\def\N{\mathbb N}
\newcommand{\Z}{\mathbb{Z}}
\def \P{\mathbb P}
\def \E{\mathbb E}
\def \R{\mathbb R}
\def \G{\mathcal G}
\def \V{\mathcal V}
\def \E1{\mathcal E}
\def \L{\mathcal L}
\def \T{\mathcal T}
\def\t{\tilde}
\def \m{\vec}
\def \H{\mathcal H}
\def \C{\mathcal C}
\def\mi{{\vec{i}}}
\def\mj{{\vec{j}}}
\newcommand{\Mod}[1]{\ (\text{mod}\ #1)}
\newcommand{\ZD}{\mathbb{Z}^d}
\newcommand{\ZZ}{\mathbb{Z}}
\newcommand{\A}{\mathcal{A}}
\newcommand{\NN}{\mathbb{N}}
\newcommand{\RR}{\mathbb{R}}
\newcommand{\Hom}{\mathit{Hom}}
\newcommand{\Mor}{\mathit{Mor}}
\newcommand{\Aut}{\mathit{Aut}}
\newcommand{\X}{\mathcal{X}}
\newcommand{\Y}{\mathcal{Y}}
\newcommand{\parity}{\mathit{parity}}
\newcommand{\Prob}{\mathit{Prob}}
\newcommand{\ldens}{\underline{d}}
\newcommand{\udense}{\overline{d}}
\newcommand{\Clopen}{\mathit{Clopen}}
\newcommand{\Borel}{\mathit{Borel}}
\newcommand{\Inter}{\mathit{Spec}}
\newcommand{\Ext}{\mathit{Ext}}
\newcommand{\sep}{\mathit{sep}}
\newcommand{\COV}{\mathit{COV}}
\newtheorem{thm}{Theorem}[section]
\newtheorem{lem}[thm]{Lemma}
\newtheorem{prop}[thm]{Proposition}
\newtheorem{cor}[thm]{Corollary}
\newtheorem{lemma}[thm]{Lemma}
\newtheorem{defn}[thm]{Definition}
\newtheorem{remark}[thm]{Remark}
\def\N{\mathbb N}
\def \P{\mathcal P}
\def \QQ{\mathcal Q}
\def \E{\mathbb E}
\def \R{\mathbb R}
\def \G{\mathcal G}
\def \V{\mathcal V}
\def \E1{\mathcal E}
\def \L{\mathcal L}
\def \T{\mathcal T}
\def\t{\tilde}
\def \m{\vec}
\def \H{\mathcal H}
\def \C{\mathcal C}
\def\mi{{\vec{i}}}
\def\mj{{\vec{j}}}
\title{Borel subsystems and ergodic universality for compact $\ZD$-systems via specification and beyond}
\author{Nishant Chandgotia and Tom Meyerovitch}
\subjclass[2010]{Primary 37A35; Secondary 37A05, 37B50, 37B40}
\keywords{Borel dynamics, symbolic dynamics, entropy, ergodic universality, non-uniform specification, generic homeomorphisms}
\begin{document}
	
\maketitle
\begin{abstract}   
	A Borel $\ZD$ dynamical system $(X,S)$ is ``almost Borel universal'' if any free Borel $\ZD$ dynamical system $(Y,T)$ of strictly lower entropy is isomorphic to a Borel subsystem of $(X,S)$, after removing a null set. We obtain and exploit a new sufficient condition for a topological $\ZD$ dynamical system to be almost Borel universal. We use our main result to deduce various conclusions and answer a number of questions. Along with additional results, we prove that a ``generic'' homeomorphism of a compact manifold of topological dimension at least two can  model any ergodic transformation, that non-uniform specification implies almost Borel universality, and that $3$-colorings in $\ZD$ and dimers in $\ZZ^2$ are almost Borel universal.
\end{abstract}
\section{Introduction and statement of results}
In this paper we extend Krieger's generator theorem in several directions. We obtain and exploit a new sufficient condition for a topological $\ZD$ dynamical system (or just a topological dynamical system) $(X,S)$ to be universal with respect to embedding in the ``almost Borel'' category. 
The new results are used to  solve a number of  questions  proposed by Boyle-Buzzi, Lind-Thouvenot, Robinson-\c{S}ahin, Quas-Soo, Weiss and others. Our results are in the interface of measurable, Borel and topological dynamics.

Let $(Y,T)$ be a Borel $\ZD$ dynamical system (or just Borel dynamical system), that is, $T$ is  a $\ZD$ action on $Y$ by Borel bijections on a standard Borel space $Y$. We say that a Borel subset $N$ of $Y$ is null if it has zero measure with respect to any $T$-invariant Borel probability measure.  We say that $(Y,T)$ almost Borel embeds in another Borel dynamical system $(X,S)$ if there is a $T$-invariant Borel subset $Y_0 \subset Y$ so that $Y \setminus Y_0$ is null and an equivariant Borel embedding of $Y_0$ into $X$.

Our  main theorem is the following:

\begin{thm}\label{thm:spec_sequence_implies_univesality}
	Let $(X,S)$ be a topological $\ZD$ dynamical system that admits a flexible marker sequence $\C=(C_n)_{n=1}^\infty \in (2^X)^{\NN}$.
	Then any free  Borel $\ZD$ dynamical system having entropy less than $h(\C)$ almost Borel embeds in $(X,S)$.
\end{thm}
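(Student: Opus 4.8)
The plan is to realize the almost Borel embedding as a pointwise limit of increasingly fine equivariant codings, each governed by one term of the flexible marker sequence $\C$, in the spirit of Krieger's generator theorem and its Borel refinements. Fix a free Borel $\ZD$ system $(Y,T)$ whose entropy --- the supremum of the measure-theoretic entropies over all $T$-invariant Borel probability measures --- is strictly less than $h(\C)$. Since ``null'' means null with respect to every such measure, after an ergodic decomposition it suffices to control all ergodic measures simultaneously, and in fact a countable weak-topology-dense family of them, which is handled by a diagonal argument. Fix also a countable Borel generating partition of $Y$ (available for free Borel $\ZD$ actions), so that recovering the $T$-orbit of a point is the same as recovering its itinerary through this partition. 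Finally, using the Borel marker lemma for free $\ZD$ actions, produce a sequence of Borel marker sets $M_1, M_2, \ldots \subseteq Y$ whose induced Voronoi-type tilings of $Y$ have, on a conull invariant set $Y_0$, all cells of diameter at least $k$ at stage $k$.

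The coding step is where the two entropies meet. At stage $k$, a cell $R$ of the stage-$k$ tiling is a large finite subset of $\ZD$, and by a uniform Shannon--McMillan--Breiman / Katok covering estimate, valid simultaneously for the chosen ergodic measures, the number of stage-$k$ itinerary patterns on $R$ that carry positive mass is at most $\exp\bigl((h(Y)+\epsilon_k)\,|R|\bigr)$ with $\epsilon_k\to 0$. On the $X$ side, the defining property of a flexible marker sequence should guarantee that for a suitable index $n=n_k$ one can exhibit at least $\exp\bigl((h(\C)-\epsilon_k)\,|R|\bigr)$ distinct points of $X$ whose $C_{n_k}$-behaviour is prescribed by the tile geometry --- so that the tiling of $Y$ becomes readable from the point of $X$ --- and which may otherwise be filled freely on the bulk of $R$ with negligible interaction across tile boundaries. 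An injection from the stage-$k$ patterns of $Y$ into such $X$-configurations, chosen to agree with the codings fixed at earlier stages outside a set of arbitrarily small measure (in particular preserving the sub-configuration that already records the coarse orbit), defines an approximate equivariant Borel map $\phi_k\colon Y\to X$.

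It then remains to check that the $\phi_k$ converge pointwise on $Y_0$ to a Borel equivariant map $\phi$, and that $\phi$ is injective on $Y_0$: from $\phi(y)$ one reads off the $C_{n_k}$-markers to recover all the tilings, decodes each cell to recover the itinerary of $y$ through the fixed generating partition, and hence recovers $y$ itself; freeness of $(Y,T)$ is used both for the marker lemma and to make this reconstruction faithful, and $Y\setminus Y_0$ is null by construction. The main obstacle I anticipate is the coherence bookkeeping between consecutive stages: the stage-$(k+1)$ re-coding must respect the marker geometry of $X$ dictated by $\C$, must not destroy the part of the configuration already certifying the coarse orbit, and must still leave room for roughly $h(\C)\,|R|$ bits on the cells touched by the refinement, all arranged uniformly over the uncountably many invariant measures of $Y$. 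A closely related technical point is to distill from the hypothesis that $\C$ is a flexible marker sequence the precise quantitative input used above --- namely that on large cells one can simultaneously prescribe the $C_n$-pattern and independently fill the interior, realizing nearly $h(\C)\,|R|$ bits per cell with controlled boundary interaction.
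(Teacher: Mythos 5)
Your outline follows the same broad strategy as the paper (approximate equivariant codings built on towers/markers, Shannon--McMillan counting against $|C_n|\approx e^{h(\C)|F_n|}$, stage-by-stage refinement, pointwise limit), but there are two genuine gaps, one of which is fatal as stated.

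First, the reduction to ``a countable weak-topology-dense family'' of ergodic measures ``handled by a diagonal argument'' does not work. The space $\Prob_e(\Y)$ is in general uncountable, and controlling a weak-$*$ dense countable subfamily gives no control over the remaining measures: neither entropy nor the full-measure sets on which your codings converge and are injective vary continuously in $\mu$. The correct mechanism (and the one the paper uses, following Hochman) is to make the entire construction a \emph{Borel function of} $\mu$ and then apply, to each generic point $y$, the embedding associated with its own empirical measure $\mu_y$. This forces a further issue your sketch does not address: two points generic for \emph{different} ergodic measures are coded by \emph{different} decoding tables, so nothing prevents them from colliding in $X$. The paper resolves this by reserving part of each configuration to encode the partition element $\mathcal{Q}_r(\mu)$ of the empirical measure itself, so that distinct measures produce separated images. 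Without some such device, injectivity on a set that is full for \emph{all} invariant measures does not follow from per-measure injectivity.

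Second, the step ``from $\phi(y)$ one reads off the $C_{n_k}$-markers to recover all the tilings'' hides the stability problem that you correctly flag as the main obstacle but do not resolve. Since $X$ is an arbitrary compact metric space, the limit $\phi(y)=\lim_k\phi_k(y)$ only approximates each stage's coding, and decoding must survive perturbation. This is exactly why the paper (i) requires each $C_n$ to be $(\epsilon_n,F_n)$-separated and the shadowing to be accurate to within $\epsilon_n/4$ while decoding tolerates errors up to $\epsilon_{n}/2$, and (ii) runs the approximate embeddings into the space of approximate orbits $X^{\ZD}$ with finitely many symbols at the origin, so that a small perturbation of an approximate embedding is still decodable. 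A related mismatch: your Voronoi cells have arbitrary shapes, whereas the flexibility hypothesis only permits shadowing of $F_k$-spaced configurations inside boxes $F_n$; one needs box-shaped Rokhlin towers (or an extra argument placing boxes inside cells) to invoke the hypothesis at all.
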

 
The definition of ``a flexible marker sequence  for a topological dynamical system $(X,S)$'' is new, and requires a bit of space to state precisely so we defer it to Section \ref{subsection:Flexible_sequences}. For now, we can say that it is a sort of ``specification property''. Some examples of systems admitting  a flexible marker sequence include generic homeomorphisms of two dimensional compact manifolds, proper $k$-colorings of the $\ZD$ for $k\geq 3$ and domino tilings of $\Z^2$.

Regarding the infinite entropy case, we additionally obtain the following theorem: 

\begin{thm}\label{thm:infinite_entropy}
	\label{thm:infinite_entropy_universal}Let $(X,S)$ be topological $\ZD$ dynamical system with a flexible marker sequence $\C=(C_n)_{n=1}^\infty \in (2^X)^{\NN}$ 
	such that $h(\C)=\infty$.
	Then any free  Borel $\ZD$ dynamical system almost Borel embeds in $(X,S)$. If in addition $\bigcup_{k\in \N} C_k$ is dense in $X$, we can choose the almost Borel embedding so that the pushforward of any $T$-invariant measure has full support in $X$.
\end{thm}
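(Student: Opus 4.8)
The plan is to bootstrap Theorem~\ref{thm:infinite_entropy_universal} from Theorem~\ref{thm:spec_sequence_implies_univesality} by a standard ``entropy cut-off plus countable concatenation'' argument, taking advantage of the extra hypothesis $h(\C)=\infty$. Given a free Borel $\ZD$ system $(Y,T)$, one first reduces to the case where $(Y,T)$ has finite entropy: partition $Y$ into countably many $T$-invariant Borel pieces $Y=\bigsqcup_{k} Y_k$ on which the entropy (in the Borel sense, i.e. the supremum of measure-theoretic entropies over invariant measures supported on $Y_k$) is finite --- for instance let $Y_k$ be the set of points whose orbit closure in some fixed Borel model has entropy in $[k-1,k)$, or more robustly use the ergodic decomposition together with the fact that a free Borel system decomposes into countably many sub-systems of finite entropy. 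Each $Y_k$, having finite entropy strictly less than $h(\C)=\infty$, almost Borel embeds in $(X,S)$ by Theorem~\ref{thm:spec_sequence_implies_univesality}; the task is to glue these embeddings into a single almost Borel embedding of all of $Y$.

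For the gluing, I would exploit the flexibility of the marker sequence to find, inside $(X,S)$, countably many pairwise disjoint invariant Borel subsystems, each carrying a full copy of the relevant $h(\C)=\infty$ specification structure, so that Theorem~\ref{thm:spec_sequence_implies_univesality} can be applied inside each one. Concretely, using a single marker set $C_n$ (for $n$ large) one can carve out ``slots'' indexed by $k\in\N$: on a sparse-enough sub-collection of marker-return-words one encodes an index $k$, and the portion of the system seeing index $k$ should itself admit a flexible marker sequence of infinite entropy (infinitely many ``free'' coordinates remain after spending boundedly much information on the index). Then embed $Y_k$ into the $k$-th slot, and take the union of these embeddings; disjointness of the slots guarantees the union is injective, and $T$-equivariance is inherited. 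The null set removed is the union of the countably many null sets, hence still null.

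For the full-support addendum, assume in addition that $\bigcup_k C_k$ is dense. The idea is to further refine the embedding so that every point of $Y$ (outside the null set) has an orbit that visits every basic open set of $X$. I would fix a countable basis $\{U_j\}$ of $X$ and, for each $j$, use density of $\bigcup_k C_k$ to find a marker word that passes through $U_j$; then, interleaving with the index-encoding scheme above, I would arrange that along the image orbit one sees, infinitely often, excursions realizing each $U_j$. This is a ``syndetic visiting'' modification: one reserves a vanishing-density subset of marker occurrences to force visits to $U_1, U_2, \ldots$ in a round-robin fashion, so that the pushforward of any $T$-invariant measure charges every $U_j$ --- whence full support. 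Care is needed to ensure this reservation does not collide with the index-encoding or destroy the infinite entropy available on the remaining coordinates, but since both reservations use only density-zero subsets of markers, there is room.

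The main obstacle I anticipate is the gluing step: making precise that the ``$k$-th slot'' inside $(X,S)$ genuinely inherits a flexible marker sequence of infinite entropy, so that Theorem~\ref{thm:spec_sequence_implies_univesality} is legitimately applicable slot-by-slot. This requires unpacking the definition of flexible marker sequence (deferred to Section~\ref{subsection:Flexible_sequences}) and checking that spending a bounded amount of ``marker budget'' on an index label leaves the defining flexibility properties intact with entropy still $\infty$ --- essentially a self-similarity or absorption property of the hypothesis. Compared to this, the finite-entropy decomposition of $(Y,T)$ and the union-of-null-sets bookkeeping are routine, and the full-support refinement, while fiddly, is a soft density argument once the slot structure is in hand.
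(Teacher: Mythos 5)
Your reduction to finite entropy fails at the first step, and it fails precisely on the case that Theorem \ref{thm:infinite_entropy} is designed to handle. If $(Y,T)$ carries an ergodic measure $\mu$ with $h_\mu(Y,T)=\infty$ (e.g.\ an infinite-entropy Bernoulli system realized as a Borel system), then by ergodicity every $T$-invariant Borel set has $\mu$-measure $0$ or $1$, so any countable partition $Y=\bigsqcup_k Y_k$ into invariant pieces puts a set of full $\mu$-measure into a single $Y_k$, and that $Y_k$ then has Gurevich entropy at least $h_\mu=\infty$. Hence no decomposition into finite-entropy invariant pieces can cover a full set, and Theorem \ref{thm:spec_sequence_implies_univesality} cannot be applied to the remaining piece because it requires the entropy of the source to be \emph{strictly} less than $h(\C)$, which fails when both are $\infty$. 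This is exactly the point made in the remark following the statement of Theorem \ref{thm:infinite_entropy}: the infinite-entropy theorem embeds systems of infinite entropy, which the finite-entropy theorem, used as a black box, never can.

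The gluing step is also not justified as stated. Theorem \ref{thm:spec_sequence_implies_univesality} produces an embedding into $X$ with no control over where the image lands, so to ``embed $Y_k$ into the $k$-th slot'' you would need the slot itself to be a topological system carrying its own flexible marker sequence of infinite entropy; but the set of points shadowing a prescribed family of marker words is in general not a subsystem (the paper stresses that the good orbit segments need not form a compact subsystem), and verifying the flexibility axioms slot-by-slot amounts to redoing the whole construction. The paper's actual proof avoids both problems by working directly: since $h(\C)=\infty$, Lemmas \ref{lem:approximate_embedding_exists_inf_entropy} and \ref{lem:improve_approx_emb_inf_entropy} produce $(k,n,0,\mu)$-approximate embeddings --- with error parameter $0$ and valid simultaneously for \emph{every} $\mu\in\Prob_e(\Y)$ --- because $|C_{n_0}^{K}|\ge|\mathcal{P}_k^{F_n}|$ allows the entire partition to be encoded injectively rather than only a high-probability part of it. No Shannon--McMillan covering argument, no measure dependence, and no strict entropy inequality are needed, and the limit embedding is defined on a set that is full for all invariant measures at once. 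The full-support addendum is then handled by reserving one marker position per stage to shadow a point of a dense subset of $\bigcup_k C_k$, which is close in spirit to your round-robin idea, but it only becomes available once the measure-independent, zero-error construction is in place.
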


Note that systems satisfying the assumptions of Theorem \ref{thm:infinite_entropy} can in particular almost Borel embed any Borel dynamical system having infinite entropy, whereas in Theorem \ref{thm:spec_sequence_implies_univesality} we require a strict inequality of entropies. 

\subsection{Brief historical survey of previous results}

Krieger's well known finite generator theorem  \cite{MR259068,MR294603} says that 
any (invertible) free,  ergodic measure preserving transformation having entropy less than $\log(N)$ admits an $N$-set generator. Equivalently, Krieger's theorem asserts that the full shift  on a finite number of symbols is universal in the ergodic sense. Krieger also proved that mixing subshifts of finite type are universal \cite{MR0422576}. Lind and Thouvenot proved that hyperbolic (two-dimensional) toral automorphisms are  universal, and used this together with an older result of Oxtoby and Ulam to deduce that any free ergodic measure preserving transformation having \emph{finite entropy} can be realized by a homeomorphism of the two-dimensional torus preserving  Lebesgue measure \cite{MR0584588}. To the best of our knowledge, prior to our new result it remained an open question if the finite entropy assumption can be dropped. Quas and Soo \cite{MR3453367}  proved that non-uniform  specification implies ergodic universality  under two additional hypothesis:  ``Asymptotic $h$-expansiveness'' and the ``small boundary property''.
As a corollary, they deduced that ergodic toral automorphisms are  universal, also in the non-hyperbolic case.

In the same paper, Quas and Soo conjectured that both additional assumptions  ``asymptotic $h$-expansiveness'' and  ``small boundary property'' can be removed. Benjy Weiss  came up with a new proof of the Quas-Soo universality result that avoids the  ``asymptotic $h$-expansiveness'' hypothesis. It is well known that the  ``small boundary property'' hypothesis can fail for some automorphisms of compact abelian groups that have infinite entropy and infinite topological dimension\footnote{One simple example is the  group $(\mathbb{R}/\mathbb{Z})^\mathbb{Z}$ with the shift map. In this particular case  Borel universality  is rather trivial, as this system contains the shift on  $(\{0,1\}^\mathbb{N})^\mathbb{Z}$ as a subsystem, see Proposition \ref{prop:Borel_universaility_of_shift_on_cantor_Set}. Furthermore, this system is universal in the Borel category for rather trivial reasons.}.

Another direction in which Krieger's finite generator theorem has been vastly extended is by replacing the acting group from $\ZZ$ to more general group. The latest breakthrough in this direction  is Seward's theorem \cite{MR3904452}, which is a generalization of Krieger's generator theorem for actions of an arbitrary countable group.

Shifting attention to topological $\ZD$ dynamical systems, Robinson and \c{S}ahin \cite{MR1844076} have shown that any $\ZD$ subshift of finite type $X$  that satisfies a certain property called the \emph{uniform filling property (UFP)} plus a technical condition on periodic points is universal in the ergodic sense. In contrast to  the  case $d=1$, where a shift of finite type is universal if and only it is topologically mixing, uniform filling property is not necessary and topological mixing is not sufficient for universality of $\ZD$ subshifts of finite type. Some examples of $\ZD$ subshifts that satisfy the assumptions of the Robinson-\c{S}ahin result and are thus universal include the hard-square model and $k$-colorings of $\mathbb{Z}^d$ for sufficiently large $k$. Pavlov used the  Robinson-\c{S}ahin result  to deduce universality for $\ZD$ subshifts of finite type with ``nearly full entropy'' \cite{MR3162822}. 

The notions of almost Borel embedding and almost Borel isomorphism  have been introduced by Hochman \cite{MR3077948}, who proved (among other results) an ``almost Borel'' version  of Krieger's theorem. This approach turned out rather fruitful in several aspects \cite{MR3692886}.

  David Burguet has recently obtained  a proof that non-uniform  specification implies universality in the almost Borel sense, confirming a conjecture of Quass and Soo \cite[Conjecture $1$]{MR3008405}. Burguet's theorem 
 also provides an affirmative answer to a question of Boyle and Buzzi \cite[Problem $9.1$]{MR3692886}.

\subsection{Some corollaries of our results}
It has been known for some time that a generic  homeomorphism of a compact manifold of dimension at least $2$ has infinite topological entropy.   
Catsigeras and Troubetzkoy \cite{catsigeras2019ergodic} recently proved  that a generic homeomorphism of a compact manifold of dimension at least $2$   admits an ergodic measure having infinite entropy. 

 Using Theorem \ref{thm:infinite_entropy} together with previously known results about generic homeomorphism of a compact $d$-manifold  we prove that ``most''  homeomorphisms of $M$  in fact admit an invariant measure isomorphic to  any given  free measure-preserving transformation, in particular those having infinite entropy:
 
 Call a topological  $\ZD$ dynamical system $(X,S)$ \emph{fully $\infty$-universal} if any free measure preserving system can be realized as a fully-supported $S$-invariant probability measure on $X$.
\begin{thm} \label{thm:inf_universal_generic}
	Let $M$ be a compact connected  topological manifold (with or without boundary) 
	of dimension $ d \ge 2$. Then there exists a fully $\infty$-universal homeomorphism $h:M \to M$.
	In fact, for any   fully supported, non-atomic probability measure $\mu \in \Prob(M)$ for which the measure of the boundary is zero, 
	there is a dense $G_\delta$ set of fully $\infty$-universal homeomorphisms in the space of homeomorphisms that preserve $\mu$.
\end{thm}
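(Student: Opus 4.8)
The plan is to deduce Theorem~\ref{thm:inf_universal_generic} from Theorem~\ref{thm:infinite_entropy}, so the main task is to exhibit, inside a suitable space of homeomorphisms of $M$, a dense $G_\delta$ set each of whose elements admits a flexible marker sequence $\C=(C_n)_{n=1}^\infty$ with $h(\C)=\infty$ and with $\bigcup_k C_k$ dense in $M$. Fix a fully supported non-atomic $\mu\in\Prob(M)$ giving zero mass to $\partial M$, and work in $\mathrm{Homeo}(M,\mu)$ with its usual Polish topology. The strategy is first to reduce to the cube: by the Oxtoby--Ulam theorem (in the version adapted to manifolds with boundary, e.g.\ the Brown--Alpern--Prasad/Fathi form) $(M,\mu)$ is, off a null closed set of zero measure, measurably and ``charts-compatibly'' modeled on $[0,1]^d$ with Lebesgue measure, so it suffices to produce the generic flexible-marker-sequence property for volume-preserving homeomorphisms of the cube and then transport it. On the cube one uses the classical flexibility of homeomorphisms: a generic $h$ has, for every scale, a Rokhlin-type tower of small-diameter cubes on which $h$ acts like a permutation and on which one has complete freedom to prescribe the dynamics on a large-measure subset --- this is exactly the ``infinite topological entropy'' phenomenon of Yano and the ergodic-measure-of-infinite-entropy result of Catsigeras--Troubetzkoy, sharpened to yield the marker-sequence data.

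Concretely, I would carry out the following steps. (1) Recall the precise definition of a flexible marker sequence from Section~\ref{subsection:Flexible_sequences} and isolate what must be verified: for each $n$ a clopen-in-the-relevant-sense (here: regular closed, nicely embedded) marker set $C_n$, a return-time structure, and a ``filling'' property letting one freely insert long orbit segments of an exponentially growing number of names, with the associated entropy $h(\C_n)\to\infty$. (2) Show that for each fixed $n$ the set of $h\in\mathrm{Homeo}([0,1]^d,\mathrm{Leb})$ for which such a $C_n$ exists with $h(\C_n)>n$ is open and dense: density comes from a direct construction --- subdivide the cube into $N^d$ subcubes for $N$ large, build a long itinerary through most of them realizing many distinct symbol patterns, and use the standard ``homeomorphism surgery'' lemmas (Lax's theorem on approximating measure-preserving bijections by measure-preserving homeomorphisms, plus Alpern's lemma for the tower structure) to realize this by a volume-preserving homeomorphism close to $h$; openness follows since the required combinatorial configuration persists under small perturbations. (3) Take the countable intersection over $n$ to get a dense $G_\delta$ set $\mathcal{G}\subset\mathrm{Homeo}([0,1]^d,\mathrm{Leb})$; for $h\in\mathcal{G}$ assemble the $C_n$'s into a single flexible marker sequence $\C$ with $h(\C)=\infty$, arranging along the way (by choosing the scales $\to 0$ and distributing the marker sets through a fixed countable dense family of subcubes) that $\bigcup_k C_k$ is dense. (4) Transport $\mathcal{G}$ back to $\mathrm{Homeo}(M,\mu)$ via the Oxtoby--Ulam identification, using that the null set discarded there is genuinely null (zero mass for \emph{every} invariant measure is what one needs, but since it is closed with empty interior and $\mu$-null one checks it supports no invariant measure), so a dense $G_\delta$ in $\mathrm{Homeo}(M,\mu)$ of homeomorphisms with an infinite-entropy flexible marker sequence with dense union is obtained. (5) Apply Theorem~\ref{thm:infinite_entropy}: every such $h$ almost Borel embeds every free Borel $\ZZ$-system, and in particular any free measure-preserving transformation $(Y,\mathcal{B},\nu,R)$ pushes forward to a fully supported $h$-invariant measure on $M$; hence $h$ is fully $\infty$-universal, proving the theorem.

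The main obstacle I anticipate is Step~(2): producing the flexible marker sequence by an \emph{explicit} homeomorphism perturbation, rather than merely an abstract measure-preserving one. One must simultaneously (a) keep the perturbation supported in a small ball so as to stay close to $h$ and to preserve openness/density bookkeeping across different $n$, (b) respect the boundary constraints of the cube (and ultimately the manifold), and (c) guarantee the ``flexibility'' clause in the definition --- that one can prescribe the names on a set of near-full measure within the tower --- which is stronger than merely having many periodic points or large topological entropy. The right tools are the Alpern--Prasad toolkit (Lax approximation, Alpern's tower lemma, the dyadic-permutation technique) combined with the observation that the marker-sequence definition was presumably designed to be \emph{local} and \emph{perturbation-stable} precisely so that this genericity argument goes through; verifying that our constructed $C_n$ meets that definition verbatim is where the care is needed. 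A secondary, more routine point is checking the boundary hypothesis $\mu(\partial M)=0$ is exactly what makes the Oxtoby--Ulam transfer legitimate and the discarded set dynamically negligible.
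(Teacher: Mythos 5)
Your overall architecture (reduce to Theorem \ref{thm:infinite_entropy} by exhibiting a dense $G_\delta$ of homeomorphisms carrying an infinite-entropy flexible marker sequence with dense union) is the right one, but the route you take to produce that $G_\delta$ is not the paper's and it contains a real gap. The paper's proof is essentially a citation argument: by Guih\'eneuf--Lefeuvre, infinite topological entropy is generic in $\mathit{Homeo}(M,\mu)$ and the specification property is also generic there; Proposition \ref{prop:specification_implies_flexible} then converts specification into a flexible marker sequence $\C$ with $h(\C)=h(X,S)=\infty$ and $\bigcup_n C_n$ dense, and Theorem \ref{thm:infinite_entropy} finishes. No Oxtoby--Ulam reduction to the cube is needed for this theorem (the paper uses Oxtoby--Ulam only later, for Theorem \ref{thm:lebesgue_universal}), and the genericity statements are already formulated for general manifolds with a good measure, so your step (4) and its attendant worries about the discarded null set are avoidable.

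The genuine gap is in your steps (2)--(3). First, your step (2) is not a lemma you can wave at: ``for each $n$, the set of $h$ admitting a marker set with entropy $>n$ is open and dense'' is, in substance, the content of the cited Guih\'eneuf--Lefeuvre results (genericity of specification and of infinite entropy), whose proofs via the Alpern--Prasad/Lax machinery are nontrivial; your proposal would have to reprove them. Second, and more importantly, a flexible marker sequence is not a conjunction of independent scale-by-scale conditions: the flexibility clause requires that for every $n$ and every $(1+\delta_k)F_k$-spaced configuration $W$ of elements of $C_k$ with $k\ll n$, some $x\in C_n$ shadows $W$. This is a \emph{joint} condition tying $C_k$ to $C_n$ across all pairs of scales, together with the marker condition \eqref{eq:marker_property} quantified over all of $X^{\ZD}$. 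Intersecting countably many sets of the form ``some good $C_n$ exists at scale $n$'' does not produce a single coherent sequence $(C_n)$ satisfying these cross-scale constraints; some unified property of the homeomorphism is needed to glue the scales together, and in the paper that unified property is precisely non-uniform specification, with Proposition \ref{prop:specification_implies_flexible} doing the (nontrivial) gluing. Until you either invoke generic specification or build the cross-scale shadowing directly into your open dense sets, step (3) (``assemble the $C_n$'s into a single flexible marker sequence'') does not go through.
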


The above result  answers a question of Lind and Thouvenot  \cite{MR0584588}, which had been brought to our attention by Benjy Weiss.

Using Theorem \ref{thm:inf_universal_generic}, we can apply the old argument of  Lind and Thouvenot from \cite{MR0584588} to deduce:
\begin{thm}\label{thm:lebesgue_universal}
	If $k >1$ then 
	for any  free measure preserving $\Z$-system $(Y,\mu,T)$ there is a  homeomorphism $\psi$ of $\mathbb{T}_k= \R^k/\Z^k$ that preserves Lebesgue measure, denoted by $m_{\mathbb{T}_k}$, and so that $(Y,\mu,T)$ is isomorphic to $(\mathbb{T}_k,m_{\mathbb{T}_k},\psi)$ as a measure preserving system.
\end{thm}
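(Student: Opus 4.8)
The plan is to follow the classical Lind--Thouvenot scheme from \cite{MR0584588}, using Theorem \ref{thm:inf_universal_generic} as the new input that removes the finite-entropy restriction. Fix $k>1$ and a free measure preserving $\Z$-system $(Y,\mu,T)$. By Theorem \ref{thm:inf_universal_generic} applied to the manifold $M=\mathbb{T}_k$ (which is compact, connected, without boundary, of dimension $k\ge 2$), there is a fully $\infty$-universal homeomorphism $h:\mathbb{T}_k\to\mathbb{T}_k$; in particular $(Y,\mu,T)$ is isomorphic, as a measure preserving system, to $(\mathbb{T}_k,\nu,h)$ for some fully supported non-atomic $h$-invariant Borel probability measure $\nu$ on $\mathbb{T}_k$. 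The point of Theorem \ref{thm:inf_universal_generic} over earlier results is precisely that $\nu$ need not have finite entropy, so $(Y,\mu,T)$ is allowed to be arbitrary (in particular of infinite entropy).

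The remaining step is to transport $\nu$ to Lebesgue measure by a homeomorphism of $\mathbb{T}_k$. This is exactly the Oxtoby--Ulam theorem: since $\nu$ is a non-atomic Borel probability measure on the manifold $\mathbb{T}_k$ that is fully supported and assigns zero mass to the (empty) boundary, there is a homeomorphism $\varphi:\mathbb{T}_k\to\mathbb{T}_k$ with $\varphi_*\nu=m_{\mathbb{T}_k}$. Setting $\psi=\varphi\circ h\circ\varphi^{-1}$, we get a homeomorphism of $\mathbb{T}_k$ with $\psi_* m_{\mathbb{T}_k}=\psi_*\varphi_*\nu=\varphi_* h_*\nu=\varphi_*\nu=m_{\mathbb{T}_k}$, so $\psi$ preserves Lebesgue measure, and $\varphi$ is by construction a measure-theoretic isomorphism from $(\mathbb{T}_k,\nu,h)$ to $(\mathbb{T}_k,m_{\mathbb{T}_k},\psi)$. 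Composing with the isomorphism from $(Y,\mu,T)$ to $(\mathbb{T}_k,\nu,h)$ finishes the proof.

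The only subtlety worth flagging is verifying that the measure $\nu$ produced by Theorem \ref{thm:inf_universal_generic} genuinely satisfies the hypotheses of the Oxtoby--Ulam realization theorem, namely that it is non-atomic, has full support, and gives zero measure to the boundary; but the definition of \emph{fully $\infty$-universal} already guarantees full support, $(Y,\mu,T)$ being free forces $\nu$ to be non-atomic (an atom would be a periodic point), and $\mathbb{T}_k$ has no boundary, so all three conditions hold automatically. Thus there is essentially no obstacle beyond invoking the two cited ingredients; the content of the theorem is entirely in Theorem \ref{thm:inf_universal_generic}, and this corollary is the same bootstrap that Lind and Thouvenot carried out in the finite-entropy case.
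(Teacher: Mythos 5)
Your proof is correct and follows essentially the same route as the paper: apply Theorem \ref{thm:inf_universal_generic} to obtain a fully $\infty$-universal homeomorphism of $\mathbb{T}_k$ realizing $(Y,\mu,T)$ as a fully supported invariant measure $\nu$, then conjugate by the Oxtoby--Ulam homeomorphism carrying $\nu$ to Lebesgue measure. Your extra check that $\nu$ is non-atomic (via freeness) and fully supported is a reasonable verification of the Oxtoby--Ulam hypotheses that the paper leaves implicit.
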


We remark  that a fully $\infty$-universal homeomorphism of the torus has infinite topological entropy, thus it cannot be topologically conjugate to a  smooth or even Lipschitz homeomorphism of the torus.

Our main result also recovers the following theorem of David Burguet:
\begin{thm}[Burguet \cite{burguet2019topological}]\label{thm:spec_implies_universal}
	Let $(X,S)$ be a topological $\ZZ$ dynamical system that has non-uniform  specification.
Then $(X,S)$ almost Borel embeds any free  Borel $\ZZ$ dynamical system whose Gurevich entropy is strictly smaller than the topological entropy of $(X,S)$. 

	Furthermore, any free ergodic transformation having entropy less than the topological entropy of $(X,S)$ can be realized as a fully supported invariant probability measure on $(X,S)$.
\end{thm}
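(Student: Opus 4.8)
The plan is to derive Theorem~\ref{thm:spec_implies_universal} from our main results, Theorem~\ref{thm:spec_sequence_implies_univesality} and Theorem~\ref{thm:infinite_entropy}, by showing that non-uniform specification for a $\ZZ$-system produces a flexible marker sequence. First I would recall the definition of non-uniform specification: there is a ``gap'' function (or mistake function) $N(\cdot)$ such that any finite family of orbit segments can be approximately concatenated provided consecutive segments are separated by time gaps controlled by $N$ applied to the lengths or precisions involved. The key translation step is to build, for each $n$, a marker set $C_n\subset X$ out of orbit segments of a fixed large length $L_n$ that can be freely shuffled and spaced: using specification I would construct, for suitable parameters $\epsilon_n\to 0$ and $L_n\to\infty$, a collection of length-$L_n$ patterns whose exponential growth rate approaches the topological entropy $h_{\mathit{top}}(X,S)$, and show that these can be glued with bounded gaps. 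This is essentially the classical construction (going back to Bowen) of large-entropy subsystems from specification; the content here is to check it satisfies the precise axioms of a flexible marker sequence from Section~\ref{subsection:Flexible_sequences}, in particular that $h(\C)$ can be taken arbitrarily close to, and in the infinite-entropy case equal to, $h_{\mathit{top}}(X,S)$.

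Next I would invoke the two main theorems. For the first assertion, given a free Borel $\ZZ$-system $(Y,T)$ with Gurevich entropy $h < h_{\mathit{top}}(X,S)$, I choose the flexible marker sequence $\C$ with $h(\C) \in (h, h_{\mathit{top}}(X,S))$ and apply Theorem~\ref{thm:spec_sequence_implies_univesality} to get an almost Borel embedding. (Here one must note the standard fact that Gurevich entropy of a Borel system equals the supremum of Kolmogorov--Sinai entropies over invariant measures, which is the quantity controlled by our hypothesis $h(\C)$; a short lemma or citation suffices.) If $h_{\mathit{top}}(X,S)=\infty$ one instead builds $\C$ with $h(\C)=\infty$ and applies Theorem~\ref{thm:infinite_entropy}. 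For the ``furthermore'' clause about realizing a free ergodic transformation of entropy $h<h_{\mathit{top}}(X,S)$ as a fully supported invariant measure, I would additionally arrange that $\bigcup_k C_k$ is dense in $X$ --- this is automatic from the specification construction since specification makes orbit segments approximate every point --- and then either invoke the full-support conclusion of Theorem~\ref{thm:infinite_entropy} in the infinite-entropy case, or, in the finite-entropy case, combine the almost Borel embedding with a density/genericity argument (the pushforward measure can be perturbed, or the flexible marker sequence can be chosen with dense union so that the embedding has image meeting every open set) to get fully supported image. Pushing an ergodic measure of entropy $h$ through the embedding and noting the image is a fully supported $S$-invariant measure isomorphic to the original completes that part.

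The main obstacle I expect is the first step: verifying that non-uniform specification in the precise sense of Burguet \cite{burguet2019topological} really yields a flexible marker sequence with $h(\C)$ approaching $h_{\mathit{top}}(X,S)$, matching the exact axioms in Section~\ref{subsection:Flexible_sequences}. Non-uniform specification allows the gap between glued segments to grow (rather than being bounded as in classical specification), so the marker sets $C_n$ must be designed with care --- typically one fixes the gap length as a function of the precision $\epsilon_n$ and the segment length $L_n$, and the ``flexibility'' (free insertion and deletion of markers, independence of placement) has to be read off from the specification property applied with these parameters. Bookkeeping the entropy cost of the gaps --- ensuring that the density of gap-coordinates is negligible so that the entropy $h(\C)$ of the marker sequence is not diminished below $h_{\mathit{top}}(X,S)$ --- is the delicate quantitative point. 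Everything after that is an invocation of our general machinery together with the standard identification of Gurevich entropy with the supremum of measure entropies.
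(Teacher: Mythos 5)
Your overall architecture coincides with the paper's: deduce Theorem \ref{thm:spec_implies_universal} by showing that non-uniform specification yields a flexible marker sequence $\C$ with $h(\C)=h(X,S)$ and $\bigcup_n C_n$ dense (this is exactly Proposition \ref{prop:specification_implies_flexible}), then invoke Theorem \ref{thm:spec_sequence_implies_univesality} for the almost Borel embedding and Proposition \ref{prop:full_universality} for the fully supported realization. However, there is a genuine gap in the step you yourself flag as the main obstacle, and you have misidentified where the difficulty sits. The entropy bookkeeping of the gaps is the routine part: taking $\delta_n=10\,g_n(\epsilon_n/8)\to 0$ with $n\delta_n\to\infty$ makes the boundary layers of vanishing density, and $h(\C)=h(X,S)$ then follows as in Lemma \ref{lem:C_m_lower_bound}. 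The delicate part is the \emph{marker property} \eqref{eq:marker_property}: specification lets you glue orbit segments, but it says nothing about preventing two approximate occurrences of elements of $C_n$ from overlapping nontrivially, and without non-overlap the recovery argument of Lemma \ref{lem:approximate_inverese} (locating the tower base from an approximate image) breaks down and injectivity of the limit map is lost. This cannot be ``read off from the specification property applied with these parameters''; it needs a separate construction. The paper first uses positive entropy together with the Downarowicz--Weiss theorem to produce segments $\t x^{(0)},\t x^{(1)},\t x$ with no nontrivial self- or cross-overlaps (Lemma \ref{lem:marker_triple}), then uses specification to plant a pseudo-random word in these symbols along the boundary annulus of each candidate marker and to plant the forbidden segment $\t x$ densely in its interior (Lemma \ref{lem:marker_patterns} and the proof of Proposition \ref{prop:specification_implies_flexible}); only then does a triangle-inequality argument exclude overlaps. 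Your sketch contains no mechanism playing this role.

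A secondary gap concerns the ``furthermore'' clause in the finite-entropy case: density of $\bigcup_n C_n$ by itself does not make the pushforward fully supported, and ``perturbing the pushforward measure'' is not available, since uncontrolled perturbations destroy equivariance and injectivity. What is needed is the mechanism of Lemma \ref{lem:improve_approx_emb_with_something_in_support}: at each stage of the inductive construction one reserves a tower level on which the improved approximate embedding shadows a designated point $x_0\in C_{n_0}$, and one runs through an enumeration of $\bigcup_n C_n$ so that every ball of $X$ receives positive measure in the limit (Proposition \ref{prop:full_universality}).
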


A topological dynamical system $(X,S)$ is called \emph{almost Borel universal} if it almost Borel embeds any free Borel $\ZZ^d$ dynamical system whose Gurevich entropy is strictly smaller than the topological entropy of $(X,S)$. The initial motivation that led us to our main result, concerned certain topological dynamical systems that do not have specification. In particular, Robinson and \c{S}ahin \cite{MR1844076} had asked whether proper $3$-colorings and domino tilings of $\Z^2$ are universal; these systems do not have specification. We answer the question positively.
\begin{thm}\label{thm:universal_subhisfts_examples}
	The following $\ZD$-subshifts are almost Borel universal:
	\begin{enumerate}[(i)]
		\item Proper  $k$-colorings of $\ZD$, for all $k \ge 3$ and all $d \ge 1$. (Theorem \ref{thm: universality of hom-shifts})\label{item:theorem_colouring}
		\item Domino tilings in $d=2$. (Theorem \ref{thm: domino universal})\label{item:domino}
	\end{enumerate}
\end{thm}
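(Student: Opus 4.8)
The plan is to derive Theorem \ref{thm:universal_subhisfts_examples} from the main result, Theorem \ref{thm:spec_sequence_implies_univesality}, together with the infinite-entropy variant is unnecessary here since all the relevant entropies are finite; the point is that for each of these subshifts one exhibits a flexible marker sequence $\C=(C_n)_{n=1}^\infty$ whose associated entropy $h(\C)$ equals the topological entropy of the subshift. Recall that a topological system is \emph{almost Borel universal} if it almost Borel embeds every free Borel $\ZZ^d$-system of strictly smaller Gurevich entropy; thus it suffices to produce, for each subshift $X$ in the list, a flexible marker sequence with $h(\C) = \htop(X)$, because then Theorem \ref{thm:spec_sequence_implies_univesality} absorbs every free Borel system of entropy $<\htop(X)=h(\C)$.

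For the colourings (item \eqref{item:theorem_colouring}), the strategy is as follows. Fix $k \ge 3$ and $d \ge 1$, and let $X = X_{k,d}$ be the space of proper $k$-colourings of $\ZD$. The marker sets $C_n$ should be built from large ``frames'': configurations that are pinned to a fixed pattern on the boundary of a large box $[-L_n,L_n]^d$ (or an annular neighbourhood of it), leaving the interior free. The key structural feature of proper colourings that makes this work is that $X_{k,d}$ is a \emph{hom-shift} (the nearest-neighbour constraint is symmetric and isotropic), hence it enjoys the property that any locally admissible pattern on the interior of a box can be glued into a globally admissible configuration once one fixes a suitable ``safe'' boundary condition; concretely one uses that with $k\ge 3$ colours one always has enough freedom to build a constant-thickness ``buffer'' separating an arbitrary interior colouring from the prescribed frame. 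This is exactly the flexibility the marker sequence requires. One then checks the axioms of a flexible marker sequence from Section \ref{subsection:Flexible_sequences} for this choice, and computes $h(\C)$: the frames have vanishing density of constrained sites, so the interior exponential growth rate of admissible colourings matches $\htop(X_{k,d})$, giving $h(\C)=\htop(X_{k,d})$. The reference to Theorem \ref{thm: universality of hom-shifts} indicates that this is carried out in the stated generality for hom-shifts; here I would isolate the colouring case and verify that proper $k$-colourings for $k \ge 3$ satisfy whatever hypothesis that theorem requires (some mixing / gluing condition such as the existence of a ``safe symbol pattern'', which three colours suffice to provide).

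For domino tilings of $\ZZ^2$ (item \eqref{item:domino}), the approach is parallel but the gluing argument is more delicate because dominoes are dimers on $\ZZ^2$ and a box boundary cannot simply be ``frozen'' to an arbitrary pattern: a tiling of an annular frame together with an arbitrary tiling of the interior need not be compatible (parity/height-function obstructions). So here the marker sets $C_n$ are taken to be tilings that agree, on a thick annular region around $[-L_n,L_n]^2$, with one of a finite collection of explicitly constructed ``brick-wall'' patterns whose height function has controlled slope; these patterns can be adjusted in a thin transition layer to absorb any admissible interior tiling. Again the key input is a filling/gluing lemma for dominoes — the analogue of the UFP-type statement — together with the fact that the constrained annular region has density zero, so that $h(\C)=\htop(\text{dimers on }\ZZ^2)=$ Catalan/Kasteleyn constant. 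Theorem \ref{thm: domino universal} presumably supplies this gluing lemma and the verification of the flexible-marker axioms.

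The main obstacle, in both cases, is verifying the precise axioms of the flexible marker sequence — in particular the requirement (whatever its exact form in Section \ref{subsection:Flexible_sequences}) that the markers can be placed at a controlled density, that their complements decompose the configuration into independently fillable regions, and that the ``filling'' can be done measurably and equivariantly with the stated entropy. For colourings this is comparatively clean since hom-shifts have good gluing properties. For dominoes the real work is the dimer gluing lemma: one must show that any finite admissible dimer configuration on the interior of a box extends, through a bounded-width corridor, to a configuration matching a fixed boundary frame, controlling the height function so that no parity obstruction survives; this is where I expect to spend the most effort, and it is presumably the content of the proof of Theorem \ref{thm: domino universal}. Once the flexible marker sequence with $h(\C)=\htop$ is in hand for each system, Theorem \ref{thm:universal_subhisfts_examples} follows immediately by citing Theorem \ref{thm:spec_sequence_implies_univesality} and the definition of almost Borel universality.
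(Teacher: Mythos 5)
Your overall strategy is exactly the paper's: both items are obtained by exhibiting a flexible marker sequence (of patterns) whose entropy equals the topological entropy and then invoking Theorem \ref{thm:spec_sequence_implies_univesality}; item (i) is the special case $\H=K_k$ of the hom-shift theorem, and the marker patterns are indeed box patterns pinned to a rigid (checkerboard) boundary, while for dominoes they are perfect tilings of boxes with prescribed outer layers.

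There is, however, one inference in your sketch that is invalid as stated and is precisely where the paper has to do real work: you argue that because the constrained ``frame'' occupies a set of vanishing density, the number of admissible configurations carrying that frame grows at the same exponential rate as the total, hence $h(\C)=\htop$. Conditioning on a fixed pattern on a density-zero set can in general destroy a positive fraction of the entropy; what is actually needed is a lower bound $\mu([\text{frame}])\ge e^{-cn^{d-1}}$ for the uniform measure on admissible box patterns, for the \emph{specific} frame that also has the marker and gluing properties (a pigeonhole argument only produces \emph{some} good boundary pattern, not one of the required form). The paper proves this for the checkerboard boundary of a hom-shift by a reflection-positivity argument (Proposition \ref{prop: estimate of entropy}), remarking that it could not locate such a statement in the literature, and for dominoes it combines Kasteleyn's torus-versus-box counts with a similar reflection argument. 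A secondary difference: the paper's gluing for dominoes does not go through height functions at all; it partitions the complement of well-separated sub-boxes into rectangles with one long side and the remaining sides divisible by $M$ (Lemmas \ref{lemma:extension of rectangles} and \ref{lemma: tiling simple rectangles}), with the marker property supplied by two outer layers each tiled by a single tile type. Your height-function corridor is a plausible alternative in $d=2$, but the entropy lower bound for the pinned boundary would still need to be supplied.
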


We prove  \eqref{item:theorem_colouring} of Theorem \ref{thm:universal_subhisfts_examples} as a particular case of a more general result about universality for the space of graph-homomorphisms  from the standard Cayley graph of $\ZD$ to an arbitrary non-bipartite finite graph.
This has some consequences for the Borel structure of a graph generated by   a finite set of commuting measure preserving transformations: After removing an invariant null set and the periodic points, the Borel chromatic number coincides with a basic spectral invariant and is always equal to  $2$ or $3$ (Corollary \ref{cor:Borel_3_coloring}). In  recent work  Gao, Jackson, Krohne and Seward have announced a stronger result namely \cite{gao2018continuous} 
that in fact the Borel chromatic number of such graphs is at most $3$ (so actually the part about  removing a null set in Corollary \ref{cor:Borel_3_coloring} is superfluous). 

Another application of our main result concerns equivariant measurable tiling of free $\ZD$-actions by rectangular shapes:  
\begin{thm}\label{thm:rec_tiling}
	Let $(Y,T)$ be a free Borel $\ZD$ dynamical system and let $F$ be a set of rectangular shapes in $\ZD$ such that the projection of $F$ onto each of the $d$ coordinates is a set of intervals in $\ZZ$ having coprime lengths.
	Then after removing a null set, there exists an  equivariant measurable map from $Y$ to
	the space of tilings of $\Z^d$ by shapes from $F$.
 Furthermore, if the entropy of $(Y,T)$ is sufficiently small (as a function of the set $F$), the map can be chosen to be injective.  
\end{thm}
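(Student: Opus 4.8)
The plan is to view the set of $F$-tilings of $\ZD$ as a subshift of finite type and apply Theorem~\ref{thm:spec_sequence_implies_univesality} to it. Let $\mathcal T_F$ denote the space of all tilings of $\ZD$ by translates of the shapes in $F$, coded so that each configuration records, at every site, which shape of $F$ covers that site and with which offset; this makes $\mathcal T_F$ a $\ZD$-subshift of finite type on a finite alphabet, on which $\ZD$ acts by translation, and an equivariant measurable map $Y\to\mathcal T_F$ (respectively, an injective one) in the sense of the theorem is exactly an equivariant measurable map into this subshift (respectively, an almost Borel embedding). Note $\mathcal T_F\neq\emptyset$, since the lattice tiling by any single $f\in F$ lies in it. Thus the content of the theorem is that $\mathcal T_F$ is rich enough, in the sense of Section~\ref{subsection:Flexible_sequences}.

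The crux is to show that $\mathcal T_F$ admits a flexible marker sequence $\C=(C_n)_{n=1}^{\infty}$ with $h(\C)>0$, and this is where the coprimality hypothesis is used. For each coordinate $i$, the lengths of the intervals occurring in the $i$th projection of $F$ are coprime, so by the Frobenius (Chicken McNugget) theorem there is $L_i$ such that every integer $\ge L_i$ is a non-negative integer combination of these lengths; hence any sufficiently long segment in the $i$th direction can be subdivided into segments of allowed lengths, in exponentially many distinct ways. Performing such one-dimensional re-tilings one coordinate at a time, one checks that (i) an $F$-tiling can be freely modified inside any prescribed large box without altering it elsewhere --- which already gives $\htop(\mathcal T_F)>0$ --- and (ii) near a sparse set of markers one can build ``slack'' regions across which a tiling may be recombined at will, so that patterns prescribed on widely separated boxes always extend to a global $F$-tiling. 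These are precisely the ingredients in the definition of a flexible marker sequence. The argument follows, and generalizes, the treatment of domino tilings in Theorem~\ref{thm: domino universal} --- domino tilings of $\ZZ^2$ are literally the case $F=\{[0,1)\times[0,2),\,[0,2)\times[0,1)\}$, where the two coprime lengths in each direction are $1$ and $2$ --- and runs parallel to the proper-coloring case of Theorem~\ref{thm: universality of hom-shifts}. This step is the main obstacle: tiling spaces generically fail the specification property, so one cannot invoke an off-the-shelf criterion, and the coprimality of the side lengths must be exploited by hand to perform the local re-tilings that (a) glue together far-apart prescribed patterns and (b) install the free marker regions, all while keeping $h(\C)$ positive.

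Granting this, the rest is soft. For the injective statement: by Theorem~\ref{thm:spec_sequence_implies_univesality}, every free Borel $\ZD$ system of entropy less than $h(\C)$ almost Borel embeds into $\mathcal T_F$, and since $h(\C)$ depends only on $F$ this is the last assertion of the theorem, with ``sufficiently small'' meaning ``less than $h(\C)$''. For the general statement we reduce to this case: by the Borel marker lemma, for every $n$ there is a Borel set $M_n\subseteq Y$ that is $n$-separated and $Cn$-syndetic (for a dimensional constant $C$), and the itinerary map $y\mapsto\bigl(\mathbbm 1_{M_n}(T^{v}y)\bigr)_{v\in\ZD}$ is an equivariant Borel map whose image lies in the subshift $X_n\subseteq\{0,1\}^{\ZD}$ of $n$-separated configurations, with $\htop(X_n)\le n^{-d}\log(n^d+1)\to 0$. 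Taking $n$ large and deleting a null set, one obtains on a conull invariant $Y_0\subseteq Y$ an equivariant Borel factor onto a free Borel $\ZD$ system of entropy less than $h(\C)$ (the existence of such a free symbolic factor after removing a null set is standard in this setting). That factor almost Borel embeds into $\mathcal T_F$ by the injective statement; composing, and discarding one more null set (preimages of null sets under factor maps are null, since factor maps push invariant measures to invariant measures), produces the required equivariant measurable map from a conull subset of $Y$ into $\mathcal T_F$.
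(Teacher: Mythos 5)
Your proposal is correct and follows essentially the same route as the paper: code $F$-tilings as a subshift, build a flexible marker sequence of patterns out of perfect tilings of large boxes with side lengths divisible by the product $M$ of all tile side lengths, using the Frobenius/coin argument on the coprime lengths to tile the complement of well-spaced prescribed boxes and a double boundary layer of two distinct tile types to get the marker property, and reduce the non-injective statement to the injective one by first passing to a free Borel factor of small entropy. The one imprecision is your claim (i) that an arbitrary $F$-tiling can be freely modified inside a large box without altering it elsewhere --- this fails when tiles cross the box boundary, and the paper instead works only with patterns that perfectly tile aligned boxes --- but it is harmless since positive entropy already follows from the at least $|F|^{M^{-d}|B|}$ grid tilings of a box $B$ with all sides multiples of $M$.
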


We prove Theorem \ref{thm:rec_tiling} in Section \ref{section:for Tilings and Flexibility for Dominoes}. 
	This relates to the ``$\ZD$-Alpern Lemma''  \cite{MR1716239,MR2573000}.
	In particular, from the case when $F$ consists of rectangular shapes of size two, it follows that the graph associated with a  free Borel $\ZD$ dynamical system admits a Borel perfect matching, after removing a null set.
	Gao, Jackson, Krohne and Seward obtained remarkable results about equivariant tilings of free $\ZD$ actions, both in the Borel and in the continuous category \cite{gao2018continuous}. In view of these results, it might be possible to avoid removing a null set in the statement Theorem \ref{thm:rec_tiling}, but this goes beyond the scope of this paper.

\subsection{Acknowledgments:}
 We are deeply indebted to Benjy Weiss for many valuable discussions. It was suggested to us by Jerome Buzzi to pursue the almost Borel version of our result. We thank Ron Peled for suggesting the use of reflection positivity and Yinon Spinka for simplifying several arguments regarding graph colorings and graph homomorphisms in Section \ref{section:universal_hom}. We thank Dominik Kwietniak for pointing out the recent publications of Guih\'eneuf and Lefeuvre. We also thank Jon Aaronson, Tim Austin,
Mike Boyle, Raimundo Brice\~no, Mike Hochman, Brian Marcus, Ronnie Pavlov, Brandon Seward, Ay\c se \c Sahin, Terry Soo and Spencer Unger for several discussions and encouragement. We thank the Pacific Institute for Mathematical Studies and the department of mathematics at the University
of British Columbia for warm hospitality. Finally, we thank the anonymous referee for numerous comments and suggestions which improved the presentation significantly and Sebasti\'an Barbieri Lemp for having look at parts of the revised version.
Most of the work in this paper was done when the first author was a postdoctoral fellow at Tel Aviv university and the Hebrew University of Jerusalem.
We gratefully acknowledge funding from European Research Council starting grant 678520 (LocalOrder), and from the Israeli Science foundation (ISF grant no. 1289/17, grant 1702/17, 1052/18 and ISF-Moked grants 2095/15 and 2919/19). 
 
\subsection{Main idea and sketch of proof}
The proof of our main result proceeds by constructing a sequence of ``approximate embeddings'' that converge pointwise to an embedding on a Borel set which has full measure with respect to any invariant measure. This basic idea goes a long way back. Burton and Rothstein defined a certain notion of $\epsilon$-approximate embedding and used it in conjunction with the  Baire category framework to reprove Krieger's generator Theorem.
At this level of generality, this is similar to  the basic approach of Quas and Soo from \cite{MR3453367}. We do not use the Baire category framework, and instead prove convergence of a sequence of approximate embeddings directly, somewhat along the lines of \cite{MR3008405}. The basic difference is that we use a slightly different notion for an ``approximate-embedding''.
Roughly speaking, a ``good approximate embedding'' is a map $\rho:Y \to X$ which is:
\begin{itemize}
	\item
	``Approximately injective'': Informally, this means that there is a  ``big subset $Y_0  \subseteq Y$'' such that  two points in $Y_0$ have ``sufficiently close''
	images under the map if and only if the two points are ``very close''
	(this can be made precise by imposing a topological structure on $Y$ or by using finite Borel partitions to indicate ``closeness'').
	\item 
	``Approximately equivariant'': Informally, this means that on a ``big part of $Y$'' $\rho(T^{\mi}(y))$  is ``very close'' to $S^{\mi}(\rho(y))$, as long as $\mi$ is in a given bounded set (the ``largeness'' of this set  is one of the parameters for the ``quality'' of the approximate embedding).
\end{itemize}
The ``approximate equivariance'' property above can be reformulated by saying that the map is  truly equivariant, but the target space is not $X$ itself, but rather $X^{\ZD}$, which we think of as the ``space of approximate orbits of the system $(X,S)$''.

Using ``Rokhlin towers'' (a classical tool in ergodic theory, which we recall later), and a relatively well-known version of the Shannon-McMillan theorem it turns out that it is quite easy to produce ``approximate embeddings'' which are ``arbitrarily good'', and that this does not require any assumptions on the target system $(X,S)$, except that its topological entropy has to exceed  the entropy of the source system $(Y,T)$.

Our precise definition of an ``approximate embedding'' appears in Section \ref{section:ergodic_universal}. The actual definition we use is slightly more complicated than the above, in particular  because to carry out the full proof we need to assure that a ``sufficiently small modification'' of an approximate embedding  retains its good properties. This seemingly minor issue is perhaps the reason why previous proofs for universality needed to assume an additional property called ``the small boundary condition'', which roughly speaking means that from a dynamical point of view the space $X$ is in some sense ``almost totally disconnected''. Essentially, we overcome this by constructing ``approximate embeddings'' whose image is already totally disconnected (in the space of approximate orbits), in a manner which is ``tolerant to small perturbations''.  
Our construction of an approximate embedding is described in  Lemma \ref{lem:approximate_embedding_exists}.

Construction of an initial approximate embedding is the first step. 
The goal is to produce a sequence of approximate embeddings that converge to a ``genuine''  embedding on a full set.
For this we prove that for systems satisfying our sufficient   condition, a ``small modification'' of a given approximate embedding  can produce a much better one. 
This is the main part of the proof. It is carried out in Lemma \ref{lem:improve_approx_emb}.
This is where we assume a  special property of the system $(X,S)$. Basically, the property we assume is a certain kind of ``specification property'': It is possible to shadow a bunch of ``sufficiently spaced'' orbit segments by a single orbit segment. 
We do not require the ability of being able to ``shadow'' any collection of ``sufficiently spaced'' orbit segments: We only need a ``sufficiently big'' supply of ``good'' orbit segments. By ``sufficiently big'' we roughly mean that these orbit segments are sufficient to ``witness enough entropy in the system''. It should be stressed that the collection of points that constitute ``good orbit segments'' need not be a compact subsystem. For the ``full universality'' result we actually need them to be a dense subset.

We point out that in contrast to \cite{MR3453367} and other works that follow the Burton-Keane-Rothstein-Serafin paradigm, 
our approach is indifferent to the existence of measures which ``locally'' maximize the entropy. For some of the systems that motivate our result the question about existence of measures which locally maximize entropy (but not globally) seems to be a subtle issue.    

To go from ``ergodic universality'' (equivalently embedding a set which is full with respect to a fixed ergodic measure) to ``almost Borel universality'' (equivalently embedding a set which is full with respect to all ergodic measures), we first check that our ``procedure'' for constructing an embedding with respect to a given ergodic measure is  ``Borel'' as a function of the given ergodic measure on $(Y,T)$. As observed in  \cite{MR3077948}, it is possible to apply ``the embedding procedure'' separately on the set of generic points for each ergodic measure and obtain an equivariant  Borel function from a full subset of $Y$ into $X$ that induces an embedding on a full set.
To ensure that the resulting function is actually injective on a full set, we take care so that a pair of points which are generic with respect to distinct ergodic measures will be mapped into different points in $X$. We do this by making sure that the image of a generic $y$  also ``encodes'' the empirical measure associated with $y$.
The part where we go from  ``ergodic universality'' to ``almost Borel universality'' is described in Section \ref{sec:almost_borel_universality}.

We have attempted to make our proof reasonably self contained and refrained from using complicated theorems without providing a  proof. The main results that we do use are the $\ZD$ version of Rokhlin's lemma, the $\ZD$ version of the mean ergodic theorem, and the $\ZD$ version of the Shannon-McMillan theorem (in its weak form stating only convergence in measure). 
For one particular lemma we also  use  a theorem  of Downarowicz and Weiss \cite[Theorem $3$]{MR2048214} (instead of using a slightly longer but self-contained argument).

\subsection{Organization of the paper}
In Section \ref{sec:prelim} we recall some background results and introduce some notation and terminology
that is used in later sections. 
 In Section \ref{sec:SMB_towers} there is a formulation and proof for certain variants of the Shannon-McMillan theorem and of the mean ergodic theorem, adapted to a sequence of Rokhlin towers.
 Section \ref{sec:approx_cov} introduces ``approximate covering numbers''  and formulates related inequalities.  These ``approximate covering numbers''
 provide a ``coding theory'' interpretation for  the entropy of a process.
 Together Sections \ref{sec:prelim} to \ref{sec:approx_cov} can be considered as background and preparation for the main part of the proof.
 In Section \ref{sec:flexible_sequences} we introduce flexible sequences and flexible marker sequences that appear in  the formulation of our main result. Also in Section \ref{sec:flexible_sequences} we  prove that systems having non-uniform specification satisfy our sufficient condition for universality.  
 In Section \ref{section:ergodic_universal} we prove an ergodic version of Theorem \ref{thm:spec_sequence_implies_univesality}  which is slightly weaker and less technical than the ``Almost-Borel'' version, and also Theorem \ref{thm:infinite_entropy} regarding full $\infty$-universality.
 Section \ref{sec:almost_borel_universality} contains a proof of our main result (Theorem \ref{thm:spec_sequence_implies_univesality}). The proofs in  Section  \ref{sec:almost_borel_universality} extend and rely on the previous section. In Section \ref{sec:universlity_generic} we prove Theorems \ref{thm:inf_universal_generic} and \ref{thm:lebesgue_universal} regarding homeomorphisms of manifolds and universality.
 In Section \ref{section:universal_hom} we introduce hom-shifts and prove that they satisfy the assumptions of Theorem \ref{thm:spec_sequence_implies_univesality}. This includes the case of $3$-colorings.
 In Section \ref{section:for Tilings and Flexibility for Dominoes} we prove the result about equivariant rectangular tilings and universality for dimers in $\ZZ^2$. In Section \ref{sec:Not_fully_universal} we exhibit an example of a fully ergodic universal subshift that admits a topological factor which is not universal, providing a negative answer to an old question of Lind and Thouvenot \cite{MR0584588}. In Section \ref{section:SI_subhshifts} we show some non-trivial restrictions regarding  subshifts that can be continuously embedded in the space of $3$-colorings of $\ZZ^2$. This shows that the universality result for the $3$-colorings cannot be deduced by applying 
the  Robinson-\c{S}ahin  universality  result on subsystems. In the last section, we conclude with some further questions.

\section{Preliminaries and notation}\label{sec:prelim}
\subsection{Borel and topological $\ZD$ dynamical systems}
In this paper a \emph{topological $\ZD$ dynamical system} (or topological dynamical system)\index{Definitions and notation introduced in Section 2!topological $\ZD$ dynamical system} is an action of $\ZD$ on a compact metric space by homeomorphisms. Throughout the first few sections $(X,S)$ will denote a topological $\ZD$ dynamical system. To be precise, $X$ will be a compact space with a compatible metric $d_X:X \times X \to \mathbb{R}_+$, and for every $\mi \in \ZD$, $S^{\mi}:X \to X$ will be a homeomorphism so that
$$\forall \mi,\mj \in \ZD,~ S^{\mi + \mj}= S^{\mi} \circ S^{\mj}.$$

A \emph{Borel $\ZD$ dynamical system} (or Borel dynamical system)\index{Definitions and notation introduced in Section 2!Borel $\ZD$ dynamical system} is an action of $\ZD$ on a standard Borel space by Borel automorphisms.
Throughout the first few sections  $\Y = (Y,T)$ will denote a Borel $\ZD$ dynamical system. This means that  $Y$ will be a standard Borel space and that the maps $T^{\mi}:Y \to Y$ are Borel bijections such that $T^{\mi + \mj}= T^{\mi} \circ T^{\mj}$. We generally assume that  $\Y$ is a \emph{free Borel dynamical system}\index{Definitions and notation introduced in Section 2!free Borel dynamical system}. This means that   $T^{\mi}(y) \ne y$ for any $y \in Y$ and $\mi \in \ZD\setminus \{\vec{0}\}$.

The Borel $\sigma$-algebra of $Y$ will be denoted by $\Borel(Y)$. When we say that $A \subset Y$ is measurable without further adjectives, we will mean that $A \in \Borel(Y)$.  

Note that any topological $\ZD$ dynamical system is also a Borel $\ZD$ dynamical system. 

A  Borel probability measure $\mu$ is $T$-invariant if $\mu(T^{-\mi}(A))=\mu(A)$ for every $\mi \in \ZD$ and $A \in \Borel(Y)$.
We denote the space of $T$-invariant probability measures on $Y$ by
\begin{equation}
\index{Definitions and notation introduced in Section 2!$\Prob(\Y)$}\Prob(\Y) = \left\{ T \mbox{-invariant Borel probability measures on } Y\right\}.
\end{equation}
We also denote:
\begin{equation}
\index{Definitions and notation introduced in Section 2!$\Prob_e(\Y)$}\Prob_e(\Y) = \left\{\mbox{ergodic } T \mbox{-invariant Borel probability measures on } Y\right\}.
\end{equation} 
Following \cite{MR3077948}, we say that a Borel set $Y_0 \subset Y$ is \emph{null}\index{Definitions and notation introduced in Section 2!null set} if $\mu(Y_0) = 0$ for all $\mu \in \Prob(\Y)$. A set
is \index{Definitions and notation introduced in Section 2!full set}\emph{full} if its complement is null.

\subsection{Boxes and other subsets  in $\ZD$}
For $n \in \NN$  denote 
\begin{equation}
\index{Definitions and notation introduced in Section 2!$F_n$}F_n = \left\{-n,\ldots,n \right\}^d \subset \ZD.
\end{equation}
Also, for $t \in (0,\infty)$ we denote
\begin{equation}
tF_n  = \left\{-\lfloor t n \rfloor, \ldots ,\lfloor t n \rfloor \right\}^d.
\end{equation}

Let $K \subset \ZD$ be a finite set. We say that $F \subset \ZD$ is \emph{$K$-spaced} \index{Definitions and notation introduced in Section 2!$K$-spaced subsets of $\ZD$} if 
\begin{equation}\label{eq:K_spaced}
(\mi + K) \cap (\mj + K) =\emptyset \mbox{ for distinct } \mi,\mj \in F. 
\end{equation}

Later on we will fix a sequence of positive numbers $(\delta_n)_{n=1}^\infty$ that tends monotonically to $0$. 
With such a sequence fixed, for integers $n>n_0$ we will denote:
\begin{equation}\label{eq:S_n_n0_def}
\index{Definitions and notation introduced in Section 2!$S_{n,n_0}$}S_{n,n_0}= \left\{ A \subset (1-2\delta_n) F_{n}:~ A \mbox { is } (1+\delta_{n_0})F_{n_0} \mbox{-spaced}\right\}.
\end{equation}

\subsection{The space of approximate orbits}
For a compact metric space $X$, the space  $X^{\ZD}$ of $X$-valued functions on $\ZD$ with the product topology is again a compact metrizable space. For $w \in X^{\ZD}$ we  denote by $w_\mi$ the value of $w$ at $\mi \in \ZD$. For $F \subset \ZD$, $w \mid_F \in X^F$ will denote the restriction of $w$ to $F$. 
The group $\ZD$ acts on $X^{\ZD}$ by translations. 
For $\mi \in \ZD$ and $w \in X^{\ZD}$ we write
$(S^{\mi}(w))_{\mj}= w_{\mi+\mj}$.
The resulting topological dynamical system $(X^{\ZD},S)$ is sometimes called the \emph{full shift over $X$}, and the action is called the \emph{shift action}. 
There is a natural embedding of a topological dynamical system  $(X,S)$ into $(X^{\ZD},S)$ given by  $x \mapsto (S^{\mi}(x))_{\mi \in \ZD}$. In other words, each point in $X$  can be identified with its  \emph{$S$-orbit}. This embedding is equivariant with respect to $S$ and the shift. Thus, we identify $X$ with its image in  $X^{\ZD}$ under the orbit map. This justifies the  abuse of notation when we denote by $S$ both the shift action $X^{\ZD}$ and the original action on $X$.  
In this context  we  refer to  $X^{\ZD}$ together with the shift action as the space of \emph{approximate orbits}\index{Definitions and notation introduced in Section 2!space of approximate orbits} for $(X,S)$. This embedding will be useful in the proof of our main result, where we obtain an almost Borel embedding of a Borel dynamical system $(Y,T)$ as a limit of equivariant maps into $X^{\ZD}$.
We will denote the space of approximate orbits by
\begin{equation*}
\index{Definitions and notation introduced in Section 2!$\X$}\X= (X^{\ZD},S).
\end{equation*}
\subsection{Bowen metrics and topological entropy}
For a finite subset $F \subset  \ZD$ and  $x,x' \in X$, we denote
\begin{equation}
\index{Definitions and notation introduced in Section 2!$d_X^F$|(}d_X^F(x,x') = \max_{\mi \in F}\left(d_X(S^{\mi}(x),S^{\mi}(x'))\right).
\end{equation}
For every non-empty finite $F \subset \ZD$,  $d_X^F:X \times X \to \mathbb{R}_+$ defines a metric on $X$ that is compatible with the original one. These metrics are known as ``Bowen metrics''.

We say that a subset  $C \subset X$ is \index{Definitions and notation introduced in Section 2!$(\epsilon,F)$-separated}$(\epsilon,F)$-separated if the $d_X^F$-distance between any pair of distinct points in $C$ is greater than $\epsilon$.
Let
\begin{equation}
\index{Definitions and notation introduced in Section 2!$\sep_\epsilon(A,F)$}\sep_\epsilon(A,F) = \max \left\{ |C| :~ C \subseteq A \mbox { is } (\epsilon,F)\mbox{-separated}\right\}.
\end{equation}

The Bowen metrics can be viewed as  restrictions of the  \emph{pseudo-metrics} on $X^{\ZD}$ given by:
If $\omega,\omega' \in X^{\ZD}$we denote
\begin{equation}
d_X^F(\omega,\omega')= \max_{\mi \in F}d_X(\omega_\mi,\omega'_\mi).
\end{equation}
We will also use the above formula when $\omega,\omega'\in X^F$.
Similarly, if $x \in X$, and $\omega \in X^{\ZD}$ or $\omega \in X^F$ we denote
\begin{equation}
d_X^F(\omega,x)= \max_{\mi \in F}d_X(\omega_\mi,S^{\mi}(x)).
\end{equation}
This is consistent with the natural embedding of $(X,S)$ into $\X$.

For $w \in X^{\Z^d}$, $F\subset \Z^d$ and $A\subset X$, we denote
$$d^F_X(w, A)=\inf_{x\in A}d_X^F(\omega,x).\index{Definitions and notation introduced in Section 2!$d_X^F$|)}$$
Let $h(X,S)$ denote the topological entropy of $(X,S)$. Recall that the topological entropy  of $(X,S)$ is given by:
\begin{equation}\label{eq:h_top_sep}
\index{Definitions and notation introduced in Section 2!Entropy!$h(X,S)$ for a topological dynamical system $(X,S)$}h(X,S)= \lim_{\epsilon \to 0}\limsup_{n\to \infty}\frac{1}{|F_n|}\log \sep_\epsilon(X,F_n).
\end{equation}

\subsection{Non-uniform specification}\label{subsec:non_uniform_spec}
 We say that $(X,S)$ satisfies \index{Definitions and notation introduced in Section 2!non-uniform specification}\emph{non-uniform specification} (as in \cite{MR3488036}) if 
there exists a sequence of increasing functions $g_n:(0,1) \to (0,\infty)$ satisfying the following conditions:
\begin{itemize}
	\item For every $\epsilon>0$, $\lim_{n \to \infty}g_n(\epsilon)= 0$,
	\item For  every $n_1,\ldots,n_s \in \NN$ , $\mi_1,\ldots,\mi_s \in \ZD$ and  $\epsilon>0$ such that 
$$\{\mi_1 + (1+g_{n_1}(\epsilon))F_{n_1},\ldots,\mi_s + (1+g_{n_s}(\epsilon))F_{n_s}\}$$
are pairwise disjoint, 
and any $x_1,\ldots,x_s \in X$ there exists $x \in X$ such that $d_X^{\mi_j + F_{n_j}}(x,x_j) < \epsilon$ for all $1 \le j \le s$.
\end{itemize}

For $d=1$  the property we defined above is a ``symmetric'' version and an easy consequence of the property that Dateyama named \emph{almost weak specification} \cite{MR1041229}. Quas and Soo used Dateyama's terminology in the context of sufficient conditions for ergodic universality \cite[P. $4138$]{MR3008405}. ``Almost weak specification''  also goes under the name \emph{weak specification property}  \cite{MR3546668}.
See \cite{MR3546668}  for an overview of specification-like 
properties and historical background.

\subsection{Morphisms for Borel $\ZD$ dynamical systems}
A morphism between two Borel dynamical systems is a measurable map that intertwines the actions. 
We denote the collection of morphisms from $Y$ to $\X$ by $\Mor(\Y,\X)$\index{Definitions and notation introduced in Section 2!$\Mor(\Y,\X)$}.
An injective morphism $\rho \in \Mor(\Y,\X)$ is a  Borel embedding of $(Y,T)$ into  $\X$.
A bijective morphism gives a Borel isomorphism, as the inverse is necessarily Borel by Souslin's theorem.

\subsection{Borel partitions}
By a Borel partition $\mathcal{P}$ of $Y$ we will mean a partition of $Y$ into finitely or countably many Borel subsets. 
We follow the convention of identifying a  Borel partition $\mathcal{P}$ of $Y$ with the function that maps $y \in Y$ to the unique element of $\mathcal{P}$ that contains $y$, which we denote by  $\index{Definitions and notation introduced in Section 2!$\mathcal{P}(y)$}\mathcal{P}(y)$.

A partition $\mathcal{P}$  refines another partition $\mathcal{Q}$ of $X$ if every partition element $P \in \mathcal{P}$ is contained in some partition element $Q \in \mathcal{Q}$. In this case we write $\mathcal{Q} \preceq \mathcal{P}$.

The least common refinement of two partitions $\mathcal{P}$ and $\mathcal{Q}$ is given by
$$\mathcal{P} \vee \mathcal{Q} = \left\{ P \cap Q :~ P \in \mathcal{P},\; Q \in \mathcal{Q} \right\}.$$

For a Borel partition $\mathcal{P}$ of $Y$   and  a finite subset $F \subset \ZD$, we write  
$$ \index{Definitions and notation introduced in Section 2!$\mathcal{P}^F$} \mathcal{P}^F = \bigvee_{\mi \in F}T^{\mi}(\mathcal{P}).$$

\subsection{Shannon entropy, information and Kolmogorov-Sinai entropy}
The \emph{information function} \index{Definitions and notation introduced in Section 2!$\mathcal{I}_\mu(\mathcal{P})$}$\mathcal{I}_\mu(\mathcal{P}):Y \to \mathbb{R}_+$ for a measurable partition $\mathcal{P}$ is defined to be
\begin{equation}
\mathcal{I}_\mu(\mathcal{P})(y) := -\sum_{P \in \mathcal{P}}1_P(y)\log(\mu(P))= -\log\left(\mu(\mathcal{P}(y))\right).
\end{equation}
More generally, the \emph{relative information function} of the partition $\mathcal{P}$ given a sub-$\sigma$-algebra $\mathcal{F} \subset \Borel(Y)$ is given by
\begin{equation}
\index{Definitions and notation introduced in Section 2!$\mathcal{I}_\mu(\mathcal{P}\mid \mathcal{F})(y)$}\mathcal{I}_\mu(\mathcal{P}\mid \mathcal{F})(y) :=
-\sum_{P \in \mathcal{P}} 1_P(y)\log\left(\mu(P\mid \mathcal{F})(y)\right).
\end{equation}

The Shannon entropy of a measurable partition $\mathcal{P}$ of $Y$ with respect to a Borel probability measure $\mu$ is given by
$$ \index{Definitions and notation introduced in Section 2!Entropy!$H_\mu(\mathcal{P})$}H_\mu(\mathcal{P}) = \int \mathcal{I}_\mu(\mathcal{P}) d\mu.$$
The entropy of a  measurable partition  $\mathcal{P}$  relative to a sub-$\sigma$-algebra  $\mathcal{F} \subset \Borel(Y)$ with respect to a Borel probability measure $\mu$ is given by
$$ \index{Definitions and notation introduced in Section 2!Entropy!$H_\mu(\mathcal{P}\mid \mathcal{F})$} H_\mu(\mathcal{P}\mid \mathcal{F}) = \int \mathcal{I}_\mu(\mathcal{P}\mid \mathcal{F}) d\mu.$$
When $\mathcal{F}=\{\emptyset,Y\}$ is the trivial $\sigma$-algebra, this coincides with the ``non-relative'' case,
For $p \in (0,1)$ we denote
\begin{equation}\label{eq:H_shannon}
\index{Definitions and notation introduced in Section 2!Entropy!$\H(p)$}\H(p) = p\log(\frac{1}{p})+(1-p)\log(\frac{1}{1-p}).
\end{equation} 
$\H(p)$
is the Shannon entropy of a two set partition, where the measure of one of the parts is $p$.

If $\mu \in \Prob(\Y)$, $\mathcal{P}$ is a finite measurable partition and $\mathcal{F}$ is a $T$-invariant sub-$\sigma$-algebra $\mathcal{F} \subset \Borel(Y)$, the Kolmogorov-Sinai entropy of the partition  $\mathcal{P}$ relative to $\mathcal{F}$ is given by
\begin{equation}
\index{Definitions and notation introduced in Section 2!Entropy!$h_\mu\left(Y,T; \mathcal{P} \mid \mathcal{F} \right)$}h_\mu\left(Y,T; \mathcal{P} \mid \mathcal{F} \right) =  \lim_{n \to \infty}\frac{1}{|F_n|}  H_\mu(\mathcal{P}^{F_n} \mid \mathcal{F}).
\end{equation}
For the non-relative case we denote:
\begin{equation}
\index{Definitions and notation introduced in Section 2!Entropy!$h_\mu\left(Y,T; \mathcal{P} \right)$}h_\mu\left(Y,T; \mathcal{P} \right) =  \lim_{n \to \infty}\frac{1}{|F_n|} H_\mu(\mathcal{P}^{F_n} ).
\end{equation}
The  Kolmogorov-Sinai entropy of $(Y,\mu,T)$ (relative to $\mathcal{F}$) is given by:
$$\index{Definitions and notation introduced in Section 2!Entropy!$h_\mu(Y,T \mid \mathcal{F})$} h_\mu(Y,T \mid \mathcal{F}) = sup_{\mathcal{P}}h_\mu(Y,T; \mathcal{P}\mid \mathcal{F}),$$
where supremum is over all finite measurable partitions $\mathcal{P}$.
\subsection{Ergodic universality and Almost Borel universality}\label{subsec:ergodic_almost_borel_universality}
We say that a topological $\ZD$ dynamical system $(X,S)$ is \emph{$t$-universal in the ergodic sense}\index{Definitions and notation introduced in Section 2!$t$-universal in the ergodic sense} if for every free Borel dynamical system $\Y=(Y,T)$ and $\mu \in \Prob_e(\Y)$ such that 
$h_\mu(Y,T) < t$ the ergodic dynamical system $(Y,\mu,T)$ can be realized as an invariant measure on $(X,S)$, in the following sense:
There is a Borel $T$-invariant subset $Y_0 \subset Y$ with $\mu(Y\setminus Y_0)=0$ and $\rho \in \Mor\left((Y_0,T),(X,S)\right)$ so  that $\rho$ is injective on $Y_0$.
We say $(X,S)$ has ergodic universality if it is  $t$-universal with $t=h(X,S)$.
If furthermore under the above conditions we can find $\rho \in \Mor\left((Y_0,T),(X,S)\right)$ as above so that in addition $\mu(\rho^{-1}(U)) >0$ for any open subset $U \subset X$, then we say that 
$(X,S)$ is \index{Definitions and notation introduced in Section 2!fully ergodic $t$-universal}\emph{fully} ergodic $t$-universal.   
In other words, $(X,S)$ is fully ergodic $t$-universal if any free ergodic dynamical system $(Y,T,\mu)$ with entropy less than $t$ can be realized as a fully supported  invariant measure on $(X,S)$.

An \emph{almost Borel embedding} of $\Y$ into $(X,S)$ is an injective morphism from a full $S$-invariant subset of $Y$ to $(X,S)$.
We say that $(X,S)$ is \index{Definitions and notation introduced in Section 2!$t$-universal in the almost Borel sense}\emph{$t$-universal in the almost Borel sense} if every free Borel dynamical system $\Y$ with $h(\Y) <t$ admits an almost Borel embedding into $(X,S)$. 
By $h(\Y)$ we refer to the  \index{Definitions and notation introduced in Section 2!Gurevich entropy}\emph{Gurevich entropy of $\Y$},  given by:
$$\index{Definitions and notation introduced in Section 2!Entropy!$h(\Y)$ for a Borel dynamical system $\Y$}h(\Y) = sup_{\mu \in \Prob(\Y)}h_\mu(Y,T),$$
where $h_\mu(Y,T)$ is the Kolmogorov-Sinai entropy of the measure-preserving system $(Y,\mu,T)$.

It follows from the variational principle  that the Gurevich entropy coincides with the topological entropy for compact topological dynamical systems. See  \cite{MR0430213} for a short proof.

\subsection{Rokhlin towers}\label{subsec:Rokhlin towers}
We now recall the notion of Rokhlin towers and  certain versions of Rokhlin's lemma for $\ZD$ actions. Rokhlin towers are instrumental in the proof of fundamental  results in ergodic theory.

From now on $\Y=(Y,T)$ will denote a free Borel $\ZD$ dynamical system. For $F \subset \ZD$ and $Z \subset Y$, we will use the notation
\begin{equation}
\index{Definitions and notation introduced in Section 2!$T^F Z$}T^F Z= \bigcup_{\mi \in F}T^{\mi}(Z).
\end{equation}
 
 Given a finite $F \subset \ZD$, $\mu \in \Prob(\Y)$ and $\epsilon >0$, we say that $Z \in \Borel(Y)$ is the base of  an \index{Definitions and notation introduced in Section 2!$(F,\epsilon,\mu)$-tower}\emph{$(F,\epsilon,\mu)$-tower}  if $\{T^{\mi}Z\}_{\mi \in F}$ are  pairwise disjoint and
	\begin{equation}\label{eq:Tower_epsilon}
	\mu(T^{F}  Z) >  1- \epsilon.
	\end{equation}
If \eqref{eq:Tower_epsilon} holds for every $\mu \in \Prob_e(\Y)$, we say that $Z$ is the base of an \index{Definitions and notation introduced in Section 2!$(F,\epsilon)$-tower}$(F,\epsilon)$-tower for $\Y$.

\begin{prop}[Rokhlin's lemma for $\ZD$-actions \cite{MR0316680,MR910005}]\label{prop:rokhlin_lemma}
For every   free Borel $\mathbb{Z}^d$ dynamical system $\Y$, every $\mu \in \Prob(\Y)$ , $n \in \NN$ and $\epsilon>0$ there exists an $(F_n,\epsilon,\mu)$-tower.
\end{prop}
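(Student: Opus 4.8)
The plan is to reduce the statement, through a greedy ``marker'' construction, to an elementary packing fact about Euclidean Voronoi cells of a separated net in $\ZD$. Fix $n$, $\epsilon>0$ and $\mu\in\Prob(\Y)$, and write $\|\cdot\|_\infty,\|\cdot\|_2$ for the sup- and the Euclidean norm on $\ZD$. It is enough to produce a \emph{Borel} set $Z$ such that $\{T^{\mi}Z\}_{\mi\in F_n}$ are pairwise disjoint and $\mu(T^{F_n}Z)>1-\epsilon$; only the $T$-invariance of $\mu$ (not ergodicity) will enter.

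First I would fix a large $N$ (the lower bound needed, of the shape $N>C_d\,n/\epsilon$ for a dimensional constant $C_d$, appears below) and produce a Borel set $W\subseteq Y$ that is \emph{maximal} among those with $\{T^{\mi}W\}_{\mi\in F_N}$ pairwise disjoint; this is the $\ZD$ marker lemma, and it is the input I would quote from the cited works. (Set-theoretically this is Zorn's lemma; Borelness comes from the usual recursion along a countable family of Borel sets separating points of each $T$-orbit, with some care that adjoining a whole Borel batch of new points keeps $\{T^{\mi}W\}_{\mi\in F_N}$ disjoint.) Freeness converts maximality into two statements: $W$ is a complete section, $\bigcup_{\mi\in\ZD}T^{\mi}W=Y$, and $W$ is $F_{2N}$-syndetic, i.e.\ every $y$ has some $\mi\in F_{2N}$ with $T^{\mi}y\in W$. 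In orbit language --- identifying the orbit of any $w\in W$ with $\ZD$ via $\mi\mapsto T^{\mi}w$ --- the resulting set $P\subseteq\ZD$ of ``centres'' satisfies $\|p-p'\|_\infty\ge 2N+1$ for distinct $p,p'\in P$ and has covering radius at most $2N$.

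Next I would cut $Y$ into cells. Fix a Borel linear order on $\ZD$ to break ties, and for a centre $w$ let $\Lambda_w\subseteq\ZD$ be the Euclidean Voronoi cell of $0$ with respect to $P$; carried back to $Y$ these cells form a Borel partition. Elementary geometry of a $(2N{+}1)$-separated net with covering radius $\le 2N$ gives $\{\mi:\|\mi\|_2\le N\}\subseteq\Lambda_w\subseteq\{\mi:\|\mi\|_2\le 2\sqrt d\,N\}$; that $\Lambda_w$ is carved out by at most $C_d$ of the bisecting hyperplanes $\{x:\|x\|_2=\|x-p\|_2\}$ (only the $O_d(1)$ centres lying within Euclidean distance $O_d(N)$ of $0$ can contribute a facet); and hence that the inner $2n$-shell $S_w:=\{\mi\in\Lambda_w:\dist_\infty(\mi,\ZD\setminus\Lambda_w)\le 2n\}$ has $|S_w|\le C_d\,n\,N^{d-1}\le C_d'\,(n/N)\,|\Lambda_w|$, using $|\Lambda_w|\ge c_d N^d$. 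Inside each cell I keep the translates $\mi+F_n$ with $\mi\in(2n+1)\ZD$ that lie entirely inside $\Lambda_w$, and I let $Z$ be the (Borel) set of base points of all these boxes, over all cells. Boxes from one cell are disjoint since they sit in a single sub-lattice $(2n+1)\ZD$ of translates of $F_n$, and boxes from different cells are disjoint because the cells are; hence $\{T^{\mi}Z\}_{\mi\in F_n}$ are pairwise disjoint. A point of $\Lambda_w$ not covered by these boxes lies within $\ell^\infty$-distance $2n$ of $\ZD\setminus\Lambda_w$, i.e.\ in $S_w$, so $Y\setminus T^{F_n}Z$ corresponds orbit-by-orbit to $\bigcup_w S_w$. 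Finally, by $T$-invariance of $\mu$,
\[
\mu\bigl(Y\setminus T^{F_n}Z\bigr)=\int_Y\frac{\bigl|\{\mi\in F_m:\ T^{\mi}y\notin T^{F_n}Z\}\bigr|}{|F_m|}\,d\mu(y)\qquad\text{for every }m,
\]
and the integrand is at most $(C_d'\,n/N)\cdot|F_{m+4dN}|/|F_m|$ for each $y$ (the uncovered $\mi$'s in $F_m$ lie in cells contained in $F_{m+4dN}$, whose shells have total size $\le(C_d'\,n/N)\,|F_{m+4dN}|$); letting $m\to\infty$ gives $\mu(Y\setminus T^{F_n}Z)\le C_d'\,n/N<\epsilon$. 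Thus $Z$ is the base of an $(F_n,\epsilon,\mu)$-tower.

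The one genuinely delicate point is the Borel marker lemma of the first step; the Voronoi geometry and the averaging identity are routine. (If one prefers to avoid Borelness at intermediate stages, one can run the whole construction $\mu$-measurably and then repair $Z$ on a $\mu$-null $T$-invariant set, since a single fixed $\mu$ is involved.)
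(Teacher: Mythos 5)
Your proof is correct, but note that the paper does not actually prove Proposition \ref{prop:rokhlin_lemma}: it is quoted from the literature (\cite{MR0316680,MR910005}), so there is no in-paper argument to match. Your marker-plus-Voronoi construction is the standard self-contained route, and each step checks out: maximality of the $F_N$-disjoint Borel set $W$ together with freeness gives the $(2N{+}1)$-separated, $F_{2N}$-syndetic net; the $\ell^\infty$-separation does imply the Euclidean ball of radius $N$ about each centre lies in its Voronoi cell, so the cells have volume $\gtrsim N^d$, boundedly many facets and diameter $O_d(N)$, which yields the shell bound $|S_w|\le C_d'(n/N)|\Lambda_w|$; the $(2n+1)\ZD$-tiling by translates of $F_n$ inside each cell produces a legitimately disjoint family $\{T^{\mi}Z\}_{\mi\in F_n}$; and the averaging identity from $T$-invariance converts the per-orbit density bound into $\mu(Y\setminus T^{F_n}Z)\le C_d'n/N$. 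One point worth emphasizing: since your $Z$ is built without reference to $\mu$ and the final estimate uses only invariance, your argument in fact establishes the stronger Proposition \ref{prop:borel_ZD_rokhlin_lemma} (a single Borel base that is an $(F_n,\epsilon,\mu)$-tower for \emph{every} $\mu\in\Prob_e(\Y)$ simultaneously), which the paper instead derives later in Section \ref{sec:almost_borel_universality} from the measure-by-measure version via inner regularity, clopen approximation and a Borel selection over $\Prob_e(\Y)$. The only input you do not prove from scratch is the Borel marker lemma, which you correctly isolate as the delicate step and which is available in the cited sources; everything else is routine as you say.
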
	

Rokhlin's lemma for  $\ZZ$-actions is classical \cite{rohlin1948general,MR0014222}. In \cite{MR0316680} a much more general version of Rokhlin's lemma was obtained for actions of countable amenable groups (see also \cite{MR2052281}).

We will need  a version for free Borel $\ZD$-actions  of Rokhlin's lemma that works simultaneously for all invariant measures. This is a counterpart of  \cite[Proposition 7.9]{MR2186250} for actions of $\ZD$. The result follows  immediately from \cite[Theorem 3.1]{MR3359054}, which is a much stronger result (with a  more involved proof). An alternative proof can be obtained using the techniques developed in \cite[Corollary 2]{MR624699}.

\begin{prop}\label{prop:borel_ZD_rokhlin_lemma}
For every free Borel $\mathbb{Z}^d$ dynamical system $\Y=(Y,T)$, $n \in \NN$ and $\epsilon>0$ there exists a Borel subset of $Y$ which is the base of an $(F_n,\epsilon,\mu)$-tower for every $\mu \in \Prob_e(\Y)$.
\end{prop}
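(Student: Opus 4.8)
The plan is to deduce this Borel-uniform strengthening of Proposition~\ref{prop:rokhlin_lemma} from the \emph{marker lemma} for free Borel $\ZD$ actions together with the mean ergodic theorem, via an explicit greedy packing by boxes; the marker lemma is the only non-elementary ingredient, and it is where freeness of $(Y,T)$ and the standard Borel structure get used. (As the paper indicates, one may instead simply quote the much stronger Borel tiling/marker results of \cite{MR3359054} or the techniques of \cite{MR624699}, but I describe the self-contained route.) Fix $n$ and $\epsilon$, and pick a large $R\in\NN$ to be determined at the end. By the marker lemma there is a Borel set $A=A_R\subseteq Y$ meeting every orbit such that distinct points of $A$ in a common orbit are at $\ell^\infty$-distance $>R$; taking $A$ maximal with this property also makes it ``$R$-syndetic'', i.e.\ every $y\in Y$ is within $\ell^\infty$-distance $R$ of a point of $A$ in its orbit. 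Then $A$ is a Delone-type set in each orbit (identified with $\ZD$), and I form the Voronoi partition $Y=\bigsqcup_{a\in A}D_a$ with respect to the Euclidean metric, breaking ties by a fixed Borel rule; this is a Borel, orbit-equivariant assignment, each cell $D_a$ (viewed in $\ZD$ with $a\leftrightarrow\vec 0$) contains a centered cube of side $\gtrsim R$ and is contained in one of side $\lesssim R$, and each has at most $C_d$ Voronoi neighbours.

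Next, inside each cell I place the translates of $F_n$ along the tiling lattice through $a$: let
$$Z_a=\bigl\{\vec k\in(2n+1)\ZD:\ \vec k+F_n\subseteq D_a\bigr\},$$
and let $Z\subseteq Y$ be the Borel set meeting the orbit of each $a\in A$ in $\{T^{\vec k}a:\vec k\in Z_a\}$. Boxes inside one cell are disjoint (distinct lattice translates) and boxes in distinct cells are disjoint (the cells are), so $\{T^{\mi}Z\}_{\mi\in F_n}$ are pairwise disjoint, giving the first required property of an $(F_n,\epsilon,\mu)$-tower. For the second, observe that any $p\in D_a$ with $p+F_{2n}\subseteq D_a$ lies in one of the placed boxes; hence the uncovered set $U:=Y\setminus T^{F_n}Z$ meets each cell only in the thickness-$2n$ inner layer near $\partial D_a$. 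Since the cells are convex Euclidean Voronoi cells of a Delone set, a routine lattice-point estimate bounds $|U\cap F_m|$, in every orbit, by $C_d\,(n/R)\,|F_m|+o(|F_m|)$: there are $\lesssim |F_m|/R^d$ cells meeting $F_m$, each contributing a boundary layer of $\lesssim nR^{d-1}$ points, plus an $o(|F_m|)$ error from the boundary of $F_m$ itself.

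Finally, $|U\cap F_m|/|F_m|$ has $\limsup\le C_d n/R$ along every orbit, so writing $g_m=\frac1{|F_m|}\sum_{\mj\in F_m}1_U\circ T^{\mj}$ and passing to an a.e.-convergent subsequence in the mean ergodic theorem, $\mathbb E_\mu[1_U\mid\mathcal I]\le C_d n/R$ a.e., whence $\mu(U)\le C_d n/R$ for \emph{every} $\mu\in\Prob(\Y)$. Choosing $R>C_d n/\epsilon$ makes $Z$ the base of an $(F_n,\epsilon,\mu)$-tower for all $\mu\in\Prob(\Y)$, in particular for all $\mu\in\Prob_e(\Y)$. I expect the genuine obstacle to be the Borel marker lemma --- producing a maximal $R$-discrete transversal-like set that depends Borel-measurably (uniformly) on the orbit --- while the box geometry and the single application of the ergodic theorem are routine; indeed the authors likely dispose of the whole statement by citing \cite{MR3359054} or \cite{MR624699}.
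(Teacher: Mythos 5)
Your construction is sound, but it is a genuinely different proof from the one the paper gives. The paper's argument (in Section \ref{sec:almost_borel_universality}) works measure by measure: for each $\mu\in\Prob_e(\Y)$ it applies the classical Rokhlin lemma (Proposition \ref{prop:rokhlin_lemma}) to get a base, shrinks it to a compact base by inner regularity, fattens that to a \emph{clopen} base using the clopen basis of the Cantor-set model of $Y$ (clopenness is what keeps the $F_n$-translates disjoint after fattening), makes the assignment $\mu\mapsto Z_\mu$ Borel by enumerating the clopen sets and taking the first that works, and finally glues the towers together over the sets of $\mu$-generic points via the empirical-measure map. That route uses no Borel combinatorics at all --- only the measurable Rokhlin lemma plus soft descriptive set theory --- which is exactly why the authors describe it as ``otherwise self-contained.'' Your route instead builds one set outright from a Borel maximal $R$-separated complete section, Voronoi cells, and a lattice packing, and then gets the measure bound uniformly: the uncovered set $U$ has density at most $C_d n/R + O(R/m)$ in every window $\mi+F_m$ of every orbit, so $\mu(U)=\int\frac{1}{|F_m|}\sum_{\mj\in F_m}1_U\circ T^{\mj}\,d\mu\le C_d n/R+O(R/m)$ for every invariant $\mu$ by invariance alone (you do not even need to pass to an a.e.\ convergent subsequence of ergodic averages). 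What your approach buys is a quantitative, measure-free statement in the spirit of the Borel tiling results of \cite{MR3359054} and \cite{gao2018continuous}, valid for all of $\Prob(\Y)$ at once; what it costs is the Borel marker lemma, which you correctly isolate as the one non-elementary input and which the paper's proof deliberately avoids. If you write this up, the two points to spell out are that the Voronoi tie-breaking rule must be shift-equivariant (e.g.\ lexicographic on the displacement to the competing markers) so that $Z$ is well defined and Borel, and the boundary-layer count for a tie-broken cell sandwiched between balls of radii $R/2$ and $R\sqrt d$.
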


For completeness, in Section \ref{sec:almost_borel_universality} we provide a proof of Proposition \ref{prop:borel_ZD_rokhlin_lemma}, that assumes the measurable version of Rokhlin's lemma (Proposition \ref{prop:rokhlin_lemma}), but is otherwise self-contained.

We will repeatedly use the following simple results:
\begin{lem}\label{lem:mu_tower_subset}
	If $F \subset  \ZD$ is a finite set, $F' \subseteq F$,  $Z \subset Y$ is the base of an $(F,\epsilon,\mu)$-tower and $Z' \subset Z$ is measurable then 
	\begin{equation}\label{eq:mu_tower_subset}
	\mu\left(Y \setminus T^{F'}Z'\right) < \epsilon + \frac{|F \setminus F'|}{|F|}+\mu\left(Z \setminus Z' \mid Z\right).
	\end{equation}
\end{lem}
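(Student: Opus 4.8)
The plan is to estimate the measure of the complement of $T^{F'}Z'$ by comparing it successively to the complement of $T^{F'}Z$, then to the complement of $T^F Z$, then to the null part outside the original tower. First I would write, using that $T^{F'}Z' \subseteq T^{F'}Z \subseteq T^F Z$,
\begin{equation*}
Y \setminus T^{F'}Z' = \left(Y \setminus T^F Z\right) \cup \left(T^F Z \setminus T^{F'}Z\right) \cup \left(T^{F'}Z \setminus T^{F'}Z'\right),
\end{equation*}
and bound the $\mu$-measure of the union by the sum of the three pieces.

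For the first piece, $\mu(Y \setminus T^F Z) < \epsilon$ directly from the defining inequality \eqref{eq:Tower_epsilon} of an $(F,\epsilon,\mu)$-tower. For the second piece, since $\{T^{\mi}Z\}_{\mi \in F}$ are pairwise disjoint, $T^F Z \setminus T^{F'}Z = T^{F\setminus F'}Z = \bigsqcup_{\mi \in F\setminus F'}T^{\mi}Z$, and by $T$-invariance of $\mu$ each $T^{\mi}Z$ has measure $\mu(Z)$; hence $\mu(T^F Z \setminus T^{F'}Z) = |F\setminus F'|\,\mu(Z) \le |F\setminus F'|/|F|$, using that the $|F|$ disjoint translates $\{T^{\mi}Z\}_{\mi\in F}$ each of measure $\mu(Z)$ fit inside $Y$, so $\mu(Z)\le 1/|F|$. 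For the third piece, again by disjointness and invariance, $\mu(T^{F'}Z \setminus T^{F'}Z') = \mu\big(T^{F'}(Z\setminus Z')\big) \le |F'|\,\mu(Z\setminus Z') \le |F|\,\mu(Z\setminus Z')$; and since $\mu(Z) \le 1/|F|$ we get $|F|\,\mu(Z\setminus Z') = |F|\,\mu(Z)\,\mu(Z\setminus Z'\mid Z) \le \mu(Z\setminus Z'\mid Z)$. Summing the three bounds yields exactly \eqref{eq:mu_tower_subset}.

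There is essentially no obstacle here; the only point requiring a little care is the bookkeeping in the third term — one must route the estimate through the conditional measure $\mu(Z\setminus Z'\mid Z)$ rather than the absolute measure, and the inequality $\mu(Z)\le 1/|F|$ (equivalently $|F'|\mu(Z)\le 1$) is what makes $|F|\mu(Z\setminus Z') \le \mu(Z\setminus Z'\mid Z)$ work. A minor edge case is $\mu(Z)=0$, where the conditional measure is ill-defined; then $T^{F'}Z'$ is itself $\mu$-null and the inequality holds trivially (the right-hand side already exceeds $\epsilon>0$), so one may simply assume $\mu(Z)>0$. Everything else is a direct application of finite additivity, $T$-invariance, and the pairwise-disjointness hypothesis.
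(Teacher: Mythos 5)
Your proof is correct and follows essentially the same route as the paper's: the same three-piece decomposition of $Y \setminus T^{F'}Z'$, the same use of pairwise disjointness and $\mu(Z)\le 1/|F|$, and the same conversion of the third term to the conditional measure $\mu(Z\setminus Z'\mid Z)$. The remark about the degenerate case $\mu(Z)=0$ is harmless but moot, since the tower condition $\mu(T^F Z)>1-\epsilon$ already forces $\mu(Z)>0$ whenever $\epsilon<1$.
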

\begin{proof}
	Note that 
	\begin{equation}\label{eq:mu_tower_subset1}
	\mu\left(Y \setminus T^{F'}Z'\right) = \mu\left(Y \setminus T^{F}Z\right)+ \mu\left(T^{F\setminus F'}Z\right)+\mu\left(T^{F'}(Z\setminus Z')\right),
	\end{equation}
	because the $F$-translates of $Z$ are pairwise disjoint, $\mu(Z) \le \frac{1}{|F|}$, so $\mu(T^{F \setminus F'}Z) \le \frac{|F \setminus F'|}{|F|}$.
	Again, because the $F$-translates of $Z$ are pairwise disjoint,
	$$\mu\left(T^{F'}(Z\setminus Z')\right)=\sum_{\mi \in F'}\mu\left(T^{\mi}(Z \setminus Z')\right)=\sum_{\mi \in F'}\mu\left(Z \setminus Z' \mid Z\right)\mu(Z) \le \frac{|F'|}{|F|}\mu(Z \setminus Z' \mid Z) \le \mu(Z \setminus Z' \mid Z).$$
	Plugging these estimates in \eqref{eq:mu_tower_subset1} we get \eqref{eq:mu_tower_subset}.
\end{proof}

\begin{lem}\label{lem:eq:mu_tower_subset_cond}
	Suppose that $\delta,\theta >0$ that $\delta+\theta <1$ and  $n <\delta m $. If $Z_n$ is the base of an $(F_n,\epsilon_n,\mu)$-tower and $Z_m$ is the base of an $(F_m,\epsilon_m,\mu)$-tower then
	\begin{equation}
	\mu(T^{ \theta F_m }Z_m\mid Z_n) \le  (1-\epsilon_n)^{-1} (\theta + \delta).
	\end{equation}
\end{lem}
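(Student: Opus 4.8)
The plan is to estimate the conditional measure $\mu(T^{\theta F_m} Z_m \mid Z_n)$ by counting how many translates of $Z_n$ can fit inside the region $T^{\theta F_m} Z_m$, using the disjointness of the $F_n$-translates of $Z_n$ together with the crude volume bound $\mu(Z_m) \le |F_m|^{-1}$ coming from the fact that the $F_m$-translates of $Z_m$ are pairwise disjoint. The key geometric observation is that since $n < \delta m$, a box $\mi + F_n$ sitting at a point of the tower over $Z_m$ either stays inside the slightly enlarged box or sticks out only near the boundary; more precisely, the set of $y \in T^{\theta F_m} Z_m$ whose $F_n$-orbit under $T$ meets $Z_n$ is controlled by $|(\theta + \delta) F_m| \cdot \mu(Z_m)$ up to the usual edge corrections.

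First I would write $\mu(T^{\theta F_m} Z_m \mid Z_n) = \mu(T^{\theta F_m} Z_m \cap Z_n)/\mu(Z_n)$ and bound the numerator. The set $T^{\theta F_m} Z_m \cap Z_n$ consists of points in $Z_n$ lying in the tower over $Z_m$ within height $\theta F_m$; pushing such a point down to its base in $Z_m$ and remembering that the $F_n$-translates of $Z_n$ are disjoint, each point of $Z_m$ can be the base of at most one element of $Z_n$ within any translate of $F_n$. Counting the lattice points $\mj \in \theta F_m$ for which $\mj + F_n$ could contain a translate landing in $Z_n$, and using $n < \delta m$ so that $\theta F_m + F_n \subseteq (\theta + \delta) F_m$ (up to the $\lfloor \cdot \rfloor$ rounding, which only helps), we get
\begin{equation}
\mu\left(T^{\theta F_m} Z_m \cap Z_n\right) \le \left|(\theta + \delta) F_m\right| \cdot \mu(Z_m) \le \frac{\left|(\theta + \delta) F_m\right|}{|F_m|} \le (\theta + \delta)^d.
\end{equation}
Wait --- the stated bound is $(\theta + \delta)$, not $(\theta+\delta)^d$, so I would instead organize the count one coordinate/slice at a time, or more robustly observe that $T^{\theta F_m} Z_m$ itself has measure at most $\frac{|\theta F_m|}{|F_m|} + \cdots$, and intersecting with $Z_n$ and conditioning only introduces the factor $(1 - \epsilon_n)^{-1}$ via $\mu(Z_n) \ge (1-\epsilon_n)/|F_n|$ together with a comparison of $|F_n|$-many disjoint translates. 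The cleanest route: $\mu(T^{\theta F_m} Z_m) \le \mu(T^{\theta F_m + F_n} Z_m)$ needs care, so I would directly bound $\mu(T^{\theta F_m} Z_m \cap Z_n)$ by noting its $F_n$-translates are disjoint and all lie in $T^{(\theta+\delta)F_m} Z_m$, whence $|F_n| \cdot \mu(T^{\theta F_m} Z_m \cap Z_n) \le \mu(T^{(\theta+\delta)F_m} Z_m) \le |(\theta+\delta)F_m|/|F_m| \cdot |F_n| \cdot (\text{something})$ --- I will reconcile the exact combinatorial factor so that it collapses to $(\theta+\delta)$, likely by choosing the translate-count to be along a single axis or by a telescoping argument that exploits $\mu(Z_m)|F_m| \le 1$ directly.

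Then I would finish by dividing through: $\mu(Z_n) \ge (1 - \epsilon_n)|F_n|^{-1}$ since the $F_n$-translates of $Z_n$ are disjoint and cover a set of measure $> 1 - \epsilon_n$, so $\mu(Z_n)^{-1} \le (1-\epsilon_n)^{-1}|F_n|$, and combining with the numerator bound of the form $|F_n|^{-1}(\theta + \delta)$ gives exactly $(1-\epsilon_n)^{-1}(\theta+\delta)$. The main obstacle I anticipate is pinning down the precise combinatorial/geometric constant so that the bound is linear in $(\theta+\delta)$ rather than picking up a dimensional power $d$; this will require being careful about whether one counts full $F_n$-translates inside $\theta F_m$ or uses a slicing/telescoping estimate, and about absorbing the $\lfloor tn \rfloor$ rounding in the definition of $tF_n$ (which should only work in our favor). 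The hypotheses $\delta + \theta < 1$ and $n < \delta m$ are exactly what is needed to keep $\theta F_m + F_n$ inside $(\theta+\delta)F_m$ and to keep the relevant quantities bounded by $1$.
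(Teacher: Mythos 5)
Your argument is essentially the paper's: the paper also uses the law of total probability over the disjoint translates $\{T^{\mi}Z_n\}_{\mi\in F_n}$, the inclusion $T^{\mi+\theta F_m}Z_m\subseteq T^{(\theta+\delta)F_m}Z_m$ for $\mi\in F_n$ (which is where $n<\delta m$ enters), the bound $\mu(Z_m)\le |F_m|^{-1}$, and $|F_n|\mu(Z_n)>1-\epsilon_n$. The one step you leave unresolved --- reconciling $(\theta+\delta)^d$ with $(\theta+\delta)$ --- is a non-issue and needs no slicing or telescoping: since the hypothesis gives $\theta+\delta<1$, one simply has $(\theta+\delta)^d\le\theta+\delta$, which is exactly how the paper's proof concludes from $\frac{|(\theta+\delta)F_m|}{|F_m|}$.
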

\begin{proof}
		By the law of total probability
	$$
	\mu\left(T^{(\theta+\delta) F_m }Z_m \right) \ge
	\sum_{\mi \in F_n}\mu\left(T^{(\theta+\delta)F_m}Z_m \mid T^{\mi}Z_n \right) \mu\left(T^{\mi}Z_n\right).
	$$
	
	Because $n < \delta m$,  for every
	$\mi \in F_n$ we have  $$T^{\mi+ \theta F_m }Z_m \subseteq T^{\theta F_m + F_n}Z_m \subseteq T^{(\theta+\delta)F_m}Z_m.$$
	So 
	$$
\mu\left(T^{(\theta+\delta) F_m }Z_m \right) 
	\ge \mu(Z_n)\sum_{\mi \in F_n}\mu\left(T^{\mi+ \theta F_m }Z_m\mid T^{\mi}Z_n \right) =
	|F_n|\mu(Z_n) \mu(T^{ \theta F_m }Z_m\mid Z_n).
	$$
	Now  because  $Z_n$ is the base of an $(F_n,\epsilon_n,\mu)$-tower we have that  $|F_n| \mu(Z_n) > 1-\epsilon_n$ and so
	$$
	\mu(T^{ \theta F_m }Z_m\mid Z_n) \le (1-\epsilon_n)^{-1} \mu\left(T^{(\theta+\delta) F_m }Z_m  \right) =  (1-\epsilon_n)^{-1} |(\theta +\delta)F_m| \mu(Z_m) ,
	$$
	where in the last equality we used that $(T^{\mi}Z_m)_{\mi \in (\theta+\delta) F_m}$ are pairwise disjoint because $Z_m$ is the base of an  $(F_m,\epsilon_m,\mu)$-tower. Also, $\mu(Z_m) < \frac{1}{|F_m|}$. So 
	$$
	\mu(T^{ \theta F_m }Z_m\mid Z_n) \le (1-\epsilon_n)^{-1} \frac{|(\theta +\delta)F_m|}{|F_m|}\le (1-\epsilon_n)^{-1} (\theta + \delta).$$
\end{proof}

\section{A Shannon-McMillan theorem and  ergodic for averages along Rokhlin towers}\label{sec:SMB_towers}

In this section we introduce some  ergodic theoretic tools that we need to prove the main result. These are variants of the Shannon-McMillan theorem and of the mean ergodic theorem, adapted to a sequence of Rokhlin towers:
\begin{prop}\label{prop:measure_SM_for_towers}
	Fix a sequence of $(F_n,\epsilon_n,\mu)$-towers with base $Z_n$ with $\epsilon_n\downarrow 0$.
	Let $\epsilon, \theta >0$. Then:
	\begin{enumerate}
		\item[(i)]
	For every measurable partition $\mathcal{P}$ with $H_\mu(\mathcal{P}) < \infty$ the following holds:
	\begin{equation}\label{eq:Rohklin_tower_SM_in_measure}
	\lim_{n \to \infty} \mu\left(  \left|\frac{1}{|\theta F_n|}\mathcal{I}_\mu(\mathcal{P}^{\theta F_n}\mid \mathcal{F}) - h_\mu(Y,T;\mathcal{P}\mid \mathcal{F})\right| > \epsilon  \mid Z_n\right) = 0.
	\end{equation}
	\item[(ii)]
	For every $f \in L^1(\mu)$ the following holds:
	\begin{equation}\label{eq:Rohklin_tower_mean_ET}
	\lim_{n \to \infty}\mu\left( \left| \frac{1}{|\theta F_n|}\sum_{\mi \in \theta F_n}f\circ T^{\mi}- \int f d\mu \right| > \epsilon  \mid Z_n\right) =0.
	\end{equation}
\end{enumerate}
\end{prop}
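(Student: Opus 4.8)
The plan is to reduce both statements to their classical (non-tower) counterparts --- the $\ZD$-Shannon--McMillan theorem in its weak form and the $\ZD$-mean ergodic theorem --- by controlling the conditional measure $\mu(\,\cdot\mid Z_n)$ with a disintegration/counting argument. The key observation is that the events whose conditional probability we must bound are ``almost translation invariant'' in the following sense: if $g_n:Y\to\RR$ is the function $g_n = \frac{1}{|\theta F_n|}\mathcal{I}_\mu(\mathcal{P}^{\theta F_n}\mid\mathcal{F})$ (respectively $g_n = \frac{1}{|\theta F_n|}\sum_{\mi\in\theta F_n}f\circ T^{\mi}$), and $B_n = \{\,|g_n - L|>\epsilon\,\}$ is the bad set (with $L = h_\mu(Y,T;\mathcal{P}\mid\mathcal{F})$, resp. $L=\int f\,d\mu$), then the classical theorem says $\mu(B_n)\to 0$, and we must upgrade this to $\mu(B_n\mid Z_n)\to 0$.

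\textbf{Main steps.}
First I would fix $\theta$ and work with the sets $\theta F_n$, noting $|\theta F_n|/|F_n|\to\theta^d$. The engine is a Fubini-type estimate: since $\{T^{\mi}Z_n\}_{\mi\in F_n}$ are pairwise disjoint with $\mu(T^{F_n}Z_n)>1-\epsilon_n$, we have $\mu(Z_n)\le |F_n|^{-1}$ and in fact $|F_n|\mu(Z_n) > 1-\epsilon_n$, so
\begin{equation*}
\mu(B_n\mid Z_n) \;=\; \frac{\mu(B_n\cap Z_n)}{\mu(Z_n)} \;\le\; \frac{|F_n|}{1-\epsilon_n}\,\mu(B_n\cap Z_n)\;\le\;\frac{|F_n|}{1-\epsilon_n}\sum_{\mi\in F_n}\mu\bigl(T^{-\mi}B_n\cap T^{\mi}Z_n\bigr)\cdot\frac{1}{|F_n|}
\end{equation*}
is \emph{not} quite what I want directly --- the naive bound $\mu(B_n\cap Z_n)\le \mu(Z_n)$ gives nothing. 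The correct route is to instead bound a \emph{blown-up} bad event. The idea: replace $B_n$ by a set $\widetilde B_n$ that already accounts for translates, namely observe that for $\mi$ in a suitable sub-box, the function $g_n\circ T^{\mi}$ differs from $g_{n'}$ for a slightly smaller scale only on a small set (for the ergodic averages this is the standard ``telescoping'' estimate $\|\frac{1}{|\theta F_n|}\sum_{\theta F_n}f\circ T^{\mi}\circ T^{\mj} - \frac{1}{|\theta F_n|}\sum_{\theta F_n}f\circ T^{\mi}\|_1 \le \frac{2|\theta F_n \triangle (\theta F_n + \mj)|}{|\theta F_n|}\|f\|_1$, which is $o(1)$ uniformly for $\mj\in F_{n'}$ with $n'=o(n)$; for the information function one uses subadditivity $\mathcal{I}_\mu(\mathcal{P}^{A\cup B}\mid\mathcal{F})\le \mathcal{I}_\mu(\mathcal{P}^{A}\mid\mathcal{F}) + \mathcal{I}_\mu(T^{\cdot}\mathcal{P}^{B}\mid\mathcal{F})\circ(\ldots)$ together with the classical SM theorem to absorb the low-order corrections).

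\textbf{The cleaner argument.}
Actually the cleanest path, which I would follow, is: pick an intermediate scale $m$ with $m\to\infty$ but $m/n\to 0$, and use Lemma \ref{lem:eq:mu_tower_subset_cond}-style reasoning in reverse. Fix $\eta>0$. By the classical $\ZD$ theorem applied at scale $n$, $\mu(B_n)<\eta^2$ for $n$ large. I claim $\mu(B_n\mid Z_n)$ is small: consider the set $Z_n^{\mathrm{bad}} = \{z\in Z_n : z\in B_n\}$; if $\mu(Z_n^{\mathrm{bad}}) \ge \eta\,\mu(Z_n)$, then, because the $F_{n'}$-translates of $Z_n$ (for $n' = \lfloor \delta n\rfloor$ with $\delta$ small relative to $\theta$) are pairwise disjoint and each translate $T^{\mj}z$ of a bad base point still lies in a slightly enlarged bad set $B_n^{+}$ (obtained by relaxing $\epsilon$ to $\epsilon/2$ and using that $g_n\circ T^{\mj}$ is $L^1$-close to $g_n$ uniformly over $\mj\in F_{n'}$, hence close in measure), we would get
\begin{equation*}
\mu(B_n^{+}) \;\ge\; \sum_{\mj\in F_{n'}}\mu\bigl(T^{\mj}Z_n^{\mathrm{bad}}\bigr) \;=\; |F_{n'}|\,\mu(Z_n^{\mathrm{bad}}) \;\ge\; \eta\,|F_{n'}|\,\mu(Z_n) \;\ge\; \eta\,(1-\epsilon_n)\,\frac{|F_{n'}|}{|F_n|},
\end{equation*}
which is a fixed positive constant $\asymp \eta\delta^d$ --- contradicting $\mu(B_n^{+})\to 0$ from the classical theorem (applied with $\epsilon/2$) once we also know the uniform $L^1$-approximation makes $\mu(B_n^+\setminus B_n)$ small. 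So $\mu(B_n\mid Z_n)<\eta$ for large $n$, giving (i) and (ii) simultaneously since both $g_n$ satisfy the required ``uniform-in-small-translation'' $L^1$ (or in-measure) stability.

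\textbf{Main obstacle.}
The main obstacle is the uniform-translation-stability input: showing that $\mu\bigl(\{|g_n\circ T^{\mj} - g_n| > \epsilon/2\}\bigr)\to 0$ uniformly over $\mj$ in a box $F_{n'}$ with $n' = o(n)$. For the ergodic averages this is an elementary boundary-to-volume estimate. For the information function it is more delicate: one must use the chain rule for information, the bound $\mathcal{I}_\mu(T^{\mj}\mathcal{P}^{\theta F_n}\mid\mathcal{F}) = \mathcal{I}_\mu(\mathcal{P}^{\theta F_n}\mid\mathcal{F})\circ T^{\mj}$ combined with $\mathcal{I}_\mu(\mathcal{P}^{A\cup B}\mid\mathcal{F})\le\mathcal{I}_\mu(\mathcal{P}^{A}\mid\mathcal{F})+\sum_{\mi\in B\setminus A}\mathcal{I}_\mu(T^{\mi}\mathcal{P}\mid\mathcal{F}\vee\mathcal{P}^{A}\vee\cdots)$ and the maximal inequality (Chung--Neveu) to control the $L^1$-norm of the difference by $\frac{|\theta F_n\triangle(\theta F_n+\mj)|}{|\theta F_n|}\cdot C$; the integrability of the maximal function of $\mathcal{I}_\mu(\mathcal{P}\mid\mathcal{F}\vee\cdots)$ requires $H_\mu(\mathcal{P})<\infty$, which is exactly our hypothesis. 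Once this stability lemma is in hand, the contradiction argument above closes both parts uniformly.
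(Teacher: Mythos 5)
Your overall strategy --- reduce to the classical Shannon--McMillan and mean ergodic theorems and exploit the disjointness of the translates $\{T^{\mj}Z_n\}$ --- is the right family of ideas, but the central step of your ``cleaner argument'' does not close. For the inequality $\mu(B_n^{+}) \ge \sum_{\mj\in F_{n'}}\mu(T^{\mj}Z_n^{\mathrm{bad}})$ you need $T^{\mj}Z_n^{\mathrm{bad}}\subseteq B_n^{+}$ (or nearly so) for \emph{every} $\mj\in F_{n'}$, and your stability input is only an $L^1$/in-measure bound: $\mu(\{|g_n\circ T^{\mj}-g_n|>\epsilon/2\})\le\alpha_n$ with $\alpha_n\asymp \|\mj\|_\infty/n$. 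The honest version of your inequality is therefore $\mu(B_n^{+})\ge |F_{n'}|\bigl(\mu(Z_n^{\mathrm{bad}})-\alpha_n\bigr)$, and here $\mu(Z_n^{\mathrm{bad}})\le\mu(Z_n)\le|F_n|^{-1}\asymp n^{-d}$ while $\alpha_n$ is of constant order for $n'\asymp\delta n$ (and still only $O(1/n)$ for $n'$ fixed). The exceptional set for the translation stability can swallow $T^{\mj}Z_n^{\mathrm{bad}}$ entirely, and no choice of $n'$ rescues the count: to contradict $\mu(B_n^{+})\to 0$ you need $|F_{n'}|\mu(Z_n)$ bounded below, forcing $n'\gtrsim n$, which forces $\alpha_n$ to be of constant order. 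The Chung--Neveu maximal inequality does not help, because it still only yields $L^1$ control of $\mathcal{I}_\mu(\mathcal{P}^{A}\mid\mathcal{F})-\mathcal{I}_\mu(\mathcal{P}^{B}\mid\mathcal{F})$ in terms of $|A\triangle B|$; there is no pointwise bound of that kind.

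The paper closes exactly this gap by replacing two-sided in-measure stability at a fixed scale with a \emph{pointwise, one-sided monotone sandwich across nearby scales}: for $\mi\in\zeta F_n$ one has $\mi+(\theta-\zeta)F_n\subseteq\theta F_n\subseteq\mi+(\theta+\zeta)F_n$, so by monotonicity of the information function under refinement (resp.\ positivity of $f$, after reducing to $f\ge 0$) one gets $I_{\theta-\zeta,n}(T^{\mi}(y))\le I_{\theta,n}(y)\le I_{\theta+\zeta,n}(T^{\mi}(y))$ \emph{deterministically}, with the normalization mismatch $(\theta\pm\zeta)^d/\theta^d$ absorbed into the gap between $\gamma$ and $\epsilon$. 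It then applies the classical theorems at scales $\theta\pm\zeta$ and conditions the resulting bad sets not on $Z_n$ but on $T^{\zeta F_n}Z_n$, whose measure is bounded below by the constant $\tfrac12\zeta^d$, pigeonholes to a single fiber $T^{\mi}Z_n$ on which the conditional bad measure is small, and transports back to $Z_n$ by $T$-invariance. If you replace your in-measure stability lemma by this pointwise sandwich, your argument becomes essentially the paper's; as written, the stability lemma is too weak to support the conclusion.
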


Variants of Proposition \ref{prop:measure_SM_for_towers}  can be found  in the literature and have been used in particular for ergodic embedding results of the type we are aiming to prove. See for instance \cite[Theorem $4.4$]{MR1844076} and the reference within to Rudolph's proof for the one dimensional case  \cite[Theorem 7.15]{MR1086631}. The proof here  is provided mainly for completeness and for the reader's convenience.

We will present a proof of  Proposition \ref{prop:measure_SM_for_towers} along the lines of  \cite[Theorem 7.15]{MR1086631}. We will rely on the following well known relative version of the Shannon-McMillan theorem:
\begin{prop}\label{prop:measure_SM}\emph{(Relative Shannon-McMillan theorem)}
		For every $\epsilon >0$ 
		\begin{equation}\label{eq:SM_in_measure}
		\lim_{n \to \infty} \mu\left(  \left|\frac{1}{| F_n|}\mathcal{I}_\mu(\mathcal{P}^{ F_n}\mid \mathcal{F}) - h_\mu(Y,T;\mathcal{P}\mid \mathcal{F})\right| > \epsilon \right) = 0.
		\end{equation}
	\end{prop}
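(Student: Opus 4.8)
\medskip\noindent\textbf{Proof strategy.}
As $L^{1}(\mu)$-convergence implies convergence in measure, the plan is to prove that $\tfrac{1}{|F_{n}|}\mathcal{I}_{\mu}(\mathcal{P}^{F_{n}}\mid\mathcal{F})\to h:=h_{\mu}(Y,T;\mathcal{P}\mid\mathcal{F})$ in $L^{1}(\mu)$ (here $\mathcal{P}$ is assumed to have finite Shannon entropy and $\mu$ to be ergodic, as in the applications, so that the limit $h$ is a constant). We run the classical Breiman-type argument, as in \cite[Theorem~7.15]{MR1086631}, relativised and adapted to $\ZD$. Fix the lexicographic order $\llex$ on $\ZD$, put $H^{-}=\{\vec{k}\in\ZD:\vec{k}\llex\vec{0}\}$, and for finite $A\subset H^{-}$ set $\mathcal{G}_{A}=\mathcal{F}\vee\bigvee_{\vec{k}\in A}T^{\vec{k}}(\mathcal{P})$; write $\mathcal{G}_{k}=\mathcal{G}_{F_{k}\cap H^{-}}$, let $\mathcal{G}_{\infty}$ be generated by $\mathcal{F}$ and all $T^{\vec{k}}(\mathcal{P})$ with $\vec{k}\in H^{-}$, and $g_{\infty}=\mathcal{I}_{\mu}(\mathcal{P}\mid\mathcal{G}_{\infty})$. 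Enumerating $F_{n}$ in increasing lexicographic order and applying the chain rule for conditional information, using the $T$-invariance of $\mathcal{F}$ and $\mu$, one obtains
\[
\mathcal{I}_{\mu}(\mathcal{P}^{F_{n}}\mid\mathcal{F})=\sum_{\mi\in F_{n}}g_{n,\mi}\circ T^{-\mi},\qquad g_{n,\mi}:=\mathcal{I}_{\mu}(\mathcal{P}\mid\mathcal{G}_{A_{n,\mi}}),\quad A_{n,\mi}:=(F_{n}-\mi)\cap H^{-}.
\]
Integrating this identity yields $\tfrac{1}{|F_{n}|}\sum_{\mi\in F_{n}}\int g_{n,\mi}\,d\mu=\tfrac{1}{|F_{n}|}H_{\mu}(\mathcal{P}^{F_{n}}\mid\mathcal{F})\to h$; the task is to upgrade this to $L^{1}$-convergence.

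\medskip
The key estimate is a uniform control of the conditional informations as the conditioning $\sigma$-algebra varies: with $\varepsilon_{k}:=\sup\big\{\|\mathcal{I}_{\mu}(\mathcal{P}\mid\mathcal{H})-g_{\infty}\|_{L^{1}}:\mathcal{G}_{k}\subseteq\mathcal{H}\subseteq\mathcal{G}_{\infty}\big\}$, one has $\varepsilon_{k}\to 0$. Indeed, for $P\in\mathcal{P}$ the tower property gives $\mu(P\mid\mathcal{H})=\mathbb{E}[\mu(P\mid\mathcal{G}_{\infty})\mid\mathcal{H}]$, and a Pythagoras computation for the $L^{2}$-bounded martingale $(\mu(P\mid\mathcal{G}_{k}))_{k}$ shows $\|\mu(P\mid\mathcal{H})-\mu(P\mid\mathcal{G}_{\infty})\|_{L^{1}}\le\big(\|\mu(P\mid\mathcal{G}_{\infty})\|_{L^{2}}^{2}-\|\mu(P\mid\mathcal{G}_{k})\|_{L^{2}}^{2}\big)^{1/2}$, a bound uniform over all $\mathcal{H}$ with $\mathcal{G}_{k}\subseteq\mathcal{H}\subseteq\mathcal{G}_{\infty}$ and tending to $0$ as $k\to\infty$; one then transfers this to the informations $-\log\mu(P\mid\mathcal{H})$ using the elementary inequalities $\log t\le t-1$ and $0\le-t\log t\le-\beta\log\beta$ for $t\le\beta<1/e$, together with the integrability of $g_{\infty}$ (equivalently $H_{\mu}(\mathcal{P})<\infty$), which makes the contribution of the sets where $\mu(P\mid\mathcal{H})$ or $\mu(P\mid\mathcal{G}_{\infty})$ is tiny uniformly small. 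We shall also use the $L^{1}$ mean ergodic theorem for the $\ZD$-action applied to $g_{\infty}\in L^{1}(\mu)$: $\tfrac{1}{|F_{n}|}\sum_{\mi\in F_{n}}g_{\infty}\circ T^{\mi}\to\int g_{\infty}\,d\mu$ in $L^{1}(\mu)$.

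\medskip
To assemble the proof, fix $\eta\in(0,1)$ and split $F_{n}=I_{n}\sqcup\partial_{n}$ with $I_{n}=(1-\eta)F_{n}$, so $|\partial_{n}|/|F_{n}|\to 1-(1-\eta)^{d}$. For $\mi\in I_{n}$ we have $F_{n}-\mi\supseteq F_{\lfloor\eta n\rfloor}$, hence $\mathcal{G}_{\lfloor\eta n\rfloor}\subseteq\mathcal{G}_{A_{n,\mi}}\subseteq\mathcal{G}_{\infty}$ and $\|g_{n,\mi}-g_{\infty}\|_{L^{1}}\le\varepsilon_{\lfloor\eta n\rfloor}$; for every $\mi$, $\|g_{n,\mi}\|_{L^{1}}=H_{\mu}(\mathcal{P}\mid\mathcal{G}_{A_{n,\mi}})\le H_{\mu}(\mathcal{P})$. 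Comparing $\tfrac{1}{|F_{n}|}\mathcal{I}_{\mu}(\mathcal{P}^{F_{n}}\mid\mathcal{F})=\tfrac{1}{|F_{n}|}\sum_{\mi\in F_{n}}g_{n,\mi}\circ T^{-\mi}$ term by term with $\tfrac{1}{|F_{n}|}\sum_{\mi\in F_{n}}g_{\infty}\circ T^{-\mi}$ (and using $F_{n}=-F_{n}$ with $T$-invariance of $\mu$) gives
\[
\Big\|\tfrac{1}{|F_{n}|}\mathcal{I}_{\mu}(\mathcal{P}^{F_{n}}\mid\mathcal{F})-\tfrac{1}{|F_{n}|}\sum_{\mi\in F_{n}}g_{\infty}\circ T^{\mi}\Big\|_{L^{1}}\le\varepsilon_{\lfloor\eta n\rfloor}+\tfrac{|\partial_{n}|}{|F_{n}|}\big(H_{\mu}(\mathcal{P})+\|g_{\infty}\|_{L^{1}}\big).
\]
Letting $n\to\infty$ with $\eta$ fixed and invoking the mean ergodic theorem, then $\eta\to 0$, we get $\tfrac{1}{|F_{n}|}\mathcal{I}_{\mu}(\mathcal{P}^{F_{n}}\mid\mathcal{F})\to\int g_{\infty}\,d\mu$ in $L^{1}(\mu)$. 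Finally $\int g_{\infty}\,d\mu=h$, by integrating the chain-rule identity ($\int g_{n,\mi}\,d\mu\to\int g_{\infty}\,d\mu$ for $\mi\in I_{n}$, the $\partial_{n}$-terms are $O(|\partial_{n}|/|F_{n}|)$, and the average equals $\tfrac{1}{|F_{n}|}H_{\mu}(\mathcal{P}^{F_{n}}\mid\mathcal{F})\to h$). This establishes \eqref{eq:SM_in_measure}.

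\medskip
The point requiring the most care is the uniform estimate $\varepsilon_{k}\to 0$ of the second paragraph --- controlling $\mathcal{I}_{\mu}(\mathcal{P}\mid\mathcal{H})$ \emph{simultaneously} for all $\sigma$-algebras $\mathcal{H}$ sandwiched between a deep finite past and the full past, in particular the passage through $-\log$ via the integrability of $g_{\infty}$; this is the technical heart of the classical $\ZZ$-argument and goes through verbatim for $\ZD$. Everything else is routine bookkeeping with F\o lner boxes.
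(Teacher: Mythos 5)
Your proposal is correct, and it is essentially the argument the paper has in mind: the paper does not prove Proposition \ref{prop:measure_SM} at all, but defers to the literature with the remark that it ``can be obtained by following the proof of the Shannon--McMillan theorem''; what you have written out is precisely that classical Breiman/Kieffer-style proof, relativized over $\mathcal{F}$ and adapted to $\ZD$ via the lexicographic past, and it in fact delivers $L^1(\mu)$-convergence, which is stronger than the convergence in measure asserted in \eqref{eq:SM_in_measure}. The chain-rule decomposition over the lexicographic enumeration of $F_n$, the uniform control $\varepsilon_k\to 0$ over all $\sigma$-algebras sandwiched between $\mathcal{G}_k$ and $\mathcal{G}_\infty$ (Pythagoras for the conditional probabilities plus the standard transfer through $-\log$ using $\mu\bigl(P\cap\{\mu(P\mid\mathcal{H})<\beta\}\bigr)\le\beta$ and uniform integrability of $g_\infty$), and the $\ZD$ mean ergodic theorem all check out. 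Two small points worth making explicit: ergodicity of $\mu$ must indeed be assumed for the limit to be the constant $h_\mu(Y,T;\mathcal{P}\mid\mathcal{F})$, as you note, and this is how the proposition is used in the paper; and if $\mathcal{P}$ is countably infinite with $H_\mu(\mathcal{P})<\infty$ (the generality needed for Proposition \ref{prop:measure_SM_for_towers}(i)), the per-atom bounds of the form $-\beta\log\beta$ should not be summed over infinitely many atoms directly --- one first replaces $\mathcal{P}$ by the finite partition $\mathcal{P}_N$ consisting of its $N$ largest atoms and the remainder, using that $\bigl\|\mathcal{I}_\mu(\mathcal{P}\mid\mathcal{H})-\mathcal{I}_\mu(\mathcal{P}_N\mid\mathcal{H})\bigr\|_{L^1}=H_\mu(\mathcal{P}\mid\mathcal{P}_N\vee\mathcal{H})\le H_\mu(\mathcal{P})-H_\mu(\mathcal{P}_N)$ uniformly in $\mathcal{H}$, and then runs your argument for $\mathcal{P}_N$.
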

 As mentioned in \cite[Proposition 2.2]{MR1355058} it can be obtained by following the proof of Shannon-McMillan theorem (say in \cite[Theorem 9.2.5]{MR797411}).
\begin{proof}[Proof of Proposition \ref{prop:measure_SM_for_towers}] 
	Let us prove $(i)$:
	We will use the shorthand 
	$$h=h_\mu(Y,T;\mathcal{P}\mid \mathcal{F})$$
	and
	\begin{equation*}
	I_{\theta,n} =\mathcal{I}_\mu(\mathcal{P}^{\theta F_n}\mid \mathcal{F}),~ n \in \NN,~ \theta \in (0,1].
	\end{equation*}

	Fix $\theta\in (0,1]$ and $\epsilon >0$. We will show that for every $\delta>0$  there exists $N \in \NN$ such that for all	$n >N$ 
	\begin{equation}\label{eq:conditioned_on_Z_n_SM_in_measure}
	\mu\left(  \left|\frac{1}{|\theta F_n|}I_{\theta,n} - h\right| > \epsilon  \mid Z_n \right) < \delta.
	\end{equation}
	Choose $\zeta, \gamma>0$ such that 
	\begin{equation}\label{eq:zeta_gamma_small}
	\zeta <  \frac{1}{4^{d}}\frac{\epsilon}{2 h + \epsilon} \theta,~ \gamma < \frac{\epsilon}{4}.
	\end{equation}
	Note that $\zeta < \theta$. From the relative Shannon-McMillan theorem (Proposition \ref{prop:measure_SM}) it follows there exists $N_1 \in \NN$ such that for every $n >N_1$ 
	\begin{equation}\label{eq:conditioned_on_set_SM_in_measure}
	\sup_{\mu(A) > \frac{1}{2}\zeta^d}  \mu\left(  \left|\frac{1}{|\theta F_n|}I_{\theta,n} - h\right| > \gamma  \mid A\right)
	 < \frac{\delta}{2}.
	\end{equation}
	
	Also, by possibly increasing $N_1$ we can assume that $\mu( T^{\zeta F_n} Z_n) > \frac{1}{2} \zeta^d$ for every $n> N_1$. Choose $N \in \NN$ so that $N > N_1 (\theta - \zeta)^{-1}$. From \eqref{eq:conditioned_on_set_SM_in_measure}  for any $n > N$,
	\begin{eqnarray*}
		\mu\left(  \left|\frac{1}{|(\theta+\zeta) F_n|}I_{\theta+\zeta,n} -h \right| > \gamma 
		\text{ and }  
		\left|\frac{1}{|(\theta-\zeta) F_n|}I_{\theta-\zeta,n} - h\right| > \gamma \mid T^{\zeta F_n}Z_n\right) < \delta.
	\end{eqnarray*}
	Thus there exists $\mi \in \zeta F_n$ such that
	\begin{eqnarray*}
		\mu\left(  \left|\frac{1}{|(\theta+\zeta) F_n|}I_{\theta+\zeta,n}-h\right| > \gamma 
		\text{ and }  
		\left|\frac{1}{|(\theta-\zeta) F_n|}I_{\theta-\zeta,n} - h\right| > \gamma \mid T^{\mi}Z_n\right) < \delta.
	\end{eqnarray*}		
	If $y\in Y$, $\mi \in \zeta F_n$ and $\left|\frac{1}{|(\theta+\zeta) F_n|}I_{\theta+\zeta,n}(T^{\mi}(y))-h\right| \leq \gamma$ then
	\begin{eqnarray*}
		\frac{1}{|\theta F_n|}I_{\theta,n}(y) -h \leq 
		\frac{(\theta+\zeta)^d}{\theta^d}\frac{1}{|(\theta+\zeta) F_n|}I_{\theta+\zeta,n}(T^{\mi}(y))- h
		\leq \frac{(\theta+\zeta)^d - \theta^d}{\theta^d}h+\frac{(\theta+\zeta)^d\gamma}{\theta^d}\le 
	\end{eqnarray*}
$$ \le  \zeta\frac{d (\theta +\zeta)^{d-1}}{\theta^d}h+\frac{(\theta+\zeta)^d\gamma}{\theta^d} < \epsilon,$$
where in the last inequality we used  \eqref{eq:zeta_gamma_small}. Similarly if $y\in Y$ such that  $\left|\frac{1}{|(\theta-\zeta) F_n|}I_{\theta-\zeta,n}(T^{\mi}(y))- h\right| \leq \gamma$ then 
	\begin{eqnarray*}
	\frac{1}{|\theta F_n|}I_{\theta,n}(y) -h \geq 
	\frac{(\theta-\zeta)^d}{\theta^d}\frac{1}{|(\theta-\zeta) F_n|}I_{\theta-\zeta,n}(T^{\mi}(y)) -h
	\geq \frac{(\theta-\zeta)^d - \theta^d}{\theta^d}h-\frac{(\theta-\zeta)^d\gamma}{\theta^d}>-\epsilon.
\end{eqnarray*}
    It follows that for all $n > N$,
    $$
    	\mu\left(  \left|\frac{1}{|\theta F_n|}I_{\theta,n} -h \right| > \epsilon  \mid Z_n \right) < \delta.
    	$$
	Hence \eqref{eq:conditioned_on_Z_n_SM_in_measure} holds for all $n>N$.
	
	The proof of $(ii)$ is almost identical, except for the following changes: By linearity of the integral and the triangle inequality, it is enough to prove \eqref{eq:Rohklin_tower_mean_ET} assuming  $f$ is non-negative $f$ and $\int f d\mu < \infty$. This time we denote
	$$ h= \int f d\mu$$
	and
	$$ I_{\theta,n} =\sum_{\mi \in \theta F_n}f \circ T^{\mi} \mbox{ for } \theta \in (0,1], n \in \NN.$$
	By the mean ergodic theorem, with these notations it follows  that for every  $\delta >0$  there exists $N$ such that for every $n >N$ \eqref{eq:conditioned_on_set_SM_in_measure} holds. From here the proof is identical as part $(i)$ (except that $h$ and $I_{\theta,n}$ have been redefined).  
\end{proof}

\section{Approximate covering numbers and relative approximate covering numbers for partitions}\label{sec:approx_cov}

Informally, the Shannon-McMillan theorem   interprets  the entropy of a process in terms of the number of ``symbols per iteration'' required to ``encode an orbit segment''. We now introduce some notation to formalize and exploit this interpretation. 

Let $\mathcal{P}$ be a finite Borel partition of $Y$, $\mu \in \Prob_e(Y,T)$, $A \subset Y$ a Borel set and $\epsilon >0$.

	We define the $\epsilon$-covering number of $A$ with respect to the partition $\mathcal{P}$ by:
	\begin{equation}
	\COV_{\mu,\epsilon,\mathcal{P}}(A) = \min\left\{|\mathcal{G}|~:~ \mathcal{G} \subset \mathcal{P} \mbox{ and } \mu(\bigcup \mathcal{G} \cap A) \ge (1- \epsilon)\mu(A) \right\}.
	\end{equation}
	\index{Definitions and notation introduced in Section 4 and Section 5!$\COV_{\mu,\epsilon,\mathcal{P}}(A)$}
	
	The Shannon-McMillan theorem is roughly equivalent to the statement that for an ergodic system $(Y,\mu,T)$ and a measurable partition $\mathcal{P}$ having finite Shannon entropy
	$$\forall \epsilon \in (0,1),\ h_\mu(Y,T; \mathcal{P}) = \lim_{n \to \infty}\frac{1}{|F_n|}\log \left( \COV_{\mu,\epsilon,\mathcal{P}^{F_n}}(A)\right),$$
	for any $A \in \Borel(Y)$ such that $\mu(A) >0$.

	Now let $\mathcal{Q}$ be another finite Borel partition. The $\epsilon$-covering number of $A$ with respect to the partition  $\mathcal{P}$ \emph{relative to $\mathcal{Q}$} is defined by:
	\begin{equation}
	\COV_{\mu,\epsilon,\mathcal{P}\mid \mathcal{Q}}(A) = \min \left\{ \max_{Q \in \mathcal{Q}'}\COV_{\mu,\epsilon,\mathcal{P}}(A \cap Q) : \mathcal{Q}' \subset \mathcal{Q} \mbox{ and } \mu(\bigcup \mathcal{Q}' \cap A) \ge (1-\epsilon)\mu(A)\right\}.
	\end{equation}\index{Definitions and notation introduced in Section 4 and Section 5!$\COV_{\mu,\epsilon,\mathcal{P}\mid \mathcal{Q}}(A)$}
This is closely related to relative entropy by the following formula:
$$\forall \epsilon \in (0,1),\ h_\mu(Y,T; \mathcal{P} \mid \mathcal{Q}^{\ZD}) = \lim_{n \to \infty}\frac{1}{|F_n|}\log \left( \COV_{\mu,\epsilon,\mathcal{P}^{F_n}\mid \mathcal{Q}^{F_n}}(A)\right).$$

In the following sections we will often use relative $\epsilon$-covering numbers, as a ``proxy'' to entropy. We are about to prove a few basic lemmas about  relative $\epsilon$-covering numbers. These statements have closely related well known counterparts in terms of entropy. 

The following is an elementary auxiliary result in basic probability theory that we include for completeness:

\begin{lem}\label{lem:partition_and_big_set}
	Suppose $0 < \delta < \epsilon < 1$ and that $\nu$ is a probability measure on $Y$, $A \subset Y$ measurable set such that $\nu(A) > 1-\delta$, and $\mathcal{P}$ is a measurable partition. Let:
	$$\mathcal{P}_\epsilon= \{P \in \mathcal{P} ~:~ \nu(P \cap A) \ge (1- \epsilon) \nu(P) \}.$$
	Then
	\begin{equation}
	\nu\left(\bigcup \mathcal{P}_\epsilon\right) \ge 1 -\frac{\delta}{\epsilon}.
	\end{equation}
\end{lem}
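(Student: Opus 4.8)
The plan is to run a direct Markov-type (first-moment) estimate on the ``bad'' partition elements, i.e.\ those $P \in \mathcal{P} \setminus \mathcal{P}_\epsilon$. First I would unwind the defining condition: $P \notin \mathcal{P}_\epsilon$ means precisely $\nu(P \cap A) < (1-\epsilon)\nu(P)$, which rearranges to $\nu(P \setminus A) > \epsilon\,\nu(P)$. The point is that each bad element wastes a definite $\epsilon$-fraction of its own mass outside $A$, and there is only $\nu(Y \setminus A) < \delta$ worth of mass outside $A$ to go around.

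Concretely, since the elements of $\mathcal{P}$ are pairwise disjoint, I would sum over the bad elements:
\[
\delta > \nu(Y \setminus A) \;\ge\; \sum_{P \in \mathcal{P}\setminus\mathcal{P}_\epsilon} \nu(P \setminus A) \;>\; \epsilon \sum_{P \in \mathcal{P}\setminus\mathcal{P}_\epsilon} \nu(P) \;=\; \epsilon\, \nu\!\left(\bigcup (\mathcal{P}\setminus\mathcal{P}_\epsilon)\right).
\]
Dividing by $\epsilon$ gives $\nu\!\left(\bigcup (\mathcal{P}\setminus\mathcal{P}_\epsilon)\right) < \delta/\epsilon$, and taking complements yields $\nu\!\left(\bigcup \mathcal{P}_\epsilon\right) \ge 1 - \delta/\epsilon$, as required. (One should note the statement is vacuous unless $\delta/\epsilon < 1$, and the countable sum is unproblematic since $\mathcal{P}$ is a measurable partition into at most countably many pieces.)

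There is essentially no obstacle here: the only thing to be careful about is the direction of the inequality in the definition of $\mathcal{P}_\epsilon$ and making sure the string of inequalities does not accidentally need $\nu(P) > 0$ for bad $P$ (it does not — if $\nu(P) = 0$ the term contributes nothing on either side, and such $P$ in fact satisfy $\nu(P\cap A) = 0 = (1-\epsilon)\nu(P)$ so they lie in $\mathcal{P}_\epsilon$ anyway). So the whole proof is the three-line display above together with the rearrangement of the definition.
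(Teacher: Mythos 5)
Your proof is correct and is essentially the same first-moment argument as the paper's: the paper bounds $\nu(B\cap A)$ from above by $(1-\epsilon)\nu(B)$ and then uses $\nu(A)=\nu(G\cap A)+\nu(B\cap A)$, whereas you equivalently bound $\nu(B\setminus A)$ from below by $\epsilon\,\nu(B)$ and compare with $\nu(Y\setminus A)<\delta$ — the same inequality rearranged. Your side remarks about $\nu(P)=0$ elements and countability are fine and do not change anything.
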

\begin{proof}
	Denote
	$$G = \bigcup \mathcal{P}_\epsilon \mbox{ and } B= Y \setminus G = \bigcup (\mathcal{P} \setminus \mathcal{P}_\epsilon).$$
	Then
	$$\nu(B \cap A) = \sum_{P\in \mathcal{P} \setminus \mathcal{P}_\epsilon}\nu(P \cap A) < (1- \epsilon) \sum_{P\in \mathcal{P} \setminus \mathcal{P}_\epsilon}\nu(P) =(1-\epsilon)\nu(B).$$
	So
	$$ 1- \delta < \nu(A) = \nu(G \cap A) + \nu(B \cap A) \le \nu(G) + (1-\epsilon)(1-\nu(G)).$$
	It follows that
	$$\nu(G) \ge \frac{\epsilon-\delta}{\epsilon}= 1 -\frac{\delta}{\epsilon}.$$
\end{proof}

	\begin{lem}\label{lem:COV_submultiplicative}
		Suppose  $A \in \Borel(Y)$,  $\mu(A) >0$ and that $\mathcal{P}_1,\mathcal{P}_2,\mathcal{P}_3$ are Borel partitions.
		For every $0<\epsilon<1$  the following  inequalities hold:
		\begin{equation}\label{eq:COV_almost_monotone}
		\COV_{\mu,\epsilon+\epsilon^2,\mathcal{P}_1 \mid \mathcal{P}_2 \vee \mathcal{P}_3}(A)
		\le 
			\COV_{\mu,\epsilon^2,\mathcal{P}_1 \mid \mathcal{P}_2}(A) 
		\end{equation}
		\begin{equation}\label{eq:COV_join_submultiplicative}
		\COV_{\mu,2\epsilon,\mathcal{P}_1 \vee \mathcal{P}_2 \mid \mathcal{P}_3} (A)
		\le
		\COV_{\mu,\epsilon,\mathcal{P}_1 \mid \mathcal{P}_3}(A)\cdot
		\COV_{\mu,\epsilon,\mathcal{P}_2 \mid \mathcal{P}_3}(A)
		\end{equation}
		\begin{equation}\label{eq:cov_triangle}
		\COV_{\mu,\epsilon, \mathcal{P}_1 \mid \mathcal{P}_3}(A)
		\le
		\COV_{\mu,\epsilon^2/6, \mathcal{P}_1 \mid \mathcal{P}_2}(A)\cdot
		\COV_{\mu,\epsilon^2/6, \mathcal{P}_2 \mid \mathcal{P}_3}(A).
		\end{equation}
		
	\end{lem}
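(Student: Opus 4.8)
The plan is to prove the three inequalities of Lemma \ref{lem:COV_submultiplicative} one at a time, in each case unpacking the definition of the relative covering number and choosing the witnessing subfamilies $\mathcal{Q}'$ and subfamilies of $\mathcal{P}_i$ appropriately.

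For \eqref{eq:COV_almost_monotone}: Let $m = \COV_{\mu,\epsilon^2,\mathcal{P}_1 \mid \mathcal{P}_2}(A)$, witnessed by some $\mathcal{P}_2' \subseteq \mathcal{P}_2$ with $\mu(\bigcup \mathcal{P}_2' \cap A) \ge (1-\epsilon^2)\mu(A)$ and, for each $Q \in \mathcal{P}_2'$, a family $\mathcal{G}_Q \subseteq \mathcal{P}_1$ with $|\mathcal{G}_Q| \le m$ covering all but an $\epsilon^2$-fraction of $A \cap Q$. The point is that $\mathcal{P}_2 \vee \mathcal{P}_3$ refines $\mathcal{P}_2$, so each atom of $\mathcal{P}_2 \vee \mathcal{P}_3$ lies inside an atom of $\mathcal{P}_2$. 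I would take $\mathcal{Q}'' \subseteq \mathcal{P}_2 \vee \mathcal{P}_3$ to be those atoms $R \subseteq Q$ for $Q \in \mathcal{P}_2'$ on which $\mathcal{G}_Q$ still covers a large enough fraction — apply Lemma \ref{lem:partition_and_big_set} (with the measure $\mu(\cdot \mid A)$ restricted to $Q$, or directly with a Markov-type averaging argument) to the ``bad set'' $Q \setminus \bigcup \mathcal{G}_Q$ inside each $Q$ to conclude that the atoms $R$ on which the uncovered fraction exceeds $\epsilon$ contribute at most an $\epsilon$-fraction more. Summing the discarded mass (the $\epsilon^2$ from $\mathcal{P}_2'$ plus the $\epsilon$ from the within-$Q$ refinement, appropriately weighted) gives the covering relative to $\mathcal{P}_2 \vee \mathcal{P}_3$ with error $\epsilon + \epsilon^2$ and the same bound $m$ on each fiber.

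For \eqref{eq:COV_join_submultiplicative}: this is the most straightforward. Let $m_i = \COV_{\mu,\epsilon,\mathcal{P}_i \mid \mathcal{P}_3}(A)$ for $i=1,2$, witnessed by subfamilies $\mathcal{P}_3^{(1)}, \mathcal{P}_3^{(2)} \subseteq \mathcal{P}_3$. Intersect: $\mathcal{P}_3' = \mathcal{P}_3^{(1)} \cap \mathcal{P}_3^{(2)}$ still satisfies $\mu(\bigcup \mathcal{P}_3' \cap A) \ge (1-2\epsilon)\mu(A)$ by a union bound. For each $Q \in \mathcal{P}_3'$, take $\mathcal{G}_Q^{(1)} \subseteq \mathcal{P}_1$ and $\mathcal{G}_Q^{(2)} \subseteq \mathcal{P}_2$ of sizes $\le m_1, \le m_2$ covering all but $\epsilon \mu(A \cap Q)$ each of $A \cap Q$; then the family $\{P_1 \cap P_2 : P_1 \in \mathcal{G}_Q^{(1)}, P_2 \in \mathcal{G}_Q^{(2)}\} \subseteq \mathcal{P}_1 \vee \mathcal{P}_2$ has size $\le m_1 m_2$ and, since its complement in $A \cap Q$ is contained in the union of the two bad sets, covers all but $2\epsilon \mu(A \cap Q)$ of $A \cap Q$. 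Hence $\COV_{\mu,2\epsilon,\mathcal{P}_1 \vee \mathcal{P}_2 \mid \mathcal{P}_3}(A) \le m_1 m_2$.

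For \eqref{eq:cov_triangle}: this is the ``transitivity/triangle inequality'' for relative covering numbers and is where I expect the main bookkeeping obstacle, because of the quadratic loss $\epsilon \mapsto \epsilon^2/6$. The idea is to combine a covering of $A$ by $\mathcal{P}_2$-atoms relative to $\mathcal{P}_3$ with, for each such $\mathcal{P}_2$-atom $R$, a covering of $A \cap R$ by $\mathcal{P}_1$-atoms; the composed family is a subfamily of $\mathcal{P}_1$ of size at most the product, covering $A \cap Q$ for $Q$ in the chosen $\mathcal{P}_3$-subfamily. The subtlety is that the covering of $A$ by $\mathcal{P}_2 \mid \mathcal{P}_3$ controls the fraction of $A \cap Q$ lying in ``good'' $\mathcal{P}_2$-atoms only on average, and the covering number $\COV_{\mu,\epsilon^2/6,\mathcal{P}_1 \mid \mathcal{P}_2}(A)$ is defined with a $\mathcal{P}_2$-subfamily that is good only on an $A$-average, not on an $A \cap Q$-average. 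So I would apply Lemma \ref{lem:partition_and_big_set} twice: once to pass from the global good set to $\mathcal{P}_3$-atoms on which the $\mathcal{P}_2$-covering-and-then-$\mathcal{P}_1$-covering works with fraction $\le \epsilon$, and once (nested) inside a $\mathcal{P}_3$-atom to pass from $\mathcal{P}_2$-atoms that are good on $A$-average to those good on $A\cap Q$-average. Each application of Lemma \ref{lem:partition_and_big_set} roughly halves an exponent or introduces a ratio $\delta/\epsilon$; tracking three such sources of error ($\epsilon^2/6$ from each of the two input covering numbers, plus the extra loss from re-localizing) and using $\epsilon^2/6 < \epsilon$ and crude inequalities like $2 \cdot \epsilon^2/6 + \epsilon^2/6 + \ldots \le \epsilon$ for $\epsilon \in (0,1)$ should close the estimate. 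I would be careful to state precisely which measure ($\mu$ versus $\mu(\cdot \mid Q)$ versus $\mu(\cdot \mid A)$) each invocation of Lemma \ref{lem:partition_and_big_set} is applied to, since that is exactly where the constant $6$ comes from.
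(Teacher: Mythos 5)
Your proposal is correct and follows essentially the same route as the paper's proof: for the first inequality you relativize the witnessing $\mathcal{P}_2$-subfamily to atoms of $\mathcal{P}_2\vee\mathcal{P}_3$ via Lemma \ref{lem:partition_and_big_set} applied with the conditional measures $\mu(\cdot\mid Q)$, for the second you intersect the two witnessing $\mathcal{P}_3$-subfamilies and take the product family in $\mathcal{P}_1\vee\mathcal{P}_2$, and for the third you compose the two coverings and re-localize the ``good on $A$-average'' sets to individual $\mathcal{P}_3$-atoms by two further applications of Lemma \ref{lem:partition_and_big_set} with thresholds of order $\epsilon/2$, which is exactly where the paper's constant $6$ arises. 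The error bookkeeping you outline ($\epsilon^2+\epsilon$, $2\epsilon$, and $(\epsilon^2/6)/(\epsilon/2)=\epsilon/3$ per re-localization) matches the paper's.
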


	Inequalities \eqref{eq:COV_almost_monotone}, \eqref{eq:COV_join_submultiplicative} and \eqref{eq:cov_triangle} correspond to the following well-known entropy inequalities respectively:
	$$
	h_{\mu}(Y,T; \mathcal{P}_1 \mid \mathcal{P}_2^{\ZD} \vee \mathcal{P}_3^{\ZD} ) \le
	h_{\mu}(Y,T; \mathcal{P}_1 \mid \mathcal{P}_2^{\ZD}) 
	$$
	
	$$
	h_{\mu}(Y,T; \mathcal{P}_1 \vee \mathcal{P}_2 \mid \mathcal{P}_3^{\ZD}) \le  
	h_{\mu}(Y,T; \mathcal{P}_1  \mid \mathcal{P}_3^{\ZD})+
	h_{\mu}(Y,T; \mathcal{P}_2 \mid \mathcal{P}_3^{\ZD})
	$$
	and
	$$
	h_{\mu}(Y,T; \mathcal{P}_1  \mid \mathcal{P}_3^{\ZD}) \le
	h_{\mu}(Y,T; \mathcal{P}_1  \mid \mathcal{P}_2^{\ZD})+
	h_{\mu}(Y,T; \mathcal{P}_2  \mid \mathcal{P}_3^{\ZD}).
	$$
	The expressions appearing in the ``$\epsilon$'' parameter have not been fully optimized, and are not very significant for our applications. We only use the fact that we can make the ``$\epsilon$-expressions'' on the left hand side arbitrary small by making the `$\epsilon$-expressions'' on the right hand side of the inequalities sufficiently small.  	

\begin{proof}[Proof of Lemma \ref{lem:COV_submultiplicative}:]
	By replacing $\mu$ with $\mu(\cdot \mid A)$ we assume without loss of generality that $\mu( A)=1$.	Let us prove \eqref{eq:COV_almost_monotone}.
	By definition of $\COV_{\mu,\epsilon^2,\mathcal{P}_1 \mid \mathcal{P}_2}(A)$,  there exists $\mathcal{P}_2^{(\epsilon^2)} \subseteq \mathcal{P}_2$ such that 
	$$ \mu \left( \bigcup_{P \in \mathcal{P}_2 \setminus  \mathcal{P}_2^{(\epsilon^2)}}P \right) < \epsilon^2,$$
	and so that for every $P \in \mathcal{P}_2^{(\epsilon^2)}$	there exists a subset $\mathcal{G}_P^{(\epsilon^2)} \subset \mathcal{P}_1$ with $|\mathcal{G}_P^{(\epsilon^2)}| \le \COV_{\mu,\epsilon^2,\mathcal{P}_1 \mid \mathcal{P}_2}(A)$ and 
	$$ \mu\left( \bigcup_{Q \in  \mathcal{G}_P^{(\epsilon^2)}} Q \cap P\right)  > (1-\epsilon^2)\mu(P ).$$
	By Lemma \ref{lem:partition_and_big_set}, for every $P\in \P_2^{(\epsilon^2)}$ there exists $\P_3^{(P, \epsilon)} \subset \mathcal{P}_3$ such that for all $P'\in \P_3^{(P, \epsilon)}$ 
	$$\mu \left(\bigcup_{Q \in \G^{(\epsilon^2)}_P} Q \cap P\cap P'\right)> (1-\epsilon)\mu(P'\cap P)$$	
	and 
	$$\mu\left(\bigcup_{P'\in \P_3^{(P, \epsilon)}}P'
	 \cap P\right)> (1-\epsilon)\mu(P).$$
	
 Consider the set 
$\mathcal{P}_{2,3} \subset \mathcal{P}_2 \vee \mathcal{P}_3$ given by 
$$ 
\mathcal{P}_{2,3} = \left\{
	P \cap  P' :~ P \in \mathcal{P}_2^{(\epsilon^2)} ,~ P' \in \mathcal{P}_3^{(P,\epsilon)}
	\right\}.
	$$
We see that
$$\mu \left( \bigcup_{(P\cap P') \in  (\mathcal{P}_2 \vee \mathcal{P}_3) \setminus \mathcal{P}_{2,3}} P \cap P'  \right) \le
\mu \left(\bigcup_{P \in \mathcal{P}_2 \setminus  \mathcal{P}_2^{(\epsilon^2)}}P \right)
+ \sum_{P \in  \mathcal{P}_2^{(\epsilon^2)}}\mu \left(\bigcup_{P' \in \mathcal{P}_3 \setminus \mathcal{P}_3^{(P,\epsilon)}}(P \cap P') \right)  \le \epsilon^2 + \epsilon.
$$
Now if $P \cap P' \in \mathcal{P}_{2,3}$ then 
$$\COV_{\mu,\epsilon^2+\epsilon,\mathcal{P}_1}(A \cap P \cap P') \le\COV_{\mu,\epsilon^2,\mathcal{P}_1}(A \cap P) \le \COV_{\mu,\epsilon^2,\mathcal{P}_1\mid \mathcal{P}_2 }(A).$$
This proves \eqref{eq:COV_almost_monotone}.

	Let us prove \eqref{eq:COV_join_submultiplicative}: By definition, there exists subsets $\mathcal{P}_3^{(1)} \subset \mathcal{P}_3$ and $\mathcal{P}_3^{(2)} \subset \mathcal{P}_3$	such that for $i = 1,2$
	$$\mu \left(\bigcup_{P \in \mathcal{P}_3 \setminus \mathcal{P}_3^{(i)}}P  \right) < \epsilon.$$
	Also for every $P \in \mathcal{P}_3^{(i)}$ there exists a subset $\mathcal{G}_P^{(i)} \subset \mathcal{P}_i$ of size at most $\COV_{\mu,\epsilon,\mathcal{P}_i \mid \mathcal{P}_3}(A)$ such that the union of the elements of $\mathcal{G}_P^{(i)}$ cover all but an $\epsilon$-fraction of the measure of $P$. Let $\mathcal{P}_3^{(1,2)}= \mathcal{P}_3^{(1)} \cap\mathcal{P}_3^{(2)}$. For $P  \in \mathcal{P}_3^{(1,2)}$ define $\mathcal{G}_P^{(1,2)} \subset \mathcal{P}_1 \vee \mathcal{P}_2$ by
	$$\mathcal{G}_P^{(1,2)} = \left\{Q_1 \cap Q_2 :~ Q_1 \in \mathcal{G}_P^{(1)}\mbox{ and } Q_2 \in \mathcal{G}_P^{(2)} \right\}.$$
	Then 
	$$\mu \left(\bigcup_{P \in \mathcal{P}_3 \setminus  \mathcal{P}_3^{(1,2)}} P \right) <\mu \left(\bigcup_{P \in \mathcal{P}_3 \setminus  \mathcal{P}_3^{(1)}} P \right) + \mu \left(\bigcup_{P \in \mathcal{P}_3 \setminus  \mathcal{P}_3^{(2)}} P \right) <  2\epsilon,$$
	and for every $P \in \mathcal \mathcal{P}_3^{(1,2)}$, we have that
    $$ |\mathcal{G}_P^{(1,2)}| \le |\mathcal{G}^{(1)}_P|\cdot|\mathcal{G}^{(2)}_P| \le 
    \COV_{\mu,\epsilon,\mathcal{P}_1 \mid \mathcal{P}_3}(A) \cdot   \COV_{\mu,\epsilon,\mathcal{P}_2 \mid \mathcal{P}_3}(A)
    $$
    and
    $$ \mu\left(P \setminus \bigcup_{Q_1 \cap Q_2 \in \mathcal{G}_P^{(1,2)}}Q_1 \cap Q_2\right) \le
    \mu\left(P \setminus  \bigcup_{Q_1  \in \mathcal{G}_P^{(1)}}Q_1 \right)
+
    \mu\left(P \setminus  \bigcup_{Q_2  \in \mathcal{G}_P^{(2)}}Q_2 \right) < 2\epsilon \mu(P).$$
    This proves \eqref{eq:COV_join_submultiplicative}.
	
	Let us prove \eqref{eq:cov_triangle}:
	Denote 
	$$N_1=  \COV_{\mu,\epsilon^2/6, \mathcal{P}_2 \mid \mathcal{P}_3}(A),~
     N_2= 	\COV_{\mu,\epsilon^2/6, \mathcal{P}_1 \mid \mathcal{P}_2}(A).$$
	By definition, there exists 
	$\mathcal{P}'_3 \subset \mathcal{P}_3$ such that
	$$\mu\left( \bigcup_{P \in \mathcal{P}_3 \setminus  \mathcal{P}'_3} P\right)  < \epsilon^2/6.$$
	and so that for every $P \in \mathcal{P}_3'$ there exists a subset 
	$\mathcal{G}_{P} \subset \mathcal{P}_2$ with $|\mathcal{G}_P| \le N_1$ and
	$$ \mu\left( \bigcup_{Q \in  \mathcal{P}_2 \setminus \mathcal{G}_P} Q \cap P\right)  < \frac{\epsilon^2}{6}\mu(P ).$$
	Similarly, there exists $\mathcal{P}_2' \subset \mathcal{P}_2$  such that $$\mu\left( \bigcup_{Q \in \mathcal{P}_2 \setminus \mathcal{P}'_2}Q\right) < \frac{\epsilon^2}{6}$$ and so that for every $Q \in \mathcal{P}_2'$ there exists a subset $\mathcal{G}_{Q}' \subset \mathcal{P}_1$ with $|\mathcal{G}_Q'| \le N_2$ and
	$$ \mu\left( \bigcup_{Q' \in \mathcal{P}_1 \setminus \mathcal{G}_Q'}Q' \cap Q \right) < \frac{\epsilon^2}{6}\mu(Q ).$$
	It follows that
	$$ \mu \left(\bigcup_{Q \in \mathcal{P}_2'} \bigcup_{Q' \in \mathcal{P}_1 \setminus \mathcal{G}_Q'}Q' \cap Q \right) < \frac{\epsilon^2}{6}.$$
	Let
	$$\mathcal{P}_3^* = \left\{P \in \mathcal{P}_3:~  \mu\left( \bigcup_{Q \in \mathcal{P}_2'}\bigcup_{Q' \in \mathcal{P}_1 \setminus \mathcal{G}_Q'}Q' \cap Q \cap P \right) < \frac{\epsilon}{2}\mu(P)\right\}$$
	and
	$$\mathcal{P}_3'' = \left\{P \in \mathcal{P}_3:~ \mu\left( \bigcup_{Q \in \mathcal{P}_2 \setminus \mathcal{P}_2'}Q \cap P\right) < \frac{\epsilon}{2}\mu(P)\right\}.$$
	Then by Lemma \ref{lem:partition_and_big_set}
	$$\mu\left(\bigcup_{P \in \mathcal{P}_3\setminus  \mathcal{P}_3^*}P \right) < \epsilon/3
	\mbox{ and }
	\mu\left(\bigcup_{P \in \mathcal{P}_3  \setminus \mathcal{P}_3''}P \right) < \epsilon/3. $$
	For every $P \in \mathcal{P}_3'$ let 
		$$\mathcal{G}''_P = \left\{Q' \in \mathcal{P}_1 :~ \exists Q \in \mathcal{G}_P \cap \mathcal{P}_2' \mbox{ s.t. } Q' \in \mathcal{G}'_Q \right\}.$$
		
		Then 
		$$|\mathcal{G}''_P|  \le N_1 \cdot N_2.$$

	Let $\mathcal{P}_3''' = \mathcal{P}_3^* \cap \mathcal{P}_3' \cap \mathcal{P}_3''$. Then
		$$\mu\left(\bigcup_{P \in \mathcal{P}_3 \setminus  \mathcal{P}_3'''}P \right) < \epsilon.$$
	 
	It follows that for every $P \in \mathcal{P}_3'''$
	$$
	\mu\left( \bigcup_{Q' \in \mathcal{P}_1 \setminus  \mathcal{G}''_P}Q' \cap P \right) < \mu( \bigcup_{Q \in \mathcal{P}_2 \setminus \mathcal{P}_2'}Q \cap P) +
	\mu\left( \bigcup_{Q \in \mathcal{P}_2'}\bigcup_{Q' \in \mathcal{P}_1 \setminus \mathcal{G}_Q'}Q \cap Q' \cap P \right).$$
	Choose $P \in \mathcal{P}_3'''$. Because $P \in \mathcal{P}_3''' \subset \mathcal{P}_3''$, 
	$$ \mu( \bigcup_{Q \in \mathcal{P}_2 \setminus \mathcal{P}_2'}Q \cap P) < \frac{\epsilon}{2}\mu(P).$$
	Also, because $P \in \mathcal{P}_3''' \subset \mathcal{P}_3^*$, 
	$$\mu\left( \bigcup_{Q \in \mathcal{P}_2'}\bigcup_{Q' \in \mathcal{P}_1 \setminus \mathcal{G}_Q'}Q \cap Q' \cap P \right) < \frac{\epsilon}{2}\mu(P).$$
	
	Thus 
	$$ 
	\mu\left( \bigcup_{Q' \in \mathcal{P}_1 \setminus  \mathcal{G}''_P}Q' \cap P \right) <
	 \epsilon \mu(P).
	$$
	This shows that  $\COV_{\mu,\epsilon,\mathcal{P}_1\mid \mathcal{P}_3}(A) < N_1 \cdot N_2$. 
\end{proof}	

From now on, let $(\epsilon_{n})_{n=1}^\infty$  be a sequence of positive numbers  tending to $0$  and let $(Z_n)_{n=1}^\infty$ be a sequence of Borel subsets of $Y$ so that for every $n \in \NN$, $Z_n \subset Y$ is the base of an $(F_n,\epsilon_n)$-tower for $\Y$.

The following is a manifestation of the Shannon-McMillan theorem  for towers (Proposition \ref{prop:measure_SM_for_towers}), in terms of $\epsilon$-covering numbers:
\begin{lem}\label{lem:tower_cov}
	
	Let $\mathcal{P}$ be a finite Borel partition  of $Y$, $\theta \in (0,1)$ and $\mu \in \Prob_e(\Y)$. For every $\epsilon >0$ there exists $N \in \N$ so that for every $n >N$
	\begin{equation}\label{eq:COV_small1}
	\COV_{\mu,\epsilon,\mathcal{P}^{\theta F_n}}( Z_n)< \exp\left( |\theta F_n|\left(h_\mu(Y,T ; \mathcal{P})+\epsilon \right)\right).
	\end{equation}
\end{lem}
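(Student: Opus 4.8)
The plan is to read this off directly from the Shannon--McMillan theorem for towers (Proposition~\ref{prop:measure_SM_for_towers}~(i)), applied with the trivial sub-$\sigma$-algebra $\mathcal{F}=\{\emptyset,Y\}$. In that case the relative information function reduces to $\mathcal{I}_\mu(\mathcal{P}^{\theta F_n})(y)=-\log\mu(\mathcal{P}^{\theta F_n}(y))$ and $h_\mu(Y,T;\mathcal{P}\mid\mathcal{F})=h_\mu(Y,T;\mathcal{P})$. Since $\mathcal{P}$ is finite, $\mathcal{P}^{\theta F_n}$ has at most $|\mathcal{P}|^{|\theta F_n|}$ atoms, so $H_\mu(\mathcal{P}^{\theta F_n})<\infty$ and the proposition applies. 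Note also that since $Z_n$ is the base of an $(F_n,\epsilon_n)$-tower whose $F_n$-translates are pairwise disjoint, $\mu(Z_n)\ge(1-\epsilon_n)/|F_n|>0$, so conditioning on $Z_n$ is meaningful.

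First I would fix $\epsilon>0$, write $h=h_\mu(Y,T;\mathcal{P})$, and choose an auxiliary slack $\epsilon'\in(0,\epsilon)$. Applying Proposition~\ref{prop:measure_SM_for_towers}~(i) with deviation parameter $\epsilon'$ yields $N$ such that for all $n>N$
\begin{equation*}
\mu\!\left(\left|\tfrac{1}{|\theta F_n|}\mathcal{I}_\mu(\mathcal{P}^{\theta F_n})-h\right|>\epsilon' \ \Big|\ Z_n\right)<\epsilon .
\end{equation*}
Then I would introduce the ``typical'' set $G_n=\{y\in Y:\ \mathcal{I}_\mu(\mathcal{P}^{\theta F_n})(y)\le|\theta F_n|(h+\epsilon')\}$, which is a union of atoms of $\mathcal{P}^{\theta F_n}$, and collect exactly those atoms: $\mathcal{G}=\{P\in\mathcal{P}^{\theta F_n}:\ P\subseteq G_n\}$, i.e. the atoms with $\mu(P)\ge\exp(-|\theta F_n|(h+\epsilon'))$.

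Two observations then finish the proof. First, since the atoms in $\mathcal{G}$ are pairwise disjoint and each has measure at least $\exp(-|\theta F_n|(h+\epsilon'))$, we get $|\mathcal{G}|\le\exp(|\theta F_n|(h+\epsilon'))<\exp(|\theta F_n|(h+\epsilon))$, using $\epsilon'<\epsilon$ and $|\theta F_n|\ge 1$. Second, $Y\setminus G_n$ is contained in the event that the deviation exceeds $\epsilon'$, so for $n>N$ we have $\mu((Y\setminus G_n)\cap Z_n)<\epsilon\,\mu(Z_n)$, hence $\mu(\bigcup\mathcal{G}\cap Z_n)=\mu(G_n\cap Z_n)>(1-\epsilon)\mu(Z_n)$. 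By the definition of $\COV_{\mu,\epsilon,\mathcal{P}^{\theta F_n}}(Z_n)$ this gives $\COV_{\mu,\epsilon,\mathcal{P}^{\theta F_n}}(Z_n)\le|\mathcal{G}|<\exp(|\theta F_n|(h+\epsilon))$, which is \eqref{eq:COV_small1}. The argument is essentially immediate once Proposition~\ref{prop:measure_SM_for_towers} is in hand; the only points demanding a little care are distinguishing the two roles of ``$\epsilon$'' (the entropy slack $\epsilon'$, used to bound the number of atoms, versus the conditional-measure threshold $\epsilon$, used to control the covered fraction of $Z_n$) and noticing that the typical set is automatically a union of $\mathcal{P}^{\theta F_n}$-cells, so that no further trimming of $\mathcal{G}$ is needed. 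I do not expect a genuine obstacle here.
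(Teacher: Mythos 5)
Your proof is correct and follows essentially the same route as the paper's: both apply the Shannon--McMillan theorem for towers (Proposition \ref{prop:measure_SM_for_towers}) to isolate the atoms of $\mathcal{P}^{\theta F_n}$ of measure at least $\exp(-|\theta F_n|(h+\epsilon'))$, bound their number by disjointness, and observe that they cover all but an $\epsilon$-fraction of $Z_n$. Your explicit separation of the entropy slack $\epsilon'$ from the covering threshold $\epsilon$ is in fact slightly more careful than the paper's version, which conflates the two.
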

\begin{proof}
	For $n \in \NN$ and $\epsilon >0$ and
	\begin{equation}
	\tilde Z_{n,\epsilon} = \left\{y \in Z_n:~  
		\frac{1}{|\theta F_n|}\log\left(\mu(\mathcal{P}^{\theta F_n}(y))^{-1}\right) \le h_\mu(Y,T;\mathcal{P} )+ \epsilon \right\}.
	\end{equation}
	Choose $y \in  \tilde Z_{n,\epsilon}$. We have that
	$$\mu\left(\mathcal{P}^{\theta F_n}(y)  \right) \ge e^{-(h_\mu(Y,T ; \mathcal{P}) + \epsilon)|\theta F_n|}.$$
	It follows that  every element of $\mathcal{P}^{\theta F_n}$ that intersects $ \tilde Z_{n,\epsilon}$ covers at least an
	$e^{-(h_\mu(Y,T ; \mathcal{P}) + \epsilon)|\theta F_n|}$ fraction of $Y$. Thus  $\tilde Z_{n,\epsilon}$ can be covered by at most
	$$e^{(h_\mu(Y,T ; \mathcal{P}) + \epsilon)|\theta F_n|}$$ elements of $\mathcal{P}^{\theta F_n}$. By Proposition \ref{prop:measure_SM_for_towers},  if $n$ is sufficiently big then
	$$\mu\left(\tilde Z_{n,\epsilon}\right) \ge (1-\epsilon)\mu( Z_n).$$
	This shows that \eqref{eq:COV_small1} holds.
\end{proof}

We remark that Proposition \ref{prop:measure_SM_for_towers} also implies a corresponding lower bound on $\COV_{\mu,\epsilon,\mathcal{P}^{\theta F_n}}( Z_n)$. We will not use it in this paper.

Here is a ``relative'' version of Lemma \ref{lem:tower_cov}:
\begin{lem}\label{lem:rel_tower_cov}
	Let $\mathcal{P},\mathcal{Q}$ be finite Borel partitions  of $Y$, $\theta \in (0,1)$ and $\mu \in \Prob_e(\Y)$. For every $ \epsilon >0$ there exists $N \in \N$ so that for every $n >N$
	\begin{equation}\label{eq:rel_COV_small1}
	\COV_{\mu,\epsilon,\mathcal{P}^{\theta F_n} \mid \mathcal{Q}^{\theta F_n}}( Z_n)< \exp\left( |\theta F_n|\left(h_\mu(Y,T ; \mathcal{P}\mid \mathcal{Q}^{\ZD})+\epsilon \right)\right).
	\end{equation}
\end{lem}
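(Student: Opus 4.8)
The plan is to follow the proof of Lemma~\ref{lem:tower_cov}, adapting it so that the covering number relative to the \emph{finite} partition $\mathcal{Q}^{\theta F_n}$ is controlled one atom at a time. Write $h = h_\mu(Y,T;\mathcal{P}\mid\mathcal{Q}^{\ZD})$ and recall the classical identity
\[
h = h_\mu(Y,T;\mathcal{P}\vee\mathcal{Q}) - h_\mu(Y,T;\mathcal{Q}).
\]
Since $(\mathcal{P}\vee\mathcal{Q})^{\theta F_n} = \mathcal{P}^{\theta F_n}\vee\mathcal{Q}^{\theta F_n}$, the chain rule for the information function gives, for $\mu$-a.e.\ $y$,
\[
\mathcal{I}_\mu\big((\mathcal{P}\vee\mathcal{Q})^{\theta F_n}\big)(y)-\mathcal{I}_\mu\big(\mathcal{Q}^{\theta F_n}\big)(y) = -\log\frac{\mu\big(\mathcal{P}^{\theta F_n}(y)\cap\mathcal{Q}^{\theta F_n}(y)\big)}{\mu\big(\mathcal{Q}^{\theta F_n}(y)\big)}.
\]
So the left-hand side, divided by $|\theta F_n|$, plays the role that $\tfrac{1}{|\theta F_n|}\mathcal{I}_\mu(\mathcal{P}^{\theta F_n})$ played in Lemma~\ref{lem:tower_cov}, and I would control it on $Z_n$ by applying Proposition~\ref{prop:measure_SM_for_towers}(i) with the trivial $\sigma$-algebra to each of the two finite partitions $\mathcal{P}\vee\mathcal{Q}$ and $\mathcal{Q}$ separately.

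Concretely, fix $\epsilon>0$ and set $\epsilon'=\epsilon/2$. First I would define
\[
\tilde Z_n = \Big\{y\in Z_n :\ \mathcal{I}_\mu\big((\mathcal{P}\vee\mathcal{Q})^{\theta F_n}\big)(y)-\mathcal{I}_\mu\big(\mathcal{Q}^{\theta F_n}\big)(y)\ \le\ |\theta F_n|\,(h+\epsilon')\Big\}.
\]
By Proposition~\ref{prop:measure_SM_for_towers}(i) applied to $\mathcal{P}\vee\mathcal{Q}$ and to $\mathcal{Q}$, for $n$ large one has, off a set of arbitrarily small $\mu(\cdot\mid Z_n)$-measure, both $\mathcal{I}_\mu((\mathcal{P}\vee\mathcal{Q})^{\theta F_n})\le|\theta F_n|(h_\mu(\mathcal{P}\vee\mathcal{Q})+\epsilon'/2)$ and $\mathcal{I}_\mu(\mathcal{Q}^{\theta F_n})\ge|\theta F_n|(h_\mu(\mathcal{Q})-\epsilon'/2)$, whence $\mu(\tilde Z_n\mid Z_n)\to 1$. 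Next, for a fixed atom $Q\in\mathcal{Q}^{\theta F_n}$ and any $y\in\tilde Z_n\cap Q$, the displayed chain rule together with $\mathcal{Q}^{\theta F_n}(y)=Q$ gives $\mu(\mathcal{P}^{\theta F_n}(y)\cap Q)\ge e^{-|\theta F_n|(h+\epsilon')}\mu(Q)$; since the sets $P\cap Q$ with $P\in\mathcal{P}^{\theta F_n}$ are pairwise disjoint subsets of $Q$, at most $e^{|\theta F_n|(h+\epsilon')}$ atoms of $\mathcal{P}^{\theta F_n}$ meet $\tilde Z_n\cap Q$. Finally I would invoke Lemma~\ref{lem:partition_and_big_set} with $\nu=\mu(\cdot\mid Z_n)$, $A=\tilde Z_n$, partition $\mathcal{Q}^{\theta F_n}$, taking $\delta=1-\mu(\tilde Z_n\mid Z_n)<\epsilon^2$ (valid for $n$ large) and the lemma's parameter equal to $\epsilon$; this produces $\mathcal{Q}' = \{Q\in\mathcal{Q}^{\theta F_n}:\nu(Q\cap\tilde Z_n)\ge(1-\epsilon)\nu(Q)\}$ with $\mu(\bigcup\mathcal{Q}'\cap Z_n)\ge(1-\epsilon)\mu(Z_n)$. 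For $Q\in\mathcal{Q}'$, the family $\mathcal{G}_Q=\{P\in\mathcal{P}^{\theta F_n}:P\cap\tilde Z_n\cap Q\ne\emptyset\}$ covers $\tilde Z_n\cap Q$ and has size $\le e^{|\theta F_n|(h+\epsilon')}$, so $\COV_{\mu,\epsilon,\mathcal{P}^{\theta F_n}}(Z_n\cap Q)\le e^{|\theta F_n|(h+\epsilon')}$; since $\mathcal{Q}'$ is admissible in the definition of the relative covering number, this gives $\COV_{\mu,\epsilon,\mathcal{P}^{\theta F_n}\mid\mathcal{Q}^{\theta F_n}}(Z_n)\le e^{|\theta F_n|(h+\epsilon/2)}<e^{|\theta F_n|(h+\epsilon)}$, which is \eqref{eq:rel_COV_small1}.

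I expect the only genuine subtlety to be the initial reduction, and in particular the insistence on working with the information function of the \emph{finite} partition $\mathcal{Q}^{\theta F_n}$ rather than with the $\sigma$-algebra $\mathcal{Q}^{\ZD}$: it is the level sets of the former that translate into a clean counting bound on the number of $\mathcal{P}^{\theta F_n}$-atoms inside a single $\mathcal{Q}^{\theta F_n}$-atom, whereas the relative information function with respect to $\mathcal{Q}^{\ZD}$ (to which Proposition~\ref{prop:measure_SM_for_towers}(i) could also be applied directly) does not afford such a bound. The identity $h_\mu(Y,T;\mathcal{P}\mid\mathcal{Q}^{\ZD})=h_\mu(Y,T;\mathcal{P}\vee\mathcal{Q})-h_\mu(Y,T;\mathcal{Q})$ is precisely what lets this finite-partition computation recover the correct exponent; everything else — the $\epsilon$-bookkeeping and the passage from $\tilde Z_n$ to the admissible subfamily $\mathcal{Q}'$ via Lemma~\ref{lem:partition_and_big_set} — is routine, exactly as in the proof of Lemma~\ref{lem:tower_cov}.
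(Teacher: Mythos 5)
Your proof is correct and follows essentially the same route as the paper's: define a good subset of $Z_n$ by a level-set condition on the relative information function of $\mathcal{P}^{\theta F_n}$ given $\mathcal{Q}^{\theta F_n}$, count the atoms of $\mathcal{P}^{\theta F_n}$ meeting each $\mathcal{Q}^{\theta F_n}$-atom on that set, and pass to the admissible subfamily $\mathcal{Q}'$ via Lemma \ref{lem:partition_and_big_set}. Your chain-rule reduction $\mathcal{I}_\mu\big((\mathcal{P}\vee\mathcal{Q})^{\theta F_n}\big)-\mathcal{I}_\mu\big(\mathcal{Q}^{\theta F_n}\big)$, handled by two applications of Proposition \ref{prop:measure_SM_for_towers} to the fixed partitions $\mathcal{P}\vee\mathcal{Q}$ and $\mathcal{Q}$, is in fact a cleaner justification of the Shannon--McMillan step than the paper's direct citation of that proposition, since the conditioning partition $\mathcal{Q}^{\theta F_n}$ there varies with $n$.
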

\begin{proof}
	Choose $ \epsilon >0$. For $n \in \NN$  let $\mu_{Z_n}(\cdot) = \mu(\cdot \mid Z_n)$ and
	\begin{equation}
	\tilde Z_{n,\epsilon} = \left\{y \in Z_n:~  
	\frac{1}{|\theta F_n|}\log\left[\mu(\mathcal{P}^{\theta F_n}(y)\mid \mathcal{Q}^{\theta F_n}(y))^{-1}\right] \le h_\mu(Y,T;\mathcal{P} \mid \mathcal{Q}^{\ZD})+ \epsilon/2 \right\}.
	\end{equation}

	We define
	\begin{equation}
	\mathcal{Q}'_n = \{Q  \in \mathcal{Q}^{\theta F_n}:~ \mu_{Z_n}(\tilde Z_{n,\epsilon} \cap Q) > (1- \epsilon)\mu_{Z_n}( Q)\}.
	\end{equation}
	Choose $Q \in \mathcal{Q}'_n$. Then for every $y \in \tilde Z_{n,\epsilon} \cap Q$,
	$$\mu\left(\mathcal{P}^{\theta F_n}(y)  \cap Q \right) \ge e^{-(h_\mu(Y,T ; \mathcal{P}\mid \mathcal{Q}^{\ZD}) + \epsilon/2)|\theta F_n|}\mu(Q).$$
	Thus for every $Q \in \mathcal{Q}'_n$ we see that $Q \cap \tilde Z_{n,\epsilon}$ can be covered by at most
	$$e^{(h_\mu(Y,T ; \mathcal{P}\mid \mathcal{Q}^{\ZD}) + \epsilon/2)|\theta F_n|}$$ elements of $\mathcal{P}^{\theta F_n}$. 
	This means that 
	\begin{equation}
	\COV_{\mu,\epsilon,\mathcal{P}^{\theta F_n}}(Q \cap Z_{n})< \exp\left( |\theta F_n|\left(h_\mu(Y,T;\mathcal{P} \mid\mathcal{Q}^{\ZD})+\epsilon/2 \right)\right).
	\end{equation}
	Recall that by definition for $Q \in \mathcal{Q}'_n$ we have
	$$\mu\left(Q \cap \tilde Z_{n,\epsilon}\right) > (1-\epsilon)\mu(Q \cap Z_n).$$
	
	By Proposition \ref{prop:measure_SM_for_towers} there exists $N \in \NN$ so that for every $n >N$, $\mu_{Z_n}(\tilde Z_{n,\epsilon})  > 1- \epsilon^2$.
	By Lemma \ref{lem:partition_and_big_set} applied with $\nu = \mu_{Z_n}$, $A=\tilde Z_{n,\epsilon}$, $\mathcal{P}= \mathcal{Q}^{\theta F_n}$ and $\delta = \epsilon^2$, we see that for every $n >N$  we have $\mu(\bigcup \mathcal{Q}'_n \cap Z_n) \ge (1-\epsilon) \mu(Z_n)$. By definition of the relative $\epsilon$-covering, we see that for $n >N$ \eqref{eq:rel_COV_small1} holds.
\end{proof}

\section{Specification and flexible sequences}\label{sec:flexible_sequences}
We now turn our attention from ergodic theory and measurable dynamics to topological dynamics.
Recall that $(X,S)$ is a topological $\ZD$ dynamical system. This means that $X$ is a compact metric space with a metric $d_X:X \times X \to \mathbb{R}_+$ and 
 for every $\mi \in \ZD$, $S^{\mi}:X \to X$ is a homeomorphism so that
$ S^{\mi + \mj}= S^{\mi} \circ S^{\mj}$ for every $\mi,\mj \in \ZD$.

\subsection{Flexible sequences}\label{subsection:Flexible_sequences}\index{Definitions and notation introduced in Section 4 and Section 5!flexible sequences}
Let $(\epsilon_n)_{n=1}^\infty$ and  $(\delta_n)_{n=1}^\infty$ be sequences of numbers between $0$ and $1$, both decreasing  to $0$, and so that
\begin{equation}\label{eq:n_delta_n_to_infty}
\lim_{n\to \infty}n\delta_n=\infty.
\end{equation}

Throughout most of the paper the sequences  $(\epsilon_n)_{n=1}^\infty$ and  $(\delta_n)_{n=1}^\infty$ will be fixed in the background.

For $n,m \in \NN$, we will use the notation \index{Definitions and notation introduced in Section 4 and Section 5!$n \ll m$} $n \ll m$  to mean that ``$m$ is much bigger than $n$'', in some manner that takes into account the sequences $(\epsilon_n)_{n=1}^\infty$ and  $(\delta_n)_{n=1}^\infty$.

Formally it can be defined as follows:
\begin{equation}\label{eq:n_ll_m}
n \ll m \mbox{ iff }
\epsilon_m < \frac{1}{10^6 d}\epsilon_n\cdot \frac{1 -\epsilon_n}{|F_n|},~ \delta_m < \frac{1}{10^6 d}\delta_n \cdot \frac{1-\epsilon_n}{|F_n|} \mbox{ and } 
n^2 < \frac{\delta_m}{1000 d}m.
\end{equation}
The notation $n\ll m$ obscures the dependence on the sequences  $(\epsilon_n)_{n=1}^\infty$ and  $(\delta_n)_{n=1}^\infty$, but it will simplify the expressions appearing in later parts of the proof.

Recall that  $(X,S)$ is a  topological $\ZD$ dynamical system.
Let $\C =(C_n)_{n=1}^\infty \in (2^X)^{\NN}$  be a sequence of finite subsets of $X$, namely $C_n \subset X$ for every $n$.

	Given $x \in X$ and $K \subset \ZD$, we say that it \index{Definitions and notation introduced in Section 4 and Section 5!$x \in X$, shadows $W\in C_k^{K}$}shadows $W  \in C_k^{K}$ if
\begin{equation}\label{eq:x_interpulate}
d_X^{F_{k}}(S^{\mi}(x),W_\mi ) <  \frac{1}{4}\epsilon_{k} \mbox{ for every  } \mi \in K.
\end{equation}
	
Given $k,n \in \NN$ with $k \ll n$, let $\Inter_{n,k}(\C)$\index{Definitions and notation introduced in Section 4 and Section 5!$\Inter_{n,k}(\C)$} denote the collection of  subsets 
$ W  \in C_k^{K}$, where $K \subset (1-\delta_n)F_n$ is $(1+\delta_k)F_k$-spaced.
Let 
$\Inter_n(\C)= \bigcup_{k \ll n}\Inter_{n,k}(\C)$\index{Definitions and notation introduced in Section 4 and Section 5!$\Inter_n(\C)$}.

	Let $\C =(C_n)_{n=1}^\infty \in (2^X)^{\NN}$  be a sequence of finite subsets of $X$, namely $C_n \subset X$ for every $n$.
	We call the sequence $\C=(C_n)_{n=1}^\infty \in (2^X)^{\NN}$ \emph{flexible}  with respect to
	$((\epsilon_n)_{n=1}^\infty,(\delta_n)_{n=1}^\infty)$ if for every $k \in \NN$, the set $C_k \subset X$ is $(\epsilon_k,F_k)$-separated, and in addition
	for any $W \in \Inter_n(\C)$ there exists $x \in C_n$ that shadows $W$. 
		Thus, if $\mathcal C$ is a flexible sequence then for all $n\in \N$, there exists a function
		\begin{equation}
	\index{Definitions and notation introduced in Section 4 and Section 5!$\Ext_n$}	\Ext_n: \Inter_n(\C) \to C_n,
		\end{equation}
		so that
		\begin{equation}\label{eq:Ext_def}
		\Ext_n(W) ~\mbox{shadows } W \mbox { whenever } W \in \Inter_n(\C).
		\end{equation}
	
	The phrase ``$(X,S)$ admits a flexible sequence $\C=(C_n)_{n=1}^\infty \in (2^X)^\NN$'' formally means that there exists monotone decreasing sequences $(\epsilon_n)_{n=1}^\infty$ and $(\delta_n)_{n=1}^\infty$ both tending to $0$ so that \eqref{eq:n_delta_n_to_infty} holds  and so that $\C$ is a flexible sequence with respect to those.

	Given a  flexible sequence $\C \in (2^X)^\NN$, we say that it is a \index{Definitions and notation introduced in Section 4 and Section 5!flexible marker sequence}\emph{flexible marker sequence} (with respect to $(\epsilon_n)_{n=1}^\infty$ and $(\delta_n)_{n=1}^\infty$) if there exists $\epsilon_\star>0$ such that 
	\begin{equation}\label{eq:marker_property}
	\text{for all } x\in X^{\ZD},  \mbox{ and } n \in \NN, \mbox{ the set } \left\{\mi \in \ZD~:~ d_X^{F_n}(S^{\mi}(x),C_n) < {\epsilon_\star} \right\}
	\mbox{ is } (1- \delta_n)F_n \mbox{-spaced}. 
	\end{equation}
	
	For  a flexible sequence $\C= (C_n)_{n=1}^\infty \in (2^X)^{\NN}$ we define:
	\begin{equation}\label{eq:h_C_def}
\index{Definitions and notation introduced in Section 4 and Section 5!Entropy!$h(\C)$ for a flexible sequence $\C$}	h(\C)= \lim_{n\to \infty}\frac{1}{|F_n|}\log |C_n|
	\end{equation}
	and
	\begin{equation}\label{eq:h_epsilon_C_lim}
\index{Definitions and notation introduced in Section 4 and Section 5!Entropy!$h_{\epsilon}(\C)$ for a flexible sequence $\C$}	h_\epsilon(\C)= \liminf_{n \to \infty}\frac{1}{|F_n|}\log \sep_{\epsilon}(C_n,F_n).
	\end{equation}
The limit in \eqref{eq:h_C_def} always exists in the extended sense, as we show in Lemma \ref{lem:limit_h_C_exists} below.

From now, assume that   $(\epsilon_n)_{n=1}^\infty$ and $(\delta_n)_{n=1}^\infty$ are fixed, and that $\C=(C_n)_{n=1}^\infty$ is a flexible sequence for the topological $\ZD$ dynamical system $(X,S)$.

We also assume that  $\Ext_n:\Inter_n(\C) \to C_n$ satisfies \eqref{eq:Ext_def}.
\begin{lem}\label{lem:Ext_sep}
	Suppose that $n \ll m$, that $K \subset (1-\delta_m)F_m$ is  $(1+\delta_n)F_n$-spaced and that $W, W' \in C_n^K$.
	Then:
	\begin{equation}\label{eq:d_W_W_prime}
	d_X^{F_m}\left(\Ext_{m}(W),\Ext_{m}(W')\right) \ge \frac{1}{2}\max_{\mi \in K}d_X^{F_n}\left(W_\mi,W'_\mi\right),
	\end{equation}
	with equality iff $W=W'$, in which case both sides of the inequality are zero. In particular, if $W \ne W'$,
	$$d_X^{F_m}\left(\Ext_{m}(W),\Ext_{m}(W')\right) > \frac{1}{2}\epsilon_n.$$
\end{lem}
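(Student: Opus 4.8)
The statement is essentially an ``injectivity with quantitative separation'' claim for the extension map $\Ext_m$, and the natural strategy is to leverage the shadowing property \eqref{eq:Ext_def} together with the separation of $C_n$. First I would observe that $W, W' \in C_n^K$ with $K \subset (1-\delta_m)F_m$ being $(1+\delta_n)F_n$-spaced means $W, W' \in \Inter_m(\C)$ (taking $k=n$ in the definition of $\Inter_{m,k}(\C)$), so $\Ext_m(W)$ shadows $W$ and $\Ext_m(W')$ shadows $W'$; that is, for every $\mi \in K$,
\begin{equation*}
d_X^{F_n}\left(S^{\mi}(\Ext_m(W)), W_\mi\right) < \tfrac{1}{4}\epsilon_n, \qquad d_X^{F_n}\left(S^{\mi}(\Ext_m(W')), W'_\mi\right) < \tfrac{1}{4}\epsilon_n.
\end{equation*}

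\textbf{Main estimate.} Fix $\mi \in K$ realizing (or nearly realizing) the maximum on the right-hand side of \eqref{eq:d_W_W_prime}. Since $\mi + F_n \subseteq \mi + (1+\delta_n)F_n \subseteq F_m$ (using that $K \subset (1-\delta_m)F_m$ and the spacing condition keeps the translate inside $F_m$ — this is where I'd check the arithmetic that $(1-\delta_m)F_m + (1+\delta_n)F_n \subseteq F_m$, which holds when $n \ll m$), the Bowen metric $d_X^{F_m}$ dominates $d_X^{\mi + F_n}$, and by equivariance $d_X^{\mi + F_n}(x, x') = d_X^{F_n}(S^{\mi}(x), S^{\mi}(x'))$. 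Therefore, by the triangle inequality applied to the pseudo-metric $d_X^{F_n}$,
\begin{align*}
d_X^{F_m}\left(\Ext_m(W), \Ext_m(W')\right) &\ge d_X^{F_n}\left(S^{\mi}(\Ext_m(W)), S^{\mi}(\Ext_m(W'))\right) \\
&\ge d_X^{F_n}(W_\mi, W'_\mi) - d_X^{F_n}\left(S^{\mi}(\Ext_m(W)), W_\mi\right) - d_X^{F_n}\left(S^{\mi}(\Ext_m(W')), W'_\mi\right) \\
&> d_X^{F_n}(W_\mi, W'_\mi) - \tfrac{1}{2}\epsilon_n.
\end{align*}
Now I would split into cases according to whether $W = W'$ (in which case both sides are trivially $0$, and also the map is applied to the same argument so the two extensions coincide, giving equality) or $W \ne W'$. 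If $W \ne W'$ then there is some coordinate $\mi_0 \in K$ with $W_{\mi_0} \ne W'_{\mi_0}$; both lie in $C_n$, which is $(\epsilon_n, F_n)$-separated, so $d_X^{F_n}(W_{\mi_0}, W'_{\mi_0}) > \epsilon_n$. Choosing $\mi$ above so that $d_X^{F_n}(W_\mi, W'_\mi) = \max_{\mj \in K} d_X^{F_n}(W_\mj, W'_\mj) \ge \epsilon_n$, the displayed inequality gives $d_X^{F_m}(\Ext_m(W), \Ext_m(W')) > \epsilon_n - \tfrac{1}{2}\epsilon_n = \tfrac12\epsilon_n$, which is the ``in particular'' clause. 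For the sharper bound \eqref{eq:d_W_W_prime}: since $C_n$ is $(\epsilon_n,F_n)$-separated, any positive value of $d_X^{F_n}(W_\mi,W'_\mi)$ is already $> \epsilon_n$, so $d_X^{F_n}(W_\mi,W'_\mi) - \tfrac12\epsilon_n > \tfrac12 d_X^{F_n}(W_\mi,W'_\mi)$ precisely because $\tfrac12 d_X^{F_n}(W_\mi,W'_\mi) > \tfrac12\epsilon_n$, and combining with the chain above yields $d_X^{F_m}(\Ext_m(W),\Ext_m(W')) > \tfrac12\max_{\mi\in K} d_X^{F_n}(W_\mi,W'_\mi)$, with equality only in the degenerate case $W=W'$ where both sides vanish.

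\textbf{Anticipated obstacle.} The only genuinely nonroutine point is verifying the containment $\mi + (1+\delta_n)F_n \subseteq F_m$ (hence $\mi + F_n \subseteq F_m$) for $\mi \in K$, so that $d_X^{F_m}$ really sees the whole window $\mi + F_n$; this requires unpacking $K \subset (1-\delta_m)F_m$ together with the relation $n \ll m$ from \eqref{eq:n_ll_m} — specifically that $\lfloor(1-\delta_m)m\rfloor + \lfloor(1+\delta_n)n\rfloor \le m$, which follows comfortably from $n^2 < \tfrac{\delta_m}{1000d}m$ and hence $n \ll \delta_m m$. Everything else is the triangle inequality, equivariance of the Bowen metrics, and the separation/shadowing hypotheses. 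I would also remark that the equality characterization is immediate: if $W=W'$ the two arguments of $\Ext_m$ coincide so the left side is $0$, matching the right side; if $W \ne W'$ the strict inequality just proved shows equality fails.
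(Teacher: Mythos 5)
Your proof is correct and follows essentially the same route as the paper's: identify $W,W'$ as elements of $\Inter_m(\C)$, use the shadowing property of $\Ext_m$ together with the triangle inequality for the Bowen pseudo-metric at a coordinate $\mi\in K$ where the configurations differ, and then convert the additive loss of $\tfrac12\epsilon_n$ into the multiplicative factor $\tfrac12$ via the $(\epsilon_n,F_n)$-separation of $C_n$. Your explicit verification that $\mi+F_n\subseteq F_m$ and your choice of $\mi$ as the maximizer are points the paper leaves implicit, but they do not change the argument.
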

\begin{proof}
	Suppose  $W,W' \in C_n^K$. If $W=W'$ then it is trivial that both sides of the inequality in \eqref{eq:d_W_W_prime} are zero.
	So suppose
	$W \ne W'$. Choose $\mi \in K$ so that $W_\mi \ne W'_\mi$.
	Since the elements of $C_n$ are $(\epsilon_n, F_n)$-separated it follows that $d_X^{F_n}(W_\mi,W'_\mi) > \epsilon_n$.
	We have
	$d_X^{\mi +F_n}(\Ext_{m}(W),S^{-\mi}(W_\mi)) < \frac{1}{4}\epsilon_n $ and 
	$d_X^{\mi +F_n}(\Ext_{m}(W'),S^{-\mi}(W'_\mi)) < \frac{1}{4}\epsilon_n $. 
	By the triangle inequality 
	$$d_X^{F_m}(\Ext_{m}(W),\Ext_{m}(W')) \ge d_X^{F_n}(W_\mi,W'_\mi)-2 \frac{\epsilon_n}{4} >\frac{1}{2}d_X^{F_n}\left(W_\mi,W'_\mi\right),$$
	where in the last inequality we used that $d_X^{F_n}(W_\mi,W'_\mi) > \epsilon_n$.
	
\end{proof}
\begin{lem}\label{lem:C_m_lower_bound}
	Suppose $n \ll m$. Then for any $\eta > \epsilon_n$
	\begin{equation}\label{eq:sep_C_m_big1}
	\sep_{\frac{1}{2}\eta}(C_m, F_m)\geq \left(\sep_{\eta}(C_n,F_n)\right)^{\frac{|(1- 3\delta_m) F_m|}{|(1+2\delta_n) F_n|}}.
	\end{equation}
	In particular,
	\begin{equation}\label{eq:sep_C_m_big}
	\sep_{\frac{1}{2}\epsilon_n}(C_m, F_m)\geq |C_n|^{\frac{|(1- 3\delta_m) F_m|}{|(1+2\delta_n) F_n|}}.
	\end{equation}
\end{lem}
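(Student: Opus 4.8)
The statement to prove is Lemma~\ref{lem:C_m_lower_bound}: given $n \ll m$ and $\eta > \epsilon_n$, the separation of $C_m$ at scale $\tfrac12\eta$ in the Bowen box $F_m$ is at least $\big(\sep_\eta(C_n,F_n)\big)^{|(1-3\delta_m)F_m|/|(1+2\delta_n)F_n|}$. The idea is to build a large separated subset of $C_m$ by packing many translated copies of an $(\eta,F_n)$-separated subset of $C_n$ into a large box inside $(1-\delta_m)F_m$ and then applying the extension map $\Ext_m$ from the flexibility assumption, using Lemma~\ref{lem:Ext_sep} to control the resulting separation from below.

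\textbf{Key steps.} First, fix a set $D \subset C_n$ with $|D| = \sep_\eta(C_n,F_n)$ that is $(\eta,F_n)$-separated. Second, choose a maximal $(1+\delta_n)F_n$-spaced set $K \subset (1-\delta_m)F_m$; a volume/packing estimate shows one may take $K$ with $|K| \ge |(1-3\delta_m)F_m| / |(1+2\delta_n)F_n|$ — the numerator shrinks $(1-\delta_m)F_m$ slightly to leave room for the ``thickened'' blocks $\mi + (1+2\delta_n)F_n$ to fit inside $(1-\delta_m)F_m$, and the denominator is the volume each such block occupies. Here I would use that $n \ll m$ guarantees $n$ is small enough relative to $\delta_m m$ so that this packing is possible with the stated count (this is exactly the kind of estimate the $n \ll m$ relation \eqref{eq:n_ll_m} is designed to feed). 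Third, observe that every element $W$ of $D^K \subset C_n^K$ lies in $\Inter_{m,n}(\C)$ (since $K \subset (1-\delta_m)F_m$ is $(1+\delta_n)F_n$-spaced and $D \subseteq C_n$), so $\Ext_m(W) \in C_m$ is defined. Fourth, by Lemma~\ref{lem:Ext_sep}, for distinct $W, W' \in D^K$ we have $d_X^{F_m}(\Ext_m(W),\Ext_m(W')) \ge \tfrac12 \max_{\mi \in K} d_X^{F_n}(W_\mi, W'_\mi) > \tfrac12\eta$, since $W$ and $W'$ differ at some coordinate $\mi \in K$ and there $d_X^{F_n}(W_\mi, W'_\mi) > \eta$ because $D$ is $(\eta,F_n)$-separated. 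In particular $W \mapsto \Ext_m(W)$ is injective on $D^K$, so
\begin{equation*}
\sep_{\frac12\eta}(C_m,F_m) \ge |D^K| = |D|^{|K|} \ge \big(\sep_\eta(C_n,F_n)\big)^{|(1-3\delta_m)F_m|/|(1+2\delta_n)F_n|}.
\end{equation*}
Finally, \eqref{eq:sep_C_m_big} follows from \eqref{eq:sep_C_m_big1} by taking $\eta \downarrow \epsilon_n$ (or any $\eta$ slightly above $\epsilon_n$) and noting $\sep_\eta(C_n,F_n)$ is bounded below by... actually more carefully: $\sep_{\epsilon_n}(C_n, F_n) = |C_n|$ since $C_n$ is itself $(\epsilon_n,F_n)$-separated, and one checks the exponent and scale match by a limiting or direct argument, so plugging $\eta = \epsilon_n + 0^+$ — or just observing $D = C_n$ works with $\eta$ any value for which $C_n$ is $(\eta, F_n)$-separated — yields the second inequality.

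\textbf{Main obstacle.} The crux is the combinatorial packing estimate in the second step: producing a $(1+\delta_n)F_n$-spaced subset $K$ of $(1-\delta_m)F_m$ of size at least $|(1-3\delta_m)F_m|/|(1+2\delta_n)F_n|$. A clean way is a greedy/volume argument: a maximal $(1+\delta_n)F_n$-spaced set $K$ in $(1-2\delta_m)F_m$ has the property that the balls $\mi + 2(1+\delta_n)F_n$ (with $\mi \in K$) cover $(1-2\delta_m)F_m$, while the balls $\mi + (1+\delta_n)F_n$ are disjoint and contained in $(1-\delta_m)F_m$ once $n \ll m$ ensures $(1+\delta_n)(2n+1) \le \delta_m m$ or similar. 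Comparing these two volume bounds and absorbing constants into the slightly-shrunk box $(1-3\delta_m)F_m$ and slightly-enlarged block $(1+2\delta_n)F_n$ gives the claimed count; the inequalities $\epsilon_m, \delta_m$ tiny and $n^2 < \delta_m m / (1000d)$ from \eqref{eq:n_ll_m} are exactly what makes the error terms negligible. I expect this to be the only place requiring genuine care; the rest is a direct application of flexibility and Lemma~\ref{lem:Ext_sep}.
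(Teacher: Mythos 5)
Your overall route is the same as the paper's: pack a $(1+\delta_n)F_n$-spaced family $K$ into $(1-\delta_m)F_m$, place an $(\eta,F_n)$-separated subset of $C_n$ at each site, apply $\Ext_m$, and invoke Lemma~\ref{lem:Ext_sep} to get $(\tfrac12\eta,F_m)$-separation and injectivity; the deduction of \eqref{eq:sep_C_m_big} from the fact that $C_n$ itself is $(\epsilon_n,F_n)$-separated is also fine.

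The one genuine problem is in the step you yourself flag as the crux: the greedy/maximal-set covering argument does \emph{not} yield the stated count. Maximality of $K$ gives that the translates $\mi+2(1+\delta_n)F_n$ cover the box, hence $|K|\gtrsim |(1-2\delta_m)F_m|/|2(1+\delta_n)F_n|$. Since $|2(1+\delta_n)F_n|\approx (4n)^d$ while $|(1+2\delta_n)F_n|\approx (2n)^d$, this is weaker than the claimed $|(1-3\delta_m)F_m|/|(1+2\delta_n)F_n|$ by a factor of essentially $2^d$ in the exponent, and that factor cannot be absorbed into the passage from $(1-2\delta_m)$ to $(1-3\delta_m)$ or from $(1+\delta_n)$ to $(1+2\delta_n)$, whose effect on volumes tends to $1$. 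The constant matters downstream: with a $2^d$ loss in the exponent, the argument in Lemma~\ref{lem:limit_h_C_exists} would only give $\lim_{\epsilon\to 0}h_\epsilon(\C)\ge 2^{-d}\limsup_n \frac{1}{|F_n|}\log|C_n|$, which is not enough to show the limit defining $h(\C)$ exists. The repair is what the paper does: take $K$ to be the explicit sublattice $(1-\delta_m)F_m\cap \lceil(2+3\delta_n)n\rceil\ZD$, which is $(1+\delta_n)F_n$-spaced and has cardinality at least $|(1-3\delta_m)F_m|/|(1+2\delta_n)F_n|$ directly (using $n\ll m$ and $n\delta_n\to\infty$ to handle rounding). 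With that substitution your proof is complete.
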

\begin{proof}
Fix $\eta >0$ and $n \ll m$ so that $\epsilon_n < \eta$. Define $$K=  (1- \delta_m)F_m \cap (2+ 3\delta_n)n \ZD.$$
	So $K\subset (1-\delta_m)F_m$ 
	is $(1+\delta_n)F_n$-spaced. For $0 < \eta$, let $X_\eta \subset C_n$ be 
	an $(\eta,F_n)$-separated set of maximal cardinality. By Lemma \ref{lem:Ext_sep}, the image of $X_\eta^K$ under the function $\Ext_m:\Inter_m(\C) \to C_m$ is $(\frac{1}{2}\eta,F_m)$-separated.
	This shows that $C_m$ contains an $(\frac{1}{2}\eta,F_m)$-separated set of size at least $|X_\eta|^{|K|}$.	
	The inequality \eqref{eq:sep_C_m_big1} follows because
	$$|K| \ge \frac{|(1- 3\delta_m) F_m|}{|(1+2\delta_n) F_n|}.$$
\end{proof}

\begin{lem}\label{lem:limit_h_C_exists}
	Let $\C= (C_n)_{n=1}^\infty \in (2^X)^{\NN}$ be a flexible sequence, then the limit in \eqref{eq:h_C_def} exists. Also, for every $\epsilon >0$, $h_\epsilon(\C) \in [0,+\infty)$ and $h(\C) \in [0,\infty]$.
	Furthermore, we have:
	\begin{equation}\label{eq:h_C_lim_h_C_epsilon}
	h(\C)= \lim_{\epsilon \to 0}h_\epsilon(\C).
	\end{equation}
\end{lem}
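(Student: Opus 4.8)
The statement asserts three things: that the limit in \eqref{eq:h_C_def} exists (possibly $=+\infty$), that $h_\epsilon(\C)$ is finite and $h(\C)\in[0,\infty]$, and that \eqref{eq:h_C_lim_h_C_epsilon} holds. The plan is to deduce all of this from the super-multiplicativity estimate in Lemma \ref{lem:C_m_lower_bound} together with the elementary bound $|C_n|\le\sep_{\epsilon_n/2}(X,F_n)$ coming from the fact that $C_n$ is $(\epsilon_n,F_n)$-separated in the compact space $X$.

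First I would record the trivial facts. Since $X$ is compact, for each $n$ and each $\epsilon>0$ the quantity $\sep_\epsilon(X,F_n)$ is finite; as $C_n$ is $(\epsilon_n,F_n)$-separated we get $|C_n|\le \sep_{\epsilon_n}(X,F_n)<\infty$, and likewise $\sep_\epsilon(C_n,F_n)\le\sep_\epsilon(X,F_n)<\infty$. Hence $\frac{1}{|F_n|}\log|C_n|$ and $\frac{1}{|F_n|}\log\sep_\epsilon(C_n,F_n)$ are well-defined nonnegative reals for every $n$, so $h_\epsilon(\C)=\liminf_n\frac{1}{|F_n|}\log\sep_\epsilon(C_n,F_n)\in[0,\infty)$ (finiteness will actually need the upper bound via topological entropy, see below), and $h(\C)\in[0,\infty]$ once the limit is shown to exist. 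Monotonicity in $\epsilon$ gives that $\epsilon\mapsto h_\epsilon(\C)$ is non-increasing, so $\lim_{\epsilon\to0}h_\epsilon(\C)=\sup_{\epsilon>0}h_\epsilon(\C)$ exists in $[0,\infty]$; and since $\sep_\epsilon(C_n,F_n)\le|C_n|$ for all $\epsilon$ we get $h_\epsilon(\C)\le\liminf_n\frac1{|F_n|}\log|C_n|$, so if the limit defining $h(\C)$ exists then $\lim_{\epsilon\to0}h_\epsilon(\C)\le h(\C)$ automatically.

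The heart of the matter is the reverse inequality and the existence of the limit, both of which I would extract from \eqref{eq:sep_C_m_big1}. Fix $\epsilon>0$ and pick $n$ with $\epsilon_n<\epsilon$ (possible since $\epsilon_n\downarrow0$); then for all $m$ with $n\ll m$,
\begin{equation*}
\frac{1}{|F_m|}\log\sep_{\epsilon/2}(C_m,F_m)\ \ge\ \frac{|(1-3\delta_m)F_m|}{|F_m|\,\cdot\,|(1+2\delta_n)F_n|}\log\sep_\epsilon(C_n,F_n).
\end{equation*}
Since $\delta_m\to0$ the ratio $|(1-3\delta_m)F_m|/|F_m|\to1$ as $m\to\infty$, so taking $\liminf_{m\to\infty}$ yields $h_{\epsilon/2}(\C)\ge\frac{1}{|(1+2\delta_n)F_n|}\log\sep_\epsilon(C_n,F_n)$. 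Now let $n\to\infty$ along the subsequence realizing the $\liminf$ for $h_\epsilon(\C)$, and use $|(1+2\delta_n)F_n|/|F_n|\to1$: this gives $h_{\epsilon/2}(\C)\ge h_\epsilon(\C)$, hence together with the opposite (trivial) monotonicity $h_{\epsilon/2}(\C)\ge h_\epsilon(\C)\ge h_{\epsilon'}(\C)$ for $\epsilon'\ge\epsilon$, the function is ``almost constant'': more precisely $h_\epsilon(\C)$ is non-increasing yet bounded below by $h_{\epsilon/2}(\C)$ for the halved parameter, forcing $\lim_{\epsilon\to0}h_\epsilon(\C)=\sup_\epsilon h_\epsilon(\C)=:h_\star$. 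Applying the same chain with $\epsilon$ replaced by $\epsilon_n$ and using $\sep_{\epsilon_n}(C_n,F_n)\ge ?$ — actually here use \eqref{eq:sep_C_m_big}: $\sep_{\epsilon_n/2}(C_m,F_m)\ge|C_n|^{|(1-3\delta_m)F_m|/|(1+2\delta_n)F_n|}$, so $\liminf_m\frac1{|F_m|}\log\sep_{\epsilon_m/2}(C_m,F_m)\ge\limsup_n\frac1{|F_n|}\log|C_n|$ after letting $m,n\to\infty$. Combined with $h_\epsilon(\C)\le\liminf_n\frac1{|F_n|}\log|C_n|$ and $h_\star=\sup_\epsilon h_\epsilon(\C)$, we conclude $\limsup_n\frac1{|F_n|}\log|C_n|\le h_\star\le\liminf_n\frac1{|F_n|}\log|C_n|$, which simultaneously shows the limit in \eqref{eq:h_C_def} exists, equals $h_\star$, and hence $h(\C)=\lim_{\epsilon\to0}h_\epsilon(\C)$. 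Finiteness of $h_\epsilon(\C)$ for each fixed $\epsilon>0$ follows from $\sep_\epsilon(C_n,F_n)\le\sep_\epsilon(X,F_n)$ and the fact that $\limsup_n\frac1{|F_n|}\log\sep_\epsilon(X,F_n)\le h(X,S)+\text{const}$ is finite (compactness), while $h(\C)\in[0,\infty]$ is then clear.

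The main obstacle is purely bookkeeping: one must carefully manage the two competing limit operations (the $\liminf$ in $n$ defining $h_\epsilon(\C)$ and the $\epsilon\to0$ limit) and verify that the ratios $|(1\pm c\delta_m)F_m|/|F_m|$ all tend to $1$ uniformly enough to pass the super-multiplicative inequality through both limits; the condition $\delta_m\downarrow0$ (and implicitly $n\ll m$ being achievable for every $n$, which needs $\epsilon_m,\delta_m\to0$ and $n^2\delta_m m\to\infty$, guaranteed by \eqref{eq:n_delta_n_to_infty}) is exactly what makes this work. No genuinely new idea beyond Lemma \ref{lem:C_m_lower_bound} is required.
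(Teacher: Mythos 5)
Your proof is correct and follows essentially the same route as the paper: both arguments hinge on Lemma \ref{lem:C_m_lower_bound} to get $h_{\epsilon_n/2}(\C)\ge\frac{1}{|(1+2\delta_n)F_n|}\log|C_n|$, combine this with the trivial bound $\sep_\epsilon(C_m,F_m)\le|C_m|$ and the monotonicity of $\epsilon\mapsto h_\epsilon(\C)$, and conclude by sandwiching $\limsup_n\frac{1}{|F_n|}\log|C_n|\le\lim_{\epsilon\to0}h_\epsilon(\C)\le\liminf_n\frac{1}{|F_n|}\log|C_n|$. The only differences are cosmetic (your extra remarks on finiteness via compactness, and a redundant detour through $h_{\epsilon/2}(\C)\ge h_\epsilon(\C)$, which is already immediate from monotonicity).
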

\begin{proof} The function  $\epsilon \mapsto h_{\epsilon}(\C)$ is monotone non-increasing on $(0,\infty)$ so the limit $\lim_{\epsilon\to 0}h_{\epsilon}(\C)$ exists.
	
	By Lemma \ref{lem:C_m_lower_bound}, if $n \ll m$ then
	$$ \frac{1}{|(1-3\delta_m)F_m|}\log \sep_{\frac{1}{2}\epsilon_n}(C_m,F_m) \ge \frac{1}{|(1+2\delta_n)F_n|}\log |C_n|.$$
	Thus, we have
	$$ h_{\frac{1}{2}\epsilon_n}(\C)=\liminf_{m \to \infty}\frac{1}{|{ F_m}|}\log \sep_{\frac{1}{2}\epsilon_n}(C_m,F_m) \ge \frac{1}{|(1+2\delta_n)F_n|}\log |C_n|.$$
	
	Taking $n \to \infty$ on both sides of the inequality, it follows that
	$$\lim_{\epsilon \to 0}h_\epsilon(\C)\ge \limsup_{n \to \infty}\frac{1}{|F_n|}\log |C_n|.
	$$
	On the other hand, $\sep_{\epsilon}(C_m,F_m)\le |C_m|$ for every $m$ so 
	$h_\epsilon(\C)\leq \liminf_{n\to \infty}\frac{1}{|F_n|}\log |C_n|$ for every $\epsilon >0$.
	This proves that the limit  in \eqref{eq:h_C_def} exists and \eqref{eq:h_C_lim_h_C_epsilon} holds.
\end{proof}

The formula \eqref{eq:h_C_lim_h_C_epsilon}  can be viewed as an alternative definition of $h(\C)$. In particular, it explains the fact that $h(\C) \le h(X,S)$ for any flexible sequence.

\subsection{Non-uniform specification implies a flexible marker sequence}

Our next goal is to show that non-uniform specification  implies the existence of a suitable flexible marker sequence. 

\begin{prop}\label{prop:specification_implies_flexible}
	For $d=1$ if $(X, S)$ has non-uniform specification then it has a flexible marker sequence $\C=(C_n)_{n=1}^\infty$ such that $h(\C)=h(X, S)$ and $\bigcup_{n=1}^\infty C_n$ is dense in $X$.
\end{prop}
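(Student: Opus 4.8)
We may assume $X$ is infinite (the finite case being degenerate), so there are $u\ne v$ in $X$, an $\epsilon_0>0$ and an $L_0\in\NN$ with $d_X^{F_{L_0}}(u,v)>\epsilon_0$; we will take $\epsilon_\star=\rho=\epsilon_0/8$. The plan is to fix at the outset monotone sequences $(\epsilon_n),(\delta_n)\downarrow0$ with $\delta_n\ge n^{-1/2}$ (hence $n\delta_n\to\infty$), with $(\epsilon_n)$ slow enough that $\limsup_n|F_n|^{-1}\log\sep_{\epsilon_n}(X,F_n)=h(X,S)$, with $g_n(\epsilon_n/8)\le\delta_n$ for every $n$ (so that non-uniform specification is available at precision $\epsilon_n/8$ for any $(1+\delta_n)F_n$-spaced family of centres), and with \eqref{eq:n_ll_m} in force; since $g_n(\epsilon)\to0$ for each fixed $\epsilon$, one may take $\delta_n=\max\{g_n(\epsilon_n/8),n^{-1/2}\}$ and pass to a non-increasing majorant. (If $g_k$ fails to tend to $0$ as $\epsilon\to0$ for some small $k$, the condition $g_k(\epsilon_k/8)\le\delta_k$ cannot be met there; this is harmless, as one may take $C_k$ a single suitable point for all $k$ below a fixed threshold, so that $\Inter_n(\C)$ never involves those levels.) The $C_n$ are defined recursively, so that when $C_n$ is constructed $\Inter_n(\C)$ is already known. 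The core ingredient is a \emph{marker template}: choosing $\ell_n\to\infty$ with $\ell_n^2\asymp\delta_n n$ and $k_n:=\lceil\log_2(6n/\ell_n)\rceil$ (so $k_n\ell_n=o(\delta_n n)$), I would use a de Bruijn word of order $k_n$ over the two letters $\{u,v\}$ together with a single application of specification to produce $r_n\in X$ whose orbit on $[-3n,3n]$ is, to within $\rho/2$, a concatenation of length-$\ell_n$ blocks, the $i$-th of which displays $u$ or $v$ on a window $F_{L_0}$ according to the $i$-th de Bruijn letter, with filler elsewhere.

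The purpose of $r_n$ is the \emph{de-synchronisation property}: for every integer $g$ with $1\le g\le2\lfloor(1-\delta_n)n\rfloor$, a positive fraction of the block positions $p$ in the overlap interval $[g-n,n]$ satisfy $d_X(S^{p}r_n,S^{p-g}r_n)>\epsilon_0$. Indeed, if $\ell_n\nmid g$ then a $u$- or $v$-block of $r_n$ meets filler of $S^{-g}r_n$ (or vice versa), while if $g=s\ell_n$ then $1\le s\le2n/\ell_n$ forces $s\not\equiv0\pmod{2^{k_n}}$, so the de Bruijn property makes the block contents of $r_n$ and of $S^{-g}r_n$ disagree within every window of $k_n$ consecutive blocks; since the overlap interval has length $\ge2\delta_n n\gg k_n\ell_n$, it contains such a window. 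Next I would assemble a pool $\widetilde C_n\subset X$, each element produced by one use of specification at precision $\min\{\epsilon_n/8,\rho\}$: (a) for each $W\in\Inter_n(\C)$, a point shadowing $W$ that moreover $\rho$-follows the orbit of $r_n$ on $[-3n,3n]$ outside the support of $W$; and (b) having chosen $m(n)\ll n$, $m(n)\to\infty$, with $|F_{m(n)}|^{-1}\log\sep_{\epsilon_{m(n)}}(X,F_{m(n)})>h(X,S)-1/n$, and fixed a maximal $(\epsilon_{m(n)},F_{m(n)})$-separated set $Z_n\subset X$, for each $(1+\delta_{m(n)})F_{m(n)}$-spaced $K\subset(1-\eta_n)F_n$ (with a fixed $\eta_n\downarrow0$, $\eta_n n\to\infty$) and each $\phi\colon K\to Z_n$, a point realising the values $\phi$ on $K$ (on windows $F_{m(n)}$) and $\rho$-following $r_n$ on $[-3n,3n]\setminus K$. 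Each such use of specification is legitimate once one reserves, between consecutive controlled orbit-segments, gaps of relative width tending to $0$: the controlled centres then form a $(1+\delta_j)F_j$-spaced set for an index $j$ with $g_j(\,\cdot\,)\le\delta_j$. Finally take $C_n$ to be a maximal $(\epsilon_n,F_n)$-separated subset of $\widetilde C_n$, obtained greedily.

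The verification then proceeds as follows. Separation is built in. If $W\in\Inter_{n,k}(\C)$, the corresponding point of (a) shadows $W$ with slack (precision $\epsilon_k/8<\tfrac14\epsilon_k$); if it was discarded it is within $\epsilon_n$ of some $q\in C_n$ in $d_X^{F_n}$, and since $k\ll n$ and $n\delta_n\to\infty$ give $(1-\delta_n)F_n+F_k\subseteq F_n$, the point $q$ still shadows $W$, at precision $\epsilon_k/8+\epsilon_n<\tfrac14\epsilon_k$. Density: taking $\vec 0\in K$ in family (b) and letting $\phi(\vec 0)$ range over $Z_n$ shows $C_n$ is $\epsilon_{m(n)}$-dense up to $\epsilon_n$, and $\epsilon_{m(n)}\to0$, so $\bigcup_nC_n$ is dense. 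Entropy: $C_n\subset X$ gives $h(\C)\le h(X,S)$ and the limit exists by Lemma~\ref{lem:limit_h_C_exists}, while, as in Lemma~\ref{lem:C_m_lower_bound}, the images under (b) of $\{\phi\colon K\to Z_n\}$ for a maximal such $K$ form a $(\tfrac12\epsilon_{m(n)},F_n)$-separated set of size $\sep_{\epsilon_{m(n)}}(X,F_{m(n)})^{|K|}$ (so none is discarded, as $\tfrac12\epsilon_{m(n)}>\epsilon_n$ eventually), and with $|K|\ge|(1-\eta_n)F_n|/|(1+2\delta_{m(n)})F_{m(n)}|$ and $\eta_n,\delta_{m(n)}\to0$ this gives $|F_n|^{-1}\log|C_n|\to h(X,S)$. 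Finally the marker property: if $\mi_1<\mi_2$ with $g:=\mi_2-\mi_1\le2\lfloor(1-\delta_n)n\rfloor$ both lie in $\{\mi:d_X^{F_n}(S^\mi x,C_n)<\epsilon_\star\}$, pick $y_t\in C_n$ with $d_X^{F_n}(S^{\mi_t}x,y_t)<\epsilon_\star$; then for every $p$ in the overlap $[g-n,n]$ outside the sparse data-support of $y_1$ and with $p-g$ outside that of $y_2$, the triangle inequality gives
\[
 d_X\bigl(S^{p}r_n,\,S^{p-g}r_n\bigr)\ \le\ \rho+d_X\bigl(S^{p}y_1,\,S^{p-g}y_2\bigr)+\rho\ <\ 2\epsilon_\star+2\rho\ =\ \epsilon_0,
\]
and there are enough such $p$ to contradict the de-synchronisation property; hence the marker set is $(1-\delta_n)F_n$-spaced.

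The hard part will be the template $r_n$ together with the accompanying combinatorics: one must make the de-synchronisation survive for \emph{all} shifts up to $\approx2n$, while (i) the data-carrying part of each point of $C_n$ stays sparse enough that $h(\C)=h(X,S)$, (ii) the part on which the point follows $r_n$ retains full asymptotic density in $F_n$, and (iii) the overlap intervals stay long enough to witness the de-synchronisation — this last being exactly where $\lim_n n\delta_n=\infty$, sharpened to $\delta_n\ge n^{-1/2}$, enters — all the while fitting the various shadowed segments (the data, the blocks of $r_n$, the segments prescribed by $\Inter_n(\C)$) into $F_n$ with spacings compatible with the gap functions $g_j$. The finitely many small levels, where $g_k$ may be large, are to be disposed of by ad hoc choices of $C_k$.
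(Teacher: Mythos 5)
Your overall architecture (a global marker template that de-synchronises with all of its own translates, sparse data-carrying windows filled from a separated set, then a maximal $(\epsilon_n,F_n)$-separated subset) is close in spirit to the paper's, and your verifications of flexibility, density, entropy and the reduction of the marker property to de-synchronisation of the template are sound. The divergence is in how the template $r_n$ is made to conflict with its translates: you use a de Bruijn word over a pair $u\ne v$ that is merely $(\epsilon_0,F_{L_0})$-separated at zero offset, whereas the paper (Lemmas \ref{lem:marker_triple} and \ref{lem:marker_patterns}) first manufactures points $\t x^{(0)},\t x^{(1)},\t x$ whose $F_{n_1}$-orbit segments have \emph{no approximate overlaps with themselves or with each other at any nonzero relative shift} in a range --- this uses the Downarowicz--Weiss theorem and positive entropy --- and then kills the remaining aligned shifts with a random binary word rather than a de Bruijn word.

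The gap is in your de-synchronisation claim for shifts $g$ with $\ell_n\nmid g$. Your stated reason --- ``a $u$- or $v$-block of $r_n$ meets filler of $S^{-g}r_n$'' --- proves nothing, because the filler is whatever specification happened to produce and is entirely uncontrolled; it may well $\rho$-resemble $u$ or $v$ there. Worse, when $|g\bmod\ell_n|\le 2L_0$ the $u/v$-windows of $r_n$ and $S^{-g}r_n$ overlap and you are comparing $S^j(u)$ with $u$ (or with $v$) for a small nonzero $j$; nothing in the hypothesis $d_X^{F_{L_0}}(u,v)>\epsilon_0$ prevents these from being $\rho$-close. The de Bruijn combinatorics only handles shifts that are multiples of the block length and not of the period $2^{k_n}$; for all other shifts you need the \emph{letters themselves} to be separated from their own and each other's nonzero translates, which is exactly the content of \eqref{eq:x_0_x_1_no_overlap}--\eqref{eq:x_0_1_no_overlap_t_x} and does not follow from $X$ being infinite (nor from picking any two separated points). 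To repair this you must either import Lemma \ref{lem:marker_triple} (positive entropy plus Downarowicz--Weiss), or derive the non-overlap property of the letters directly from non-uniform specification, and you must also shrink the filler to nothing (as the paper does by taking the window width $2n_1+1$ comparable to the block length $L<2.5n_1$) so that a non-aligned shift always forces a window-against-window conflict rather than a window-against-filler non-event. A secondary, fixable slip: the triangle inequality gives $d_X(S^pr_n,S^{p-g}r_n)>\epsilon_0-\rho$, not $>\epsilon_0$, on the conflicting blocks; your final marker computation still closes with this weaker bound since $\epsilon_0-\rho>2\epsilon_\star+2\rho$.
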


Burguet's theorem about non-uniform specification implying almost Borel  universality for $\mathbb{Z}$-actions (Theorem \ref{thm:spec_implies_universal}) follows directly from our Theorem \ref{thm:spec_sequence_implies_univesality} combined with Proposition \ref{prop:specification_implies_flexible} above.

\begin{remark}
	The only part of our proof of  Proposition \ref{prop:specification_implies_flexible} that is specific for $d=1$ is  our proof of Lemma \ref{lem:marker_triple} which uses a result of Downarowicz-Weiss \cite{MR2048214}. As an alternative, we  could have used a direct proof of the conclusion of Lemma \ref{lem:marker_triple} by strengthening the positive entropy assumption assuming and replacing it with non-uniform specification. This would work for any $d\ge 1$.
\end{remark}	

The main issue in our proof of Proposition \ref{prop:specification_implies_flexible} is to extract suitable ``markers''. A similar component appears in most proofs of ergodic universality. The reader is invited to compare our proof below with  the ``Marker Lemma'' of Quas and Soo  \cite[Lemma $24$]{MR3453367}.

The following basic  ``Marker Lemma'' says that for a system with positive entropy it is possible to find $F_n$-orbit segments that do not overlap with each other and have only trivial self overlaps.
\begin{lem}\label{lem:marker_triple}
	If $(X,S)$ is a topological $\Z$ dynamical system with positive topological entropy, then for all sufficiently small $\epsilon_1 >0$, if $n_1 \in \NN$ is sufficiently big there exist $\t x^{(0)},\t x^{(1)}, \t x \in X$ so that
	\begin{equation}\label{eq:x_0_x_1_no_overlap}
	d_X^{F_{n_1} \cap (i +F_{n_1})}(S^{-i}(\t x^{(0)}),\t x^{(1)}) > 2\epsilon_1 \mbox{ for all } i \in {\frac32}F_{n_1},
	\end{equation}
	\begin{equation}\label{eq:x_0_1_no_self_overlap}
	d_X^{F_{n_1} \cap (i +F_{n_1})}(S^{-i}(\t x^{(t)}),\t x^{(t)}) > 2\epsilon_1 \mbox{ for all } i \in {\frac32}F_{n_1}\setminus \{0\} \mbox{ and } t \in \{0,1\},
	\end{equation}
	and
	\begin{equation}\label{eq:x_0_1_no_overlap_t_x}
	d_X^{F_{n_1} \cap (i +F_{n_1})}(S^{-i}(\t x^{(t)}),\t x) > 2\epsilon_1 \mbox{ for all } i \in {\frac32}F_{n_1}\mbox{ and } t \in \{0,1\}.
	\end{equation}
	
\end{lem}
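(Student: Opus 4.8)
\textbf{Proof proposal for Lemma \ref{lem:marker_triple}.}

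The plan is to first use positive topological entropy to extract, via the Downarowicz--Weiss theorem \cite[Theorem $3$]{MR2048214}, a symbolic factor that carries positive entropy, and then do all the combinatorial work at the symbolic level where the marker construction is transparent. Concretely, positive entropy of $(X,S)$ means there is a finite clopen partition (equivalently a continuous factor map onto a subshift $(Z,\sigma) \subseteq \{0,\ldots,r-1\}^{\ZZ}$) whose topological entropy is positive; by Downarowicz--Weiss we may in fact take this factor to be a subshift with positive entropy, which in particular contains at least two distinct periodic orbits, or more usefully contains exponentially many words of each length $N$. Points with distinct images in $Z$ are $\epsilon_1$-separated in $X$ for some $\epsilon_1>0$ depending only on the factor map and the metric, by uniform continuity; so it suffices to produce the three ``marker'' points inside $Z$ with the required non-overlapping behavior measured in the symbolic (Hamming-on-windows) metric, and then pull back to $X$.

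In $Z$, the construction is the classical ``marker word'' trick. Since $Z$ has positive entropy, for $N$ large the number of admissible words of length $2N+1$ grows like $e^{cN}$ with $c>0$; in particular we can find three admissible words $u^{(0)}, u^{(1)}, u$ of length $2N+1$ (with $N = n_1$ up to a harmless adjustment) that pairwise disagree in a positive-density set of coordinates, and moreover each $u^{(t)}$, $t\in\{0,1\}$, disagrees with every nontrivial shift of itself on a window of overlap in a positive-density set of coordinates --- the latter is a standard counting fact: the number of length-$(2N+1)$ words that are ``nearly periodic'' (agree with some nontrivial shift of themselves outside a sparse set) is sub-exponential, so a positive-entropy subshift must contain words avoiding this degeneracy. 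Extending these words to full bi-infinite points $\tilde x^{(0)}, \tilde x^{(1)}, \tilde x \in Z$ (using admissibility of the subshift to complete on the left and right, e.g. by repeating a fixed admissible word or a fixed periodic point) and then pulling back through the factor map, inequalities \eqref{eq:x_0_x_1_no_overlap}, \eqref{eq:x_0_1_no_self_overlap}, \eqref{eq:x_0_1_no_overlap_t_x} hold with the corresponding $2\epsilon_1$, after shrinking $\epsilon_1$ once more to absorb the modulus of continuity of the factor map on the relevant finite windows.

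The main obstacle --- and the reason the lemma is stated only for $d=1$ and invokes Downarowicz--Weiss --- is precisely the step of guaranteeing a positive-entropy \emph{symbolic} factor: a general positive-entropy topological system need not have any nontrivial finite-entropy symbolic factor reflecting its entropy, but the Downarowicz--Weiss theorem provides, for any system with positive topological entropy, a symbolic (subshift) factor of positive entropy. Once that is in hand, the rest is the routine counting argument sketched above: bound the number of ``self-overlapping'' or ``mutually overlapping'' word-triples by a sub-exponential quantity and conclude that a positive-entropy subshift has triples avoiding all the bad overlap patterns simultaneously. One should be mildly careful that all the disagreement requirements are imposed on overlap windows $F_{n_1} \cap (i + F_{n_1})$ of possibly small size when $|i|$ is close to $\tfrac32 n_1$; here the point is that we only need \emph{some} coordinate in the overlap to witness a disagreement exceeding $2\epsilon_1$, and for the genuinely small overlaps (near the ends) one can arrange a fixed, explicitly disagreeing pattern at the word boundaries, so the counting is only needed for the bulk range of $i$. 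I expect the write-up to be short modulo citing \cite{MR2048214} and the standard word-counting lemma.
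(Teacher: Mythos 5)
Your reduction to a symbolic factor is the step that fails. A general positive-entropy topological $\Z$-system need not admit \emph{any} nontrivial subshift factor: a subshift is totally disconnected, so a connected space (e.g.\ $\mathbb{T}^2$ with a hyperbolic automorphism, which has positive entropy) has no subshift factor other than a single point, and likewise admits no nontrivial clopen partition. Hence your opening claim that positive entropy ``means'' there is a positive-entropy clopen partition, and the subsequent claim that Downarowicz--Weiss supplies a positive-entropy subshift factor, are both false. You have also misread the cited result: Theorem $3$ of \cite{MR2048214} is not an existence theorem for symbolic factors. As used in the paper it is a measure-theoretic non-recurrence statement: for any ergodic $\mu$ with $h_\mu(X,S)>0$ and all sufficiently small $\epsilon_1>0$ there is $\tilde h>0$ with
$\mu\left(\left\{x :~ \exists i\in F_{e^{\tilde h n}}\setminus\{0\} \mbox{ s.t. } d_X^{\frac12 F_n}(x,S^{i}(x))\le 2\epsilon_1\right\}\right)\to 0$.
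With the symbolic reduction gone, the word-counting part of your argument has nothing to act on for a general $X$.

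The paper's actual proof avoids symbolic dynamics entirely. Take an ergodic $\mu$ with $h_\mu(X,S)>0$ (variational principle), apply the non-recurrence statement above, and note that every overlap window $F_{n_1}\cap(i+F_{n_1})$ with $i\in\frac32 F_{n_1}$ contains an interval of length at least $n_1/2$. Then choose $x$ $\mu$-randomly and set $\t x=S^{n_1}(x)$, $\t x^{(0)}=S^{3n_1}(x)$, $\t x^{(1)}=S^{5n_1}(x)$. Because all three points lie on a single orbit with bounded spacing, each of \eqref{eq:x_0_x_1_no_overlap}, \eqref{eq:x_0_1_no_self_overlap} and \eqref{eq:x_0_1_no_overlap_t_x} reduces to the single assertion that $x$ is not $2\epsilon_1$-close (in the relevant Bowen metric) to any nontrivial shift of itself within a bounded range, which holds with high probability. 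This collapsing of all the pairwise and self-overlap conditions into one non-self-recurrence condition is the idea your proposal is missing; to repair your approach you would have to abandon the symbolic factor and work directly with the measure-theoretic statement, at which point you are essentially led back to the paper's argument.
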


\begin{proof} 
	From a result of  Downarowicz and Weiss \cite[Theorem $3$]{MR2048214} it follows that for any $\mu \in \Prob_{e}(X,S)$ for which  $h_\mu(X,S) >0$, if $\epsilon_1 >0$ is sufficiently small there exists $\tilde h>0$ such that
	$$\lim_{n \to \infty}\mu \left(\left\{x \in X:~ \exists i \in F_{e^{\tilde h n}} \setminus \{\vec{0}\} ~ \mbox{ s.t. } d_X^{\frac{1}{2}F_n}(x,S^{i}(x)) \le 2\epsilon_1 \right\}\right) =0.$$
	Because $F_{n} \cap (i + F_n)$ contains a translate of $\frac{1}{2}F_n$ for all $i \in \frac{3}{2}F_n$,
	this easily implies that  for $\epsilon_1>0$ small enough to satisfy the above, if $n_1 \in \NN$ is sufficiently big, 
	we can choose $x$ randomly according to $\mu$ and with high  probability the choice of $\t x= S^{n_1}(x)$,  $\t x^{(0)}= S^{3n_1}(x)$ and $\t x^{(1)}=S^{5n_1}(x)$ will satisfy the conclusion of the lemma.
\end{proof}

The result below provides us with a sequence of markers whose orbits avoid a given open set. 
This unavoidably uses an extra property in addition to  positive entropy, in this case non-uniform specification. Indeed, a system satisfying the conclusion of this lemma cannot be minimal.
In this lemma we still assume $d=1$, but only so we can apply Lemma \ref{lem:marker_triple}. 
\begin{lem}\label{lem:marker_patterns}
	Suppose $(X,S)$ has non-uniform specification. Then  there exists $\epsilon_1 >0$, $n_1\in \N$ and $\t x \in X$ so that for every $\delta >0$ and every sufficiently big  $n \in \NN$ there exists $x^{(n)} \in X$ with the following properties:
	\begin{equation}\label{eq:t_x_far_from_x_n}
	d_X^{F_{n_1}}\left(S^{\mi}(x^{(n)}),\t x\right) > \epsilon_1 \mbox{ for all } \mi \in   \Z,
	\end{equation}
	\begin{equation}\label{eq:x_n_no_short_periods}
	d_X^{(\mj +F_n) \cap (\mi + F_n)}\left(S^{-\mi}(x^{(n)}),S^{-\mj}(x^{(n)})\right) > \epsilon_1 \mbox{ whenever }  \mi-\mj  \in  (2-\delta)F_n \mbox{ and } \mi \ne \mj.
	\end{equation}
	
\end{lem}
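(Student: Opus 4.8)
The strategy is to build $x^{(n)}$ by using non-uniform specification to shadow a carefully chosen collection of orbit segments, arranged so that along a very long stretch of $\ZZ$ we see copies of the segments $\t x^{(0)}, \t x^{(1)}$ produced by Lemma \ref{lem:marker_triple}, separated by generic filler. First I would fix $\epsilon_1 > 0$ and $n_1 \in \NN$ small/large enough so that Lemma \ref{lem:marker_triple} applies, obtaining $\t x^{(0)}, \t x^{(1)}, \t x \in X$ with the three non-overlap properties \eqref{eq:x_0_x_1_no_overlap}--\eqref{eq:x_0_1_no_overlap_t_x}. These points $\t x^{(0)}, \t x^{(1)}$ will play the role of "synchronizing words": because they do not overlap each other, themselves, or $\t x$ (except trivially), any place where we see a shadow of one of them at resolution better than, say, $\epsilon_1$, is unambiguously located. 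The point $\t x$ is the one whose $F_{n_1}$-orbit segment the markers must avoid, which will give property \eqref{eq:t_x_far_from_x_n}.

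\textbf{Construction of $x^{(n)}$.} Given $\delta > 0$ and $n$ large, I would choose a spacing parameter, e.g. place blocks of length roughly $n_1$ carrying copies of $\t x^{(0)}$ or $\t x^{(1)}$ at positions spaced out along a window of length a bit more than $2n$ (so that any two translates $\mi, \mj$ with $\mi - \mj \in (2-\delta)F_n$ and $\mi \neq \mj$ see \emph{different} patterns of these synchronizing blocks near their centers), and fill the remaining space using non-uniform specification with some choice of auxiliary points — the natural choice is to use $\t x$-avoiding material, which one can guarantee exists provided $(X,S)$ is not reduced to the orbit of $\t x$; here the positive entropy / non-uniform specification hypothesis is what rules out degeneracy. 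The non-uniform specification property lets me glue all these segments — finitely many, of bounded length $n_1$ (plus one long filler segment), pairwise separated by the specification gap $g_{n_1}(\epsilon)$ for a suitable small $\epsilon \ll \epsilon_1$ — into a single point $x^{(n)} \in X$ that $\epsilon$-shadows each of them on its respective window. I would take $\epsilon$ small enough (and hence the blocks spaced widely enough, which forces "$n$ sufficiently big") that shadowing error $\epsilon$ plus the separation $2\epsilon_1$ in Lemma \ref{lem:marker_triple} still leaves room to conclude strict inequalities with constant $\epsilon_1$.

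\textbf{Verifying the two properties.} For \eqref{eq:t_x_far_from_x_n}: I need $d_X^{F_{n_1}}(S^{\mi}(x^{(n)}), \t x) > \epsilon_1$ for every $\mi \in \ZZ$. At a position $\mi$ whose $F_{n_1}$-window lies inside one of the synchronizing blocks, this follows from \eqref{eq:x_0_1_no_overlap_t_x} (the block agrees up to $\epsilon$ with a translate of $\t x^{(0)}$ or $\t x^{(1)}$, which is $2\epsilon_1$-far from the corresponding piece of $\t x$); at a position inside the filler it follows from having chosen the filler material to be $\t x$-avoiding at scale, say, $2\epsilon_1$; and at a position straddling a boundary one combines the two. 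For \eqref{eq:x_n_no_short_periods}: suppose $\mi - \mj \in (2-\delta)F_n$, $\mi \neq \mj$. Because the synchronizing blocks are placed so that the relative pattern of $\t x^{(0)}$'s versus $\t x^{(1)}$'s visible within $F_n$ of the center is a "de Bruijn-like" or simply injective function of the center position over the range $(2-\delta)F_n$, there must be some coordinate $\mj + \mk \in (\mj + F_n)\cap(\mi + F_n)$ where, after the shift, one of $S^{-\mi}(x^{(n)})$, $S^{-\mj}(x^{(n)})$ sits on a $\t x^{(0)}$-block and the other on a $\t x^{(1)}$-block (or one sits on a synchronizing block and the other on filler); invoking \eqref{eq:x_0_x_1_no_overlap} (or the $\t x$-avoidance of the filler together with \eqref{eq:t_x_far_from_x_n}) at that coordinate and absorbing the $\epsilon$ shadowing errors gives the desired $> \epsilon_1$.

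\textbf{Main obstacle.} The delicate point is the bookkeeping of \emph{where} the synchronizing blocks are placed: I must simultaneously (a) make the pattern of $\{\t x^{(0)}, \t x^{(1)}\}$-blocks, read through an $F_n$-window, an injective function of the center over the whole range $\mi - \mj \in (2-\delta)F_n$ — this is exactly a de Bruijn-sequence / distinct-substrings requirement and forces the block density and the size of the surrounding window to grow with $n$; (b) keep the blocks far enough apart that the non-uniform specification gap $g_{n_1}(\epsilon)$ (which depends on $n_1$ and $\epsilon$, \emph{not} on $n$) fits between consecutive blocks, and also far enough that a single $F_{n_1}$-window never meets two synchronizing blocks at once, so that the "straddling" cases in both verifications reduce to one block plus filler; and (c) arrange the non-overlap clauses \eqref{eq:x_0_x_1_no_overlap}--\eqref{eq:x_0_1_no_overlap_t_x} to be applicable, i.e. ensure that whenever two shifted copies of $x^{(n)}$ have overlapping blocks, the overlap offset $i$ lies in $\tfrac32 F_{n_1}$ so Lemma \ref{lem:marker_triple} speaks to it — this is automatic if a single $F_{n_1}$-window meets at most one block. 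Once the placement is fixed consistently with (a)--(c), the two estimates are routine triangle-inequality arguments with the errors $\epsilon \ll \epsilon_1$, so the real content of the lemma is the combinatorial design of the marker pattern, which is why this is stated as a separate lemma and why the non-uniform specification hypothesis (rather than mere positive entropy, needed already in Lemma \ref{lem:marker_triple}) is invoked here to supply both the gluing and the $\t x$-avoiding filler.
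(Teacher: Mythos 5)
Your overall architecture (take the synchronizing segments $\t x^{(0)},\t x^{(1)},\t x$ from Lemma \ref{lem:marker_triple}, glue shadowed copies of them along $\ZZ$ via non-uniform specification, and arrange the $0/1$ labelling so that distinct offsets in $(2-\delta)F_n$ see distinct patterns) is the same as the paper's. But there is a genuine gap in your treatment of the ``filler''. Non-uniform specification controls the glued point $x^{(n)}$ \emph{only} on the specified windows $\mi_j + F_{n_j}$; on the complement it gives no information whatsoever. Property \eqref{eq:t_x_far_from_x_n} is a statement about \emph{every} $\mi \in \ZZ$, so at any coordinate not covered by a synchronizing block you cannot conclude anything — and your proposed remedy, ``$\t x$-avoiding material, which one can guarantee exists provided $(X,S)$ is not reduced to the orbit of $\t x$,'' is not a construction: you would need a point whose entire orbit stays $d_X^{F_{n_1}}$-far from $\t x$ over an arbitrarily long window, which does not follow from non-degeneracy, and even if you had it you would have to specify it as yet another block (at which point it is not filler). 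The same problem infects \eqref{eq:x_n_no_short_periods} whenever the overlap window lands in filler for both shifts. The paper's resolution is simply to have \emph{no} filler: the copies of $\t x^{(0)}/\t x^{(1)}$ are placed at every multiple of $L=\lceil(2+2g_{n_1}(\epsilon_1/2))n_1\rceil$, so consecutive $F_{n_1}$-blocks essentially tile $\ZZ$, every coordinate lies within $\frac32 F_{n_1}$ of some block center, and \eqref{eq:x_0_1_no_overlap_t_x} (resp. \eqref{eq:x_0_x_1_no_overlap}, \eqref{eq:x_0_1_no_self_overlap} for misaligned offsets $\mi-\mj\notin L\ZZ$) applies everywhere. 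If you want to salvage your sparse-placement picture, the only workable ``filler'' is more copies of $\t x^{(0)}$ — which collapses back to the dense arrangement.

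A secondary, non-fatal difference: where you propose a deterministic ``de Bruijn-like'' injective labelling (which you do not actually construct, and whose parameters you would have to match against the overlap length $\gtrsim \delta n$ and the period range $\lesssim 2n/L$), the paper chooses the labels $w \in \{0,1\}^{\ZZ/3n\ZZ}$ uniformly at random and kills the remaining aligned offsets $\mi-\mj \in (2-\delta)F_n \cap L\ZZ$ by a first-moment bound: each such offset forces $w$ to be periodic along a window of length $\gtrsim \delta n/L$, an event of probability at most $2^{-\delta n/(2L)}$, and a union bound over the $O(|F_n|^2)$ offsets leaves a positive-probability good $w$ for large $n$. This replaces the combinatorial design you identify as ``the real content of the lemma'' with a two-line probabilistic estimate; a deterministic aperiodic word would also work, but you would still need to supply it explicitly and check the period condition in the stated range.
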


\begin{proof}
It is enough to prove for the case $\delta<1$. Let $\epsilon_1>0$, $n_1 \in \NN$ and $\t x^{(0)},\t x^{(1)}, \t x \in X$ satisfy \eqref{eq:x_0_x_1_no_overlap} , \eqref{eq:x_0_1_no_self_overlap} and \eqref{eq:x_0_1_no_overlap_t_x},  as in Lemma \ref{lem:marker_triple}, and in addition, assume that $n_1 \in \NN$ is sufficiently big so that $g_{n_1}(\frac{\epsilon_1}{2}) < \frac{1}{2}\delta$,  where $g_{n_1}:(0,1) \to (0,+\infty)$ is the function that appears in the definition of non-uniform specification in Subsection \ref{subsec:non_uniform_spec}.
                Choose $n$ sufficiently big so that  ${ \delta}n > 3n_1$. Now let $ w \in \{0,1\}^{\Z/3n\Z}$ be chosen uniformly at random.
	Let $L= \lceil  ({2}+2g_{n_1}(\frac{\epsilon_1}{2})) n_1 \rceil$.  
	By almost weak specification, there exists $x^{(n)} \in X$ so that
	$$ d_X^{L\mi+ F_{n_1}}(x^{(n)},{S^{-L\mi}}(\t x^{(0)})) < \epsilon_1/2 \mbox{ if } w(\mi \mod 3n\Z) =0$$
	and
	$$ d_X^{L\mi+ F_{n_1}}(x^{(n)},{S^{-L\mi}}(\t x^{(1)})) < \epsilon_1/2 \mbox{ if } w(\mi \mod 3n\Z) =1.$$
	Then because $L < 2.5 n_1$, the properties \eqref{eq:x_0_x_1_no_overlap}  and \eqref{eq:x_0_1_no_self_overlap}  of   $\t x^{(0)},\t x^{(1)}$ ensure that 
	$$d_X^{(\mj +F_n) \cap (\mi + F_n)}\left(S^{-\mi}(x^{(n)}),S^{-\mj}(x^{(n)})\right) > \epsilon_1\text{ whenever }\mi - \mj \in {(2-\delta)F_n}\setminus L\Z,$$ and the property \eqref{eq:x_0_1_no_overlap_t_x} ensures
	that \eqref{eq:t_x_far_from_x_n} holds.
	If $\mi - \mj \in {(2-\delta)F_n}\cap L\Z$ then still, 
	$$d_X^{(\mj +F_n) \cap (\mi + F_n)}\left(S^{-\mi}(x^{(n)}),S^{-\mj}(x^{(n)})\right) > \epsilon_1$$
	 unless 
	$$w\left(\frac{\vec{k}-\mi}{L} \mod 3n\Z\right)=w\left(\frac{\vec{k}-\mj}{L} \mod 3n\Z\right) $$
for all $\vec{k}$ such that  $\vec{k} +F_{n_1} \subset (\mi +F_n) \cap (\mj + F_n)$, and $\vec{k} -\mi \in L\Z$.

	For each $\mi \ne\mj $ the probability of this event is less than 
	$$2^{-\frac{1}{2L}|(\mi + F_n) \cap (\mj + F_n)|}$$
	for large enough $n$. Since $\mi - \mj \in (2-\delta)F_n$ this probability is bounded above by 
	$$2^{-\frac{\delta}{2L}n}$$ whenever $n$ is large enough. Thus the probability of this event occurring for at least one such choice of $\mi, \mj$ is bounded above by
	$${|F_n|^{2}} \cdot 2^{-\frac{\delta}{2L}n},$$
	which goes to zero as $n$ tends to infinity. Thus there exists $ w \in \{0,1\}^{\Z/3n\Z}$  so that the corresponding $x^{(n)}$ satisfies \eqref{eq:x_n_no_short_periods}.
\end{proof}

\begin{proof}[Proof of Proposition \ref{prop:specification_implies_flexible}]
	
	For each $n \in \NN$, let  $g_n:(0,1) \to (0,+\infty)$ be a function that witnesses the  non-uniform specification of $(X,S)$ as in the definition above. { By modifying the functions $g_n:(0,1) \to (0,+\infty)$,  we can assume without loss of generality that  $g_n:(0,1) \to (0,+\infty)$ is a decreasing function for every $n \in \NN$, and that for every fixed $\epsilon >0$ the sequence $(g_n(\epsilon) )_{n=1}^\infty$ {decreases} $0$, but slowly enough so that
		$$
		\lim_{ n \to \infty} n g_n(\epsilon) = \infty.
		$$}
	We can thus find a sequence $(\epsilon_n)_{n=1}^\infty$ of positive numbers decreasing to zero at a slow enough rate such that $g_n(\epsilon_n/8)$ tends to zero, $\lim_{n \to \infty} n g_n(\epsilon_n/8)= +\infty$ and
	\begin{equation}\label{eq:sep_epsilon_n_to_h_X_S}
	\lim_{n\to \infty }\frac{1}{|F_n|}\log \sep_{\epsilon_n}(X, F_n)= h(X, S)
	\end{equation}
	and $\epsilon_1$ is small enough such that Lemma \ref{lem:marker_patterns} applies to it.
	
	Define the sequence $(\delta_n)_{N=1}^\infty$ by $\delta_n= 10 \cdot g_n(\epsilon_n/8)$.
	For $n \in \NN$, let $E_n = F_n\setminus (1-\frac{1}{8}\delta_n )F_n$. 
	Apply Lemma \ref{lem:marker_patterns} to find  $n_1 \in \NN$,  $\t x \in X$ such that for $n$ large enough by almost weak specification there exists $x^{(n)} \in X$ that satisfies  \eqref{eq:t_x_far_from_x_n} and 	
	\begin{equation}\label{eq:x_n_no_short_periods1}
	d_X^{E_n \cap (i + E_n)}\left(S^{-i}(x^{(n)}),x^{(n)}\right) > \epsilon_1 \mbox{ whenever }  i  \in \frac{1}{10}\delta_n F_n\setminus\{0\}.
	\end{equation}
	For $k\gg n_1$ let
	$$\tilde \epsilon_k:=\max\{\epsilon_{k_1}+ \epsilon_{k_2}+ \ldots + \epsilon_{k_r}~:~ n_1\ll k_1\ll k_2\ll \ldots k_r\ll k\}$$
	and 
	$$\tilde \epsilon_\infty:=\max\{\epsilon_{k_1}+ \epsilon_{k_2}+ \ldots + \epsilon_{k_r}~:~ n_1\ll k_1\ll k_2\ll \ldots k_r\}.$$
	By \eqref{eq:n_ll_m} it follows that $\tilde \epsilon_k<  \tilde \epsilon_\infty<\epsilon_{n_1}$.
	
	Further for $n\gg n_1$ for which $\delta_n<1/8$, let $X_n \subset X$ consist of all those $x \in X$ that satisfy the following properties:
	\begin{equation}\label{eq:X_n_decorated_on_corners}
	d_X^{E_n}(x,x^{(n)})< \frac{1}{4}\epsilon_{1},
	\end{equation}
	and 
	\begin{equation}\label{eq:X_n_t_x_appears_densely}
	\forall i \in (1- \frac17\delta_n)F_n~ \exists j+F_{n_1}\subset i + \frac{1}{64} \delta_n F_n \mbox { s.t } d_X^{ F_{n_1} }(S^j(x), \t x)<\frac{1}{8}\epsilon_1+  \tilde \epsilon_n
	\end{equation}
	and empty otherwise. The non-uniform specification property easily implies that $\bigcup_{n=1}^\infty X_n$ is dense in $X$,
	and as in the proof of Lemma \ref{lem:C_m_lower_bound} because $\delta_n n \to \infty$ as $n \to \infty$ it follows that for any $\epsilon>0$,
	$$ \liminf_{n \to \infty} \frac{\log \sep_{\epsilon_n}(X_n,F_n)}{\log \sep_\epsilon(X,F_n)} \ge 1.$$
	Together with \eqref{eq:sep_epsilon_n_to_h_X_S} this implies
	\begin{equation}\label{eq:sep_X_n_big}
	\lim_{n \to \infty}\frac{1}{|F_n|} \log \sep_{\epsilon_n} (X_n,F_n) = h(X,S).
	\end{equation}
To check the marker property we will verify that whenever $x,x' \in X_n$ and $i \in (2-2\delta_n)F_n \setminus \{0\}$ then 
	\begin{equation}\label{eq:marker_X_n}
	d_X^{F_n \cap (i + F_n)}\left(S^{-i}(x),x'\right) > \frac{1}{8}\epsilon_1.
	\end{equation}

	There are two cases to check: For  $i \in \frac{1}{10}\delta_n F_n \setminus \{0\}$,  use the fact that the orbits of both $x$ and $x'$ are $\frac{1}{4}\epsilon_1$-close to $x^{(n)}$ on $E_n$  in the sense of \eqref{eq:X_n_decorated_on_corners}. Along with \eqref{eq:x_n_no_short_periods1}, the marker property in this case follows by the triangle inequality.
	For  $i \in (2-2\delta_n)F_n \setminus \frac{1}{10} \delta_n F_n$, first note that there exists $i'\in (1- \frac{1}{7} \delta_n)F_n $ such that 
	$$i'+ \frac1{64} F_{n_1}\subset (i+E_n)\cap F_n.$$ 
	In this case \eqref{eq:marker_X_n}  follows by \eqref{eq:X_n_t_x_appears_densely}, \eqref{eq:X_n_decorated_on_corners} and \eqref{eq:t_x_far_from_x_n} and the triangle inequality.
	
	Fix $\epsilon_\star< \frac18\epsilon_1$ and let $C_n \subset X_n$ be an $(\epsilon_n,F_n)$-separated set of maximal cardinality. As proved above, we know that it is a marker sequence. We claim that $\C = (C_n)_{n=1}^\infty$ it is a flexible sequence for $(X,S)$ with respect to the sequences $(\epsilon_n)_{n=1}^\infty$ and $(\delta_n)_{n=1}^\infty$. 
	Indeed, suppose  $k \ll n$, that $K \subset (1-\delta_n)F_n$ is $(1+\delta_k)F_k$-spaced and that $W \in C_k^K$.
	
	Let $V \subset \ZZ$ be a maximal $(1+g_{k}(\epsilon_k/8))F_{k}$-spaced subset so that
	$$V + (1+g_{k}(\epsilon_k/8))F_{k}\subseteq (1-\frac{1}{7}\delta_n)F_n \setminus  \left(K+ (1+\delta_k)F_k\right).$$
	
	Then the non-uniform specification property implies that there exists $x'\in X$ such that
	$$d_X^{i +F_{k}}(x', S^{-i}(W_i) )<\epsilon_{k}/8\mbox{ for every } i \in K,$$
	$$d_X^{E_n}(x',x^{(n)}) < \epsilon_k/8 \text{ and }$$
	$$d_X^{i+F_{k}}(x',S^{-i}(\tilde x) )< \epsilon_{k}/8  \mbox{ for every } i \in V.$$
	
	Let us check that $x'\in X_n$. The second equation directly implies that \eqref{eq:X_n_decorated_on_corners} holds. So we have that check that \eqref{eq:X_n_t_x_appears_densely} holds as well.  Let $i \in (1- \frac17\delta_n)F_n$.
	
	There are two possibilities to consider. In the first case it might so happen that for some $k \in K$ we have that $k + (1+\delta_k) F_k\subset i + \frac{1}{64}\delta_nF_n$.
	Then there exists $j$ so that $j+F_{n_1}\subset i + (1+\delta_k) F_k$ such that
	$$d_X^{j+F_{n_1}}(x', S^{-j}\tilde x)<1/8 \epsilon_1+  \tilde \epsilon_{k}+ \epsilon_k/8<1/8 \epsilon_1+  \tilde \epsilon_n.$$    

	We are left with the case when for all $i' \in K$	, $i'+ (1+\delta_k) F_k$ is not contained in $ i + \frac{1}{64}\delta_nF_n$. In this case, because $k \ll n$ it follows that  $i + \frac{1}{64}\delta_nF_n$ contains a translate of 
	$2(1+g_k(\epsilon_k/8))F_k$ which is disjoint from $K+(1+\delta_k)F_k$.
	
	By the maximality property of $V$, there exists $j\in V$ such that $j+(1+g_{k}(\epsilon_k/8))F_{k}\subset i + \frac{1}{64}\delta_nF_n$ which proves \eqref{eq:X_n_t_x_appears_densely}.
	
	Now since $C_n$ is a maximal  $(\epsilon_n, F_n)$-separated subset of $X_n$, it is $(\epsilon_n,F_n)$-dense in $X_n$. This means that  there exists $x\in C_n$ such that $d_X^{F_n}(x, x')<\epsilon_n$. Then $x$  shadows $W$ (because $\epsilon_n < \frac{1}{10^6}\epsilon_k$).
	By \eqref{eq:sep_X_n_big} it follows that $h(\C)=h(X,S)$. 
	Also, because every $C_n$  is $(\epsilon_n,F_n)$-dense in $X_n$ and $\bigcup_{n=1}^\infty X_n$ is dense in $X$, it follows that $\bigcup_{n=1}^\infty C_n$ is dense in $X$.
\end{proof}

\section{Ergodic universality via approximate embeddings}\label{section:ergodic_universal}
In this section we apply the tools introduced in the previous sections to prove  full ergodic universality for systems admitting a flexible sequence, Proposition \ref{prop:full_universality}, which is a partial result towards Theorem \ref{thm:spec_sequence_implies_univesality}. 
Let us list our notational conventions and  standing assumptions:
\begin{itemize}
	\item  $\Y = (Y,T)$ is a free Borel $\ZD$ dynamical system.
	\item $(\epsilon_n)_{n=1}^\infty$ and $(\delta_n)_{n=1}^\infty$ are decreasing sequences of positive numbers tending to $0$, $\epsilon_\star>0$ so that $\lim_{ n \to \infty} \delta_n  n  = \infty$, and $\epsilon_n < \frac{1}{2}\epsilon_\star$ for every $n$. 
	\item For every $n \in \NN$, $Z_n \subset Y$ is the base of an $((1+\delta_n)F_n,\epsilon_n)$-tower. 
	\item There is a sequence of finite measurable partitions $(\mathcal{P}_k)_{k=1}^\infty$ that together generate the $\sigma$-algebra on $Y$, so that $\mathcal{P}_k \prec \mathcal{P}_{k+1}$.
	
	\item $(X,S)$ is a topological $\ZD$ dynamical system, and $d_X:X \times X \to \mathbb{R}_+$ is a compatible metric on $X$.
	\item $\X=(X^{\ZD},S)$ is the space of approximate orbits for $(X,S)$. 
	\item $\C= (C_n)_{n=1}^\infty \in (2^X)^{\NN}$ is a flexible marker sequence for $(X,S)$ with respect to $(\epsilon_n)_{n=1}^\infty$ and $(\delta_n)_{n=1}^\infty$, in the sense that it satisfies \eqref{eq:marker_property}.
	\item 
	We fix an element of $X$ and denote it by  $x_\star \in X$.
	\item Recall that the notation $n \ll m$ intuitively means that $n$ is ``much smaller than m'' and is formally defined by \eqref{eq:n_ll_m}.

\end{itemize}

As in most of the proofs for ergodic universality, the idea is to construct an embedding of $(Y,T,\mu)$  into $(X,S)$ as a limit of ``approximate embeddings'' of some sort. However, the target for our  ``approximate  embeddings'' is not $(X,S)$ itself, but rather  $\X=(X^{\ZD},S)$, which as we explained can be viewed as  ``the space of approximate orbits of (X,S)''. Also, in this section we fix $\mu \in \Prob_e(Y,T)$ with $h_\mu(Y,T) < h(\C)$.  Later on, when we prove universality in the ``almost Borel'' category we will consider $\mu \in \Prob_e(Y,T)$ as a ``variable'' and pay closer attention to the manner in which other parameters depend on $\mu$.

Let us introduce a bit more notation and definitions:
\begin{defn}
	For $F \subseteq \ZD$ and $w \in X^F$ let
	\begin{equation}
	\index{Definitions and notation introduced in Section 6!$[w]$}[w] := \left\{ w' \in X^{\ZD}:~w'|_F =w \right\}.
	\end{equation}
	Also, for $x \in X$ let
	\begin{equation}
	\index{Definitions and notation introduced in Section 6!$[x]_F$}[x]_F :=  \left\{ w' \in X^{\ZD}:~ w'_{\mi} =S^{\mi}(x) ~\forall \mi \in F \right\}.
	\end{equation}
\end{defn}

\begin{defn}
	Given $\rho \in \Mor(\Y,\X)$ we define the following Borel partition of $Y$:
	\begin{equation}
	\index{Definitions and notation introduced in Section 6!$\mathcal{P}_\rho$}\mathcal{P}_\rho := \left\{\rho^{-1} [\rho(y)]_{\{\vec{0}\}} :~ y \in Y\right\}.
	\end{equation}
\end{defn}

\begin{defn}
We say that  $\rho \in \Mor(\Y,\X)$  is a \index{Definitions and notation introduced in Section 6!symbolic morphism}symbolic morphism if $\rho(y)_0$ takes finitely many values as $y$ ranges over $Y$.
\end{defn}
Whenever $\rho \in \Mor(\Y,\X)$  is a symbolic morphism then $\mathcal{P}_\rho$ is a finite measurable partition. If  $\rho \in \Mor(\Y,\X)$  is a symbolic morphism it follows that the closure of $\rho(Y)$ in $X^{\ZD}$ is a zero dimensional compact metrizable space, this is our reason for the term ``symbolic morphism''.

Recall that $x_\star \in X$ is a fixed element of $X$. Here is what we mean by an ``approximate embedding'':

\begin{defn}
We say that $\rho \in \Mor(\Y,\X)$ is \emph{$n$-towerable} \index{Definitions and notation introduced in Section 6!$n$-towerable}if
   \begin{equation}\label{eq:rho_y_traces_x}
\forall y \in Z_n, ~\exists x \in C_n \mbox{ s.t. } \rho(y)_{\mi}= S^{\mi}(x) ~ \mbox{ for all } \mi \in F_{ n}
\end{equation}
	and
\begin{equation}
\rho(y)_{0} = x_\star~\forall y \in Y \setminus T^{F_n}Z_n.
\end{equation}
Fix $\mu \in \Prob_e(Y,T)$ and integers $k,n \in \NN$ and $\epsilon >0$.
A $(k,n,\epsilon,\mu)$-approximate embedding\index{Definitions and notation introduced in Section 6!$(k,n,\epsilon,\mu)$-approximate embedding} is an $n$-towerable map  $\rho \in \Mor(\Y,\X)$ such that there exists a Borel set $Z[\rho]\subset Z_n$\index{Definitions and notation introduced in Section 6!$Z[\rho]$} satisfying
	\begin{equation}\label{eq:Z_rho_1}
	\mu(Z_n \setminus Z[\rho]) \le \epsilon\mu(Z_n)
	\end{equation}
	and
	\begin{equation}\label{eq:rho_almost_injective}
	\forall y,y'\in Z[\rho] \mbox{ if }  \rho(y)_{\m 0} = \rho(y')_{\m 0} \mbox{ then } \mathcal{P}_k^{F_n}(y) = \mathcal{P}_k^{F_n}(y').
	\end{equation}
	Thus there exists a map \index{Definitions and notation introduced in Section 6!$\Psi_{k,n}$}$\Psi_{k,n}: X\to \P_k^{F_n}$ such that
\begin{equation}\label{eq:Psi_n_k_def}
\Psi_{k,n}(\rho(y)_{\m0})=\P_k^{F_n}(y)\text{ for all $y\in Z[\rho]$.}
\end{equation}
\end{defn}

\begin{remark}
	An  $n$-towerable map  $\rho \in \Mor(\Y,\X)$ is in particular a symbolic morphism because  $\rho(y)_{\m 0}$ takes values only in $\bigcup_{\mi \in F_n}T^{\mi}C_n \cup \{x_\star\}$, which is a finite set. 
\end{remark}
\begin{remark}\label{rem:approximate_emb_monotone}
	Because $\mathcal{P}_{j} \prec \mathcal{P}_{j+1}$ for all $j \in\NN$, whenever $\rho \in \Mor(\Y,\X)$ is a $(k,n,\epsilon,\mu)$-approximate embedding then it is also a $(k_0,n,\epsilon,\mu)$-approximate embedding for every $k_0 < k$.
\end{remark}
	
Given  a Borel function $\tilde \Phi: Y \to X$ and $n \in \NN$ we define \index{Definitions and notation introduced in Section 6!$\rho_{\tilde \Phi,n} $}$\rho_{\tilde \Phi,n} \in\Mor(\Y,\X)$  by
\begin{equation}\label{eq:rho_Phi_n_def}
\rho_{\tilde \Phi,n}(y)_\mi :=
\begin{cases}
S^{\mi-\mj}\left(\tilde \Phi(T^{\mj}(y))\right) & \mbox{ if } \mi - \mj \in F_n \mbox{ and } y \in T^{-\mj}Z_n\\
x_\star & \mbox{ if } T^{\mi}(y) \notin T^{F_n}Z_n.
\end{cases}
\end{equation}
Because $\{T^{-\mj}Z_n\}_{\mj \in F_n}$ are pairwise disjoint, $\rho_{\tilde \Phi,n} \in \Mor(\Y,\X)$ is well defined. Furthermore,  if $\tilde \Phi(Z_n) \subset C_n$ then
$\rho= \rho_{\tilde \Phi,n}$ satisfies  \eqref{eq:rho_y_traces_x}.

The following basic lemma asserts that $(k,n,\epsilon,\mu)$-approximate embeddings exist, as long as $n$ is sufficiently big in terms of the other parameters. 
\begin{lem}\label{lem:approximate_embedding_exists}
	For every $k \in\NN$ and $\epsilon >0$ there exists $N$ such that for every $n >N$ there exists a  $(k,n,\epsilon,\mu)$-approximate embedding.
\end{lem}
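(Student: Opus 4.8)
The plan is to build the approximate embedding from a Rokhlin tower plus a coding of the $\mathcal{P}_k$-process, using the relative covering-number estimates from Section \ref{sec:approx_cov} to guarantee that the coding fits into $C_n$.

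First I would fix $k$ and $\epsilon$. Since $h_\mu(Y,T) < h(\C)$, by Lemma \ref{lem:limit_h_C_exists} we may pick $\epsilon' < \epsilon$ small enough and $n$ large enough that $\frac{1}{|F_n|}\log|C_n|$ exceeds $h_\mu(Y,T;\mathcal{P}_k) + 3\epsilon'$; combined with Lemma \ref{lem:tower_cov}, this gives $\COV_{\mu,\epsilon',\mathcal{P}_k^{F_n}}(Z_n) < |C_n|$ for all large $n$ (here I use the standing assumption that $Z_n$ is the base of an $((1+\delta_n)F_n,\epsilon_n)$-tower, so in particular an $(F_n,\epsilon_n,\mu)$-tower, with $\epsilon_n \downarrow 0$, so Lemma \ref{lem:tower_cov} applies once $\epsilon_n < \epsilon'$). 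Fix such an $n$; this will be the $N$ of the statement, and the argument below works for every $n > N$.

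Next, from $\COV_{\mu,\epsilon',\mathcal{P}_k^{F_n}}(Z_n) < |C_n|$ choose a subfamily $\mathcal{G} \subseteq \mathcal{P}_k^{F_n}$ with $|\mathcal{G}| \le |C_n|$ and $\mu\big(\bigcup\mathcal{G} \cap Z_n\big) \ge (1-\epsilon')\mu(Z_n)$. Set $Z[\rho] := Z_n \cap \bigcup \mathcal{G}$, which satisfies \eqref{eq:Z_rho_1}. Since $|\mathcal{G}| \le |C_n|$ we may fix an injection $\iota : \mathcal{G} \hookrightarrow C_n$, and define a Borel map $\tilde\Phi : Y \to X$ by $\tilde\Phi(y) = \iota(\mathcal{P}_k^{F_n}(y))$ for $y \in Z[\rho]$ (this is Borel because $\mathcal{P}_k^{F_n}(y)$ takes only finitely many values on $Z_n$, each a Borel set), $\tilde\Phi(y) = c_0$ for some arbitrary fixed $c_0 \in C_n$ when $y \in Z_n \setminus Z[\rho]$, and $\tilde\Phi(y) = x_\star$ otherwise. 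Then $\tilde\Phi(Z_n) \subseteq C_n$, so $\rho := \rho_{\tilde\Phi,n} \in \Mor(\Y,\X)$ defined by \eqref{eq:rho_Phi_n_def} satisfies \eqref{eq:rho_y_traces_x} and $\rho(y)_0 = x_\star$ off $T^{F_n}Z_n$; hence $\rho$ is $n$-towerable. For the approximate-injectivity condition \eqref{eq:rho_almost_injective}: if $y,y' \in Z[\rho]$ have $\rho(y)_0 = \rho(y')_0$, then since both lie in $Z_n$ we get $\tilde\Phi(y) = \rho(y)_0 = \rho(y')_0 = \tilde\Phi(y')$, i.e. $\iota(\mathcal{P}_k^{F_n}(y)) = \iota(\mathcal{P}_k^{F_n}(y'))$, so by injectivity of $\iota$ we conclude $\mathcal{P}_k^{F_n}(y) = \mathcal{P}_k^{F_n}(y')$, which is exactly \eqref{eq:rho_almost_injective}. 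The map $\Psi_{k,n}$ of \eqref{eq:Psi_n_k_def} is then $\iota^{-1}$ extended arbitrarily off $\iota(\mathcal{G})$.

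The only genuinely delicate point — and hence the main thing to get right — is the passage $\COV_{\mu,\epsilon',\mathcal{P}_k^{F_n}}(Z_n) < |C_n|$, i.e. making sure the number of $\mathcal{P}_k^{F_n}$-names needed to cover all but an $\epsilon'$-fraction of $Z_n$ is genuinely below $|C_n|$; this is where the strict inequality $h_\mu(Y,T) < h(\C)$ gets used, via Lemma \ref{lem:tower_cov} (the tower version of Shannon–McMillan, Proposition \ref{prop:measure_SM_for_towers}) and Lemma \ref{lem:limit_h_C_exists}. Everything else is bookkeeping: checking $\rho_{\tilde\Phi,n}$ is a well-defined morphism (the translates $\{T^{-\mj}Z_n\}_{\mj\in F_n}$ are disjoint, already noted in the text), that $\tilde\Phi$ is Borel, and that the finitely-many-values condition makes $\rho$ a symbolic morphism.
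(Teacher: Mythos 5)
Your proof is correct and follows essentially the same route as the paper's: apply the tower Shannon--McMillan covering bound (Lemma \ref{lem:tower_cov}) together with $h_\mu(Y,T)<h(\C)$ to obtain a subfamily $\mathcal{G}\subseteq\mathcal{P}_k^{F_n}$ with $|\mathcal{G}|<|C_n|$ covering all but an $\epsilon$-fraction of $Z_n$, then code $\mathcal{G}$ injectively into $C_n$ and set $\rho=\rho_{\tilde\Phi,n}$. The only (immaterial) differences are your auxiliary $\epsilon'<\epsilon$ and the piecewise definition of $\tilde\Phi$ in place of the paper's single map $\Phi$ defined on all of $\mathcal{P}_k^{F_n}$.
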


\begin{proof}

	Apply  Lemma \ref{lem:tower_cov} with $\mathcal{P}=\mathcal{P}_k$. It follows that for sufficiently large $n$ there exists a subset $\mathcal{G} \subset \mathcal{P}_k^{F_n}$ such that
	\begin{equation}\label{eq:mu_G_big}
	\mu(Z_n \cap \bigcup\mathcal{G}) \ge (1-\epsilon)\mu(Z_n)
	\end{equation}
	and
	\begin{equation}
	|\mathcal{G}| \le \exp\left(|F_n|\frac{1}{2}(h_\mu(Y,T)+h(\C))\right).
	\end{equation}
	Recall that $h_\mu(Y,T) < h(\C)$,
	so 
	if $n$ is sufficiently big then $|C_n| > \exp\left(|F_n| \frac{1}{2}(h_\mu(Y,T)+h(\C))\right)$. This implies that
	\begin{equation}\label{eq:size_G_small}
	|\mathcal{G}| < |C_n|.
	\end{equation}
	Let $\Phi:\mathcal{P}_k^{F_n} \to C_n$ be a function such that the restriction to $\mathcal{G}$ is  injective and
	$\Phi(\mathcal{G}) \cap \Phi(\mathcal{P}_k^{F_n} \setminus \mathcal{G}) =\emptyset$, and let
	$\tilde \Phi : Y \to C_n$ be given by
	\begin{equation}
	\tilde \Phi(y) = \Phi(\mathcal{P}_k^{F_n}(y)).
	\end{equation}
	Let $\rho = \rho_{\tilde \Phi,n} \in \Mor(\Y,\X)$  be given by \eqref{eq:rho_Phi_n_def}. Then $\t\Phi(Z_n) \subset C_n$ so  \eqref{eq:rho_y_traces_x} is satisfied. Because $\Phi\mid_\mathcal{G}$ is injective, if we set $Z[\rho] = \bigcup \mathcal{G}$ then  \eqref{eq:rho_almost_injective} will also be satisfied. This shows that the map $\rho \in \Mor(\Y,\X)$ is indeed a $(k,n,\epsilon,\mu)$-approximate embedding.
	
\end{proof}	
	
\begin{defn}
For $\rho,\tilde \rho \in \Mor(\Y,\X)$, and $\epsilon >0$, $n \in \NN$ let 
			\begin{equation}\label{eq:D_n_epsilon_rho_def}
		\index{Definitions and notation introduced in Section 6!$D_{n,\epsilon}[\rho,\tilde \rho]$}	D_{n,\epsilon}[\rho,\tilde \rho] := \left\{y \in Z_n:~ d_X^{F_{n}}(\rho(y),\tilde \rho(y)) \ge \epsilon \right\}.
			\end{equation}	
\end{defn}

For future reference, we write the following formula, which is a direct consequence of the definition: 
\begin{equation}\label{eq:D_n_n_0}
\forall n>n_0~ \mbox{ and } \rho,\tilde \rho \in \Mor(\Y,\X),~D_{n_0,\epsilon}[\rho,\tilde \rho] \subseteq \left(Y \setminus T^{F_{n-n_0}}Z_n\right) \cup T^{F_{n-n_0}}(D_{n,\epsilon}[\rho,\tilde \rho] ).
\end{equation}
Also, for future reference we write the following simple consequence of the triangle inequality:
\begin{equation}\label{eq:D_triangle}
D_{n,\epsilon_1 + \epsilon_2}[\rho_a,\rho_c] \subset D_{n,\epsilon_1}[\rho_a,\rho_b] \cup D_{n,\epsilon_2}[ \rho_b,\rho_c].
\end{equation}

The following lemma asserts that every $(k,n,\epsilon,\mu)$-approximate embedding $\rho$ admits a ``stable approximate inverse'', in the sense that ``for most'' $y \in Y$  it is possible to recover $\mathcal{P}_k(y)$ from  a ``sufficiently good approximation'' of $\rho(y)$. The map  $\tilde \rho \in \Mor(\Y,\X)$ in the statement of the lemma below plays the role of this  ``sufficiently good approximation''.  
\begin{lem}\label{lem:approximate_inverese}
	Let   $\rho \in \Mor(\Y,\X)$ be a $(k,n_0,\epsilon,\mu)$-approximate embedding, and suppose that $\tilde \rho \in \Mor(\Y,\X)$.
	Then for every $y \in  T^{(1-2\delta_{n_0})F_{n_0}}\left(Z[\rho] \setminus D_{n_0, \frac{1}{2}\epsilon_{n_0}}[\rho,\tilde \rho]\right)$,	
	the value of	$\mathcal{P}_{k}(y)$ is determined by $\t \rho (y)\mid_{F_{2n_0}}$.
	\end{lem}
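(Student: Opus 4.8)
The plan is to unwind the definitions and trace how the value $\mathcal{P}_k(y)$ can be read off from $\tilde\rho(y)\mid_{F_{2n_0}}$, passing through $\rho$ as an intermediary. First I would fix $y$ in the set $T^{(1-2\delta_{n_0})F_{n_0}}\left(Z[\rho]\setminus D_{n_0,\frac12\epsilon_{n_0}}[\rho,\tilde\rho]\right)$, and write $y = T^{\mi}(y_0)$ with $y_0 \in Z[\rho]\setminus D_{n_0,\frac12\epsilon_{n_0}}[\rho,\tilde\rho]$ and $\mi \in (1-2\delta_{n_0})F_{n_0}$. Because $\rho$ is $n_0$-towerable, \eqref{eq:rho_y_traces_x} gives an $x \in C_{n_0}$ with $\rho(y_0)_\mj = S^\mj(x)$ for all $\mj \in F_{n_0}$; equivalently $\rho(y_0)\mid_{F_{n_0}} \in [x]_{F_{n_0}}$, so $\rho(y_0)_{\vec 0} = x$. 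Applying the map $\Psi_{k,n_0}$ from \eqref{eq:Psi_n_k_def} (using $y_0 \in Z[\rho] \subset Z_n$), we get $\Psi_{k,n_0}(x) = \mathcal{P}_k^{F_{n_0}}(y_0)$, and in particular this determines $\mathcal{P}_k(T^{\mi}(y_0)) = \mathcal{P}_k(y)$ since $\mi \in F_{n_0}$. So $\mathcal{P}_k(y)$ is determined by the single symbol $x = \rho(y_0)_{\vec 0}$ together with the offset $\mi$.

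The core of the argument is then to recover $x = \rho(y_0)_{\vec 0}$, and more to the point the relevant coordinate, from $\tilde\rho(y)\mid_{F_{2n_0}}$. The key point is that $\tilde\rho(y)\mid_{F_{2n_0}}$ records $\tilde\rho(T^\mj(y))_{\vec 0} = \tilde\rho(y)_\mj$ for all $\mj \in F_{2n_0}$; since $\mi \in (1-2\delta_{n_0})F_{n_0} \subseteq F_{n_0}$, the window $\mi + F_{n_0}$ sits inside $F_{2n_0}$, so $\tilde\rho(y)\mid_{\mi+F_{n_0}}$ is visible. Now $y_0 = T^{-\mi}(y) \notin D_{n_0,\frac12\epsilon_{n_0}}[\rho,\tilde\rho]$ means $d_X^{F_{n_0}}(\rho(y_0),\tilde\rho(y_0)) < \frac12\epsilon_{n_0}$, i.e. the pattern $\tilde\rho(y)\mid_{\mi+F_{n_0}}$ (a shifted copy of $\tilde\rho(y_0)\mid_{F_{n_0}}$) is within $\frac12\epsilon_{n_0}$ in the $d_X^{F_{n_0}}$-pseudometric of the genuine orbit block $[x]_{F_{n_0}}$ with $x\in C_{n_0}$. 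Since $C_{n_0}$ is $(\epsilon_{n_0},F_{n_0})$-separated, there is at most one such $x\in C_{n_0}$: if $x' \in C_{n_0}$ with $d_X^{F_{n_0}}([x']_{F_{n_0}}, \tilde\rho(y_0)) < \frac12\epsilon_{n_0}$ as well, the triangle inequality would give $d_X^{F_{n_0}}(x,x') < \epsilon_{n_0}$, forcing $x=x'$. Hence $x$ is uniquely determined by $\tilde\rho(y)\mid_{F_{2n_0}}$, and with it $\mathcal{P}_k(y)$.

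The one bookkeeping subtlety — and the step I expect to need the most care — is making sure the offset $\mi$ is itself recoverable from $\tilde\rho(y)\mid_{F_{2n_0}}$, since the argument above produces $\mathcal{P}_k(y)$ only after we know both $x$ and $\mi$. Here the marker property \eqref{eq:marker_property} of $\C$ (together with $\epsilon_{n_0} < \frac12\epsilon_\star$) is what does the work: the set of lattice sites $\mj$ at which $\tilde\rho(y)$ looks (within $\epsilon_\star$) like a $C_{n_0}$-orbit-block is $(1-\delta_{n_0})F_{n_0}$-spaced, so within the window $F_{2n_0}$ there is essentially a unique such occurrence consistent with being ``the'' marker centered near $-\mi$, pinning down $\mi$; combined with the separation of $C_{n_0}$ this uniquely identifies the pair $(x,\mi)$. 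One should phrase this cleanly: define $\Phi: X^{F_{2n_0}} \to \mathcal{P}_k \cup \{\text{undefined}\}$ by searching over $\mi \in (1-2\delta_{n_0})F_{n_0}$ and $x \in C_{n_0}$ for a pair with $d_X^{F_{n_0}}([x]_{F_{n_0}}, (\text{given pattern})\mid_{\mi+F_{n_0}}) < \frac12\epsilon_{n_0}$ and $\Psi_{k,n_0}(x)$ well-defined, then output the corresponding element of $\mathcal{P}_k$ picked out by $\Psi_{k,n_0}(x)$ at offset $\mi$; the analysis above shows this is unambiguous and returns $\mathcal{P}_k(y)$ for the $y$ in question, which is exactly the claimed determination.
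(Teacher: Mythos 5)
Your proposal is correct and follows essentially the same route as the paper's proof: the paper likewise first uses the marker property \eqref{eq:marker_property} (with the search restricted to $(1-2\delta_{n_0})F_{n_0}$ so the spacing forces uniqueness of the offset, via the function it calls $I_{n_0}$), then uses the $(\epsilon_{n_0},F_{n_0})$-separation of $C_{n_0}$ together with $y_0\notin D_{n_0,\frac12\epsilon_{n_0}}[\rho,\tilde\rho]$ to pin down the element of $C_{n_0}$ (via a nearest-point map $\pi_{n_0}$), and finally applies $\Psi_{k,n_0}$ and reads off the coordinate corresponding to the recovered offset. No gaps.
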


\begin{proof}
Let $\rho$  be a $(k,n_0, \epsilon,\mu)$-approximate embedding and $\tilde \rho \in \Mor(\Y,\X)$. We will show that there exists a Borel function $\Phi_{k,n_0}: X^{F_{2n_0}} \to \mathcal{P}_k$ so that  for every $\tilde \rho \in \Mor(\Y,\X)$ the following holds:
	\begin{equation}\label{eq:Phi_k_n_recover_stable}
	\Phi_{k,n_0}(\tilde \rho(y)\mid_{F_{2n_0}})= \mathcal{P}_k(y) \mbox{ for every } y\in T^{(1-2\delta_{n_0})F_{n_0}}\left(Z[\rho] \setminus D_{n_0, \frac{1}{2}\epsilon_{n_0}}[\rho,\tilde \rho]\right).
	\end{equation}
	Define  a function  ${I}_{n_0}:X^{F_{2n_0}} \to  F_{n_0}$ as follows:
	\begin{equation}
	\index{Definitions and notation introduced in Section 6!${I}_{n_0}(w)$}{I}_{n_0}(w) :=
	\min\left\{ \mi \in F_{n_0}:~ \exists x \in C_{n_0} \mbox{ s.t. } d_X^{F_{n_0}}(S^{\mi}(w),x) <  \epsilon_{n_0}/2 \right\}.
	\end{equation}
	The minimum in the definition of $I_{n_0}$ above is with respect to some fixed total order on $F_{n_0}$. If the set is empty we arbitrarily define $I_{n_0}(w)=\vec{0}$.
	Suppose $\mi \in (1-2\delta_{n_0})F_{n_0}$ and $T^{\mi}(y) \in Z_{n_0}$ then  $d_X^{F_{n_0}}(S^{\mi}(\rho(y)),C_{n_0})=0$. By the marker property \eqref{eq:marker_property} it follows that in this case $I_{n_0}(\rho(y))=\mi$. 
	We conclude that 
	\begin{equation}\label{eq:I_n_to_Z_n}
	T^{I_{n_0}(\rho(y))}(y) \in Z_{n_0}~ \forall ~ y \in T^{(1-2\delta_{n_0})F_{n_0}} Z_{n_0}.
	\end{equation}
	Furthermore, if $T^{\mi}(y) \in Z_{n_0}\setminus D_{n_0,\frac{1}{2}\epsilon_{n_0}}[\rho,\tilde \rho]$ then the same consideration using the marker property \eqref{eq:marker_property} also implies that $I_{n_0}(\tilde \rho(y))=\mi$.
	We conclude that
	\begin{equation}\label{eq:I_n_rho_tilde_rho}
	I_{n_0}(\tilde \rho(y))=I_{n_0}(\rho(y)) ~\forall ~y \in T^{(1-2\delta_{n_0})F_{n_0}} (Z_{n_0}\setminus D_{n_0, \epsilon_{n_0}/2}[\rho,\tilde \rho]).
	\end{equation}
	
	Let $\Psi_{k,n_0}:X \to \mathcal{P}_k^{F_{n_0}}$ satisfy \eqref{eq:Psi_n_k_def}. Define $\Phi_{k,n_0}:X^{F_{2n_0}} \to \mathcal{P}_k$ as follows:
	\begin{equation}
	\Phi_{k,n_0}(w)= \left(\Psi_{k,n_0}(\pi_{n_0}(S^{I_{n_0}(w)}(w)|_{F_{n_0}} ) )\right)_{-I_{n_0}(w)},
	\end{equation}
	where \index{Definitions and notation introduced in Section 6!$\pi_n$}$\pi_n:X^{F_n} \to C_n$ is a Borel function satisfying
	$$
	d_X^{F_n}(\pi_n(w),w) = \min_{x \in C_n}d_X^{F_n}(x,w) ~\forall ~ w \in X^{F_n}.
	$$
	By \eqref{eq:Psi_n_k_def} and \eqref{eq:I_n_to_Z_n} we have that $\Phi_{k,{n_0}}(\rho(y)\mid_{F_{2n_0}}) = \mathcal{P}_k(y)$ whenever $y \in T^{(1-2\delta_{n_0})F_{n_0}}Z[\rho]$.
	Also, if $ T^{\mi}(y) \in Z[\rho]\setminus D_{n_0,\frac{1}{2}\epsilon_{n_0}}[\rho,\tilde \rho]$ for some $\mi \in (1-2\delta_{n_0})F_{n_0}$ then $\Phi_{k,{n_0}}(\rho(y))= \Phi_{k,{n_0}}(\tilde \rho(y))$.
	We conclude that \eqref{eq:Phi_k_n_recover_stable} holds.

\end{proof}

\begin{lem}\label{lem:D_n_epsilon_D_m_epsilon}
	Suppose $1 \ll n \ll m$ and $\rho,\tilde \rho \in \Mor(\Y,\X)$ then:
	\begin{equation}\label{eq:D_n_epsilon_D_m_epsilon}
\mu\left(
	D_{n,\epsilon}[\rho,\tilde \rho] \mid Z_n
	\right) \le 2\epsilon_m + 8d \delta_m +
2 \mu\left(D_{m,\epsilon}[\rho,\t \rho] \mid Z_m \right).
	\end{equation}
\end{lem}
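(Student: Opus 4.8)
The plan is to relate the ``defect set'' $D_{n,\epsilon}[\rho,\tilde\rho]$ at scale $n$ to the defect set $D_{m,\epsilon}[\rho,\tilde\rho]$ at the larger scale $m$, using the fact that $Z_m$ is the base of an $((1+\delta_m)F_m,\epsilon_m)$-tower and that $n\ll m$ (so in particular $n^2 < \frac{\delta_m}{1000d}m$, which lets us fit $F_n$-translates inside $\delta_m F_m$). The key geometric observation is the inclusion \eqref{eq:D_n_n_0}, which here reads
$$
D_{n,\epsilon}[\rho,\tilde\rho] \subseteq \left(Y\setminus T^{F_{m-n}}Z_m\right)\cup T^{F_{m-n}}\left(D_{m,\epsilon}[\rho,\tilde\rho]\right).
$$
Restricting to $Z_n$ and conditioning on $Z_n$, I would estimate the conditional $\mu$-measure of each of the two pieces on the right.

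For the first piece, $\mu\left(Y\setminus T^{F_{m-n}}Z_m\mid Z_n\right)$: since $Z_m$ is the base of an $((1+\delta_m)F_m,\epsilon_m)$-tower, one has $\mu\left(Y\setminus T^{(1+\delta_m)F_m}Z_m\right)<\epsilon_m$, and the difference between $T^{(1+\delta_m)F_m}Z_m$ and $T^{F_{m-n}}Z_m$ is controlled by $\frac{|(1+\delta_m)F_m\setminus F_{m-n}|}{|(1+\delta_m)F_m|}$, which is $O(d\delta_m)$ once $n\ll m$ (here I use $n < \delta_m m$). However, to turn an unconditional estimate into one conditioned on $Z_n$ I need Lemma \ref{lem:eq:mu_tower_subset_cond} (or a variant of the argument there): the point is that $\mu(Z_n)$ is comparable to $\frac{1}{|(1+\delta_n)F_n|}$ since $Z_n$ is a tower base, so dividing by $\mu(Z_n)$ costs at most a factor $(1-\epsilon_n)^{-1}$, and the quantity $n^2 < \frac{\delta_m}{1000d}m$ guarantees $n$ is small compared to $\delta_m m$ so that the complement of the tower, sliced over the fibers above $Z_n$, is still small — this is essentially the content of Lemma \ref{lem:eq:mu_tower_subset_cond} applied with $\theta$ close to $1$ and $\delta\sim\delta_m$. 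This bounds the first term by roughly $\epsilon_m + 8d\delta_m$ after absorbing constants (the explicit $8d$ coming from the surface-to-volume ratio of $F_{m-n}$ inside $(1+\delta_m)F_m$, plus the $(1-\epsilon_n)^{-1}$ factor, which is at most $2$ since $\epsilon_n<\tfrac12$).

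For the second piece, $\mu\left(Z_n\cap T^{F_{m-n}}(D_{m,\epsilon}[\rho,\tilde\rho])\mid Z_n\right)$: I would apply the law of total probability over the $F_n$-translates of... no — rather, I would run the argument in the proof of Lemma \ref{lem:eq:mu_tower_subset_cond} in the opposite direction: write $D_{m,\epsilon}[\rho,\tilde\rho]\subseteq Z_m$, so $T^{F_{m-n}}(D_{m,\epsilon}[\rho,\tilde\rho])\subseteq T^{F_{m-n}}Z_m$, and $\mu(T^{F_{m-n}}Z_m\mid Z_n)$ is comparable (up to the $(1-\epsilon_n)^{-1}\le 2$ factor and up to the $O(d\delta_m)$ and $\epsilon_m$ error terms already accounted for) to $\mu(T^{F_{m-n}}Z_m)$; and since the $(1+\delta_m)F_m$-translates of $Z_m$ are pairwise disjoint, $\mu(T^{F_{m-n}}(D_{m,\epsilon}[\rho,\tilde\rho]))=|F_{m-n}|\mu(D_{m,\epsilon}[\rho,\tilde\rho])$ and $\mu(T^{F_{m-n}}Z_m)=|F_{m-n}|\mu(Z_m)$, so the conditional probability $\mu(T^{F_{m-n}}(D_{m,\epsilon}[\rho,\tilde\rho])\mid T^{F_{m-n}}Z_m)$ equals $\mu(D_{m,\epsilon}[\rho,\tilde\rho]\mid Z_m)$. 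Putting the two slices together using the first-piece estimate to pass from conditioning on $Z_n$ to conditioning on $Z_m$ (losing a factor $2$ from $(1-\epsilon_n)^{-1}$) gives the bound $2\mu(D_{m,\epsilon}[\rho,\tilde\rho]\mid Z_m)$, and collecting all error terms yields \eqref{eq:D_n_epsilon_D_m_epsilon}.

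I expect the main obstacle to be the bookkeeping of the conditional-probability manipulations — specifically, carefully justifying the passage from ``conditioned on $Z_n$'' to ``conditioned on $Z_m$'' via the nested-tower structure, which is exactly where Lemma \ref{lem:eq:mu_tower_subset_cond} and the hypothesis $n\ll m$ (through $n^2 < \frac{\delta_m}{1000d}m$ and $\epsilon_m,\delta_m$ being tiny relative to $\epsilon_n,\delta_n,|F_n|$) are essential, and where the specific constants $2\epsilon_m$, $8d\delta_m$, and the factor $2$ in front of $\mu(D_{m,\epsilon}[\rho,\tilde\rho]\mid Z_m)$ are pinned down. The geometric inclusion \eqref{eq:D_n_n_0} and the disjointness of tower translates do all the real work; the rest is estimating surface-to-volume ratios of boxes, which $n\ll m$ makes negligible.
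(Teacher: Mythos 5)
Your strategy --- condition the inclusion \eqref{eq:D_n_n_0} on $Z_n$ and estimate the two pieces separately --- is genuinely different from the paper's, and it has a gap in the first piece that prevents it from delivering the stated constants. Decompose $Y\setminus T^{F_{m-n}}Z_m$ into the slab $T^{(1+\delta_m)F_m\setminus F_{m-n}}Z_m$ of the $m$-tower and the complement $Y\setminus T^{(1+\delta_m)F_m}Z_m$ of the whole tower. For the slab, the argument of Lemma \ref{lem:eq:mu_tower_subset_cond} does apply and costs only the factor $(1-\epsilon_n)^{-1}\le 2$, giving the $O(d\delta_m)$ term as you say. But that lemma is of no help for the complement: it only controls conditional measures of sets of the form $T^{K}Z_m$, and since no relation is assumed between the positions of the $n$-tower and the $m$-tower, the only available estimate is $\mu\left(Y\setminus T^{(1+\delta_m)F_m}Z_m\mid Z_n\right)\le \epsilon_m/\mu(Z_n)\le \epsilon_m\,|(1+\delta_n)F_n|/(1-\epsilon_n)$, which by \eqref{eq:n_ll_m} is of order $\epsilon_n/(10^6 d)$ --- small, but vastly larger than the $2\epsilon_m$ you claim (recall $\epsilon_m<\epsilon_n(1-\epsilon_n)/(10^6 d|F_n|)$). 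You have conflated the two conditioning costs: $(1-\epsilon_n)^{-1}$ for slabs of the $m$-tower versus $1/\mu(Z_n)\approx|F_n|$ for the off-tower set. Your treatment of the second piece (running the Lemma \ref{lem:eq:mu_tower_subset_cond} argument with $D_{m,\epsilon}[\rho,\tilde\rho]\subseteq Z_m$ in place of $Z_m$) is fine and yields the correct factor $2\mu\left(D_{m,\epsilon}[\rho,\tilde\rho]\mid Z_m\right)$, but overall your route proves only a weaker inequality with an $\epsilon_n$-sized error in place of $2\epsilon_m$, not \eqref{eq:D_n_epsilon_D_m_epsilon} as stated.

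The paper sidesteps this by never conditioning the off-tower set on $Z_n$. It introduces the unconditioned, fattened defect set $\tilde D_{2n,\epsilon}=\{y\in Y: d_X^{F_{2n}}(\rho(y),\tilde\rho(y))\ge\epsilon\}$ and argues in two steps: first, since $T^{\mi}D_{n,\epsilon}[\rho,\tilde\rho]\subseteq\tilde D_{2n,\epsilon}$ for every $\mi\in F_n$ and the $F_n$-translates of $Z_n$ are pairwise disjoint, one gets $\mu\left(D_{n,\epsilon}[\rho,\tilde\rho]\mid Z_n\right)\le (|F_n|\mu(Z_n))^{-1}\mu(\tilde D_{2n,\epsilon})\le 2\mu(\tilde D_{2n,\epsilon})$; second, the \emph{unconditional} measure $\mu(\tilde D_{2n,\epsilon})$ is bounded by $\epsilon_m+4d\delta_m+\mu\left(D_{m,\epsilon}[\rho,\tilde\rho]\mid Z_m\right)$ by decomposing over the $m$-tower and using $\tilde D_{2n,\epsilon}\cap T^{\mi}Z_m\subseteq T^{\mi}D_{m,\epsilon}[\rho,\tilde\rho]$ for $\mi\in(1-\delta_m)F_m$. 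Because the $\epsilon_m$ and $\delta_m$ errors enter at the unconditional level and are only then multiplied by $2$, the stated constants come out. To repair your write-up you would either have to weaken the conclusion (and recheck that the weaker bound still suffices where the lemma is summed over scales in Section \ref{sec:almost_borel_universality}) or adopt this reversed order of operations.
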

\begin{proof}
				Let 
		\begin{equation*}\label{eq:D_n_epsilon_rho_def1}
		\t D_{2n,\epsilon}[\rho,\tilde \rho] := \left\{y \in Y:~ d_X^{F_{2n}}(\rho(y),\tilde \rho(y)) \ge \epsilon \right\}.
		\end{equation*}

	By the law of total probability
	$$
	\mu\left(\t D_{2n,\epsilon}[\rho,\t \rho]\right) \ge
	\sum_{\mi \in F_n}\mu\left(\t D_{2n,\epsilon}[\rho,\t \rho] \mid T^{\mi}Z_n \right) \mu\left(T^{\mi}Z_n\right).
	$$
	Note that $T^{\mi}D_{n,\epsilon}[\rho,\t \rho] \subseteq \t D_{2n,\epsilon}[\rho,\t \rho]$ for every
	$\mi \in F_n$ .
	So 
	$$
	\mu\left(\t D_{2n,\epsilon}[\rho,\t \rho]\right) 
	\ge \mu(Z_n)\sum_{\mi \in F_n}\mu\left(T^{\mi}D_{n,\epsilon}[\rho,\tilde \rho] \mid T^{\mi}Z_n \right) =
	|F_n|\mu(Z_n) \mu(D_{n,\epsilon}[\rho,\t \rho] \mid Z_n).
	$$
	Now because $n \gg 1$ it follows that $|F_n| \mu(Z_n) > \frac{1}{2}$,
	so
	$$
	 \mu(D_{n,\epsilon}[\rho,\t \rho] \mid Z_n) <  2\mu\left(\t D_{2n,\epsilon}[\rho,\t \rho]\right).
	$$

Again by the law of total probability,
$$\mu\left(\t D_{2n,\epsilon}[\rho,\t \rho] \right) \le \epsilon_m + 4d \delta_m + \sum_{\mi \in (1-\delta_m)F_m} \mu(\t D_{2n,\epsilon}[\rho,\t \rho] \mid T^{\mi}Z_m)\mu(T^{\mi}Z_m).$$
Now if $n \ll m$, for every $\mi \in (1-\delta_m)F_m$ we have
$\t D_{2n,\epsilon}[\rho,\t \rho] \cap T^{\mi}Z_m \subseteq T^{\mi}D_{m,\epsilon}[\rho,\t \rho]$.
It follows that
$$\mu\left(\t D_{2n,\epsilon}[\rho,\t \rho] \right) \le \epsilon_m + 4d \delta_m+  \sum_{\mi \in (1-\delta_m)F_m} \mu( T^{\mi}D_{m,\epsilon}[\rho,\t \rho] \mid T^{\mi}Z_m)\mu(T^{\mi}Z_m)\le $$
$$\le \epsilon_m + 4d \delta_m +\mu\left(D_{m,\epsilon}[\rho,\t \rho] \mid Z_m \right).$$
The inequality \eqref{eq:D_n_epsilon_D_m_epsilon} follows.
\end{proof}

The following lemma roughly says that if $\rho$ is a $(k_0,n_0,\delta,\mu)$-approximate embedding with $\delta>0$ sufficiently small and $n_0 \in \NN$ sufficiently big, then the log of the approximate covering number of  $\mathcal{P}_{k_0}^{F_n}$ relative to $\mathcal{P}_{\rho}^{F_n}$ on $Z_n$ is a very small fraction of  $|F_n|$, provided $n$ is big enough.
\begin{lem}\label{lem:COV_rel_to_approximate_embedding}
	For any $\eta >0$ and $k_0 \in\NN$ there exist $N_0 \in \NN$ such that for any $(k_0,n_0, \frac{1}{N_0},\mu)$-approximate embedding $\rho \in \Mor(\Y,\X)$ with $n_0 \ge N_0$,  $\gamma \in [0,1)$ and  $\delta >0$ there	
	 exists $N \in \NN$ so that for every $n >N$
	 \begin{equation}\label{eq:COV_P_k_cond_rho_small}
	 \COV_{\mu,\delta,\mathcal{P}_{k_0}^{F_n \setminus \gamma F_n }\mid \mathcal{P}_{\rho}^{(1-2\delta_n)F_n \setminus \gamma F_n}}(Z_n) < e^{ \eta |F_n|}.
	 \end{equation}
\end{lem}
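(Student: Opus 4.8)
The plan is to reduce the statement to the relative Shannon--McMillan estimate for towers (Lemma~\ref{lem:rel_tower_cov}) by showing that, up to a tiny error in the ``$\epsilon$-parameter'' and the exponent, the partition $\mathcal{P}_\rho$ refined along $(1-2\delta_n)F_n$ already ``almost determines'' $\mathcal{P}_{k_0}$ along $F_n$ on most of the tower $Z_n$. Concretely, Lemma~\ref{lem:approximate_inverese} (with $\tilde\rho=\rho$, so that $D_{n_0,\frac12\epsilon_{n_0}}[\rho,\rho]=\emptyset$) gives a Borel function $\Phi_{k_0,n_0}:X^{F_{2n_0}}\to\mathcal{P}_{k_0}$ with $\Phi_{k_0,n_0}(\rho(y)\mid_{F_{2n_0}})=\mathcal{P}_{k_0}(y)$ for every $y\in T^{(1-2\delta_{n_0})F_{n_0}}Z[\rho]$. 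Applying this shifted by $\mi$ for each $\mi$ in a suitable subset of $F_n$, the value $\mathcal{P}_{k_0}(T^{\mi}y)$ is a function of $\rho(y)\mid_{\mi+F_{2n_0}}$, hence is determined by $\mathcal{P}_\rho^{\mi+F_{2n_0}}(y)$, for all $y$ such that $T^{\mi}y\in T^{(1-2\delta_{n_0})F_{n_0}}Z[\rho]$. So the first step is bookkeeping: identify the set of good coordinates $\mi\in F_n\setminus\gamma F_n$ (those for which $\mi+F_{2n_0}\subseteq(1-2\delta_n)F_n\setminus\gamma F_n$ and $T^{\mi}y$ lands in the good part of the $n_0$-tower), and observe that for such $\mi$ the $\mi$-th coordinate of $\mathcal{P}_{k_0}^{F_n\setminus\gamma F_n}(y)$ is a deterministic function of $\mathcal{P}_\rho^{(1-2\delta_n)F_n\setminus\gamma F_n}(y)$.

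The second step is to control the measure of the ``bad'' coordinates. For a fixed ergodic $\mu$, the mean ergodic theorem along towers (Proposition~\ref{prop:measure_SM_for_towers}(ii), applied to $f=\mathbf 1_{Y\setminus T^{F_{n_0}}Z[\rho]}$, whose integral is small once $N_0$ is large by \eqref{eq:Z_rho_1} and Lemma~\ref{lem:mu_tower_subset}) shows that for $n$ large, conditioned on $Z_n$, the fraction of $\mi\in F_n$ with $T^{\mi}y\notin T^{(1-2\delta_{n_0})F_{n_0}}Z[\rho]$ is small with high probability; the boundary layer contributions ($\gamma F_n$ and the $2\delta_n F_n$ margin) are a bounded fraction that we absorb into $\eta$. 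Thus outside a set of $\mu(\cdot\mid Z_n)$-measure $<\delta$, the remaining ``free'' coordinates number at most $\eta' |F_n|$ for $\eta'$ as small as we please (by choosing $N_0$ large and $n$ large). This yields the key inclusion: restricted to a $(1-\delta)$-fraction of $Z_n$, each element of $\mathcal{P}_\rho^{(1-2\delta_n)F_n\setminus\gamma F_n}$ is contained in a union of at most $|\mathcal{P}_{k_0}|^{\eta'|F_n|}=e^{\eta'\log|\mathcal{P}_{k_0}|\,|F_n|}$ elements of $\mathcal{P}_{k_0}^{F_n\setminus\gamma F_n}$.

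The third step combines this with Lemma~\ref{lem:rel_tower_cov} and the submultiplicativity of relative covering numbers (the ``triangle'' inequality \eqref{eq:cov_triangle} of Lemma~\ref{lem:COV_submultiplicative}): write
\[
\COV_{\mu,\delta,\mathcal{P}_{k_0}^{F_n\setminus\gamma F_n}\mid \mathcal{P}_\rho^{(1-2\delta_n)F_n\setminus\gamma F_n}}(Z_n)
\le e^{\eta'\log|\mathcal{P}_{k_0}|\,|F_n|},
\]
which is already the desired bound once $\eta'\log|\mathcal{P}_{k_0}|<\eta$. (Alternatively one routes through $\mathcal{P}_\rho^{F_n}$ as an intermediate partition and uses $h_\mu(Y,T;\mathcal{P}_{k_0}\mid\mathcal{P}_\rho^{\ZD})$, but the direct combinatorial counting above avoids needing to evaluate that relative entropy.) Choosing $N_0$ so large that $\eta'\log|\mathcal{P}_{k_0}|<\eta$, and then $N$ so large that the measure estimates of Step~2 hold for all $n>N$, completes the proof.

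I expect the main obstacle to be Step~2, specifically making the error terms uniform: one must carefully choose the order of quantifiers so that $N_0$ (controlling both $\tfrac1{N_0}$ in the approximate-embedding hypothesis and the smallness of $\eta'$) is fixed \emph{before} $n_0$ and $\rho$, while the measure-theoretic control over the fraction of bad coordinates (which depends on $\rho$ through $Z[\rho]$) is obtained only afterward via the ergodic theorem with $n$ large. Keeping the boundary-margin losses ($\gamma F_n$, the $2\delta_n$ buffer, the passage from $F_{2n_0}$-windows to single coordinates) genuinely $o(|F_n|)$ and folding them into $\eta$ is the delicate bookkeeping that makes the argument go through.
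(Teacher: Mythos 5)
Your proposal follows essentially the same route as the paper's proof: apply Lemma \ref{lem:approximate_inverese} with $\tilde\rho=\rho$ so that on the good part $G_{n_0}=T^{(1-2\delta_{n_0})F_{n_0}}(Z[\rho])$ the $\mathcal{P}_{k_0}$-name is read off from $\mathcal{P}_\rho$, use the mean ergodic theorem along towers (Proposition \ref{prop:measure_SM_for_towers}(ii) with $f=1_{G_{n_0}}$) to show most coordinates of most points of $Z_n$ are good, count the contribution of the bad coordinates, and absorb the boundary layers via \eqref{eq:COV_join_submultiplicative}.

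One point in your Step~3 needs repair: the set of bad coordinates is a function of $y$ (it records which $\mi$ satisfy $T^{\mi}y\in G_{n_0}$), not of the atom of $\mathcal{P}_\rho^{(1-2\delta_n)F_n\setminus\gamma F_n}$, so a single atom must be covered by a union over all admissible bad sets $S^c$ as well as over the $\mathcal{P}_{k_0}$-values on them. This contributes an extra factor $\binom{|F_n\setminus\gamma F_n|}{\t\epsilon|F_n\setminus\gamma F_n|}\le e^{\H(\t\epsilon)|F_n|}$ on top of your $|\mathcal{P}_{k_0}|^{\eta'|F_n|}$; it is harmless, but the threshold defining $N_0$ must then be $\H(\t\epsilon)+\t\epsilon\log|\mathcal{P}_{k_0}|<\eta$ (as in the paper's definition of $\overline{N}_R$), not merely $\eta'\log|\mathcal{P}_{k_0}|<\eta$. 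With that correction the argument goes through exactly as you describe.
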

\begin{proof}
	For  $N_0 \in \NN$, denote
	\begin{equation}\label{eq:t_epsilon_def}
	\t\epsilon_{N_0} = \epsilon_{N_0} +  6d\delta_{N_0}+ \frac{2}{N_0}.
	\end{equation}
	For $\eta \in (0,1)$ and $k_0 \in \NN$, let
	\begin{equation}\label{eq:N_r_def}
	\index{Definitions and notation introduced in Section 6!$\overline{N}_R(\eta,k_0)$}\overline{N}_R(\eta,k_0) = \inf\left\{ N_0 \in \NN:~	{\H( \t \epsilon_{N_0})+(\tilde \epsilon_{N_0}+12d\delta_{N_0})\log|\P_{k_0}| } < \eta\right\}.
	\end{equation}
	Then $\overline{N}_R(\eta,k_0) \in \NN$ is  well defined because $\H(\epsilon) \to 0$ as $\epsilon \to 0+$ and $\tilde \epsilon_{N},\delta_N \to 0$ as $N \to \infty$.
	
	Fix $\eta \in (0,1)$ and $k_0 \in \NN$. Denote $N_0 = \overline{N}_R(\eta,k_0)$. 
	Choose any $n_0 > N_0$, $\gamma \in [0,1)$, any  
	$(k_0,n_0,\frac{1}{N_0},\mu)$-approximate embedding  $\rho \in \Mor(\Y,\X)$ and $\delta  > 0$. 

	Denote
	\begin{equation}\label{eq:G_n_rho_def}
	G_{n_0} = T^{(1-2\delta_{n_0})F_{n_0}}\left(Z[\rho] \right).
	\end{equation}
	
	Recall that $Z_{n_0}$ is the base of an $((1+\delta_{n_0})F_{n_0},\epsilon_{n_0})$-tower.
	Also because $\rho$ is a $(k_0,n_0,\frac{1}{N_0},\mu)$-approximate embedding
	$$  
	\mu(Z_{n_0}\setminus Z[\rho]|Z_{n_0})<\frac{1}{N_0}.$$
	
	Thus by Lemma \ref{lem:mu_tower_subset}
	$$\mu(Y \setminus G_{n_0})   <  \tilde \epsilon_{N_0}.$$ 

	Let 

	\begin{equation}\label{eq:A_N_0_n_n_0_def}
	A_{N_0,n,n_0} = \left\{y \in Z_n:~ \sum_{\mi \in F_n \setminus \gamma F_n} 1_{G_{n_0}}(T^{\mi}(y))>(1- \t \epsilon_{N_0}) |F_n \setminus \gamma F_n |\right\}.
	\end{equation}
	We apply  the mean ergodic theorem  for Rokhlin towers (Proposition \ref{prop:measure_SM_for_towers}) to deduce that 
	\begin{equation*}
	\lim_{n \to \infty}\mu\left(A_{N_0,n,n_0}\mid Z_n\right)=1.
	\end{equation*}
	More specifically, we apply \eqref{eq:Rohklin_tower_mean_ET} with $f= 1_{G_{n_0}}$ twice (with $n$ and $\gamma n$) , taking into account that 
	$\int f d\mu = \mu(G_{n_0})  > 1- \t \epsilon_{N_0}$.
	So there exists $N \gg n_0$ so that for every $n >N$ 
	\begin{equation}\label{eq:mu_A_epsilon_n_big}
	\mu\left(A_{N_0,n,n_0}\mid Z_n\right) > 1- \delta/2.
	\end{equation}
	It is well known and easy to show that for any natural numbers $k <n $ we have
	\begin{equation}
	\frac{1}{n}\log {n \choose k} \le \H(\frac{k}{n}),
	\end{equation}
	where $\H(p)$ is given by \eqref{eq:H_shannon}. Thus,

	\begin{equation}\label{eq:F_n_choose_epsilon_small}
	{|F_n\setminus \gamma F_n| \choose \t \epsilon_{N_0} |F_n\setminus \gamma F_n|} \le e^{\H( \t \epsilon_{N_0} )|F_n\setminus \gamma F_n|}.
	\end{equation}

	Now choose any $n > N$, so that \eqref{eq:mu_A_epsilon_n_big} holds. By Lemma \ref{lem:approximate_inverese}, for every $y \in G_{n_0}$, the value of	$\mathcal{P}_{k_0}(y)$ is determined by $\rho (y)\mid_{F_{2n_0}}$.
	This means that 
	$$\COV_{\mu,0,\mathcal{P}_{k_0} \mid \mathcal{P}_{\rho}^{F_{2n_0}}}(G_{n_0}) =1.$$
	Choose $S \subset F_n \setminus \gamma F_n$. Then (taking into account that $n\gg n_0$) for every $\mi \in S \cap (1-3 \delta_n)F_n\setminus (\gamma+\delta_n)F_n$, 
	$$\COV_{\mu,0,T^{\mi}\mathcal{P}_{k_0}\mid \mathcal{P}_{ \rho}^{(1-2\delta_n)F_n \setminus \gamma F_n}}(\bigcap_{\mj \in S}T^{\mj}G_{n_0})=1.$$
	
	It follows (for instance by applying a degenerate easy case of Lemma \ref{lem:COV_submultiplicative} with $\epsilon=0$) that for any $S \subset F_n \setminus \gamma F_n$
	\begin{equation}\label{eq:cov_S_n}
	\COV_{\mu,0,\mathcal{P}_{k_0}^{(1-2\delta_n)F_n \setminus \gamma F_n}\mid \mathcal{P}_{ \rho}^{(1-2\delta_n)F_n \setminus \gamma F_n}}(\bigcap_{\mi \in S}T^{\mi}G_{n_0}\cap Z_n) \le |\mathcal{P}_{k_0}|^{|(F_n \setminus \gamma F_n) \setminus S|+ |F'_{n,\gamma}|}
	\end{equation}
	where:
	$$
	F'_{n,\gamma} = F_{n} \setminus (1-3\delta_n)F_{n} \cup  ((\gamma+\delta_n) F_n ) \setminus \gamma F_n). 
	$$
	Because 
	$n >N \gg n_0$ it follows that
	$$
	| F'_{n,\gamma}| \le 8 d \delta_n |F_n|.
	$$

	Note that 
	\begin{equation}\label{eq:A_epsilon_bigcup}
	A_{N_0,n,n_0} = \bigcup_{ |S| \ge (1-\tilde \epsilon_{N_0})|F_n \setminus \gamma F_n|}\left( \bigcap_{\mi \in S}T^{\mi}G_{n_0}\cap Z_n\right),
	\end{equation}
	where the union is over all $S \subset F_n \setminus \gamma F_n$ such that $|S| \ge (1-\t \epsilon_{N_0})|F_n \setminus \gamma F_n|$.
	From \eqref{eq:cov_S_n} and \eqref{eq:A_epsilon_bigcup} it follows that
	\begin{equation}\label{eq:A_epsilon_sum}
	\COV_{\mu,0,\mathcal{P}_{k_0}^{(1-2\delta_n)F_n \setminus \gamma F_n}\mid \mathcal{P}_{ \rho}^{(1-2\delta_n)F_n \setminus \gamma F_n}}(A_{N_0,n,n_0}) \le
	\sum_{ |S| \ge (1-\t \epsilon_{N_0})|F_n \setminus \gamma F_n|}|\mathcal{P}_{k_0}|^{|(F_n \setminus \gamma F_n) \setminus S|+ 8d\delta_n|F_n|}.
	\end{equation}
	If $|S| \ge (1-\t \epsilon_{N_0})|F_n \setminus \gamma F_n|$ then $$|\mathcal{P}_{k_0}|^{|(F_n \setminus \gamma F_n) \setminus S|} \le |\mathcal{P}_{k_0}|^{\t \epsilon_{N_0}|F_n|}= e^{\t \epsilon_{N_0}  (\log |\mathcal{P}_{k_0}|) \cdot  |F_n|}.$$
	There are ${|F_n \setminus \gamma F_n| \choose \t \epsilon_{N_0} |F_n \setminus \gamma F_n|}$ summands in the sum in the right hand side of  \eqref{eq:A_epsilon_sum}. 
	Thus 
	$$\COV_{\mu,0,\mathcal{P}_{k_0}^{(1-2\delta_n)F_n \setminus \gamma F_n}\mid \mathcal{P}_{ \rho}^{(1-2\delta_n)F_n \setminus \gamma F_n}}(A_{N_0,n,n_0})  \le {|F_n \setminus \gamma F_n| \choose \t \epsilon_{N_0} |F_n \setminus \gamma F_n|} e^{(\tilde \epsilon_{N_0}+8d\delta_n)|F_n|\log|\P_{k_0}|  }.$$
	By \eqref{eq:F_n_choose_epsilon_small} it follows that
	$$\COV_{\mu,0,\mathcal{P}_{k_0}^{(1-2\delta_n)F_n \setminus \gamma F_n}\mid \mathcal{P}_{\rho}^{(1-2\delta_n)F_n \setminus \gamma F_n}}(A_{N_0,n,n_0})  \le e^{\H( \t \epsilon_{N_0} )|F_n\setminus \gamma F_n|}\cdot e^{  (\tilde \epsilon_{N_0}+8d\delta_n)|F_n|\log|\P_{k_0}| }.$$
	By  \eqref{eq:mu_A_epsilon_n_big},
	$$ \COV_{\mu,\delta/2,\mathcal{P}_{k_0}^{(1-2\delta_n)F_n \setminus \gamma F_n}\mid \mathcal{P}_{ \rho}^{(1-2\delta_n)F_n \setminus \gamma F_n}}(Z_n) \le 
	\COV_{\mu,0,\mathcal{P}_{k_0}^{(1-2\delta_n)F_n \setminus \gamma F_n}\mid \mathcal{P}_{\rho}^{(1-2\delta_n)F_n \setminus \gamma F_n}}(A_{N_0,n,n_0}).$$
	So
	$$
	\COV_{\mu,\delta/2,\mathcal{P}_{k_0}^{(1-2\delta_n)F_n \setminus \gamma F_n}\mid \mathcal{P}_{\rho}^{(1-2\delta_n)F_n \setminus \gamma F_n}}(Z_n) \le
	e^{\H( \t \epsilon_{N_0} )|F_n\setminus \gamma F_n| +(\tilde \epsilon_{N_0}+8d\delta_n)|F_n|\log|\P_{k_0}| } \le
	$$
	$$\le
	e^{\left(\H( \t \epsilon_{N_0})+(\tilde \epsilon_{N_0}+8d\delta_n)\log|\P_{k_0}| \right)|F_n|}.
	$$
	
	Apply  \eqref{eq:COV_join_submultiplicative} of Lemma \ref{lem:COV_submultiplicative}  to deduce that
	$$
	\COV_{\mu,\delta,\mathcal{P}_{k_0}^{F_n \setminus \gamma F_n}\mid \mathcal{P}_{ \rho}^{(1-2\delta_n)F_n \setminus \gamma F_n}}(Z_n) \le$$
	$$\le
	\COV_{\mu,\delta/2,\mathcal{P}_{k_0}^{(1-2\delta_n)F_n \setminus \gamma F_n}\mid \mathcal{P}_{ \rho}^{(1-2\delta_n)F_n \setminus \gamma F_n}}(Z_n)
	\cdot 
	\COV_{\mu,\delta/2,\mathcal{P}_{k_0}^{F_n \setminus (1-2\delta_n)F_n }\mid \mathcal{P}_{\rho}^{(1-2\delta_n)F_n \setminus \gamma F_n}}(Z_n).
	$$
	Clearly,
	$$
	\COV_{\mu,\delta/2,\mathcal{P}_{k_0}^{F_n \setminus (1-2\delta_n)F_n }\mid \mathcal{P}_{ \rho}^{(1-2\delta_n)F_n \setminus \gamma F_n}}(Z_n) \le |\mathcal{P}_{k_0}|^{|F_n \setminus (1-2\delta_n)F_n|},
	$$
	so
	$$ \COV_{\mu,\delta,\mathcal{P}_{k_0}^{F_n \setminus \gamma F_n}\mid \mathcal{P}_{ \rho}^{(1-2\delta_n)F_n \setminus \gamma F_n}}(Z_n) \le \COV_{\mu,\delta/2,\mathcal{P}_{k_0}^{(1-2\delta_n)F_n \setminus \gamma F_n}\mid \mathcal{P}_{ \rho}^{(1-2\delta_n)F_n \setminus \gamma F_n}}(Z_n) \cdot |\mathcal{P}_{k_0}|^{|F_n \setminus (1-2\delta_n)F_n|}\le$$
	$$\le
	e^{{\left(\H( \t \epsilon_{N_0})+(\tilde \epsilon_{N_0}+8d\delta_n)\log|\P_{k_0}| \right)|F_n|}}e^{\log|\mathcal{P}_{k_0}| \cdot |F_n \setminus (1-2\delta_n)F_n|} \le
	$$
	$$ \le 
	e^{\left(\H( \t \epsilon_{N_0})+(\tilde \epsilon_{N_0}+12d\delta_n)\log|\P_{k_0}| \right)|F_n|}.
	$$
	Since $n > n_0 > \overline{N}_R(\eta,k_0)$ it follows that
	$$
	{\left(\H( \t \epsilon_{N_0})+(\tilde \epsilon_{N_0}+12d\delta_n)\log|\P_{k_0}| \right)} < \eta.
	$$
	This proves that \eqref{eq:COV_P_k_cond_rho_small} holds.

\end{proof}
Our next goal is to state certain sufficient  conditions for a sequence of approximate embeddings converge to a proper embedding.
\begin{lem}\label{lem:convergence_set_for_rho_j_s}
	Suppose  $(k_j)_{j=1}^\infty$ and $(n_j)_{j=1}^\infty$ are strictly increasing sequences of natural numbers so that $n_{j} \ll n_{j+1}$ for every $j \ge 1$. Let $(\rho_j)_{j= 1}^\infty \in \Mor(\Y,\X)^\NN$ be a sequence of morphisms such that $\rho_j$ is a $(k_j,n_j,\frac{1}{2^j},\mu)$-approximate embedding for every $j$.
	Let $Y_\infty \subset Y$ be given by:
	\begin{equation}\label{eq:Y_infty_def}
Y_\infty = \bigcap_{\mi \in \ZD}T^{-\mi}\bigcup_{j=1}^\infty \bigcap_{t \ge j}T^{(1-2\delta_{n_t})F_{n_t}}\left(Z[\rho_t] \setminus D_{n_t,\frac{3}{8}\epsilon_{n_t}}[\rho_t, \rho_{t+1}]\right).
	\end{equation}
	Then:
	\begin{enumerate}
		\item[(i)]
		The limit $\rho(y) = \lim_{j \to \infty} \rho_j(y)$ exists for every $y \in Y_\infty$.
		\item[(ii)]
		For every $y \in Y_{\infty}$ there exist $x \in X$ so that $\rho(y)_\mi = S^{\mi}(x)$ for every $\mi \in \ZD$.
		\item[(iii)]
		The function $\rho:Y_\infty \to X^{\ZD}$ is injective. 
	\end{enumerate}
     In other words, on the Borel set $Y_\infty \subset Y$ , the sequence $(\rho_j)_{j=1}^\infty$ converges pointwise to  an equivariant Borel embedding $\rho:Y_\infty \to X$. 
\end{lem}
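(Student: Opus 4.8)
The plan is to establish (i)--(iii) in turn and then read off the final ``in other words'' statement. First I would record that $Y_\infty$ is a Borel subset of $Y$ (a countable combination of the Borel sets $Z[\rho_t]$, $D_{n_t,\frac38\epsilon_{n_t}}[\rho_t,\rho_{t+1}]$ and their $T$-translates) and that it is $T$-invariant, because of the outer intersection $\bigcap_{\mi\in\ZD}T^{-\mi}(\,\cdot\,)$. Fix $y\in Y_\infty$.

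For (i), I would fix a coordinate $\mi\in\ZD$: membership of $y$ in $Y_\infty$ gives some $j_0=j_0(y,\mi)$ such that for every $t\ge j_0$ one can write $T^{\mi}(y)=T^{\mj_t}(z_t)$ with $\mj_t\in(1-2\delta_{n_t})F_{n_t}$ and $z_t:=T^{\mi-\mj_t}(y)\in Z[\rho_t]\setminus D_{n_t,\frac38\epsilon_{n_t}}[\rho_t,\rho_{t+1}]$. Reading the inequality defining that $D$-set at the coordinate $\mj_t\in F_{n_t}$ and using equivariance of $\rho_t,\rho_{t+1}$ (so that $\rho_t(z_t)_{\mj_t}=\rho_t(y)_{\mi}$ and $\rho_{t+1}(z_t)_{\mj_t}=\rho_{t+1}(y)_{\mi}$) yields $d_X(\rho_t(y)_{\mi},\rho_{t+1}(y)_{\mi})<\tfrac38\epsilon_{n_t}$ for all $t\ge j_0$; since $n_t\ll n_{t+1}$ makes $\sum_t\epsilon_{n_t}$ finite, $(\rho_t(y)_{\mi})_t$ is Cauchy in the compact space $X$ and converges. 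Doing this for every $\mi$ defines $\rho(y):=\lim_t\rho_t(y)\in X^{\ZD}$; the resulting $\rho\colon Y_\infty\to X^{\ZD}$ is Borel (pointwise limit of Borel maps) and $T$-equivariant (continuity of each $S^{\mi}$ on $\X$).

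For (ii), I would apply the previous paragraph at $\mi=\vec{0}$: for large $t$, $y=T^{\mj_t}(w_t)$ with $w_t\in Z[\rho_t]\subseteq Z_{n_t}$ and $\mj_t\in(1-2\delta_{n_t})F_{n_t}$, and $n_t$-towerability of $\rho_t$ gives $x_t\in C_{n_t}$ with $\rho_t(w_t)_{\mi}=S^{\mi}(x_t)$ for all $\mi\in F_{n_t}$ by \eqref{eq:rho_y_traces_x}. Translating by $\mj_t$, $\rho_t(y)_{\mi}=S^{\mi}(\rho_t(y)_{\vec{0}})$ whenever $\mj_t+\mi\in F_{n_t}$, which for a fixed $\mi$ holds once $t$ is large because $\delta_{n_t}n_t\to\infty$; letting $t\to\infty$ and using continuity of $S^{\mi}$ gives $\rho(y)_{\mi}=S^{\mi}(\rho(y)_{\vec{0}})$ for all $\mi$. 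Hence $\rho(y)$ is the $S$-orbit of $x:=\rho(y)_{\vec{0}}\in X$, so $\rho(Y_\infty)\subseteq X$, proving (ii).

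The hard part will be (iii). Since the partitions $(\mathcal{P}_k)_k$ generate $\Borel(Y)$ and therefore separate points, for $y,y'\in Y_\infty$ with $\rho(y)=\rho(y')$ it suffices to prove $\mathcal{P}_k(y)=\mathcal{P}_k(y')$ for every $k$. I would fix $k$, choose $t$ with $k_t\ge k$ and $t$ large enough -- depending on $y,y'$ only through $j_0(y,\vec{0})$ and $j_0(y',\vec{0})$ -- that both $y,y'$ lie in $T^{(1-2\delta_{n_t})F_{n_t}}\bigl(Z[\rho_t]\setminus D_{n_t,\frac12\epsilon_{n_t}}[\rho_t,\rho_{t+1}]\bigr)$ (using $\tfrac38<\tfrac12$, so the set appearing in the definition of $Y_\infty$ is contained in this one), and apply Lemma \ref{lem:approximate_inverese} with the approximate embedding $\rho_t$ (which is a $(k,n_t,2^{-t},\mu)$-approximate embedding by Remark \ref{rem:approximate_emb_monotone}) and $\tilde\rho=\rho_{t+1}$: this produces a single Borel map $\Phi\colon X^{F_{2n_t}}\to\mathcal{P}_{k_t}$, depending only on $\rho_t,k_t,n_t$, with $\Phi(\rho_{t+1}(y)\mid_{F_{2n_t}})=\mathcal{P}_{k_t}(y)$ and $\Phi(\rho_{t+1}(y')\mid_{F_{2n_t}})=\mathcal{P}_{k_t}(y')$. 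Because $n_t\ll n_{t+1}$, the box $F_{2n_t}$ sits deep inside the level-$(t+1)$ tower, so $n_{t+1}$-towerability of $\rho_{t+1}$ forces $\rho_{t+1}(y)\mid_{F_{2n_t}}$ to be the $F_{2n_t}$-restriction of the $S$-orbit of the single point $\rho_{t+1}(y)_{\vec{0}}$, likewise for $y'$, and $\rho_{t+1}(y)_{\vec{0}},\rho_{t+1}(y')_{\vec{0}}\to\rho(y)_{\vec{0}}=\rho(y')_{\vec{0}}$ in $X$. The remaining task is to feed this into the insensitivity of the decoding $\Phi$ to perturbations of its argument (built into the proof of Lemma \ref{lem:approximate_inverese} from the $(\epsilon_{n_t},F_{n_t})$-separation of $C_{n_t}$ and the marker property) in order to conclude $\mathcal{P}_{k_t}(y)=\mathcal{P}_{k_t}(y')$, whence $\mathcal{P}_k(y)=\mathcal{P}_k(y')$ since $\mathcal{P}_k\preceq\mathcal{P}_{k_t}$; I expect this matching of the finite decoding windows of $\rho_{t+1}$ at $y$ and $y'$, using only the identity of their limits together with a careful accounting of the constants inside Lemma \ref{lem:approximate_inverese}, to be the only genuinely delicate point, with everything else being routine. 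Combining (i)--(iii) with the Borel measurability and equivariance obtained in the proof of (i) then shows that $\rho$ restricts to an equivariant Borel embedding $Y_\infty\to X$, which is the last assertion.
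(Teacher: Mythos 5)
Your parts (i) and (ii) are essentially correct. For (i), evaluating the condition defining $D_{n_t,\frac38\epsilon_{n_t}}[\rho_t,\rho_{t+1}]$ at the single index $\mj_t$ is valid and gives convergence of $\rho_t(y)_{\mi}$ for each fixed $\mi$; for (ii), your identification of the limit with the orbit of $\rho(y)_{\vec 0}$ via towerability is a clean variant of the paper's argument (which instead shows that the points $x_s\in C_{n_s}$ form a Cauchy sequence). However, by working one coordinate at a time with a threshold $j_0(y,\mi)$ depending on $\mi$, you forgo the box-uniform estimate $d_X^{F_{n_s}}\bigl(\rho_s(T^{\mi_s}(y)),\rho_t(T^{\mi_s}(y))\bigr)<\frac25\epsilon_{n_s}$ for all $t>s$, which the paper extracts from the nesting $\mi_s+F_{n_s}\subset\mi_t+F_{n_t}$ (valid because $n_s\ll n_t$ and $\mi_t\in(1-2\delta_{n_t})F_{n_t}$) followed by summing the geometric tail $\sum_{r\ge s}\frac38\epsilon_{n_r}$. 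That uniform bound is exactly what (iii) needs.

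The gap is in (iii), at the step you yourself flag as delicate. You apply Lemma \ref{lem:approximate_inverese} with $\tilde\rho=\rho_{t+1}$, obtaining $\Phi(\rho_{t+1}(y)\mid_{F_{2n_t}})=\mathcal{P}_{k_t}(y)$ and $\Phi(\rho_{t+1}(y')\mid_{F_{2n_t}})=\mathcal{P}_{k_t}(y')$, and then hope to conclude these agree because $\rho_{t+1}(y)_{\vec 0}$ and $\rho_{t+1}(y')_{\vec 0}$ both converge to $\rho(y)_{\vec 0}=\rho(y')_{\vec 0}$. This does not close: the decoding $\Phi$ is stable only against perturbations that are small in the Bowen metric over a box whose size grows with $t$, whereas proximity of $\rho_{t+1}(y)_{\vec 0}$ to $\rho_{t+1}(y')_{\vec 0}$ in $d_X$ gives no control on $d_X\bigl(S^{\mi}(\rho_{t+1}(y)_{\vec 0}),S^{\mi}(\rho_{t+1}(y')_{\vec 0})\bigr)$ for $\mi$ ranging over $F_{2n_t}$ --- the maps $S^{\mi}$ are not equicontinuous in $\mi$. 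The correct move, and the one the paper makes, is to apply Lemma \ref{lem:approximate_inverese} with $\tilde\rho$ equal to the limit $\rho$ itself: the box-uniform estimate above passes to the limit in $t$ and shows $T^{\mi_s}(y)\in Z[\rho_s]\setminus D_{n_s,\frac12\epsilon_{n_s}}[\rho_s,\rho]$ (since $\frac25<\frac12$), so $\Phi_{k,n_s}(\rho(y))=\mathcal{P}_k(y)$ and likewise for $y'$; injectivity is then immediate from $\rho(y)=\rho(y')$, with no comparison of the intermediate maps $\rho_{t+1}$ at $y$ and $y'$ required. To carry this out you must first upgrade your (i) to the box-uniform statement via the nesting argument.
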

The constant $\frac{3}{8}$ in front of $\epsilon_{n_t}$ is somewhat arbitrary; the important point is that it is strictly smaller than $\frac{1}{2}$. Have a look at Remark \ref{remark:constant_half}.
\begin{proof}
For  $j \in \NN$ let
$$Y_j = \bigcap_{t \ge j}T^{(1-2\delta_{n_t})F_{n_t}}\left(Z[\rho_t] \setminus D_{n_t,\frac{3}{8}\epsilon_{n_t}}[\rho_t, \rho_{t+1}]\right).$$
If $y \in Y_j$ then for all $t  \ge j$, there exists $\mi_t \in (1-2\delta_{n_t})F_{n_t}$ such that
$$T^{\mi_t}(y) \in Z[\rho_t] \mbox{ and } d_X^{F_{n_t}}\left(\rho_t(T^{\mi_t}(y)),\rho_{t+1}(T^{\mi_t}(y))\right) < \frac{3}{8}\epsilon_{n_t}.$$
Because $ n_t \gg  n_s$ whenever $t >s$, it follows  that for any $t>s >j$, if $\mi_s \in F_{n_s}$ and $\mi_t \in (1-2\delta_{n_t})F_{n_t}$ then
$F_{n_s}+ \mi_s \subset F_{n_t} + \mi_t$.
This implies that if $y \in Y_j$, then for every $t > s > j$
$$\exists \mi_s \in (1-2\delta_{n_s})F_{n_s} \mbox{ s.t. } T^{\mi_s}(y) \in Z[\rho_s] \mbox{ and } d_X^{F_{n_s}}\left(\rho_t(T^{\mi_s}(y)),\rho_{t+1}(T^{\mi_s}(y))\right) < \frac{3}{8}\epsilon_{n_t}.$$
Because $\epsilon_{n_{t+1}} < \frac{1}{16} \epsilon_{n_t}$, we have $$\sum_{t \ge j} \frac{3}{8}\epsilon_t \le \left(\sum_{\ell=0}^\infty \frac{3}{8}16^{-\ell} \right) \epsilon_j = \frac{2}{5}\epsilon_j.$$
So by the triangle inequality and for every $y \in Y_j$ and every $t > s> j$ 
\begin{equation}\label{eq:d_rho_t_rho_s_at_mi_s}
\exists 
 \mi_s \in (1-2\delta_{n_s})F_{n_s} \mbox{ s.t. } 
 T^{\mi_s}(y) \in Z[\rho_s] \mbox{ and }
d_X^{F_{n_s}}\left(\rho_t(T^{\mi_s}(y)),\rho_{s}(T^{\mi_s}(y))\right) < \frac{2}{5}\epsilon_{n_s}.
\end{equation}
Because $F_{n_{s-1}}\subset \mi_s+ F_{n_s}$ this shows  
\begin{equation}
d_X^{F_{n_{s-1}}}\left(\rho_t(y),\rho_{s}(y)\right) < \frac{2}{5}\epsilon_{n_s} \mbox{ for every } y \in Y_j \mbox{ and } t> s> j.
\end{equation}

Because $\lim_{s \to \infty}\epsilon_{n_s} =  0$ and $F_{n_{s-1}}$ increase to $\ZD$ as $s \to \infty$, it follows that for every $y \in Y_\infty$ and every $\mi \in \ZD$ the sequence $(\rho_j(y)_\mi)_{j=1}^\infty$ is a Cauchy sequence, and thus converges. So the limit $\rho(y)= \lim_{j \to \infty} \rho_j(y)$ exists for every $y \in Y_\infty$. This proves $(i)$.
By \eqref{eq:d_rho_t_rho_s_at_mi_s} for every $y \in Y_\infty$ there exists $j \in \NN$ such that  for every $ s> j$ there exists $\mi_s \in (1-2\delta_{n_s})F_{n_s}$ so that $T^{\mi_s}(y) \in Z[\rho_s]$ and 
\begin{equation}\label{eq:d_rho_rho_s_at_mi_s}
d_X^{F_{n_s}}\left(\rho(T^{\mi_s}(y)),\rho_{s}(T^{\mi_s}(y))\right) \le \frac{2}{5}\epsilon_{n_s}.
\end{equation}
Since $\rho_s$ is a $(k_s,n_s,\frac{1}{2^s},\mu)$ approximate-embedding and $T^{\mi_s}(y) \in Z[\rho_s] \subset Z_{n_s}$, and $F_{n_{s-1}} \subset F_{n_s}+ \mi_s$ there exists $x_s \in C_s \subset X$ such that $d_X^{F_{n_{s-1}}}(\rho(y),x_s) \le \frac{2}{5}\epsilon_{n_s}$. Using the previous argument, the sequence $(x_s)_{s=1}^\infty$ is a Cauchy sequence so it converges to a point $x \in X$, and so $(ii)$ holds, meaning that $\rho(y)$ is in the image of $X$ for every $y \in Y_\infty$.

We now  prove $(iii)$, namely that the function $\rho$ is injective.

 Because  the sequence of partitions $(\mathcal{P}_k)_{k=1}^\infty$ separates points, in order to show that $\rho:Y_\infty \to X^{\ZD}$ is injective, it suffices to show that for every $k \in \NN$ there exists a function $\Phi_k:X^{\ZD} \to \mathcal{P}_k$ so that 
\begin{equation}\label{eq:Phi_k_recover_P_k}
\forall y \in Y_\infty,~ \Phi_k(\rho(y))= \mathcal{P}_k(y).
\end{equation}
By Lemma \ref{lem:approximate_inverese},  because $\rho_j$ is a $(k_j,n_j,\frac{1}{2^j},\mu)$ approximate embedding, for every $s \in \NN$ and  every $k <k _s$ (using the fact that $\mathcal{P}_k \prec \mathcal{P}_{k_s}$ for every $k < k_s$) there exists a Borel  function $\Phi_{k,n_s}:X^{\ZD}\to \mathcal{P}_{k}$ so that 
\begin{equation}
\Phi_{k,n_s}(\rho(y)) = \mathcal{P}_{k}(y) \mbox{ for every } y \in T^{(1-2\delta_{n_s})F_{n_s}}(Z[\rho_s] \setminus D_{n_s,\frac{1}{2}\epsilon_{n_s}}[\rho_s,\rho]).
\end{equation}
We have already concluded that for every $y \in Y_\infty$ there exists $j \in \NN$ such that for every $s >j$ there exists $\mi_s \in (1-2\delta_{n_s})F_{n_s}$ so that $T^{\mi_s}(y) \in Z[\rho_s]$ and   \eqref{eq:d_rho_rho_s_at_mi_s} holds. But this precisely means that $$y \in T^{(1-2\delta_{n_s})F_{n_s}}(Z[\rho_s] \setminus D_{n_s,\frac{1}{2}\epsilon_{n_s}}[\rho_s,\rho]).$$
We conclude that for every $y \in Y_\infty$ there exists $j \in \NN$ such that for every $s >j$, and every $k \le k_s$ 
 $\Phi_{k,n_s}(\rho(y)) = \mathcal{P}_k(y)$.

 It follows that for every $k \in \NN$ and every $y \in Y_\infty$ the sequence $(\Phi_{k,n_s}(\rho(y)))_{s=1}^\infty$  stabilizes and we have $$\lim_{s \to \infty} \Phi_{k,n_s}(\rho(y))= \mathcal{P}_k(y).$$
 So we can define a Borel function $\Phi_k:X^{\ZD} \to \mathcal{P}_k$ so that 
 $\Phi_k(x)=\lim_{s \to \infty}\Phi_{k,n_s}(x)$ whenever the limit exists and \eqref{eq:Phi_k_recover_P_k} holds.
This completes the proof.
\end{proof}

\begin{lem}\label{lem:mu_Y_0_1}
	Let $(k_j)_{j=1}^\infty$ $(n_j)_{j=1}^\infty$ and $(\rho_j)_{j=1}^\infty$ be as in Lemma \ref{lem:convergence_set_for_rho_j_s} above. Further suppose that for every $j >1$
	\begin{equation}\label{eq:rho_j_shadows_rho_j_prev}
	\mu \left( D_{n_{j-1},\frac{3}{8}\epsilon_{n_{j-1}}}[\rho_j,\rho_{j-1}] \mid Z_{n_{j-1}}\right)<\frac{1}{2^j}.
	\end{equation}
	Then the set $Y_\infty \subset Y$ given by \eqref{eq:Y_infty_def} satisfies $\mu(Y_\infty)=1$. 
\end{lem}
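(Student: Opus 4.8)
The plan is to show $\mu(Y\setminus Y_\infty)=0$ by means of the Borel--Cantelli lemma. Abbreviate
\[
W_t:= T^{(1-2\delta_{n_t})F_{n_t}}\left(Z[\rho_t] \setminus D_{n_t,\frac{3}{8}\epsilon_{n_t}}[\rho_t, \rho_{t+1}]\right),
\]
so that $Y_\infty = \bigcap_{\mi \in \ZD}T^{-\mi}\bigl(\bigcup_{j=1}^\infty \bigcap_{t \ge j} W_t\bigr)$. Since $\mu$ is $T$-invariant and a countable intersection of full sets is full, it is enough to prove that $\liminf_t W_t = \bigcup_{j} \bigcap_{t \ge j} W_t$ has full measure, and for this it suffices to show $\sum_{t=1}^\infty \mu(Y\setminus W_t) < \infty$.

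First I would estimate $\mu(Y\setminus W_t)$ by applying Lemma \ref{lem:mu_tower_subset} with $F=(1+\delta_{n_t})F_{n_t}$, for which $Z_{n_t}$ is the base of an $(F,\epsilon_{n_t},\mu)$-tower, with $F'=(1-2\delta_{n_t})F_{n_t}\subseteq F$, and with $Z' = Z[\rho_t]\setminus D_{n_t,\frac{3}{8}\epsilon_{n_t}}[\rho_t,\rho_{t+1}]\subseteq Z_{n_t}$. This gives
\[
\mu(Y\setminus W_t) < \epsilon_{n_t} + \frac{|(1+\delta_{n_t})F_{n_t}|-|(1-2\delta_{n_t})F_{n_t}|}{|(1+\delta_{n_t})F_{n_t}|} + \mu\bigl(Z_{n_t}\setminus Z' \mid Z_{n_t}\bigr).
\]
For the last term I would use $D_{n,\epsilon}[\rho,\tilde\rho]\subseteq Z_n$ together with the obvious symmetry $D_{n,\epsilon}[\rho,\tilde\rho]=D_{n,\epsilon}[\tilde\rho,\rho]$ (the pseudo-metric $d_X^{F_n}$ is symmetric) to split
\[
\mu\bigl(Z_{n_t}\setminus Z' \mid Z_{n_t}\bigr) \le \mu\bigl(Z_{n_t}\setminus Z[\rho_t]\mid Z_{n_t}\bigr) + \mu\bigl(D_{n_t,\frac{3}{8}\epsilon_{n_t}}[\rho_{t+1},\rho_t]\mid Z_{n_t}\bigr) \le \tfrac{1}{2^t}+\tfrac{1}{2^{t+1}},
\]
where the first bound is \eqref{eq:Z_rho_1} for the $(k_t,n_t,\tfrac{1}{2^t},\mu)$-approximate embedding $\rho_t$, and the second is exactly the hypothesis \eqref{eq:rho_j_shadows_rho_j_prev} with $j=t+1$.

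It then remains to see that the three contributions sum over $t$. By \eqref{eq:n_ll_m}, $n_t\ll n_{t+1}$ forces $\epsilon_{n_{t+1}}<\tfrac12\epsilon_{n_t}$ and $\delta_{n_{t+1}}<\tfrac12\delta_{n_t}$, hence $\sum_t\epsilon_{n_t}<\infty$ and $\sum_t\delta_{n_t}<\infty$; the geometric terms $\tfrac{1}{2^t}+\tfrac{1}{2^{t+1}}$ are trivially summable. For the middle term, comparing cardinalities of the cubes $F_n=\{-n,\dots,n\}^d$ yields a bound of the form $\tfrac{2d(3\delta_{n_t}n_t+1)}{2n_t-1}\le 4d\,\delta_{n_t}$ once $n_t$ is large, the extra $O(1/n_t)$ coming from the floor functions being $o(\delta_{n_t})$ by \eqref{eq:n_delta_n_to_infty}; since the middle term is in any case $\le 1$, it too is summable. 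Thus $\sum_t\mu(Y\setminus W_t)<\infty$, so Borel--Cantelli gives $\mu\bigl(\limsup_t(Y\setminus W_t)\bigr)=0$, i.e. $\mu(\liminf_t W_t)=1$, and invariance of $\mu$ together with countability of $\ZD$ yield $\mu(Y_\infty)=1$. The only place requiring a little care is the boundary-of-tower (middle) term: one must invoke $n\delta_n\to\infty$ so that the discretization error from the floors does not interfere with summability; everything else is routine bookkeeping on top of the already-proven estimates \eqref{eq:Z_rho_1}, \eqref{eq:rho_j_shadows_rho_j_prev}, Lemma \ref{lem:mu_tower_subset} and the definition of $n\ll m$.
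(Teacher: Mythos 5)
Your proof is correct and follows essentially the same route as the paper's: apply Lemma \ref{lem:mu_tower_subset} to the sets $W_t$, bound $\mu(Z_{n_t}\setminus Z'\mid Z_{n_t})$ using \eqref{eq:Z_rho_1} and the hypothesis \eqref{eq:rho_j_shadows_rho_j_prev}, verify summability via $n_t\ll n_{t+1}$, and finish with Borel--Cantelli and $T$-invariance. The only differences are an immaterial index shift in the auxiliary sets and your slightly more explicit bookkeeping of the boundary term, which the paper simply records as $6d\delta_{n_{t-1}}$.
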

\begin{proof}
	For $t \in \NN$ let
	\begin{equation}\label{eq:G_t_def}
	G_t = T^{(1-2\delta_{n_{t-1}})F_{n_{t-1}}}\left(Z[\rho_{t-1}] \setminus D_{n_{t-1},\frac{3}{8}\epsilon_{n_{t-1}}}[\rho_t, \rho_{t-1}]\right).
	\end{equation}
	By Lemma \ref{lem:mu_tower_subset}
	$$
	\mu(Y \setminus G_t) \le \epsilon_{n_{t-1}}+  6d\delta_{n_{t-1}}+
		\mu \left( D_{n_{t-1},\frac{3}{8}\epsilon_{n_{t-1}}}[\rho_t,\rho_{t-1}] \mid Z_{n_{t-1}}\right)
		+
		\mu\left( Z_{n_{t-1}} \setminus Z[\rho_{t-1}] \mid Z_{n_{t-1}} \right).
		$$
		Because $\rho_t$ is a $(k_t,n_t,\frac{1}{2^t},\mu)$-approximate embedding, 
		$$\mu \left(Z_{n_t} \setminus Z[\rho_t]  \mid Z_{n_t}\right) < \frac{1}{2^t}.$$
		So using \eqref{eq:rho_j_shadows_rho_j_prev} it follows that 
		$$\mu(Y \setminus G_{t+1}) \le
		\epsilon_{n_t} + 6d \cdot \delta_{n_t} + \frac{2}{2^t}.
		$$
	It follows that 
	$$ \sum_{t=2}^\infty \mu \left(Y \setminus G_t\right) \le \sum_{t=1}^\infty \left(\epsilon_{n_t}+ 6d \cdot \delta_{n_t} + \frac{1}{2^{t-1}}\right).$$
	By our assumption that $n_t \ll n_{t+1}$ it follows that  the series on the right hand side converges.
		Let $Y_\infty'= \bigcup_{j=1}^\infty \bigcap_{t > j} G_t$.
			By the Borel-Cantelli Lemma $\mu (Y_\infty') =1$. Now $Y_\infty = \bigcap_{\mi \in \ZD} T^{-\mi}Y_\infty'$, so it follows that $\mu(Y_\infty)=1$.
\end{proof}

\subsection{The case of infinite entropy}

Our next goal is to  prove Theorem \ref{thm:infinite_entropy} which states  flexibility implies universality for systems with infinite entropy. 
The proof follows similar pattern to that of the finite entropy case,  but it is  considerably less involved,  so  the reader can consider it as a preparation.

The following lemma regarding the existence of  approximate embeddings shows that in the presence of of a flexible sequence  $\C = (C_n)_{n=1}^\infty  \in X^\NN$ with $h(\C)=\infty$ one can actually get a $(k,n,\epsilon,\mu)$-approximate embedding for any $\mu \in \Prob_e(\Y)$ , $k \in \NN$ with $\epsilon =0$, provided that $n$ is sufficiently big.  The infinite entropy assumption  $h(\C)=\infty$ is essential for this.
\begin{lem}\label{lem:approximate_embedding_exists_inf_entropy}
	Suppose $\C = (C_n)_{n=1}^\infty  \in X^\NN$ is a flexible sequence with $h(\C)=\infty$.
	For every $k \in\NN$ 
	there exists $N_k$ such that for every $n >N_k$ there exists $\rho \in \Mor(\Y,\X)$ which is a
	  $(k,n,0,\mu)$-approximate embedding for any $\mu \in \Prob_e(\Y)$.
\end{lem}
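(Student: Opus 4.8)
The plan is to follow the proof of Lemma~\ref{lem:approximate_embedding_exists}, exploiting the assumption $h(\C)=\infty$ to make two simplifications: we will not need any Shannon--McMillan input (Lemma~\ref{lem:tower_cov}), and instead of encoding into $C_n$ only a large subcollection $\mathcal G\subset \mathcal{P}_k^{F_n}$, we will encode the \emph{entire} partition $\mathcal{P}_k^{F_n}$. Since the partition $\mathcal{P}_k$ is finite, $|\mathcal{P}_k^{F_n}|\le |\mathcal{P}_k|^{|F_n|}$, so a crude counting bound against $|C_n|$ will suffice, and — crucially — the construction will involve no invariant measure of $\Y$ at all.

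First I would use that $h(\C)=\lim_{n\to\infty}\frac{1}{|F_n|}\log|C_n|=\infty$ (the limit exists by Lemma~\ref{lem:limit_h_C_exists}, and here equals $\infty$) to produce $N_k\in\NN$ such that for every $n>N_k$ one has $\frac{1}{|F_n|}\log|C_n|>\log|\mathcal{P}_k|$, i.e. $|C_n|>|\mathcal{P}_k|^{|F_n|}\ge|\mathcal{P}_k^{F_n}|$. Fix such an $n$. Choose an injective map $\Phi\colon \mathcal{P}_k^{F_n}\to C_n$, define $\tilde\Phi\colon Y\to C_n$ by $\tilde\Phi(y)=\Phi(\mathcal{P}_k^{F_n}(y))$, and let $\rho=\rho_{\tilde\Phi,n}\in\Mor(\Y,\X)$ be the morphism given by \eqref{eq:rho_Phi_n_def}.

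It then remains to check that this single $\rho$ is a $(k,n,0,\mu)$-approximate embedding for every $\mu\in\Prob_e(\Y)$ at once. Since $\tilde\Phi(Z_n)\subset C_n$, property \eqref{eq:rho_y_traces_x} holds, and the second clause of \eqref{eq:rho_Phi_n_def} gives $\rho(y)_{\m 0}=x_\star$ for $y\notin T^{F_n}Z_n$, so $\rho$ is $n$-towerable. Take $Z[\rho]=Z_n$; then $\mu(Z_n\setminus Z[\rho])=0$ for every $\mu$, so \eqref{eq:Z_rho_1} holds with $\epsilon=0$ irrespective of $\mu$. Finally, for $y,y'\in Z[\rho]=Z_n$ we have $\rho(y)_{\m 0}=\tilde\Phi(y)=\Phi(\mathcal{P}_k^{F_n}(y))$, so injectivity of $\Phi$ gives $\rho(y)_{\m 0}=\rho(y')_{\m 0}\Rightarrow \mathcal{P}_k^{F_n}(y)=\mathcal{P}_k^{F_n}(y')$, which is \eqref{eq:rho_almost_injective}; the accompanying $\Psi_{k,n}$ may be taken to be any left inverse of $\Phi$.

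There is essentially no obstacle beyond the two observations used: that finiteness of $\mathcal{P}_k$ caps $|\mathcal{P}_k^{F_n}|$ by an exponential in $|F_n|$ that is eventually beaten by $|C_n|$ once $h(\C)=\infty$, and that the resulting $\rho$ is manifestly independent of $\mu$ — the only place the invariant measures of $\Y$ enter the ambient setup is the standing requirement that $Z_n$ be the base of an $((1+\delta_n)F_n,\epsilon_n)$-tower for $\Y$, i.e. simultaneously for all ergodic $\mu$. If one wishes, the explicit value $N_k=\min\{N\in\NN:\ |C_n|>|\mathcal{P}_k|^{|F_n|}\ \text{for all}\ n>N\}$ can be recorded.
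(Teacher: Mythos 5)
Your proof is correct and is essentially the intended argument: the paper omits the proof of this lemma, remarking only that it is a simplified version of the proof of Lemma \ref{lem:improve_approx_emb_inf_entropy}, and your construction is exactly that simplification. It is also the proof of Lemma \ref{lem:approximate_embedding_exists} run with $\mathcal{G}=\mathcal{P}_k^{F_n}$ and $\epsilon=0$, the Shannon--McMillan input replaced by the crude, measure-independent bound $|\mathcal{P}_k^{F_n}|\le|\mathcal{P}_k|^{|F_n|}$, which $h(\C)=\infty$ eventually beats.
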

The proof is a simplified version of the proof of Lemma \ref{lem:improve_approx_emb_inf_entropy} below, so we omit it.

\begin{lem}\label{lem:improve_approx_emb_inf_entropy}

	Suppose $\C = (C_n)_{n=1}^\infty  \in X^\NN$ is a flexible sequence with $h(\C)=\infty$. For every $k_0,k \in \NN$ and $\gamma \in (0,1)$ there exists $N_{k,\gamma} \in \NN$ so that for every 
		$n_0$-towerable $\rho \in \Mor(\Y,\X)$ with $n_0 \ge N_{k,\gamma}$, $\t x \in C_{n_0}$
		and any $n \gg n_0$,
		there exists $\tilde \rho \in \Mor(\Y,\X)$ which is a  $(k,n,0,\mu)$-approximate embedding for all $\mu \in \Prob_e(\Y)$, so that
	\begin{equation}\label{eq:tilde_rho_traces_rho2}
	D_{n_0,\frac{3}{8}\epsilon_{n_0}}[\rho,\tilde \rho] \subset Y \setminus T^{(1-2\delta_n)F_n \setminus \gamma F_{n}} Z_n
	\end{equation}
	and so that for any $y \in  Z_{n}$, $d_X(\t x, \t \rho(y)) < \epsilon_{n_0}$.
	
\end{lem}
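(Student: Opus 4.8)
The goal is to improve a given $n_0$-towerable morphism $\rho$ into a $(k,n,0,\mu)$-approximate embedding $\tilde\rho$ which (a) agrees with $\rho$ up to error $\tfrac38\epsilon_{n_0}$ on the part of $Z_n$ that is ``deep inside'' the tower (the complement of $\gamma F_n$-neighborhoods), and (b) whose value at $\vec 0$ on all of $Z_n$ is $\epsilon_{n_0}$-close to the prescribed $\tilde x\in C_{n_0}$. The construction is the same as in Lemma \ref{lem:approximate_embedding_exists}, but carried out inside the tower $Z_n$ using the relative $\epsilon$-covering machinery together with the infinite-entropy hypothesis $h(\C)=\infty$, which lets us take the error parameter $\epsilon=0$.

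First I would set up the combinatorial data on $Z_n$. Apply Lemma \ref{lem:COV_rel_to_approximate_embedding} (with the fixed partition $\mathcal P_{k}$, a small $\eta$ to be chosen, and the given $\gamma$) to get, for $n$ large, a subset $\mathcal Q'\subseteq \mathcal P_\rho^{(1-2\delta_n)F_n\setminus\gamma F_n}$ covering all but a $0$-fraction — here we use that on the good set the relative covering number is actually $1$, so by the argument in that lemma one gets an \emph{exact} partition of $Z_n$ into at most $e^{\eta|F_n|}$ cells on which $\mathcal P_k^{F_n\setminus\gamma F_n}$ is constant. (More precisely, use Lemma \ref{lem:approximate_inverese} to see that on $T^{(1-2\delta_{n_0})F_{n_0}}Z[\rho]$ the value of $\mathcal P_k$ is determined by $\rho\mid_{F_{2n_0}}$, then combine over translates as in that lemma.) Since $h(\C)=\infty$, for $n$ large we have $|C_n|\ge e^{\eta|F_n|}\cdot |\text{(extra factors from }F_n\setminus(1-2\delta_n)F_n\text{ and }\gamma F_n)|$, so there is an injection $\Phi$ from the set of cells into $C_n$. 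Define $\tilde\Phi:Y\to C_n$ by $\tilde\Phi(y)=\Phi(\text{cell of }y)$ for $y\in Z_n$, and set $\tilde\rho=\rho_{\tilde\Phi,n}$ via \eqref{eq:rho_Phi_n_def}. Then $\tilde\Phi(Z_n)\subset C_n$ gives \eqref{eq:rho_y_traces_x}, the injectivity of $\Phi$ gives \eqref{eq:rho_almost_injective} with $Z[\tilde\rho]=Z_n$ (hence $\epsilon=0$), and since all this is defined via Borel partitions and $C_n$ depends only on $X$, the same $\tilde\rho$ works for every $\mu\in\Prob_e(\Y)$.

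Next, the two extra conclusions. For \eqref{eq:tilde_rho_traces_rho2}: if $y\in T^{\mi}Z_n$ for some $\mi\in(1-2\delta_n)F_n\setminus\gamma F_n$, then $T^{-\mi}y\in Z_n$ lies in a cell of the partition, and the cell determines $\mathcal P_k^{F_n\setminus\gamma F_n}(T^{-\mi}y)$; one must check that $\tilde\rho(y)$ and $\rho(y)$ both ``shadow'' the same $C_{k}$-words on $F_{n_0}$ at the relevant coordinate, so that $d_X^{F_{n_0}}(\tilde\rho(y),\rho(y))<\tfrac38\epsilon_{n_0}$ — this follows because both are built from orbit segments of points in $C_{n_0}$ that shadow the same data, using the separation/shadowing estimates of Lemma \ref{lem:Ext_sep} and the definition \eqref{eq:x_interpulate} of shadowing; the $\tfrac14\epsilon_{n_0}$ shadowing tolerance plus room leaves us under $\tfrac38\epsilon_{n_0}$. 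Hence the discrepancy set $D_{n_0,\frac38\epsilon_{n_0}}[\rho,\tilde\rho]$ is contained in $Y\setminus T^{(1-2\delta_n)F_n\setminus\gamma F_n}Z_n$. For the last clause, $d_X(\tilde x,\tilde\rho(y))<\epsilon_{n_0}$ for $y\in Z_n$: modify $\Phi$ so that it only uses elements of $C_n$ that shadow a word $W$ with $W_{\vec 0}$ a chosen $F_{n_0}$-segment near $\tilde x$ (this is where $\tilde x\in C_{n_0}$ is used — we require the cell's image to have a coordinate at $\vec 0$ matching $\tilde x$, and $\Inter_n(\C)$ contains such words); then the $\tfrac14\epsilon_{n_0}$ shadowing tolerance gives $d_X(\tilde x,\tilde\rho(y)_{\vec 0})<\epsilon_{n_0}$.

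\textbf{Main obstacle.} The delicate point is bookkeeping of how large $N_{k,\gamma}$ must be so that \emph{simultaneously}: the relative covering estimate of Lemma \ref{lem:COV_rel_to_approximate_embedding} kicks in with $\eta$ small enough; the infinite-entropy lower bound $|C_n|$ dominates all the accumulated factors $|\mathcal P_k|^{|F_n\setminus(1-2\delta_n)F_n|}$, $|\mathcal P_k|^{|\gamma F_n|}$ and $e^{\eta|F_n|}$ for all $n\gg n_0$; and the shadowing/separation constants line up to keep the discrepancy strictly below $\tfrac38\epsilon_{n_0}$ rather than merely below $\tfrac12\epsilon_{n_0}$. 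Because $h(\C)=\infty$, for any fixed $\gamma$ and $k$ the entropy comparison is eventually satisfied regardless of $\eta$ (once $\eta$ is fixed small), so the quantifier order ($N_{k,\gamma}$ first, then any $n_0\ge N_{k,\gamma}$, then any $n\gg n_0$) does close — but writing the inequalities in the right order is the part requiring care. Everything else is a routine adaptation of Lemmas \ref{lem:approximate_embedding_exists} and \ref{lem:approximate_inverese} with $\epsilon=0$.
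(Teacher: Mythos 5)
Your approach has a genuine gap, and it is precisely at the point where the infinite‑entropy hypothesis is supposed to do its work. You route the construction through Lemma \ref{lem:COV_rel_to_approximate_embedding} and Lemma \ref{lem:approximate_inverese}, i.e.\ through relative $\epsilon$-covering numbers. That machinery is intrinsically measure-dependent: it only controls the cells of $\mathcal{P}_k^{F_n}$ over a subset of $Z_n$ of large $\mu$-measure (the set $A_{N_0,n,n_0}$ of points most of whose $F_n$-translates return to the good set $T^{(1-2\delta_{n_0})F_{n_0}}Z[\rho]$), after discarding a $\mu$-small exceptional set. Your claim that one gets ``an exact partition of $Z_n$ into at most $e^{\eta|F_n|}$ cells on which $\mathcal{P}_k^{F_n\setminus\gamma F_n}$ is constant'' is false: no such bound holds on all of $Z_n$, and the exceptional set depends on $\mu$. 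Consequently this route can at best produce a $(k,n,\delta,\mu)$-approximate embedding for one fixed $\mu$ with $\delta>0$ — essentially re-deriving the finite-entropy Lemma \ref{lem:improve_approx_emb} — and cannot deliver the two features that are the whole point here: $\epsilon=0$ (so $Z[\tilde\rho]=Z_n$, injectivity of the coding on all of $Z_n$) and validity for \emph{all} $\mu\in\Prob_e(\Y)$ simultaneously. A secondary problem with invoking those lemmas is that the hypothesis here only gives an $n_0$-towerable $\rho$, not a $(k_0,n_0,\cdot,\mu)$-approximate embedding with an associated $Z[\rho]$, so Lemma \ref{lem:approximate_inverese} is not even applicable as stated.

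The paper's proof avoids covering numbers entirely. Since $h(\C)=\infty$, one can choose $N_{k,\gamma}$ so that for $n_0\ge N_{k,\gamma}$ one has $\frac{1}{|(1+\delta_{n_0})F_{n_0}|}\log|C_{n_0}| > \frac{4^d}{\gamma^d}\log|\mathcal{P}_k|$; hence for a $(1+\delta_{n_0})F_{n_0}$-spaced set $K_{n,n_0,\gamma}\subset \frac12\gamma F_n$ one gets $|C_{n_0}^{K_{n,n_0,\gamma}}|\ge|\mathcal{P}_k^{F_n}|$, and the \emph{entire} partition $\mathcal{P}_k^{F_n}$ — every cell, with no reference to any measure — is injected into $C_{n_0}^{K_{n,n_0,\gamma}}$. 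One then forms the word $W_y$ carrying $\tilde x$ at $\vec 0$, the encoding of $\mathcal{P}_k^{F_n}(y)$ on $K_{n,n_0,\gamma}$, and $\rho(T^{\mi}(y))$ at every $\mi\in(1-2\delta_n)F_n\setminus\gamma F_n$ with $T^{\mi}(y)\in Z_{n_0}$, and sets $\tilde\Phi(y)=\Ext_n(W_y)$. This last point also exposes a defect in your construction: you define $\tilde\Phi$ as ``some injection of cells into $C_n$'' and only afterwards assert that $\tilde\rho$ shadows $\rho$; but an arbitrary injection carries no such guarantee. The shadowing \eqref{eq:tilde_rho_traces_rho2} comes from the fact that the target element of $C_n$ is produced by $\Ext_n$ applied to a word that explicitly records $\rho$'s values at the $Z_{n_0}$-return positions, which your construction never builds in.
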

\begin{proof}

Because $h(\C) = \infty$, for any  $k \in \NN$  and $\gamma \in (0,1)$ there exists 
$N_{k,\gamma}$ so that for any $n_0 > N_{k,\gamma}$
$$ \frac{1}{|(1+\delta_{n_0})F_{n_0}|}\log |C_{n_0}| >  {\frac{4^d}{\gamma^d}} \log |\mathcal{P}_k|.$$ 
Now fix $n\in \N$ such that $n > N_{k,\gamma}$ and $n \gg \lceil \gamma^{-1} n_0 \rceil $.
Let 
$$K_{n, n_0, \gamma}=  \frac{1}{2} \gamma F_{n}\cap\left(\lceil(2+ 2\delta_{n_0})n_0\rceil\right)\Z^d \setminus \{\vec{0}\}.$$
We have that $|K_{n, n_0, \gamma}|\ge\frac{|\frac{1}{2}\gamma F_{n}|}{|(1+\delta_{n_0})F_{n_0}|-1}$.
Thus
$$|C_{n_0}^{K_{n, n_0, \gamma}}| \ge | \P_{k}^{F_{n}}|.$$
Then there is an injective map 
$$\phi:  \P_{k}^{F_{n}}\to C_{n_0}^{K_{n, n_0, \gamma}}.$$
Let $\phi^{-1}$ be a left inverse of $\phi$. 
For $y \in Y$ set
$$K_y=\{\mi\in (1-2 \delta_n)F_n\setminus \gamma F_n ~:~ T^\mi(y)\in Z_{n_0}\}\cup K_{n, n_0, \gamma}.$$
and $W_y\in {C_{N_k}}^{K_y}$ by
$$(W_y)_\mi=\begin{cases}
\t x & \text{ if } \mi = \vec{0}\\
\phi\left(\P_{k+1}^{F_{N_{k+1}}}(y)\right)_\mi &\text{ if } \mi \in  K_{n, n_0, \gamma}\\
\rho(T^{i}(y))&\text{ otherwise.}
\end{cases}$$
Since $K_y \subset (1-2\delta_{N_k+1})F_n$ is $(1+\delta_{N_k})F_{N_k}$-spaced we can 
define $\t \Phi: Y\to C_{N_{k+1}}$ by 
$$\t \Phi(y)= \Ext(W_y).$$

Let $\tilde \rho = \rho_{\tilde \Phi,n} \in \Mor(\Y,\X)$ be given by \eqref{eq:rho_Phi_n_def}. 
Then $\tilde \rho$ is a $(k,n,0,\mu)$-approximate embedding because we can take
$Z[\tilde \rho]=Z_n$ and 
\eqref{eq:tilde_rho_traces_rho2} holds. Also, whenever $y \in Z_n$,  $d_X(\t x, \t \rho(y)) < \epsilon_{n_0}$.
\end{proof}

\begin{proof}[Proof of Theorem \ref{thm:infinite_entropy}]
	We assume that $\C$ is a flexible sequence on $X$ and that $h(\C)=\infty$. Suppose we have a sequence $(x_k)_{k=1}^\infty$ so that $x_k \in C_k$ and $\{x_k\}_{k=1}^\infty$ is dense in $\cup_{k=1}^\infty C_k$.
	By induction, construct an increasing sequence $(n_k)_{k=1}^\infty$ of natural numbers so that $n_1 \gg 1$ is also big enough to satisfy the conclusion of Lemma \ref{lem:approximate_embedding_exists_inf_entropy} with $k=1$ and also so that for any $k \ge 1$ 
	$$ n_{k} \ge N_{k+1,2^{-(k+11)}} \mbox { and } n_{k+1} \gg \lceil 2^{(k+11)}n_k \rceil,$$
	where $N_{k,\gamma}$ is a number that satisfies the conclusion of Lemma \ref{lem:improve_approx_emb_inf_entropy}.

Note that this in particular  implies that for any $k \ge 1$
$$\epsilon_{n_k}+ \delta_{n_k} < 2^{-(k+1)}.$$
By induction, we will construct sequences $\rho_k \in \Mor(\Y,\X)$, $\tilde x_k \in C_{n_k}$ so that 
$$ d_X(\tilde x_k, x_k) < \epsilon_k,$$
and 
 $\rho_k$ is a $(k,n_k,0,\mu)$-approximate embedding for all $\mu \in \Prob_e(\mathcal Y)$ and such that
$$
D_{n_k,\frac{1}{4}\epsilon_{n_k}}[\rho_k,\rho_{k+1}]\subseteq  E_k,
$$
where
$$E_k = Y \setminus T^{(1-2\delta_{n_{k+1}})F_{n_{k+1}} \setminus 2^{-(k+10)} F_{n_{k+1}}} Z_{n_{k+1}},$$
and so that
$$ \forall\  y\in Z_{n_{k+1}},~ d_X(\rho_{k+1}(y),\tilde x_k) < \epsilon_{n_k}.$$ 
Lemma \ref{lem:convergence_set_for_rho_j_s} implies that the sequence converges to
 a Borel embedding of $(Y_\infty,T)$ into $(X,S)$, where 
$$
	Y_\infty = \bigcap_{\mi \in \ZD}T^{-\mi}\bigcup_{j=1}^\infty \bigcap_{t \ge j}T^{(1-2\delta_{n_t})F_{n_t}}\left(Z_t \setminus E_t\right).
	$$
	Such sequences can be constructed inductively: To start the induction, apply Lemma \ref{lem:approximate_embedding_exists_inf_entropy}. 
	For the induction step, apply  Lemma \ref{lem:improve_approx_emb_inf_entropy} with $\rho= \rho_k$, and $n_0=n_k$, $n = N = n_{k+1}$, $\gamma = \frac{1}{2^{k+10}}$. Let us check that  \eqref{eq:rho_j_shadows_rho_j_prev} holds:
	Note that
	$$  E_k 
	= \left(Y \setminus T^{(1-2\delta_{n_{k+1}})F_{n_{k+1}}}Z_{n_{k+1}}\right) \cup
	\left(T^{2^{-(k+10)} F_{n_{k+1}}} Z_{n_{k+1}}\right).$$
	Fix $\mu \in \Prob_e(\mathcal Y)$. Now 
	$$ \mu\left( Y \setminus T^{(1-2\delta_{n_{k+1}})F_{n_{k+1}}}Z_{n_{k+1}} \mid Z_{n_k}\right) \le
	\frac{1}{\mu(Z_{n_k})}\mu\left( Y \setminus T^{(1-2\delta_{n_{k+1}})F_{n_{k+1}}}Z_{n_{k+1}}\right) .
	$$
	So Lemma \ref{lem:mu_tower_subset}  implies that  
	$$ \mu\left( Y \setminus T^{(1-2\delta_{n_{k+1}})F_{n_{k+1}}}Z_{n_{k+1}} \mid Z_{n_k}\right)
	 \le 2|F_{n_k}| \left(\epsilon_{n_{k+1}} + 6d\delta_{n_{k+1}}\right)
	 \le \epsilon_{n_k} +\delta_{n_k} < 2^{-(k+1)}.
	$$
	By
	 Lemma \ref{lem:eq:mu_tower_subset_cond},
	$$
	 \mu\left(T^{2^{-(k+10)} F_{n_{k+1}}} Z_{n_{k+1}} \mid Z_{n_k}\right) 
	 \le 2^{-(k+9)} + \delta_{n_k}< 2^{-(k+1)}.
	$$
Together this implies that  
	$$
	\mu\left(E_k \mid Z_{n_k} \right) < 2^{-k}.
	$$
 Lemma \ref{lem:mu_Y_0_1} now implies that $\mu(Y_\infty)=1$ for any $\mu \in \Prob_e(\Y)$.
To see that the support of $\mu \circ \rho^{-1}$ is full for any $\mu \in \Prob_e(\Y)$, note that the assumption that $\bigcup_{k=1}^\infty C_k$ is dense in $X$ implies that the sequence $(\t x_k)_{k=1}^\infty$ is also dense in $X$.
The construction implies that for any $k \in \NN$
$$ \mu(\left\{y \in Z_{k}:~ d_X(\t x_k,\rho(y)) < 2\epsilon_k \right\} \mid Z_{k}) > \frac{1}{2}.$$
This implies that $\mu \circ \rho^{-1} (U) >0$ for  every non-empty open set  $U \subseteq X$.
\end{proof}

\subsection{The case of finite entropy}
In the following  subsection we prove that flexibility  implies ergodic universality  for the more difficult case that $h(\C) < \infty$, that in particular  confirms \cite[Conjecture $1$]{MR3453367}.
For the finite entropy case we use  Lemma \ref{lem:improve_approx_emb} below as a replacement for   Lemma \ref{lem:improve_approx_emb_inf_entropy}. The statement and the proof of Lemma \ref{lem:improve_approx_emb} are similar to those of Lemma \ref{lem:improve_approx_emb_inf_entropy}, but slightly more involved.

\begin{lem}\label{lem:C_gamma_n}

	Fix $0 < \hat{h}< h(\C)$. 
	Consider the functions
	$$\alpha,\beta:(0,1) \to \mathbb{R}$$
		defined by:
	\begin{equation}\label{eq:beta_def}
\index{Definitions and notation introduced in Section 6!$\beta(\gamma)$}	\beta(\gamma) = \sqrt[d]{\frac{\hat{h}+h(\C)}{2h(\C)}}\gamma
	\end{equation}
	and
	\begin{equation}\label{eq:alpha_def}
\index{Definitions and notation introduced in Section 6!$\alpha(\gamma)$}	\alpha(\gamma) = \frac{1}{2}\min\left\{\left(\beta(\gamma)^d h(\C)- \hat{h}\gamma^d\right),\frac{1}{10}\left(\gamma - \beta(\gamma)\right)\right\},
	\end{equation}
	then for every $\gamma \in (0,1)$ 
	$$ \alpha(\gamma),\beta(\gamma) \in (0,1),$$
	and
	\begin{equation}\label{eq:gamma_big}
	\gamma > \beta(\gamma) + 10 \alpha(\gamma).
	\end{equation}
	In addition, there exists $K_0 \in \NN$ and  a function	$$N_T:(0,1) \to \NN$$ \index{Definitions and notation introduced in Section 6!$N_T(\gamma)$}
	so that for every $\gamma \in (0,1)$,  $k > K_0$   and  $n >N_T(\gamma)$ the following hold:
    \begin{equation}\label{eq:gamma_n_big}
    (1-2\delta_n) \gamma> (1+\delta_n)\beta(\gamma)+10 \alpha(\gamma),
    \end{equation}
	\begin{equation}\label{eq:C_n_gamma_big}
    \frac{|\beta(\gamma)F_n|}{|(1+\delta_k)F_k|}\log |C_k| > \alpha(\gamma)|F_n|+\hat{h} |\gamma F_n|, 
	\end{equation}
	\begin{equation}\label{eq:C_n_alpha_big}
 \frac{|\alpha(\gamma)F_n|}{|(1+\delta_k)F_k|}\log |C_k| > \hat {h} \alpha(\gamma)^d|F_n|. 
	\end{equation}

\end{lem}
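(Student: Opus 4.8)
The plan is to verify the three displayed inequalities by making the auxiliary quantities $\alpha(\gamma)$ and $\beta(\gamma)$ explicit and then reading off the required estimates; none of the steps is deep, the content is purely bookkeeping around the definitions in \eqref{eq:beta_def}--\eqref{eq:alpha_def}.

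First I would check the claims that do not involve $n$ or $k$. Writing $c = \sqrt[d]{\tfrac{\hat h + h(\C)}{2h(\C)}}$, we have $\tfrac12 < c^d = \tfrac{\hat h + h(\C)}{2h(\C)} < 1$ since $0 < \hat h < h(\C)$, hence $c \in (0,1)$ and $\beta(\gamma) = c\gamma \in (0,\gamma) \subset (0,1)$. For $\alpha(\gamma)$, the first term in the minimum is $\beta(\gamma)^d h(\C) - \hat h \gamma^d = \gamma^d\bigl(c^d h(\C) - \hat h\bigr) = \gamma^d\bigl(\tfrac{\hat h + h(\C)}{2} - \hat h\bigr) = \gamma^d \tfrac{h(\C)-\hat h}{2} > 0$, and the second term $\tfrac{1}{10}(\gamma - \beta(\gamma)) = \tfrac{1}{10}\gamma(1-c) > 0$; both are $< 1$ (the first because $\gamma^d < 1$ and $\tfrac{h(\C)-\hat h}{2} < h(\C)$ — here I would note one may also shrink using $\gamma<1$, or simply observe the min of the two positive quantities, the second of which is at most $\tfrac{1}{10}$, is certainly in $(0,1)$). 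Then \eqref{eq:gamma_big} is immediate: $\alpha(\gamma) \le \tfrac{1}{10}(\gamma-\beta(\gamma))$ gives $10\alpha(\gamma) \le \gamma - \beta(\gamma)$, and strictness comes from $\alpha(\gamma)$ being the minimum with a strictly smaller first argument only when that argument is smaller, so in all cases $\gamma > \beta(\gamma) + 10\alpha(\gamma)$ (if the two arguments of the min coincide one still has the strict inequality because $\alpha$ is half of that common value).

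Next I would handle \eqref{eq:C_n_gamma_big} and \eqref{eq:C_n_alpha_big}, which are asymptotic in $n$ and $k$. By Lemma \ref{lem:limit_h_C_exists}, $\frac{1}{|(1+\delta_k)F_k|}\log|C_k| \to h(\C)$ as $k\to\infty$, so there is $K_0$ such that for all $k > K_0$ this ratio exceeds $\tfrac{\hat h + h(\C)}{2} = c^d h(\C)$. Using $\tfrac{|\beta(\gamma)F_n|}{|F_n|} \to \beta(\gamma)^d = c^d\gamma^d$ and $\tfrac{|\alpha(\gamma)F_n|}{|F_n|}\to\alpha(\gamma)^d$ and $\tfrac{|\gamma F_n|}{|F_n|}\to\gamma^d$ as $n\to\infty$, the left side of \eqref{eq:C_n_gamma_big} divided by $|F_n|$ tends to something $> c^d\gamma^d \cdot c^d h(\C)$... rather, more carefully: for $k>K_0$ the left side is at least $|\beta(\gamma)F_n| \cdot c^d h(\C)$, whose ratio to $|F_n|$ tends to $\beta(\gamma)^d c^d h(\C) = c^{2d}\gamma^d h(\C)$, and I must compare this with the limit $\alpha(\gamma) + \hat h\gamma^d$ of the right side's ratio; since $\alpha(\gamma) \le \tfrac12(\beta(\gamma)^d h(\C) - \hat h\gamma^d)$ the right-side limit is at most $\tfrac12(\beta(\gamma)^d h(\C) + \hat h \gamma^d) < \beta(\gamma)^d h(\C) = c^d\gamma^d h(\C)$, and one checks $c^{2d}\gamma^d h(\C)$... here I notice a potential sign subtlety, so the honest move is: bound the left side below using $k>K_0$ by $|\beta(\gamma)F_n|\cdot\beta(\gamma)^{-d}\cdot(\text{something})$ — actually cleaner is to first fix $k$ large enough that $\frac{1}{|(1+\delta_k)F_k|}\log|C_k|$ exceeds $h(\C) - \varepsilon$ for a small $\varepsilon$ to be chosen, then the left side of \eqref{eq:C_n_gamma_big} is $\ge |\beta(\gamma)F_n|(h(\C)-\varepsilon)$, and dividing by $|F_n|$ and letting $n\to\infty$ gives limit $\beta(\gamma)^d(h(\C)-\varepsilon)$, which for small $\varepsilon$ strictly exceeds $\alpha(\gamma) + \hat h\gamma^d$ precisely because $\beta(\gamma)^d h(\C) - \hat h\gamma^d > \alpha(\gamma)$ (as $\alpha(\gamma)$ is \emph{half} that difference). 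So for such $\varepsilon$ one picks $K_0$ accordingly, then $N_T(\gamma)$ so that the finite-$n$ inequality holds for $n > N_T(\gamma)$; the same scheme handles \eqref{eq:C_n_alpha_big} since $\alpha(\gamma)^d h(\C) > \hat h \alpha(\gamma)^d$ with room to spare (as $h(\C) > \hat h$), and \eqref{eq:gamma_n_big} holds for large $n$ because $\delta_n \to 0$ and \eqref{eq:gamma_big} is a strict inequality. One subtlety worth a line: $N_T$ and $K_0$ may need to be chosen uniformly over $\gamma\in(0,1)$ or else the statement should be read with $K_0$ independent of $\gamma$ but $N_T$ a function of $\gamma$ — the latter matches the statement, so I would just note that $K_0$ can be taken from the single convergence $\frac{1}{|(1+\delta_k)F_k|}\log|C_k|\to h(\C)$ with the fixed gap $h(\C)-\hat h$, independent of $\gamma$, while $N_T(\gamma)$ absorbs the $\gamma$-dependence.

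The main obstacle, such as it is, is purely organizational: keeping straight that $\alpha(\gamma)$ is \emph{half} the minimum, which is exactly the slack that makes all three strict inequalities go through, and making sure the order of quantifiers ($K_0$ first and $\gamma$-independent, then $N_T(\gamma)$) is respected. There is no genuine difficulty; the lemma is a packaging step whose point is to record, once and for all, the constants used in the proof of Lemma \ref{lem:improve_approx_emb}.
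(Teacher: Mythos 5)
Your proof is correct and follows essentially the same route as the paper: the identity $\beta(\gamma)^d h(\C)-\hat{h}\gamma^d = \frac{h(\C)-\hat{h}}{2}\gamma^d$, the limit $\frac{1}{|F_k|}\log|C_k|\to h(\C)$ from Lemma \ref{lem:limit_h_C_exists}, the factor $\frac12$ in the definition of $\alpha(\gamma)$ supplying the strictness, and $\delta_n\to 0$ for \eqref{eq:gamma_n_big}. Your insistence on a $K_0$-threshold of the form $h(\C)-\varepsilon$ (rather than merely exceeding $\hat{h}$) is in fact what \eqref{eq:C_n_gamma_big} requires, and the $\gamma$-uniformity of $\varepsilon$ that you flag does hold, since $\alpha(\gamma)/\beta(\gamma)^d$ is bounded below on $(0,1)$ by a constant depending only on $d$, $\hat{h}$ and $h(\C)$ (using $\gamma^{1-d}\ge 1$ when the second branch of the minimum is active).
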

\begin{proof}
	It is clear from \eqref{eq:beta_def} that $\beta(\gamma) >0$ for every $\gamma >0$. On the other hand, because $\hat{h} < h(\C)$ it follows that $\beta(\gamma)< \gamma$ whenever $\gamma >0$. In particular if $0< \gamma <1$ then $0 < \beta(\gamma) <1$.
	So for $\gamma \in (0,1)$ it follows that
	$$ 0 < \beta(\gamma) < \gamma < 1,$$
	and in this case by \eqref{eq:alpha_def} 
	$$\alpha(\gamma) \le \frac{1}{10}(\gamma-\beta(\gamma)) < \frac{1}{10} <1.$$
	In addition, by \eqref{eq:beta_def},
 	\begin{equation}\label{eq:gamma_beta_bla}
 	\beta(\gamma)^d h(\C)- \hat{h}\gamma^d= \left(\frac{h(\C)-\hat{h}}{2} \right)\gamma^d.
 	\end{equation}
	So $\alpha(\gamma) >0$ whenever $\gamma >0$. So we see that $\alpha,\beta:(0,1) \to (0,1)$ are well defined.
	Also, from the definition of $\alpha(\gamma)$ it is clear that \eqref{eq:gamma_big} holds. Since $\lim_{n \to \infty}\delta_n =0$,
	it follows that \eqref{eq:gamma_n_big} holds for all sufficiently big $n$.
	By definition of $\alpha(\gamma)$,
	$$\beta(\gamma)^dh(\C) > \alpha(\gamma)+ \hat{h}\gamma^d.$$
	Note that as $n \to \infty$.
	$$e^{\alpha(\gamma)|F_n|+\hat{h}|\gamma F_n|} = e^{\alpha(\gamma)|F_n|+\hat{h}\gamma^d|F_n|+o(|F_n|)}.$$
	By definition,
	$$\lim_{k \to \infty}\frac{1}{|F_k|}\log |C_k| = h(\C).$$
	So there exists $K_0 \in \NN$ such that for any $k > K_0$ 
	$$ \frac{1}{|(1+\delta_k)F_k|}\log |C_k| > \hat h.$$
It follows that \eqref{eq:C_n_gamma_big} and \eqref{eq:C_n_alpha_big}    hold for all sufficiently large $n$
and we can choose $N_T(\gamma)$ to be the smallest $N \in \NN$ so that \eqref{eq:gamma_n_big}, \eqref{eq:C_n_gamma_big}  and \eqref{eq:C_n_alpha_big} hold for all $n >N$.

\end{proof}

The following lemma is a crucial step in the proof of our main result. It says that we can slightly modify a given approximate embedding and get a much better one that is ``close'' to the original on a ``big part of the space''. The ``extent of modification required'' depends on the ``quality'' of the original approximate embedding, and goes to zero as the original approximate embedding gets better and better.

\begin{lem}\label{lem:improve_approx_emb}
	For every $\gamma \in (0,1)$ there exists $k_0,N_0 \in \NN$ such that for every $n_0 \ge N_0$ and every   $(k_0,n_0,\frac{1}{N_0},\mu)$-approximate embedding $\rho$, every $k \in \NN$ and $\delta >0$ there exists $N\in\N$ such that for all $n>N$ there is a $(k,n,\delta,\mu)$-approximate embedding $\tilde \rho \in \Mor(\Y,\X)$ so that
	\begin{equation}\label{eq:tilde_rho_traces_rho}
D_{n_0,\frac{3}{8}\epsilon_{n_0}}[\rho,\tilde \rho] \subset Y \setminus T^{(1-2\delta_n)F_n \setminus \gamma F_n} Z_n.
	\end{equation}
	
\end{lem}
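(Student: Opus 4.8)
The goal is to improve a given $(k_0,n_0,\tfrac{1}{N_0},\mu)$-approximate embedding $\rho$ into a much better $(k,n,\delta,\mu)$-approximate embedding $\tilde\rho$ that agrees with $\rho$ (up to $\tfrac38\epsilon_{n_0}$ in the $d^{F_{n_0}}_X$-metric) everywhere except on a set contained in $Y\setminus T^{(1-2\delta_n)F_n\setminus\gamma F_n}Z_n$. The construction mirrors that of Lemma~\ref{lem:improve_approx_emb_inf_entropy}, but since $h(\C)<\infty$ we can no longer afford to encode $\mathcal{P}_k^{F_n}(y)$ directly into $C_{n_0}^{K}$ for a $K$ of size $\sim\gamma^d|F_n|/|F_{n_0}|$; instead we must exploit the relative covering bound of Lemma~\ref{lem:COV_rel_to_approximate_embedding} to reduce how much fresh information needs to be inserted. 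So first, fix $\hat h$ with $h_\mu(Y,T)<\hat h<h(\C)$ and invoke Lemma~\ref{lem:C_gamma_n} to obtain $\alpha(\gamma),\beta(\gamma)\in(0,1)$, $K_0$, and $N_T(\gamma)$, with the key inequalities \eqref{eq:gamma_big}, \eqref{eq:gamma_n_big}, \eqref{eq:C_n_gamma_big}, \eqref{eq:C_n_alpha_big}; choose $k_0>K_0$ large and $N_0\ge\overline N_R(\eta,k_0)$ for a suitably small $\eta>0$ (to be matched against $\alpha(\gamma)$).

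**The core construction.** Given $\rho$ a $(k_0,n_0,\tfrac{1}{N_0},\mu)$-approximate embedding and a target level $k\ge k_0$, $\delta>0$, take $n\gg n_0$ with $n>N_T(\gamma)$ and $n$ large enough for Lemma~\ref{lem:COV_rel_to_approximate_embedding} (with parameters $\eta$, $k_0$, $\gamma$ there) to apply: on $Z_n$,
\begin{equation*}
\COV_{\mu,\delta/4,\mathcal P_{k_0}^{F_n\setminus\gamma F_n}\mid\mathcal P_\rho^{(1-2\delta_n)F_n\setminus\gamma F_n}}(Z_n)<e^{\eta|F_n|}.
\end{equation*}
Combining this (via the submultiplicativity inequalities \eqref{eq:COV_join_submultiplicative}, \eqref{eq:cov_triangle} of Lemma~\ref{lem:COV_submultiplicative}) with Lemma~\ref{lem:tower_cov} applied to $\mathcal P_k$ on $Z_n$, and using $h_\mu(Y,T)<\hat h$, we get that $Z_n$ (up to a $\tfrac{\delta}2$-fraction) is covered by at most $e^{(\hat h\gamma^d+\alpha(\gamma)+o(1))|F_n|}$ atoms of $\mathcal P_k^{F_n}$ once we condition on the values of $\mathcal P_\rho$ on $(1-2\delta_n)F_n\setminus\gamma F_n$ — i.e., we only need to encode $\sim(\hat h\gamma^d+\alpha(\gamma))|F_n|$ nats of new data to recover $\mathcal P_k^{F_n}(y)$ on a large subset, given that we preserve $\rho$ outside of $\gamma F_n$. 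By \eqref{eq:C_n_gamma_big} the free ``space'' available inside $C_{n_0}^{K}$, for $K$ a maximal $(1+\delta_{n_0})F_{n_0}$-spaced subset of $\tfrac12\beta(\gamma)F_n$, is at least $\tfrac{|\beta(\gamma)F_n|}{|(1+\delta_k)F_k|}\log|C_{k_0}|>\alpha(\gamma)|F_n|+\hat h|\gamma F_n|$, which dominates. So we may define an injective (on a full-enough subfamily $\mathcal G\subset\mathcal P_k^{F_n}$) coding map $\phi$ from the relevant conditioned atoms into $C_{n_0}^{K}$. Now, for $y\in Y$, set $K_y$ to be the union of $K$ (reindexed inside $\gamma F_n$, with the marker values of $\rho(T^\bullet y)$ retained on $\{\mi\in(1-2\delta_n)F_n\setminus\gamma F_n: T^\mi y\in Z_{n_0}\}$) and define $W_y\in C_{n_0}^{K_y}$ by copying $\rho(T^\mi y)$ for $\mi$ in the outer region, the components of $\phi(\mathcal P_k^{F_n}(y))$ for $\mi\in K$, with $x_\star$ (or the marker point) at the origin; then put $\tilde\Phi(y)=\Ext_n(W_y)$ and $\tilde\rho=\rho_{\tilde\Phi,n}$ as in \eqref{eq:rho_Phi_n_def}. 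Since $K_y\subset(1-2\delta_n)F_n$ is $(1+\delta_{n_0})F_{n_0}$-spaced (using \eqref{eq:gamma_n_big}), $\Ext_n$ applies, and by \eqref{eq:Ext_def} the new point $\tilde\rho(y)$ shadows $\rho(T^\mi y)$ on $(1-2\delta_n)F_n\setminus\gamma F_n$ to within $\tfrac14\epsilon_{n_0}<\tfrac38\epsilon_{n_0}$, giving \eqref{eq:tilde_rho_traces_rho}. The approximate-injectivity set $Z[\tilde\rho]$ for $\tilde\rho$ is taken to be the union of the relevant conditioned atoms on $Z_n$; Lemma~\ref{lem:Ext_sep} (the separation-preservation property of $\Ext_n$) ensures $\tilde\rho(y)_{\vec 0}$ together with the reconstructed marker positions determines $\phi(\mathcal P_k^{F_n}(y))$ hence (via $\phi^{-1}$ and the relative-covering reconstruction) $\mathcal P_k^{F_n}(y)$, so \eqref{eq:rho_almost_injective} holds up to the $\delta$-fraction we discarded, with the residual controlled by \eqref{eq:mu_A_epsilon_n_big}-type mean-ergodic estimates and Lemma~\ref{lem:mu_tower_subset}.

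**Where the difficulty lies.** The genuinely delicate point — absent in the infinite-entropy case — is the accounting that shows the ``new information'' we must insert, namely $\sim\hat h\gamma^d|F_n|+\alpha(\gamma)|F_n|$ nats, genuinely fits into the $\beta(\gamma)$-sized reservoir while $\beta(\gamma)<\gamma$ leaves room to also not disturb the marker structure on $(1-2\delta_n)F_n\setminus\gamma F_n$; this is exactly what the carefully chosen constants $\alpha(\gamma),\beta(\gamma)$ in Lemma~\ref{lem:C_gamma_n} are engineered for, and the proof amounts to verifying that Lemma~\ref{lem:COV_rel_to_approximate_embedding}'s $\eta$ can be taken smaller than $\alpha(\gamma)$ and that all the $o(|F_n|)$ boundary terms (the $\delta_n|F_n|$-order sets from \eqref{eq:C_n_alpha_big} and from passing between $\gamma F_n$ and $(1\pm2\delta_n)\gamma F_n$, etc.) are absorbed for $n$ beyond a threshold $N$. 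A secondary bookkeeping burden is checking that $\tilde\rho$ is genuinely $n$-towerable (it is, by construction, with $Z[\tilde\rho]\subseteq Z_n$ and $\tilde\rho(y)_{\vec0}=x_\star$ off $T^{F_n}Z_n$) and that the reconstruction map from $X^{F_{2n}}$ claimed implicitly is Borel — both routine given Lemma~\ref{lem:approximate_inverese} and the explicit form of $\Ext_n,\pi_n$. I do not expect any single calculation to be hard; the work is in threading the constants consistently.
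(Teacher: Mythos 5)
Your overall architecture matches the paper's: use the relative covering bounds to show that, conditioned on $\rho$'s values on $(1-2\delta_n)F_n\setminus\gamma F_n$, only about $e^{\alpha(\gamma)|F_n|/2+\hat h|\gamma F_n|}$ atoms of $\mathcal{P}_k^{F_n}$ need to be distinguished, inject those into $C_{n_0}^{K}$ for a spaced $K$ of size $\sim\beta(\gamma)^d|F_n|/|F_{n_0}|$, and glue with $\Ext_n$. But there is a genuine gap in your injectivity/decoding step, precisely at the phrase ``together with the reconstructed marker positions.'' To verify \eqref{eq:rho_almost_injective} the decoder must recover, from $\tilde\rho(y)_{\vec 0}$ alone, the conditioning pattern $w=\rho(y)\mid_{(1-2\delta_n)F_n\setminus\gamma F_n}$ (so as to know which left inverse $\Phi_w^{-1}$ to apply), and for that it must know \emph{exactly} the set $\Upsilon_{n,n_0}(y)=\{\mi: T^{\mi}(y)\in Z_{n_0}\}$: at those positions one reads off the nearest $C_{n_0}$-orbit, everywhere else one must fill in $x_\star$. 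The marker property only guarantees that the set of $\mi$ where $\tilde\rho(y)$ is $\epsilon_\star$-close to $C_{n_0}$ is spaced; it does not identify it with $\Upsilon_{n,n_0}(y)$, since $\Ext_n(\overline W_y)$ is unconstrained off $\overline K_y$ and can produce false positives, and two points $y,y'$ with different $\Upsilon$-sets have coding domains $\overline K_y\ne\overline K_{y'}$, so Lemma \ref{lem:Ext_sep} does not apply to separate their images. The paper resolves this by \emph{explicitly encoding} $\Upsilon_{n,n_0}(y)\in S_{n,n_0}$ into a second reserved block $\mi_b+K_{n,n_0,\alpha(\gamma)}$ via an injection $\Phi_b$; this is exactly why $S_{n,n_0}$, the counting bound $|S_{n,n_0}|<\exp(\alpha(\gamma)^d\hat h|F_n|)$, the inequality \eqref{eq:C_n_alpha_big}, and the term $\overline N_S(\alpha(\gamma)^d\hat h)$ in the definition of $N_0$ all appear. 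Your proposal omits this component entirely, and without it the map $\tilde\rho$ need not satisfy \eqref{eq:rho_almost_injective}.

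A second, smaller gap: your covering estimate for $\mathcal{P}_k^{F_n}$ given $\mathcal{P}_\rho$ does not follow from Lemma \ref{lem:COV_rel_to_approximate_embedding} (which controls only the \emph{fixed} partition $\mathcal{P}_{k_0}$) plus Lemma \ref{lem:tower_cov} on $\gamma F_n$. Since $k$ is arbitrary and chosen after $k_0$, you must pass through $\mathcal{P}_{k_0}$ via \eqref{eq:cov_triangle} and bound $\COV_{\mu,\cdot,\mathcal{P}_k^{F_n}\mid\mathcal{P}_{k_0}^{F_n}}(Z_n)$ by Lemma \ref{lem:rel_tower_cov}; this term is $e^{o(|F_n|)}$-small only if $k_0$ is chosen so that $h_\mu(\Y\mid\mathcal{P}_{k_0}^{\ZD})<\alpha(\gamma)/8$, which is the actual definition of $k_0$ in the paper and the reason $k_0$ appears in the lemma's quantifiers. ``Choose $k_0>K_0$ large'' does not capture this: without the conditional-entropy condition, the new information in $\mathcal{P}_k^{F_n\setminus\gamma F_n}$ not determined by $\rho$ can be of order $h_\mu(\Y\mid\mathcal{P}_{k_0}^{\ZD})\,|F_n|$, which does not fit in the $\beta(\gamma)$-reservoir.
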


\begin{remark}\label{remark:constant_half}
	Informally, equation \eqref{eq:tilde_rho_traces_rho} means that for ``most'' $y\in Y$, $\rho(y)$ and  $\tilde \rho(y)$ are very close in $d_X^{F_{n_0}}$ metric. To see this note that for large $n$ the set on the right hand side of \eqref{eq:tilde_rho_traces_rho} has very small measure because it consists of a very small fraction of the levels of the tower with base $Z_n$ together with the complement of the tower. The constant $\frac{3}{8}$ in front of the $\epsilon_{n_0}$ is somewhat arbitrary; the important point is that it is strictly smaller than $\frac{1}{2}$.  This is important due to the fact that $C_{n_0}$ is $(\epsilon_{n_0},F_{n_0})$-separated and allows us to recover the $\mathcal{P}_{k_0}$ names of $y$. This remark applies to similar numerical constants appearing later in the paper.
\end{remark}

Before proceeding with the proof of Lemma \ref{lem:improve_approx_emb} let us give a brief informal outline: Given a $(k_0,n_0,\frac{1}{N_0},\mu)$-approximate embedding $\rho$, the new $(k,n,\delta,\mu)$-approximate embedding $\tilde \rho \in \Mor(\Y,\X)$ will be defined roughly as follows:
Whenever $y \in Z_n$, $\tilde \rho(y)\mid_{F_n}$ will be the $F_n$-orbit segment an element of $C_n$, 
so that for ``most'' $\mi \in F_n$, $S^{\mi}(\tilde \rho(y))$ is very close to $S^{\mi}(\rho(y))$.
More specifically, outside some ``small fixed regions'' in $F_n$ (namely those that are $2\delta_n$-close to the boundary and those within the small box $\gamma F_n$) , if $T^{\mi}(y) \in Z_{n_0}$ then 
$S^{\mi}(\tilde \rho(y))\mid_{F_{n_0}}$ will be very close to $S^{\mi}( \rho(y))\mid_{F_{n_0}}$. The orbit segment $\tilde\rho (y)\mid_{\gamma F_n}$ is used to ``encode additional information regarding $\mathcal{P}_k^{F_n}(y)$'', beyond the information in $\rho(y)\mid_{F_n}$.  
 \begin{proof}
   Fix $\gamma \in (0,1)$. Choose $\hat{h} \in (h_\mu(\Y), h(\C))$. Because $(\mathcal{P}_{k})_{k=1}^\infty$ are an increasing sequence of partitions and 
$\bigvee_{k=1}^\infty \mathcal{P}_k^{\ZD} = \Borel(Y)$, it follows that $\lim_{k \to \infty} h_\mu(\Y \mid \mathcal{P}_{k}^{\ZD}) =0$. So we can define
\begin{equation}\label{eq:k_0_def}
\overline{k_0}(\mu,\gamma)= \min \{k \in \NN:~ h_\mu\left(\Y \mid \mathcal{P}_k^{\ZD} \right) < \alpha(\gamma)/8 \},
\end{equation}
where $\alpha:(0,1) \to \mathbb{R}$ is given by \eqref{eq:alpha_def}.
Let $k_0 = \overline{k_0}(\mu,\gamma)$. We thus have, 
\begin{equation}\label{eq:h_mu_rel_P_k_0_small}
h_\mu(\Y \mid \mathcal{P}_{k_0}^{\ZD}) < \alpha(\gamma)/8.
\end{equation}

For any $n_0,n \in \NN$ with $n > n_0$ recall $S_{n,n_0}$ as defined in \eqref{eq:S_n_n0_def}.
Because every element of $S_{n,n_0}$ is a subset of $F_n$ that has cardinality at most $|F_{n}|/|F_{n_0}|$, 
for every $n >n_0$ 
$$\frac{1}{|F_n|} \log|S_{n,n_0}| \le \frac{1}{|F_n|}\log\left(|F_n|   {{|F_n|}\choose{\frac{|F_n|}{|F_{n_0}|}}} \right)  \leq \frac{\log{|F_n|}}{|F_n|} +\mathcal H\left(\frac{1}{|F_{n_0}|}\right).$$
In particular, because $\lim_{p \to 0+}\H(p)=0$ it follows that
\begin{equation}\label{eq:S_n_n0_small0}
\lim_{n_0 \to \infty} \sup_{n > n_0}\frac{1}{|F_n|}\log |S_{n,n_0}|=0.
\end{equation}

For $\epsilon >0$ let
\begin{equation}\label{eq:N_S_def}
\overline{N}_{S} (\epsilon) = \inf\left\{ N \in \NN:~\ \forall\ n > n_0 > N, ~ |S_{n,n_0}| < \exp\left(\epsilon |F_n| \right)\right\}.
\end{equation}
The limit \eqref{eq:S_n_n0_small0} shows that $\overline{N}_{S}:(0,1) \to \NN$ is indeed well defined and finite.

Set
\begin{equation}\label{eq:N_0_def}
N_0 = \max \left\{\overline{N}_{S}(\alpha(\gamma)^d\hat{h}),\overline{N}_R(\alpha(\gamma)/4,k_0),K_0\right\},
\end{equation}
where $\alpha:(0,1) \to (0,1)$ is given by \eqref{eq:alpha_def}, and $\overline{N}_R(\alpha(\gamma)/4,k_0)$ 
is the integer obtained by applying Lemma \ref{lem:COV_rel_to_approximate_embedding} with $\eta=\alpha(\gamma)/4$ and $k_0$, and $K_0 \in \NN$ is the constant that appears in Lemma \ref{lem:C_gamma_n}.
 Explicitly, this  means that for every $n_0 > N_0$ , $(k_0,n_0,\frac{1}{N_0},\mu)$-approximate embedding $\rho \in \Mor(\Y,\X)$  and any $\delta'>0$
 there exists $N \in \NN$ such that for all $n >N$ 
 \begin{equation}\label{eq:COV_P_k_0_rel_rho}
 \COV_{\mu,\delta',\mathcal{P}_{k_0}^{F_n \setminus \gamma F_{n}}\mid \mathcal{P}_\rho^{(1-2\delta_n)F_n \setminus \gamma F_n}}(Z_n) < e^{\alpha(\gamma)/4 |F_n|}
 \end{equation} and also
 \begin{equation}\label{eq:S_n_n0_small}
 |S_{n,n_0}| < \exp\left(\alpha(\gamma)^d  \hat{h} |F_n| \right),
 \end{equation}
 where $S_{n,n_0}$ is defined by \eqref{eq:S_n_n0_def}.

 At this point we fix an arbitrary $n_0 >N_0$,  $x_0 \in C_{n_0}$,  a  $(k_0,n_0,\frac{1}{N_0},\mu)$-approximate embedding $\rho \in \Mor(\Y,\X)$ and $\delta \in (0,1)$.
 Let $\delta ' >0$ be a positive number much smaller than  $\delta$, so that 
 \begin{equation}\label{eq:delta_prime_small}
 \delta' < 10^{-10}\delta^4.
 \end{equation}

By Lemma \ref{lem:tower_cov}  there exists $N \in \NN$ so that for every $n >N$ 
\begin{equation}\label{eq:COV_P_k_0_gamma_F_n}
\COV_{\mu,\delta', \mathcal{P}_{k_0}^{\gamma F_n} }(Z_n) < e^{ \hat h |\gamma F_n|}.
\end{equation}
Using the inequality \eqref{eq:COV_almost_monotone} from Lemma \ref{lem:COV_submultiplicative} it follows from \eqref{eq:COV_P_k_0_gamma_F_n} that
\begin{equation}\label{eq:COV_P_k_0_gamma_F_n_cond}
\COV_{\mu,\sqrt{\delta'}+\delta', \mathcal{P}_{k_0}^{\gamma F_n} \mid \mathcal{P}_\rho^{(1-2\delta_n)F_n \setminus \gamma F_n}}(Z_n) < 
	e^{ \hat h |\gamma F_n|}.
\end{equation}

Choose $k \in \NN$. By Lemma \ref{lem:rel_tower_cov} and \eqref{eq:h_mu_rel_P_k_0_small} there exists $N \in \NN$ so that for every $n >N$ 
\begin{equation}\label{eq:rel_COV_P_k_F_n_P_k_0_F_n_Z_n}
\COV_{\mu,\delta', \mathcal{P}_k^{F_n} \mid \mathcal{P}_{k_0}^{F_n}}(Z_n) < e^{\alpha(\gamma)/4 |F_n|}.
\end{equation}

Let $N_T:(0,1)\to \NN$ be the function given by  Lemma \ref{lem:C_gamma_n}. In particular, for every $n > N_T(\gamma)$ \eqref{eq:C_n_alpha_big} holds. Also, for $n \gg n_0$ and $\theta \in (0,1)$ let
\begin{equation}\label{eq:K_n_n_0_theta_def}
\index{Definitions and notation introduced in Section 6!$K_{n,n_0,\theta}$}K_{n,n_0,\theta}= \theta F_n \cap  \lceil (2+2\delta_{n_0})n_0\rceil \ZD.
\end{equation} 
 Using \eqref{eq:S_n_n0_small}, it follows that if $n > \max\{n_0,N_T(\gamma)\}$ then
\begin{equation}\label{eq:C_n_alpha_bigger_S_n_n_0}
|C_{n_0}^{K_{n,n_0,\alpha(\gamma)}}| > |S_{n,n_0}|.
\end{equation}

Also, by Lemma \ref{lem:C_gamma_n}, for every $n > N_T(\gamma)$ we have that \eqref{eq:gamma_n_big} and \eqref{eq:C_n_gamma_big} hold. Let $N$ be the smallest integer so that $N > N_T(\gamma)$, $N \gg n_0$, \eqref{eq:COV_P_k_0_gamma_F_n}, \eqref{eq:rel_COV_P_k_F_n_P_k_0_F_n_Z_n} hold and also
\begin{equation}\label{eq:epsilon_n_smaller_eta}
\epsilon_N < \frac{1}{16}\min\{\alpha(\gamma/4),\epsilon_{n_0}\}.
\end{equation}
 It follows that \eqref{eq:COV_P_k_0_rel_rho}, \eqref{eq:COV_P_k_0_gamma_F_n_cond} and  \eqref{eq:C_n_alpha_bigger_S_n_n_0} also hold for all $n >N$.

Now choose any $n >N$. From \eqref{eq:COV_P_k_0_rel_rho} and \eqref{eq:COV_P_k_0_gamma_F_n_cond} using \eqref{eq:COV_join_submultiplicative} of Lemma \ref{lem:COV_submultiplicative} it follows that
\begin{equation}\label{eq:COV_P_k_0_F_n_cond_rho}
\COV_{\mu, \sqrt{\delta'} + 2\delta' ,\mathcal{P}_{k_0}^{F_n}  \mid \mathcal{P}_\rho^{(1-2\delta_n)F_n \setminus \gamma F_n}}(Z_n)< e^{\frac{\alpha(\gamma)}{4}|F_n| + \hat h |\gamma F_n| }.
\end{equation}
By \eqref{eq:cov_triangle} of Lemma \ref{lem:COV_submultiplicative} 
$$
\COV_{\mu,\delta/3,\mathcal{P}_{k}^{F_n} \mid \mathcal{P}_\rho^{(1-2\delta_n)F_n \setminus \gamma F_n}}(Z_n) \le 
\COV_{\mu,\frac{(\delta/3)^2}{6},\mathcal{P}_{k}^{F_n} \mid \mathcal{P}_{k_0}^{F_n}}(Z_n)
\cdot
\COV_{\mu, \frac{(\delta/3)^2}{6} ,\mathcal{P}_{k_0}^{F_n}  \mid \mathcal{P}_\rho^{(1-2\delta_n)F_n \setminus \gamma F_n}}(Z_n).
$$
The inequality \eqref{eq:delta_prime_small} ensures that $\delta'$ is sufficiently small so that
$$
\sqrt{\delta'}+2\delta'< \frac{(\delta/3)^2}{6}.
$$
So using 
  \eqref{eq:rel_COV_P_k_F_n_P_k_0_F_n_Z_n}  and \eqref{eq:COV_P_k_0_F_n_cond_rho} it follows that
\begin{equation}\label{equation: covering is good}
\COV_{\mu,\delta/3,\mathcal{P}_{k}^{F_n} \mid \mathcal{P}_\rho^{(1-2\delta_n)F_n \setminus \gamma F_n}}(Z_n) < e^{\alpha(\gamma)/2 |F_n| + \hat h |\gamma F_n|}.
\end{equation}

This means that there is a set $\hat X_{n} \subset X^{(1-2\delta_n)F_n\setminus \gamma F_n}$ so that 
\begin{equation}\label{eq:mu_hat_X_n1}
\mu \left(Z_n \setminus \bigcup_{w \in \hat{X}_n}\rho^{-1}([w])\right) \le  \frac{\delta}{3}\mu(Z_n),
\end{equation}
and so that for every $w \in \hat{X}_n$ there exists a set $\mathcal{G}_w \subset \mathcal{P}_k^{F_n}$ such that 
\begin{equation}\label{eq:G_w_small}
|\mathcal{G}_w| < e^{\alpha(\gamma)/2 |F_n| + \hat h |\gamma F_n|}
\end{equation}
and
\begin{equation}\label{eq:mu_G_w_big}
\mu\left( Z_n  \cap \rho^{-1}([w])\setminus \bigcup \mathcal{G}_w \right) \le \frac{\delta}{3}\mu(\rho^{-1}([w])  \cap Z_n).
\end{equation}

By \eqref{eq:C_n_gamma_big} and \eqref{eq:G_w_small} it follows that 
\begin{equation}\label{eq:G_w_small_C_eta}
\forall w \in \hat{X}_n, ~|\mathcal{G}_w| < |C_{n_0}^{K_{n,n_0,\beta(\gamma)}} |.
\end{equation}
For each $w \in \hat X_n$ let $\Phi_w:\mathcal{P}_k^{F_n} \to C_{n_0}^{K_{n,n_0,\beta(\gamma)}} $ be a function such that the restriction of $\Phi_w$ to 
$\mathcal{G}_w$ is injective and 
let $\Phi_w^{-1}:C_{n_0}^{K_{n,n_0,\beta(\gamma)}} \to \mathcal{P}_k^{F_n}$ be a left inverse on $\mathcal{G}_w$.
By this we mean that 
\begin{equation}\label{eq:check_inverse_w}
{\Phi_w}^{-1} \circ \Phi_w (P) = P \mbox{ for every } P \in {\mathcal{G}_w}.
\end{equation}

 	Define $\Upsilon_{n,n_0}:Y \to S_{n,n_0}$ by
 	\begin{equation}\label{eq:Psi_n_n0_def}
 	\Upsilon_{n,n_0}(y) = \left\{\mi \in (1-2\delta_{n})F_{n}:~ T^{\mi}(y) \in Z_{n_0}	\right\}.
 	\end{equation}
 	The fact that $\Upsilon_{n,n_0}(y) \in S_{n,n_0}$ follows because $\{T^{\mi}Z_{n_0}\}_{\mi \in (1+\delta_{n_0})F_{n_0}}$ are pairwise disjoint.
 
 	Recall  that $C_{n_0}^{K_{n,n_0,\alpha(\gamma)}}$ satisfies \eqref{eq:C_n_alpha_bigger_S_n_n_0}. Let $\Phi_b:S_{n,n_0} \to C_{n_0}^{K_{n,n_0,\alpha(\gamma)}}$ be an injective function with left inverse $\Phi_b^{-1}:C_{n_0}^{K_{n,n_0,\alpha(\gamma)}} \to  S_{n,n_0}$.
 	 By this we mean that
 	\begin{equation}\label{eq:check_inverse_b}
 	{\Phi_b}^{-1} \circ \Phi_b (s) = s \mbox{ for every } s \in {S_{n,n_0}}.
 	\end{equation}
Choose $\mi_b$ so that 
$$ (1+\delta_n)(\beta(\gamma) + \alpha(\gamma)) \cdot n < \|\mi_b\|_\infty < (1+\delta_n)(\beta(\gamma)+ 2\alpha(\gamma)) \cdot n.$$
It follows that
\begin{equation}\label{eq:mi_b_disjoint}
\left(\mi_b + (1+\delta_{n})F_{\lfloor \alpha(\gamma) n \rfloor}\right) \cap  (1+\delta_{n})F_{\lfloor \beta(\gamma )n \rfloor} = \emptyset.
\end{equation}
By \eqref{eq:gamma_n_big}
\begin{equation}\label{eq:mi_1_mi_2_mi_3_subset_gamma}
\left(\mi_b + (1+\delta_{n})F_{\lfloor \alpha(\gamma) n \rfloor}\right) \cup  (1+\delta_{n})F_{\lfloor \beta(\gamma )n \rfloor} \subset   (1-2\delta_n)\gamma F_{n}.
\end{equation}
For $y \in Y$, let
$$K_y =  \Upsilon_{n,n_0}(y) \cap (1-2\delta_n)F_n \setminus \gamma F_n$$
and
\begin{equation}\label{eq:overline_K_y}
\overline{K}_y = K_y \cup \left(\mi_b+ K_{n,n_0,\alpha(\gamma)}\right) \cup K_{n,n_0,\beta(\gamma)}.
\end{equation}
Then $\overline{K}_y \subset (1-\delta_n)F_{n}$ is $(1+\delta_{n_0})F_{n_0}$-spaced.  
 Let 
$\overline{W}_y \in C_{n_0}^{\overline{K}_y}$ be given by 

\begin{equation}\label{eq:overline_W_y_def}
(\overline{W}_y)_{\mi}=\begin{cases}
\rho(T^{\mi}(y)) & \mi \in K_y\\
(\Phi_b(\Upsilon_{n,n_0}(y)))_{\mi-\mi_b} & \mi \in \mi_b + K_{n,n_0,\alpha(\gamma)}\\
 (\Phi_{\rho(y)\mid_{(1-2\delta_n)F_n\setminus \gamma F_n}}(\mathcal{P}_k^{F_n}(y)))_{\mi} & \mi \in K_{n,n_0,\beta(\gamma)}.
\end{cases}
\end{equation}
Define $\tilde \phi:Y \to C_n$ by 
\begin{equation}\label{eq:tilde_phi_def}
\tilde \phi(y) = \Ext_n(\overline{W}_y),
\end{equation} 
and
let $\tilde \rho = \rho_{\tilde \phi,n} \in \Mor(\Y,\X)$ be given by \eqref{eq:rho_Phi_n_def}. 
If $y \in Z_{n_0}$ whenever $\mi \in (1-2\delta_n)F_n \setminus \gamma F_n$ such that $T^{\mi}y \in Z_n$ then 
$$\tilde \rho(y)\mid_{F_{n_0}}=S^{-\mi}(\tilde \phi(T^{\mi}(y)))\mid_{F_{n_0}} =$$
$$ = S^{-\mi}(\Ext(\overline{W}_{T^{\mi}(y)}))\mid_{F_{n_0}}.$$
Also, because $y \in Z_{n_0}$, $-\mi \in \Upsilon_{n,n_0}(T^{\mi}(y)) \cap (1-2\delta_n)F_n \setminus \gamma F_n$.
It follows that $-\mi \in \overline{K}_{T^{\mi}(y)}$ and $(\overline{W}_{T^{\mi}(y)})_{-\mi}= \rho(y)_{\vec{0}} $.
Thus,
$$d_X^{F_{n_0}}(\rho(y),\tilde \rho(y)) = d_X^{F_{n_0}}(\rho(y),S^{-\mi}(\Ext(\overline{W}_{T^{\mi}(y)})))  < \frac{1}{4}\epsilon_{n_0}.$$
This implies \eqref{eq:tilde_rho_traces_rho}. Here we could have used the better constant $\frac14 \epsilon_{n_0}$ but the bound $\frac38 \epsilon_{n_0}$ is sufficient for our purposes. It remains to show that $\tilde \rho$ is a $(k,n,\delta,\mu)$ approximate embedding.
Because $\tilde \phi$ takes values in $C_n$, it is clear that $\tilde \rho$ is $n$-towerable. So to complete the proof we will find a  Borel set $Y_0 \subset Z_n$ so that
\begin{equation}\label{eq:P_k_deteremined_by_tilde_rho_on_Y_0}
\forall y \in Y_0, ~ \mathcal{P}_k^{F_n}(y) \mbox{ is uniquely determined by } \tilde \rho(y)\mid_{F_n}
\end{equation}
   and
\begin{equation}\label{mu_Z_n_setminus_Y_0_small}
\mu(Z_n \setminus Y_0) < \delta \mu(Z_n).
\end{equation}
Let
\begin{equation}\label{eq:Y_0_def}
Y_0 = \left\{y \in Z_n:~ \exists w \in \hat X_n \mbox{ so that } \rho(y)|_{(1-2\delta_n)F_n \setminus \gamma F_n} =w \mbox{ and } \mathcal{P}_k^{F_n}(y) \in \mathcal{G}_{w}\right\}.
\end{equation}
Thus,
$$\mu(Z_n \setminus Y_0) \le \mu(Z_n \setminus \bigcup_{w \in \hat X_n} \rho^{-1}([w]) )+
\sum_{w \in \hat X_n}\mu\left( Z_n \cap \rho^{-1}([w]) \setminus \bigcup \mathcal{G}_w \right).$$
So \eqref{mu_Z_n_setminus_Y_0_small} follows from  \eqref{eq:mu_hat_X_n1} and \eqref{eq:mu_G_w_big}.

For $w \in X^{\ZD}$ let $\Pi_{\gamma,n}(w) \in  C_{n_0}^{K_{n,n_0,\alpha(\gamma)}}$ satisfy 
\begin{equation}\label{eq:Pi_C_star_def}
\max_{\mj \in K_{n,n_0,\alpha(\gamma)}}d_X^{F_{n_0}}(S^{\mj}(w),\Pi_{\gamma,n}(w)_\mj) = \min\left\{\max_{\mj \in K_{n,n_0,\alpha(\gamma)}}d_X^{F_{n_0}}(S^{\mj}(w),w'_\mj):~ w' \in  C_{n_0}^{K_{n,n_0,\alpha(\gamma)}}\right\}.
\end{equation}
By choosing some fixed rules for ``breaking ties'', this  clearly defines a  Borel function $\Pi_{\gamma,n}:X^{\ZD} \to C_{n_0}^{K_{n,n_0,\alpha(\gamma)}}$ so that \eqref{eq:Pi_C_star_def} holds for every $w \in X^{\ZD}$.

Because $C_{n_0}$ is $(\epsilon_{n_0},F_{n_0})$-separated it follows that whenever $w \in X^{\ZD}$, $w' \in C_{n_0}^{K_{n,n_0,\alpha(\gamma)}}$  and $$\max_{\mj \in K_{n,n_0,\alpha(\gamma)}}d_X^{F_{n_0}}(S^{\mj}(w),w'_\mj) < \frac{1}{2}\epsilon_{n_0}$$ then $\Pi_{\gamma,n}(w) = w'$.
Because 
$(\Phi_b(\Upsilon_{n,n_0}(y)))_\mi  =  (\overline{W}_y)_{{\mi + \mi_b}}$ for every $\mi \in K_{n,n_0,\alpha(\gamma)}$,
$$\max_{\mj \in K_{n,n_0,\alpha(\gamma)}}d_X^{F_{n_0}}(S^{\mi_b+\mj}(\Ext_n(\overline{W}_y)),\Phi_b(\Upsilon_{n,n_0}(y))_\mj) < \frac{1}{4}\epsilon_{n_0}.$$
  It follows that for any $y \in Y_0$
 	\begin{equation}\label{eq:tilde_Phi_y_determines_Psi_n_y}
 	\Upsilon_{n,n_0}(y) = \Phi_b^{-1}\left(\Pi_{\gamma,n}(S^{\mi_b}(\Ext_n(\overline{W}_y)))\right)= \Phi_b^{-1}\left(\Pi_{\gamma,n}(S^{\mi_b}(\tilde \rho(y)))\right).
 	\end{equation}
 	This argument shows that for $y \in Z_n$, we can use the value of $\tilde \rho(y)\mid_{\mi_b +F_{\lfloor \alpha(\gamma) n\rfloor}}$ to uniquely recover  $\Upsilon_{n,n_0}(y)$ (in a Borel manner).
 	Once we have ``recovered'' $\Upsilon_{n,n_0}(y)$ from $\tilde \rho(y)\mid_{\mi_b +F_{\lfloor \alpha(\gamma) n\rfloor}}$, using   \eqref{eq:tilde_rho_traces_rho} together with the fact that  $\rho$ is a $n_0$-towerable and that $C_{n_0}$ is $(\epsilon_{n_0},F_{n_0})$-separated, 
  we can similarly ``read-off''  	$\rho(y) \mid_{\mj + F_{n_0}}$ from  $\tilde \rho(y)\mid_{\mj +F_{n_0}}$
  for every $j \in \Upsilon_{n,n_0}(y) \cap (1-2\delta_n)F_n \setminus \gamma F_n$.

 	Because $\rho$ is $n_0$-towerable, for every $y \in Y$, if we know $\rho(y)\mid_{\mj + F_{n_0}}$ for every $\mj \in \Upsilon_{n,n_0}(y) \cap   (1-2\delta_n)F_n \setminus \gamma F_{n}$ then $\rho(y)\mid_{ (1-2\delta_n)F_n \setminus \gamma F_{n}}$ is trivially determined because $\rho(y)_\mi = x_\star$ for all remaining $\mi \in (1-2\delta_n)F_n \setminus F_{\gamma_n}$.
 	The above argument explicitly describes a function $\hat \Psi: C_n \to X^{(1-2\delta_n)F_n \setminus\gamma F_{n}}$ so that for any $y \in Z_n$, $\hat \Psi(\tilde \rho (y)_{\vec{0}}) = \rho(y) \mid_{(1-2\delta_n)F_n \setminus \gamma F_{n}}$.
 
 	Now let $\Pi'_{n,\gamma}:X^{\ZD} \to C_{n_0}^{K_{n,n_0,\beta(\gamma)}}$ be a Borel function that satisfies
 	\begin{equation}\label{eq:Pi_gamma_prime_def}
\max_{\mj \in K_{n,n_0,\beta(\gamma)}} d_X^{F_{n_0}}(S^{\mj}(w),\Pi'_{n,\gamma}(w)_\mj) = \min\left\{\max_{\mj \in K_{n,n_0,\beta(\gamma)}} d_X^{F_{n_0}}(S^{\mj}(w),w'_{\mj}):~ w' \in C_{n_0}^{K_{n,n_0,\beta(\gamma)}}\right\}.
 	\end{equation}
 	It follows that for $y \in Y_0$,
 	\begin{equation}\label{eq:recover_P_k_from_Phi}
 	\mathcal{P}_k^{F_n}(y) = \Phi_{\hat \Psi (\tilde \rho(y)_{\m 0})}^{-1}\left( \Pi'_{n,\gamma}(\tilde \rho(y))\right).
 	\end{equation}
 	This shows that \eqref{eq:P_k_deteremined_by_tilde_rho_on_Y_0} holds.
 \end{proof}

Combining everything we proved so far gives the following:
\begin{prop}\label{prop:flex_implies_ergodic_univesality}
Let $(X,S)$ be a topological $\ZD$ dynamical system and $\Y=(Y,T)$ a Borel $\ZD$ dynamical system. If $\C=(C_n)_{n=1}^\infty \in X^{\NN}$  is a flexible sequence, $\mu \in \Prob_e(\Y)$	and $h_\mu(\Y) < h(\C)$ then there exists a Borel $T$-invariant $Y_\infty \subset Y$ with $\mu(Y_\infty)=1$  and an injective equivariant Borel embedding $\rho:Y_\infty \to X$.
\end{prop}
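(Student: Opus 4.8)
The plan is to run the finite-entropy analogue of the argument used for Theorem \ref{thm:infinite_entropy}: build inductively a sequence $(\rho_j)_{j=1}^\infty$ of approximate embeddings with geometrically improving parameters, and then quote Lemmas \ref{lem:convergence_set_for_rho_j_s} and \ref{lem:mu_Y_0_1}. Throughout I work under the standing assumptions of this section, so $\C$ is a flexible marker sequence and $(\epsilon_n),(\delta_n),(Z_n),(\mathcal{P}_k)$ are as fixed above (the hypothesis ``flexible sequence'' is to be read in that context; passing from a general flexible sequence to a flexible marker one with entropy still exceeding $h_\mu(\Y)$ is a separate, standard decoration step). Replacing Lemma \ref{lem:approximate_embedding_exists_inf_entropy} by Lemma \ref{lem:approximate_embedding_exists} and Lemma \ref{lem:improve_approx_emb_inf_entropy} by Lemma \ref{lem:improve_approx_emb}, the skeleton is the same as in the infinite-entropy case; the one genuinely new bookkeeping issue is that the finite-entropy improvement lemma only accepts input embeddings of a prescribed quality.

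Concretely, I would first fix, once and for all, a sequence $\gamma_j \downarrow 0$ decaying at a fixed geometric rate (this is legitimate because, as the estimates below show, the measure bounds feeding Lemma \ref{lem:mu_Y_0_1} only ask that each $\gamma_j$ lie below an absolute threshold, independent of $\mu$ and of the towers). For each $j$, Lemma \ref{lem:improve_approx_emb} with parameter $\gamma_j$ supplies integers $k_0(\gamma_j),N_0(\gamma_j)$ (depending also on $\mu$ through $h_\mu(\Y)$). The induction then maintains: $\rho_j$ is a $(k_j,n_j,\epsilon_j,\mu)$-approximate embedding with $k_1<k_2<\cdots$ and $k_j\ge k_0(\gamma_{j+1})$, with $\epsilon_j\le\min(2^{-j},1/N_0(\gamma_{j+1}))$, with $n_1\ll n_2\ll\cdots$ and $n_j\ge N_0(\gamma_{j+1})$, and (for $j\ge2$)
\[
D_{n_{j-1},\frac38\epsilon_{n_{j-1}}}[\rho_{j-1},\rho_j]\ \subseteq\ Y\setminus T^{(1-2\delta_{n_j})F_{n_j}\setminus\gamma_jF_{n_j}}Z_{n_j}.
\]
The base case $j=1$ is handled by Lemma \ref{lem:approximate_embedding_exists}. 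For the inductive step, by Remark \ref{rem:approximate_emb_monotone} together with the evident monotonicity of the approximate-embedding property in $\epsilon$, the invariant makes $\rho_{j-1}$ a $(k_0(\gamma_j),n_{j-1},1/N_0(\gamma_j),\mu)$-approximate embedding with $n_{j-1}\ge N_0(\gamma_j)$; hence Lemma \ref{lem:improve_approx_emb} applies with this $\gamma_j$, with target index $k_j:=\max\{k_{j-1}+1,\,j,\,k_0(\gamma_{j+1})\}$ and with $\delta:=\epsilon_j:=\min(2^{-j},1/N_0(\gamma_{j+1}))$, producing $N$ so that for all $n>N$ there is a $(k_j,n,\epsilon_j,\mu)$-approximate embedding with the required $D$-inclusion. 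I then choose $n_j>\max\{N,N_0(\gamma_{j+1})\}$ with $n_{j-1}\ll n_j$ and $n_j$ large enough for the measure estimate below, and let $\rho_j$ be the resulting approximate embedding.

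Finally I would verify hypothesis \eqref{eq:rho_j_shadows_rho_j_prev} of Lemma \ref{lem:mu_Y_0_1}, i.e.\ $\mu\bigl(D_{n_{j-1},\frac38\epsilon_{n_{j-1}}}[\rho_j,\rho_{j-1}]\mid Z_{n_{j-1}}\bigr)<2^{-j}$. Since $D$ is symmetric in its morphism arguments, the invariant places this set inside $\bigl(Y\setminus T^{(1-2\delta_{n_j})F_{n_j}}Z_{n_j}\bigr)\cup T^{\gamma_jF_{n_j}}Z_{n_j}$. Lemma \ref{lem:mu_tower_subset} bounds the unconditional measure of the first set by $\epsilon_{n_j}+O(d\delta_{n_j})$; dividing by $\mu(Z_{n_{j-1}})\ge(1-\epsilon_{n_{j-1}})/|(1+\delta_{n_{j-1}})F_{n_{j-1}}|$ and using the defining inequalities of $n_{j-1}\ll n_j$ (which make $\epsilon_{n_j}|F_{n_{j-1}}|$ and $\delta_{n_j}|F_{n_{j-1}}|$ negligible) makes its conditional contribution $<2^{-(j+1)}$ once $n_{j-1}$ is large. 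Lemma \ref{lem:eq:mu_tower_subset_cond} with $\theta=\gamma_j$ and $\delta=\delta_{n_j}$ (legitimate since $n_{j-1}\ll n_j$) bounds $\mu\bigl(T^{\gamma_jF_{n_j}}Z_{n_j}\mid Z_{n_{j-1}}\bigr)\le(1-\epsilon_{n_{j-1}})^{-1}(\gamma_j+\delta_{n_j})$, which is $<2^{-(j+1)}$ once $(\gamma_j)$ decays fast enough and $n_j$ is large; these are exactly the demands that pin down the lower bound on $n_j$ in the induction step. Granting \eqref{eq:rho_j_shadows_rho_j_prev}, Lemma \ref{lem:mu_Y_0_1} gives $\mu(Y_\infty)=1$ for the set $Y_\infty$ of \eqref{eq:Y_infty_def}, which is manifestly Borel and $T$-invariant, and Lemma \ref{lem:convergence_set_for_rho_j_s} (applicable because each $\rho_j$ is in particular a $(k_j,n_j,2^{-j},\mu)$-approximate embedding with $n_j\ll n_{j+1}$) shows that $\rho:=\lim_j\rho_j$ exists on $Y_\infty$ and is an injective equivariant Borel map into $X$ — the required embedding.

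The main obstacle is the circular-looking interdependence in the induction: the quality thresholds $k_0(\gamma_j),N_0(\gamma_j)$ that $\rho_{j-1}$ must meet come out of Lemma \ref{lem:improve_approx_emb} and depend on $\gamma_j$, whereas $\rho_{j-1}$ is constructed before $\gamma_j$ would naturally be selected. Pre-committing to the whole sequence $(\gamma_j)$ in advance — possible precisely because the only constraint on $\gamma_j$ arising on the convergence side is a $\mu$- and $n$-independent geometric decay — breaks the loop, after which, as in the proof of Theorem \ref{thm:infinite_entropy}, everything else is absorbed by choosing each $n_j$ sufficiently large.
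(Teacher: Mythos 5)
Your proposal is correct and follows essentially the same route as the paper: the paper likewise pre-commits to $\gamma_j=2^{-(j+1)}$, extracts $k_j=k_0(\gamma_j)$ and $N_j=N_0(\gamma_j)$ from Lemma \ref{lem:improve_approx_emb} before the induction begins, starts with Lemma \ref{lem:approximate_embedding_exists}, iterates Lemma \ref{lem:improve_approx_emb} choosing each $n_{j+1}$ large enough, and verifies \eqref{eq:rho_j_shadows_rho_j_prev} via Lemma \ref{lem:eq:mu_tower_subset_cond} before invoking Lemmas \ref{lem:convergence_set_for_rho_j_s} and \ref{lem:mu_Y_0_1}. The only difference is presentational: you phrase the parameter bookkeeping as an explicit induction invariant with a monotonicity downgrade of $\rho_{j-1}$, whereas the paper arranges the indices so that each $\rho_j$ already has exactly the required quality.
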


\begin{proof}
		For every $j$, let $k_j$ and $N_j$ be numbers obtained as $k_0,N_0$ by applying Lemma \ref{lem:improve_approx_emb} with $\gamma=\frac{1}{2^{j+1}}$. Without loss of generality we can assume that the sequence $k_j$ is increasing and $N_j>2^j$ for all $j \in \N$. 
	We will inductively construct a sequence of natural numbers $(n_j)_{j=1}^\infty$ with $n_j \ge N_j$ and a sequence $(\rho_j)_{j= 1}^\infty \in \Mor(\Y,\X)^\NN$  so that $n_{j} \ll n_{j+1}$ 
	and \eqref{eq:rho_j_shadows_rho_j_prev} holds for every $j$.
	To start the induction apply Lemma \ref{lem:approximate_embedding_exists} with $\epsilon =  \frac{1}{N_1}$ and  $k = k_1$.
	Let $n_1=N_1$.  
	 Let $\rho_1 \in \Mor(\Y,\X)$ be the resulting $(k_1,n_1,\frac{1}{N_1},\mu)$-approximate embedding.
	
	For the induction step, suppose  $n_j$ and $\rho_j$ have been defined for a fixed $j \in \NN$.
	Apply Lemma \ref{lem:improve_approx_emb} with $\gamma= \frac{1}{2^{j+1}}$, $\rho=\rho_j$, $k=k_{j+1}$, $\delta= \frac{1}{2^{j+2}}$.
    Let $\tilde N_j$ be the resulting number $N$. Define $n_{j+1}$ to be the smallest integer greater or equal to $\max\{\tilde N_j,N_{j+1}\}$ that also satisfies $n_{j} \ll n_{j+1}$.
      Then apply Lemma \ref{lem:improve_approx_emb} and let $\rho_{j+1}$ be the resulting $(k_{j+1},n_{j+1},\frac{1}{N_{j+1}},\mu)$-approximate embedding.
      We have
      $$
      D_{n_j,\frac{1}{4}\epsilon_{n_j}}[\rho_j,\rho_{j+1}]\subseteq  Y \setminus T^{(1-2\delta_{n_{j+1}})F_{n_{j+1}} \setminus 2^{-j-1} F_{n_{j+1}}} Z_{n_{j+1}}.
      $$
      Using Lemma \ref{lem:eq:mu_tower_subset_cond} it follows that 
 \eqref{eq:rho_j_shadows_rho_j_prev} holds.
      
	By Lemma \ref{lem:convergence_set_for_rho_j_s} and Lemma \ref{lem:mu_Y_0_1} the limit $\rho=\lim_{n\to\infty}\rho_j$ exist on the set $Y_\infty$ given by \eqref{eq:Y_infty_def}, and satisfies the statement of the lemma.

\end{proof}
\subsection{Full universality - Realizing ergodic measures with full support}

In this section we will prove the following result which deals with full ergodic universality:
\begin{prop}\label{prop:full_universality}
	Under the assumptions of Theorem \ref{thm:spec_sequence_implies_univesality},
	if in addition every $x \in X$ is an accumulation point of  $\bigcup_{n=1}^\infty C_n$ then $(X,S)$ is fully ergodic $h(\C)$-universal.
\end{prop}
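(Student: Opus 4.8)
The plan is to revisit the proof of Proposition \ref{prop:flex_implies_ergodic_univesality}, keeping track of the extra ``density'' hypothesis and upgrading the construction so that the resulting embedding has full support. The key observation is that the flexible sequence $\C$ in Theorem \ref{thm:spec_sequence_implies_univesality} need not be the one produced by Proposition \ref{prop:specification_implies_flexible}; what we use instead is that, under the additional assumption, we may assume $\bigcup_{n} C_n$ is dense in $X$ and in fact we can find a dense sequence $(x_k)_{k=1}^\infty$ with $x_k \in C_k$ for all $k$ (after passing to a subsequence, or using that every point is an accumulation point of $\bigcup_n C_n$ so for every basic open set infinitely many of the $C_k$'s meet it). First I would fix such a dense sequence $(x_k)_{k=1}^\infty$, and then mimic the proof of Theorem \ref{thm:infinite_entropy}: build the sequence of approximate embeddings $(\rho_j)_{j=1}^\infty$ inductively, but at stage $j$ also require the chosen $\t x_j \in C_{n_j}$ to satisfy $d_X(\t x_j, x_j) < \epsilon_j$, and require that for all $y \in Z_{n_{j+1}}$ one has $d_X(\rho_{j+1}(y), \t x_j) < \epsilon_{n_j}$.

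The tool enabling this is Lemma \ref{lem:improve_approx_emb}, which must be applied in a slightly strengthened form, exactly parallel to how Lemma \ref{lem:improve_approx_emb_inf_entropy} strengthens the infinite-entropy case: given a prescribed $\t x \in C_{n_0}$, the improved approximate embedding $\tilde \rho$ can be taken so that, in addition to \eqref{eq:tilde_rho_traces_rho}, we have $d_X(\t x, \tilde\rho(y)_{\vec 0}) < \epsilon_{n_0}$ for every $y \in Z_n$. Inspecting the proof of Lemma \ref{lem:improve_approx_emb}, the value $\tilde\rho(y)_{\vec 0} = \Ext_n(\overline W_y)_{\vec 0}$ is the value at $\vec 0$ of an element of $C_n$ that shadows $\overline W_y$; so it suffices to force $\vec 0 \in \overline K_y$ with $(\overline W_y)_{\vec 0} = \t x$. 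This requires one more ``reserved slot'' in the spacing pattern, of size $F_{n_0}$, placed near the origin, disjoint from $K_y$, $\mi_b + K_{n,n_0,\alpha(\gamma)}$ and $K_{n,n_0,\beta(\gamma)}$ (all of which already live inside $(1-2\delta_n)F_n$ minus a controlled region). Since $\gamma F_n$ has room for $O(\gamma^d n^d)$ further well-spaced boxes, one extra box costs nothing entropically and the encoding/decoding argument in the proof is untouched; the shadowing property of $\Ext_n$ then gives $d_X^{F_{n_0}}(\t x, \Ext_n(\overline W_y)) < \tfrac14 \epsilon_{n_0} < \epsilon_{n_0}$ as needed. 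I would state this strengthening as a remark or as the actual content of a (re)proof of Lemma \ref{lem:improve_approx_emb}, noting it requires no new idea.

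With the strengthened lemma in hand, the main body of the proof proceeds as in Proposition \ref{prop:flex_implies_ergodic_univesality}: inductively choose $k_j, N_j$ from Lemma \ref{lem:improve_approx_emb} with $\gamma = 2^{-(j+1)}$, then $n_j \gg n_{j-1}$ large enough, apply Lemma \ref{lem:approximate_embedding_exists} to start and the strengthened Lemma \ref{lem:improve_approx_emb} to continue, producing $(k_j, n_j, \tfrac{1}{N_j}, \mu)$-approximate embeddings $\rho_j$ with $D_{n_j, \frac38\epsilon_{n_j}}[\rho_j, \rho_{j+1}] \subseteq Y \setminus T^{(1-2\delta_{n_{j+1}})F_{n_{j+1}} \setminus 2^{-j-1}F_{n_{j+1}}} Z_{n_{j+1}}$, and additionally $d_X(\rho_{j+1}(y)_{\vec 0}, \t x_j) < \epsilon_{n_j}$ for $y \in Z_{n_{j+1}}$. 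As in the proof of Theorem \ref{thm:infinite_entropy}, Lemma \ref{lem:eq:mu_tower_subset_cond} and Lemma \ref{lem:mu_tower_subset} give \eqref{eq:rho_j_shadows_rho_j_prev}, so Lemma \ref{lem:convergence_set_for_rho_j_s} and Lemma \ref{lem:mu_Y_0_1} yield a Borel $T$-invariant $Y_\infty$ with $\mu(Y_\infty) = 1$ and an injective equivariant Borel embedding $\rho: Y_\infty \to X$. To get full support: the construction guarantees, for every $j$,
\begin{equation*}
\mu\left(\left\{ y \in Z_{n_j} :~ d_X(\t x_j, \rho(y)) < 2\epsilon_{n_j} \right\} \mid Z_{n_j}\right) > \tfrac12,
\end{equation*}
because $\rho$ differs from $\rho_{j+1}$ by at most $\sum_{t>j}\tfrac38\epsilon_{n_t} < \epsilon_{n_j}$ in $d_X^{F_{n_{j}}}$ on a set of conditional measure close to $1$ (using \eqref{eq:D_triangle}, \eqref{eq:rho_j_shadows_rho_j_prev} and that $\rho_{j+1}(y)_{\vec 0}$ is $\epsilon_{n_j}$-close to $\t x_j$ on all of $Z_{n_{j+1}}$, hence, pulling back one level, on most of $Z_{n_j}$); since $\mu(Z_{n_j}) > 0$ this forces $\mu\circ\rho^{-1}(B(\t x_j, 2\epsilon_{n_j})) > 0$, and as $(\t x_j)_{j=1}^\infty$ is dense in $X$ with $\epsilon_{n_j}\to 0$, every nonempty open $U \subseteq X$ contains some such ball, so $\mu(\rho^{-1}(U)) > 0$. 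Together with Proposition \ref{prop:flex_implies_ergodic_univesality}'s injectivity this shows $(X,S)$ is fully ergodic $h(\C)$-universal.

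The main obstacle, as in the rest of the paper, is the bookkeeping in the strengthened Lemma \ref{lem:improve_approx_emb}: one must check that inserting the reserved $F_{n_0}$-slot at the origin does not collide with the other reserved blocks and does not disturb the decoding procedure \eqref{eq:recover_P_k_from_Phi}, i.e. the decoder still only reads off $\Upsilon_{n,n_0}(y)$ from the $\mi_b$-block, the relative $\mathcal P_{k_0}$-data from the $\rho$-part, and the $\mathcal P_k$-refinement from the $\beta(\gamma)$-block, ignoring the new slot. This is routine once one notes the new slot lies inside $\gamma F_n$ and is disjoint from everything else, but it is the one place where care is genuinely needed.
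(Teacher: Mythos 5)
Your proposal matches the paper's proof: the paper proves exactly your strengthened version of Lemma \ref{lem:improve_approx_emb} (as Lemma \ref{lem:improve_approx_emb_with_something_in_support}, inserting one extra reserved $(1+\delta_{n_0})F_{n_0}$-slot inside $(1-2\delta_n)\gamma F_n$ carrying the prescribed $x_0 \in C_{n_0}$) and then reruns the induction of Proposition \ref{prop:flex_implies_ergodic_univesality} while shadowing an enumeration of $\bigcup_{n}C_n$, deducing full support from the accumulation-point hypothesis exactly as you do. The only discrepancy is that the paper places the reserved slot at some $\mi_0\ne\vec 0$ and obtains closeness to $x_0$ on $T^{-\mi_0}Z_n$ rather than on $Z_n$, which sidesteps the fact that $\vec 0$ already belongs to $K_{n,n_0,\beta(\gamma)}$ (so your slot cannot sit exactly at the origin unless you first delete $\vec 0$ from that encoding block, as is done in the infinite-entropy Lemma \ref{lem:improve_approx_emb_inf_entropy}); this changes nothing in the full-support argument since $\mu(T^{-\mi_0}Z_n)=\mu(Z_n)>0$.
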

This follows by a slight modification of Lemma \ref{lem:improve_approx_emb}:
\begin{lem}\label{lem:improve_approx_emb_with_something_in_support}
	In the statement of Lemma \ref{lem:improve_approx_emb}, for every  $x_0 \in C_{n_0}$ it is possible to 
	arrange that the  resulting  $(k,n,\delta,\mu)$-approximate embedding $\tilde \rho \in \Mor(\Y,\X)$ will have the additional  property that for some $\mi_0 \in F_n$
	\begin{equation}\label{eq:tilde_rho_support_x}
	\forall y \in T^{-\mi_0}Z_n, ~ d_X(\tilde \rho(y),x_0) < \epsilon_{n_0}.
	\end{equation}
\end{lem}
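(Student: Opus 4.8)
The plan is to revisit the construction of $\tilde\rho$ in the proof of Lemma \ref{lem:improve_approx_emb} and carve out one extra ``reserved region'' inside $\gamma F_n$ on which we plant the orbit segment of the prescribed point $x_0 \in C_{n_0}$. Recall that in that proof, for each $y$ we built $\overline{W}_y \in C_{n_0}^{\overline{K}_y}$ where $\overline{K}_y = K_y \cup (\mi_b + K_{n,n_0,\alpha(\gamma)}) \cup K_{n,n_0,\beta(\gamma)}$, and the latter two index sets both sit inside $(1-2\delta_n)\gamma F_n$ by \eqref{eq:mi_1_mi_2_mi_3_subset_gamma}, and are mutually disjoint by \eqref{eq:mi_b_disjoint}. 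The key observation is that $\gamma F_n$ has ``room to spare'': by \eqref{eq:gamma_big} and \eqref{eq:gamma_n_big} we have $(1-2\delta_n)\gamma > (1+\delta_n)\beta(\gamma) + 10\alpha(\gamma)$, so besides the blocks around $\m 0$ (of radius $\beta(\gamma)n$) and around $\mi_b$ (of radius $\alpha(\gamma)n$) there is further space for a single additional $F_{n_0}$-block. Concretely, choose $\mi_0 \in \ZD$ with
$$(1+\delta_n)(\beta(\gamma)+2\alpha(\gamma))\cdot n < \|\mi_0\|_\infty < (1+\delta_n)(\beta(\gamma)+3\alpha(\gamma))\cdot n,$$
so that $(\mi_0 + (1+\delta_n)F_{n_0})$ is disjoint from both $(1+\delta_n)F_{\lfloor\beta(\gamma)n\rfloor}$ and $(\mi_b + (1+\delta_n)F_{\lfloor\alpha(\gamma)n\rfloor})$, and is contained in $(1-2\delta_n)\gamma F_n$ — this last inclusion uses $\alpha(\gamma)<\tfrac1{10}$ and \eqref{eq:gamma_n_big} exactly as in the derivation of \eqref{eq:mi_1_mi_2_mi_3_subset_gamma}, possibly after increasing $N_0$ so that $\delta_n$ is small enough.

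Next I would redefine $\overline{K}_y := K_y \cup (\mi_b + K_{n,n_0,\alpha(\gamma)}) \cup K_{n,n_0,\beta(\gamma)} \cup \{\mi_0\}$ and extend the definition of $\overline{W}_y$ by one more case: $(\overline{W}_y)_{\mi_0} = x_0$. Since $\mi_0$ is separated from all the other index positions as above, $\overline{K}_y \subset (1-\delta_n)F_n$ is still $(1+\delta_{n_0})F_{n_0}$-spaced, so $\Ext_n(\overline{W}_y)$ is still defined, and all the estimates used in the original proof (the covering bounds \eqref{equation: covering is good}, the injectivity of $\Phi_w$ and $\Phi_b$, the cardinality inequalities \eqref{eq:C_n_gamma_big}, \eqref{eq:C_n_alpha_bigger_S_n_n_0}, \eqref{eq:G_w_small_C_eta}) are untouched — adding a single constant coordinate does not affect any of them. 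Then $\tilde\rho := \rho_{\tilde\phi,n}$ with $\tilde\phi(y) = \Ext_n(\overline{W}_y)$ is the desired map. The verification that $\tilde\rho$ is a $(k,n,\delta,\mu)$-approximate embedding satisfying \eqref{eq:tilde_rho_traces_rho} is verbatim the same as before, because the ``reading-off'' procedure (recovering $\Upsilon_{n,n_0}(y)$ from the block near $\mi_b$, then $\rho(y)$ restricted to $(1-2\delta_n)F_n\setminus\gamma F_n$, then $\mathcal{P}_k^{F_n}(y)$ from the block near $\m 0$, via \eqref{eq:tilde_Phi_y_determines_Psi_n_y}, the function $\hat\Psi$, and \eqref{eq:recover_P_k_from_Phi}) uses only those same positions and never touches $\mi_0$.

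Finally, the new property \eqref{eq:tilde_rho_support_x}: for $y \in T^{-\mi_0}Z_n$ we have $y' := T^{\mi_0}(y) \in Z_n$, and by the definition \eqref{eq:rho_Phi_n_def} of $\rho_{\tilde\phi,n}$, $\tilde\rho(y)\mid_{F_{n_0}} = S^{-\mi_0}(\tilde\phi(y'))\mid_{F_{n_0}} = S^{-\mi_0}(\Ext_n(\overline{W}_{y'}))\mid_{F_{n_0}}$. Since $\mi_0 \in \overline{K}_{y'}$ with $(\overline{W}_{y'})_{\mi_0} = x_0$, the shadowing property \eqref{eq:Ext_def}–\eqref{eq:x_interpulate} of $\Ext_n$ gives $d_X^{\mi_0+F_{n_0}}(\Ext_n(\overline{W}_{y'}), S^{-\mi_0}(x_0)) < \tfrac14\epsilon_{n_0}$, hence in particular $d_X(\tilde\rho(y)_{\m 0}, x_0) = d_X(S^{-\mi_0}(\Ext_n(\overline{W}_{y'}))_{\m 0}, x_0) < \tfrac14\epsilon_{n_0} < \epsilon_{n_0}$, as required. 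There is no real obstacle here — the only point requiring a little care is checking that the extra reserved block genuinely fits inside $(1-2\delta_n)\gamma F_n$ disjointly from the existing blocks, which is precisely what the slack in \eqref{eq:gamma_big} and \eqref{eq:gamma_n_big} provides (and one may shrink $\alpha(\gamma)$ or enlarge $N_0$ if one wants extra margin).
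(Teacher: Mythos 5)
Your proof is correct and follows essentially the same route as the paper's own argument: reserve one additional $(1+\delta_{n_0})F_{n_0}$-spaced block inside $(1-2\delta_n)\gamma F_n$ disjoint from the two encoding blocks, set the extra coordinate of $\overline{W}_y$ equal to $x_0$ there, and invoke the shadowing property of $\Ext_n$, with all other verifications unchanged. (One cosmetic remark, applicable to the paper's write-up as well: planting $x_0$ at index $\mi_0$ of $\overline{W}_{y'}$ for $y'\in Z_n$ controls $\tilde\rho$ on $T^{\mi_0}Z_n$ rather than $T^{-\mi_0}Z_n$, but this is harmless since the conclusion is existential in $\mi_0$ and $F_n=-F_n$.)
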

\begin{proof}
	In the proof of Lemma \ref{lem:improve_approx_emb} (with the standing assumptions that $n$ is large enough) given 
	 $\mi_b \in F_n$ that satisfies \eqref{eq:mi_b_disjoint} and \eqref{eq:mi_1_mi_2_mi_3_subset_gamma}, 
	 we can find $\mi_0 \in F_n$ such that
	$$\mi + (1+\delta_{n_0})F_{n_0} \subset (1-2\delta_n)\gamma F_{ n}$$
	and also so that $\mi_0 + (1+\delta_{n_0})F_{n_0}$ is disjoint from 
	$$\left(\mi_b + K_{n,n_0,\alpha(\gamma)}+ (1+\delta_{n_0})F_{n_0}\right)  \cup\left( 
	K_{n,n_0,\beta(\gamma)} +  (1+\delta_{n_0})F_{n_0} \right).$$
	Then proceed exactly as in the proof of Lemma \ref{lem:improve_approx_emb}: Define for $y \in Y$, $\overline{W}_y \in \Inter_n(\C)$ by \eqref{eq:overline_W_y_def}.
	Then for each $y \in Y$ let $\overline{K}_y \subset  (1-\delta_n)F_n$ be given by \eqref{eq:overline_K_y} as in the proof of Lemma \ref{lem:improve_approx_emb}, and let $\overline{K}_y^*= \overline{K}_y \cup \{\mi_0\}$.
	Then $\overline{K}_y^* \subset (1-\delta_n)F_n$ is also $(1+\delta_{n_0})F_{n_0}$-spaced.
	Extend $\overline{W}_y$ to 
	$\overline{W}_y^* \in C_{n_0}^{\overline{K}_y^*}$ by defining 
	\begin{equation*}
	(\overline{W}_y^*)_{\mi}  = 
	\begin{cases}
	x_0 & \mi=\mi_{0}\\
	(\overline{W}_y)_{\mi} & \mi \in \overline{K}_y
	\end{cases},
	\end{equation*}
	\begin{equation}
	\tilde \phi(y)  = \Ext_n(\overline{W}_y^{*}),
	\end{equation}
	and $\tilde \rho =\rho_{\tilde \phi,n}$.
	It follows that the resulting $\tilde \rho \in \Mor(\X,\Y)$ satisfies \eqref{eq:tilde_rho_support_x}.
\end{proof}
\begin{proof}[Proof of Proposition \ref{prop:full_universality}]
	Given $\mu \in \Prob_e(\Y,T)$ we need to exhibit $\rho \in \Mor(\Y,\X)$ that induces an embedding of $(Y,T,\mu)$ into $(X,S)$ and  such that the closed support of $\mu \circ \rho^{-1}$ is $X$.
	
	For each $n$ write $C_n= \{x_{1,n},\ldots, x_{|C_n|,n}\}$, and let  
	$\left(x_j\right)_{j=1}^\infty$ be the concatenation of the lists $(x_{1,n},\ldots, x_{|C_n|,n})$:
	$$\left(x_j\right)_{j=1}^\infty = (x_{1,1}\ldots,x_{|C_1|,1},\ldots,x_{1,n},\ldots,x_{|C_n|,n},x_{1,n+1},\ldots).$$
    Thus, $\bigcup_{n=1}^\infty C_n= \{x_j\}_{j=1}^\infty$.
     For every $j \in \NN$ let $w_j \in C_{k_j}^{\{0\}}$ be given by $w_j(\vec{0})=x_j$. 
	 Using Lemma \ref{lem:improve_approx_emb_with_something_in_support} we can construct a sequence  $(\rho_j)_{j= 1}^\infty \in \Mor(\Y,\X)^\NN$ as we did in the proof of Proposition \ref{prop:flex_implies_ergodic_univesality}, with the additional feature that for every $j \in \NN$ there exists $\mi_j \in (1-\delta_{n_j})F_{n_j}$ so that for any  $y \in T^{-\mi_j}Z_{n_j}$, $d_X(\rho_j(y),\tilde x_j) < \epsilon_{j}$, where $\tilde x_j = \Ext_{n_j}(w_j)$.
	It follows that there is a Borel set $Y_\infty \subset Y$ with $\mu(Y_\infty)=1$ so that the limit $\rho= \lim_{j\to \infty}\rho_j$ exists on $Y_\infty$ and is an  injective equivariant Borel embedding $\rho:Y_\infty \to X$, and in addition for every $j \in \NN$,
	$$\mu\left(\left\{y \in Y :~ d(\rho(y), x_j) < \frac{1}{2}\epsilon_{j} \right\}\right) > 0.$$
	By the assumption that every $x \in X$ is an accumulation point of  $\bigcup_{n=1}^\infty C_n$, for every $x \in X$ and $\epsilon >0$ there exists $j \in \NN$ such that the $(\epsilon,d_X)$-ball  centered at $x$ contains the $(2\epsilon_j,d_X)$-ball centered at $x_j$. This proves that $\mu\circ \rho^{-1}(U)>0$ for every non-empty open subset of $U \subseteq X$.
\end{proof}

\section{Almost Borel Universality}\label{sec:almost_borel_universality}
In this section we prove Theorem \ref{thm:spec_sequence_implies_univesality}. This follows the basic strategy  of the previous section and relies on it. 

There are additional complications in certain steps: We use the same procedure to construct a converging sequence of approximate  embeddings for every measure $\mu \in \Prob_e(\Y)$, but make sure that the dependence on the measure $\mu$ is ``Borel measurable''. For ``almost all'' points $y \in Y$ that are generic with respect to $\mu$ we apply the embedding that corresponds to $\mu$. In doing so, we need is to  make sure that generic points corresponding to different measures do not get mapped to the same point in $X$. 
For this we apply Lemma \ref{lem:trace_and_recover_Q} below, which in some sense is responsible for  ``encoding the measure $\mu$''  into $X$. 

For the record, we mention the following corollary of  Theorem \ref{thm:spec_sequence_implies_univesality}:
\begin{prop}\label{prop:flex_factor}
	Let $(X,S)$ be topological $\ZD$ dynamical system with a flexible marker sequence $\C$, and let $\Y=(Y,T)$ be a  free  Borel $\ZD$ dynamical system. 
	Then there exists a $T$-invariant Borel subset $Y_0 \subset Y$ so that $Y \setminus Y_0$ is null with respect to any $T$-invariant probability measure and a Borel equivariant  map from  $Y_0$ into $X$.
\end{prop}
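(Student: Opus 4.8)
The plan is to run the construction underlying Theorem~\ref{thm:spec_sequence_implies_univesality} in stripped-down form. Since we only want an equivariant Borel \emph{map} into $X$ (not an embedding), there is no partition data to encode, no entropy hypothesis is needed, and the maps we build need not be approximate embeddings. Fix a realization of $\C$ as a flexible marker sequence with respect to monotone null sequences $(\epsilon_n)_{n=1}^\infty,(\delta_n)_{n=1}^\infty$ with $\delta_n n\to\infty$, and note that $\emptyset\in\Inter_n(\C)$ for every $n$, so flexibility forces $C_n\neq\emptyset$; fix $x_n\in C_n$ and an arbitrary $x_\star\in X$. By Proposition~\ref{prop:borel_ZD_rokhlin_lemma} (applied to $\lfloor(1+\delta_n)n\rfloor$) fix, once and for all, a single Borel $Z_n\subseteq Y$ that is the base of an $((1+\delta_n)F_n,\epsilon_n)$-tower for $\Y$ — this measure-independent choice is what will make the final set full for all invariant measures at once. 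Fix also some increasing generating sequence $(\mathcal P_k)_{k=1}^\infty$ of finite partitions of $Y$ (it plays no real role). The goal is a strictly increasing $(n_j)_{j=1}^\infty$ with $n_j\ll n_{j+1}$ and $\epsilon_{n_j}+\delta_{n_j}<2^{-(j+1)}$, together with $n_j$-towerable $\rho_j\in\Mor(\Y,\X)$ such that
\begin{equation}\label{eq:plan_nested_D}
D_{n_j,\frac38\epsilon_{n_j}}[\rho_j,\rho_{j+1}]\subseteq Y\setminus T^{(1-2\delta_{n_{j+1}})F_{n_{j+1}}}Z_{n_{j+1}}\qquad\text{for all }j.
\end{equation}

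For the base step take any admissible $n_1$ and $\rho_1:=\rho_{\tilde\phi_1,n_1}$ with $\tilde\phi_1\equiv x_{n_1}$, via \eqref{eq:rho_Phi_n_def}. For the inductive step, given an $n_j$-towerable $\rho_j$, pick $n_{j+1}$ admissible and write $\ell=n_j$, $m=n_{j+1}$; for $y\in Z_m$ let $K_y:=\{\mi\in(1-2\delta_m)F_m:\ T^\mi(y)\in Z_\ell\}$. Since the translates $\{T^{\vec k}Z_\ell\}_{\vec k\in(1+\delta_\ell)F_\ell}$ are pairwise disjoint, $K_y$ is $(1+\delta_\ell)F_\ell$-spaced, and $K_y\subseteq(1-\delta_m)F_m$; as $\rho_j$ is $\ell$-towerable, each $\mi\in K_y$ determines the unique $x\in C_\ell$ with $\rho_j(T^\mi y)\!\mid_{F_\ell}=(S^{\vec k}x)_{\vec k\in F_\ell}$, and we set $(W_y)_\mi:=x$. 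Then $W_y\in\Inter_{m,\ell}(\C)$ (using $\ell\ll m$), so $\Ext_m(W_y)\in C_m$ shadows $W_y$; define $\tilde\phi(y):=\Ext_m(W_y)$ on $Z_m$ (arbitrarily off $Z_m$) and $\rho_{j+1}:=\rho_{\tilde\phi,m}$, which is $m$-towerable since $\tilde\phi(Z_m)\subseteq C_m$. To check \eqref{eq:plan_nested_D}: if $y\in Z_\ell$ with $y=T^{-\mj}(z)$, $z\in Z_m$, $-\mj\in(1-2\delta_m)F_m$, then $-\mj\in K_z$, $(W_z)_{-\mj}$ is the element carrying $\rho_j(y)\!\mid_{F_\ell}$, and (using $\ell\ll m$, so $-\mj+F_\ell\subseteq F_m$) \eqref{eq:rho_Phi_n_def} gives $\rho_{j+1}(y)_{\vec k}=S^{\vec k-\mj}(\Ext_m(W_z))$ for $\vec k\in F_\ell$; hence $d_X^{F_\ell}(\rho_{j+1}(y),\rho_j(y))=d_X^{F_\ell}(S^{-\mj}\Ext_m(W_z),(W_z)_{-\mj})<\tfrac14\epsilon_\ell$ by the shadowing property. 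Thus $Z_\ell\cap T^{(1-2\delta_m)F_m}Z_m$ is disjoint from $D_{\ell,\frac38\epsilon_\ell}[\rho_j,\rho_{j+1}]\subseteq Z_\ell$, which is \eqref{eq:plan_nested_D}.

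Now set $Z[\rho_j]:=Z_{n_j}$ and let $Y_\infty$ be the set \eqref{eq:Y_infty_def} attached to $(j)_j,(n_j)_j,(\rho_j)_j$. Inspecting the proofs of parts (i)–(ii) of Lemma~\ref{lem:convergence_set_for_rho_j_s} and of Lemma~\ref{lem:mu_Y_0_1}, the only features of $\rho_j$ actually used are: $n_j$-towerability, $Z[\rho_j]\subseteq Z_{n_j}$, the inclusion \eqref{eq:plan_nested_D}, and $\mu(Z_{n_j}\setminus Z[\rho_j]\mid Z_{n_j})\le 2^{-j}$ — and \emph{not} the approximate-injectivity clause \eqref{eq:rho_almost_injective}, which our $\rho_j$ do not satisfy. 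The fourth item is trivial here, and the hypothesis \eqref{eq:rho_j_shadows_rho_j_prev} of Lemma~\ref{lem:mu_Y_0_1} holds uniformly in $\mu\in\Prob_e(\Y)$: by \eqref{eq:plan_nested_D} and Lemma~\ref{lem:mu_tower_subset},
\begin{equation}\label{eq:plan_uniform}
\mu\!\left(D_{n_{j-1},\frac38\epsilon_{n_{j-1}}}[\rho_j,\rho_{j-1}]\mid Z_{n_{j-1}}\right)\le\frac{\epsilon_{n_j}+6d\delta_{n_j}}{\mu(Z_{n_{j-1}})}\le 2|F_{n_{j-1}}|(\epsilon_{n_j}+6d\delta_{n_j})<\epsilon_{n_{j-1}}+\delta_{n_{j-1}}<2^{-j},
\end{equation}
using $\mu(Z_{n_{j-1}})>\tfrac12|F_{n_{j-1}}|^{-1}$ and $n_{j-1}\ll n_j$. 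Hence $\rho(y):=\lim_j\rho_j(y)$ exists on $Y_\infty$, takes values in the orbit-image of $X$, and defines an equivariant Borel map $\rho\colon Y_\infty\to X$; moreover $\mu(Y_\infty)=1$ for every ergodic $\mu$, and therefore, by the ergodic decomposition, for every $\mu\in\Prob(\Y)$, i.e. $Y\setminus Y_\infty$ is null. Since $Y_\infty$ is $T$-invariant, we take $Y_0:=Y_\infty$.

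The crux is not any single estimate but the observation that the convergence-and-fullness lemmas of Section~\ref{section:ergodic_universal} go through verbatim when ``$(k,n,\epsilon,\mu)$-approximate embedding'' is weakened to ``$n$-towerable morphism obeying \eqref{eq:plan_nested_D}''; producing such a sequence is the easy direction of those lemmas — essentially Lemma~\ref{lem:improve_approx_emb_inf_entropy} with the entropy-consuming encoding block deleted — and the only remaining work is the routine tower bookkeeping behind \eqref{eq:plan_uniform} and the ergodic-decomposition step upgrading ``full for each ergodic $\mu$'' to ``null complement''.
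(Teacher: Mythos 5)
Your proof is correct, but it takes a genuinely different route from the one the paper actually writes down. The paper's proof of Proposition~\ref{prop:flex_factor} is a short reduction: any free Borel system admits a free Borel factor of arbitrarily small Gurevich entropy (e.g.\ the factor generated by a rapidly growing sequence of $(F_n,\epsilon_n)$-towers), and composing that factor map with the almost Borel embedding supplied by Theorem~\ref{thm:spec_sequence_implies_univesality} gives the desired equivariant map. You instead carry out the direct construction that the paper only alludes to in the remark following the proposition (``along the lines of the proof of Theorem~\ref{thm:infinite_entropy}''): using the measure-independent towers of Proposition~\ref{prop:borel_ZD_rokhlin_lemma}, you build $n_j$-towerable morphisms by reading off, at each return to $Z_{n_j}$, the $C_{n_j}$-point traced by the previous morphism and extending via $\Ext_{n_{j+1}}$, so that consecutive morphisms $\tfrac14\epsilon_{n_j}$-shadow each other off a set controlled purely by tower geometry; your observation that parts (i)--(ii) of Lemma~\ref{lem:convergence_set_for_rho_j_s} and Lemma~\ref{lem:mu_Y_0_1} use only towerability, the containment of the discrepancy set, and the bound on $Z_{n_j}\setminus Z[\rho_j]$ --- and not the approximate-injectivity clause --- is accurate, and your uniform-in-$\mu$ verification of \eqref{eq:rho_j_shadows_rho_j_prev} matches the computations in the proof of Theorem~\ref{thm:infinite_entropy}. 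What the paper's route buys is brevity, given the (hard) main theorem; what yours buys is independence from that theorem and from all entropy considerations: you need neither the small-entropy free-factor fact nor $h(\C)>0$ (the paper's reduction is vacuous in the degenerate case $h(\C)=0$), and since your towers, your shadowing inclusion, and hence your set $Y_\infty$ are all chosen independently of the measure, the ``full for every invariant measure'' conclusion is immediate without any of the Borel-in-$\mu$ bookkeeping of Section~\ref{sec:almost_borel_universality}. Two harmless imprecisions: the empty configuration lies in $\Inter_n(\C)$ (forcing $C_n\neq\emptyset$) only once some $k\ll n$ exists, i.e.\ for all sufficiently large $n$, which is all you use; and you cannot cite Lemmas~\ref{lem:convergence_set_for_rho_j_s} and~\ref{lem:mu_Y_0_1} as black boxes, since their hypotheses are phrased for approximate embeddings --- but you acknowledge this explicitly, and your inspection of their proofs is correct.
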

\begin{proof}
	This is  a direct consequence of Theorem \ref{thm:spec_sequence_implies_univesality} together with the observation that for any $\epsilon >0$, any free Borel dynamical system admits  a free Borel factor  having  Gurevich entropy less than $\epsilon$.
	To see this, note that the factor generated by a sequence of $(F_n,\epsilon_n)$-towers is always essentially free and has small entropy provided that the sequence $(|F_n|)_{n=1}^\infty$ grows rapidly enough and that the sequence $(\epsilon_n)_{n=1}^\infty$ decays fast enough. More generally, the existence of free factors having small entropy for actions of arbitrary countable groups follows rather directly from \cite[Theorem $1.1$]{MR3413491}.  
\end{proof}
We note that because there is no injectivity requirement, one can prove  Proposition \ref{prop:flex_factor} directly and much more easily than  Theorem \ref{thm:spec_sequence_implies_univesality}, along the lines of the proof of  Theorem \ref{thm:infinite_entropy}.

\subsection{Borel setup} 
It will be convenient for us to use the following  (essentially trivial)  ``universal Borel embedding'' result with respect to the shift over $(\{0,1\}^{\NN})^{\ZD}$:
\begin{prop}\label{prop:Borel_universaility_of_shift_on_cantor_Set}
Any standard Borel dynamical system is isomorphic to a Borel subsystem of the shift over the Cantor set:
Let $\Y=(Y,T)$ be a Borel $\ZD$ dynamical system.
Then there is a Borel  embedding $\rho:Y \to (\{0,1\}^{\NN})^{\ZD}$  that is equivariant in the sense that
$$\rho(T^{\mi}(y))_{\vec{0}} =  \rho(y)_{\mi} \mbox{ for every } \mi \in \ZD,~ y \in Y.$$
\end{prop}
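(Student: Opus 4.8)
The plan is to construct the Borel embedding $\rho:Y\to(\{0,1\}^\NN)^{\ZD}$ from a countable separating family of Borel sets together with the fact that $\Y$ is a standard Borel space. First I would recall that the Borel $\sigma$-algebra of the standard Borel space $Y$ is countably generated and separates points, so there is a sequence $(A_m)_{m=1}^\infty$ of Borel subsets of $Y$ such that the map $c:Y\to\{0,1\}^\NN$ given by $c(y)_m=1_{A_m}(y)$ is an injective Borel map; since $Y$ is standard, $c$ is in fact a Borel isomorphism onto its image, which is a Borel subset of $\{0,1\}^\NN$ (this is a standard fact about standard Borel spaces, and may be invoked; if one prefers to avoid it, one only needs injectivity of $c$, which suffices for everything below).

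Next I would \emph{make} the map equivariant by spreading the coordinate functions over the group. Define $\rho:Y\to(\{0,1\}^\NN)^{\ZD}$ by
\begin{equation*}
\rho(y)_{\mi} := c\left(T^{\mi}(y)\right),\qquad \mi\in\ZD,\ y\in Y,
\end{equation*}
so that $\rho(y)_{\mi}\in\{0,1\}^\NN$ has $m$-th bit equal to $1_{A_m}(T^{\mi}(y))$. Each coordinate $y\mapsto\rho(y)_{\mi}$ is Borel because $T^{\mi}$ is a Borel automorphism and $c$ is Borel, hence $\rho$ is Borel into the product $(\{0,1\}^\NN)^{\ZD}$ equipped with the product Borel structure. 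Equivariance is immediate from the cocycle identity $T^{\mi+\mj}=T^{\mi}\circ T^{\mj}$: for any $\mi,\mj\in\ZD$,
\begin{equation*}
\rho\left(T^{\mi}(y)\right)_{\mj} = c\left(T^{\mj}(T^{\mi}(y))\right) = c\left(T^{\mi+\mj}(y)\right) = \rho(y)_{\mi+\mj},
\end{equation*}
and in particular taking $\mj=\vec 0$ gives $\rho(T^{\mi}(y))_{\vec 0}=\rho(y)_{\mi}$, which is the equivariance stated in the proposition.

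It remains to check that $\rho$ is injective, which is where one uses that $c$ is injective: if $\rho(y)=\rho(y')$, then looking at the $\vec 0$ coordinate gives $c(y)=c(y')$, hence $y=y'$. Finally, to conclude that $\rho$ is a Borel \emph{embedding} (i.e.\ a Borel isomorphism onto a Borel subset of $(\{0,1\}^\NN)^{\ZD}$), one invokes Souslin's theorem exactly as in the discussion of morphisms earlier in the paper: an injective Borel map between standard Borel spaces has Borel image and Borel inverse. This last point is essentially routine and I do not expect it to be the main obstacle; indeed there is no real obstacle here, as the statement is, as the authors say, essentially trivial — the only thing one must be slightly careful about is to phrase the construction so that the equivariance comes out exactly in the normalized form $\rho(T^{\mi}(y))_{\vec 0}=\rho(y)_{\mi}$, which the orbit-spreading definition $\rho(y)_{\mi}=c(T^{\mi}(y))$ handles automatically.
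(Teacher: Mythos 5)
Your construction is exactly the paper's: the authors take a countable generating family $\mathcal{A}=\{A_n\}$ of Borel sets and define $\rho(y)_{\mi}(n)=1_{A_n}(T^{\mi}(y))$, which is precisely your $\rho(y)_{\mi}=c(T^{\mi}(y))$, with injectivity following because the $A_n$ separate points and the inverse being Borel by Souslin's theorem as already noted in the paper's discussion of morphisms. The proof is correct and essentially identical to the paper's.
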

\begin{proof}
Let $\mathcal{A}= \{A_1,\ldots, A_n,\ldots\} \subset \mathit{Borel}(Y)$ be a countable sequence of Borel sets that generate the Borel $\sigma$-algebra of $Y$.
Define $\rho:Y \to  (\{0,1\}^{\NN})^{\ZD}$ by
$$ \rho(y)_{\mi}(n)= 1_{A_n}(T^{\mi}(y)),~ \mi \in \ZD, n \in \NN.$$
Clearly $\rho$ is a Borel function. Injectivity of $\rho$ follows because the elements of $\mathcal{A}$ separate points.
\end{proof}

Proposition \ref{prop:Borel_universaility_of_shift_on_cantor_Set} is an easy and well known starting point for ``Borel dynamics''.
 It clearly  generalizes verbatim to actions of arbitrary countable groups. 
 For free $\ZZ$-actions, Proposition \ref{prop:Borel_universaility_of_shift_on_cantor_Set} is a direct consequence of the existence of a countable generator. This has a  short but  non-trivial proof due to Weiss \cite{MR737417}.

Using Proposition  \ref{prop:Borel_universaility_of_shift_on_cantor_Set}
 above, from now on we will identify $Y$ with a shift-invariant Borel subset of $(\{0,1\}^{\NN})^{\ZD}$, and assume that the $\ZD$-action $T$ on $Y$ is the restriction of the shift on $(\{0,1\}^{\NN})^{\ZD}$. Thus $Y$ inherits the relative topology from $(\{0,1\}^{\NN})^{\ZD}$, compatible with its Borel structure. We denote by \index{Definitions and notation introduced in Section 7!$C(Y, X)$}$C(Y,X)$ the space of continuous functions from $Y$ to $X$. The space $C(Y,X)$ is a standard Borel space. 
 Let \index{Definitions and notation introduced in Section 7!$\Mor_C(\Y, \X)$}$\Mor_C(\Y,\X)$ denote the space of continuous morphisms from $\Y=(Y,T)$ to $\X=(X^{\ZD},S)$. There is an obvious bijection between $\Mor_C(\Y,\X)$ and $C(Y,X)$ that gives $\Mor_C(\Y,\X)$  a standard Borel structure.
 We denote by \index{Definitions and notation introduced in Section 7!$\Clopen(Y)$}$\Clopen(Y)$ the collection of subsets of $Y$ 
 obtained by intersecting a clopen subset of $(\{0,1\}^{\NN})^{\ZD}$ with $Y$. This is a countable set (as there are countably many clopen subsets of $(\{0,1\}^{\NN})^{\ZD}$).
  We consider the space $\Prob_e(\Y) \subset \Prob\left((\{0,1\}^{\NN})^{\ZD}\right)$, again with the Borel structure that comes from the  weak-$*$ topology.
 Clearly, we can choose the Borel embedding of $Y$ into $(\{0,1\}^{\NN})^{\ZD}$ so that with respect to the inherited  topology on $Y$ each of the partitions $\mathcal{P}_k$ consists of clopen sets. We will assume this from now on.  This is to ensure  continuity of certain functions related to approximate embeddings that appear in the proof. 
We call a point $y \in Y$ \emph{generic}\index{Definitions and notation introduced in Section 7!generic points} if the sequence of probability measures
\begin{equation}\label{eq:generic_seq}
\frac{1}{|F_n|}\sum_{\mi \in F_n}\delta_{T^{\mi}(y)}
\end{equation}
converge in the weak-$*$ topology to an ergodic measure $\mu \in \Prob_e(\Y)$. 
Let $G(\Y) \subset Y$ denote the set of generic points for $\Y$.
In this case we refer to the measure
\begin{equation}
\index{Definitions and notation introduced in Section 7!$\mu_y$}\mu_y = \lim_{n \to \infty}\frac{1}{|F_n|}\sum_{\mi \in F_n}\delta_{T^\mi(y)}
\end{equation}
as the \emph{empirical measure} of $y$. 
Note that $Y$,  viewed as a subset of $\{0,1\}^{\ZD}$, is not assumed to be compact, so for general $y \in Y$ a limit point of the sequence of measures in \eqref{eq:generic_seq} need not be supported on $Y$. 
We also denote by 
\begin{equation}
\mathit{emp}:G(\Y) \to \Prob_e(\Y)\index{Definitions and notation introduced in Section 7!$\mathit{emp}:G(\Y)\to \Prob_e(\Y)$}
\end{equation}
the map that sends a generic point $y \in G(\Y)$ to its empirical measure $\mu_y$.
Namely
\begin{equation}
\mathit{emp}(y) = \mu_y
\end{equation}
For $\mu \in \Prob_e(\Y)$, let
\begin{equation}
G_\mu(\Y) = \mathit{emp}^{-1}(\{\mu\}).\index{Definitions and notation introduced in Section 7!$G_\mu(\Y)$}
\end{equation}
The set  $G_\mu(\Y)$ is the collection of $\mu$-generic points in $Y$.
We have $$G(\Y)= \bigcup_{\mu \in \Prob_e(Y,T)}G_\mu(Y,T).\index{Definitions and notation introduced in Section 7!$G(\Y)$}$$
For later reference, we record some Borel measurably results about generic points and empirical measures.
\begin{prop}\label{prop:measurability_of_generic_points}
	\begin{enumerate}
		
		\item The set $G(\Y)$ is a Borel subset of $Y$.
		\item The set $\Prob(\Y)$ is a Borel subset of $\Prob\left((\{0,1\}^\NN)^{\ZD} \right)$.
		\item The set $\Prob_e(\Y)$ is a Borel subset of $\Prob(\Y)$.
		\item For every $\mu \in \Prob_e(\Y)$ the set $G_\mu(\Y)$ is a Borel subset of $Y$.
		\item The function $\mathit{emp}:G(\Y) \to \Prob_e(\Y)$ is a Borel measurable function.
	\end{enumerate}
\end{prop}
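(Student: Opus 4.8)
The five assertions of Proposition \ref{prop:measurability_of_generic_points} are all standard facts about standard Borel spaces, invariant measures, and generic points; the plan is to establish them one by one, mostly by exhibiting the relevant set or function as a countable combination (intersection/union/preimage) of Borel or continuous pieces, using that $Y$ has been realized as a shift-invariant Borel subset of $(\{0,1\}^\NN)^{\ZD}$ sitting inside the compact metrizable space $\Omega:=(\{0,1\}^\NN)^{\ZD}$, and that $\Prob(\Omega)$ with the weak-$*$ topology is a compact metrizable (hence standard Borel) space.

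First I would handle (2) and (3), since the measure-theoretic assertions underpin the rest. For a fixed clopen set $A \in \Clopen(\Omega)$ and $\mi \in \ZD$, the map $\nu \mapsto \nu(T^{-\mi}A) - \nu(A)$ is continuous on $\Prob(\Omega)$; hence $\{\nu : \nu(T^{-\mi}A)=\nu(A)\}$ is closed, and intersecting over the countably many $A \in \Clopen(\Omega)$ and $\mi \in \ZD$ gives the closed set of $T$-invariant probability measures on $\Omega$. Intersecting this with $\{\nu : \nu(Y) = 1\}$ gives $\Prob(\Y)$; since $Y$ is Borel in $\Omega$, the map $\nu \mapsto \nu(Y)$ is Borel (this is the one genuinely non-trivial input — Borel measurability of $\nu \mapsto \nu(B)$ for Borel $B$, which follows from a monotone-class argument starting from open sets where it is lower semicontinuous), so $\{\nu : \nu(Y)=1\}$ is Borel and (2) follows. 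For (3), ergodicity: a $T$-invariant $\nu$ is ergodic iff for every $A \in \Clopen(\Omega)$ the Birkhoff averages $\frac{1}{|F_n|}\sum_{\mi \in F_n} 1_A \circ T^{\mi}$ converge $\nu$-a.e.\ to the constant $\nu(A)$; equivalently, by the $L^2$ mean ergodic theorem, iff $\int \bigl(\frac{1}{|F_n|}\sum_{\mi \in F_n} 1_A\circ T^{\mi}\bigr)^2 d\nu \to \nu(A)^2$ for every $A \in \Clopen(\Omega)$. For each fixed $n$ and $A$ the quantity $\int \bigl(\frac{1}{|F_n|}\sum_{\mi \in F_n} 1_A\circ T^{\mi}\bigr)^2 d\nu$ is continuous in $\nu$ (it is the integral of a fixed continuous function), so the condition ``$\forall A\ \forall \epsilon\ \exists N\ \forall n\ge N$'' defining ergodicity is a countable combination of Borel sets, giving (3).

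Next, (1), (4) and (5) concern generic points. Fix a countable dense set $\{f_1, f_2, \ldots\}$ in $C(\Omega)$ (e.g.\ finite rational combinations of indicators of basic clopen sets), so that weak-$*$ convergence of the empirical measures of $y$ is equivalent to convergence of $\frac{1}{|F_n|}\sum_{\mi\in F_n} f_j(T^{\mi}y)$ for every $j$. For each $j$, the function $y \mapsto \frac{1}{|F_n|}\sum_{\mi\in F_n} f_j(T^{\mi}y)$ is continuous (in fact depends on finitely many coordinates), hence the set of $y$ for which the sequence in $n$ is Cauchy is Borel; intersecting over $j$ gives the Borel set $G_0$ of points whose empirical measures converge weak-$*$ to \emph{some} $\nu \in \Prob(\Omega)$. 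On $G_0$ the limit functionals $y \mapsto \lim_n \frac{1}{|F_n|}\sum_{\mi\in F_n} f_j(T^{\mi}y)$ are Borel (pointwise limits of continuous functions), so $y \mapsto \mathrm{emp}_\Omega(y) \in \Prob(\Omega)$ is a Borel map on $G_0$ into the standard Borel space $\Prob(\Omega)$. Now $G(\Y) = G_0 \cap \mathrm{emp}_\Omega^{-1}(\Prob_e(\Y))$, which is Borel by (3) and the Borel measurability just established; this gives (1). On $G(\Y)$ the map $\mathrm{emp}$ is the restriction of the Borel map $\mathrm{emp}_\Omega$, proving (5). Finally $G_\mu(\Y) = \mathrm{emp}^{-1}(\{\mu\})$ is the preimage of a point (a closed, hence Borel, subset of $\Prob_e(\Y)$) under a Borel map, so it is Borel, giving (4).

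\textbf{Main obstacle.} The only step that is not essentially bookkeeping is the Borel measurability of $\nu \mapsto \nu(B)$ for a general Borel set $B$ (needed to see $\{\nu : \nu(Y) = 1\}$ is Borel), which requires a monotone-class / Dynkin-system argument: the class of $B$ for which $\nu\mapsto\nu(B)$ is Borel contains the open sets (on which it is lower semicontinuous, as $\nu(U) = \sup\{\int f\,d\nu : f \le 1_U,\ f \in C_b\}$, a sup of continuous functions) and is closed under complements and countable disjoint unions, hence contains all Borel sets. Everything else reduces to the observation that finitary functions of the shift coordinates are continuous and that weak-$*$ convergence and ergodicity can be tested against a countable family of continuous functions.
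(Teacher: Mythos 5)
The paper gives no proof of this proposition — it simply declares all five statements standard and refers the reader to the discussion of generic points in Hochman's paper — and your argument is a correct, complete instantiation of exactly those standard arguments (weak-$*$ continuity of $\nu\mapsto\int f\,d\nu$ for continuous $f$, a $\pi$-$\lambda$ argument for Borel measurability of $\nu\mapsto\nu(B)$, the $L^2$ mean ergodic characterization of ergodicity, and countable quantifier bookkeeping over clopen sets and a dense family in $C(\Omega)$). The only cosmetic point is that in part (1) one should write $G(\Y)=Y\cap G_0\cap \mathrm{emp}_\Omega^{-1}(\Prob_e(\Y))$, since a point of $\Omega\setminus Y$ can have convergent empirical measures whose limit is supported on $Y$; this does not affect Borelness.
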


These statements are all  standard and well known, so we omit the proof, referring for instance to the discussion of generic points in \cite{MR3077948}.

 We  now give a short proof of Proposition \ref{prop:borel_ZD_rokhlin_lemma} regarding the existence of ``Borel'' Rokhlin towers:
\begin{proof}[Proof of Proposition \ref{prop:borel_ZD_rokhlin_lemma}]
Choose $n \in \NN$ and $\epsilon >0$.
Using the ``usual'' version of Rokhlin's lemma (Proposition \ref{prop:rokhlin_lemma}), for every $\mu \in \Prob_e(Y,T) \subset \Prob_e\left((\{0,1\}^{\NN})^{\ZD}\right)$, let $Z'_{\mu} \subset Y$ be the base an $(F_n, \epsilon/2,\mu)$-tower.
By inner regularity of $\mu$, we can find a closed (hence compact) set $Z''_{\mu}\subset (\{0,1\}^{\NN})^{\ZD}$ 
so that $Z''_{\mu}  \subset Z'_\mu$ and 
$$\mu(Z'_\mu \setminus Z''_\mu) < |F_n|^{-1}\epsilon/2.$$
Then $Z''_{\mu}$ is an $(F_n, \epsilon,\mu)$-tower.
Because the sets $\{T^{-\mi}Z''_{\mu}\}_{\mi \in F_n}$ are pairwise disjoint compact subsets and the topology of $Y$  has  a clopen basis, we can find a clopen set $Z_\mu \subset (\{0,1\}^{\NN})^{\ZD}$ that contains $Z''_\mu$ and so that $(T^{-\mi}Z_{\mu})_{\mi \in F_n}$ are still pairwise disjoint. We conclude that for every $\mu \in \Prob_e(Y,T)$ there exists a clopen set  $Z_\mu \subset (\{0,1\}^{\NN})^{\ZD}$ that is  also the base of an $(F_n, \epsilon,\mu)$-tower. 
Our next goal is to show that we can furthermore choose $Z_\mu$ as above so that the  function $\mu \mapsto Z_\mu$ will be  Borel measurable as a function from $\Prob_e(\Y)$ to the clopen sets of $(\{0,1\}^{\NN})^{\ZD}$.
 Let $(E_k)_{k=1}^\infty$ be some enumeration of the clopen subsets of $ (\{0,1\}^{\NN})^{\ZD}$. For every $\mu \in \Prob_e(Y,T)$ let $Z_\mu = E_{k_0}$ if $E_{k_0}$ is the base of an $(F_n, \epsilon,\mu)$-tower and for every $k < k_0$ the clopen set $E_k$ is not the base of an $(F_n, \epsilon,\mu)$-tower.
Then using Proposition \ref{prop:measurability_of_generic_points} it is not difficult to check that for every $k \in \NN$ the set
$$U_k = \left\{\mu \in \Prob_e(\Y):~ Z_\mu = E_k \right\}$$
is a Borel subset of $\Prob_e(\Y)$. It follows that the resulting function $\mu \mapsto Z_\mu$ is indeed a Borel measurable function.
 Define 
$$Z = \bigcup_{k=1}^\infty \mathit{emp}^{-1}(U_k) \cap E_k.$$
From the fact that the $U_k$'s are Borel subsets of $\Prob_e(\Y)$, using   Proposition \ref{prop:measurability_of_generic_points} again it follows that $Z$ is a Borel subset of $Y$. One can check that 
$$Z= \bigcup_{\mu \in \Prob_e(\Y)}\left(Z_\mu \cap G_\mu(\Y) \right).$$

Thus is clear that for every $\mu \in \Prob_e(\Y)$,
$$\mu(\bigcup_{\mi \in F_n}T^{-\mi} Z)= \mu(\bigcup_{\mi \in F_n}T^{-\mi} Z_\mu) \ge 1 -\epsilon.$$
Thus $Z$ is the base of an $(F_n,\epsilon,\mu)$-tower for every $\mu \in \Prob_e(\Y)$.
\end{proof}

From now on we will  further assume that for every $n \in \NN$ the set  $Z_n \subset Y$ is clopen in $Y$. We can assume this without any loss of generality by modifying the embedding of $\Y$ into the shift over $\{0,1\}^\NN$.

For $y \in Y$ and $A \subset Y$ define:
\begin{equation}\label{eq:overline_d_def}
\index{Definitions and notation introduced in Section 7!$\udense_{\Y}(y, A)$}\udense_{\Y}(y,A)= \limsup_{n \to \infty}\frac{1}{|F_n|}\left|\left\{\mi \in F_n:~ T^{\mi}(y) \in A\right\}\right|,
\end{equation}
and
\begin{equation}\label{eq:underline_d_def}
\index{Definitions and notation introduced in Section 7!$\ldens_{\Y}(y, A)$}\ldens_{\Y}(y,A)= \liminf_{n \to \infty}\frac{1}{|F_n|}\left|\left\{\mi \in F_n:~ T^{\mi}(y) \in A\right\}\right|.
\end{equation}
Viewing  $\Prob_e(\Y)$ as a Borel subset of $\Prob(\{0,1\}^{\ZD})$, it is a standard Borel space.
So we can find a sequence  $(\mathcal{Q}_n)_{n=1}^\infty$\index{Definitions and notation introduced in Section 7!$(\mathcal{Q}_n)_{n=1}^\infty$} of finite Borel partitions that together generate the Borel $\sigma$-algebra of $\Prob_e(\Y)$, and in particular separate points.
We further assume that each partition in the sequence refines the previous one and that $|\mathcal{Q}_r| \le |C_{r}|$ for every $r$.

\subsection{Main lemmas and proof of Theorem \ref{thm:spec_sequence_implies_univesality}}
In the following we will frequently need the fact that if $n_i$ is a sequence of integers such that $n_1\ll n_2...$ then $\epsilon_{n_i}$ and $\delta_{n_i}$ are decreasing at an exponential rate. For instance this would imply that
$$\sum_{t=1}^\infty\frac38\epsilon_{n_t}<3/7\epsilon_{n_1}.$$

Our proof of Theorem \ref{thm:spec_sequence_implies_univesality} follows a similar structure as the proof of Proposition \ref{prop:flex_implies_ergodic_univesality} in the previous section, but will be somewhat more involved. We will use two main lemmas. The  easier one is a refinement of Lemma \ref{lem:approximate_embedding_exists}. It asserts that we can produce a  $(k,n,\epsilon,\mu)$-approximate embedding $\rho \in \Mor(\Y,\X)$, provided that $n$ is sufficiently big, as a Borel function of $k \in \NN$, $\mu \in \Prob_e(\Y)$ and $\epsilon >0$:
\begin{lem}\label{lem:borel_appproximate_embedding}
	
	There exists a Borel function 
	$$N_0 : \Prob_e(\Y) \times \NN \times (0,1) \to \NN,$$
	a Borel function 
	$$\Phi_0: \Prob_e(\Y) \times \NN \times (0,1) \times  \NN \to \Mor_C(\Y,\X),$$
	and a Borel function
	$$\overline{Z}: \Prob_e(\Y) \times \NN \times (0,1) \times  \NN \to \Clopen(Y),$$
	so that the following holds:
	
	Fix $\mu \in \Prob_e(\Y)$,  $k,n \in \NN$ and $\epsilon >0$ such that 
	$$n > N_0(\mu,k,\epsilon).$$\index{Definitions and notation introduced in Section 7!$N_0(\mu,k,\epsilon)$}
	Denote
	$$\rho = \Phi_0(\mu,k,\epsilon,n). $$\index{Definitions and notation introduced in Section 7!$\Phi_0(\mu,k,\epsilon,n)$}
	Then $\rho \in \Mor(\Y,\X)$ is a $(k,n,\epsilon,\mu)$-approximate embedding and we can choose 
	$$Z[\rho] = \overline{Z}(\mu,k,\epsilon,n).\index{Definitions and notation introduced in Section 7!$\overline{Z}(\mu,k,\epsilon,n)$}$$
\end{lem}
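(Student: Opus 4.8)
The plan is to upgrade Lemma \ref{lem:approximate_embedding_exists} into a version where every choice made in its proof is carried out by a Borel measurable rule in the parameters $(\mu,k,\epsilon,n)$. Recall that the proof of Lemma \ref{lem:approximate_embedding_exists} does essentially three things: (a) it invokes Lemma \ref{lem:tower_cov} to find, for $n$ large, a subfamily $\mathcal{G}\subset \mathcal{P}_k^{F_n}$ with $\mu(Z_n\cap\bigcup\mathcal{G})\ge(1-\epsilon)\mu(Z_n)$ and $|\mathcal{G}|\le \exp(|F_n|\tfrac12(h_\mu(Y,T)+h(\C)))$; (b) it uses $h_\mu(Y,T)<h(\C)$ to conclude $|\mathcal{G}|<|C_n|$ once $n$ is large; (c) it picks an injective map $\Phi:\mathcal{G}\hookrightarrow C_n$ (with $\Phi(\mathcal{P}_k^{F_n}\setminus\mathcal{G})$ disjoint from $\Phi(\mathcal{G})$), sets $\tilde\Phi(y)=\Phi(\mathcal{P}_k^{F_n}(y))$, and takes $\rho=\rho_{\tilde\Phi,n}$, $Z[\rho]=\bigcup\mathcal{G}$. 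Each of these must be ``Borelized'' as a function of $\mu$.

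The key steps, in order, are as follows. First I would define $N_0(\mu,k,\epsilon)$: the subtlety is that $h_\mu(Y,T)$ and the measures $\mu(Z_n\cap P)$ for $P\in\mathcal{P}_k^{F_n}$ are Borel functions of $\mu$ (entropy of a partition with respect to $\mu$ is a Borel function of $\mu$ by the usual limit-of-integrals formula, and $P\mapsto\mu(P)$ is Borel for each fixed Borel $P$), so the threshold obtained from Lemma \ref{lem:tower_cov} — namely the least $n$ past which the covering estimate $\COV_{\mu,\epsilon,\mathcal{P}_k^{F_n}}(Z_n)<\exp(|F_n|(h_\mu(Y,T;\mathcal{P}_k)+\epsilon))$ holds and past which $|C_n|>\exp(|F_n|\tfrac12(h_\mu(Y,T)+h(\C)))$ — is itself a Borel function of $(\mu,k,\epsilon)$, since ``$n$ works'' is a Borel condition on $\mu$ and we take the infimum of such $n$. (Here one uses $h_\mu(Y,T;\mathcal{P}_k)\le h_\mu(Y,T)<h(\C)$.) Second, for the family $\mathcal{G}$: I would fix once and for all an enumeration-based canonical rule — $\mathcal{P}_k^{F_n}$ is a finite partition into clopen sets indexed in a fixed way, so ``greedily add the partition elements of largest $\mu$-measure until $(1-\epsilon)\mu(Z_n)$ of $Z_n$ is covered, breaking ties by index'' produces $\mathcal{G}=\mathcal{G}(\mu,k,\epsilon,n)$ with the required mass and with $|\mathcal{G}|\le\COV_{\mu,\epsilon,\mathcal{P}_k^{F_n}}(Z_n)$, and this choice is Borel in $\mu$ because it only refers to the Borel quantities $\mu(P\cap Z_n)$. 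Third, for $\Phi$: enumerate $C_n$ and $\mathcal{P}_k^{F_n}$ in fixed ways and let $\Phi$ send the $j$-th element of $\mathcal{G}$ to the $j$-th element of $C_n$ and every element of $\mathcal{P}_k^{F_n}\setminus\mathcal{G}$ to the last element of $C_n$ (which is outside $\Phi(\mathcal{G})$ since $|\mathcal{G}|<|C_n|$); this is a deterministic function of $\mathcal{G}$, hence Borel in $\mu$. Then set $\Phi_0(\mu,k,\epsilon,n)=\rho_{\tilde\Phi,n}$ and $\overline{Z}(\mu,k,\epsilon,n)=\bigcup\mathcal{G}(\mu,k,\epsilon,n)$, which is a finite union of elements of $\Clopen(Y)$ hence in $\Clopen(Y)$.

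Finally I would check two things. One: that $\Phi_0(\mu,k,\epsilon,n)$ genuinely lands in $\Mor_C(\Y,\X)$, i.e. is \emph{continuous}, not just Borel — this is exactly why we arranged earlier (in the Borel setup subsection) that each $\mathcal{P}_k$ and each $Z_n$ is clopen in $Y$, so $\tilde\Phi$ is locally constant and $\rho_{\tilde\Phi,n}$, which is built from $\tilde\Phi$ and finitely many translates via \eqref{eq:rho_Phi_n_def}, is continuous. Two: measurability of the three assignments as maps into the standard Borel spaces $\NN$, $\Mor_C(\Y,\X)\cong C(Y,X)$, and $\Clopen(Y)$ — since $\Clopen(Y)$ is countable and $\Mor_C(\Y,\X)$ is a standard Borel space with the Borel structure from $C(Y,X)$, and since in all cases the output for each $n$ takes only countably many possible values, it suffices to verify that each preimage of a single value is a Borel set of $\mu$'s, which reduces to the Borelness of finitely many maps $\mu\mapsto\mu(P\cap Z_n)$ and of $\mu\mapsto h_\mu(Y,T;\mathcal{P}_k)$ together with Proposition \ref{prop:measurability_of_generic_points}. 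The conclusion — that for $n>N_0(\mu,k,\epsilon)$ the map $\rho=\Phi_0(\mu,k,\epsilon,n)$ is a $(k,n,\epsilon,\mu)$-approximate embedding with $Z[\rho]=\overline{Z}(\mu,k,\epsilon,n)$ — is then literally the conclusion of Lemma \ref{lem:approximate_embedding_exists} applied with these canonical choices. The main obstacle is purely bookkeeping: making sure the canonical greedy selection of $\mathcal{G}$ is defined cleanly enough that both its cardinality bound (needed for $|\mathcal{G}|<|C_n|$, hence for the injection $\Phi$ to exist) and its Borel dependence on $\mu$ are transparent; everything else is routine once the clopenness reductions from the Borel-setup subsection are in force.
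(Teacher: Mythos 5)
Your proposal is correct and matches the paper's own proof in all essentials: the paper likewise defines $N_0$ as the least $n$ with $\COV_{\mu,\epsilon,\mathcal{P}_k^{F_n}}(Z_n)<|C_n|$ (Borel by Lemma \ref{lem:smallest_Cov_N_is_Borel}), selects $\mathcal{G}$ canonically by ordering $\mathcal{P}_k^{F_n}$ according to decreasing $\mu(\cdot\mid Z_n)$ with ties broken by a fixed enumeration, maps the $j$-th element of $\mathcal{G}$ to the $j$-th element of a fixed enumeration of $C_n$ and the remainder to a single reserved element, and relies on the clopenness of the $\mathcal{P}_k$ and $Z_n$ for continuity. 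The only cosmetic difference is that the paper takes $\mathcal{G}$ to be the top $|C_n|-1$ elements outright rather than stopping the greedy selection at the coverage threshold; both give $|\mathcal{G}|<|C_n|$ and the required mass bound.
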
 

The following lemma is an extension of Lemma \ref{lem:improve_approx_emb}. Recall that Lemma \ref{lem:improve_approx_emb} 
says that there is a procedure that given a $(k_0, n_0, \delta_0, \mu)$-approximate embedding $\rho$ and parameters $k,n,\delta$ produces a $(k, n, \delta, \mu)$-approximate embedding $\tilde \rho$ which is  ``close'' to $\rho$ in certain sense, as long as the parameters $n_0,k_0,\delta_0$ are ``sufficiently good''. In the lemma below the ``distance'' between $\rho$ and $\tilde \rho$ will be captured by an additional parameter $\gamma$. The lemma says that $\tilde \rho$ can be generated as a Borel function of the ``input parameters'' $(\mu,k,n,\delta,\gamma,\rho)$. Furthermore, the lemma below asserts that we can arrange that for a generic $y$, the image  $\tilde \rho(y) \in X^{\ZD}$ will  ``approximately encode the measure $\mu \in \Prob_e(\Y)$'', in the sense that it will be possible to recover the partition element $\mathcal{Q}_r(\mu)$ from $\tilde \rho(y)$, for $r$'s in a certain range. There is a technical requirement that the parameter $n$ takes values only in a certain subsets of the integers $\{ \tilde q(n) : n \in \mathbb{N}\}$ where $\tilde q:\mathbb{N} \to \mathbb{N}$ is some rapidly growing function.

\begin{lem}\label{lem:improve_approx_emb_with_measure_encode}

	Given  functions $q,\tilde q:\NN \to \NN$   so that 
	\begin{equation}\label{eq:q_r_rapid}
	1 \ll q(1) \ll \tilde q(1) \ll q(2) \ll \tilde q(2) \ll \ldots \ll q(n) \ll\tilde q(n) \ll q(n+1) \ll \ldots,
	\end{equation}  \index{Definitions and notation introduced in Section 7!$q(\N)$ and $\tilde q(\N)$}
	there exist   Borel functions
		\begin{equation}\label{eq:overline_Phi_type}
\overline{\Phi}:\Prob_e(\Y)\times \NN \times \NN \times (0,1) \times (0,1) \times \Mor_C(\Y,\X) \to
\Mor_C(\Y,X),
\end{equation}
 $$\overline{\overline{Z}}:\Prob_e(\Y)\times \NN \times \NN \times (0,1) \times (0,1)  \times \Mor_C(\Y,\X) \to
\Clopen(Y),
$$
 $$\tilde{k}_0:\Prob_e(\Y)\times (0,1) \to \NN,$$
$$\tilde{N}_0:\Prob_e(\Y)\times (0,1)  \times \NN \to \NN,$$
	 $$	\tilde{N}:\Prob_e(\Y) \times (0,1) \times \Mor_C(\Y,\X) \times (0,1) \times \NN \to \NN, $$
  and a partition 
   $\{C_{r,Q}\}_{Q \in \mathcal{Q}_r}$\index{Definitions and notation introduced in Section 7!$C_{r, Q}$} of $C_{q(r)}$ for every $r \in \NN$, so that  whenever $\mu \in \Prob_e(\Y)$, $\gamma \in (0,1)$, $\delta_0,\delta >0$, $k, k_0,n,n_0 \in \NN$ and   $\rho \in \Mor_C(\Y,\X)$ is a continuous $(k_0,n_0,\delta_0,\mu)$-approximate embedding so that
\begin{equation}\label{eq:cond_delta_0_n_0_k_0}
	\delta_0 \le \frac{1}{\tilde{N}_0(\mu,\gamma, k_0)},~ k_0 \ge \tilde{k}_0(\mu,\gamma), \mbox{ and }	
	n_0 \ge \tilde{N}_0(\mu,\gamma,k_0)
	\end{equation} \index{Definitions and notation introduced in Section 7!$\tilde{k}_0(\mu,\gamma)$}\index{Definitions and notation introduced in Section 7!$\tilde{N}_0(\mu, \gamma,k_0)$}
	and
	\begin{equation}\label{eq:cond_n_tilde_q}
	 n \ge \tilde{N}(\mu,\gamma,\rho,\delta,k) \mbox{ and } n\in \tilde{q}(\NN),\index{Definitions and notation introduced in Section 7!$\tilde{N}(\mu,\gamma,\rho,\delta,k) $}
	\end{equation}
	then 
	\begin{equation}\label{eq:tilde_rho_def}
	\tilde \rho = \overline{\Phi}(\mu,k,n,\delta,\gamma,\rho)\index{Definitions and notation introduced in Section 7!$\overline{\Phi}(\mu,k,n,\delta,\gamma,\rho)$}
	\end{equation}
	is a  $(k,n,\delta,\mu)$-approximate embedding
	so that 
	\begin{equation}\label{eq:D_rho_tilde_rho}
	\mu\left(D_{n_0,\frac{3}{8}\epsilon_{n_0}}[\rho,\tilde \rho] \mid Z_{n_0}\right)< \gamma,
	\end{equation}
and whenever $n_0 \ll q(r) \ll n$ we have 
	\begin{equation}\label{eq:y_C_r_mu_on_U_r}
	\mu \left(\left\{y \in Z_{q(r)}:~ d_X^{F_{q(r)}}(\tilde \rho(y),C_{r,\mathcal{Q}_r(\mu)}) \ge \frac{3}{8}\epsilon_{q(r)}\right\}\mid Z_{q(r)} \right)
<  \gamma,
	\end{equation}
	and	we can take
	\begin{equation}\label{eq:Z_tilde_rho}
	Z[\tilde \rho]= \overline{\overline{Z}}(\mu,k,n,\delta,\gamma,\rho).\index{Definitions and notation introduced in Section 7!$\overline{\overline{Z}}(\mu,k,n,\delta,\gamma,\rho)$}
	\end{equation}
\end{lem}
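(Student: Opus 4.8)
The strategy is to carefully re-run the proof of Lemma \ref{lem:improve_approx_emb} (and its variant Lemma \ref{lem:improve_approx_emb_with_something_in_support}), keeping track of the measure $\mu$ as a Borel parameter and inserting one additional ``measure-encoding'' block into the codeword. Concretely, the only real novelty compared to Lemma \ref{lem:improve_approx_emb} is: (a) every quantity that was merely asserted to ``exist for $n$ large enough'' must now be exhibited as a Borel function of $(\mu,\gamma,k_0,n_0,\rho,\delta,k)$ — this is where we get $\tilde k_0, \tilde N_0, \tilde N$ — and (b) inside the $\gamma F_n$-region of the codeword, in addition to the block that encodes $\Upsilon_{n,n_0}(y)$ and the block encoding $\mathcal{P}_k^{F_n}(y)$, we reserve a third ``spaced block'' $K_{n,n_0,\zeta}$ of size $\ll \gamma n$, whose $C_{n_0}$-decoration is used to store a label determining $\mathcal{Q}_r(\mu)$ for the relevant range of $r$'s; we do this at scale $q(r)$ rather than at scale $n_0$ so that \eqref{eq:y_C_r_mu_on_U_r} comes out in the right form. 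The partition $\{C_{r,Q}\}_{Q\in\mathcal{Q}_r}$ of $C_{q(r)}$ is fixed once and for all using $|\mathcal{Q}_r|\le|C_{q(r)}|$: just partition $C_{q(r)}$ into $|\mathcal{Q}_r|$ nonempty pieces indexed by $\mathcal{Q}_r$, and define the encoding so that whenever $n_0\ll q(r)\ll n$ the $F_{q(r)}$-restriction of $\tilde\rho$ at a generic $y\in Z_{q(r)}$ lies within $\tfrac14\epsilon_{q(r)}$ of $C_{r,\mathcal{Q}_r(\mu)}$; the flexibility of $\C$ together with the marker property is exactly what lets us ``plant'' such a prescribed $C_{q(r)}$-element along a $(1+\delta_{q(r)})F_{q(r)}$-spaced subset of $\gamma F_n$.

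\textbf{Key steps, in order.} First, I would isolate from the proof of Lemma \ref{lem:improve_approx_emb} the functions $\overline k_0(\mu,\gamma)$ (given by \eqref{eq:k_0_def}, which is manifestly Borel in $\mu$ once $h_\mu(\Y\mid\mathcal{P}_k^{\ZD})$ is Borel in $\mu$ — a standard fact, e.g. via Proposition \ref{prop:measurability_of_generic_points}) and $\overline N_R, \overline N_S, \overline N_T$, and set $\tilde k_0 = \overline k_0$, $\tilde N_0(\mu,\gamma,k_0)=\max\{\overline N_S(\alpha(\gamma)^d\hat h),\overline N_R(\alpha(\gamma)/4,k_0),K_0\}$ verbatim as in \eqref{eq:N_0_def}, where $\hat h$ is chosen Borel-measurably strictly between $h_\mu(\Y)$ and $h(\C)$ (e.g.\ $\hat h = \tfrac12(h_\mu(\Y)+h(\C))$ when finite, or any fixed finite value $>h_\mu(\Y)$ otherwise). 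Second, I would run the covering-number estimates \eqref{eq:COV_P_k_0_rel_rho}--\eqref{equation: covering is good} exactly as before; the Shannon--McMillan and mean-ergodic inputs (Proposition \ref{prop:measure_SM_for_towers}, Lemmas \ref{lem:tower_cov}, \ref{lem:rel_tower_cov}, \ref{lem:COV_rel_to_approximate_embedding}) each hold for a fixed $\mu$, and the threshold ``$N$'' they produce depends Borel-measurably on $(\mu,\gamma,\rho,\delta,k)$ because it is the smallest integer past which an explicitly-written inequality involving continuous/Borel functions of these data holds; this defines $\tilde N(\mu,\gamma,\rho,\delta,k)$ (I would also fold in the requirement $n\in\tilde q(\NN)$, which only forces us to round $\tilde N$ up to the next element of $\tilde q(\NN)$). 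Third, I would modify the codeword $\overline W_y$ of \eqref{eq:overline_W_y_def}: choose a shift vector $\mi_c\in F_n$ for a new block $K_{n,n_0,\zeta}$ with $\zeta$ a small fixed fraction of $\alpha(\gamma)$, disjoint from the existing $\mi_b+K_{n,n_0,\alpha(\gamma)}$ and $K_{n,n_0,\beta(\gamma)}$ blocks and still inside $(1-2\delta_n)\gamma F_n$ (possible by the same arithmetic as \eqref{eq:mi_b_disjoint}--\eqref{eq:mi_1_mi_2_mi_3_subset_gamma} after shrinking $\gamma$-constants slightly); on this block put a fixed $C_{n_0}$-pattern depending only on a coarsening of $\mu$ that is fine enough to determine all $\mathcal{Q}_r(\mu)$ with $q(r)$ in the relevant window. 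Because each $\mathcal{P}_k$ and each $Z_n$ was arranged to be clopen in $Y$, the resulting $\tilde\phi=\Ext_n(\overline W_y)$ and $\tilde\rho=\rho_{\tilde\phi,n}$ are continuous, so $\tilde\rho\in\Mor_C(\Y,\X)$. Fourth, the verification that $\tilde\rho$ is a $(k,n,\delta,\mu)$-approximate embedding with $Z[\tilde\rho]=\overline{\overline Z}(\dots)$ is identical to the end of the proof of Lemma \ref{lem:improve_approx_emb} (recover $\Upsilon_{n,n_0}(y)$ from the $\mi_b$-block, then $\rho(y)\!\mid_{(1-2\delta_n)F_n\setminus\gamma F_n}$ from the $K_y$-block via $n_0$-towerability and $(\epsilon_{n_0},F_{n_0})$-separation of $C_{n_0}$, then $\mathcal{P}_k^{F_n}(y)$ from the $\beta(\gamma)$-block via $\Phi_w^{-1}$); estimate \eqref{eq:D_rho_tilde_rho} is \eqref{eq:tilde_rho_traces_rho} combined with Lemma \ref{lem:eq:mu_tower_subset_cond} to pass from ``outside a thin annulus of the $F_n$-tower'' to a conditional-measure-$<\gamma$ statement on $Z_{n_0}$; estimate \eqref{eq:y_C_r_mu_on_U_r} follows because for $n_0\ll q(r)\ll n$, a $(1-\gamma)$-fraction (by the mean ergodic theorem for towers, Proposition \ref{prop:measure_SM_for_towers}(ii), applied to $1_{Z_{q(r)}}$) of the $F_n$-orbit of a point in $Z_{q(r)}$ meets the planted $C_{r,\mathcal{Q}_r(\mu)}$-pattern with the required $\tfrac38\epsilon_{q(r)}$-accuracy, after tracing through $\Ext_n$ and using $C_{q(r)}$-separation. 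Fifth and last, I would verify Borel measurability of $\overline\Phi$ and $\overline{\overline Z}$: every construction above is an explicit composition of Borel selections (choosing enumerated clopen sets, injections between finite sets of bounded size, shift vectors from finite sets defined by inequalities, and $\Ext_n$, $\pi_n$, $\Pi_{\gamma,n}$ which are fixed Borel maps), so the whole assignment $(\mu,k,n,\delta,\gamma,\rho)\mapsto\tilde\rho$ is Borel into $\Mor_C(\Y,\X)\cong C(Y,X)$ with its standard Borel structure.

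\textbf{Main obstacle.} The substantive analytic content — the covering-number bookkeeping that makes the codeword fit — is already done in Lemma \ref{lem:improve_approx_emb}, and the measure-encoding block costs only $O(\zeta^d)$ of the entropy budget, which is absorbed by further shrinking $\alpha(\gamma)$; so the real work is bureaucratic: one must (i) re-derive \emph{every} ``for all sufficiently large $n$'' statement in the proof of Lemma \ref{lem:improve_approx_emb} with an \emph{explicit} threshold and check that threshold is a Borel function of the stated arguments (the delicate point being the dependence on $\mu$ through $h_\mu(\Y)$, $h_\mu(\Y\mid\mathcal{P}_k^{\ZD})$, and through the conditional measures appearing in the covering estimates — all Borel in $\mu$ by Proposition \ref{prop:measurability_of_generic_points} and standard facts about entropy as a function of the measure), and (ii) choose the windows ``$n_0\ll q(r)\ll n$'' so that the $r$'s for which \eqref{eq:y_C_r_mu_on_U_r} must hold are precisely those handled by the planted block — i.e.\ coordinate the growth of $q,\tilde q$ with the scales $n_0,n$ that will arise when this lemma is iterated in the proof of Theorem \ref{thm:spec_sequence_implies_univesality}. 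Neither of these is conceptually hard given the machinery already in place; the risk is purely in the length and in keeping the quantifier order straight between the Borel functions $\tilde k_0,\tilde N_0,\tilde N$ and their arguments.
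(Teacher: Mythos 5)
Your treatment of the Borel-measurability bookkeeping (explicit thresholds $\tilde k_0,\tilde N_0,\tilde N$ as smallest integers past which explicitly written inequalities hold, Borel dependence on $\mu$ of entropies and covering numbers) matches the paper's strategy and is fine. The gap is in your measure-encoding mechanism, which does not deliver the conclusion \eqref{eq:y_C_r_mu_on_U_r}. That conclusion is a statement about the window $F_{q(r)}$ \emph{centered at the base of the $Z_{q(r)}$-tower}: for most $y\in Z_{q(r)}$, the restriction $\tilde\rho(y)\mid_{F_{q(r)}}$ must $\tfrac38\epsilon_{q(r)}$-shadow an element of the specific piece $C_{r,\mathcal{Q}_r(\mu)}\subseteq C_{q(r)}$, and this must hold simultaneously for \emph{every} $r$ with $n_0\ll q(r)\ll n$ (a number of scales that grows with $n$). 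Your single extra block $K_{n,n_0,\zeta}$ inside $\gamma F_n$, decorated by a $C_{n_0}$-pattern, lives at scale $n_0$ and at a fixed offset in the $Z_n$-tower; it is not aligned with the bases of the $Z_{q(r)}$-towers, it carries no $C_{q(r)}$-structure, and nothing in your construction forces $\tilde\rho(y)\mid_{F_{q(r)}}$ to be near $C_{q(r)}$ at all for $y\in Z_{q(r)}$ at intermediate scales. Your appeal to the mean ergodic theorem ("a $(1-\gamma)$-fraction of the $F_n$-orbit of a point in $Z_{q(r)}$ meets the planted pattern") proves a statement about orbit visits, which is weaker than and different from \eqref{eq:y_C_r_mu_on_U_r}; the latter is what the final injectivity argument in Theorem \ref{thm:spec_sequence_implies_univesality} needs, via the $(\epsilon_{q(r)},F_{q(r)})$-separation and marker property of $C_{q(r)}$. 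Relatedly, the partition $\{C_{r,Q}\}$ cannot be arbitrary: the paper defines $C_{r,Q}$ as the elements of $C_{q(r)}$ that are $\tfrac12\epsilon_r$-close on $F_r$ to a designated $\tilde f_r(Q)\in C_r$, which is exactly what lets the planting via $\Ext_{q(r)}$ land in the correct piece.

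The paper's route is structurally different: it first builds an intermediate morphism $\hat\rho$ by iterating a separate planting lemma (Lemma \ref{lem:trace_and_recover_Q}) once at \emph{each} scale $q(r)$, $r_1(n_0)\le r\le r_2(n)$, converting a $q(r-1)$-towerable morphism into a $q(r)$-towerable one that places, at the base of the $Z_{q(r)}$-tower, a $C_{q(r)}$-element shadowing $\tilde f_r(\mathcal{Q}_r(\mu))$ on $F_r$, while perturbing the previous morphism only on a set $K_{r,q(r-1)}$ of small conditional measure. This is why the hypothesis requires $n\in\tilde q(\NN)$ and why the interleaved scales $q,\tilde q$ appear at all. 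Only then is the improvement step applied, and because the reference morphism is now $\hat\rho$ rather than $\rho$, one needs a \emph{robust} version of Lemma \ref{lem:improve_approx_emb} (Lemma \ref{lem:improve_approx_emb_robust}, with the uniform threshold $\tilde N_U$ taken as a supremum over all perturbations supported in $E_{n_0}$) — a point your proposal sidesteps but would have to confront if you repaired the encoding by perturbing $\rho$ at the intermediate scales. Both \eqref{eq:D_rho_tilde_rho} and \eqref{eq:y_C_r_mu_on_U_r} then follow by summing the perturbation sets over the scales via the triangle inequality \eqref{eq:D_triangle} and Lemmas \ref{lem:mu_tower_subset}, \ref{lem:eq:mu_tower_subset_cond}, \ref{lem:D_n_epsilon_D_m_epsilon}.
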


	The proof of Theorem  \ref{thm:spec_sequence_implies_univesality} is similar to the proof of Proposition \ref{prop:flex_implies_ergodic_univesality}. Like in the proof of Proposition \ref{prop:flex_implies_ergodic_univesality}, we produce a sequence of better and better approximate embeddings $\rho_{n,\mu}$ for each measure $\mu$, except that now our approximate embeddings are also a Borel function of the measure $\mu$. These are produced in an inductive manner where we start the induction using Lemma \ref{lem:borel_appproximate_embedding} and apply the induction step using Lemma \ref{lem:improve_approx_emb_with_measure_encode}. The convergence of the maps  $\rho_{n,\mu}$ to embeddings $\rho_\mu$ also follows as in Proposition \ref{prop:flex_implies_ergodic_univesality}. However now we need to further worry about the fact that for two distinct measures $\mu$ and $\nu$, the maps $\rho_{ \mu}$ and $\rho_{\nu}$ might be close to each other. For this we use the fact that in our induction step, our approximate embeddings $\rho_{n, \mu}$ also encode the measure. In particular if $\QQ_{r}(\mu)$ and $\QQ_r(\nu)$ for some $r$ are distinct, the embeddings $\rho_{n,\mu}$ and $\rho_{n,\nu}$ (implying that also $\rho_{\mu}$ and $\rho_{\nu}$) are far from each other.

\begin{proof}[Proof of Theorem \ref{thm:spec_sequence_implies_univesality} assuming Lemma \ref{lem:borel_appproximate_embedding} and Lemma  \ref{lem:improve_approx_emb_with_measure_encode}]

	We follow a similar scheme as in the proof of Proposition \ref{prop:flex_implies_ergodic_univesality}, this time using Lemma \ref{lem:borel_appproximate_embedding} instead of Lemma \ref{lem:approximate_embedding_exists} and Lemma \ref{lem:improve_approx_emb_with_measure_encode} instead of Lemma \ref{lem:improve_approx_emb}.
	
	We fix a rapidly decaying sequence $(\gamma_j)_{j=1}^\infty$ by $\gamma_j = \frac{1}{10^{j+1}}$. For every $j \in \NN$ and $\mu \in \Prob_e(\Y)$, define 
	$$ \overline{k}_{j,\mu}=\max\{\tilde {k}_0(\mu,\gamma_{j+1}),\overline{k}_{j-1, \mu}+1\} \mbox{ and } { \overline{N}_{j,\mu}=\tilde{N}_0(\mu,\gamma_{j+1},\overline{k}_{j,\mu})},$$
	$$\delta_{j,\mu}= \frac{1}{2{ \tilde{N}_0(\mu,\gamma_{j+1},\overline{k}_{j,\mu})}},$$
	where {$\tilde{k}_0:\Prob_e(\Y)\times(0,1) \to \NN$, $\tilde{N}_0:\Prob_e(\Y)\times(0,1) \times \NN \to \NN$} are the Borel functions that appear in the statement of Lemma \ref{lem:improve_approx_emb_with_measure_encode}.
	By induction on $j \in \NN$ define
	$n_{j,\mu} \in\NN$, $\rho_{j,\mu} \in \Mor_C(\Y,\X)$ and $Z_{j,\mu} \in \Clopen(Y)$ 
	for every $\mu \in \Prob_e(\Y)$	as follows:

	To start the induction, let 
	\begin{equation}
	\tilde {n}_{1,\mu}= \max\{N_0(\mu,\overline{k}_{1,\mu},\delta_{1,\mu}),{\overline{N}_{1,\mu}}\},
	\end{equation}
	\begin{equation}
	n_{1,\mu} = \min \left\{ \tilde q(r):~ r\in \NN ,~\tilde{q}(r) > \tilde n_{1,\mu}\right\},  
	\end{equation}
	\begin{equation}
	\rho_{1,\mu}= \Phi_0\left(\mu,\bar{k}_{1,\mu},\delta_{1,\mu},n_{1,\mu}\right)
	\end{equation}
	and
	\begin{equation}
	Z_{1,\mu}= \overline{Z}(\mu,\bar{k}_{1,\mu},\delta_{1,\mu},n_{1,\mu}),
	\end{equation}
	where $N_0,\Phi_0,\overline{Z}$ are the functions that appear in the statement of Lemma \ref{lem:borel_appproximate_embedding}.

	Assume by induction that these have been defined for some $j  \ge 1$ and all $\mu \in \Prob_e(\Y)$. Define:
	\begin{equation}
	\tilde n_{j+1,\mu}= \tilde {N}\left(\mu,\gamma_{j+1},\rho_{j,\mu},\delta_{j+1,\mu},\overline k_{j,\mu}\right),
	\end{equation}
	\begin{equation}
	n_{j+1,\mu}= \min \left\{ \tilde q(r):~ r\in \NN ,~\tilde{q}(r) \ge \overline N_{j+1,\mu},~\tilde{q}(r) \gg n_{j,\mu} \mbox{ and } 
	~\tilde{q}(r) \ge \tilde n_{j+1,\mu}\right\},    
	\end{equation}
	\begin{equation}
	\rho_{j+1,\mu}= \overline{\Phi}\left(\mu,\bar k_{j+1,\mu},n_{j+1,\mu},\delta_{j+1,\mu},\gamma_{j+1},\rho_{j,\mu}\right),
	\end{equation}
	\begin{equation}
	Z_{j+1,\mu}=\overline{\overline{Z}}\left(\mu,\bar k_{j+1,\mu},n_{j+1,\mu},\delta_{j+1,\mu},\gamma_{j+1},\rho_{j,\mu}\right),
	\end{equation}
	where $\tilde{N}$, $\overline{\Phi}$ and $\overline{\overline{Z}}$ are the functions that appear in the statement of Lemma \ref{lem:improve_approx_emb_with_measure_encode}.
	Then the application of  Lemmas \ref{lem:borel_appproximate_embedding} and \ref{lem:improve_approx_emb_with_measure_encode} ensures that for every $\mu \in \Prob_e(\Y)$, the function $\rho_{j,\mu} \in \Mor_C(\Y,\X)$ is a  continuous $(\overline k_{j,\mu},n_{j,\mu},\delta_{j,\mu},\mu)$-approximate embedding, and that
	\begin{equation}\label{eq:D_n_j_mu}
	\mu\left(D_{n_{j,\mu},\frac{3}{8}\epsilon_{n_{j,\mu}}}[\rho_{j,\mu},\rho_{j+1,\mu}] \mid Z_{n_{j, \mu}}\right) < \gamma_{j}.
	\end{equation}
	
	Then, by Lemma \ref{lem:convergence_set_for_rho_j_s}, for every $\mu \in \Prob_e(\Y)$ the sequence $(\rho_{j,\mu})_{j=1}^\infty$ converges 
	pointwise on the set
	\begin{equation}\label{eq:Y_mu_def}
	Y_\mu = \bigcap_{\mi \in \ZD}T^{-\mi}\bigcup_{j=1}^\infty\bigcap_{t \ge j}T^{(1-2\delta_{n_{t,\mu}})F_{n_{t,\mu}}}\left(Z_{t,\mu}\setminus D_{n_{t,\mu},\frac{3}{8}\epsilon_{n_{t,\mu}}}[\rho_{t,\mu},\rho_{t+1,\mu}]\right),  
	\end{equation}
	and on this set the limit is a Borel embedding into $X$.
	By Lemma \ref{lem:mu_Y_0_1}, it follows that  $\mu(Y_\mu)=1$ for all $\mu \in \Prob_e(\Y)$.
	For each $\mu \in \Prob_{e}(\Y)$ let us denote  the pointwise limit of $(\rho_{j,\mu})_{j=1}^\infty$ by   $\rho_\mu:Y_\mu \to X$.

	By Lemma \ref{lem:D_n_epsilon_D_m_epsilon}  and  \eqref{eq:D_n_j_mu} it follows that for every $r$ such that $q(r) \ll n_{j,\mu}$,
	$$
	\mu\left(D_{q(r),\frac{3}{8}\epsilon_{n_{j,\mu}}}[\rho_{j,\mu},\rho_{j+1,\mu}] \mid Z_{q(r)}\right) <\frac{1}{10^j} .
	$$
	
	So, using the triangle inequality, it follows that whenever $q(r) \ll n_{j,\mu}$ then 
	\begin{equation}\label{eq:D_q_r_rho_mu_j}
	\mu\left(D_{q(r),\frac{3}{7}\epsilon_{n_{j,\mu}}}[\rho_{j,\mu},\rho_{\mu}] \mid Z_{q(r)}\right) <\sum_{t=j}^\infty\mu\left(D_{q(r), \frac38\epsilon_{n_t,\mu}}[\rho_{t, \mu}, \rho_{t+1, \mu}]~|~Z_{q(r)}\right)< \frac{1}{10^{j-1}}=\gamma_{j-2}.
	\end{equation}
	
	Also,  by  the construction of  $\rho_{j,\mu}$ using Lemma \ref{lem:improve_approx_emb_with_measure_encode}
	if $n_{j-1,\mu} \ll q(r) \ll n_{j,\mu}$ then 
	$$\mu \left(\left\{ y \in Y :~ d_X^{F_{q(r)}}\left(\rho_{j,\mu}(y),C_{r,\mathcal{Q}_r(\mu)}\right) < \frac{3}{8}\epsilon_{q(r)}\right\} \mid Z_{q(r)} \right) > 1 - \gamma_j,$$
	where $\{C_{r,Q}\}_{Q \in {\QQ}_r}$	is the partition of $C_{q(r)}$ that appears in the statement of Lemma \ref{lem:improve_approx_emb_with_measure_encode}.

	Since $\frac37\epsilon_{n_{j, \mu}}+ \frac38\epsilon_{q(r)}  <  \frac{5}{12}\epsilon_{q(r)}$ it follows from \eqref{eq:D_q_r_rho_mu_j} that
	\begin{equation}\label{eq:mu_rho_mu__close_to_C_r}
	\mu \left(\left\{ y \in Y :~ d_X^{F_{q(r)}}\left(\rho_{ \mu}(y),C_{r,\mathcal{Q}_r(\mu)}\right) < \frac{5}{12}\epsilon_{q(r)}\right\} \mid Z_{q(r)} \right) > 1- 2\gamma_{j-2}.
	\end{equation}
	Recall the Borel set $\G(\Y)$ consisting of points of $Y$ which are generic for ergodic probability measures. We will be concerned with the subset of $\G(\Y)$ on which the maps $\rho_{j,\mu}$ converge:
	$$Y_{\rightarrow}=\{y\in \G(\Y)~:~ \rho_{j, \mu_y}(y)\text{ converges as } j\to \infty\}.$$
	For every $j \in \mathbb{N}$ the function  $y\to \rho_{j, \mu_y}(y)$ is a composition of Borel functions. Thus $y\to \rho_{j, \mu_y}(y)$ is also a Borel function and hence the set $Y_{\rightarrow}$ is measurable.
	We have already mentioned that the sequence $(\rho_{j,\mu})_{j=1}^\infty$ converges pointwise on $Y_\mu$, so 	$Y_\mu \cap G_\mu\subset Y_\to$ for every $\mu\in \Prob_e(\Y)$.
	Consequently 
	 $\mu(Y_\to)=1$ for all $\mu\in \Prob_e(\Y)$.
	Now consider the Borel maps
	$\rho_j: Y_\to \to X^{\ZD}$ as $\rho_j(y)= \rho_{j , \mu_y}(y)$
	and its limit
	$\rho: Y_\to \to X^{\ZD}$ as
	$\rho(y)=\lim_{j \to \infty} \rho_{j}(y)$.

	For $r \in \NN$ and $Q \in \mathcal{Q}_r$ let
	\begin{equation}
	A_{r,Q} = \left\{y \in Y_\to:~ d_X^{F_{q(r)}}\left(\rho(y),C_{r,Q}\right) < \frac{1}{2}\epsilon_{q(r)} \right\},
	\end{equation}
	
	and
	\begin{equation}
	A_{r} = \left\{y \in Y_\to:~ d_X^{F_{q(r)}}\left(\rho(y),C_{q(r)}\right) < \frac{1}{2}\epsilon_{q(r)} \right\}.
	\end{equation}
	Then the fact that $C_{q(r)}$ is $(\epsilon_{q(r)},F_{q(r)})$-separated implies that $\{A_{r,Q}\}_{Q \in \mathcal{Q}_r}$ are pairwise disjoint and $A_r = \biguplus_{Q \in \mathcal{Q}_r}A_{r,Q}$.
	Because $\rho:Y_\to \to X^{\ZD}$ is a Borel function it follows that $A_r, A_{r,Q} \subset Y_\to$ are Borel sets for every $r \in\NN$ and $Q \in \mathcal{Q}_r$.
	From \eqref{eq:mu_rho_mu__close_to_C_r} it follows that 
	\begin{equation}
	\lim_{r \to \infty}\mu(A_{r,\mathcal{Q}_r(\mu)} \mid Z_{q(r)})=1 ~\forall ~\mu \in \Prob_e(\Y).
	\end{equation}
	Since $\lim_{r \to \infty} |F_{q(r)}| \cdot \mu(Z_{q(r)})=1$, it follows that $\lim_{r \to \infty} |F_{q(r)}| \cdot \mu(A_{r,\mathcal{Q}_r(\mu)})=1$.
	The marker property \eqref{eq:marker_property} implies that for every $y \in Y$ the set
	$\{\mi \in \ZD:~ \rho(T^{\mi}y) \in A_r \}$ is $(1-\delta_{q(r)})F_{q(r)}$-spaced. 
	This means that for every $\mu \in \Prob_e(\Y)$ there exist $R \in \NN$ so that for every $r >R$
	$$
	\mu\left(A_{r,\mathcal{Q}_r(\mu)} \right) > \max_{Q \in \mathcal{Q}_r \setminus \{\mathcal{Q}_r(\mu)\}}\mu\left(A_{r,Q}\right).
	$$
	Let 
	$$G_r = \left\{y \in Y_\to :~ \udense_{\Y}(y,A_{r,Q_r(\mu_y)}) > \max_{Q \in \mathcal{Q}_r \setminus \{\mathcal{Q}_r(\mu_y)\}}\udense_{\Y}(y,A_{r,Q})\right\}.
	$$
	Then it follows that $G_r \subset Y$ is a Borel set, and that for every $\mu \in \Prob_e(\Y)$ there exists $R$ so that  $\inf_{r \ge R} \mu(G_r) =1$. Note that
	$$
	G_r \cap \mathit{emp}^{-1}(Q) =  \left\{y \in Y_\to :~ \udense_{\Y}(y,A_{r,Q}) > \max_{Q' \in \mathcal{Q}_r \setminus \{Q\}}\udense_{\Y}(y,A_{r,Q'})\right\} \mbox{ for } r \in \NN \mbox{ and } Q \in \mathcal{Q}_r.$$
	Since the sets $A_{r, Q}$ are measurable with respect to $\rho^{-1}\left(\Borel(X)\right)$ this shows that for every $r \in \NN$, the partition $G_r \cap \mathit{emp}^{-1}(\mathcal{Q}_r)$ is also measurable with respect to $\rho^{-1}\left(\Borel(X)\right)$. Further let $$Y_\infty' = \bigcup_{R \in \NN} \bigcap_{r >R}G_r.$$ It follows that if $y_1,y_2 \in Y_\infty' $ and $\mu_{y_1} \ne \mu_{y_2}$ then for some large enough $r$, $\QQ_r(\mu_{y_1})\neq \QQ_r(\mu_{y_2})$ and thus $\rho(y_1) \ne \rho(y_2)$. 
	Clearly $Y_\infty' \subset Y$ 
	is a $T$-invariant Borel subset and $\mu(Y_\infty')=1$ for every $\mu \in \Prob_e(\Y)$. Denote 
	\begin{equation}\label{eq:hat_Y_infty_def}
	\hat Y_\infty = \bigcup_{\mu \in \Prob_e(\Y)}\left(Y_\mu \cap G_\mu(\Y)\right).
	\end{equation}
	Let us check that $\hat Y_\infty$ is a Borel subset of $Y$. The above formula involves a possibly  uncountable union, so this does not directly follow from the observation that $Y_\mu \cap G_\mu(\Y)$ is a Borel subset for every $\mu \in \Prob_e(\Y)$.
	To see that $\hat Y_\infty \in \Borel(Y)$  we can use a similar argument used to show that $G(\Y)= \bigcup_{\mu \in \Prob_e(\Y)}G_\mu(\Y)$ is Borel:
	For $Z \in \Clopen(\Y)$, $n,\tilde n, j \in \NN$   define:
	\begin{equation}
	P(Z,n,\tilde n,j) =
	\left\{\mu \in \Prob_e(\Y):
	{Z}_{j,\mu} = Z,~ n_{j, \mu}=n \mbox{ and } n_{j+1, \mu}=\tilde n\right\}.
	\end{equation}
	Then using \eqref{eq:Y_mu_def} we can rewrite \eqref{eq:hat_Y_infty_def} as follows:
	
	$$\hat Y_\infty = 
	\bigcap_{\mi \in \ZD}T^{-\mi}
	\bigcup_{j=1}^\infty\bigcap_{t \ge j}\bigcup_{n,\tilde n, Z}T^{(1-2\delta_{n})F_{n}} \left(\left(Z \setminus  D_{n,\frac{3}{8}\epsilon_n}[\rho_t,\rho_{t+1}]\right) \cap \mathit{emp}^{-1}(P(Z,n,\tilde n,t)) \right). 
	$$
	The right-most union  is over $Z \in \Clopen(\Y)$ and $n,\tilde n \in \NN$.
	Since all the above unions and intersections are countable, it is clear that $\hat Y_\infty$ is a Borel set. Now let 
	$$Y_\infty = Y'_\infty \cap \hat Y_\infty.$$
	Then $Y_\infty \in \Borel(Y)$ is a $T$-invariant set and $\mu(Y_\infty)=1$ for every $\mu \in \Prob_e(\Y)$. Also if $y_1,y_2 \in Y_\infty$ and 
	$\rho(y_1)=\rho(y_2)$ then there exists $\mu \in \Prob_e(\Y)$ such that $y_1,y_2 \in G_\mu(\Y)$ and so 
	$\rho(y_i) = \rho_\mu(y_i) $ for $i=1,2$. Since $\rho_\mu:Y_\mu \to X$ is injective, it follows that $y_1 =y_2$.
	It follows that $\rho:Y_\infty \to X$ is a Borel embedding.
\end{proof}

\subsection{Completing the proof of Theorem \ref{thm:spec_sequence_implies_univesality}}
Our goal now is to prove Lemma \ref{lem:borel_appproximate_embedding} and Lemma \ref{lem:improve_approx_emb_with_measure_encode} above, in order to complete the proof of Theorem \ref{thm:spec_sequence_implies_univesality}. We will state and prove a few  additional auxiliary lemmas in the process. 
\begin{lem}\label{lem:smallest_Cov_N_is_Borel}
	For $\mu \in \Prob_e(\Y)$, $\epsilon >0$, a finite Borel partition $\mathcal{P}$ and $\theta \in (0,1)$ let
	$N(\mu,\epsilon,\theta,\mathcal{P})$ denote the smallest integer $N \ge 1$ that satisfies the conclusion of Lemma \ref{lem:tower_cov} in the sense that for every $n >N$ \eqref{eq:COV_small1}  holds. Then the function 
	$\mu \mapsto N(\mu,\epsilon,\theta,\mathcal{P})$ is Borel measurable.  Similarly, if $N'(\mu,\epsilon,\theta,\mathcal{P},\mathcal{Q})$ denotes the smallest integer $N \ge 1$ that satisfies the conclusion of Lemma \ref{lem:rel_tower_cov}, then the function $\mu \mapsto  N(\mu,\epsilon,\theta,\mathcal{P})$ is Borel measurable.
\end{lem}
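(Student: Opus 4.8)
The plan is to establish Borel measurability of the threshold functions by giving explicit definitions of the relevant sets in terms of countably many Borel conditions on $\mu$. The key observation is that for each fixed $N$ the event ``$N(\mu,\epsilon,\theta,\mathcal{P}) \le N$'' — equivalently ``for every $n > N$, inequality \eqref{eq:COV_small1} holds'' — can be written as a countable intersection over $n > N$ of sets of the form $\{\mu : \COV_{\mu,\epsilon,\mathcal{P}^{\theta F_n}}(Z_n) < c_n\}$, where $c_n = \exp(|\theta F_n|(h_\mu(Y,T;\mathcal{P})+\epsilon))$ itself depends on $\mu$ through $h_\mu(Y,T;\mathcal{P})$. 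So the proof reduces to two sub-claims: (a) the map $\mu \mapsto h_\mu(Y,T;\mathcal{P})$ is Borel, and (b) for fixed $n$ and fixed threshold the map $\mu \mapsto \COV_{\mu,\epsilon,\mathcal{P}^{\theta F_n}}(A)$ is Borel when $A$ is a fixed clopen (hence Borel) set. Granting (a) and (b), $\{\mu : N(\mu,\cdot) \le N\}$ is Borel, and since $N(\mu,\cdot)$ is the least such $N$, the function $\mu \mapsto N(\mu,\cdot)$ takes the value $N$ precisely on the Borel difference $\{N(\mu)\le N\}\setminus\{N(\mu)\le N-1\}$; a function with Borel level sets (indexed by $\NN$) is Borel.

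For sub-claim (b): $\COV_{\mu,\epsilon,\mathcal{P}^{\theta F_n}}(A)$ is by definition the minimum of $|\mathcal{G}|$ over subfamilies $\mathcal{G} \subset \mathcal{P}^{\theta F_n}$ with $\mu(\bigcup\mathcal{G}\cap A) \ge (1-\epsilon)\mu(A)$. Since $\mathcal{P}^{\theta F_n}$ is a \emph{fixed finite} partition (as $\mathcal{P}$ is a fixed finite Borel partition), there are only finitely many subfamilies $\mathcal{G}$, and for each one the condition $\mu(\bigcup\mathcal{G}\cap A) \ge (1-\epsilon)\mu(A)$ is a Borel condition on $\mu$ because $\mu \mapsto \mu(B)$ is Borel for each fixed Borel set $B$ (the sets $\bigcup\mathcal{G}\cap A$ and $A$ being fixed Borel sets). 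Hence $\{\mu : \COV_{\mu,\epsilon,\mathcal{P}^{\theta F_n}}(A) \le m\}$ is a finite union of Borel sets for each integer $m$, so the covering number is a Borel function of $\mu$, and so is the $\mu$-dependent threshold $c_n$ (using that $t \mapsto \exp(|\theta F_n| t)$ is continuous and, granting (a), $\mu\mapsto c_n$ is Borel). The set $\{\mu : \COV_{\mu,\epsilon,\mathcal{P}^{\theta F_n}}(Z_n) < c_n\}$ is then $\bigcup_{m,\, q}\big(\{\COV(\cdot)\le m\} \cap \{c_n > q\}\big)$ over rationals $q \ge m$, which is Borel.

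For sub-claim (a): $h_\mu(Y,T;\mathcal{P}) = \inf_n \frac{1}{|F_n|}H_\mu(\mathcal{P}^{F_n})$, and for each fixed $n$ the finite partition $\mathcal{P}^{F_n}$ has finitely many atoms $B_1,\dots,B_r$, so $H_\mu(\mathcal{P}^{F_n}) = \sum_i \phi(\mu(B_i))$ where $\phi(t) = -t\log t$ is continuous on $[0,1]$ and each $\mu \mapsto \mu(B_i)$ is Borel; hence $\mu \mapsto H_\mu(\mathcal{P}^{F_n})$ is Borel, and the countable infimum over $n$ is Borel. The same argument handles the relative quantities in Lemma \ref{lem:rel_tower_cov}: the relative covering number $\COV_{\mu,\epsilon,\mathcal{P}^{\theta F_n}\mid\mathcal{Q}^{\theta F_n}}(Z_n)$ is a minimum/maximum over finitely many pairs of subfamilies of fixed finite partitions of quantities built from $\mu(B)$ for fixed Borel $B$, hence Borel in $\mu$; and the relative threshold involves $h_\mu(Y,T;\mathcal{P}\mid\mathcal{Q}^{\ZD})$, which equals $\inf_n\frac{1}{|F_n|}H_\mu(\mathcal{P}^{F_n}\mid\mathcal{Q}^{\ZD})$ and, by a standard argument, also equals $\inf_{n}\inf_{m}\frac{1}{|F_n|}(H_\mu(\mathcal{P}^{F_n}\vee\mathcal{Q}^{F_m}) - H_\mu(\mathcal{Q}^{F_m}))$ — a countable infimum of Borel functions of $\mu$ by the same finite-partition reasoning. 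Putting these together yields Borel measurability of $\mu\mapsto N'(\mu,\epsilon,\theta,\mathcal{P},\mathcal{Q})$ exactly as before.

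The main obstacle — really the only subtle point — is the measurability of $\mu \mapsto h_\mu(Y,T;\mathcal{P}\mid\mathcal{Q}^{\ZD})$: one must make sure the relative Kolmogorov--Sinai entropy relative to the \emph{infinite} $\sigma$-algebra $\mathcal{Q}^{\ZD}$ can be expressed as a countable infimum (or supremum) of Borel functions of $\mu$, rather than only as a limit along $F_n$ of conditional entropies given an infinite $\sigma$-algebra (which a priori are not obviously Borel in $\mu$). This is resolved by the classical identity expressing $h_\mu(Y,T;\mathcal{P}\mid\mathcal{Q}^{\ZD})$ as the decreasing limit, hence infimum, of $\frac{1}{|F_n|}H_\mu(\mathcal{P}^{F_n}\mid\mathcal{Q}^{F_m})$ as $m\to\infty$ and then over $n$, each term of which involves only finitely many atoms and is therefore Borel in $\mu$; everything else is the routine bookkeeping of countable unions and intersections sketched above.
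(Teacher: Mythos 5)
Your proof is correct and takes the same route as the paper's: the paper's entire argument is the single observation that for fixed $m$ the set $\{\mu : \COV_{\mu,\epsilon,\mathcal{P}}(A)\le m\}$ is cut out by a finite system of inequalities on the measures of atoms of a fixed finite partition, from which the Borel measurability of $N(\mu,\epsilon,\theta,\mathcal{P})$ is said to "follow directly." Your write-up supplies exactly what that "directly" hides — most importantly the Borel measurability of $\mu\mapsto h_\mu(Y,T;\mathcal{P})$ and of $\mu\mapsto h_\mu(Y,T;\mathcal{P}\mid\mathcal{Q}^{\ZD})$, which is genuinely needed because the right-hand sides of \eqref{eq:COV_small1} and \eqref{eq:rel_COV_small1} depend on $\mu$, and your reduction of the conditional entropy given the infinite $\sigma$-algebra $\mathcal{Q}^{\ZD}$ to a countable infimum of finite-partition entropies via martingale convergence is the standard and correct way to handle that point.
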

\begin{proof}
	
	For any finite Borel partition $\mathcal{P}$, any Borel $A \subset \Y$  and any $m \in \NN$, the set of  $\mu \in \Prob_e(\Y)$ such that $\COV_{\mu,\epsilon,\mathcal{P}}(A) \le m$  is described by a finite system of inequalities on measures of atoms of the partition $\mathcal{P}$ intersected with $A$. This shows that   
	the function $\mu \mapsto \COV_{\mu,\epsilon,\mathcal{P}}(A)$ is a Borel function. From this it follows directly that  the function 
	$\mu \mapsto N(\mu,\epsilon,\theta,\mathcal{P})$ is also a Borel function. The proof of the second statement is similar.   
\end{proof}

\begin{proof}[Proof of Lemma \ref{lem:borel_appproximate_embedding}]
	The proof of Lemma \ref{lem:approximate_embedding_exists} implicitly describes functions $N_0$, $\Phi_0$ and $\overline{Z}$ as in the statement of Lemma \ref{lem:borel_appproximate_embedding}. The idea is to make sure that various arbitrary choices in the proof of Lemma \ref{lem:approximate_embedding_exists} can be specified  in a ``Borel measurable manner''. Here are the details:
	Define $N_0 : \Prob_e(\Y) \times \NN \times (0,1) \to \NN$ by
	$$N_0(\mu,k,\epsilon) = \min \left\{n\in \NN:~ \COV_{\mu,\epsilon,\mathcal{P}_k^{F_n}}(Z_n) < |C_n| \right\}.$$
	Because $$h(\C) > h(Y,T) \ge h_\mu(Y,T; \mathcal{P}_k),$$ it follows from Lemma \ref{lem:tower_cov} that $N_0(\mu,k,\epsilon)$ is well defined and from Lemma \ref{lem:smallest_Cov_N_is_Borel} above it follows that $N_0 : \Prob_e(\Y) \times \NN \times (0,1) \to \NN$ is Borel measurable.
	
	For every $k,n \in \NN$ choose an enumeration of the elements of $\mathcal{P}_k^{F_n}$
	$$F_{k,n}:\mathcal{P}_k^{F_n} \to \{1,\ldots,|\mathcal{P}_k^{F_n}|\}.$$
	For every  $\mu \in \Prob_e(\Y)$, $k,n \in \NN$ let $<_{\mu,k,n}$ be the linear order on $\mathcal{P}_k^{F_n}$ defined by
	$$P <_{\mu,k,n} Q \mbox{ iff } \left(\mu(P \mid Z_n) > \mu(Q \mid Z_n) \mbox{ or } F_{k,n}(P) < F_{k,n}(Q) \mbox{ and } \mu(P \mid Z_n) = \mu(Q \mid Z_n)\right).$$
	Let 
	$$F_{\mu,k,n}:\mathcal{P}_k^{F_n} \to \{1,\ldots,|\mathcal{P}_k^{F_n}|\}$$
	be the enumeration of the elements of $\mathcal{P}_k^{F_n}$ according to the linear order $<_{\mu,k,n}$. It is routine to check that  the map  $(\mu,k,n) \mapsto F_{\mu,k,n}$ is also Borel measurable.
	
	Given $\mu \in \Prob_e(\Y)$, $k,n \in \NN$ such that $n > N_0(\mu,k,\epsilon)$, define $\mathcal{G} \subset \mathcal{P}_k^{F_n}$ by
	$$\mathcal{G} = F_{\mu,k,n}^{-1}(\{1,\ldots,\min\{|\mathcal{P}_k^{F_n}|,|C_n|-1\}\}).$$
	Then by construction of $N_0(\mu,k,\epsilon)$, as in the proof of Lemma \ref{lem:approximate_embedding_exists}, 	$\mathcal{G}$ satisfies \eqref{eq:mu_G_big} and \eqref{eq:size_G_small}. 
	
	We specify
	$$ \overline{Z}(\mu,k,\epsilon,n) = \bigcup_{P \in \mathcal{G}}P \cap Z_n = \bigcup_{P \in F_{\mu,k,n}^{-1}(\{1,\ldots,|\G|\})}P \cap Z_n.$$
	It follows that $\overline{Z}: \Prob_e(\Y) \times \NN \times (0,1) \times  \NN \to \Clopen(Y)$ is Borel measurable. By \eqref{eq:mu_G_big} 
	$$ \mu \left( \overline{Z}(\mu,k,\epsilon,n) \right) > (1-\epsilon)\mu(Z_n).$$
	Choose some fixed enumeration 
	$$C_n = \{x_1,\ldots,x_{|C_n|}\}.$$
	Let $\Phi_{\mu,k,n}:\mathcal{P}_k^{F_n} \to C_n$ be given by
	$$\Phi_{\mu,k,n}(P) = \begin{cases}
	x_{F_{\mu,k,n}(P)} & \mbox { if } F_{\mu,k,n}(P) \le |C_n|\\
	x_{|C_n|} & \mbox{ otherwise}.
	\end{cases}
	$$
	Then by \eqref{eq:size_G_small}, $\Phi_{\mu,k,n}$ is injective on $\mathcal{G}$.
	Let $\phi: Y \to X$ be given by
	$$\phi(y)=\Phi_{\mu,k,n}(\mathcal{P}_k^{F_n}(y)).$$
	Then it follows that $\phi \in C(Y,X)$ and that
	$\rho_{\phi,n} \in \Mor_C(\Y,\X)$ is a $(k,n,\epsilon,\mu)$-approximate embedding,
	and we can choose
	$$Z[\rho_{\phi,n}] = \overline{Z}(\mu,k,\epsilon,n).$$
	In this case define: 	$\Phi_0(\mu,k,\epsilon,n)=\rho_{\phi,n}$ . 
	If $n \le N_0(\mu,k,\epsilon)$ we can arbitrarily define  $\Phi_0(\mu,k,\epsilon,n)=\rho_0$ for some fixed $\rho_0 \in \Mor_C(\Y,\X)$.
 This clearly  defines a Borel measurable function  $\Phi_0: \Prob_e(\Y) \times \NN \times (0,1) \times  \NN \to \Mor_C(\Y, \X)$ that satisfies the statement of the lemma.
\end{proof}

We now introduce some definitions that will be needed for a   technical ``Borel'' version of Lemma  \ref{lem:COV_rel_to_approximate_embedding}: Given $\gamma \in [0,1)$, $\delta,\eta >0$ and finite Borel partitions $\mathcal{P}$ and $\mathcal{Q}$ of $Y$ let
\begin{equation}\label{eq:N_C_def}
\tilde{N}_C \left(\mu,\gamma,\delta,\eta,\mathcal{P},\mathcal{Q}\right)=
\inf \left\{N \in \NN:~ \forall n >N, ~ 
\COV_{\mu,\delta,\mathcal{P}^{F_n \setminus \gamma F_n} \mid \mathcal{Q}^{(1-\delta_n)F_n \setminus \gamma F_n}}(Z_n) < e^{\eta |F_n|}
\right\}. 
\end{equation}\index{Definitions and notation introduced in Section 7!$\tilde{N}_C$}
Depending on the  parameters $\tilde{N}_C \left(\mu,\gamma,\delta,\eta,\mathcal{P},\mathcal{Q}\right)$ could be a finite natural number or $+\infty$.
For $A \in \Borel(Y)$ and an $n_0$-towerable map $\rho \in \Mor(\Y,\X)$ let
\begin{equation}\label{eq:N_U_def}
\tilde{N}_U\left(\mu,\gamma,\delta,\eta,\mathcal{P},\rho,A\right)= \sup  \left\{\tilde{N}_C(\mu,\gamma,\delta,\eta,\mathcal{P},\mathcal{P}_{\hat \rho}) :~ \hat{\rho} \in \Mor(\Y,\X) \mbox{ is symbolic and } D_{n_0,\frac{1}{2}\epsilon_{n_0}}[\rho,\hat{\rho}]\subseteq A\right\}.
\end{equation}\index{Definitions and notation introduced in Section 7!$\tilde{N}_U$}
Roughly speaking, in Lemma \ref{lem:COV_rel_to_approximate_embedding} we showed that given $\eta>0$ and $k_0$, if $\rho$ is a $(k_0, n_0, \frac{1}{N_0}, \mu)$-approximate embedding for large enough $n_0$ and $N_0$ then the log of the approximate covering number of $\mathcal P_{k_0}^{F_n}$ relative to $\mathcal P_\rho^{F_n}$ is an $\eta$ fraction of $|F_n|$ provided $n$ is large enough. In the course of our proof, we found that we needed $n_0, N_0> \overline{N}_R(\eta, k_0)$ where $\overline{N}_R$ is a Borel function. We point this out more explicitly in the following lemma with some additional technicalities coming in because we replace the map $\rho$ with a map $\tilde \rho$ which is close to $\rho$ outside a  set $A$ of small measure. 

\begin{lem}\label{lem:COV_rel_to_approximate_embedding_borel}
	There exists a Borel function 
	$$  \overline{N}_R: (0,1) \times \NN \to \NN$$
	so that for every  $\eta >0$, 
	and $k_0,n_0\in \NN$ such that $n_0 >\overline{N}_R(\eta,k_0)$, $\mu \in \Prob_e(\Y)$ and  
	any Borel set  $A \subset Y$ such that
	$$\mu(A \mid Z_{n_0})< \frac{1}{\overline{N}_R(\eta,k_0)},$$
any  $(k_0,n_0, \frac{1}{\overline{N}_R(\eta,k_0)},\mu)$-approximate embedding $\rho \in \Mor(\Y,\X)$,  $\gamma \in [0,1)$ and  $\delta >0$ we have
	$$ \tilde{N}_U(\mu,\gamma,\delta,\eta, \mathcal{P}_{k_0},\rho,A) < +\infty.$$
	Equivalently:
	There exists $N \in \NN$ so that for every $n >N$ and every  $\tilde \rho \in \Mor(\Y,\X)$ that satisfies $D_{n_0,\frac{1}{2}\epsilon_{n_0}}[\rho,\tilde \rho]  \subset A$ we have
	\begin{equation}\label{eq:COV_P_k_cond_tilde_rho_small}
	\COV_{\mu,\delta,\mathcal{P}_{k_0}^{F_n \setminus \gamma F_n }\mid \mathcal{P}_{\tilde \rho}^{(1-2\delta_n)F_n \setminus \gamma F_n}}(Z_n) < e^{ \eta |F_n|}.
	\end{equation}
\end{lem}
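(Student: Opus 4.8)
The statement is the ``Borel-measurability-in-$\mu$'' upgrade of Lemma~\ref{lem:COV_rel_to_approximate_embedding}. The plan is to re-examine the proof of Lemma~\ref{lem:COV_rel_to_approximate_embedding} and check that the quantity $\overline{N}_R(\eta,k_0)$ constructed there (via \eqref{eq:N_r_def}) already works, after two modifications: first, it must absorb the dependence on $\mu$ through the conditional entropy $h_\mu(Y,T;\mathcal{P}_{k_0}\mid \mathcal{Q}^{\ZD})$, which is handled because the estimate $\COV_{\mu,\delta,\mathcal{P}_{k_0}^{F_n}\mid\mathcal{P}_\rho^{F_n}}(Z_n)<e^{\eta|F_n|}$ in \eqref{eq:COV_P_k_cond_rho_small} was proved \emph{without} any reference to the actual value of that entropy --- only the combinatorial bound \eqref{eq:F_n_choose_epsilon_small} together with the Shannon--McMillan tower estimate (Lemma~\ref{lem:mean ET for towers} / Proposition~\ref{prop:measure_SM_for_towers}) and Lemma~\ref{lem:approximate_inverese} were used. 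Hence the same $\overline{N}_R(\eta,k_0)=\inf\{N_0:\H(\tilde\epsilon_{N_0})+(\tilde\epsilon_{N_0}+12d\delta_{N_0})\log|\mathcal{P}_{k_0}|<\eta\}$ serves uniformly in $\mu$. Second, the hypothesis in Lemma~\ref{lem:COV_rel_to_approximate_embedding} that $\rho$ be a $(k_0,n_0,\tfrac1{N_0},\mu)$-approximate embedding is here weakened: $\rho$ is a $(k_0,n_0,\tfrac1{N_0},\mu)$-approximate embedding but the relevant map is $\tilde\rho$ with $D_{n_0,\frac12\epsilon_{n_0}}[\rho,\tilde\rho]\subset A$ for a set $A$ of small conditional measure. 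This is exactly the situation Lemma~\ref{lem:approximate_inverese} was designed for.

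\textbf{Key steps.} First I would recall the definition of $\overline{N}_R$ from \eqref{eq:N_r_def} and set it as the claimed Borel function; since it depends only on $\eta$ and $k_0$ (not on $\mu$), its Borel measurability as a map $(0,1)\times\NN\to\NN$ is trivial. Then, fixing $\eta>0$, $k_0,n_0$ with $n_0>\overline{N}_R(\eta,k_0)$, $\mu\in\Prob_e(\Y)$, a Borel set $A$ with $\mu(A\mid Z_{n_0})<1/\overline{N}_R(\eta,k_0)$, a $(k_0,n_0,1/\overline{N}_R(\eta,k_0),\mu)$-approximate embedding $\rho$, $\gamma\in[0,1)$ and $\delta>0$, I would pick an arbitrary symbolic $\tilde\rho\in\Mor(\Y,\X)$ with $D_{n_0,\frac12\epsilon_{n_0}}[\rho,\tilde\rho]\subset A$ and run the argument of Lemma~\ref{lem:COV_rel_to_approximate_embedding} verbatim but with the set $G_{n_0}=T^{(1-2\delta_{n_0})F_{n_0}}(Z[\rho]\setminus D_{n_0,\frac12\epsilon_{n_0}}[\rho,\tilde\rho])$ in place of $T^{(1-2\delta_{n_0})F_{n_0}}(Z[\rho])$. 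By Lemma~\ref{lem:approximate_inverese}, on $G_{n_0}$ the value $\mathcal{P}_{k_0}(y)$ is determined by $\tilde\rho(y)\mid_{F_{2n_0}}$, so $\COV_{\mu,0,\mathcal{P}_{k_0}\mid\mathcal{P}_{\tilde\rho}^{F_{2n_0}}}(G_{n_0})=1$. The estimate $\mu(Y\setminus G_{n_0})<\tilde\epsilon_{N_0}$ follows from Lemma~\ref{lem:mu_tower_subset} exactly as before, now using both $\mu(Z_{n_0}\setminus Z[\rho]\mid Z_{n_0})<1/N_0$ and $\mu(A\mid Z_{n_0})<1/N_0$ (which only changes the constant $\tilde\epsilon_{N_0}$ by a harmless factor absorbed into the definition, or one simply redefines $\tilde\epsilon_{N_0}$ with $\tfrac3{N_0}$ in place of $\tfrac2{N_0}$ --- the limit as $N_0\to\infty$ is unaffected). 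From here the chain of inequalities \eqref{eq:cov_S_n}--\eqref{equation: covering is good} goes through unchanged, yielding \eqref{eq:COV_P_k_cond_tilde_rho_small} for all $n>N$ where $N$ is produced by the mean ergodic theorem for towers (Proposition~\ref{prop:measure_SM_for_towers}) applied to $f=1_{G_{n_0}}$. Taking the supremum over all such $\tilde\rho$ gives $\tilde{N}_U(\mu,\gamma,\delta,\eta,\mathcal{P}_{k_0},\rho,A)\le N<\infty$, which is the ``equivalently'' reformulation; note the $N$ here depends on $\mu,\gamma,\delta,\rho,A$ but finiteness is all that is asserted.

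\textbf{Main obstacle.} The only genuinely new point relative to Lemma~\ref{lem:COV_rel_to_approximate_embedding} is the uniformity of $\overline{N}_R$ in $\mu$, and as noted this is essentially automatic once one observes that the original proof never invoked the numerical value of the conditional entropy --- the Shannon--McMillan theorem for towers supplies whatever entropy is present, and the covering bound was derived purely from the combinatorial count of ``bad'' coordinates. The mild technical nuisance is the bookkeeping in the $\tilde\epsilon_{N_0}$ constant when the exceptional set is $Z_{n_0}\setminus Z[\rho]$ \emph{together with} $A$ rather than just the former; I expect this to be a one-line adjustment to \eqref{eq:t_epsilon_def}. I would also need to double-check that the hypothesis $n_0>\overline{N}_R(\eta,k_0)$ (rather than $n_0>N_0$ for the composite $N_0$ of \eqref{eq:N_0_def}) suffices --- it does, because in the present lemma $\overline{N}_R$ plays the sole role of controlling the $\H(\tilde\epsilon_{n_0})$ term, the other constituents of \eqref{eq:N_0_def} ($\overline{N}_S$, $K_0$) being irrelevant to the relative-covering estimate itself.
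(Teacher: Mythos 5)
Your overall strategy is the paper's: keep $\overline{N}_R$ exactly as in \eqref{eq:N_r_def} (so Borel measurability and uniformity in $\mu$ are free, since the relative covering bound never used the numerical value of the conditional entropy), enlarge the exceptional set inside $Z_{n_0}$ to account for $A$, and invoke Lemma \ref{lem:approximate_inverese} to get $\COV_{\mu,0,\mathcal{P}_{k_0}\mid \mathcal{P}_{\tilde\rho}^{F_{2n_0}}}(G_{n_0})=1$. The bookkeeping remarks about $\tilde\epsilon_{N_0}$ and about $n_0>\overline{N}_R(\eta,k_0)$ sufficing are also correct.

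There is, however, one genuine gap, and it sits exactly at the one point where this lemma differs from Lemma \ref{lem:COV_rel_to_approximate_embedding}. You define $G_{n_0}=T^{(1-2\delta_{n_0})F_{n_0}}\bigl(Z[\rho]\setminus D_{n_0,\frac12\epsilon_{n_0}}[\rho,\tilde\rho]\bigr)$, which depends on $\tilde\rho$. The threshold $N$ you then extract comes from applying Proposition \ref{prop:measure_SM_for_towers}(ii) to $f=1_{G_{n_0}}$ (to get \eqref{eq:mu_A_epsilon_n_big} for the set $A_{N_0,n,n_0}$ built from $G_{n_0}$), and that proposition gives an $N$ depending on $f$, hence on $\tilde\rho$. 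The quantity $\tilde{N}_U$ is by definition a supremum over the (uncountable) family of all symbolic $\tilde\rho$ with $D_{n_0,\frac12\epsilon_{n_0}}[\rho,\tilde\rho]\subseteq A$, so a supremum of finite, $\tilde\rho$-dependent thresholds need not be finite; your closing step ``taking the supremum over all such $\tilde\rho$ gives $\tilde{N}_U\le N$'' does not follow. The fix is exactly the point of the lemma: set $G_{n_0}=T^{(1-2\delta_{n_0})F_{n_0}}\bigl(Z[\rho]\setminus A\bigr)$, which is independent of $\tilde\rho$. Since $D_{n_0,\frac12\epsilon_{n_0}}[\rho,\tilde\rho]\subseteq A$ implies $Z[\rho]\setminus A\subseteq Z[\rho]\setminus D_{n_0,\frac12\epsilon_{n_0}}[\rho,\tilde\rho]$, Lemma \ref{lem:approximate_inverese} still recovers $\mathcal{P}_{k_0}(y)$ from $\tilde\rho(y)\mid_{F_{2n_0}}$ on this smaller set, the measure estimate $\mu(Y\setminus G_{n_0})<\tilde\epsilon_{N_0}$ uses $\mu(A\mid Z_{n_0})<1/N_0$ exactly as you describe, and now the single threshold $N$ produced by the ergodic theorem for towers works simultaneously for every admissible $\tilde\rho$, which is what the finiteness of $\tilde{N}_U$ asserts.
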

\begin{proof} 
Although the statement of this lemma is a bit more complicated compared to that of Lemma \ref{lem:COV_rel_to_approximate_embedding}, the proof is almost identical. We explain the requirement modifications.
As in the proof of Lemma \ref{lem:COV_rel_to_approximate_embedding},
	define $\overline{N}_R: (0,1) \times \NN \to \NN$  by \eqref{eq:N_r_def},	for $\eta \in (0,1)$ and $k_0 \in \NN$.
	It is clear that  $\overline{N}_R: (0,1) \times \NN \to \NN$ is a Borel function.
	 Fix $\eta \in (0,1)$ and $k_0 \in \NN$. Denote $N_0 = \overline{N}_R(\eta,k_0)$. 
	For $n_0>N_0$, let $A \subset Y$ with $\mu(A \mid Z_{n_0})  < \frac{1}{N_0}$,  $\rho \in \Mor(\Y,\X)$ and $\gamma \in [0,1)$ be as in the statement above and $\delta >0$.

	In contrast to the proof of Lemma \ref{lem:COV_rel_to_approximate_embedding} where $G_{n_0}$ has been defined by \eqref{eq:G_n_rho_def}, this time we define $G_{n_0} \subset Y$ by 
	\begin{equation}\label{G_n_rho_tilde_rho_def}
	G_{n_0} = T^{(1-2\delta_{n_0})F_{n_0}}\left(Z[\rho] \setminus  A\right).
	\end{equation}
	Then 
	$$\mu(Z_{n_0}\setminus (Z[\rho] \setminus A) \mid Z_{n_0}) <  \mu(Z_{n_0} \setminus Z[\rho] \mid  Z_{n_0}) + \mu(A \mid Z_{n_0}) <  \frac{2}{N_0},$$
	so by Lemma \ref{lem:mu_tower_subset}
	$$\mu(Y \setminus G_{n_0})   <  \epsilon_{n_0} +  \left(1-\frac{|(1-2\delta_{n_0})F_{n_0}|}{|(1+\delta_{n_0})F_{n_0}|}\right)+ \frac{2}{N_0}.$$ 
	In particular, by definition of $\t \epsilon_{N_0}$ in \eqref{eq:t_epsilon_def}, it follows that  $\mu(Y \setminus G_{n_0}) < \t \epsilon_{N_0}$.
		Let $\tilde \rho \in \Mor(\Y,\X)$ satisfy  $D_{n_0}[\rho,\tilde \rho] \subset A$.
		By Lemma \ref{lem:approximate_inverese}, for every $y \in G_{n_0}$, the value of	$\mathcal{P}_{k_0}(y)$ is determined by $\rho (y)\mid_{F_{2n_0}}$.
		This means that 
		$$\COV_{\mu,0,\mathcal{P}_{k_0} \mid \mathcal{P}_{\tilde \rho}^{F_{2n_0}}}(G_{n_0}) =1.$$
		
		From here we proceed exactly as in the proof of Lemma \ref{lem:COV_rel_to_approximate_embedding}, replacing $\rho$ by $\t\rho$ throughout the proof.
\end{proof}

 The following is a technical refinement of Lemma   \ref{lem:improve_approx_emb}, an auxiliary result towards Lemma \ref{lem:improve_approx_emb_with_measure_encode}.

\begin{lem}\label{lem:improve_approx_emb_robust}
	Suppose $(E_m)_{m=1}^\infty$ is a sequence of Borel subsets $E_m \in \Borel(Y)$ and $E_m \subseteq Z_m$ so that 
	\begin{equation}\label{eq:lim_ucap_E_m}
	\forall \mu \in \Prob_e(\Y),~ \lim_{m \to \infty}\mu(E_m \mid Z_m) =0.
	\end{equation}
	Then there exists Borel functions 
		$$\overline{k}_0:\Prob_e(\Y)\times (0,1) \to \NN,$$
    $$  \overline{N}_0:\Prob_e(\Y)\times (0,1) \times \NN \to \NN,$$
	$$\tilde{N}:\Prob_e(\Y)\times (0,1)  \times \Mor_C(\Y,\X) \times (0,1) \times \NN \to \mathbb{N},$$ 
	$$\tilde\Phi:\Prob_e(\Y)\times \NN \times \NN \times (0,1) \times (0,1) \times \Mor_C(\Y,\X) \to \Mor_C(\Y,\X)$$ 
	and
	$$\tilde{Z}:\Prob_e(\Y)\times \NN \times \NN \times (0,1) \times (0,1) \times \Mor_C(\Y,\X) \to  \Clopen(Y),$$
	so that the  following holds:	
	Suppose that $\mu \in \Prob_e(\Y)$, $\delta_0, \gamma\in (0,1)$, 
	and that $\rho \in \Mor_C(\Y,\X)$ is a continuous $(k_0,n_0,\delta_0,\mu)$-approximate embedding where
	$$ k_0 \ge \overline{k}_0(\mu,\gamma),$$
	\begin{equation}\label{eq:n_0_k_0_big2}
	n_0 \ge {\overline{N}_0(\mu,\gamma,k_0)},
	\end{equation}
	{ $$ \delta_0 < \frac{1}{\overline{N}_0(\mu,\gamma,k_0)},$$}
	that $\hat \rho \in \Mor_C(\Y,X)$ is $\hat n$-towerable for some 
	$\hat{n} \in \NN$ so that $n_0 \le \hat{n} \ll n$ and satisfies 
	\begin{equation}\label{eq:D_n_0_rho_hat_rho}
	D_{n_0,\frac{1}{2}\epsilon_{n_0}}[\rho,\hat \rho] \subseteq E_{n_0}.
	\end{equation}
	Fix $\delta\in(0,1)$, $k\in \N$ and let
	\begin{equation}\label{eq:n_big_tilde_N}
	n > \tilde N(\mu,\gamma,\rho,\delta,k).
	\end{equation}
	Denote 
	$$\tilde \rho  = \tilde \Phi(\mu,k,n,\gamma,\delta,\hat \rho) \in \Mor_C(\Y,\X).$$
	Then
	\begin{equation}\label{eq:tilde_phi_approx_emb}
	\tilde \rho \mbox{ is a }  (k,n,\delta,\mu) \mbox{-approximate embedding},
	\end{equation}
	and we can take
	\begin{equation}
	Z[\tilde 	\rho] = \tilde Z(\mu,k,n,\gamma,\delta,\hat \rho)
	\end{equation}
	and
	\begin{equation}\label{eq:tilde_rho_traces_rho1}
	D_{\hat n,\frac{3}{8}\epsilon_{\hat n}}[\hat \rho,\tilde  \rho] \subset  Y \setminus \left(T^{(1-2\delta_n)F_n \setminus \gamma F_n} Z_n \right).
	\end{equation}
\end{lem}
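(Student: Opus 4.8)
The statement is essentially a ``Borel-parametrized'' repackaging of Lemma~\ref{lem:improve_approx_emb} (and its proof), together with the robustness feature that we feed in a map $\hat\rho$ which is only close to a genuine approximate embedding $\rho$ outside the small set $E_{n_0}$. So the plan is to replay the proof of Lemma~\ref{lem:improve_approx_emb} essentially verbatim, substituting $\hat\rho$ for $\rho$ in the construction of $\tilde\rho$ while still using $\rho$ (via the hypothesis \eqref{eq:D_n_0_rho_hat_rho} and Lemma~\ref{lem:COV_rel_to_approximate_embedding_borel} instead of Lemma~\ref{lem:COV_rel_to_approximate_embedding}) to control covering numbers; the real work is then just to check that every ``arbitrary choice'' made along the way can be made in a Borel-measurable fashion in the parameters $(\mu,k,n,\gamma,\delta,\rho)$, exactly as was done in the proof of Lemma~\ref{lem:borel_appproximate_embedding}.

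First I would define the auxiliary functions. Set $\overline{k}_0(\mu,\gamma) = \overline{k_0}(\mu,\gamma)$ via \eqref{eq:k_0_def} (this is Borel by the remark after Lemma~\ref{lem:smallest_Cov_N_is_Borel}-type arguments, since $h_\mu(\Y\mid\mathcal{P}_k^{\ZD})$ depends measurably on $\mu$). Set
$$\overline{N}_0(\mu,\gamma,k_0)=\max\left\{\overline{N}_S(\alpha(\gamma)^d\hat h(\mu)),\ \overline{N}_R(\alpha(\gamma)/4,k_0),\ K_0\right\},$$
where $\hat h(\mu)\in(h_\mu(\Y),h(\C))$ is chosen in a Borel way (e.g.\ the midpoint), $\alpha,\beta$ are as in Lemma~\ref{lem:C_gamma_n}, and $\overline{N}_R$ is the Borel function from Lemma~\ref{lem:COV_rel_to_approximate_embedding_borel}; this matches \eqref{eq:N_0_def}. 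The hypothesis \eqref{eq:D_n_0_rho_hat_rho} is precisely what lets Lemma~\ref{lem:COV_rel_to_approximate_embedding_borel} apply with $A=E_{n_0}$ (note $\mu(E_{n_0}\mid Z_{n_0})\to 0$ by \eqref{eq:lim_ucap_E_m}, so after possibly enlarging $\overline{N}_0$ we have $\mu(E_{n_0}\mid Z_{n_0})<1/\overline{N}_R$), giving a finite $\tilde N_U(\mu,\gamma,\delta',\alpha(\gamma)/4,\mathcal{P}_{k_0},\rho,E_{n_0})$ uniformly valid for any $\hat\rho$ with $D_{n_0,\frac12\epsilon_{n_0}}[\rho,\hat\rho]\subset E_{n_0}$. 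Then $\tilde N(\mu,\gamma,\rho,\delta,k)$ is defined as the smallest $N$ past which $N\gg n_0$, \eqref{eq:COV_P_k_0_gamma_F_n}, \eqref{eq:rel_COV_P_k_F_n_P_k_0_F_n_Z_n}, \eqref{eq:epsilon_n_smaller_eta} and the output of Lemma~\ref{lem:COV_rel_to_approximate_embedding_borel} all hold; Borel measurability of $\tilde N$ follows from Lemma~\ref{lem:smallest_Cov_N_is_Borel} and the observation that $\COV_{\mu,\cdot,\cdot}(\cdot)$ and $D_{n,\epsilon}[\cdot,\cdot]$ are Borel in $(\mu,\rho)$.

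Next I would carry out the construction of $\tilde\rho$. This is the body of the proof of Lemma~\ref{lem:improve_approx_emb}: using \eqref{equation: covering is good} (derived from Lemma~\ref{lem:COV_rel_to_approximate_embedding_borel} and the covering inequalities of Lemma~\ref{lem:COV_submultiplicative}) we obtain $\hat X_n\subset X^{(1-2\delta_n)F_n\setminus\gamma F_n}$ and sets $\mathcal{G}_w\subset\mathcal{P}_k^{F_n}$ of size $<|C_{n_0}^{K_{n,n_0,\beta(\gamma)}}|$, then define $\Phi_w,\Phi_b$ injectively (fixing a Borel rule for the ordering of atoms and the enumeration of $C_{n_0}$, exactly as in the proof of Lemma~\ref{lem:borel_appproximate_embedding}), build $\overline W_y\in C_{n_0}^{\overline K_y}$ by \eqref{eq:overline_W_y_def} but with $\hat\rho$ in place of $\rho$ in the first branch and with $\Phi_{\hat\rho(y)\mid\cdots}$ in the third branch, and set $\tilde\phi(y)=\Ext_n(\overline W_y)$, $\tilde\rho=\rho_{\tilde\phi,n}$. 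Since $\hat\rho$ is $\hat n$-towerable and $n_0\le\hat n$, the same ``read-off'' argument (using that $C_{n_0}$ is $(\epsilon_{n_0},F_{n_0})$-separated and the marker property) recovers $\Upsilon_{n,n_0}(y)$, then $\hat\rho(y)\mid_{(1-2\delta_n)F_n\setminus\gamma F_n}$, then $\mathcal{P}_k^{F_n}(y)$ on a set $Y_0=Z[\tilde\rho]$ of relative measure $>1-\delta$, establishing \eqref{eq:tilde_phi_approx_emb}; and \eqref{eq:tilde_rho_traces_rho1} holds by the same triangle-inequality computation as \eqref{eq:tilde_rho_traces_rho}, now at scale $\hat n$ rather than $n_0$ (which is fine since $\hat n\ge n_0$, $F_{\hat n}\supseteq F_{n_0}$, and $\hat\rho$ is $\hat n$-towerable so its orbit segment on $F_{\hat n}$ is literally an orbit of an element of $C_{\hat n}$, while $\tilde\rho$ agrees with $\hat\rho$ on $F_{n_0}$-windows away from $\gamma F_n$ and near the boundary---the constant $\frac14$ upgrades to $\frac38$ with room to spare). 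Finally, $\tilde\Phi$ and $\tilde Z$ are Borel because every object above ($\hat X_n$, the $\mathcal{G}_w$, $\Phi_w$, $\Phi_b$, $\mi_b$, $\overline W_y$, $\tilde\phi$) is produced from $(\mu,k,n,\gamma,\delta,\hat\rho)$ by finitely many Borel operations, as in Lemma~\ref{lem:borel_appproximate_embedding}.

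\textbf{Main obstacle.} The mathematical content is already in Lemmas~\ref{lem:improve_approx_emb}, \ref{lem:COV_rel_to_approximate_embedding_borel} and \ref{lem:C_gamma_n}; the genuinely delicate point is bookkeeping the \emph{uniformity} in $\hat\rho$. One must ensure that the threshold $\tilde N$ does \emph{not} depend on $\hat\rho$ itself (it is allowed to depend on $\rho$), which is exactly why Lemma~\ref{lem:COV_rel_to_approximate_embedding_borel} was phrased via the supremum $\tilde N_U$ over all $\hat\rho$ with $D_{n_0,\frac12\epsilon_{n_0}}[\rho,\hat\rho]\subset E_{n_0}$---so this reduces to invoking that lemma correctly. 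The second fussy point is the scale shift in \eqref{eq:tilde_rho_traces_rho1}: one must check that $D_{\hat n,\frac38\epsilon_{\hat n}}[\hat\rho,\tilde\rho]$, not merely $D_{n_0,\frac38\epsilon_{n_0}}$, is contained in the complement of the relevant part of the $Z_n$-tower, which works because $\tilde\rho$ was built so that for $\mi\in(1-2\delta_n)F_n\setminus\gamma F_n$ with $T^\mi y\in Z_n$ the whole window $S^\mi\tilde\rho(y)\mid_{F_{\hat n}}$ equals $S^\mi\hat\rho(y)\mid_{F_{\hat n}}$ (since the first branch of $\overline W_y$ copies $\hat\rho$ on a $(1+\delta_{n_0})F_{n_0}\supseteq$-spaced---but here we need $\hat n$-spaced---grid); this forces the minor requirement $\hat n\ll n$ in the hypotheses, which is why it is there. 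Everything else is a routine transcription.
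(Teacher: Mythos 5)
Your proposal is correct and follows essentially the same route as the paper: the same choice of $\overline{k}_0$, $\overline{N}_0$ (enlarged so that $\mu(E_{n_0}\mid Z_{n_0})$ beats $1/\overline{N}_R$), the same use of the uniform threshold $\tilde N_U$ from Lemma \ref{lem:COV_rel_to_approximate_embedding_borel} to make $\tilde N$ independent of $\hat\rho$, the same Borel enumeration devices as in Lemma \ref{lem:borel_appproximate_embedding}, and the same transcription of the construction of Lemma \ref{lem:improve_approx_emb} with $\rho$ replaced by $\hat\rho$ and the tiling scale shifted from $n_0$ to $\hat n$. The only slip is that in your construction paragraph you write $\overline W_y\in C_{n_0}^{\overline K_y}$ where it must be $C_{\hat n}^{\overline K_y}$ with the grid $K_y$ read off the $Z_{\hat n}$-tower (since $\hat\rho$ is only $\hat n$-towerable, its values on $Z_{n_0}$ need not shadow elements of $C_{n_0}$), but your ``main obstacle'' discussion already identifies and resolves exactly this point, so it is a typo rather than a gap.
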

	
 Since the statement above is somewhat lengthy and convolved, we give an informal interpretation.   Recall that  given a $(k_0, n_0, \delta_0, \mu)$-approximate embedding $\rho$ for large enough $k_0, n_0$ and small enough $\delta_0$, Lemma \ref{lem:improve_approx_emb} says we can construct a much better $(k, n, \delta, \mu)$-approximate embedding $\tilde \rho$ for large enough $n$ which is equal to $\rho$ on a large part of the space. The following Lemma will show how all of this can be done as Borel function of various parameters. It will further show that the ``procedure''  is ``robust'' in the sense  that $\rho$ can be replaced by another morphism $\hat \rho$ which is obtained by perturbing it on small part of the space denoted by $E_{n_0}$  (in the sense that \eqref{eq:D_n_0_rho_hat_rho} holds), in such a way that the parameters are controlled in a uniform (and Borel) manner. The reader should think of $\hat \rho$ as a morphism which further encodes the measure $\mu$.
We note that Lemma \ref{lem:improve_approx_emb} is a particular case of Lemma \ref{lem:improve_approx_emb_robust} by setting $\hat \rho = \rho$ and $\hat n = n_0$  .

\begin{proof}
	As in the proof of Lemma \ref{lem:borel_appproximate_embedding}, the basic idea is to follow the steps of the proof of Lemma \ref{lem:improve_approx_emb}, taking care to specify all the ``choices'' so it is is clear that the functions constructed are all Borel measurable. 
	Additionally, there is the issue of making the function $\tilde \Phi$ ``works properly''even if we replace $\rho$ by suitable $\hat \rho$. For this we 	
	use the full strength of Lemma \ref{lem:COV_rel_to_approximate_embedding_borel}.
	Here are the details:
	
	Fix a sequence $(E_m)_{m=1}^\infty$ of Borel subsets that satisfies \eqref{eq:lim_ucap_E_m}. As in the proof of Lemma \ref{lem:improve_approx_emb}, fix $\hat{h} \in (h(\Y),h(\C))$.
	We can take  function $\overline{k_0}:\Prob_e(\Y) \times (0,1) \to \NN$ to be the one given by \eqref{eq:k_0_def} in the proof of Lemma \ref{lem:improve_approx_emb},
	namely, the smallest positive integer $k$ such that the entropy of $(Y,T,\mu)$ given $\mathcal{P}_k^{\ZD}$ is less than $\alpha(\gamma)/8$, where $\alpha(\gamma)$ is given by \eqref{eq:alpha_def}. The function $\mu \mapsto h_\mu(Y,T\mid \mathcal{P}_k^{\ZD})$ is a Borel function on $\Prob_e(\Y)$. The function  $\alpha:(0,1) \to \mathbb{R}$ given by  \eqref{eq:alpha_def} is clearly a Borel function. It follows that the function $\overline{k_0}:\Prob_e(\Y) \times (0,1) \to \NN$ is Borel measurable.
	
{
 For $\mu \in \Prob_e(\Y)$, $\gamma \in (0,1)$ and $k_0 \ge \overline{k_0}(\mu,\gamma)$, let
	\begin{equation}\label{eq:ovreline_N_0_prime_def}
	\overline{N}_0'(\mu,\gamma,k_0) = \max \left\{\overline{N}_{S}(\alpha(\gamma)^d \hat{h}),\overline{N}_R\left(\alpha(\gamma)/4,k_0\right), K_0(\gamma)\right\},
	\end{equation}
}
	where $\overline{N}_{S}:(0,1) \to \NN$ is given by \eqref{eq:N_S_def}, $\overline{N}_R:(0,1)\times \NN \to \NN$ is given by \eqref{eq:N_r_def} and $K_0(\gamma)$ is the smallest integer $K_0$ that appears in the statement of Lemma \ref{lem:C_gamma_n}. An inspection of the proof of Lemma \ref{lem:C_gamma_n}  confirms  that the integer $K_0$ depends only on $\gamma$ and that $\gamma \mapsto K_0(\gamma)$ is a a Borel function.
	Define { $\overline{N}_0:\Prob_e(\Y)\times (0,1) \times \NN  \to \NN$} by
	\begin{equation}\label{eq:ovreline_N_0_def}
	\overline{N}_0(\mu,\gamma,k_0)= \min \left\{ n \ge \overline{N}_0'(\mu,\gamma, k_0):~ \mu\left(E_n \mid Z_n\right) < \frac{1}{\overline{N}_R\left(\alpha(\gamma)/4,k_0\right)} \right\}.
	\end{equation}
	This is a well defined natural  number by \eqref{eq:lim_ucap_E_m}.
	It follows that $\overline{N}_0$ is a Borel function.
	
	Now for $\mu\in\Prob_e(Y, T)$ we fix $n_0, \hat n, k_0$ and $\delta_0$ as in \eqref{eq:n_0_k_0_big2} and $ \rho, \hat \rho  \in \Mor_C(\Y,\X)$ as in the statement of the lemma. Fix $\delta >0$, $k\in \N$ and let $\delta' = 10^{-10}\delta^4$. Let
	$\tilde N(\mu,\gamma,\rho,\delta,k)$ be the smallest natural number $N$  satisfying  $N \gg n_0$ 
	and so that 
	for every $n >N$  \eqref{eq:gamma_n_big}, \eqref{eq:C_n_gamma_big} (the role of $k$ which appears in this inequality is played by $n_0$ here), \eqref{eq:COV_P_k_0_gamma_F_n},  \eqref{eq:rel_COV_P_k_F_n_P_k_0_F_n_Z_n}   
	and \eqref{eq:epsilon_n_smaller_eta} hold, and in addition 
	$$N \ge \tilde{N}_U(\mu,\gamma,\delta',\alpha(\gamma)/4,\mathcal{P}_{k_0},\rho,E_{n_0}),$$
	where the right hand side is the function defined by \eqref{eq:N_U_def}.
	The fact that the right hand side is finite follows by Lemma \ref{lem:COV_rel_to_approximate_embedding_borel}
	because {$\mu\left(E_{n_0} \mid Z_{n_0} \right) < \frac{1}{\overline{N}_R(\alpha(\gamma)/4,k_0)}$ and $n_0 >\overline{N}_R(\alpha(\gamma)/4,k_0)$}.
	This means that 
	 \begin{equation}\label{eq:COV_P_k_0_rel_hat_rho}
	\COV_{\mu,\delta',\mathcal{P}_{k_0}^{F_n \setminus \gamma F_{n}}\mid \mathcal{P}_{\hat \rho}^{(1-2\delta_n)F_n \setminus \gamma F_n}}(Z_n) < e^{\alpha(\gamma)/4 |F_n|}
	\end{equation}
 holds for any $\hat{n}$-towerable $\hat \rho \in \Mor_C(\Y,\X)$ satisfying \eqref{eq:D_n_0_rho_hat_rho} with $\hat{n} \ll n$
 (this is the analog of \eqref{eq:COV_P_k_0_rel_rho} with $\rho$ replaced with $\hat \rho$).
	
	It follows that $\tilde N$ is a Borel function for the set of parameters where it has been defined. 
	Extend it to a Borel function
	$$\tilde{N}:\Prob_e(\Y)\times (0,1)  \times \Mor_C(\Y,\X) \times (0,1) \times \NN \to \mathbb{N}.$$ 
	
	Let us now describe the functions 
	$$\tilde\Phi:\Prob_e(\Y)\times \NN \times \NN \times (0,1) \times (0,1) \times \Mor_C(\Y,\X) \to \Mor_C(\Y,\X),$$ 
	and
	$$\tilde{Z}:\Prob_e(\Y)\times \NN \times \NN \times (0,1) \times (0,1) \times \Mor_C(\Y,\X) \to  \Clopen(Y).$$
	
	It is enough to define $\tilde \Phi(\mu,k,n,\gamma,\delta,\hat \rho) \in C(\Y,\X)$ and $\tilde{Z}(\mu,k,n,\gamma,\delta,\hat \rho)  \in \Clopen(Y)$ for parameters satisfying the assumptions of the lemma. So assume that  
	$n \ge \tilde{N}(\mu,\gamma,\rho,\delta, k)$, $\rho  \in \Mor_C(\Y,\X)$ as before, and that $\hat \rho \in \Mor_c(\Y,X)$ is $\hat n$-towerable for some $n_0 \le \hat{n} \ll n$ and satisfies \eqref{eq:D_n_0_rho_hat_rho}.
	Then, repeating the exact same steps as in the proof of Lemma \ref{lem:improve_approx_emb}, except that this time we use \eqref{eq:COV_P_k_0_rel_hat_rho} instead of \eqref{eq:COV_P_k_0_rel_rho}, and replace $\rho$ by $\hat \rho$ throughout, we conclude  that
	\begin{equation}
	\COV_{\mu,\delta/3,\mathcal{P}_{k}^{F_n} \mid \mathcal{P}_{\hat \rho}^{(1-2\delta_n)F_n \setminus \gamma F_n}}(Z_n) < e^{\alpha(\gamma)/2 |F_n| + \hat h |\gamma F_n|}.
	\end{equation}
	This is the counterpart of \eqref{equation: covering is good} as in the proof of Lemma \ref{lem:improve_approx_emb}. Let $K_{n,\hat n,\beta(\gamma)} \subset (1-\delta_n)F_n$ be given by \eqref{eq:K_n_n_0_theta_def} and
	$$ \hat{X}_n = \left\{w \in X^{(1-2\delta_n)F_n \setminus \gamma F_n}:~ \COV_{\mu(\cdot \mid \hat \rho^{-1}([w])),\delta/3,\mathcal{P}_k^{F_n}}(Z_n) \le
	|C_{\hat n}^{K_{n,\hat n, \beta(\gamma)}}|
	\right\}.
	$$
	Then the analog of \eqref{eq:mu_hat_X_n1} holds with $\rho$ replaced by $\hat \rho$. Enumerate the elements of $C_{\hat n}^{K_{n,\hat n, \beta(\gamma)}}$:
	$$C_{\hat n}^{K_{n,\hat n, \beta(\gamma)}}= \{w_1,\ldots,w_{M}\},$$
	where
	$$M = \left|C_{\hat n}^{K_{n,\hat n, \beta(\gamma)}} \right|.$$
	As in the proof of Lemma \ref{lem:borel_appproximate_embedding}, 
	for every $k,n \in \NN$ let 
	$$F_{k,n}:\mathcal{P}_k^{F_n} \to \{1,\ldots,|\mathcal{P}_k^{F_n}|\}$$
	be an enumeration of the elements of $\mathcal{P}_k^{F_n}$.
	For every  $\mu \in \Prob_e(\Y)$, $k,n \in \NN$ and $w\in X^{(1-2\delta_n)F_n \setminus \gamma F_n}$ let $<_{\mu,k,n,w}$ be the linear order on $\mathcal{P}_k^{F_n}$ define by $P <_{\mu,k,n,w} Q$ if and only if
	$$
	\mu(P \mid Z_n \cap \hat \rho^{-1}([w])) > \mu(Q \mid Z_n\cap \hat \rho^{-1}([w]))$$
	or
	$$F_{k,n}(P) < F_{k,n}(Q) \mbox{ and } \mu(P \mid Z_n\cap\hat \rho^{-1}([w])) = \mu(Q \mid Z_n\cap\hat \rho^{-1}([w])).
	$$
	Let 
	$$F_{\mu,k,n,w}:\mathcal{P}_k^{F_n} \to \{1,\ldots,|\mathcal{P}_k^{F_n}|\}$$
	be the enumeration of the elements of $\mathcal{P}_k^{F_n}$ according to the linear order $<_{\mu,k,n,w}$.
	As in the  proof of Lemma \ref{lem:borel_appproximate_embedding}, $(\mu,k,n,w) \mapsto F_{\mu,k,n,w}$ is also Borel measurable.
	
	Let $\mathcal{G}_w \subset \mathcal{P}_k^{F_n}$
	$$\mathcal{G}_w = F_{\mu,k,n,w}^{-1}(\{1,\ldots,\min\{M,|\mathcal{P}_k^{F_n}|\}\}).$$
	For $w \in \hat{X}_n$, as in the proof of Lemma \ref{lem:improve_approx_emb}, we see that \eqref{eq:mu_G_w_big} holds  with $\rho$ replaced by $\hat \rho$, and also that \eqref{eq:G_w_small_C_eta} holds with $n_0$ replaced by $\hat n$.
	For $w \in \hat{X}_n$ let 
	$\Phi_w:\mathcal{P}_k^{F_n} \to C_{\hat n}^{K_{n,\hat n, \beta(\gamma)}}$ be given by 
	$$\Phi_w(P)=\begin{cases}
	w_{F_{\mu,k,n,w}(P)}&\text{ if }F_{\mu,k,n,w}(P)\leq |C_{\hat n}^{K_{n,\hat n, \beta(\gamma)}}|  \\
	w_{|C_{\hat n}^{K_{n,\hat n, \beta(\gamma)}}|}&\text{ otherwise.}
	\end{cases}$$ Then $\Phi_w$ is injective on $\mathcal{G}_w$.
	From here we proceed exactly as in the proof of Lemma \ref{lem:improve_approx_emb}, replacing $\rho$ with $\hat \rho$ and $n_0$ with $\hat n$ in the appropriate places: For $y \in Y$  define 
	$\overline{W}_y \in \Inter_n(\C)$  by  replacing $\rho$ with $\hat \rho$ and $n_0$ with $\hat n$ in \eqref{eq:overline_W_y_def}.
	
	Then let
	\begin{equation}\tilde \Phi(\mu,k,n,\gamma,\delta,\hat \rho) 
	= \rho_{\tilde \phi,n},
	\end{equation}
	where
	$\tilde \phi$ be given by \eqref{eq:tilde_phi_def},  and $\rho_{\tilde \phi,n}$ is given by \eqref{eq:rho_Phi_n_def}. 
	Then indeed $\rho= \rho_{\tilde \phi,n}\in \Mor_C(\Y,\X)$, and we can extend $\tilde \Phi$ to a Borel function arbitrarily on other parameter values.  
	Let $Y_0 \subset Z_n$ be the set given by \eqref{eq:Y_0_def}. Then it follows that $Y_0$ is clopen in $Y$. 
	We set $\tilde Z({\mu,k,n,\gamma,\delta, \hat\rho}) = Y_0$.
	From here, the proof concludes exactly as in the proof of Lemma \ref{lem:improve_approx_emb}.
\end{proof}

	The following lemma describes how given an $n$-towerable map $\rho$, we can modify it on a small part of the space to get a $q(r)$-towerable map $\tilde \rho$ such that $\tilde \rho$ is close to $\rho$ on most of the space and on a small portion it  is close to a chosen element $x\in C_r$ where $r\ll n\ll q(r)$ in a Borel fashion. The reader should think of $\rho$ as an $(n, k, \delta, \mu)$-approximate embedding and $x\in C_r$ as a point chosen so as to record the partition element $\mu$ belongs to.

\begin{lem}\label{lem:trace_and_recover_Q}
	Let $q:\NN\to \NN$ be an increasing function so that $q(r) \ll q(r+1)$ for every $r \in \NN$ and $1 \ll q(1)$. Then there exists  a sequence of Borel functions
		$$\tilde \Phi_r: C_r \times \Mor_C(\Y,\X) \to \Mor_C(\Y,\X)$$ 
	with the following property:
    Suppose  $x \in C_r$ and  $\rho \in \Mor_C(\Y,\X)$   is a continuous $n$-towerable with  $r \ll n \ll q(r)$.
    Denote 
    $$
    \tilde \rho = \tilde \Phi_r(x,\rho) \in \Mor_C(\Y,\X).$$
    Then $\tilde \rho$ is a   $q(r)$-towerable continuous morphism so that

	\begin{equation}\label{eq:tilde_phi_traces_phi_on_W_r_}
	D_{n,\frac{3}{8}\epsilon_n}[\rho,\tilde \rho] \subseteq K_{r,n},
	\end{equation}
	and
	\begin{equation}\label{eq:mu_circ_rho_tilde_phi_in_U_x_r}
	\forall y \in Z_{q(r)},~ d_X^{F_r}\left(\tilde \rho(y),x\right) < \frac{11}{40}\epsilon_{r}
	\end{equation}
	where:
	\begin{equation}\label{eq:K_r_def}
	K_{r,n} = Y \setminus T^{\hat F_{r,n}}Z_{q(r)}
	\end{equation}\index{Definitions and notation introduced in Section 7!$K_{r,n}$}
	and
	\begin{equation}\label{eq:F_r_def}
	\hat F_{r,n} = (1-\delta_{q(r)})F_{q(r)} \setminus (1+\delta_{n})F_{2n}.
	\end{equation}\index{Definitions and notation introduced in Section 7!$\hat F_{r,n}$}
	
\end{lem}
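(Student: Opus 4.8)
The statement is a ``Borel packaging'' of a single, rather concrete, modification of an $n$-towerable map, so the plan is to first describe that modification explicitly and then check that every arbitrary choice in it can be made in a Borel way, exactly as was done in the proof of Lemma~\ref{lem:borel_appproximate_embedding} and Lemma~\ref{lem:improve_approx_emb_robust}. Fix $x \in C_r$ and a continuous $n$-towerable $\rho \in \Mor_C(\Y,\X)$ with $r \ll n \ll q(r)$. We will build $\tilde \rho$ as $\rho_{\tilde\phi, q(r)}$ for a suitable continuous $\tilde \phi : Y \to C_{q(r)}$, using the $\Ext$ function of the flexible sequence $\C$. For $y \in Y$ the idea is to form a pattern $\overline W_y \in C_r^{\overline K_y}$ supported on a set $\overline K_y \subset (1-\delta_{q(r)})F_{q(r)}$ which is $(1+\delta_r)F_r$-spaced, where $\overline K_y$ consists of two pieces: first, the indices $\mi \in (1-\delta_{q(r)})F_{q(r)} \setminus (1+\delta_n)F_{2n}$ for which $T^{\mi}(y) \in Z_n$, on which we put $(\overline W_y)_\mi = \rho(T^{\mi}(y))$ (note $\rho$ is $n$-towerable, so $\rho(T^{\mi}(y))\in C_n$; and since $r\ll n$ we need $C_n$-patterns to be legal inputs to $\Ext_{q(r)}$, which requires $n \ll q(r)$ as hypothesized and $C_n$ being $(1+\delta_n)F_n$-spaced positions, exactly the shape of $\Inter_{q(r),n}(\C)$); second, a single marked index $\mi_0$ well inside $(1+\delta_n)F_{2n}$ where we put $(\overline W_y)_{\mi_0} = x$. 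Set $\tilde\phi(y) = \Ext_{q(r)}(\overline W_y)$ and $\tilde\rho = \rho_{\tilde\phi, q(r)}$; this is $q(r)$-towerable and continuous because all of $\rho$, the tower indicator functions, and $\Ext$ are continuous (the partitions $Z_m$ and $\mathcal P_k$ are clopen by our standing Borel setup).

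\textbf{Verification of the two properties.} For \eqref{eq:mu_circ_rho_tilde_phi_in_U_x_r}: when $y \in Z_{q(r)}$, the point $\vec 0 \in \overline K_y$ (it is the marked index, since $\mi_0$ can be chosen to be $\vec 0$, well within $(1+\delta_n)F_{2n}$ and disjoint from the first piece because the first piece avoids $(1+\delta_n)F_{2n}$), and $(\overline W_y)_{\vec 0} = x$; by the shadowing property \eqref{eq:Ext_def} we get $d_X^{F_r}(\Ext_{q(r)}(\overline W_y), x) < \tfrac14\epsilon_r < \tfrac{11}{40}\epsilon_r$, and since $\tilde\rho(y) = \tilde\phi(y) = \Ext_{q(r)}(\overline W_y)$ when $y \in Z_{q(r)}$ (using $\rho_{\tilde\phi, q(r)}$ evaluated with $\mj = \vec 0$), this is exactly the claim. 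For \eqref{eq:tilde_phi_traces_phi_on_W_r_}: suppose $y \notin K_{r,n}$, i.e.\ $T^{\mi}(y) \in Z_{q(r)}$ for some $\mi \in (1-\delta_{q(r)})F_{q(r)}\setminus (1+\delta_n)F_{2n}$. Then $\mi \in \overline K_{T^{\vec 0}(y)}$... more carefully: by disjointness of the tower $\{T^{\mj}Z_n\}$ and the fact that $y$ itself lies in some $T^{\mj}Z_n$ with $\mj \in F_n$ when $y \in T^{F_n}Z_n$, one reads that $-\mi \in \overline K_{T^{\mi}(y)}$ with $(\overline W_{T^{\mi}(y)})_{-\mi} = \rho(y)_{\vec 0}$, so by \eqref{eq:Ext_def} $d_X^{F_n}(\rho(y), S^{-\mi}(\Ext_{q(r)}(\overline W_{T^{\mi}(y)}))) < \tfrac14\epsilon_n$, hence $d_X^{F_n}(\rho(y), \tilde\rho(y)) < \tfrac14\epsilon_n \le \tfrac38 \epsilon_n$ and $y \notin D_{n,\frac38\epsilon_n}[\rho,\tilde\rho]$; contrapositively $D_{n,\frac38\epsilon_n}[\rho,\tilde\rho] \subseteq K_{r,n}$. (If instead $y \notin T^{F_n}Z_n$ entirely, then $\rho(y)_{\vec 0} = x_\star$ and one checks $\tilde\rho(y)_{\vec 0} = x_\star$ only when $y$ is also outside the larger tower, which is handled by the same $K_{r,n}$ bound since $K_{r,n}$ already removes that region; this case is routine.)

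\textbf{Borel-ness of $r \mapsto \tilde\Phi_r$.} This is the part requiring care but no real difficulty, mirroring Lemma~\ref{lem:borel_appproximate_embedding}: the only ``choices'' are (a) the choice of $\mi_0$ (take it to be $\vec 0$, which always works), (b) the choice of the spaced index set $\hat F_{r,n}$ inside $(1-\delta_{q(r)})F_{q(r)}\setminus(1+\delta_n)F_{2n}$ and the lattice pattern for the first piece (take a fixed canonical $(1+\delta_n)F_n$-spaced maximal sublattice, which is a deterministic function of $r,n$), and (c) the function $\Ext_{q(r)}$, which is fixed once $\C$ is fixed. Since $\rho$ and $x$ enter $\overline W_y$ and hence $\tilde\phi$ through continuous operations, and since the map $\rho \mapsto \rho_{\tilde\phi, q(r)}$ from $C(Y,X)$ to $\Mor_C(\Y,\X)$ is Borel (indeed continuous), the resulting $\tilde\Phi_r : C_r \times \Mor_C(\Y,\X) \to \Mor_C(\Y,\X)$ is Borel; on parameter values not satisfying $r \ll n \ll q(r)$ we simply extend $\tilde\Phi_r$ arbitrarily by a fixed constant morphism. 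The main obstacle is purely bookkeeping: one must check that $\overline K_y$ is genuinely $(1+\delta_r)F_r$-spaced and lies in $(1-\delta_{q(r)})F_{q(r)}$, so that $\overline W_y \in \Inter_{q(r)}(\C)$ and $\Ext_{q(r)}$ applies — this uses $r \ll n$ (to make the $\rho$-values, which are $C_n$-patterns, themselves decompose into legal spaced inputs, so really one should think of the first piece as an element of $\Inter_{q(r),n}(\C)$ and concatenate, or directly observe that $C_n$-orbit-segments at $(1+\delta_n)F_n$-spaced positions are $C_r$-patterns at finer spacing only after re-expanding) and $n \ll q(r)$. Since these ``$\ll$'' inequalities are precisely \eqref{eq:n_ll_m}, the constants work out as designed, and this is no harder than the analogous spacing checks in the proofs of Lemma~\ref{lem:improve_approx_emb} and Lemma~\ref{lem:improve_approx_emb_robust}.
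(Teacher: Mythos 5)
Your overall architecture is the paper's: build a pattern $W_y$ from the $Z_n$-tower positions in $\hat F_{r,n}$ plus one marked position at $\vec 0$, apply $\Ext_{q(r)}$, wrap with $\rho_{\tilde\phi,q(r)}$, and get Borel-ness by making every choice canonical. But there is one genuine gap, which you yourself flag in the last paragraph and then leave unresolved: you place $x\in C_r$ directly into $\overline W_y$ alongside entries that are $C_n$-patterns. Such a mixed-scale pattern is not an element of $\Inter_{q(r)}(\C)=\bigcup_{k\ll q(r)}\Inter_{q(r),k}(\C)$, because each $\Inter_{q(r),k}(\C)$ consists of configurations $W\in C_k^K$ drawn from a \emph{single} $C_k$ on a $(1+\delta_k)F_k$-spaced set; so $\Ext_{q(r)}$ is simply not defined on your $\overline W_y$, and the flexibility hypothesis gives you no shadowing point. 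Neither of your suggested repairs works as stated: ``concatenating'' still produces a mixed-scale object, and a $C_n$-orbit segment does not decompose into $C_r$-patterns at $C_r$-spacing (flexibility only goes the other way, from small scale to large). Relatedly, your spacing requirement ``$(1+\delta_r)F_r$-spaced'' is the wrong scale; the pattern must be $(1+\delta_n)F_n$-spaced, which is exactly why $\hat F_{r,n}$ excises $(1+\delta_n)F_{2n}$ around the marked position.

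The paper's fix is a two-step promotion: first set $W_{x,n}\in C_r^{\{\vec 0\}}$ with $(W_{x,n})_{\vec 0}=x$ and define $\tilde x=\Ext_n(W_{x,n})\in C_n$, so that $d_X^{F_r}(\tilde x,x)<\tfrac14\epsilon_r$; then place $\tilde x$ (not $x$) at $\vec 0$ in $W_y\in C_n^{K_y}$, making $W_y$ a legal element of $\Inter_{q(r),n}(\C)$ since $r\ll n\ll q(r)$. The estimate \eqref{eq:mu_circ_rho_tilde_phi_in_U_x_r} is then a triangle inequality through $\tilde x$: $d_X^{F_r}(\tilde\phi(y),x)<\tfrac14\epsilon_{q(r)}+\tfrac14\epsilon_r<\tfrac{11}{40}\epsilon_r$. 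Your claim of getting $\tfrac14\epsilon_r$ in one step is a symptom of the same gap — the shadowing constant at scale $q(r)$ for a $C_n$-pattern is $\tfrac14\epsilon_{q(r)}$, and the $\epsilon_r$-accuracy to $x$ has to come from the separate first extension. The rest of your verification of \eqref{eq:tilde_phi_traces_phi_on_W_r_} and the Borel bookkeeping match the paper once this is repaired.
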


\begin{proof}
	Let $q:\NN \to \NN$ as in the statement of the lemma.

 	Fix $r \in \NN$.  We will now construct   $\tilde \Phi_r:C_r \times \Mor_C(\Y,\X) \to \Mor_C(\Y,\X)$ as in the statement of the lemma.
	It suffices to define 
	$\tilde \Phi_r(x,\rho)$ for $x \in C_r$ and $n$-towerable   $\rho \in \Mor_C(\Y,\X)$  with $r \ll n \ll q(r)$.
	Let $W_{x,n} \in C_r^{\{\vec{0}\}}$ be given by $(W_{x,n})_{\vec{0}} =x$, and let $\tilde x \in C_n$ be given by $\tilde x = \Ext_n(W_{x,n})$. Consequently $d_{X}^{F_r}(\tilde x, x)< \frac14 \epsilon_{r}$.
	
	For $y \in Y$ define
	$$K_y = \left\{\mi \in \hat F_{r,n} :~T^{\mi}(y) \in Z_{n} \right\} \cup \{\vec{0}\}\subset (1-\delta_{q(r)})F_{q(r)} ,$$
	where $\hat F_{r,n} \subset F_{q(r)}$ is defined by \eqref{eq:F_r_def}.
	Then $K_y \subset (1-\delta_{q(r)})F_{q(r)}$ is $(1+\delta_n)F_n$-spaced. Define $W_y \in C_{n}^{K_y}$ by
	$$(W_y)_{\mi} =
	\begin{cases}
	\tilde x & \mi = 0\\
	S^{\mi}(\rho(y)) & \mi \in K_y \setminus \{0\}.
	\end{cases}
	$$
 Define $\tilde \phi:Y \to X$ by
	$$\tilde \phi(y) = \Ext_{q(r)}(W_y).$$
	Let
	$$\tilde \rho =  \rho_{\tilde \phi, q(r)},$$
 where $\rho_{\tilde \phi, q(r)}$ is defined in \eqref{eq:rho_Phi_n_def}. By definition, $\tilde \rho$ is $q(r)$-towerable. Let $y'\in Z_n$. If $y'\in T^{\hat{F}_{r,n}}(Z_{q(r)})$ then we have 
$$d^{F_n}(\tilde{\rho}(y'), \rho(y'))< 1/4 \epsilon_{n}.$$
Then $\tilde{\rho}$ satisfies \eqref{eq:tilde_phi_traces_phi_on_W_r_}  (we are replacing $\frac14$ for the worse constant $\frac38$, see Remark \ref{remark:constant_half}). Also, by the triangle inequality we have for every $y \in Y$,
	\begin{equation}\label{eq:tilde_psi_code_Psi}
	d_X^{F_r}\left(\tilde \phi(y), x \right) <d^{F_r}_X(\tilde\phi(y), \tilde x)+d^{F_r}_X(\tilde x, x)<1/4 \epsilon_r+ 1/4 \epsilon_{q(r)}<\frac{11}{40}\epsilon_r.
	\end{equation}
	By definition of $\rho_{\tilde \phi, q(r)}$ this implies 
	\eqref{eq:mu_circ_rho_tilde_phi_in_U_x_r}.
	
	For $\rho \in \Mor_C(\Y,\X)$ as above and $x \in C_r$, define
	$$\tilde \Phi_r(x,\rho) = \rho_{\tilde \phi, q(r)}.$$
So far, we have defined $\tilde{\Phi}_r(x, \rho)$ for $x\in C_r$ and $\rho\in Mor_C(\mathcal Y, \mathcal X)$ which are $n$-towerable. The set of such morphisms forms a Borel set and we extend the definition of $\tilde \Phi_r$ arbitrarily to a measurable function on the entire domain $C_r\times Mor_C(\mathcal Y, \mathcal X)$. This completes the construction.  
\end{proof}

	The proof of Lemma \ref{lem:improve_approx_emb_with_measure_encode} below applies both Lemma \ref{lem:trace_and_recover_Q} and Lemma  \ref{lem:improve_approx_emb_robust}: Given a   $(k_0, n_0, \delta_0, \mu)$-approximate embedding $\rho$ we first use Lemma  \ref{lem:trace_and_recover_Q}  to produce a morphism $\hat \rho$ which is close to $\rho$ on most of the space and encodes some information about the measure $\mu$.  Then we use Lemma \ref{lem:improve_approx_emb_robust} which given $\hat \rho$ helps construct an ``improved''  $(k, n, \delta, \mu)$-approximate embedding $\tilde \rho$, which is still close to $\rho$ on most of the space, and still retains information about  the measure $\mu$  encoded in $\hat \rho$. While the idea is simple, there is copious book-keeping involved.

\begin{proof}[Proof of Lemma \ref{lem:improve_approx_emb_with_measure_encode}]

	Let $q,\tilde q:\NN \to \NN$  be  functions that satisfy \eqref{eq:q_r_rapid}, as in the statement of the lemma.
	For $n,n_0 \in \NN$ denote:
	\begin{equation}\label{eq:r_1_r_2_def}
	  r_1(n_0) =\min \{r \in \NN:~ n_0 \ll q(r)\}
	\mbox{ and }
	r_2(n) =  \max \{r \in \NN:~ q(r) \ll  n\}.
	\end{equation}
	Let $(\tilde \Phi_r)_{r=1}^\infty$ be the functions given by Lemma \ref{lem:trace_and_recover_Q}. For $r \in \NN$ let 
	$$ \t E_r =  \left(Y\setminus T^{(1-\delta_{q(r)})F_{q(r)}}Z_{q(r)}\right) \cup T^{F_{q(r)}}\left(K_{r+1,q(r)}\right),$$
		where $K_{r,n}$ is given by \eqref{eq:K_r_def}.
	For every $n_0 \in \NN$ define
		\begin{equation}
	E_{n_0} =  K_{r_1(n_0),n_0} \cup 
	\bigcup_{r = r_1(n_0) }^{\infty}\t E_r.
	\end{equation}

	By definition of $K_{r,n}$, if $1\ll n_0$, for any $\mu \in \Prob_{e}(\Y)$,
	\begin{equation}\label{eq:mu_K_r_1}
		\mu\left( K_{r_1(n_0),n_0} \mid Z_{n_0}\right) \le 2(\epsilon_{q(r_1(n_0))}+ 6d\delta_{q(r_1(n_0))}+ 2\frac{|F_{n_0}|}{|F_{q(r_1(n_0))}|})|F_{n_0}| \le \epsilon_{n_0} + \delta_{n_0},
	\end{equation}
where in the last inequality we used that $n_0 \ll q(r_1(n_0))$. Note that 
	$$T^{F_{q(r)}}(K_{r+1,q(r)})\subseteq (Y\setminus T^{(1-2\delta_{q(r+1)})F_{q(r+1)}}Z_{q(r+1)}) \cup 
	T^{3(1+\delta_{q(r)})F_{q(r)}}Z_{q(r+1)}.$$
	An estimate similar  to that of \eqref{eq:mu_K_r_1} shows that  for any $r \ge r_1(n_0)$, $1\ll k \ll q(r)$ and  $\mu \in \Prob_{e}(\Y)$ we have
	\begin{equation}\label{eq:mu_E_0_k2}
	\mu\left(\t E_r\right) < \left(\epsilon_{k} + \delta_{k}\right)\mu\left( Z_{k} \right).
	\end{equation}
	In particular, by union bound it follows that
	$$\mu\left(E_{n_0} \mid Z_{n_0} \right) \le 2\epsilon_{n_0}+2\delta_{n_0}.$$
		This shows that the sequence $(E_m)_{m=1}^\infty$ satisfies 
	\eqref{eq:lim_ucap_E_m}.
		 Let 
	$N_0(\gamma)$ be the smallest $n \in \NN$ so that 
	$$\epsilon_{n}+\delta_{n} < \gamma/4.$$
	It follows that for every $n_0 \ge N_0(\gamma)$,
	\begin{equation}\label{eq:mu_E_n_0_n_gamma_small}
	\mu \left( E_{n_0} \mid Z_{n_0}\right) < \frac{1}{2}\gamma.
	\end{equation}

	Let {$\tilde{N}_0:\Prob_e(\Y)\times (0,1)\times \NN \to \NN$} be defined by
{	$$\tilde{N}_0(\mu,\gamma,k_0) = \max\{\overline{N}_0(\mu,\frac{1}{8}\gamma,k_0),N_0(\gamma), \frac{1}{\gamma},\tilde q(1)\} \mbox{ for } \gamma \in (0,1),\mu\in \Prob_e(\Y), k_0 \in \NN$$}
	where { $\overline{N}_0:\Prob_e(\Y)\times (0,1) \times \NN \to \NN$ } is  the Borel function obtained  by  applying Lemma \ref{lem:improve_approx_emb_robust} with the sequence $(E_m)_{m=1}^\infty$ above.

	Let $\tilde{k}_0:\Prob_e(\Y)\times (0,1) \to \NN$ 
	be defined by
	$$\tilde{k}_0(\mu,\gamma) = \overline{k}_0(\mu,\frac{1}{8}\gamma)\mbox{ for } \gamma \in (0,1),\mu\in \Prob_e(\Y),$$ where $\overline{k}_0:\Prob_e(\Y)\times (0,1) \to \NN$ is
	the  Borel function obtained by applying Lemma \ref{lem:improve_approx_emb_robust} with the sequence $(E_m)_{m=1}^\infty$ above.

	Also , let $\tilde{N}:\Prob_e(\Y)\times (0,1)  \times \Mor_C(\Y,\X) \times (0,1) \times \NN \to \mathbb{N}$
	be obtained  by  applying Lemma \ref{lem:improve_approx_emb_robust} with the sequence $(E_m)_{m=1}^\infty$ above.
	Recall that for every $r \in \NN$, $\mathcal{Q}_r$ is a finite Borel partition of $\Prob_e(\Y)$, and that according to our assumption $|\mathcal{Q}_r| \le |C_r|$. For each $r \in \NN$, let  $\tilde f_r:\mathcal{Q}_r \to C_r$ be an injective function. Recall that for every $r \in \NN$ the set $C_r \subset X$ is $(\epsilon_r,F_r)$-separated so we can choose a partition $\{C_{r,Q}\}_{Q \in \mathcal{Q}_r}$ of $C_{q(r)}$ so that for every  $Q \in \mathcal{Q}_r$ 
	$$
	\{x \in C_{q(r)}:~ d_X^{F_r}(x,\tilde f_r(Q)) < \frac{1}{2}\epsilon_r\} \subseteq C_{r,Q}.
	$$
	
	We will now define $\overline{\Phi}(\mu,k,n,\delta,\gamma,\rho)$ assuming that  $\rho \in \Mor_C(\Y,X)$ is a continuous  $(k_0,n_0,\delta_0,\mu)$-approximate embedding with $k_0,n_0 \in \NN$ and $\delta_0$ satisfying \eqref{eq:cond_delta_0_n_0_k_0} and $n \in\NN$ satisfying \eqref{eq:cond_n_tilde_q}.
	Let $r_1=r_1(n_0)$, $r_2=r_2(n)$.
	If  $r_1 > r_2$, 
	let $\hat{\rho} = \rho$ and $\hat n = n_0$.	Otherwise for $0 \le i \le r_2 -r_1$, 
	$$\rho_{i+1}=\tilde \Phi_{r_1+i}\left(\tilde f_{r_1+i}(\mathcal{Q}_{r_1+i}(\mu)),\rho_i\right),$$
	where $(\tilde \Phi_r)_{r=1}^\infty$ are the functions given by Lemma \ref{lem:trace_and_recover_Q} and $\rho_0=\rho$. Also, let $\hat n= q(r_2)$,  $\hat \rho  = \rho_{r_2 -r_1+1}$.
	
	The properties of the functions $\tilde \Phi_r$ given by Lemma \ref{lem:trace_and_recover_Q} ensure that
	\begin{equation}\label{eq:D_rho_rho_1}
	D_{n_0,\frac{3}{8}\epsilon_{n_0}}[\rho,\rho_1] \subseteq K_{r_1,n_0}
	\end{equation}
	and 
	\begin{equation}\label{eq:D_r_i_i_plus_i}
	\forall r_1 < r \le r_2,~  D_{q(r-1),\frac{3}{8}\epsilon_{q(r-1)}}[\rho_{r-r_1},\rho_{r-r_1+1}] 
	\subseteq K_{r,q(r-1)}.
	\end{equation}
	Applying  \eqref{eq:D_n_n_0} and \eqref{eq:D_triangle} together with \eqref{eq:D_rho_rho_1} and \eqref{eq:D_r_i_i_plus_i} it follows by triangle inequality that
	\begin{equation}\label{eq:D_n_rho_hat_rho}
	D_{n_0,\frac{3}{7}\epsilon_{n_0}}[\rho,\hat \rho] \subseteq D_{n_0, \frac{3}{8}\epsilon_{n_0}}[\rho, \rho_1]\bigcup _{r=r_1+1}^{r_2} D_{n_0, \frac{3}{8}\epsilon_{q(r-1)}}[\rho_{r-r_1}, \rho_{r-r_1+1}]  \subseteq  E_{n_0}.
\end{equation}
	Using \eqref{eq:mu_E_n_0_n_gamma_small} it follows that
	$$ \mu \left(D_{n_0,\frac{3}{7}\epsilon_{n_0}}[\rho,\hat \rho] \mid Z_{n_0} \right) < \gamma/2.$$
	Let
	$$\tilde {\rho} = \tilde \Phi\left(\mu,k,n,\frac{1}{8}\gamma, \delta, \hat \rho\right),$$
	where $\tilde \Phi$ is the function obtained by applying Lemma \ref{lem:improve_approx_emb_robust} with the sequence $(E_m)_{m=1}^\infty$ defined as above.
	Define:
	$$\overline{\Phi}\left(\mu,k,n,\delta,\gamma,\rho \right)= \tilde{\rho},$$
	and
	$$\overline{\overline{Z}}\left(\mu,k,n,\delta,\gamma,\rho \right)= \tilde Z\left(\mu,k,n,\frac{1}{8}\gamma, \delta, \hat \rho\right),$$
	where  $\tilde Z$ is the Borel function described in the statement of Lemma \ref{lem:improve_approx_emb_robust}.
	This completes the definition of $\overline{\Phi}\left(\mu,k,n,\delta,\gamma,\rho \right)$ 
	and $\overline{Z}\left(\mu,k,n,\delta,\gamma,\rho \right)$
	for relevant input parameters.
	
	So we assume now that $\rho \in \Mor_C(\Y,X)$ is a continuous $(k_0,n_0,\delta_0,\mu)$-approximate embedding such that \eqref{eq:cond_delta_0_n_0_k_0} and \eqref{eq:cond_n_tilde_q} hold.
		Since $\tilde \rho$ was obtained by applying the function $\overline{\Phi}$ of Lemma \ref{lem:improve_approx_emb_robust}, 
	$$ D_{\hat n, \frac{3}{8}\epsilon_{\hat n}}[\hat{\rho}, \tilde \rho]  \subseteq  Y \setminus \left(T^{(1-2\delta_{n})F_{n} \setminus \frac{\gamma}{8} F_{n}} Z_{ n} \right)= A\cup B $$
	where
	$$A= Y \setminus \left(T^{(1-2\delta_n)F_n}Z_n\right)  \text{ and } B=T^{\frac{\gamma}{8} F_{n}}Z_n.$$
	Because $1 \ll \hat n\ll n $, by \eqref{eq:n_ll_m}, using Lemma \ref{lem:mu_tower_subset} to bound $\mu (A)$ and Lemma \ref{lem:eq:mu_tower_subset_cond} to bound $\mu(B|Z_{\hat n})$ and the fact that $\hat n> N_0(\gamma)$, we have that,  
	$$ \mu \left( D_{\hat n, \frac{3}{8}\epsilon_{\hat n}}[\hat{\rho}, \tilde \rho]  \mid Z_{\hat n}\right) <\left( \epsilon_n+ 6d\delta_{n}\right) |F_{\hat n}|+ \frac{3}{2}(\gamma/8)+ 2 \hat{n}/n<\frac{1}{10}\epsilon_{\hat n}+ \frac{1}{10}\delta_{\hat n} + \frac32(\gamma/8)+ \delta_n/1000   <\gamma/5.$$	
Because $1\ll n_0 \ll \hat n$ it follows using Lemma \ref{lem:D_n_epsilon_D_m_epsilon} that 
$$ \mu \left( D_{ n_0, \frac{3}{8}\epsilon_{\hat n}}[\hat{\rho}, \tilde \rho]  \mid Z_{ n_0}\right) < \gamma/2.$$	
	Using \eqref{eq:D_triangle} once again we conclude that \eqref{eq:D_rho_tilde_rho} holds.
	
	Apply the triangle inequality to conclude that  whenever $n_0 \ll q(r) \ll n$ 
	$$ \left\{y \in Z_{q(r)} :~ d_X^{F_{q(r)}}\left(\tilde \rho(y),C_{r,\mathcal{Q}_r}\right) \ge 
	\frac{3}{8}\epsilon_{q(r)} 
	\right\}
	\subseteq $$
	$$\subseteq \bigcup_{r' =r+1}^{r_2} \left\{
	y \in Z_{q(r)} :~ d_X^{F_{q(r)}}\left( \rho_{r'-r_1-1}(y), \rho_{r'-r_1}(y)\right) \ge 
	\frac{3}{8}\epsilon_{q(r')} 
	\right\} \cup 
	\left\{
	y \in Z_{q(r)} :~ d_X^{F_{q(r)}}\left(\hat \rho(y),\tilde \rho(y)\right) \ge \frac{3}{8} \epsilon_{\hat n}
	\right\}.
	$$
	The inequality  \eqref{eq:y_C_r_mu_on_U_r}  now follows in a very similar fashion as did \eqref{eq:D_rho_tilde_rho}. We don't repeat the argument.
This completes the proof.
\end{proof}

\section{Universality of generic homeomorphisms on manifolds and realization of infinite entropy transformations via Lebesgue measure}\label{sec:universlity_generic}

In this short section we apply  Theorem \ref{thm:infinite_entropy} together with the results of  Guih\'eneuf-Lefeuvre \cite{MR2931648,MR3834653} to prove  Theorem \ref{thm:inf_universal_generic} (full $\infty$-universality of a generic homeomorphism on  a manifold $M$). Subsequently, we deduce  Theorem \ref{thm:lebesgue_universal}  (realizing an arbitrary measure preserving transformations via a homeomorphism  that preserves Lebesgue measure).	 

\begin{proof}[Proof of Theorem \ref{thm:inf_universal_generic}]
	Let $M$ be a finite dimensional compact topological manifold of dimension  at least $2$ and $\mu \in \Prob(M)$ a fully supported measure. 
	Let $\mathit{Homeo}(M,\mu)$ denote the space of homeomorphisms of $M$ that preserve the measure $\mu$.
	The group $\mathit{Homeo}(M,\mu)$  is a Polish group, and in particular a Baire space.
	By a result of  Pierre-Antoine Guih\'eneuf and Thibault Lefeuvre \cite[Theorem 3.17]{MR2931648}  infinite entropy is generic in $\mathit{Homeo}(M,\mu)$. By a more recent result of  Guih\'eneuf and Lefeuvre  \cite[Corollary 1.4]{MR3834653}    specification is also generic in $\mathit{Homeo}(M,\mu)$. If $(X,S)$ has specification, 
	then by Proposition \ref{prop:specification_implies_flexible}  it has a flexible marker sequence which is dense and has the same entropy. Theorem \ref{thm:infinite_entropy} now implies the result.
\end{proof}

We now show how to deduce Theorem \ref{thm:lebesgue_universal} from  Theorem \ref{thm:inf_universal_generic}. This reduction follows an argument of Lind and Thouvenot \cite{MR0584588}, who used it to show  that any ergodic $(Y,\mu,T)$ that has finite entropy is isomorphic to a Lebesgue measure-preserving homeomorphism of the torus.
\begin{proof}[Proof of Theorem \ref{thm:lebesgue_universal}:]
	By Theorem \ref{thm:inf_universal_generic} there exists a fully $\infty$-universal homeomorphism $\t \psi$.
	Let $(Y,\mu,T)$ be a free measure preserving $\Z$-system.
	Let $\nu$ be a fully-supported $\tilde \psi$-invariant measure on the torus so that $(\mathbb{T}_k,\nu,\tilde \psi)$ is isomorphic to     $(Y,\mu,T)$.
	An old  result of Oxtoby and Ulam \cite[Corollary 1]{MR0005803} implies that  there is a homeomorphism of the torus $g_\nu$ such that the pushforward of $\nu$ via $g_\nu$ is the Lebesgue measure.
	It follows that $ \psi= g_\nu \circ \tilde \psi \circ g_\nu^{-1}$ preserves Lebesgue measure and that $(Y,\mu,T)$ is isomorphic to $(\mathbb{T}_k,m_{\mathbb{T}_k},\psi)$ as a measure preserving system. 
\end{proof}

\section{Universality for graph homomorphisms}\label{section:universal_hom}

The following sections will be concerned with \emph{symbolic dynamical systems} or \emph{subshifts}.
We briefly recall some notation and basic definitions.

For a finite subset $\A$ (the ``alphabet''), the \emph{$\ZD$-full shift} over $\A$ is the topological $\ZD$ dynamical system  $(\A^{\ZD},S)$, where $S$ is the shift action  given by
$$S^{\mi}(x)_\mj= x_{\mi + \mj}.$$ 
A \emph{subshift} is a topological subsystem of the full-shift. 

If $X \subset \A^{\ZD}$ is a subshift, for every finite  $F \subset \ZD$  let
\begin{equation}
\L(X,F) = \left\{x\mid_F:~ x \in X \right\} \subset \A^F. \index{Definitions and notation introduced in Section 9 and Section 10!$\L(X, F)$}
\end{equation}
The elements of $\L(X,F)$ are called \emph{admissible $F$}-configurations for $X$.

Let $X$ be a $\ZD$ subshift, and  let  $g:\NN \to \NN$ be a function so that $\lim_{n\to \infty}\frac{g(n)}{n}=0$.

We say that  $\C = (\tilde C_n)_{n=1}^\infty$ with $\tilde C_n \subset \L(X,F_n)$ is a  \emph{flexible sequence of patterns}\index{Definitions and notation introduced in Section 9 and Section 10!flexible sequence of patterns} for $X$ with respect to $g$ if  for every  $k,n \in \NN$
any $F_{k+g(k)}$-spaced subset  $K$ contained in $F_{n-g(n)-k}$  and $W \in (\t C_k)^K$ there exists $w \in \t C_n$ so that 
\begin{equation}\label{eq:x_interpulate2}
S^{\mi}(w)\mid_{F_{k}}=W(\mi) \mbox{ for all } \mi \in K.
\end{equation}
We say that $\t \C$ as above is a 
 \emph{flexible marker sequence of patterns} \index{Definitions and notation introduced in Section 9 and Section 10!flexible marker sequence of patterns} if in addition
 \begin{equation}\label{eq:marker_property_subshift}
 \forall x \in X \mbox{ and } n \in \NN, \mbox{ the set } \left\{\mi \in \ZD~:~ S^{\mi}(x)\mid_{F_n}\in \t C_n \right\}
 \mbox{ is } F_{n-g(n)} \mbox{-spaced}. 
 \end{equation}
 
 For a flexible sequence of patterns $\tilde \C$ as above let  
 \begin{equation}
 h(\tilde C)= \limsup_{n \to \infty}\frac{1}{|F_n|}\log |C_n|.\index{Definitions and notation introduced in Section 9 and Section 10!$h(\tilde C)$ for a flexible sequence of patterns $\tilde C$}
 \end{equation}
 It is not difficult  to check that any $\ZD$ subshift $X$ that admits a flexible marker sequence of patterns $\t \C= (\t C_n)_{n=1}^\infty$ also admits a flexible marker sequence $\C=(C_n)_{n=1}^\infty \in X^{\NN}$ so that $h(\C)=h(\t \C)$.
 The idea is that if we choose a suitable function $\alpha:\NN\to \NN$ so that $\lim_{n\to \infty}\alpha(n)=+\infty$ and $\lim_{n\to \infty}\frac{\alpha(n)}{n}=0$, and choose for any $w \in \t C_{n +\alpha(n)}$ some $x(w) \in X$ so that $x(w)\mid_{F_{n+\alpha(n)}} = w$, then the sequence $\C = (C_n)_{n=1}^\infty$ defined by 
 	$$ C_n = \left\{x(w):~ w \in \t C_{n+ \alpha(n)}\right\}$$
 	will be a flexible marker  sequence for $(X,S)$ (with respect to  suitable sequences $(\delta_n)_{n=1}^\infty$ and $(\epsilon_n)_{n=1}^\infty$).
 
 By Theorem \ref{thm:spec_sequence_implies_univesality}, a $\ZD$ subshift $(X,S)$ that admits a flexible marker sequence of patterns $\t \C= (\t C_n)_{n=1}^\infty$ is almost Borel $h(\C)$-universal. If furthermore for every $n \in \NN$ and   $w \in \L(X,F_n)$ there exist $m \ge n$ and  $\t w \in \t C_m$ such that $\t w \mid_{F_n} = w$ then $X$ is fully ergodic $h(\t C)$-universal.
 
Let us also remark that for a slightly modified definition for a ``flexible sequence of patterns'', we can show that the existence of a flexible sequence of patterns implies the existence of a flexible marker sequence of patterns of equal entropy. The proof for this is quite similar to that of Proposition \ref{prop:specification_implies_flexible}. This slightly modified definition still implies universality and holds for the systems appearing in our main applications below (hom-shifts and rectangular tilings).  For the sake of presentation, we choose to bring a direct construction for existence of flexible marker sequence of patterns in the application below rather than proving this general implication.
   
Let us  briefly introduce graph homomorphisms and hom-shifts. For background and more we refer for instance to \cite{MR3702862,MR1719348,MR3743365}. 
In the category of graphs, a \emph{homomorphism} is a function between the vertex sets of two graphs that maps adjacent vertices to adjacent vertices. Note that graphs are one dimensional simplicial complexes and a graph homomorphism is just a simplicial map. If $\G$ and $\H$ are graphs, we denote by $Hom(\G, \H)$ the set of all graph homomorphisms from $\G$ to $\H$. For a finite graph $\H$ we let $Hom(\ZD,\H)$ \index{Definitions and notation introduced in Section 9 and Section 10!$Hom(\ZD, \H)$} denote the space of graph homomorphisms from the standard Cayley graph on $\ZD$ to $\H$. As $\ZD$ acts on its Cayley graph by graph automorphisms, it also acts on $Hom (\ZD,\H)$. Viewed as a closed shift-invariant subset of $V_\H^{\ZD}$,   $Hom(\ZD,\H)$ is a subshift. Here graphs will be  undirected and simple in the sense that there is at most one edge between any pair of vertices, but we allow self-loops. Hom-shifts are shift spaces that arise as the space of graph homomorphisms from $\ZD$ to a fixed graph $\H$. For instance, the space of proper $n$-colorings is the set of graph homomorphisms from $\ZD$ to a complete graph on $n$ vertices.

Equivalently, hom-shifts are nearest neighbor $\ZD$ subshifts of finite type that are symmetric with respect to permutations and reflections along the cardinal directions (they are ``isotropic and symmetric''). 
They are very special because of their inherent symmetry, and contain various interesting examples  of shift spaces.
 
Our goal is to prove the following:
\begin{thm}\label{thm: universality of hom-shifts}
	If $\H$ is a finite connected graph which is not bipartite,
	then the subshift $\Hom(\ZD,\H)$ admits a 
	flexible marker sequence of patterns $\t \C= (\t C_n)_{n=1}^\infty$ with $h(\t \C)= h(\Hom(\ZD,\H))$ so that $\bigcup_{n=1}^\infty \t C_n$ is dense in $X$. Thus, $\Hom(\ZD,\H)$ is almost Borel universal. Furthermore, it is fully ergodic universal. 
\end{thm}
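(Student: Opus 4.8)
The plan is to construct, for $\H$ a finite connected non-bipartite graph, a flexible marker sequence of patterns $\t\C = (\t C_n)_{n=1}^\infty$ for $X = \Hom(\ZD,\H)$ whose entropy equals $h(X)$ and whose union is dense. The key structural fact about non-bipartite connected graphs is that they are ``connected at all large odd and even distances'': there is a constant $\ell_0 = \ell_0(\H)$ such that any two vertices $u,v \in V_\H$ can be joined by a walk of length exactly $\ell$ for every $\ell \geq \ell_0$ (one uses an odd closed walk through the graph together with connectedness; this is the standard fact behind the ``universal exponent'' of a primitive matrix). This ``filling at scale $\ell_0$'' is precisely what is needed to stitch together prescribed patterns on a $K$-spaced set of boxes, and is the replacement for the specification property used in Proposition \ref{prop:specification_implies_flexible}.

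First I would fix the gluing geometry. Choose $g(n)$ to grow slowly, say $g(n) = \lfloor \sqrt{n}\rfloor$, and let $\t C_n$ consist of all configurations $w \in \L(X,F_n)$ satisfying two requirements: a ``boundary decoration'' condition that pins down the values of $w$ in a thin shell near $\partial F_n$ (I would fix some reference homomorphism $x_\star \in X$ and demand $w$ agree with $x_\star$ on $F_n \setminus F_{n - g(n)}$, after translating so that the parities align — here the bipartite-versus-non-bipartite distinction matters, and non-bipartiteness lets us ignore parity constraints), together with a ``marker'' condition that forces a specially chosen local pattern (built from the odd closed walk) to appear near the center, so that the set of occurrences of $\t C_n$-patterns is automatically $F_{n-g(n)}$-spaced, giving \eqref{eq:marker_property_subshift}. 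The flexibility property \eqref{eq:x_interpulate2}: given an $F_{k+g(k)}$-spaced $K \subseteq F_{n-g(n)-k}$ and $W \in (\t C_k)^K$, I place the patterns $W(\mi)$ on the boxes $\mi + F_k$, fill the complement with the decorated background $x_\star$ outside, and use the $\ell_0$-filling fact to interpolate homomorphism values in the gaps between consecutive boxes (the gaps have width $\geq g(k)$, much larger than $\ell_0$ once $k$ is large; for small $k$ one absorbs $\ell_0$ into the definition of $g$ or restricts to $k$ large, which is harmless for entropy). Density of $\bigcup_n \t C_n$ follows because any admissible finite pattern of $X$ can be extended to one in $\t C_m$ for $m$ large, again by the filling fact.

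Next I would verify the entropy statement $h(\t\C) = h(X)$. The inequality $h(\t\C)\le h(X)$ is immediate since $\t C_n \subseteq \L(X,F_n)$. For the reverse inequality, one shows that a positive fraction of $\L(X,F_n)$ survives the boundary-decoration and marker constraints: the shell $F_n \setminus F_{n-g(n)}$ has cardinality $o(|F_n|)$, so forcing the configuration there costs at most $o(|F_n|)$ in the exponent, and the central marker likewise costs $o(|F_n|)$; moreover by the $\ell_0$-filling fact every admissible interior pattern on $F_{n - 2g(n)}$ (say) extends to something compatible with the decoration, so $\frac{1}{|F_n|}\log|\t C_n| \geq \frac{1}{|F_n|}\log |\L(X,F_{n-2g(n)})| - o(1) \to h(X)$. (Here I use that $h(X) = \lim_n \frac1{|F_n|}\log|\L(X,F_n)|$, standard for subshifts.) Once $\t\C$ is in hand, the general remarks in the excerpt immediately preceding the theorem statement convert it into a flexible marker sequence $\C = (C_n)_{n=1}^\infty \in X^{\NN}$ with $h(\C) = h(\t\C) = h(X)$ and $\bigcup_n C_n$ dense; then Theorem \ref{thm:spec_sequence_implies_univesality} gives almost Borel $h(X)$-universality, and since $h(X)$ is the topological entropy this is full almost Borel universality. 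Full ergodic universality follows from Proposition \ref{prop:full_universality}, whose hypothesis (every point of $X$ is an accumulation point of $\bigcup_n C_n$) holds because $\bigcup_n C_n$ is dense and $X$ has no isolated points (a connected non-bipartite $\H$ has at least one vertex of degree $\geq 2$, forcing $\Hom(\ZD,\H)$ to be uncountable and perfect).

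The main obstacle I anticipate is making the ``interpolation in the gaps'' genuinely work as a map into $\Hom(\ZD,\H)$ rather than just into $\L(X,F_n)$ — that is, ensuring the local filling pieces are mutually compatible across a $d$-dimensional grid of gap regions and match both the pinned background and the prescribed boxes simultaneously. The clean way to handle this is to do the filling coordinate-direction by coordinate-direction using ``straight walks'': first lay down the background homomorphism $x_\star$ on all of $\ZD \setminus \bigcup_\mi(\mi + F_k)$, then on each box $\mi + F_k$ one must reconcile the given interior pattern $W(\mi)$ with the surrounding $x_\star$-values on the shell — but this is exactly guaranteed by the decoration condition built into $\t C_k$, since $W(\mi) \in \t C_k$ already agrees with $x_\star$ on its own shell $F_k \setminus F_{k-g(k)}$. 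So the decoration condition does double duty: it gives the marker/spacing property and it makes the boxes plug in with zero interpolation needed. With that design choice the ``hard'' step dissolves, and what remains is the bookkeeping of parities (handled by non-bipartiteness) and the routine entropy estimate. A secondary subtlety is the $d\geq 2$ versus $d=1$ case: for $d=1$ the argument is essentially classical (hom-shifts of non-bipartite graphs are mixing SFTs), and for general $d$ the isotropy of hom-shifts lets the one-dimensional filling idea be applied slab-by-slab, which I would spell out carefully in the full proof.
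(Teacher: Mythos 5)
Your overall architecture matches the paper's: a flexible marker sequence of boundary-decorated patterns (the paper uses two nested ``checkerboard'' shells $a_{\mi}=v_{\parity(\mi)}$ built from an edge of $\H$, which is precisely your ``periodic $x_\star$'' with the decoration doing double duty as marker and as zero-interpolation glue), followed by the reduction to Theorem \ref{thm:spec_sequence_implies_univesality} and Proposition \ref{prop:full_universality}. The gluing and density steps are sound modulo the parity bookkeeping, which is where non-bipartiteness actually enters (the paper's Lemma \ref{lem:C_v_0_v_1_w_0_w_1}: paths of every sufficiently large length between any two vertices let one switch the boundary colors across a bounded-width collar).

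The genuine gap is the entropy estimate. Your claim that every admissible pattern on $F_{n-2g(n)}$ extends to one on $F_n$ compatible with the pinned decoration, with $g(n)=o(n)$, is false for $d\ge 2$. For $\H=K_3$ (proper $3$-colorings) the height cocycle of Section \ref{section:SI_subhshifts} is $1$-Lipschitz, and there are admissible patterns on $F_{n-2g(n)}$ whose height varies by order $n$ along their boundary, while the pinned decoration has bounded height oscillation; no extension across a collar of width $o(n)$ can reconcile the two. This is exactly the content of Proposition \ref{prop:No_SI_in_C3}: no subshift of the $3$-colorings has the uniform filling property, so no $o(n)$-width filling argument of the kind you propose can work. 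The paper's Lemma \ref{lem:C_k_full_support} does extend an arbitrary pattern on $F_n$ to a checkerboard-boundary pattern, but only on a box of side comparable to $2dn$, which yields $h(\t \C)>0$ but not $h(\t \C)=h(\Hom(\ZD,\H))$. To obtain the full entropy the paper instead proves Proposition \ref{prop: estimate of entropy} by a reflection positivity argument: under the uniform measure on $\Hom(F_n,\H)$ the checkerboard-boundary event has probability at least $e^{-cn^{d-1}}$, hence the decoration costs nothing at the level of entropy. Some counting argument of this type (showing that a subexponentially small fraction of \emph{all} patterns already carries the decoration, rather than that every pattern can be modified to carry it) is indispensable; your extension-based shortcut cannot be repaired as stated.
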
  

To prove Theorem \ref{thm: universality of hom-shifts}, we will identify a flexible marker sequence of patterns in  $\Hom(\ZD,\H)$ as follows:
We will identify a subset $F \subset \ZD$ with the corresponding induced subgraph of the standard Cayley graph of $\ZD$.
Let $\H =(V_\H,E_\H)$ be a finite connected graph. For $\mi \in \ZD$ let $\parity(\mi) \in \{0,1\}$ denote the parity of the sum of the coordinates of $\mi$.
Given  neighboring vertices $v_0,v_1 \in V_\H$ define for $n \ge 1$
\begin{equation}\label{eq:checkerboard_patterns_tag}
C_{n}^{(v_0,v_1)} = \left\{ a\in \Hom(F_{n},\H):~ a_{\mi} = v_{\parity(\mi)}~ \mbox{ for all } \mi \in \left(F_{n}\setminus F_{n-1}\right)  \right\}.\index{Definitions and notation introduced in Section 9 and Section 10!$C_n^{(v_0, v_1)},$ a set of patterns for hom-shifts}
\end{equation} 

 We assume  that $|E_\H|>1$ (otherwise $\Hom(\ZD,\H)$ is either empty or consists of a single point). Because $\H$ is connected this implies that there exists $v_0 \in V_\H$ that is incident to at least two edges.
So we can find  $v_1,v_2 \in V_\H$ such that $v_1 \ne v_2 $ and $(v_0,v_1),(v_0,v_2) \in E_\H$. 
For $n \ge 1$ let
\begin{equation}\label{eq:checkerboard_patterns_marker}
\tilde C_{n+1} = \left\{ a\in C_{n+1}^{(v_0,v_1)}:  a\mid_{F_n} \in C_{n}^{(v_0,v_2)}  \right\}.\index{Definitions and notation introduced in Section 9 and Section 10!$\tilde C_n$, a set of patterns for hom-shifts}
\end{equation}

In other words the patterns in $\t C_{n}$ have two layers of ``checkerboard boundaries'', so that for each of the two layers there is a different ``color'' for the odd sites.
For this reason when $d\ge 2$ it is not possible for two patterns in $\t C_n$ to overlap non-trivially except at most on the boundary.
In other words, for any $x \in \Hom(\ZD, \H)$ the set 
\begin{equation}\label{eq:t_C_marker}
\left\{\mi \in \ZD:~ S^{\mi}(x)\mid_{F_{n+1}} \in \t C_{n+1} \right\} \mbox{ is } F_n \mbox{-spaced}.
\end{equation}

Our goal is to show that the sequence $\t \C = (\t C_{n+1})_{n=1}^\infty$ is a flexible marker sequence of patterns with $h(\tilde \C)=h(X,S)$.
We will do this by showing that  for any neighboring vertices $v_0, v_1$ the sequence $\C^{(v_0,v_1)}= (C_n^{(v_0,v_1)})_{n=1}^\infty$ is a flexible sequence of patterns for $\Hom(\ZD,\H)$.
It is clear that $|\t C_{n+1}| = |C_n^{(v_0,v_2)}|$ because every $a \in  C_n^{(v_0,v_2)}$ is the restriction of precisely one $\t a \in \t C_{n+1}$. In particular $h(\t \C)=h(\C^{(v_0,v_2)})$.

\begin{prop}\label{prop:checkerboard_flexible}
		Let $\H$ be a connected graph which is not bipartite.
	For any $(v_0,v_1) \in E_\H$ the sequence $\t \C= (\t C_n)_{n=1}^\infty$ given by \eqref{eq:checkerboard_patterns_marker} is flexible  marker sequence of patterns in $\Hom(\ZD, \H)$ and 
	\begin{equation}\label{eq:h_C_eq_h_Hom}
	h(\t \C)=  h(\Hom(\ZD,\H)).
	\end{equation}
	Moreover, there exists $N\in \N$ such that	for any $a \in \L(\Hom(\ZD,\H),F_n)$ there exists $w \in \t C_{(d+1)n+N+2}$ so that $w \mid_{F_n}=a$.
\end{prop}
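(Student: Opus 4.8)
The plan is to prove three things: (1) $\C^{(v_0,v_1)}=(C_n^{(v_0,v_1)})_{n=1}^\infty$ is a flexible sequence of patterns for $\Hom(\ZD,\H)$ for every edge $(v_0,v_1)\in E_\H$; (2) the doubly-decorated sequence $\t\C=(\t C_{n+1})_{n=1}^\infty$ inherits flexibility from $\C^{(v_0,v_2)}$ while in addition enjoying the marker property \eqref{eq:t_C_marker}, which is exactly \eqref{eq:marker_property_subshift}; and (3) the entropy and density statements. The key input is that $\H$ is connected and \emph{not} bipartite: non-bipartiteness gives us an odd closed walk, which is what lets us ``turn around'' and reach the prescribed checkerboard boundary from an arbitrary interior configuration, and connectedness lets us travel between any two vertices.

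First I would establish the basic ``filling'' lemma: given any $a\in\L(\Hom(\ZD,\H),F_n)$ and any edge $(v_0,v_1)$, one can extend $a$ to an element of $C_m^{(v_0,v_1)}$ for $m$ not much larger than $n$ — quantitatively $m=(d+1)n+N+2$ for a constant $N$ depending only on $\H$ (the diameter of $\H$ and the length of a shortest odd cycle). The construction: in the annulus $F_m\setminus F_n$ one needs to interpolate between the arbitrary ``inner'' values of $a$ on $\partial F_n$ and the rigid checkerboard pattern $v_{\parity(\cdot)}$ on the two outermost layers. Working one coordinate direction at a time, or radially, one uses a homomorphism from a path: between two vertices $u,v\in V_\H$ at graph-distance $\le \mathrm{diam}(\H)$ there is a walk of \emph{any} length $\ell$ with $\ell\ge \mathrm{diam}(\H)$ and $\ell\equiv \mathrm{dist}_\H(u,v)\pmod 2$ — and, crucially, since $\H$ has an odd cycle, also of any sufficiently large length of the \emph{other} parity. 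This parity flexibility is what removes the bipartite obstruction: a checkerboard boundary forces a parity constraint that an arbitrary interior pattern need not satisfy, and the odd cycle lets us absorb the mismatch. The factor $(d+1)$ comes from fixing up the $d$ coordinate directions in sequence plus one extra layer for the odd-cycle correction; I would carry this out carefully but it is routine graph-homomorphism surgery. This same lemma, applied to $a\in(\t C_k)^K$-pieces placed on a $K$-spaced set $K\subset F_{n-g(n)-k}$, gives \eqref{eq:x_interpulate2}: each placed copy already carries the checkerboard boundary on its outer two layers, the copies don't interact because they are $F_{k+g(k)}$-spaced, and the complementary region is filled by the same interpolation argument with $g(n)$ chosen large enough (e.g.\ $g(n)=\lceil\sqrt n\rceil$) to accommodate the $O(n^{1/?})$... rather, to accommodate a buffer of fixed width around each copy and near $\partial F_n$.

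Second, the marker property: I would observe that if $x\in\Hom(\ZD,\H)$ and $\mi\ne\mj$ with $S^{\mi}(x)|_{F_{n+1}}\in\t C_{n+1}$ and $S^{\mj}(x)|_{F_{n+1}}\in\t C_{n+1}$, then on the overlap $(\mi+F_{n+1})\cap(\mj+F_{n+1})$ one pattern demands odd sites have color $v_2$ (from its inner layer $C_n^{(v_0,v_2)}$) while — if the overlap meets the outer layer of the other — that other pattern demands the outer checkerboard be $(v_0,v_1)$; comparing the two decorations on a translate forces $\mj-\mi\notin F_n$, since any nontrivial translate by a vector in $F_n$ would superimpose the $v_2$-decorated layer of one on the $v_1$-decorated layer of the other at a common odd site, contradicting $v_1\ne v_2$. (One has to check the few cases according to how deep the overlap reaches; $d\ge 2$ makes the ``two checkerboard layers can't be consistently shifted'' argument clean, and for $d=1$ the statement of Proposition~\ref{prop:checkerboard_flexible} is still fine because $\Hom(\Z,\H)$ has a flexible marker sequence by Proposition~\ref{prop:specification_implies_flexible} once one checks non-uniform specification — but I would simply handle all $d\ge 1$ by the decoration argument directly, noting the $d=1$ overlap analysis needs the inner/outer layers to have \emph{two} distinct decorating colors, which is exactly why we introduced $v_1\ne v_2$.) This yields \eqref{eq:marker_property_subshift} with $g(n)$ as above.

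Third, the entropy identity \eqref{eq:h_C_eq_h_Hom}. Since $\t C_{n+1}\subset\L(\Hom(\ZD,\H),F_{n+1})$ we trivially get $h(\t\C)\le h(\Hom(\ZD,\H))$. For the reverse, I would use the filling lemma in the form: every $a\in\L(\Hom(\ZD,\H),F_n)$ extends to some $w\in\t C_{(d+1)n+N+2}$ with $w|_{F_n}=a$ — which also proves the ``moreover'' clause and density of $\bigcup_n\t C_n$. Distinct $a$'s yield distinct $w$'s (they already differ on $F_n$), so $|\t C_{(d+1)n+N+2}|\ge|\L(\Hom(\ZD,\H),F_n)|$, hence
\[
h(\t\C)=\limsup_{m\to\infty}\frac{\log|\t C_m|}{|F_m|}\ge \limsup_{n\to\infty}\frac{\log|\L(\Hom(\ZD,\H),F_n)|}{|F_{(d+1)n+N+2}|}.
\]
Because $|F_{(d+1)n+N+2}|/|F_n|\to (d+1)^d$ is a bounded ratio, a first naive bound only gives $h(\t\C)\ge h(\Hom(\ZD,\H))/(d+1)^d$, which is not enough; so here I would instead be more economical and only thicken in a \emph{single} direction at a time or, better, reuse the standard fact (as in the proof of Lemma~\ref{lem:C_m_lower_bound} / \eqref{eq:sep_X_n_big}) that because the ``decoration cost'' is a boundary phenomenon and $\delta_n n\to\infty$, one has $\liminf_n |\t C_{n+o(n)}|^{1/|F_n|}\ge \lim_n|\L(\Hom,F_n)|^{1/|F_n|}$: concretely, fill $F_n$ freely and decorate only a sublinear annulus of width $o(n)$, which is possible once we know interpolation can be done in any width $\ge N$ — the parity correction via the odd cycle needs only bounded width. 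Thus the annulus has width $w(n)$ with $w(n)\to\infty$, $w(n)/n\to 0$, giving $|\t C_{n+w(n)}|\ge|\L(\Hom(\ZD,\H),F_{n})|$ and $|F_{n+w(n)}|/|F_n|\to 1$, whence \eqref{eq:h_C_eq_h_Hom}. Finally, the stated applications follow: $\t\C$ being a flexible marker sequence of patterns with $h(\t\C)=h(\Hom(\ZD,\H))$ gives almost Borel universality by Theorem~\ref{thm:spec_sequence_implies_univesality} (via the paragraph converting flexible marker sequences of patterns to flexible marker sequences), and the density/extension property gives full ergodic universality by Proposition~\ref{prop:full_universality}.

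\textbf{Main obstacle.} The genuine difficulty is the filling/interpolation lemma in the presence of the rigid checkerboard boundary: one must show that \emph{any} admissible interior pattern can be connected to the two fixed checkerboard layers within a controlled-width annulus, and the only thing standing in the way is a $\Z/2$ parity obstruction, which non-bipartiteness of $\H$ kills by supplying odd walks of every large length. Getting the width bound to be $O(n)$ (for the ``moreover'' clause) and simultaneously $o(n)$ (for the sharp entropy bound, using a different, thinner annulus) requires organizing the two uses of the lemma carefully; the rest is bookkeeping about spaced copies and overlaps. I would expect to spend most of the write-up making the odd-cycle parity-correction gadget precise and checking it fits in bounded width independent of $n$.
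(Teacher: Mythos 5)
Your treatment of flexibility, the marker property, and the ``moreover'' clause (linear-width filling) is essentially sound and parallels the paper's Lemmas \ref{basic_hom_contraction_ZD}--\ref{lem:checkerboard_flexbile}, though the paper organizes the filling via an explicit graph-homomorphism retraction $\tau_n:\ZD\to F_n$ rather than radial interpolation. The genuine gap is in your proof of the entropy identity \eqref{eq:h_C_eq_h_Hom}. You correctly observe that the linear blow-up $n\mapsto (d+1)n+N+2$ only yields $h(\t\C)\ge h(\Hom(\ZD,\H))/(d+1)^d$, and you propose to repair this by extending every $a\in\L(\Hom(\ZD,\H),F_n)$ to a checkerboard-bounded pattern on $F_{n+w(n)}$ with $w(n)=o(n)$, on the grounds that ``the only thing standing in the way is a $\Z/2$ parity obstruction,'' which the odd cycle kills in bounded width. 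This is false for general connected non-bipartite $\H$. Take $\H=K_3$ (proper $3$-colorings) and $d=2$: every $3$-coloring carries a $1$-Lipschitz height function (see Section \ref{section:SI_subhshifts}), the checkerboard boundary is height-flat, and the coloring $a_{\mi}=(i_1+i_2)\bmod 3$ has height range of order $n$ on $F_n$. Since the height changes by exactly one per edge, flattening it to a checkerboard boundary forces an annulus of width comparable to $n$ — not $o(n)$, and certainly not bounded. So the interpolation step on which your entropy bound rests fails: parity is not the only obstruction.

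The paper's route here is genuinely different: it never extends arbitrary patterns into thin checkerboard collars. Instead, Proposition \ref{prop: estimate of entropy} shows directly that the patterns in $\Hom(F_n,\H)$ with $2$-periodic boundary (the set $\hat C_n$) make up at least an $e^{-cn^{d-1}}$ fraction of all of $\Hom(F_n,\H)$. This is proved by reflection positivity of the uniform measure on $\Hom(F_n,\H)$ and on the discrete torus (Lemmas \ref{lem:many_periodic_conf_hom} and \ref{lemma: checkerboard on one side}): starting from a boundary pattern of not-too-small probability on a fundamental domain and reflecting it outward, with Cauchy--Schwarz controlling the loss at each reflection. Combined with Lemma \ref{lem:ext_C_n_to_tilde_C_n}, which embeds $\hat C_n$ into $\t C_{n+O(1)}$, this gives $|\t C_{n+O(1)}|\ge e^{-cn^{d-1}}|\Hom(F_n,\H)|$ and hence \eqref{eq:h_C_eq_h_Hom}. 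You would need to import this boundary-entropy estimate (or an equivalent one) to close the gap; no interpolation argument of the kind you describe can supply it.
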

We prove Proposition \ref{prop:checkerboard_flexible} in several steps.

\begin{lem}\label{basic_hom_contraction_ZD}
There exists a graph homomorphism  $\tau:\ZD \to \ZD$  from the Cayley graph of $\ZD$ to itself  so that:
\begin{enumerate}
	\item For any 
	$\mi \in \ZD \setminus \{0\}$,
 $\|\tau(\mi)\|_1 < \|\mi\|_1$.  
\item 
$\tau(\vec{0})=\vec{e}_1$.
\item For any $\mi \in \ZD$, $\tau(\mi)$ is adjacent to $\mi$. 
\item For any $n \in \NN$, if 
$\mi,\mj  \in \ZD$ are adjacent vertices with $\mi \not\in F_n$ and $\mj \in F_n$ then $\tau(\mi)=\mj$.
\end{enumerate}
\end{lem}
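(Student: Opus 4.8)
The plan is to define $\tau$ by an explicit case analysis on the coordinates of $\mi = (i_1,\dots,i_d)$, designed so that it moves every point one step closer to the origin in the $\ell^1$-norm while respecting the graph structure. The natural first attempt is the ``radial'' map that decrements the absolute value of the first nonzero coordinate: if $j$ is the least index with $i_j \ne 0$, send $\mi$ to $\mi - \mathrm{sign}(i_j)\vec e_j$. This is a graph homomorphism (it changes exactly one coordinate by $\pm 1$, so it sends edges of the Cayley graph to edges or loops), it strictly decreases $\|\cdot\|_1$ off the origin, and $\tau(\mi)$ is always adjacent to $\mi$. The only defect is the value at $\vec 0$: the formula is undefined there, or if we patch it to $\vec 0 \mapsto \vec e_1$ we must check that adjacency to its neighbors is preserved and that property (4) still holds. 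So first I would write down this candidate, then handle the origin separately.

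The second step is to verify property (4), which is the genuinely delicate requirement: whenever $\mi \notin F_n$ is adjacent to $\mj \in F_n$, we need $\tau(\mi) = \mj$. Adjacency plus $\mi \notin F_n \ni \mj$ forces $\mi$ and $\mj$ to differ in exactly one coordinate, say coordinate $\ell$, with $|i_\ell| = n+1$ and $j_\ell = \pm n$ (same sign), and all other coordinates equal and lying in $\{-n,\dots,n\}$. For the radial map to output $\mj$ we would need $\ell$ to be precisely the least index at which $\mi$ is nonzero and the decrement to act in that coordinate — but that need not hold, since $\mi$ may have smaller-index coordinates that are nonzero. This means the naive radial map fails (4). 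The fix is to make $\tau$ prioritize decrementing a coordinate of \emph{maximal absolute value}: let $j$ be an index (say the least such) achieving $\max_\ell |i_\ell|$, and set $\tau(\mi) = \mi - \mathrm{sign}(i_j)\vec e_j$ for $\mi \ne \vec 0$, and $\tau(\vec 0) = \vec e_1$. Now in the situation of (4), the coordinate $\ell$ with $|i_\ell| = n+1$ is the unique coordinate of maximal absolute value of $\mi$ (all others are $\le n$), so $j = \ell$ and $\tau(\mi) = \mi - \mathrm{sign}(i_\ell)\vec e_\ell = \mj$, as required. I expect this tie-breaking-by-maximal-coordinate to be the main obstacle, in the sense that it is the one design choice that makes all four properties simultaneously true, and it needs to be identified correctly.

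The third step is a routine check of the remaining properties for this corrected $\tau$. For (1): if $\mi \ne \vec 0$ then $|i_j| \ge 1$, and $\tau(\mi)$ differs from $\mi$ by reducing $|i_j|$ by one, so $\|\tau(\mi)\|_1 = \|\mi\|_1 - 1 < \|\mi\|_1$. For (2): $\tau(\vec 0) = \vec e_1$ by definition. For (3): for $\mi \ne \vec 0$, $\tau(\mi)$ and $\mi$ differ in a single coordinate by $1$, hence are adjacent in the Cayley graph; for $\mi = \vec 0$, $\vec 0$ and $\vec e_1$ are adjacent. That $\tau$ is a graph homomorphism follows because adjacent $\mi,\mi'$ differ in one coordinate by one, and one checks in the few cases (same maximal-coordinate index chosen for both; the maximal index shifting between them) that $\tau(\mi)$ and $\tau(\mi')$ are either equal or adjacent — I would dispatch this by noting $\|\tau(\mi) - \tau(\mi')\|_1 \le 1$ in each case, which can be seen from $\|\tau(\mi)-\mi\|_1 = 1$ (or $0$) and $\|\mi - \mi'\|_1 = 1$ together with the explicit description. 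Finally one should double-check that the patched value at $\vec 0$ does not break the homomorphism property at edges incident to $\vec 0$: a neighbor of $\vec 0$ is some $\pm\vec e_\ell$, whose maximal-coordinate index is $\ell$ with value $\pm 1$, so $\tau(\pm \vec e_\ell) = \vec 0$, which is adjacent to $\tau(\vec 0) = \vec e_1$. This completes the verification.
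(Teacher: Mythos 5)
Your construction is exactly the one in the paper: for $\mi\ne\vec 0$ decrement the coordinate of maximal absolute value (least index as tie-break) towards zero, and send $\vec 0\mapsto\vec e_1$; your identification of why the "first nonzero coordinate" variant fails property (4) and why the max-coordinate choice fixes it is precisely the paper's design. The only cosmetic caveat is in your homomorphism check: "equal or adjacent" should just be "adjacent" (equality is impossible by parity, and the Cayley graph of $\ZD$ has no self-loops), but this does not affect correctness.
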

\begin{proof}
	We define $\tau:\ZD\to \ZD$ as follows: $\tau(\vec{0})=\vec{e}_1$, and if $\mi = (i_1,\ldots,i_d) \ne \vec{0}$ 
	let 
	$$\xi(\mi) = \min \{ 1\le t \le d:~ |i_t|= \|\mi\|_\infty \}$$
	and let
	$$
	\tau(\mi) =\begin{cases}
	\mi - \vec{e}_{\xi(\mi)} & i_{\xi(\mi)} >0\\
	\mi + \vec{e}_{\xi(\mi)} & i_{\xi(\mi)} <0.	
	\end{cases}
	$$
	
	Suppose $\mi,\mj  \in \ZD$ are adjacent vertices then there exists $1\le t \le d$ so that $\mi = \mj \pm \vec{e}_t$.
	Without loss of generality assume $\|\mi\|_1 > \|\mj \|_1$.
	If $\xi(\mi)\ne t$ then $\|\mj \|_\infty = \|\mi \|_\infty$ and $\xi(\mi)= \xi(\mj)$ so $\tau(\mi)=\tau(\mj) \pm \vec{e}_t$.
	If $\xi(\mi)= t$ then $\tau(\mi)=\mj$, and so $\tau(\mi)=\tau(\mj) \pm \vec{e}_{\xi(\mj)}$.
	In either case, we see that $\tau(\mi)$ and $\tau(\mj)$ are adjacent vertices. This shows that $\tau:\ZD \to \ZD$ is indeed a graph homomorphism. 
	Properties $(1),(2),(3)$ follow directly from the definition.
	Let us check property $(4)$:
	If $\mi,\mj  \in \ZD$ are adjacent vertices and in addition 
	$\mi \not\in F_n$ and $\mj \in F_n$ then it follows that $\mj = \mi \pm \vec{e}_{\xi(\mi)}$ and    $\mj = \tau(\mi)$.

\end{proof}

\begin{lem}\label{lem:hom_ZD_B_n}
For any $n \in \NN$ there exists a graph homomorphism  $\tau_n:\ZD \to \ZD$  from the Cayley graph of $\ZD$ to itself so that the following holds.
\begin{itemize}
	\item $\tau_n(\mi)=\mi$ for all $\mi  \in F_n$.
	\item $\tau_n(\ZD)= F_n$.
	\item If $\|\mi\|_1 \ge 2nd$ then either $\tau_n(\mi) =\vec{0}$ or $\tau_n(\mi)=\vec{e}_1$, according to the parity of $\mi$.
\end{itemize}
\end{lem}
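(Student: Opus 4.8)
\textbf{Proof proposal for Lemma \ref{lem:hom_ZD_B_n}.}

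The plan is to iterate the contraction map $\tau$ from Lemma \ref{basic_hom_contraction_ZD} enough times that every vertex lands inside $F_n$, then compose with a projection-like ``folding'' homomorphism of $\ZD$ onto $F_n$ that fixes $F_n$ pointwise. The key observation is that Lemma \ref{basic_hom_contraction_ZD}(1) gives strict decrease of the $\ell^1$-norm on $\ZD \setminus \{\vec 0\}$, and Lemma \ref{basic_hom_contraction_ZD}(4) guarantees that once a vertex has entered $F_n$ it is ``absorbed'' correctly by later applications: if $\mi \notin F_n$ but $\tau(\mi) \in F_n$, then in fact $\tau(\mi)$ is one of the neighbours of $\mi$ inside $F_n$, and $\tau$ fixes no point of $F_n$ except by sending it further toward the origin. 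So I would first construct, for each $n$, a homomorphism $\sigma_n : \ZD \to F_n$ that is the identity on $F_n$ and maps $\ZD$ onto $F_n$. This can be done coordinate-wise by the standard ``reflection folding'' of $\ZZ$ onto $\{-n,\dots,n\}$: define $f_n : \ZZ \to \{-n,\dots,n\}$ to be the $4n$-periodic triangle-wave map with $f_n(j) = j$ for $|j|\le n$, $f_n(j) = 2n - j$ for $n \le j \le 3n$, extended periodically; this is a graph homomorphism of the Cayley graph of $\ZZ$ to the induced path $\{-n,\dots,n\}$ (adjacent integers go to adjacent or equal integers — here always adjacent since $f_n$ never repeats a value on consecutive integers), and then $\sigma_n(\mi) = (f_n(i_1),\dots,f_n(i_d))$. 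One must check $\sigma_n$ is a graph homomorphism of $\ZD$: two adjacent $\mi,\mj \in \ZD$ differ in exactly one coordinate by $\pm 1$, so $\sigma_n(\mi),\sigma_n(\mj)$ differ in exactly that coordinate by $f_n(i_t\pm 1) - f_n(i_t) = \pm 1$, hence are adjacent. Also $\sigma_n$ fixes $F_n$ and has image $F_n$.

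Next I would address the third bullet, which requires the parity-dependent behaviour ``if $\|\mi\|_1 \ge 2nd$ then $\tau_n(\mi) \in \{\vec 0, \vec e_1\}$ according to parity.'' The folding $\sigma_n$ by itself does not do this — it sends far-away vertices to boundary points of $F_n$, not to the centre. So the construction must be: first apply a large iterate $\tau^{\circ m}$ of the single-step contraction from Lemma \ref{basic_hom_contraction_ZD} with $m = m(n)$ chosen so that $\tau^{\circ m}$ collapses the ball $\|\mi\|_1 < 2nd$ — well, rather the complement — appropriately, then apply $\sigma_n$ (or apply $\sigma_n$ first and then a few steps of $\tau$). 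Let me reconsider: the cleanest route is $\tau_n := \sigma_n \circ \tau^{\circ m}$ where $m$ is chosen depending on $n$. By Lemma \ref{basic_hom_contraction_ZD}(1), $\|\tau(\mi)\|_1$ strictly decreases, and by Lemma \ref{basic_hom_contraction_ZD}(4) once a vertex's image is in $F_n$ it stays within the ``orbit into the origin''; iterating $\tau$ enough times sends every $\mi$ with bounded norm into $\{\vec 0,\vec e_1\}$ (alternating by parity, since $\tau(\vec 0) = \vec e_1$ and $\tau(\vec e_1) = \vec 0$ as $\|\vec e_1\|_1 = 1$ forces $\tau(\vec e_1) = \vec 0$). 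The issue is that $\tau^{\circ m}$ also moves points of $F_n$, so $\tau_n$ would not be the identity on $F_n$. To fix this I would instead restrict the contraction to act only outside $F_n$: define a ``conditional'' homomorphism $\tau_{(n)}$ by $\tau_{(n)}(\mi) = \mi$ if $\mi \in F_n$ and $\tau_{(n)}(\mi) = \tau(\mi)$ if $\mi \notin F_n$. This is a graph homomorphism precisely because of Lemma \ref{basic_hom_contraction_ZD}(4): the only adjacent pairs $(\mi,\mj)$ with $\mi \notin F_n$, $\mj \in F_n$ have $\tau(\mi) = \mj = \tau_{(n)}(\mj)$, so adjacency is preserved across the ``boundary of $F_n$''; pairs both inside or both outside are handled by identity and by $\tau$ being a homomorphism, respectively (with the caveat that $\tau$ maps $\ZD \setminus F_n$ to $F_n \cup (\ZD \setminus F_n)$, and the image values agree with $\tau_{(n)}$). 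Then set $\tau_n := \tau_{(n)}^{\circ m}$ for $m = m(n)$ large enough; by the strict norm decrease this stabilises, $\tau_n$ fixes $F_n$, has image $F_n$ (every vertex is eventually pulled in), and far vertices reach $\{\vec 0, \vec e_1\}$ by the right parity.

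The main obstacle I anticipate is verifying cleanly that the conditional map $\tau_{(n)}$ is genuinely a graph homomorphism and that the far-field / parity statement survives iteration, because one has to track what $\tau$ does on the ``shell just outside $F_n$'' carefully — in particular one must make sure that a vertex $\mi$ with $\|\mi\|_1 \ge 2nd$ needs at most (say) $\|\mi\|_1$ applications of $\tau_{(n)}$ to reach $\{\vec 0, \vec e_1\}$, but this number is unbounded over all of $\ZD$, so $m(n)$ cannot be chosen uniformly. The resolution is that $\tau_n$ need not literally be a finite iterate: instead one defines $\tau_n(\mi)$ directly as ``apply $\tau_{(n)}$ repeatedly until the value stabilises'', which is well-defined pointwise since the $\ell^1$-norm is a nonnegative integer strictly decreasing until the orbit hits $F_n$ and then $\tau_{(n)}$ acts as $\tau$ inside, contracting into $\{\vec 0,\vec e_1\}$ (a periodic orbit of period $2$ on which we stop, or rather: stop at the first time the orbit enters $\{\vec 0,\vec e_1\}$, recording $\vec 0$ if $\parity(\mi)$ equals $\parity(\vec 0)$ and $\vec e_1$ otherwise — one checks this ``limit'' is still a graph homomorphism because $\tau_{(n)}$ is and adjacent vertices have stabilisation times differing by at most one). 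Once this is set up, the three bullets are immediate: the identity on $F_n$ and image $= F_n$ are built in, and for $\|\mi\|_1 \ge 2nd$ the orbit cannot terminate inside $F_n$ away from the centre (since from any boundary-adjacent point $\tau$ continues to decrease the norm toward $0$), hence terminates at $\vec 0$ or $\vec e_1$ according to parity, parity being preserved since $\tau$ always moves to an adjacent vertex.
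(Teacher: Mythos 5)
Your construction has a genuine gap at its central step: the ``conditional'' map $\tau_{(n)}$ (identity on $F_n$, equal to $\tau$ outside) is \emph{not} a graph homomorphism. Take adjacent $\mi \notin F_n$, $\mj \in F_n$. By property (4) of Lemma \ref{basic_hom_contraction_ZD} you get $\tau_{(n)}(\mi)=\tau(\mi)=\mj=\tau_{(n)}(\mj)$, so the two images are \emph{equal}, not adjacent. The Cayley graph of $\ZD$ has no self-loops, so collapsing an edge to a point is fatal — and it is fatal precisely for the intended application, since composing a proper coloring with such a map would assign the same color to adjacent sites. There is also an internal inconsistency in the stabilization step: if $\tau_{(n)}$ is the identity on $F_n$, then iterating it stabilizes at the \emph{first} point of $F_n$ the orbit reaches, i.e.\ near the boundary of $F_n$, so far-away vertices never reach $\{\vec{0},\vec{e}_1\}$ and the third bullet fails; if instead you let it act as $\tau$ inside $F_n$ (as your last paragraph tacitly assumes), the first bullet fails.

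The paper resolves both problems simultaneously with a level-dependent power: writing $k$ for the least number of $\tau$-iterations needed to bring $\mi$ into $F_n$, one sets $\tau_n(\mi)=\tau^{2k}(\mi)$ (and $\tau_n=\mathrm{id}$ on $F_n$, which is level $0$). Adjacent vertices have levels differing by at most one; when $\mi$ has level $k$ and $\mj$ has level $k-1$, the one-step collapse $\tau^{k}(\mi)=\tau^{k-1}(\mj)$ is undone by the extra applications, since $\tau^{2k}(\mi)=\tau^{2k-1}(\mj)$ is adjacent to $\tau^{2k-2}(\mj)$ because $\tau$ always moves to a neighbour. The same doubling pushes deep-level points ($\|\mi\|_1\ge 2nd$ forces level $\ge dn$) an additional $k$ steps \emph{into} $F_n$, all the way to $\{\vec{0},\vec{e}_1\}$, while leaving $F_n$ itself untouched. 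If you want to salvage your approach, you need some such device that re-separates images after crossing the boundary of $F_n$; your $\sigma_n$-folding is a correct homomorphism but, as you noted, lands on the boundary rather than the centre, and the naive conditional iteration cannot be repaired without it.
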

\begin{proof}
	Let $\tau:\ZD \to \ZD$ be as in the statement of Lemma \ref{basic_hom_contraction_ZD}.
	Define $\tau_n:\ZD \to \ZD$  as follows:
	\begin{equation}
	\tau_n(\mi) = \begin{cases}
	\mi & \mi \in F_n\\
	\tau^{2k}(\mi) & \mi \in \tau^{-k}(F_n)\setminus \tau^{-(k-1)}(F_n).
	\end{cases}
	\end{equation}
	
Let us check that $\tau_n$ is a graph homomorphism: 
Let us say that $\mi \in \ZD\setminus F_n$ is of level $k >0$ if $\mi \in \tau^{-k}(F_n)\setminus \tau^{-(k-1)}(F_n)$ (if $\mi \in F_n$ we say it is of level $0$).
Suppose $\mi,\mj \in \ZD$ are adjacent. If both $\mi$ and $\mj$ are of the same level $k >0$,
then $\tau_n(\mi)=\tau^{2k}(\mi)$ and $\tau_n(\mj)=\tau^{2k}(\mj)$ and so $\tau_n(\mi)$ is adjacent to $\tau_n(\mj)$ because $\tau^{2k}$ is a graph homomorphism.
Otherwise, without loss of generality $\mi$ is of level $k >0$ and $\mj$ is of level $k-1$. 
Since $\tau$ is a graph homomorphism, $\tau^{k-1}(\mi)\notin F_n$ is adjacent to $\tau^{k-1}(\mj)\in F_n$. But by the properties of $\tau$ we have that $\tau^{k}(\mi)=\tau^{k-1}(\mj)$ and consequently $\tau_n(\mi)=\tau^{2k}(\mi)$ is adjacent to $\tau_n(\mj)=\tau^{2k-2}(\mj)$.
This shows that $\tau_n$ is a graph homomorphism. 

It is clear from the definition that $\tau_n(\mi)=\mi$ for $\mi \in F_n$.
Because $\tau (F_n) \subseteq F_n$ it follows that $\tau_n(\ZD) = F_n$.
 For any $\mj \in F_n$, and $k \ge 2dn$,  either $\tau^{k}(\mj)=\vec{0}$ or  
 $\tau^k(\mj)=\vec{e}_1$. Also, if $\|\mi\|_1 \ge 2nd$ then 
there exists $k \ge dn$ so that $\mi \in \tau^{-k}(F_n)\setminus \tau^{-(k-1)}(F_n)$. This shows that if $\|\mi\|_1 \ge 2nd$ then either $\tau_n(\mi) =\vec{0}$ or $\tau_n(\mi)=\vec{e}_1$, according to the parity of $\mi$.
\end{proof}

\begin{lem}\label{lem:C_v_0_v_1_w_0_w_1}
If $\H$ is a connected graph which is not bipartite then there exists $N \in \NN$ so that for any $(v_0,v_1),(w_0,w_1) \in E_\H$, $n \in \N$, $k \ge N$ and $a \in C_n^{(v_0,v_1)}$ there exists $\t a \in C_{n+k}^{(w_0,w_1)}$ such that $\t a\mid_{F_n}= a$. 
\end{lem}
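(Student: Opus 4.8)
The plan is to wrap $F_n$ in concentric ``checkerboard shells'' $S_j := F_{n+j}\setminus F_{n+j-1}$, $j=0,1,\dots,k$, and to let the two colours of the $j$-th shell evolve, shell by shell, along a walk in $\H$ chosen so as to start at the edge $(v_0,v_1)$ (which is the colouring of $S_0$ prescribed by $a$, cf.\ \eqref{eq:checkerboard_patterns_tag}) and to end at the edge $(w_0,w_1)$. The one non-elementary ingredient is the classical fact that, since $\H$ is connected and non-bipartite, its adjacency matrix is primitive; hence there is a constant $N_\H$, depending only on $\H$, such that for all $u,v\in V_\H$ and all $\ell\ge N_\H$ there is a walk of length exactly $\ell$ from $u$ to $v$ in $\H$. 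I would take $N=N_\H+1$.

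First I would record the ``shell adjacency'' bookkeeping. Since any two adjacent vertices of $\ZD$ have opposite parity, an assignment that is constant-by-parity on each shell, i.e.\ $\tilde a(\mi)=p_j^{\parity(\mi)}$ whenever $\|\mi\|_\infty=n+j$ with $p_j=(p_j^0,p_j^1)\in V_\H\times V_\H$, glued to $\tilde a|_{F_n}=a$, defines a graph homomorphism $F_{n+k}\to\H$ as soon as: (i) each $p_j$ is an edge of $\H$; (ii) $p_j^0\sim p_{j+1}^1$ and $p_j^1\sim p_{j+1}^0$ for $0\le j<k$; and (iii) $p_0=(v_0,v_1)$ and $p_k=(w_0,w_1)$. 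Condition (iii) makes $\tilde a$ agree with $a$ on $S_0$ and puts $\tilde a$ in $C_{n+k}^{(w_0,w_1)}$, while conditions (i) and (ii) handle, respectively, the adjacencies inside a shell and the adjacencies between consecutive shells; I would check that in the $\ell^\infty$ geometry these are the only adjacencies that occur (a site of $S_{j+1}$ is adjacent only to sites of $S_{j-1}\cup S_j\cup S_{j+1}\cup S_{j+2}$, and interior sites of $F_n$ only to sites of $F_n$, so nothing else arises).

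Next I would produce a sequence $(p_j)_{j=0}^k$ satisfying (i)--(iii). Apply the substitution $q_j=p_j$ for $j$ even and $q_j=(p_j^1,p_j^0)$ for $j$ odd. A short computation shows that (i)--(ii) are equivalent to: each $q_j$ is an edge of $\H$ and $q_j^0\sim q_{j+1}^0$, $q_j^1\sim q_{j+1}^1$ for all $j$ --- that is, $(q_j)_{j=0}^k$ is a walk in the auxiliary graph $\mathcal E(\H)$ whose vertices are the ordered edges of $\H$ and where $(p,q)\sim(p',q')$ iff $p\sim p'$ and $q\sim q'$. For any walk $w_0\sim w_1\sim w_2\sim\cdots$ in $\H$, the ``sliding window'' $\bigl((w_i,w_{i+1})\bigr)_i$ is a walk in $\mathcal E(\H)$; combining this with the walk-length fact above, I would show that for every edge $(p,q)$ of $\H$ and every $s\ge N_\H+1$ there is a walk of length exactly $s$ in $\mathcal E(\H)$ from $(v_0,v_1)$ to $(p,q)$, realised by the $\H$-walk $v_0,v_1,\dots,p,q$ whose middle portion is any length-$(s-1)$ walk from $v_1$ to $p$. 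Taking $s=k$ and $(p,q)=(w_0,w_1)$ when $k$ is even, $(p,q)=(w_1,w_0)$ when $k$ is odd, and then undoing the substitution, yields $(p_j)_{j=0}^k$ with $p_0=(v_0,v_1)$, $p_k=(w_0,w_1)$ and (i)--(ii); feeding it into the shell construction gives the desired $\tilde a\in C_{n+k}^{(w_0,w_1)}$ with $\tilde a|_{F_n}=a$, for every $k\ge N=N_\H+1$, uniformly in $n$ and in the four chosen edges.

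I expect the main difficulty to be bookkeeping rather than conceptual: pinning down the shell-adjacency conditions precisely in every dimension and at the $\ell^\infty$-corners, and keeping track of the parity of $k$ (which forces the target ordered edge to be written $(w_1,w_0)$ for odd $k$, then corrected by the substitution). The only genuinely substantive input is the walk-length statement for connected non-bipartite graphs, which can be dispatched in a line via primitivity of the adjacency matrix (Perron--Frobenius).
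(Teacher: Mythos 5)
Your proposal is correct and follows essentially the same route as the paper: both colour the concentric $\ell^\infty$-shells by a checkerboard pattern driven by a walk in $\H$ of prescribed length from $v_1$ to $w_0$ (whose existence for all lengths $\ge N_\H$ is the one substantive input, coming from connectedness plus non-bipartiteness), with the parity bookkeeping that you package via the auxiliary ordered-edge graph $\mathcal{E}(\H)$ appearing in the paper as the explicit alternation between $C_{n+t}^{(v_t,v_{t+1})}$ and $C_{n+t}^{(v_{t+1},v_t)}$ according to the parity of $t$.
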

\begin{proof}
	The fact that $\H$ is connected and not bipartite implies that  there exists $N \in \NN$ so that for every $n \geq N$ there exists a path of length precisely  $n$ between any two vertices of $\H$ (a path of length $m$ is a graph homomorphism from $\{0,1,\ldots, m\}$ to $\H$).
	Suppose $n \in \NN$, $k \ge N+1$, $(v_0,v_1),(w_0,w_1) \in E_\H$ and $a \in C_n^{(v_0,v_1)}$.
	We split the proof into two cases depending on the parity of $k$.
	
	Let $k\in \N$ be even. Then we can choose a path $v_0, v_1, v_2, \ldots, v_{k+1}$  such that $v_k=w_0$ and $v_{k+1}=w_1$ and define $\t a \in C_{n+k}^{(w_0,w_1)}$ by the implicit conditions
	\begin{eqnarray*}
		\t a|_{F_n}&=&a\\
		\t a|_{F_{n+t}}&\in& C_{n+t}^{(v_t, v_{t+1})}\text{ if }1\leq t\leq k\text{ is even}\\
		\t a|_{F_{n+t}}&\in& C_{n+t}^{(v_{t+1}, v_{t})}\text{ if }1\leq t\leq k\text{ is odd}.
	\end{eqnarray*}
	The case of odd  $k$ is identical, except that we set $v_k=w_1$ and $v_{k+1}=w_0$.
\end{proof}

\begin{lem}\label{lem:C_k_full_support}
	Let $\H$ be a connected graph which is not bipartite and $(v_0, v_1)\in E_{\H}$.
	 There exists $N\in \N$ such that for all $n\in \N$, $k\geq N+d$ and $a\in \Hom(F_n, \H)$ there exists $\t a\in C_{2dn+k}^{(v_0, v_1)}$ for which $\t a|_{F_n}=a$. 
\end{lem}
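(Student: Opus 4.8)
The plan is to reduce everything to Lemma~\ref{lem:C_v_0_v_1_w_0_w_1} by first manufacturing, out of the arbitrary finite pattern $a$, a \emph{global} graph homomorphism whose restriction to a sufficiently large box already carries a checkerboard boundary. Fix $(v_0,v_1)\in E_\H$ and let $N$ be the integer furnished by Lemma~\ref{lem:C_v_0_v_1_w_0_w_1}; we will show that this same $N$ works, so in particular the assertion holds for every $k\ge N+d$. Let $n\in\N$ and $a\in\Hom(F_n,\H)$ be given.

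The first step would be to form $b:=a\circ\tau_n$, where $\tau_n:\ZD\to\ZD$ is the graph homomorphism supplied by Lemma~\ref{lem:hom_ZD_B_n}. Since $\tau_n(\ZD)=F_n$, the composition is defined on all of $\ZD$, and being the composition of graph homomorphisms (with $F_n$ regarded as an induced subgraph of the Cayley graph of $\ZD$) it is a graph homomorphism $\ZD\to\H$, i.e.\ $b\in\Hom(\ZD,\H)$. Because $\tau_n$ restricts to the identity on $F_n$, we get $b|_{F_n}=a$.

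The second step would be to analyse $b|_{F_{2dn}}$. Every $\mi$ in the outer shell $F_{2dn}\setminus F_{2dn-1}$ satisfies $\|\mi\|_1\ge\|\mi\|_\infty=2dn=2nd$, so the last clause of Lemma~\ref{lem:hom_ZD_B_n} applies: $\tau_n(\mi)=\vec{0}$ when $\parity(\mi)=0$ and $\tau_n(\mi)=\vec{e}_1$ when $\parity(\mi)=1$ (the parity matching being consistent because the Cayley graph of $\ZD$ is connected and bipartite with classes $\{\mi:\parity(\mi)=0\}$ and $\{\mi:\parity(\mi)=1\}$, and $\tau_n$ fixes $\vec{0}$, hence preserves parity). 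Consequently $b_\mi=a_{\vec{0}}$ at the even sites of the shell and $b_\mi=a_{\vec{e}_1}$ at the odd sites. Writing $w_0:=a_{\vec{0}}$ and $w_1:=a_{\vec{e}_1}$, we have $(w_0,w_1)\in E_\H$ since $\vec{0}$ and $\vec{e}_1$ are adjacent in $\ZD$ and $a$ is a homomorphism, so by the definition \eqref{eq:checkerboard_patterns_tag} we conclude $b|_{F_{2dn}}\in C_{2dn}^{(w_0,w_1)}$.

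Finally I would invoke Lemma~\ref{lem:C_v_0_v_1_w_0_w_1} with starting pattern $b|_{F_{2dn}}\in C_{2dn}^{(w_0,w_1)}$ and target edge $(v_0,v_1)$: for every $k\ge N$ there is $\t a\in C_{2dn+k}^{(v_0,v_1)}$ with $\t a|_{F_{2dn}}=b|_{F_{2dn}}$, and hence $\t a|_{F_n}=b|_{F_n}=a$, which is exactly the claim (with room to spare, covering $k\ge N+d$). There is no serious obstacle here: the one genuinely substantive idea is the use of the retraction $\tau_n$ to produce, at no cost, a homomorphism with a checkerboard boundary on a box of controlled size, after which the ``boundary-colour adjustment'' lemma finishes the job; the only point demanding care is the parity bookkeeping in the second step, so that the induced boundary colours match the convention in \eqref{eq:checkerboard_patterns_tag}.
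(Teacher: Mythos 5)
Your proposal is correct and follows essentially the same route as the paper: compose $a$ with the retraction $\tau_n$ from Lemma \ref{lem:hom_ZD_B_n} to obtain a pattern on $F_{2dn}$ extending $a$ with checkerboard boundary $(w_0,w_1)=(a_{\vec 0},a_{\vec e_1})$, then apply Lemma \ref{lem:C_v_0_v_1_w_0_w_1} to convert the boundary to $(v_0,v_1)$. Your extra remark on the parity bookkeeping (that $\tau_n$ preserves parity since it fixes $\vec 0$) is a correct and slightly more careful treatment of a point the paper leaves implicit.
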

\begin{proof}
	Given $a \in \Hom(F_n,\H)$ we can define $\hat a \in \Hom(F_{2nd},\H)$
	by 
\begin{equation}
\hat a_\mi = a_{\tau_n(\mi)},
\end{equation}
where $\tau_n:\ZD \to \ZD$ is the graph homomorphism given by Lemma \ref{lem:hom_ZD_B_n}. Note that $a_{\tau_n(\mi)}$ is well defined for any $\mi \in \ZD$ because $\tau_n(\mi) \in F_n$. Because it is a composition of graph homomorphisms, $\hat a \in  \Hom(F_{2nd},\H)$. Because the restriction of $\tau_n$ is the identity it follows that  $\hat a\mid_{F_n}=a$. By the last property of $\tau_n:\ZD \to \ZD$ in Lemma \ref{lem:hom_ZD_B_n} it follows that there exists $(w_0,w_1) \in E_\H$ 
so that for any $\mi \in F_{2nd}\setminus F_{2nd-1}$ either $\hat a_\mi = w_0$ or $\hat a_\mi = w_1$, according to the parity of $\mi$. 
In other words, $\hat a \in C_{2nd}^{(w_0,w_1)}$ for some $(w_0,w_1) \in E_\H$.
Let $N$ be the integer given by  Lemma \ref{lem:C_v_0_v_1_w_0_w_1}, and suppose $k \ge N$.
Apply Lemma \ref{lem:C_v_0_v_1_w_0_w_1} to find $\t a \in C_{2nd + k}^{(v_0,v_1)}$ so that $\t a \mid_{F_{2nd}} = \hat a$.
\end{proof}

Note that Lemma \ref{lem:C_k_full_support} in particular tells us that $\L(\Hom(\ZD,\H),F_n)= \Hom(F_n,\H)$. 
\begin{lemma}\label{lem:checkerboard_flexbile}
	Let $\H$ be a connected graph which is not bipartite.
	There exists $N \in \NN$ so that for any $(v_0,v_1),(w_0,w_1) \in E_\H$,  $k, n \in \NN$, any $F_{k+N}$-spaced subset $K \subset F_{n-N-k-2}$ and any $W \in (C_{k}^{(v_0,v_1)})^K$ there exists 
	$w \in C_{n}^{(w_0,w_1)}$ so that 
	$$ S^{\mi}(w)\mid_{F_k} = W(\mi) \mbox{ for all } \mi \in K.$$	
\end{lemma}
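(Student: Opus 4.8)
The plan is to prove Lemma~\ref{lem:checkerboard_flexbile} by combining the ``contraction'' homomorphism $\tau_n$ of Lemma~\ref{lem:hom_ZD_B_n}, applied locally around each marked site, with the path-filling of Lemma~\ref{lem:C_v_0_v_1_w_0_w_1}. The point is that once a configuration is specified on $K + F_k$ by the given patterns $W(\mi)\in C_k^{(v_0,v_1)}$, the complement is ``almost all boundary'' and we must extend a partial graph homomorphism on this complement to all of $F_n$ while ending up with the checkerboard boundary $(w_0,w_1)$ on $F_n\setminus F_{n-1}$.

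Concretely, let $N$ be the integer from Lemma~\ref{lem:C_v_0_v_1_w_0_w_1}; by enlarging it we may also assume $N \ge 2d$ and that the conclusion of Lemma~\ref{basic_hom_contraction_ZD}/\ref{lem:hom_ZD_B_n} holds. First I would note that each ``island'' $\mi + F_k$ for $\mi\in K$ carries the pattern $W(\mi)\in C_k^{(v_0,v_1)}$, so on the ring $\mi + (F_k\setminus F_{k-1})$ the configuration agrees with the checkerboard pattern determined by $(v_0,v_1)$ and $\parity(\cdot)$. Since distinct islands are $F_{k+N}$-spaced (hence the $F_k$-neighborhoods are well-separated, with a buffer of size $N$), I can extend the configuration from each $\mi+F_k$ to $\mi+F_{k+N}$ by ``padding with checkerboard layers'': apply Lemma~\ref{lem:C_v_0_v_1_w_0_w_1} with the target edge $(v_0,v_1)$ and $n=k$, $k=N$, which gives $\tilde W(\mi)\in C_{k+N}^{(v_0,v_1)}$ with $\tilde W(\mi)\mid_{F_k}=W(\mi)$. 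Outside $\bigcup_{\mi\in K}(\mi+F_{k+N-1})$, declare the configuration to be the global checkerboard $\parity(\cdot)\mapsto v_{\parity(\cdot)}$ relative to $(v_0,v_1)$. Because each $\tilde W(\mi)\in C_{k+N}^{(v_0,v_1)}$ has checkerboard boundary matching exactly the ambient checkerboard, this glues to a well-defined element $w'\in \Hom(F_{n'},\H)$ for $n' = $ (roughly) $n-N$, and moreover $w'$ lies in $C_{n'}^{(v_0,v_1)}$ up to the outer ring; in any case $w'$ agrees with the ambient checkerboard near $F_{n'}\setminus F_{n'-1}$, so $w'\in C_{n'}^{(v_0,v_1)}$ after possibly shrinking $n'$ by one. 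Since $K\subset F_{n-N-k-2}$ and the islands have radius $k+N$, all marked data sits well inside, so this is consistent.

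Finally, to convert the boundary edge from $(v_0,v_1)$ to the desired $(w_0,w_1)$ and to reach exactly $F_n$, I would apply Lemma~\ref{lem:C_v_0_v_1_w_0_w_1} once more: with $a = w'\in C_{n'}^{(v_0,v_1)}$ and the target edge $(w_0,w_1)$, it produces $w\in C_{n}^{(w_0,w_1)}$ with $w\mid_{F_{n'}} = w'$, provided $n - n' \ge N$, which holds by our bookkeeping since $n' \le n - N$. Then for every $\mi\in K$ we have $S^{\mi}(w)\mid_{F_k} = (w\mid_{\mi+F_k}) = W(\mi)$, as required. The statement about $S^{\mi}(w)\mid_{F_k} = W(\mi)$ is immediate because restriction and shifting commute with restriction to a subregion that was never overwritten in later steps.

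The main obstacle is purely one of \emph{bookkeeping of the nested radii}: one must choose the intermediate size $n'$ and check that (a) $K + F_{k+N} \subseteq F_{n'-1}$ so that the islands do not touch the boundary ring of $F_{n'}$ where we need the clean checkerboard, (b) the $F_{k+N}$-spacing of $K$ (which is stronger than $F_{k+N}$-spacing needed to separate the padded islands, since $N\ge$ the constant from Lemma~\ref{lem:C_v_0_v_1_w_0_w_1}) indeed guarantees the padded neighborhoods $\mi+F_{k+N}$ are pairwise disjoint and that the ambient checkerboard between them is consistent with each island's boundary, and (c) the final jump from $n'$ to $n$ is at least $N$. All three reduce to the inclusion $K\subseteq F_{n-N-k-2}$ in the hypothesis together with $N\ge 2d$; I would simply carry the inequalities through. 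No genuinely hard step remains once the constant $N$ is taken to be the maximum of the constants furnished by the earlier lemmas.
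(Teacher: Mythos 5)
Your overall strategy (pad each island of $K$ by $N$ checkerboard layers via Lemma \ref{lem:C_v_0_v_1_w_0_w_1}, then fill the complement with a global checkerboard) is the right one, but there is a genuine gap in the gluing step. The boundary condition in the definition of $C_{m}^{(v_0,v_1)}$ is stated in terms of the \emph{absolute} parity $\parity(\cdot)$, so when you place $\tilde W(\mi)\in C_{k+N}^{(v_0,v_1)}$ at position $\mi$, the boundary site at relative position $\mj$ sits at absolute position $\mi+\mj$ and carries the value $v_{\parity(\mj)}$, whereas the ambient checkerboard assigns $v_{\parity(\mi+\mj)}=v_{\parity(\mi)+\parity(\mj)}$ to nearby sites. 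If $\parity(\mi)=1$ these two conventions are out of phase: an island boundary site and its ambient neighbour then receive the \emph{same} colour, which forces a self-loop in $\H$ and fails already for $3$-colourings ($\H=K_3$). So your claim that each $\tilde W(\mi)$ ``has checkerboard boundary matching exactly the ambient checkerboard'' is false for islands centred at odd-parity sites, and the resulting $w'$ is not a graph homomorphism. The paper's proof avoids this by padding the island at $\mi$ with the \emph{parity-adjusted target} edge, i.e.\ it chooses $\hat W(\mi)\in C_{k+N}^{(w_{\parity(\mi)},\,w_{1-\parity(\mi)})}$, so that after translation by $\mi$ the island boundary agrees exactly with the global checkerboard $\mj\mapsto w_{\parity(\mj)}$; this is permitted because Lemma \ref{lem:C_v_0_v_1_w_0_w_1} allows an arbitrary target edge.

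There is a second, smaller problem: your two-stage route (first produce $w'\in C_{n'}^{(v_0,v_1)}$, then convert the border to $(w_0,w_1)$ by a further application of Lemma \ref{lem:C_v_0_v_1_w_0_w_1}) needs roughly $2N$ of margin between $K+F_k$ and $\partial F_n$ --- $N$ for the island padding and another $N$ for the outer conversion ring --- whereas the hypothesis $K\subset F_{n-N-k-2}$ only guarantees $K+F_{k+N}\subset F_{n-2}$, i.e.\ a margin of $N+2$. The inequalities you propose to ``simply carry through'' therefore do not close for $N\ge 2$. Both issues disappear simultaneously once you adopt the one-step construction: pad each island directly to the parity-adjusted target edge and fill everything outside $\bigcup_{\mi\in K}(\mi+F_{k+N})$ with $w_{\parity(\mj)}$; then the outermost ring of $F_n$ is already the required $(w_0,w_1)$-checkerboard and no second conversion is needed.
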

\begin{proof}
Let $N \in \NN$ be as given Lemma \ref{lem:C_v_0_v_1_w_0_w_1}. Suppose  $K \subset F_{n-N-k-2}$ is $F_{k+N}$-spaced and  $W \in (C_{k}^{(v_0,v_0)})^K$.
Apply Lemma \ref{lem:C_v_0_v_1_w_0_w_1} to obtain $\hat W \in (\Hom(F_{k+N},\H))^K$ so that for any $\mi \in K$, 
$\hat W(\mi) \in C_{k+N}^{(w_{\parity(\mi)},w_{1-\parity(\mi)})}$ and $\hat W(\mi)\mid_{F_k} = W(\mi)$.
Now define $w \in C_{n}^{(w_0,w_1)}$ as follows:
$$
w_\mj = \begin{cases}
\hat W(\mi)_{\mj-\mi} & \mbox{ if } \mi \in K \mbox{ and } \mj \in \mi + F_{k +  N}\\
w_{\parity(\mj)} & \mbox{ if } \mj \not\in \bigcup_{\mi \in K}(\mi + F_{k +N}).
\end{cases}
$$
Then $w \in C_{n}^{(w_0,w_1)}$ has the desired properties.

\end{proof}
Given Lemma \ref{lem:checkerboard_flexbile} it is easy to prove that $\t \C$ is a flexible sequence of patterns, and the only  somewhat non-trivial part is the ``entropy estimate''  $h(\t \C)= h(\Hom(\ZD,\H))$. For this purpose it will be convenient to introduce another sequence of patterns:
\begin{equation}\label{eq:almost_checkerboard_patterns}
\hat C_{n} = \left\{ a\in \Hom(F_{n},\H):~  \mbox{ If }\mi,\mj \in \left(F_{n}\setminus F_{n-1}\right) \mbox{ and }  (\mi- \mj) \in 2\ZD \mbox{ then } a_{\mi} = a_\mj \right\}.\index{Definitions and notation introduced in Section 9 and Section 10!$\hat C_n$,  a set of patterns for hom-shifts}
\end{equation}
By definition  $\hat C_{n}$ is precisely the set of graph homomorphisms $w \in \Hom(F_{n},H)$ such that $w\mid_{F_n \setminus F_{n-1}}$ is the composition of some $\t w \in \Hom(\{0,1\}^d,\H)$ with natural graph homomorphism 
$\mi \mapsto (\mi \mod 2\ZD)$  from the standard Cayley graph of $\ZD$ to the standard Cayley graph of $\ZD/2\ZD$.
Clearly, $ \t C_n \subset \hat C_n$. We will  prove  that $h(\t \C)= h(\Hom(\ZD,\H))$ by proving that $h(\t \C)=h(\hat \C)$ and $h(\hat \C)=h(\Hom(\ZD,\H))$.
The fact that $h(\t \C)=h(\hat \C)$ is a direct consequence of the following:
\begin{lem}\label{lem:ext_C_n_to_tilde_C_n} 
	If $\H$ is a connected graph  
	then 
	for every $n \in \NN$, $k \ge 2d$ and  $a \in \hat C_n$ there exists $(v_0,v_1)\in E_\H$ and $\tilde a \in C_{n+k}^{(v_0,v_1)}$ such that
	$$\tilde a \mid_{F_n} = a.$$
\end{lem}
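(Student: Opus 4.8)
The plan is to recast the two membership conditions in terms of how the outermost layer of a configuration factors through the parity map, and then to collapse this parity structure to a genuine checkerboard over $2d$ consecutive layers. Write $p\colon\ZD\to\ZD/2\ZD$ for the quotient and identify $\ZD/2\ZD$ with the hypercube graph $Q_d$ on $\{0,1\}^d$ (two vertices adjacent iff they differ in one coordinate), so that $p$ is a graph homomorphism. For $a\in\Hom(F_n,\H)$ the condition $a\in\hat C_n$ says exactly that $a\mid_{F_n\setminus F_{n-1}}=\tilde w\circ p$ for some graph homomorphism $\tilde w$ defined on $p(F_n\setminus F_{n-1})=Q_d\setminus\{q_n\}$, where $q_n$ denotes the vertex of $Q_d$ all of whose coordinates equal $(n+1)\bmod 2$; likewise $a\in C_n^{(v_0,v_1)}$ says $a\mid_{F_n\setminus F_{n-1}}$ is $\mi\mapsto v_{\parity(\mi)}$, i.e.\ it factors through the homomorphism $s\colon Q_d\to K_2$, $\epsilon\mapsto\epsilon_1\oplus\dots\oplus\epsilon_d$, followed by an edge $(v_0,v_1)$ of $\H$. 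Since any element of $C_m^{(v_0,v_1)}$ extends to an element of $C_{m+1}^{(v_0,v_1)}$ simply by putting the checkerboard pattern $\mi\mapsto v_{\parity(\mi)}$ on $\p F_{m+1}$ (a one-line verification), it is enough to treat the case $k=2d$ and then pad with extra checkerboard layers.

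To produce $\tilde a\in\Hom(F_{n+2d},\H)$ with $\tilde a\mid_{F_n}=a$ it suffices to choose, for $i=1,\dots,2d$, a graph homomorphism $\tilde w_i$ on $Q_d\setminus\{q_{n+i}\}$ and set $\tilde a\mid_{F_{n+i}\setminus F_{n+i-1}}:=\tilde w_i\circ p$ (with $\tilde w_0:=\tilde w$). Examining which points of $\p F_{n+i}$ have a neighbour inside $F_{n+i-1}$ — precisely those with a single maximal coordinate — one checks that the assembled map is a graph homomorphism if and only if the $\tilde w_i$ form a \emph{compatible sequence}: for every $i$, every $\epsilon\in Q_d$ and every coordinate $t$ with $\epsilon_t=(n+i)\bmod 2$ one has $\tilde w_i(\epsilon)\sim_{\H}\tilde w_{i-1}(\epsilon\oplus\vec e_t)$ (every value occurring here is defined, because $\epsilon\oplus\vec e_t$ has a coordinate equal to $(n+i+1)\bmod2$ and hence is not $q_{n+i-1}$). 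The goal is a compatible sequence with $\tilde w_{2d}$ factoring through $s$: reassembling the layers then yields $\tilde a\in C_{n+2d}^{(v_0,v_1)}$ as required.

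The remaining point — and the one I expect to be the main obstacle — is the purely combinatorial assertion that such a compatible sequence exists starting from an arbitrary $\tilde w$. I would prove it by induction on $d$, using the folding homomorphisms $f_j\colon Q_j\to Q_{j-1}$, $\epsilon\mapsto(\epsilon_1,\dots,\epsilon_{j-2},\epsilon_{j-1}\oplus\epsilon_j)$ (each $f_j$ is a graph homomorphism, and $s=f_2\circ\dots\circ f_d$), grouping the $2d$ layers into $d$ consecutive pairs and using each pair to replace the current coloring by one that factors through one more of the $f_j$'s; connectedness of $\H$ supplies the finitely many new vertex values needed at each such step, and the fact that a pair of layers realises both parity phases of a coordinate is what makes the pulled-back constraints for $Q_{j-1}$ equivalent to the genuine ones (so that the induction closes). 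The delicate bookkeeping is the alternation with $i$ of the omitted vertex $q_{n+i}$ and the verification that each folding pair respects every $\sim_{\H}$-constraint; the degenerate cases ($d\le 1$, and graphs $\H$ with no edge, where the statement is vacuous or immediate) would be disposed of separately.
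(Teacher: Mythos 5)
Your reduction of the lemma to a layer-by-layer statement is sound: identifying $a\in\hat C_n$ with a homomorphism $\tilde w$ on $Q_d\setminus\{q_n\}$, observing that the only radial edges between consecutive layers come from sites with a unique maximal coordinate, translating the extension problem into the existence of a compatible sequence $\tilde w_1,\dots,\tilde w_{2d}$ whose last term factors through the total parity $s$, and the padding remark are all correct. The problem is that the compatible-sequence claim you defer \emph{is} the lemma --- all of its content sits there --- and you leave it at the level of a strategy. Moreover the strategy as described (one folding $f_j$ per pair of layers, with the intermediate colorings factoring through the current quotient $g=f_{j+1}\circ\dots\circ f_d$) runs into a concrete difficulty once the first fold has been performed: for a folded coordinate $t\ge j$ both phases of $\epsilon_t$ occur in every fiber of $g$, so the radial constraints force the next coloring $u_1$ on $Q_j$ to satisfy $u_1(\delta)\sim_{\H} u(\delta\oplus\vec{e}_j)$ for \emph{every} $\delta$ (no parity restriction survives), and likewise $u''(\delta)\sim_{\H} u_1(\delta\oplus\vec{e}_j)$ for every $\delta$ at the next step; demanding in addition that $u''$ be invariant under $\delta\mapsto\delta\oplus\vec{e}_{j-1}\oplus\vec{e}_j$ forces $u''(\delta)$ to be a common neighbour of $u_1(\delta\oplus\vec{e}_{j-1})$ and $u_1(\delta\oplus\vec{e}_j)$, chosen coherently so as to again be a graph homomorphism. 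For $\H$ as simple as a path such common neighbours are essentially unique, and I do not see a coherent two-layer choice; so the induction does not obviously close, and the ``delicate bookkeeping'' you flag is a genuine gap, not a routine verification.

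For comparison, the paper avoids this bookkeeping entirely by re-using the $\ell^1$-contracting endomorphism $\tau$ of the Cayley graph of $\ZD$ from Lemma~\ref{basic_hom_contraction_ZD} in two roles. Its level sets retract $F_{n+k}\setminus F_n$ onto $F_n$; and --- this is the step that replaces your folding induction --- $\tau$ restricts to a graph endomorphism of the hypercube $\{0,1\}^d$ that decreases $\|\cdot\|_1$ by one at each application until it reaches $\{\vec{0},\vec{e}_1\}$ and then alternates between those two vertices, so for large enough iterates $\tau^{k}|_{\{0,1\}^d}$ is exactly the total-parity map onto a single edge. The extension is then the one-line formula $\tilde a_{\mi}=a'_{\tau^{k}(\hat\tau(\tau^{k}(\mi)))}$ on the $k$-th level set, manifestly a composition of graph homomorphisms, and the phase on the outer layers comes out right automatically because $\tau$ moves to an adjacent vertex (hence flips parity) at every step. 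If you want to salvage your approach, I would prove your compatible-sequence claim by exhibiting it explicitly via such a contracting endomorphism of $Q_d$ rather than by induction on $d$.
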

\begin{proof}
	If $a \in \hat C_n$ there exists $a' \in \Hom(\{0,1\}^d,\H)$ such that for every $\mi \in F_n \setminus F_{n-1}$ we have $a_\mi = a'_{\hat \tau(\mi)}$ where $\hat \tau:\ZD \to \{0,1\}^d$ is given by $\hat \tau(\mi) = \mi \mod 2\ZD$.
	Note that $\hat \tau$ is a graph homomorphism.
	Let $\tau :\ZD \to \ZD$ be as in Lemma 
	\ref{basic_hom_contraction_ZD}.
	Define $\t a :\ZD\to \H$ by 
	$$\t a_{\mi}=
	\begin{cases}
	a_\mi & \mi \in F_n\\
	a_{\tau^k(\hat \tau(\tau^{k}(\mi))}' & \mi \in \tau^{-k}(F_{n}) \setminus \tau^{-(k-1)}(F_n).
	\end{cases}
	$$ 
	It follows by the properties of $\tau$ that $\t a\in \Hom(\ZD,\H)$.
	Exactly as in the proof of Lemma \ref{lem:C_k_full_support} it follows that whenever $k \ge 2d$,   $\t a \mid_{C_{n+k}} \in C_{n+k}^{(v_0,v_1)}$, where 
	either $(v_0,v_1)= (a'_{\vec{0}},a'_{\vec{e}_1})$ or $(v_0,v_1)= (a'_{\vec{e}_1},a'_{\vec{0}})$, according to the parity of $k$.
	It is clear from the definition that $\t a \mid_{F_n} =a$.
\end{proof}

To see that $h(\hat \C)= h(\Hom(\ZD,\H))$ we prove the following:
\begin{prop}\label{prop: estimate of entropy}
	There exists a constant $c >0$ so that for every $n \in \NN$
	\begin{equation}
	\frac{|\hat C_n|}{|\Hom(F_n,\H)|} \ge e^{-c n^{d-1}}.
	\end{equation}
	In particular,
	\begin{equation}
	h(\C) = h(\Hom(\ZD,\H),S).
	\end{equation}	
\end{prop}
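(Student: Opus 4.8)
The plan is to prove the quantitative estimate $|\hat C_n|\ge e^{-cn^{d-1}}|\Hom(F_n,\H)|$ for a constant $c=c(d,\H)$ and then read off the entropy equality. The easy half is that $\hat C_n\subseteq \Hom(F_n,\H)=\L(\Hom(\ZD,\H),F_n)$ (the last identity being the remark following Lemma~\ref{lem:C_k_full_support}), so $\limsup_n \frac1{|F_n|}\log|\hat C_n|\le h(\Hom(\ZD,\H),S)$ because the growth rate of admissible patterns computes the topological entropy of a subshift. Conversely, once the displayed estimate is available, $\frac1{|F_n|}\log|\hat C_n|\ge \frac1{|F_n|}\log|\Hom(F_n,\H)|-\frac{cn^{d-1}}{|F_n|}$, and since $n^{d-1}=o(|F_n|)$ the $\liminf$ matches, so $\frac1{|F_n|}\log|\hat C_n|\to h(\Hom(\ZD,\H),S)$, which is the asserted equality $h(\hat\C)=h(\Hom(\ZD,\H),S)$.

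For the estimate itself I would compare the ``free'' count $|\Hom(F_n,\H)|$ with the ``prescribed outer layer'' count $|\hat C_n|$ by switching boundary conditions at surface cost. Because $\Hom(\ZD,\H)$ is an isotropic nearest-neighbour subshift of finite type, its partition functions are reflection positive, and a chessboard-type argument is exactly the tool that bounds the partition function with one boundary condition by that with another up to a factor exponential only in the size of the boundary, i.e. $e^{O(n^{d-1})}$; concretely one reflects $F_n$ across its faces to a torus $(\Z/2n\Z)^d$, does the same for the checkerboard-boundary patterns (the checkerboard being invariant under the relevant reflections), and uses reflection positivity on the torus to relate the two torus partition functions. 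A more hands-on alternative avoiding reflection positivity is to build a map $\Phi_n:\Hom(F_n,\H)\to\hat C_n$ which changes a configuration only on a region $S(a)$ of size $O(n^{d-1})$ near $\partial F_n$ and which is recoverable from its image, so that each fibre has size at most $|V_\H|^{O(n^{d-1})}$ and hence $|\Hom(F_n,\H)|\le e^{O(n^{d-1})}|\hat C_n|$; here the modified region is re-coloured by a checkerboard pattern on the outermost layer attached to the interior through interpolating walks in $\H$, which exist precisely because $\H$ is connected and non-bipartite (the walk-length bound behind Lemma~\ref{lem:C_v_0_v_1_w_0_w_1}).

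The main obstacle, under either route, is controlling the surface-order error. A graph homomorphism is $1$-Lipschitz for the $\ell^\infty$-metric, so for graphs carrying a height function (e.g.\ $K_3$) the outer layers of a typical configuration need not be close to any checkerboard, and one genuinely cannot overwrite a bounded-width collar; keeping the modified (or reflected-boundary) region of size $O(n^{d-1})$ rather than $O(n^{d})$ while still producing a valid homomorphism with the required checkerboard outermost layer, and doing so in a way that remains recoverable, is the delicate point. Once the estimate $|\hat C_n|\ge e^{-cn^{d-1}}|\Hom(F_n,\H)|$ is established, the conclusion $h(\hat\C)=h(\Hom(\ZD,\H),S)$ follows immediately as explained above, and feeds into the earlier reductions $h(\t\C)=h(\hat\C)$ (via Lemma~\ref{lem:ext_C_n_to_tilde_C_n}) to give Proposition~\ref{prop:checkerboard_flexible}.
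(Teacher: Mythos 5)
You have the right reduction and the right tool, but the heart of the proposition is missing from your argument. The derivation of $h(\hat\C)=h(\Hom(\ZD,\H),S)$ from the displayed estimate is fine, and reflection positivity is indeed how the paper proceeds. However, your treatment of the estimate itself stops exactly where the work begins: you name the surface-order control as "the delicate point" and then proceed as if the estimate were "established". The one-sentence chessboard sketch you give (reflect $F_n$ to a torus and "relate the two torus partition functions") does not by itself produce configurations of $F_n$ whose entire outer layer $F_n\setminus F_{n-1}$ is $2\ZD$-periodic. The paper needs three separate reflection stages for this: (i) reflection positivity of the uniform Markov random field on $\Hom(F_n,\H)$ across coordinate hyperplanes, giving $|\Hom(T_n,\H)|\ge e^{-cn^{d-1}}|\Hom(F_n,\H)|$ (Lemma \ref{lem:many_periodic_conf_hom}); (ii) the torus reflections $R_{r,k}$ applied successively with doubling radii to force a $2\ZD$-periodic pattern on a single coordinate-hyperplane slice at cost $e^{-cn^{d-1}}$ (Lemma \ref{lemma: checkerboard on one side}); and (iii) the \emph{diagonal} reflections $R_{k,\pm}$ of $F_n$ itself, which swap coordinates, to propagate the flat pattern from one face of the boundary to all $2d$ faces. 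Step (iii) in particular has no counterpart in your sketch; without it you control at best one face, which does not place the configuration in $\hat C_n$.

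Your diagnosis of why the "hands-on" surgery route fails is correct, and it is precisely the reason a measure-level argument is unavoidable: for $\H=K_3$ the height cocycle of a worst-case configuration fluctuates by $\Theta(n)$ along the boundary of $F_n$, so no deterministic recoding supported on a collar of width $o(n)$ can flatten the outer layer, and a fiber bound of $|V_\H|^{O(n^{d-1})}$ is unattainable configuration by configuration. The chessboard estimate sidesteps this by working with probabilities of boundary events under the uniform measure rather than with individual configurations, but carrying that out is exactly the content your proposal defers. As written, the quantitative inequality $|\hat C_n|\ge e^{-cn^{d-1}}|\Hom(F_n,\H)|$ — the whole point of the proposition — is asserted rather than proved.
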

Proposition \ref{prop: estimate of entropy} is essentially the statement that the probability that a uniform random graph homomorphism from the box $F_n \subset \ZD$ to a finite graph $\H$ takes only two values on the ``boundary'' of the box $F_n$ is at most exponentially small in the size of the boundary.

We include a proof since we did not find one in the literature.
As pointed out to us by Yinon Spinka, in the case where $\H$ is the complete graph on $q$ vertices and so $\Hom(\ZD,\H)$ corresponds to $q$-colorings of $\ZD$, a short proof for Proposition \ref{prop: estimate of entropy} follows from the fact that for any finite  bipartite graph $\G$, the probability that a uniform random $q$-coloring assigns only two colors to a subset $W \subset V_\G$ is at least $q^{-|W|}$. For $q=3$, this follows from arguments in \cite[Lemma 5.1]{MR3374637} but the same idea works for $q>3$. For $3$-colorings it also follows from \cite[Proposition 2.1]{benjaminimossel2000}. Our proof of  Proposition \ref{prop: estimate of entropy} is based on an idea that is  sometimes called \emph{reflection positivity}. We mostly follow Biskup \cite{Biskup2009}, with suitable adaptations to our setting. 
   
Let us make a  brief detour to introduce a  notion of ``reflection'' and ``reflection positivity'' for ``discrete random fields'' (namely, random functions on discrete graphs).

\begin{defn}
	Let $\G=(V,E)$ be a (discrete, finite or countable) graph. An automorphism $R \in \Aut(\G)$ is called a \emph{reflection} if:
	\begin{enumerate}[(i)]
		\item $R$ is an involution: $R^2 = \mathit{Id}$.
		\item The complement of the fixed points of $R$ in $V$  has precisely two connected components  $V_1,V_2$ that are mapped bijectively onto each other. We refer to these as the  \emph{sides} of the reflection. 
	\end{enumerate}
   A reflection also induces a self map on $\A^V$. We say that  $\mu \in \Prob(\A^V)$ has \emph{reflection positivity} with respect to $R$ if $R$ preserves $\mu$ (that is $ \mu \circ R^{-1}= \mu$) and for every $W \subset V_1$ and $a \in \A^{W}$,
   \begin{equation}\label{eq:RP_sets}
   \mu \left([a]_W \cap [R(a)]_{R(W)} \right) \ge \mu \left([a]_W \right)^2. 
   \end{equation}
\end{defn}

A \emph{Markov random field} with respect to a graph $\G$ is a Borel probability measure $\mu \in \Prob(\A^{V_\G})$ with the following conditional independence property:
Whenever $A_1,A_2,B \subset V_\G$ and $B$ disconnects  $A_1$ and $A_2$ then 
the sigma-algebras generated by the restrictions to $A_1$ and $A_2$ are independent conditioned on the sigma-algebra generated by the restriction to $B$.
\begin{prop}\label{prop:MRF_RP}
	If $\mu \in \Prob(\A^{V_\G})$ is a Markov random field with respect to $\G$ and $R \in \Aut(\G)$ is a reflection that preserves $\mu$, then $\mu$ is reflection positive with respect to $R$.
\end{prop}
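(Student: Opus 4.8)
The plan is to disintegrate $\mu$ over the restriction to the fixed-point set of $R$, feed this into the Markov random field property, and close with Jensen's inequality. Write $F\subseteq V$ for the set of fixed points of $R$, so that $V\setminus F=V_1\sqcup V_2$ with $R(V_1)=V_2$. The first, purely combinatorial step is to observe that no edge of $\G$ joins $V_1$ to $V_2$: such an edge would put a vertex of $V_1$ and a vertex of $V_2$ in the same connected component of $V\setminus F$, contradicting that $V_1,V_2$ are distinct components. Hence $F$ separates $V_1$ from $V_2$, and the Markov random field hypothesis applies with $A_1=V_1$, $A_2=V_2$, $B=F$: the $\sigma$-algebras $\sigma(x|_{V_1})$ and $\sigma(x|_{V_2})$ are independent conditioned on $\mathcal{B}:=\sigma(x|_{F})$. (Regular conditional probabilities exist because $\A$ is finite and $V$ countable, so $\A^{V}$ is a standard Borel space.)

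Next, fix $W\subseteq V_1$ and $a\in\A^{W}$, and set $E_1=[a]_W$ and $E_2=[R(a)]_{R(W)}$. Since $W\subseteq V_1$ and $R(W)\subseteq V_2$, we have $E_1\in\sigma(x|_{V_1})$ and $E_2\in\sigma(x|_{V_2})$, so conditional independence gives
\[
\mu(E_1\cap E_2)=\int \mu(E_1\mid\mathcal{B})\,\mu(E_2\mid\mathcal{B})\,d\mu .
\]
The third step is to identify the two conditional expectations. Because $R$ is an involution acting on $\A^V$ by $(Rx)_v=x_{R(v)}$ and fixing $F$ pointwise, a direct unwinding of cylinder sets shows $R^{-1}(E_1)=E_2$ and $R(x)|_{F}=x|_{F}$, so $\mathcal{B}$ is $R$-invariant. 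Using that $R$ preserves $\mu$, the standard identity $\mu(R^{-1}E_1\mid\mathcal{B})=\mu(E_1\mid\mathcal{B})\circ R$ holds $\mu$-a.e.; and since $\mu(E_1\mid\mathcal{B})$ is a function of $x|_F$, which is $R$-invariant, $\mu(E_1\mid\mathcal{B})\circ R=\mu(E_1\mid\mathcal{B})$. Therefore $\mu(E_2\mid\mathcal{B})=\mu(E_1\mid\mathcal{B})=:f$.

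Finally, $\mu(E_1\cap E_2)=\int f^2\,d\mu\ge\big(\int f\,d\mu\big)^2=\mu(E_1)^2=\mu([a]_W)^2$ by Jensen's inequality, which is exactly \eqref{eq:RP_sets}; together with the hypothesis that $R$ preserves $\mu$, this establishes reflection positivity. The only delicate point, and the one I would write out carefully, is the measure-theoretic bookkeeping in the third step — namely the a.e.\ identity $\mu(R^{-1}E\mid\mathcal{B})=\mu(E\mid\mathcal{B})\circ R$ for a measure-preserving involution $R$ and an $R$-invariant sub-$\sigma$-algebra $\mathcal{B}$ — which follows from a short change-of-variables argument against test sets $B\in\mathcal{B}$. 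Everything else is routine.
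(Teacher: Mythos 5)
Your proof is correct and follows essentially the same route as the paper's: conditional independence across the fixed-point set $F$ via the Markov property, identification of the two conditional probabilities via $R$-invariance, and Cauchy--Schwarz/Jensen. The only difference is cosmetic — you condition on the $\sigma$-algebra generated by $x|_F$ rather than on individual cylinder events $[b]_F$ as the paper does, which is if anything slightly cleaner when $F$ is infinite.
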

\begin{proof} Let $\mu$ and $R$ be as in the statement of the proposition, let $F \subset V_\G$ be the fixed points of $R$, and let $V_1 \subset V_\G$ be one of the sides for $R$.
	 Choose $W \subset V_1$, $a \in \A^W$ and $b \in \A^F$.
	 Because $\mu$ is a Markov random field, and $F$ disconnects  $W$ and $R(W)$
	$$\mu([a]_W \cap [R(a)]_{R(W)} \mid [b]_{F})=
	\mu([a]_W  \mid [b]_{F}) \cdot
	\mu([R(a)]_{(R(W)} \mid [b]_{F})=
	\mu([a]_W  \mid [b]_{F})^2.
	$$
	The last equality follows because of invariance of $\mu$ under $R$. Taking expectation over $b \in \A^F$ with respect to $\mu$  and applying the Cauchy-Schwarz inequality we conclude that \eqref{eq:RP_sets} holds.	
\end{proof}

For $n \in \NN$ let $T_n$ denote the Cayley graph of the group $\left( \ZZ / 2n\ZZ \right)^d$ with respect to the standard generators; this is a ``discrete torus''.
\begin{lem}\label{lem:many_periodic_conf_hom}
	There exists a constant $c>0$ so that for every $n \in \NN$,
	\begin{equation}
	\frac{|\Hom(T_n,\H)|}{|\Hom(F_n,\H)|} \ge e^{-c n^{d-1}}.
	\end{equation}
\end{lem}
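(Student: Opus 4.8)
The plan is to compare the number of graph homomorphisms from the torus $T_n$ to $\H$ with the number from the box $F_n$ by a folding/reflection argument. I would first observe that $F_n$ embeds naturally into $T_n$ (identify $F_n = \{-n,\dots,n\}^d$ with a fundamental domain, noting that the torus has side length $2n$ so the box of side length $2n+1$ wraps with one layer of overlap). The key point is that a homomorphism $w \in \Hom(T_n,\H)$ restricts to a homomorphism in $\Hom(F_n,\H)$, and conversely one wants to ``close up'' a homomorphism on $F_n$ to one on $T_n$. The obstruction to closing up is only along the $d$ ``seams'' where opposite faces of the box are glued; so the loss should be controlled by $|\H|^{O(n^{d-1})}$, which is exactly the claimed bound $e^{-cn^{d-1}}$ after taking the ratio.

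More precisely, I would proceed by induction on the dimension $d$, or equivalently by gluing one pair of opposite faces at a time. Consider the ``half-open box'' and reflect: given $w \in \Hom(F_n,\H)$, I would use the contraction homomorphisms $\tau_n:\ZD \to \ZD$ from Lemma \ref{lem:hom_ZD_B_n} applied in one coordinate direction to push the configuration on a thin boundary slab of $F_n$ down to a configuration that depends only on two values (a checkerboard), and then this checkerboard data patches consistently across the seam. The number of homomorphisms on $F_n$ that are ``already periodic'' in one coordinate is at least $|\Hom(F_n,\H)| \cdot |\H|^{-C n^{d-1}}$ because specifying the two-valued boundary data on a face costs at most $|\H|^{2}$ choices while there are $O(n^{d-1})$ face-sites to coordinate; iterating over the $d$ coordinate directions multiplies at most $d$ such factors. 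Each such periodic homomorphism descends to $\Hom(T_n,\H)$, and the descent map is injective (a homomorphism on $T_n$ is determined by its restriction to the fundamental domain $F_n$), giving the stated inequality with $c = dC\log|\H|$ for a suitable absolute constant $C$.

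Alternatively, and perhaps more cleanly, I would invoke reflection positivity directly: equip $\Hom(T_n,\H)$ with the uniform measure $\mu_n$, which is a Markov random field on $T_n$ invariant under all the coordinate reflections of the torus, hence reflection positive by Proposition \ref{prop:MRF_RP}. Then a standard ``chessboard estimate'' derived from \eqref{eq:RP_sets} bounds the probability that a uniform random element of $\Hom(T_n,\H)$ has any prescribed boundary-face behavior from below in terms of a product over translates; combined with the observation that $\Hom(F_n,\H)$ is obtained from $\Hom(T_n,\H)$ by ``cutting'' the $d$ seams (each cut multiplying the count by at most $|\H|^{O(n^{d-1})}$ since a homomorphism on the torus is specified by its values on a fundamental domain together with matching data on the $O(n^{d-1})$ seam vertices), this yields the bound. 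The ``in particular'' clause then follows: combining with Lemma \ref{lem:ext_C_n_to_tilde_C_n} (so $h(\t\C)=h(\hat\C)$) and the inequality $|\Hom(T_n,\H)| \le |\hat C_{n}| \cdot |\H|^{O(1)}$ — every torus-homomorphism, viewed on $F_n$, is two-valued on the chessboard sublattice of the boundary, hence lies in $\hat C_n$ up to the trivial identification — one gets $|\hat C_n|/|\Hom(F_n,\H)| \ge e^{-cn^{d-1}}$ and therefore $h(\hat\C) = \lim \frac{1}{|F_n|}\log|\Hom(F_n,\H)| = h(\Hom(\ZD,\H))$.

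The main obstacle I anticipate is making the seam-counting bookkeeping precise: one must carefully verify that cutting the torus open along the $d$ coordinate hyperplanes really does recover (a superset of, or a set in bijection with) $\hat C_n$ or $\Hom(F_n,\H)$ with only a $|\H|^{O(n^{d-1})}$ discrepancy, keeping track of the one-layer overlap between the side-$(2n+1)$ box and the side-$(2n)$ torus and ensuring the reflection axes are placed on edges versus vertices consistently. Getting the constant $c$ to be uniform in $n$ (and only depending on $d$ and $|\H|$) is the crux; once the combinatorial correspondence is pinned down, the reflection-positivity input and the passage to entropy are routine.
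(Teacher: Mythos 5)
You have the right tools on the table---the uniform measure, the Markov random field property, Proposition \ref{prop:MRF_RP}, and the $O(n^{d-1})$ size of the boundary---but neither of your two concrete routes closes the argument. The decisive step is to show that an $e^{-O(n^{d-1})}$ \emph{fraction} of the elements of $\Hom(F_n,\H)$ carry a boundary pattern that matches on opposite faces (so that they glue into elements of $\Hom(T_n,\H)$), and in both routes this step is either unproved or circular. In your first route, the count ``specifying the two-valued boundary data costs at most $|\H|^2$ choices over $O(n^{d-1})$ face-sites'' does not show that periodic-boundary homomorphisms constitute an $|\H|^{-O(n^{d-1})}$ fraction of all of $\Hom(F_n,\H)$; a priori those boundary patterns could carry an arbitrarily small share of the count, and ruling that out is exactly the content of the lemma. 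Moreover, the contractions $\tau_n$ of Lemma \ref{lem:hom_ZD_B_n} cannot ``push a thin boundary slab down to a checkerboard'': reaching a two-valued configuration requires depth of order $n$ (the construction goes out to distance $2nd$), so this modification alters the configuration on a set of size $\Omega(n^d)$ and at best yields $|\Hom(T_{Cn},\H)|\ge |\Hom(F_n,\H)|$ for some $C>1$, which is weaker than the claimed bound by a factor exponential in $n^d$. In your second route, the assertion that ``cutting the $d$ seams multiplies the count by at most $|\H|^{O(n^{d-1})}$'', i.e.\ $|\Hom(F_n,\H)|\le |\Hom(T_n,\H)|\cdot |\H|^{O(n^{d-1})}$, is precisely the inequality to be proved, so invoking it is circular; reflection positivity of the uniform measure on the torus does not deliver it, and the observation you do justify (a torus homomorphism is determined by its restriction to a fundamental domain) gives only the opposite, easy inequality.

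The fix is to apply reflection positivity in the other direction, which is what the paper does: take the uniform measure $\mu$ on $\Hom(F_n,\H)$ (the box, not the torus). It is a Markov random field invariant under the reflections through the coordinate hyperplanes, hence reflection positive by Proposition \ref{prop:MRF_RP}. By pigeonhole there is a pattern $a$ on $(F_n\setminus F_{n-1})\cap(\ZZ_+)^d$ with $\mu([a])\ge |\H|^{-O(n^{d-1})}$; reflecting it successively through the $d$ coordinate hyperplanes and applying \eqref{eq:RP_sets} at each step produces a reflection-symmetric boundary pattern $\tilde a$ on all of $F_n\setminus F_{n-1}$, in particular one whose opposite faces match, with $\mu([\tilde a])\ge \mu([a])^{2^d}\ge |\H|^{-2^d O(n^{d-1})}$. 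The box homomorphisms carrying this boundary pattern inject into $\Hom(T_n,\H)$ by gluing opposite faces, which gives the stated bound.
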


\begin{proof}
	Let $\mu$ denote the uniform measure on $\Hom(F_n,\H)$. For all reflections $R$ along coordinate hyperplanes,  $\mu$ is an $R$-invariant Markov random field. By Proposition \ref{prop:MRF_RP}, it is reflection positive with respect to all such reflections $R$. 
	We can find $ a\ \in \V_\H^{(F_n \setminus F_{n-1}) \cap (\ZZ_+)^d}$  such that 
	$$\mu([a]_{(F_n \setminus F_{n-1}) \cap (\ZZ_+)^d})\geq |\H|^{-|(F_n \setminus F_{n-1}) \cap (\ZZ_+)^d|}.$$
By successive applications of ``reflection positivity''  along the $d$ hyperplanes corresponding to the cardinal directions, we get a pattern $\t a$ on $F_n\setminus F_{n-1}$ which is periodic, meaning, $\t a_\mi =\t a_\mj$ whenever $\mi -\mj \in 2n\ZD$ and so that 
$$\mu([\t a]_{F_n \setminus F_{n-1}} )\geq \left(\mu([a]_{F_n \setminus F_{n-1}) \cap (\ZZ_+)^d}) \right)^{2^{d}}\geq|\H|^{-2^d|F_n \setminus F_{n-1}) \cap (\Z_+)^d|}.$$
This gives us the required result because there is a natural bijection between the periodic patterns and elements of $Hom(T_n, \H)$.
\end{proof}
\begin{remark}
Lemma \ref{lem:many_periodic_conf_hom} has the following dynamical consequence about $\Hom(\ZD,\H)$: The subshift  $\Hom(\ZD,\H)$ has ``many'' periodic points in the sense that 
\begin{equation}
h(\Hom(\ZD,\H),S)= \lim_{n \to \infty}\frac{\log |P_{2n}(\Hom(\ZD,\H),S)|}{(2n)^d},
\end{equation}
where $P_{2n}(\Hom(\ZD,\H),S)$ is the set of points in $\Hom(\ZD,\H),S)$ that are stabilized under the subaction of $(2n\ZZ)^d$.
See for instance \cite{symmtricfriedlan1997}. 
\end{remark}
The conceptual reason  for introducing  $\Hom(T_n,\H)$ as an auxiliary object  is that $T_n$ has  additional symmetries coming from reflection and also the action of $(\ZZ/2n\ZZ)^d$.
\begin{lem}\label{lemma: checkerboard on one side}
	There exists a constant $c>0$ so that for every $n \in \NN$,
\begin{equation}
\frac{|\{a\in Hom(T_n, \H)~:~a_{(0, \mi)}=a_{(0,\mj)}\text{ whenever }\mi-\mj\in (2\Z)^{d-1}\}|}{|\Hom(T_n,\H)|} \ge e^{-c n^{d-1}}.
\end{equation}	
\end{lem}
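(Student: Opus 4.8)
The plan is to use reflection positivity one more time, now exploiting the extra reflection symmetries of the discrete torus $T_n$ that are not available in $F_n$. Concretely, $T_n$ has reflections across each of the $d$ coordinate ``hyperplanes'' $\{i_1 = 0\}, \{i_1 = n\}$, and similarly in the other directions; these give us $d$ pairs of reflections whose fixed-point sets are exactly copies of lower-dimensional discrete tori. The uniform measure $\nu$ on $\Hom(T_n,\H)$ is invariant under all of these reflections, and since it is a Markov random field with respect to $T_n$ (the edge constraints of a hom-shift are nearest-neighbor), Proposition \ref{prop:MRF_RP} tells us $\nu$ is reflection positive with respect to each of them.

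First I would reflect in the coordinate direction $1$ (say across the hyperplane $\{i_1 = 0\}$, with fixed-point set the sub-torus $\{0\} \times T_n^{(d-1)}$ where $T_n^{(d-1)}$ denotes the $(d-1)$-dimensional discrete torus of the same size). Write $W = \{0\} \times \left( (F_n \setminus F_{n-1})^{(d-1)} \cap (\ZZ_+)^{d-1} \right)$ or rather the appropriate ``positive quadrant of the boundary sphere'' inside the fixed sub-torus, and pick $a \in V_\H^{W}$ maximizing $\nu([a]_W)$, so that $\nu([a]_W) \ge |V_\H|^{-|W|} \ge |V_\H|^{-Cn^{d-2}}$ — wait, that exponent is too small; I need to be more careful about which set I fix. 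The right move, following the pattern of Lemma \ref{lem:many_periodic_conf_hom} but one dimension down, is to work entirely within the sub-torus $\{0\} \times T_n^{(d-1)}$: fix a pattern on $\{0\} \times \big( (F_n^{(d-1)} \setminus F_{n-1}^{(d-1)}) \cap (\ZZ_+)^{d-1} \big)$, which has size $O(n^{d-2})$, and then apply reflection positivity across the $d-1$ coordinate hyperplanes of $T_n^{(d-1)}$ to promote it to a pattern on all of $\{0\} \times (F_n^{(d-1)} \setminus F_{n-1}^{(d-1)})$ that is $(2\Z)^{d-1}$-periodic, i.e.\ is checkerboard-like in the sense demanded. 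Each reflection step costs us a square, so after $d-1$ steps we pick up a factor $|V_\H|^{-2^{d-1} \cdot O(n^{d-2})}$, which is still $e^{-O(n^{d-1})}$ (indeed much better), and the resulting event $\{a_{(0,\mi)} = a_{(0,\mj)}$ whenever $\mi - \mj \in (2\Z)^{d-1}\}$ has $\nu$-probability at least $e^{-cn^{d-1}}$ as required.

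The step I expect to need the most care is making the reflection-positivity bookkeeping precise: I must check that the sets $W$, $R(W)$ involved at each stage really do lie on opposite sides of the relevant fixed sub-torus (so that \eqref{eq:RP_sets} applies), that the fixed-point sub-torus of a coordinate reflection of $T_n$ is genuinely a connected graph disconnecting the two sides (it is, since $n$ is even in the sense that $T_n$ has side length $2n$), and that after all the reflections the produced configuration lies in the event whose probability we are lower-bounding. There is also a mild subtlety that the ``boundary'' set $(F_n \setminus F_{n-1})$ intersected with the sub-torus is not literally a quadrant-reflected object but an annular shell; however, exactly the same successive-reflection argument as in the proof of Lemma \ref{lem:many_periodic_conf_hom} goes through verbatim, just with $d$ replaced by $d-1$ and $\Hom(F_n,\H)$ replaced by $\Hom(T_n,\H)$ as the ambient space. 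Once this lemma is in hand, combining it with Lemma \ref{lem:many_periodic_conf_hom} and Lemma \ref{lem:ext_C_n_to_tilde_C_n} (to pass from the one-sided checkerboard-on-a-torus count to $\hat C_n$ and then to $\t C_n$) yields $h(\t \C) = h(\hat\C) = h(\Hom(\ZD,\H))$, completing the proof of Proposition \ref{prop: estimate of entropy}.
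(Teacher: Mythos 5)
There is a genuine gap, and it is exactly at the point you flagged as needing "the most care." Two problems. First, the event in the lemma constrains the \emph{entire} slice $\{0\}\times (\ZZ/2n\ZZ)^{d-1}$ to be $(2\Z)^{d-1}$-periodic, but your construction only ever controls the boundary shell $\{0\}\times(F_n^{(d-1)}\setminus F_{n-1}^{(d-1)})$ of that slice; even if it succeeded it would bound the probability of a strictly weaker event. Second, and more fundamentally, performing one reflection per coordinate direction cannot produce $(2\Z)^{d-1}$-periodicity. A single reflection only forces mirror symmetry about its fixed hyperplane; this is why in Lemma \ref{lem:many_periodic_conf_hom} one reflection per direction yields only $2n\ZD$-periodicity (identification of opposite faces, i.e.\ torus-compatibility), which is a much coarser conclusion than period $2$. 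To turn a pattern prescribed on $\{0\}\times\{0,1\}^{d-1}$ into a genuinely $2$-periodic pattern across a region of diameter $\sim n$, the paper uses, in each of the $d-1$ directions, the \emph{dyadic} family of reflections $R_{1,k},R_{2,k},R_{4,k},\dots,R_{2^{\lfloor\log_2 n\rfloor},k}$ about hyperplanes at positions $1,2,4,\dots$ of the torus: each reflection doubles the length of the interval on which the pattern is known to be $2$-periodic. This doubling cascade is the essential idea, and it is absent from your argument.

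The bookkeeping also only works with that cascade and the correct seed. Starting from a constant-size seed $\{0\}\times\{0,1\}^{d-1}$ of probability at least $|\H|^{-2^{d-1}}$ and applying roughly $(d-1)\log_2 n$ reflections, each of which squares the probability via \eqref{eq:RP_sets}, gives a $(2\Z)^{d-1}$-periodic pattern on the whole slice with probability at least $|\H|^{-(4n)^{d-1}}=e^{-cn^{d-1}}$, as required. Had you instead seeded with a set of size $O(n^{d-2})$ as proposed and then run the full doubling sequence needed to reach period $2$, the exponent would be $O(n^{d-2})\cdot(2n)^{d-1}=O(n^{2d-3})$, which overshoots the allowed budget for every $d\ge 3$. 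So the proposal needs to be repaired by (i) replacing the target by the full slice, (ii) replacing the single coordinate reflections by the dyadic family $R_{r,k}$ with $r=1,2,4,\dots$, and (iii) shrinking the seed to the $2^{d-1}$-point cube.
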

\begin{proof}
Let $\nu$ denote the uniform measure on $Hom(T_n, \H)$ and observe that $\nu$ is invariant and reflection positive with respect to reflections of the type $R_{r,k}:T_n\to T_n$ given by
\begin{equation}
R_{r,k}(\mi):= \mi - (2i_k-2r)\m e_k \text{ for $1\leq k\leq d$ and $0\leq r\leq n-1$}.\label{equation: reflections on torus}
\end{equation}
Again we begin with a pattern $a$ on $\{0\}\times \{0,1\}^{d-1}$, chosen such that
$$\nu([a]_{\{0\}\times \{0,1\}^{d-1}})\geq |\H|^{-2^{d-1}}.$$
Successive reflections of $a$ by
$$R_{1, 2},  R_{2, 2},  R_{4, 2}, \ldots, R_{2^{\lfloor\log_2{n}\rfloor}, 2}, \   R_{1, 3},  R_{2, 3},  R_{4, 3}, \ldots, R_{2^{\lfloor\log_2{n}\rfloor}, 3}, \ldots, R_{2^{\lfloor\log_2{n}\rfloor}, d}$$ and applying reflection positivity gives us a pattern $\t a$ on $\{0\}\times [0,n-1]^d$ such that 
$$\t a_{\mi}=a_{\mj}\text{ whenever }\mi-\mj\in (2\Z)^d$$
and 
$$\nu([\t a]_{\{0\}\times [0,n-1]^d})\geq |\H|^{-(2n)^{d-1}}.$$
(In fact, we might get a slightly bigger pattern but we do not need it for this proof.) Finally by successively reflecting $\t a$ ($R_{r,k}$ for $r=0$ and $2\leq k \leq d$ in \eqref{equation: reflections on torus}) and applying reflection positivity we get a pattern $a'$ on $\{0\}\times [0,2n-1]^d$ such that 
$$a'_{\mi}=a_{\mj}\text{ whenever }\mi-\mj\in (2\Z)^d$$
and 
$$\nu([a']_{\{0\}\times [0,2n-1]^d})\geq |\H|^{-(4n)^{d-1}}.$$
This completes the proof.
\end{proof}

We can now complete the proof:
\begin{proof}[Proof of Proposition \ref{prop: estimate of entropy}]
Again, let $\mu$ be the uniform measure on $Hom(F_n, \H)$. The measure $\mu$ is invariant and reflection positive with respect to ``diagonal'' reflections of the type 
$R_{k, \pm}: F_n\to F_n$ given by 
$$R_{k, \pm}(\mi):=\mi-(i_1\m e_1+i_k\m e_k)\pm (i_k\m e_1 + i_1 \m e_k)\text{ for }2\leq k \leq d.$$

By Lemmas \ref{lem:many_periodic_conf_hom} and \ref{lemma: checkerboard on one side} we have a pattern $a$ on $\{n\}\times[-n,n]^{d-1}$ such that $a_\mi=a_\mj$ whenever $\mi-\mj\in (2\Z)^d$ and 
$$\nu([a]_{\{n\}\times[-n,n]^{d-1}})\geq e^{-C n^{d-1}}$$
for some constant $C$ independent of $n$. Now we apply successive reflections to $a$. It may so happen that a certain reflections might result in patterns which are larger than the side associated with the next reflection. In this case, we just restrict that pattern to the side which contains $\{n\}\times[-n,n]^{d-1}$ before continuing.

By successive reflections of $a$ along the diagonals by
$$R_{2, +},R_{3, +}, \ldots, R_{d, +}, R_{2, -},R_{3, -}, \ldots, R_{d, -}$$
 and applying reflection positivity we get a checkerboard boundary pattern $\t a$ such that 
$$\mu([\t a]_{F_n \setminus F_{n-1}})\geq e^{-2^{2(d-1)}C n^{d-1}}.$$
This completes the proof. 
\end{proof}

\begin{proof}[Proof of Proposition \ref{prop:checkerboard_flexible}]
The result that $\C$ is a flexible sequence of patterns follows  immediately from Lemma \ref{lem:checkerboard_flexbile}. The fact that it is furthermore  a flexible marker sequence of patterns follows from \eqref{eq:t_C_marker}.
By Lemma \ref{lem:ext_C_n_to_tilde_C_n} combined with Lemma \ref{lem:C_v_0_v_1_w_0_w_1}  it follows that  there exists $N \in \NN$ such that $|\hat C_n| \le |\t C_{n+N}|$ so  
\begin{equation}\label{eq:h_t_C_h_top}
h(\t \C) \ge \limsup_{n \to \infty}\frac{\log |\hat C_n|}{|F_n|}.
\end{equation}
As we explained, from Lemma  \ref{lem:C_k_full_support} it follows that  $\L(\Hom(\ZD,\H),F_n)=\Hom(F_n,\H)$.
By Proposition \ref{prop: estimate of entropy} the right hand side of \eqref{eq:h_t_C_h_top} is equal to the topological entropy of the hom-shift $\Hom(\ZD,\H)$.
	The statement beginning with  ``Moreover'' follows  by applying Lemma \ref{lem:C_k_full_support} together with Lemma \ref{lem:C_v_0_v_1_w_0_w_1}.
	
\end{proof}

\begin{remark}
	If $\H$ is not bipartite then $\Hom(\ZD,\H)$ is not universal because any invariant measure admits a set of measure $\frac{1}{2}$ which is invariant under the  $(2\Z)^d$ subaction. Nevertheless, in this case it is still true that the  $(2\Z)^d$ subaction on $\Hom(\ZD,\H)$  is universal, as can be shown by suitably adapting our  proof of  Proposition \ref{prop:checkerboard_flexible}. Even without our new result, ergodic universality of the $(2\Z)^d$ subaction on $\Hom(\ZD,\H)$ can be deduced by applying  the  \c{S}ahin-Robinson universality result \cite{MR1844076} to a certain sequence of closed subsystems that are strongly irreducible and have dense periodic points with entropy arbitrarily close to $Hom(\Z^d, \H)$. As this is not the focus of this paper, we do not include a  proof. 
\end{remark}

\begin{remark}
	Using the entropy formula for actions of finite-index subgroups, it is not difficult to see that universality (in the ergodic or almost Borel sense) implies universality with respect to the action of any finite-index subgroup.
\end{remark}
	 As stated in the introduction Theorem \ref{thm: universality of hom-shifts} has consequences  related  to some  problems in  ``Borel graph theory'' discussed for instance  in \cite{gao2018continuous} and  references within.
	A \emph{Borel graph} is a pair  $\G=(V,E)$, where $V$ is a Borel space and $E \subset V \times V$ is a Borel subset of $V \times V$. 
	A Borel probability measure $\mu$ on $V$ is called $\G$-invariant if $\mu \circ \phi^{-1} = \mu$ for any Borel bijection $\phi:V \to V$ that satisfies $(v,\phi(v)) \in E$ for every vertex $v\in V$.
	A subset of the vertices of $\G$ is called $\G$-null if it has zero measure with respect to any
	measure which is $\G$-invariant. 
	A \emph{Borel graph homomorphism}  from $\G$ to another Borel graph $\H$ is a Borel map from the vertices of $\G$ to the vertices of $\H$ that takes edges into edges.
	A \emph{Borel almost graph homomorphism} from $\G$ to $\H$ is a Borel map from the vertices of $\G$ to the vertices of $\H$ that is a graph homomorphism away from a $\G$-null set.
	The Borel  chromatic number of a Borel graph $\G$  is the smallest $k$ so that there exists a Borel function from $Y$ to $\{1,\ldots,k\}$ which is a proper coloring of the graph $\G_{T_1,\ldots,T_d}$.
	The almost Borel chromatic number of a Borel graph $\G$  is the smallest $k$ so that there exists a Borel function from $Y$ to $\{1,\ldots,k\}$ which is a proper coloring of the graph $\G_{T_1,\ldots,T_d}$, away from a $\G$-null set.
	Given a Borel space $Y$ and Borel bijections  $T_1,\ldots,T_d:Y \to Y$, let $\G_{T_1,\ldots,T_d}$ denote the  graph on $Y$ that has edges of the form $(y,T_j^{\pm 1}(y)) \in Y \times Y$ where  $y \in Y$ and $1 \le j \le d$. It follows that $\G_{T_1,\ldots,T_d}$ is a Borel graph.
	It also follows that a Borel subset $Y_0 \subset Y$ is $\G_{T_1,\ldots,T_d}$-null if and only if $\mu(Y_0)=0$ for any $T$-invariant Borel probability measure $\mu$ on $Y$.
	\begin{cor}\label{cor:Borel_3_coloring}
		Let $T_1,\ldots,T_d:Y \to Y$ are $d$-commuting  Borel bijections of a standard Borel space $Y$ that generate a free $\ZD$-action, and let $\H$  be a finite connected graph  that  is non-bipartite. Then there exists a Borel almost graph homomorphism from $\G_{T_1,\ldots,T_d}$ to $\H$. In particular the almost Borel chromatic number of $\G_{T_1,\ldots,T_d}$  is either $2$ or $3$. It is equal to $2$ if and only if there exists a  two set Borel partition ${Y_0,Y_1}$ of $Y$ modulo a null set so that $T_i(Y_j)=Y_{1-j}$ for $i=1,\ldots,d$ and $j=0,1$.
		 
	\end{cor}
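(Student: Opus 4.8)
The strategy is to deduce Corollary \ref{cor:Borel_3_coloring} directly from Theorem \ref{thm: universality of hom-shifts}, using the standard dictionary between free Borel $\ZD$-actions and the graph $\G_{T_1,\ldots,T_d}$. First I would observe that since $T_1,\ldots,T_d$ generate a free $\ZD$-action on $Y$, the Borel graph $\G_{T_1,\ldots,T_d}$ is, on each orbit, isomorphic (via the obvious identification of the orbit of $y$ with $\ZD$, sending $T^{\mi}(y)\mapsto \mi$) to the standard Cayley graph of $\ZD$. Consequently a Borel map $f\colon Y\to V_\H$ is a graph homomorphism from $\G_{T_1,\ldots,T_d}$ to $\H$ on an invariant subset $Y_0$ precisely when the induced map $\rho\colon Y_0\to V_\H^{\ZD}$ given by $\rho(y)_{\mi}=f(T^{\mi}(y))$ takes values in $\Hom(\ZD,\H)$ and is equivariant; that is, equivariant Borel maps $Y_0\to\Hom(\ZD,\H)$ are the same thing as Borel graph homomorphisms $\G_{T_1,\ldots,T_d}\!\restriction_{Y_0}\to\H$. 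I would also note that, as remarked just before the statement, a Borel set $Y_0\subset Y$ is $\G_{T_1,\ldots,T_d}$-null iff it is null with respect to every $T$-invariant Borel probability measure, so the two notions of ``null'' coincide.

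Next, by Theorem \ref{thm: universality of hom-shifts}, the subshift $\Hom(\ZD,\H)$ admits a flexible marker sequence of patterns, hence (via the construction discussed in Section \ref{section:universal_hom}) a flexible marker sequence $\C=(C_n)_{n=1}^\infty$, so by Theorem \ref{thm:spec_sequence_implies_univesality} (or Proposition \ref{prop:flex_factor} in the non-injective form we need here) it is almost Borel universal. Applying Proposition \ref{prop:flex_factor} to the free Borel $\ZD$-system $\Y=(Y,T)$ with $T=T^{\m e_1},\ldots,T^{\m e_d}$ being the given generators, we obtain a $T$-invariant Borel subset $Y_0\subset Y$ with $Y\setminus Y_0$ null and a Borel equivariant map $\rho\colon Y_0\to\Hom(\ZD,\H)$. (We do not need injectivity here, only the existence of an equivariant Borel map, which is why the easy Proposition \ref{prop:flex_factor} suffices.) Pulling back via the dictionary of the previous paragraph, the map $f\colon Y_0\to V_\H$, $f(y)=\rho(y)_{\vec 0}$, is a Borel graph homomorphism from $\G_{T_1,\ldots,T_d}\!\restriction_{Y_0}$ to $\H$; extending $f$ arbitrarily (say constantly) on the null set $Y\setminus Y_0$ gives a Borel almost graph homomorphism from $\G_{T_1,\ldots,T_d}$ to $\H$, as claimed.

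For the consequence about the almost Borel chromatic number: take $\H=K_3$, the complete graph on three vertices (which is connected and non-bipartite), so that a Borel almost graph homomorphism $\G_{T_1,\ldots,T_d}\to K_3$ is exactly a Borel proper $3$-coloring of $\G_{T_1,\ldots,T_d}$ away from a null set. Hence the almost Borel chromatic number is at most $3$. It is at least $2$ because $\G_{T_1,\ldots,T_d}$ contains edges (freeness of the action guarantees $T_i(y)\neq y$), and no proper coloring with a single color exists on any edge, so one cannot delete a null set to make the graph edgeless (a set of positive measure of edges remains with respect to any invariant measure, by Poincar\'e recurrence / the fact that the action is free). Finally, the almost Borel chromatic number equals $2$ iff there is a Borel proper $2$-coloring away from a null set; unwinding the definition, a proper $2$-coloring is a Borel partition $\{Y_0,Y_1\}$ with every edge bichromatic, i.e.\ $T_i(Y_j)=Y_{1-j}$ for all $i$ and $j$, modulo a null set. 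This is precisely the stated characterization, and it is immediate in both directions from the definitions.

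\textbf{Main obstacle.} The only genuine content is Theorem \ref{thm: universality of hom-shifts} (equivalently, the existence of the flexible marker sequence of patterns for $\Hom(\ZD,\H)$), which has already been proved; everything after that is a routine translation between the language of free $\ZD$-actions and of the Borel graph $\G_{T_1,\ldots,T_d}$, together with elementary bookkeeping about null sets. The one point requiring a little care is the lower bound ``at least $2$'': one must argue that freeness of the action forces every invariant measure to give positive weight to the set of vertices incident to a ``horizontal'' edge, so that no null set removal can destroy all edges — this is where one invokes that $T_i$ has no fixed points together with invariance of the measure. No deeper difficulty is expected.
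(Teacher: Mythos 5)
Your proposal is correct and follows essentially the same route as the paper: invoke Theorem \ref{thm: universality of hom-shifts} together with Proposition \ref{prop:flex_factor} to obtain an equivariant Borel map into $\Hom(\ZD,\H)$ on a full set, read off the coordinate at $\vec 0$ to get the almost graph homomorphism, and specialize to the complete graph on three vertices for the chromatic number bound. The remaining assertions about the lower bound $2$ and the characterization of equality are treated as immediate in the paper as well, so no discrepancy there.
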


\begin{proof}
	Let $T_1,\ldots,T_d:Y \to Y$ be as above, and let $(Y,T)$ be the  Borel $\ZD$ dynamical system generated by $T_1,\ldots,T_d$.
	Let $\H=(V_\H,E_\H)$ be a finite connected graph  that  is non-bipartite.
	By Theorem \ref{thm: universality of hom-shifts}, $\Hom(\ZD,\H)$ admits a flexible sequence so  
	by Proposition \ref{prop:flex_factor}, there exists a $T$-invariant null set $Y_0 \subset Y$ and a Borel factor map $\pi:Y\setminus Y_0 \to \Hom(\ZD,\H)$.
 	The set $Y_0$ is $\G_{T_1,\ldots,T_d}$-null.
 	Choose an arbitrary $v_0 \in V_{\H}$. 
 	Define a  map $\phi:Y \to V_{\H}$  by 
 	$$\phi(y)=\begin{cases}
 	\pi(y)_0 & y \in Y \setminus Y_0\\
 	v_0 & y\in Y_0.
 	\end{cases}$$
 	It follows directly that $\phi$ is a Borel almost graph homomorphism.
 	Applying this to the case where $\H$ is the complete graph on three vertices, we see that the almost Borel chromatic number of $\G_{T_1,\ldots,T_d}$  is at most $3$. It is clear that the almost Borel chromatic number of $\G_{T_1,\ldots,T_d}$  is at least $2$, with equality if and only if the graph is ```almost Borel bipartite'' in the sense that there exists  a  two set Borel partition ${Y_0,Y_1}$ of $Y$ modulo a null set so that $T_i(Y_j)=Y_{1-j}$ for $i=1,\ldots,d$ and $j=0,1$. 	  
\end{proof}
	More generally, Theorem \ref{thm: universality of hom-shifts} implies that for  any $d$-commuting  Borel bijections  $T_1,\ldots,T_d:Y \to Y$ and any connected finite graph $\H$ that  is non-bipartite there exists a Borel map from $Y$ to the vertices of $\H$ which is a graph homomorphism on a full subset of $Y$.
	We remark that in order to deduce this corollary alone we did not need to prove that $h(\t \C)$ is equal to the topological entropy of $\Hom(\ZD,\H)$, nor did we need the full strength of Theorem \ref{thm:spec_sequence_implies_univesality}.  As mentioned in the introduction,  Gao, Jackson, Krohne and Seward formulated a much stronger version of Corollary \ref{cor:Borel_3_coloring} in \cite{gao2018continuous}: The Borel chromatic number of such graphs is at most 3. 

\section{Universality of dimers and rectangular tilings}\label{section:for Tilings and Flexibility for Dominoes}

In this section we use our main result to prove universality for dimers and more generally for   rectangular tilings in $\ZD$.
Let  $\T$ be a finite collection of finite subsets of $\ZD$, which we refer to as prototiles. A $\T$-tiling of $\ZD$ is a partition of $\ZD$ into pairwise disjoint translates of elements of $\T$. We denote the space of all $\T$ -tilings by $X^\T$ and refer to it as the \emph{tiling space} corresponding to $\T$. \index{Definitions and notation introduced in Section 9 and Section 10!$X^\T$ for a set of prototiles $\T$}

In this section we consider rectangular prototiles. To every $\mi = (i_1,\ldots,i_d) \in \NN^d$ we associate the rectangular prototile  
\begin{equation}\label{eq:rect_prototile}
T_\mi = \{1,\ldots, i_1\} \times \ldots \times \{1,\ldots,i_d\}.\index{Definitions and notation introduced in Section 9 and Section 10!$T_\mi$}
\end{equation}

We call a tiling space with rectangular prototiles is called a rectangular tiling shift. Rectangular tilings spaces are naturally in  one-to-one correspondence with finite subsets of $\NN^d$. Given a finite subset $F \subset (\NN)^d$, we denote by $\T_{(F)}$ the tiling set corresponding to the prototiles
$$\T_{(F)} = \{T_\mi:~ \mi \in F\}.\index{Definitions and notation introduced in Section 9 and Section 10!$T_{(F)}$}$$
We refer to  $X_{(F)}=X^{\T_{(F)}}$\index{Definitions and notation introduced in Section 9 and Section 10!$X_{(F)}$, the rectangular tiling shift for the set of tiles $\T_{(F)}$} as the rectangular tiling shift corresponding to $F$.

A particularly interesting and well studied instance of a rectangular tiling shift is that of \emph{dimers} or \emph{domino tilings} in $\ZD$ where 
$$F  = \{\vec{d}+\vec{e}_1,\ldots,\vec{d}+\vec{e}_d\}, \mbox{ with } \vec{d}= (\underbrace{1,\ldots,1}_d)=\sum_{t=1}^d \vec{e}_t.$$

Dimer tilings also correspond to perfect matchings in the standard Cayley graph of $\ZD$. 
There are significant and deep results about dimers in $\ZZ^2$, in particular the topological entropy of the corresponding tiling space is known and  much more is known about the measure of maximal entropy. We refer for instance to the celebrated  Cohn-Kenyon-Propp variational principle for domino tilings   \cite{MR1815214}.

More general  rectangular tiling shift have been considered by Einsedler who studied their shift cohomology  \cite{MR1836431} and by Pak, Sheffer and Tassy who studied some algorithmic aspects of rectangular tilings \cite{MR3530972}.

Call a  set $F \subset \NN^d$ \emph{coprime} \index{Definitions and notation introduced in Section 9 and Section 10!coprime sets $F\subset \NN^d$} if projecting it onto each coordinate yields a coprime set.
So $F \subset \NN^d$ is coprime if $\gcd(\{ i_t:~ \mi \in F\})=1$ for every $1 \le t \le d$, where $i_t= \mi \cdot \vec{e}_t$ is the projection of $\mi$ onto the $t$'th coordinate.

We recall the following result, now known as the \emph{$\ZD$-Alpern Lemma} \cite{MR661814}: Let $F \subset (\NN)^d$ be a coprime finite set, and let $(Y,\mu,T)$ be an free measure preserving $\ZD$-system. Then there exists a Borel $T$-invariant subset $Y_0 \subset Y$ such that $\mu(Y_0)=1$ and an equivariant Borel map $\pi:Y_0 \to X_{(F)}$.
The above result is due to Alpern proved this for $d=1$, and to Prikhod'ko \cite{MR1716239} and  \c{S}ahin \cite{MR2573000} for arbitrary $d$. 
We note that  in  \cite{MR1716239,MR2573000} it is further proved that given any strictly positive probability distribution $(p_T)_{T \in F}$ on $F$ the map $\pi$ can be chosen so that almost surely with respect to $\mu \circ \pi^{-1}$, the proportion of tiles of type $T$ is $p_T$. This also applies when $F$ is infinite \cite{MR1716239,MR2573000}. 

\begin{thm}\label{theorem: flexible tiling}
	If $F \subset (\NN)^d$ is coprime and $|F|>1$ then $X_{(F)}$ admits a flexible marker sequence of patterns and is thus $h$-universal for some $h >0$.
\end{thm}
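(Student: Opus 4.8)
The goal is to construct, for a coprime set $F\subset(\NN)^d$ with $|F|>1$, a flexible marker sequence of patterns for $X_{(F)}$ with positive entropy. The overall strategy mirrors the construction for hom-shifts in Section \ref{section:universal_hom}: I will build patterns on boxes $F_n$ that have a rigid, recognizable ``frame'' near the boundary, so that two such framed patterns can only overlap trivially (giving the marker property \eqref{eq:marker_property_subshift}), while the interior retains enough freedom to carry positive entropy (giving $h(\tilde\C)>0$) and enough flexibility to shadow sparse configurations of smaller framed patterns (giving the flexibility property \eqref{eq:x_interpulate2}). First I would fix a ``background tiling'' $x_\star\in X_{(F)}$ that is periodic: since $|F|>1$, pick two distinct prototiles and lay out a simple periodic brick-wall pattern using them; let $L$ be a common period vector in each coordinate. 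The frame of a pattern on $F_n$ will be the restriction of $x_\star$ to an annular neighborhood $F_n\setminus F_{n-g(n)}$ of the boundary, where $g(n)$ is a slowly growing function (say $g(n)=\lceil\sqrt n\rceil$, rounded up to a multiple of $L$) so that $g(n)/n\to 0$ as required. Define $C_n\subset\L(X_{(F)},F_n)$ to be the set of admissible $F_n$-patterns that agree with $x_\star$ on $F_n\setminus F_{n-g(n)}$ and additionally contain no tile crossing the inner boundary $\partial F_{n-g(n)}$, i.e.\ tilings of $F_n$ that ``close up'' the annulus exactly. Because every prototile fits inside $F_{g(n)}$ once $n$ is large, such closed patterns exist.

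The marker property is the easy part: if $x\in X_{(F)}$ and $S^{\mi}(x)|_{F_n},S^{\mj}(x)|_{F_n}\in C_n$ with $(\mi-\mj)\in F_{n-g(n)}$ but $\mi\neq\mj$, then the frames of the two occurrences overlap in a region of $\ZD$; on the overlap one copy shows $x_\star$ and the other shows $x_\star$ translated by $\mi-\mj$, and since $x_\star$ is periodic with period dividing $L$ in each coordinate, one checks this forces $\mi-\mj\in L\ZD$ --- and then by making the frame carry a small aperiodic ``defect'' (a single specified tile at a prescribed location in the annulus, chosen so that the background pattern has this tile in exactly one position per period) one rules out $\mi-\mj\in L\ZD\setminus\{\vec 0\}$ as well. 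This is exactly the two-layer trick used for hom-shifts: I would use a fixed small gadget instead of the checkerboard-of-two-colors, but the bookkeeping is the same. So $\{\mi:S^{\mi}(x)|_{F_n}\in C_n\}$ is $F_{n-g(n)}$-spaced.

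For flexibility: given $k$, an $F_{k+g(k)}$-spaced set $K\subset F_{n-g(n)-k}$, and $W\in(C_k)^K$, I would define a pattern $w\in C_n$ by placing the framed block $W(\mi)$ at each $\mi\in K$ and filling the complement --- which is $F_n$ minus the union of disjoint copies of $F_{k}$ (each already surrounded by a band of background, since the $W(\mi)$'s carry their own frames and are well-separated) --- with tiles extending $x_\star$. The only thing to verify is that the complementary region, a union of rectangular ``rooms'' and ``corridors'' all of whose side-lengths can be arranged to be large multiples of $L$, admits a tiling by $\T_{(F)}$ extending the prescribed boundary data; this follows from the coprimality of $F$ by the standard one-dimensional argument (coprime segment lengths generate all sufficiently large lengths, hence all sufficiently long corridors tile), applied coordinatewise, exactly as in the proof of the $\ZD$-Alpern Lemma \cite{MR1716239,MR2573000}. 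Choosing $g$ to grow slowly enough that all rooms have side $\geq$ the Frobenius bound for $F$ completes this step.

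Finally, positive entropy $h(\tilde\C)>0$: it suffices to exhibit exponentially many (in $|F_n|$) closed framed patterns on $F_n$. Take two distinct prototiles $T_{\mi_1},T_{\mi_2}\in\T_{(F)}$; inside a large central sub-box of $F_{n-g(n)}$ one can tile by a periodic pattern in which a positive-density family of disjoint ``flippable'' $2\times$(period) windows each admits at least two local tilings, so that independently toggling a linear-in-$|F_n|$ number of them yields $e^{cn^d}$ distinct patterns, all of which can be surrounded by the frame (again using coprimality to close up the annulus). Hence $h(\tilde\C)\geq c>0$. The main obstacle I anticipate is the flexibility step in dimension $d\geq 2$: one must check that after carving out well-separated framed blocks, the leftover polyomino region --- a priori a complicated rectilinear shape --- really is tileable extending the frame data. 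I would handle this by being careful in the construction to keep the leftover region a disjoint union of axis-aligned rectangles with controlled side-lengths (inserting extra background ``mortar'' of width a multiple of $L$ between blocks), reducing everything to the one-dimensional coprime-interval tiling fact already used for Alpern's lemma; no genuinely new combinatorics beyond that should be needed. The statement that $X_{(F)}$ is therefore $h$-universal (hence almost Borel universal, and in fact fully ergodic universal, since $\bigcup_n C_n$ can be arranged dense by the frame-extension argument of Lemma \ref{lem:C_k_full_support}-type reasoning) then follows immediately from Theorem \ref{thm:spec_sequence_implies_univesality} via the reduction from flexible marker sequences of patterns to flexible marker sequences recorded at the start of Section \ref{section:universal_hom}.
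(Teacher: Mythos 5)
Your overall route is the same as the paper's: a recognizable ``frame'' near the boundary of $F_n$ to force the marker property, a decomposition of the complement of the prescribed sub-blocks into axis-aligned rectangles whose side-lengths are controlled, a one-dimensional Frobenius/coin argument from coprimality to tile each such rectangle (this is exactly the content of Lemmas \ref{lemma:extension of rectangles}, \ref{lemma: tiling simple rectangles} and \ref{lem:tiling_felxible}), and positive entropy from independent choices of tile type in a positive density of disjoint windows. Two remarks on the quantitative setup: your period $L$ of a two-tile brick wall should be replaced by a common multiple of \emph{all} side-lengths occurring in $F$ (the paper's $M$), since the cross-sections of the leftover corridors must be tileable by every prototile used in the Frobenius step, not just the two used in the background; and the claim at the end that $\bigcup_n C_n$ can be made dense (hence full ergodic universality) is not supported for general $F$ --- extending an arbitrary admissible pattern to a framed perfect tiling is a nontrivial extendability statement that the paper only establishes for dominoes in $\ZZ^2$. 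Neither of these affects the stated theorem, which only asks for $h$-universality for some $h>0$.

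The genuine gap is in the marker step. Your periodicity argument correctly reduces, for $d\ge 2$, to ruling out offsets $\mi-\mj\in L\ZD\setminus\{\vec 0\}$ with $\mi-\mj\in F_{2(n-g(n))}$, but the proposed gadget --- a single specified tile at one prescribed location of the annulus --- does not rule these out. For the gadget to produce a contradiction, the defect location of one copy must land in the \emph{frame} of the other copy; but for many admissible offsets it lands in the other copy's interior (where nothing is prescribed) or outside its box entirely, and the same happens symmetrically. Concretely, in $d=2$ with the defect at the corner $(n,n)$ and offset $t=(L,-L)$, the point $(n,n)$ lies outside $t+F_n$ and the point $t+(n,n)$ lies outside $F_n$, so neither copy's defect constrains the other and the marker property fails; a defect placed anywhere else admits analogous offsets. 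A localized defect is \emph{not} equivalent to the two-layer trick: what makes the hom-shift argument (and the paper's tiling construction, where two full boundary layers of thickness $M$ are each tiled by a single, distinct tile type) work is that the distinguishing structure occupies entire $(d-1)$-dimensional shells, so that any substantial overlap of two boxes forces a shell of one copy to cross both shells of the other and hence to see conflicting prescriptions. Replacing your single defect by two full annular sub-layers carrying distinct rigid tilings repairs the argument and recovers the paper's proof.
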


The case $|F|=1$ where $F$ is coprime corresponds to the trivial one point system. For $n \in \NN$ we denote a $\ZD$-box of side-length $n$ by 
\begin{equation}\label{eq:B_n_def}
B_n = \{1,\ldots,n\}^d\index{Definitions and notation introduced in Section 9 and Section 10! $B_n$}
\end{equation}
(we need this notation since the boxes $F_n$ have odd side-lengths, and we will need even ones as well here.)

Roughly speaking, the flexible marker sequence of patterns will consist of perfect tilings of $\T_F$-tiling of a translate of  boxes whose side length is divisible by  certain integers, and with a specific tiling by a ``marker pattern'' near the boundary.

 An immediate corollary of Theorem \ref{theorem: flexible tiling}  is  the following ``almost-Borel'' $\ZD$-Alpern's Lemma:
\begin{cor}
	Let $F \subset (\NN)^d$ be a coprime finite set, and let $(Y,T)$ be a free Borel $\ZD$ dynamical system. Then there exists a full Borel $T$-invariant subset $Y_0 \subset Y$ and an equivariant Borel map $\pi:Y_0 \to X_{(F)}$.
\end{cor}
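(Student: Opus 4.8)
The plan is to derive the corollary immediately from Theorem \ref{theorem: flexible tiling} together with the main result of the paper, without constructing anything new. First I would invoke Theorem \ref{theorem: flexible tiling}: since $F \subset (\NN)^d$ is coprime and $|F|>1$, the rectangular tiling shift $X_{(F)}$ admits a flexible marker sequence of patterns $\t \C = (\t C_n)_{n=1}^\infty$. As explained in the discussion following the definition of a flexible marker sequence of patterns in Section \ref{section:universal_hom}, the existence of such a sequence of patterns for a subshift yields a flexible marker sequence $\C = (C_n)_{n=1}^\infty \in X_{(F)}^\NN$ in the sense required by Theorem \ref{thm:spec_sequence_implies_univesality} (with $h(\C) = h(\t \C)$), simply by extending, for a suitable slowly growing $\alpha:\NN\to\NN$, each pattern $w \in \t C_{n+\alpha(n)}$ to some point $x(w) \in X_{(F)}$ restricting to $w$ on $F_{n+\alpha(n)}$.

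Next I would apply Proposition \ref{prop:flex_factor} directly with $(X,S) = X_{(F)}$ (equipped with its natural shift action) and $\Y = (Y,T)$ the given free Borel $\ZD$ dynamical system. Proposition \ref{prop:flex_factor} asserts exactly that whenever $(X,S)$ admits a flexible marker sequence, there is a $T$-invariant Borel subset $Y_0 \subset Y$ whose complement is null with respect to every $T$-invariant probability measure---that is, $Y_0$ is full---together with a Borel equivariant map $\pi:Y_0 \to X_{(F)}$. This is precisely the assertion of the corollary, so the proof is essentially a one-line combination: "By Theorem \ref{theorem: flexible tiling}, $X_{(F)}$ admits a flexible marker sequence of patterns, hence (as in Section \ref{section:universal_hom}) a flexible marker sequence; the conclusion now follows from Proposition \ref{prop:flex_factor}."

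There is no real obstacle here, since no injectivity (and hence no entropy bound on $(Y,T)$) is required; the only minor point to state clearly is that Proposition \ref{prop:flex_factor} is formulated for systems with a flexible marker sequence, while Theorem \ref{theorem: flexible tiling} produces a flexible marker sequence \emph{of patterns}, so one should explicitly point to the passage in Section \ref{section:universal_hom} that bridges the two notions (or, equivalently, note that the non-injective case can be handled by the easier direct argument mentioned after Proposition \ref{prop:flex_factor}, along the lines of the proof of Theorem \ref{thm:infinite_entropy}). If one instead wanted the tiling map $\pi$ to be injective on $Y_0$, one would additionally need $h(\Y)$ to be smaller than the universality threshold $h$ from Theorem \ref{theorem: flexible tiling} and would invoke Theorem \ref{thm:spec_sequence_implies_univesality} rather than Proposition \ref{prop:flex_factor}; but as stated the corollary only claims a factor map, so Proposition \ref{prop:flex_factor} suffices.
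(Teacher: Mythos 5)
Your proposal is correct and takes essentially the same route as the paper, which proves the corollary exactly as it proved Corollary \ref{cor:Borel_3_coloring}: pass from the flexible marker sequence of patterns given by Theorem \ref{theorem: flexible tiling} to a flexible marker sequence for $X_{(F)}$ and then apply Proposition \ref{prop:flex_factor} (no injectivity, hence no entropy hypothesis, is needed). The only point left implicit is the case $|F|=1$, which your appeal to Theorem \ref{theorem: flexible tiling} excludes but which is trivial, since a coprime singleton forces $X_{(F)}$ to be a single point and the constant map works.
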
 
The proof of this corollary follows as did Corollary \ref{cor:Borel_3_coloring}. Theorem \ref{theorem: flexible tiling} additionally says that if $(Y,T)$ has sufficiently low entropy it is possible to make the equivariant tiling  $\pi:Y_0 \to X_{(F)}$  injective. We remark that although Theorem \ref{theorem: flexible tiling} does not directly recover the part of Alpern's lemma about specifying a probability distribution for the prototiles, it is possible to extract this part of the result by formulating a result about the possibility to control the push forward of a measure $\mu$ when embedding $(Y,T,\mu)$ into a flexible system $(X,S)$.

Notice that for $d=1$, if $\T$ is a coprime tile set then $X^\T$ is a mixing shift of finite type
so Alpern's Lemma follows from the mixing SFT version of Krieger's  generator theorem (stated in  \cite{MR0422576}, see   \cite[Theorem 28.1]{MR0457675} for a detailed proof).

For domino tilings in $\ZZ^2$ we can say a little more:
\begin{thm}\label{thm: domino universal}
 The  domino tiling in $\ZZ^2$ admits a flexible marker sequence of patterns $\t \C$ 
 such that $h(\t \C)$ is equal to the topological entropy and so that every admissible pattern appears in some element of $\t \C$. Thus the subshift of domino tilings in $\ZZ^2$ is fully ergodic universal and almost Borel universal.
\end{thm}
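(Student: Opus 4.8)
The plan is to establish Theorem~\ref{thm: domino universal} as a special case of the machinery developed for rectangular tilings, namely Theorem~\ref{theorem: flexible tiling}, but to upgrade the construction so that the flexible marker sequence of patterns achieves full topological entropy and covers every admissible pattern. Since dominoes in $\ZZ^2$ correspond to $F = \{(1,2),(2,1)\}$, which is coprime with $|F|>1$, Theorem~\ref{theorem: flexible tiling} already gives $h$-universality for some $h>0$. To get fully ergodic universality and almost Borel universality we only need to check two things: that $h(\t\C) = h(X^{\T},S)$, the topological entropy of the domino shift, and that every admissible $F_n$-pattern extends to some element of $\t C_m$ with $m \ge n$. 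Once these hold, the general implications recorded in Section~\ref{section:universal_hom} (a flexible marker sequence of patterns gives a flexible marker sequence with equal entropy, hence almost Borel universality via Theorem~\ref{thm:spec_sequence_implies_univesality}, and the full-support condition gives full ergodic universality via Proposition~\ref{prop:full_universality}) yield the result immediately.

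The first step is to recall from the proof of Theorem~\ref{theorem: flexible tiling} the precise form of the marker patterns: these are perfect domino tilings of a box $B_m$ (with $m$ even, so that the box is tileable) whose configuration in a thin annulus near the boundary is fixed to a particular periodic ``brick wall'' pattern that forces non-overlapping of distinct marker occurrences. I would take $\t C_m$ to consist of all domino tilings of $B_m$ agreeing with this fixed boundary pattern on $B_m \setminus B_{m - c}$ for a suitable constant $c$ (or a slowly growing width). Flexibility of the sequence amounts to: given sufficiently spaced occurrences of smaller marker patterns inside a large box, one can fill the complement with dominoes; this is where coprimality of $F$ is used (to tile the leftover regions between blocks), and it is already established in the proof of Theorem~\ref{theorem: flexible tiling}. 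The marker property \eqref{eq:marker_property_subshift} follows from the rigidity of the doubled brick-wall boundary, exactly as \eqref{eq:t_C_marker} does for hom-shifts.

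The key new step --- and the main obstacle --- is the entropy estimate $h(\t\C) = h(X^{\T},S)$ together with the full-coverage property. The point is that imposing a fixed tiling pattern on an annulus of width $c$ near the boundary of $B_m$ costs only $O(m)$ bits (a boundary effect), while the number of domino tilings of $B_m$ is $\exp(h(X^\T,S) m^2 + O(m\log m))$ by the Kasteleyn/Temperley--Fisher formula and its consequences for the entropy of the $\ZZ^2$ dimer model (or, more robustly, by a reflection-positivity / periodic-points argument analogous to Lemma~\ref{lem:many_periodic_conf_hom} and Proposition~\ref{prop: estimate of entropy}, using that periodic domino tilings have full entropy). So I would argue: (i) the number of domino tilings of $B_m$ with the prescribed boundary annulus pattern is still $\exp(h(X^\T,S)m^2 + o(m^2))$, because any tiling of the slightly smaller box $B_{m-c}$ extends to one of $B_m$ with the chosen boundary, up to correcting a bounded-width region whose tiling count is $\exp(O(m))$; hence $\liminf \frac{1}{|F_m|}\log|\t C_m| \ge h(X^\T,S)$, and the reverse inequality is trivial since $|\t C_m| \le |\L(X^\T, B_m)|$. (ii) For full coverage: given an admissible pattern $a$ on $F_n$, first extend $a$ to a domino tiling of a larger box (possible since the domino shift is, after a parity bookkeeping, sufficiently mixing to extend any partial tiling --- one can always complete a valid partial domino tiling of a box to a full tiling of a larger box by routing around defects, and coprimality/evenness of side lengths handles the remaining strip), then adjust a bounded collar to match the fixed marker boundary, landing in some $\t C_m$ with $m \ge n$. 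The delicate point throughout is the parity constraint for dominoes (a region is domino-tileable only if it has equally many black and white cells, and even this is not sufficient in general), so the completion arguments must be carried out on boxes of even side length and with careful attention to the checkerboard balance; this is exactly the kind of bounded-boundary correction that changes the tile count by only an $\exp(O(m))$ factor and therefore does not affect the entropy. I expect writing down the explicit collar pattern and verifying both the marker rigidity and the extendability of arbitrary admissible patterns to be the most technical part, but conceptually it is routine once the $\ZZ^2$ dimer entropy asymptotics and the coprime-tiling lemmas from Theorem~\ref{theorem: flexible tiling} are in hand.
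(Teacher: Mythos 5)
Your proposal is correct and follows essentially the same route as the paper: it reuses the flexible marker sequence of patterns from Theorem~\ref{theorem: flexible tiling}, reduces the entropy identity $h(\t\C)=h(X_{(D)})$ to the count of perfect tilings of large boxes and settles that count via Kasteleyn's formulas combined with a reflection-positivity argument in the spirit of Lemma~\ref{lem:many_periodic_conf_hom} (the paper's exact chain is: admissible box configurations $\to$ torus matchings by reflection positivity, torus matchings $\to$ perfect box matchings by Kasteleyn), and obtains the full-coverage property from the fact that any admissible partial domino pattern extends to a perfect tiling of a larger square. The only place you gesture where the paper cites is that last extension step, which the paper outsources to Einsiedler's Theorem~2.1 rather than re-deriving it by ``routing around defects.''
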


In the case when $(\underbrace{1,\ldots,1}_d)\in F$  the corresponding space $X_{(F)}$ is strongly irreducible and has dense periodic points.
In this case universality of  $X_{(F)}$ in the ergodic sense follows from an earlier result of  \c{S}ahin and Robinson \cite{MR1844076}. This is not the case for  domino tilings in $\ZD$ or for more general rectangular tiling spaces.  See for instance \cite{MR1658619}.

\begin{remark}
If $F$ is not coprime and $h(X_{(F)})>0$, the subshift $X_{(F)}$ fails to be $h$-universal for any $h>0$ due to periodicity issues, but it is still $h$-universal for some $h>0$ with respect to a subaction of the finite-index subgroup of $\ZD$  corresponding to the g.c.d of the side lengths of the tiles. This follows from the fact when $F$ is not coprime, we can restrict to tilings of $\mathbb{Z}^d$ by $F$  where the vertices of the tiles sit on a sublattice of $\mathbb{Z}^d$, and so up to choosing a base point one can reduce to the coprime case.
\end{remark}

One of the ingredients of the proof of Theorem \ref{theorem: flexible tiling} is the following lemma.

\begin{lemma}\label{lemma:extension of rectangles}
Let $N,n, n',M\in \N$ and $\mi \in \Z^d$ such that the translate of $B_{nM}+F_N$ centered at  $\mi$ is contained in $B_{(n+n')M}$. Then $B_{(n+n')M} \setminus (\mi + B_{nM})$ can be partitioned into rectangular shapes of which one of the sides is greater than or equal to $N$ and the rest are multiples of $M$.
\end{lemma}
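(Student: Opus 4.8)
The plan is to give an explicit ``shelling'' decomposition of the annular region $B_{(n+n')M}\setminus(\mi+B_{nM})$ one coordinate direction at a time, so that at each stage we slice off a slab whose extent in the current direction is controlled and whose extent in every other direction is already a multiple of $M$. Write $\mi = (a_1,\dots,a_d)$ and let $R = \mi + B_{nM}$, so $R = \prod_{t=1}^d \{a_t+1,\dots,a_t+nM\}$. The hypothesis that $\mi + B_{nM} + F_N \subseteq B_{(n+n')M}$ means precisely that for each coordinate $t$ we have $a_t \ge N$ and $a_t + nM + N \le (n+n')M$, i.e. both the ``left margin'' $a_t$ and the ``right margin'' $(n+n')M - nM - a_t = n'M - a_t$ are at least $N$.

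First I would set up the telescoping decomposition. Define the nested boxes $Q_0 = R$ and, for $1 \le t \le d$,
\[
Q_t = \prod_{s\le t}\{1,\dots,(n+n')M\}\times \prod_{s>t}\{a_s+1,\dots,a_s+nM\},
\]
so that $Q_d = B_{(n+n')M}$ and $B_{(n+n')M}\setminus R = \bigsqcup_{t=1}^d (Q_t \setminus Q_{t-1})$. The box $Q_t \setminus Q_{t-1}$ consists of the points of $Q_t$ whose $t$-th coordinate lies outside $\{a_t+1,\dots,a_t+nM\}$; it splits as a disjoint union of a ``low part'' $L_t$ (coordinate $t$ in $\{1,\dots,a_t\}$) and a ``high part'' $H_t$ (coordinate $t$ in $\{a_t+nM+1,\dots,(n+n')M\}$), each of which is itself a single rectangular box. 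In $L_t$ and $H_t$, the coordinates $s < t$ already range over a full interval of length $(n+n')M$, a multiple of $M$; the coordinate $t$ ranges over an interval of length $a_t \ge N$ (resp. $n'M - a_t \ge N$); and the coordinates $s > t$ range over intervals of length $nM$, a multiple of $M$. Thus each of the $2d$ boxes $L_t, H_t$ is a rectangle with all sides multiples of $M$ except the $t$-th side, which has length $\ge N$. This is exactly the claimed partition.

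The only point requiring care is to confirm that the $t$-th side really is $\ge N$ in both pieces, and here the two margin inequalities extracted from the containment hypothesis are used: $a_t \ge N$ handles $L_t$, and $n'M - a_t \ge N$ (equivalently $a_t + nM + N \le (n+n')M$) handles $H_t$. I do not anticipate a genuine obstacle here; the main thing is to write the index bookkeeping for the nested boxes $Q_t$ cleanly so that the disjointness and the ``union equals the annulus'' claims are transparent. One can also remark that if some margin $a_t$ or $n'M-a_t$ happens to be zero the corresponding piece $L_t$ or $H_t$ is empty and is simply omitted; but under our hypothesis both are in fact at least $N \ge 1$, so this degenerate case does not arise.
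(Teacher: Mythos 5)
Your proof is correct and is essentially the paper's argument: the paper proves the lemma by induction on $d$, at each step extending two opposite faces of $\mi+B_{nM}$ to peel off two slabs (one side $\ge N$, the rest multiples of $M$) and reducing to a $(d-1)$-dimensional instance crossed with an interval of length $nM$; your nested boxes $Q_0\subset Q_1\subset\dots\subset Q_d$ are exactly that induction unrolled, with $L_t,H_t$ the two slabs peeled off in direction $t$. The margin inequalities $a_t\ge N$ and $n'M-a_t\ge N$ that you extract from the containment hypothesis are precisely what is needed, so no gap remains.
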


\begin{proof}
 The proof for higher dimensions follows by induction on $d$.
For  $d=1$, $B_{(n+n')M} \setminus (\mi + B_{nM})$ is a disjoint union of two intervals of length greater than or equal to $N$, so this gives the required partition.

For the induction step, suppose the result is known for dimensions less than $d$.   Extend two opposite faces of $\mi + B_{nM}$ to get a partition of $B_{(n+n')M}$ into three parts. The parts which do not contain $\mi + B_{nM}$  have one side of length greater than or equal to $N$ and the rest are multiples of $M$. Now the partition element which contains $\mi + B_{nM}$ is the product of a $d-1$ dimensional  instance of the induction with  $\{1,\ldots,nM\}$. This can be partitioned as required by the induction hypothesis.
\end{proof}

Given a partition as in Lemma \ref{lemma:extension of rectangles}, we will need to tile each such rectangular shape of the partition with elements of our coprime tile set. This is given by the following lemma.

\begin{lemma}\label{lemma: tiling simple rectangles}
Suppose  $F \subset \NN^d$ is finite and  coprime. Let $M$ be the product of all the side lengths of the rectangles in $\T_F$. 
Then each of the following conditions on $\mi \in \NN^d$ is sufficient so that $\T_F$ can tile  $T_\mi$:
\begin{enumerate}
\item
$\mi \in M\NN^d$.
Equivalently, all of the side lengths of $T_\mi$ are positive integer multiplies of $M$.
\item
There exists $m \in \NN$ and $1\le t \le d$ so that
$\mi \in m \vec{e}_t + M \NN^d$.
Equivalently, one of its side lengths of $T_\mi$ is greater than or equal to $M$ and the rest are multiples of $M$.
\end{enumerate}
Furthermore, in the first case the number of possible tilings of $T_\mi$ is at least
$|F|^{M^{-d}|T_\mi|}$.
\end{lemma}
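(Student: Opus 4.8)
\textbf{Proof plan for Lemma \ref{lemma: tiling simple rectangles}.}
The plan is to build tilings of $T_{\mi}$ by stacking translates of the fundamental ``brick'' $T_{(M,M,\ldots,M)}$ and then decomposing each brick into copies of a single prototile from $\T_F$. First I would observe the basic building block: for each fixed prototile $T_{\mj}$ with $\mj = (j_1,\ldots,j_d)$, since $M$ is a multiple of every side length $j_t$ (because $M$ is the product of all side lengths occurring in $\T_F$), the cube $\{1,\ldots,M\}^d$ can be tiled by exactly $M^d / \prod_t j_t = M^d/|T_{\mj}|$ translates of $T_{\mj}$, arranged on the obvious sublattice. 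In particular $T_{(M,\ldots,M)}$ admits at least $|F|$ distinct tilings by $\T_F$ (one for each choice of prototile), and these tilings are genuinely different since different prototiles have different shapes. This handles the ``furthermore'' claim once we know how many bricks fit in $T_{\mi}$.

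Next, for case (1), write $\mi = M\mathbf{q}$ with $\mathbf{q}\in \NN^d$; then $T_{\mi}$ is partitioned into $q_1 q_2\cdots q_d = M^{-d}|T_{\mi}|$ translates of the brick $T_{(M,\ldots,M)}$, laid out as a $q_1\times\cdots\times q_d$ array. Tiling each brick independently by some prototile of $\T_F$ gives a $\T_F$-tiling of $T_{\mi}$, and choosing among the $\geq|F|$ brick-tilings independently for each of the $M^{-d}|T_{\mi}|$ bricks yields at least $|F|^{M^{-d}|T_{\mi}|}$ distinct tilings; distinctness follows because two tilings that assign different brick-tilings to the same brick position already differ as partitions of $\ZD$. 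This proves both the existence statement and the counting bound in case (1).

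For case (2), suppose $\mi = m\vec{e}_t + M\mathbf{q}$ with $m\in\NN$, $\mathbf{q}\in\NN^d$, so the $t$-th side of $T_{\mi}$ has length $m + M q_t \geq M$ while every other side $M q_s$ ($s\neq t$) is a positive multiple of $M$. The only issue is that $m$ need not be a multiple of $M$, so I cannot directly slice off a multiple-of-$M$ chunk in the $t$-direction. The key step is to use coprimality: since the side lengths of $\T_F$ appearing in coordinate $t$ have gcd equal to $1$, the numerical semigroup they generate contains all sufficiently large integers (Chicken McNugget / Sylvester-Frobenius), hence in particular $M + (\text{any residue})$ is representable once we pass to a large enough length — but to get \emph{every} length $\geq M$ I instead argue as follows: the length $\ell_t := m + Mq_t$ satisfies $\ell_t \geq M \geq$ (Frobenius number of the $t$-coordinate side lengths)$\,+1$, provided $M$ was chosen large enough — and indeed $M$, being the \emph{product} of all side lengths, dominates the Frobenius number of any sub-collection of them, since the Frobenius number of a set containing $1$ is $0$ when $1$ is among the side lengths, and otherwise is bounded by (product of the two smallest) $\leq M$; I would make this bound precise. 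Thus $\ell_t$ is a nonnegative integer combination of the $t$-coordinate side lengths of $\T_F$, which lets me partition the interval $\{1,\ldots,\ell_t\}$ into subintervals each of a length occurring as a $t$-side of some prototile. Taking the product of this interval partition with the trivial partition of $\{1,\ldots,Mq_s\}$ into unit-length pieces in the remaining coordinates is \emph{not} quite enough, so instead I slice $T_{\mi}$ into slabs $\{1,\ldots,\ell_t\}\times(\text{brick in the other coordinates})$: the other coordinates are all multiples of $M$, so they decompose into $(d-1)$-dimensional bricks $T_{(M,\ldots,M)}$, and each slab is then $\{1,\ldots,\ell_t\}\times\{1,\ldots,M\}^{d-1}$, which, using the interval decomposition of $\{1,\ldots,\ell_t\}$ together with the fact that $M$ is divisible by every side length, tiles by $\T_F$ one subinterval-slab at a time. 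The main obstacle here is precisely pinning down that $M$ exceeds the relevant Frobenius numbers; once that is in hand the rest is bookkeeping. No counting bound is required in case (2).
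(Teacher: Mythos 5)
Your proposal is correct and follows essentially the same route as the paper: case (1) by partitioning into $M\times\cdots\times M$ bricks each grid-tiled by a single prototile (giving the same $|F|^{M^{-d}|T_\mi|}$ count), and case (2) by writing the one non-multiple side length as a nonnegative integer combination of the $t$-coordinate side lengths (Frobenius/coin problem, using coprimality) and grid-tiling each resulting slab. The only inaccuracy is your deferred bound on the Frobenius number: the \emph{two smallest} $t$-side lengths need not be coprime (e.g.\ $\{6,10,15\}$ has gcd $1$ but no coprime pair), so ``product of the two smallest'' does not work; a correct bound of the same flavor, e.g.\ Schur's bound $(a_1-1)(a_n-1)-1<a_1a_n\le M$ for a set $a_1<\dots<a_n$ with gcd $1$, does the job, and the paper itself simply cites the Frobenius problem at this point without further justification.
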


\begin{proof}
Let $M$ be as above.
It is clear that $T_\mj$ can tile $B_M$ for  every $\mj \in F$ by the obvious grid tiling. 
Thus, if $\mi \in M\NN^d$,  $T_\mi$ can be tiled by translates of $B_M$, each of which can be tiled by $\T_F$ in at least $|F|$ different ways.
It follows that $T_\mi$ can be tiled by $\T_F$, and there are  at least $|F|^{M^{-d}|T_\mi|}$ possible tilings.
 
Now assume  that $\mi \in  M\NN^d + m \vec{e}_1$, where $m \in \NN$. By our assumption that $F$ is coprime,
$$\gcd\left( \left\{ j_1:~\mj \in F\right\}\right) =1,$$
 where $j_1= \mj \cdot \vec{e}_1$ is the first coordinate of $\mj$.
Let $i_1 = \mi \cdot \vec{e}_1$. Our assumption is that $i_1 > M$. Thus 
there exists   $c \in \NN^F$ so that $i_1 = \sum_{\mj \in F} c_\mj j_1$, where again  $j_1= \mj \cdot \vec{e}_1$ is the first coordinate of $\mj$ (this is related to the well-known Diophantine Frobenius problem or the coin problem. For more about this have a look at \cite{MR0006196}).
For $\mj \in F$ let $\mj^* = (c_\mj j_1, i_2,\ldots,i_d)$, where $i_t= \mi \cdot \vec{e}_t$ is the $t$-th coordinate of $\mi$.
By our assumption $i_2,\ldots,i_d \in M \NN$. It follows that $T_\mj$ can tile $T_{\mj^*}$. By construction, $T_\mi$ can be tiled by $\{T_{\mj^*}:~\mj \in F\}$ by stacking them next to each other in direction $\vec{e}_1$. 
\end{proof}

Let 
\begin{equation}\label{eq:C_n_tiling_def}
C_n = \left\{a \in \L(B_{nM}, X_{(F)}):~ a \mbox{ is a perfect } \T_F  \mbox{ tiling of } B_{nM} \right\}.\index{Definitions and notation introduced in Section 9 and Section 10!$C_n$, a flexible sequence of patterns for rectangular tiling shifts}
\end{equation}

\begin{lemma}\label{lem:tiling_felxible}
Suppose $k<n$. Consider a $(B_{kM}+F_M)$-separated set $K \subset B_{nM}$ so that the translate of $\mi + B_{kM}+F_M$ is contained in $B_{nM}$ for all $\mi \in K$. Then for any  $w \in C_k^K$ there exists $a \in C_n$ such that $S^{\mi}(a)\mid_{B_{kM}}= w_\mi$ for all $\mi \in K$.
\end{lemma}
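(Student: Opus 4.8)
The plan is to build the tiling $a \in C_n$ by first placing a copy of the prescribed pattern $w_\mi$ inside each box $\mi + B_{kM}$, and then tiling the complement $B_{nM} \setminus \bigcup_{\mi \in K}(\mi + B_{kM})$ by rectangular shapes from $\T_F$. The separation hypothesis — that $K$ is $(B_{kM}+F_M)$-separated and that each $\mi + B_{kM} + F_M$ sits inside $B_{nM}$ — is exactly what gives the geometric room to carry this out: around each placed sub-tiling there is a buffer of width at least $M$ before any other placed sub-tiling or the boundary of $B_{nM}$.

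First I would deal with the complementary region. The set $B_{nM} \setminus \bigcup_{\mi \in K}(\mi + B_{kM})$ should be cut into rectangular pieces. The idea is to extend faces of the boxes $\mi + B_{kM}$ (as in the proof of Lemma~\ref{lemma:extension of rectangles}): for each $\mi \in K$, the annular region $(\mi + B_{(k+k')M}) \setminus (\mi + B_{kM})$ — for a suitable $k'$ chosen so that the buffer box $\mi + B_{kM} + F_M$ fits inside $\mi + B_{(k+k')M}$ — can be partitioned into rectangular shapes of which one side has length $\ge M$ (in fact $\ge N$ with $N = M$ there, since $F_M$ has side $2M+1 \ge M$) and the rest are multiples of $M$. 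Because the buffer boxes $\mi + B_{kM} + F_M$ are pairwise disjoint and each is contained in $B_{nM}$, these annuli can be taken disjoint, and then what remains of $B_{nM}$ after removing all the enlarged boxes $\mi + B_{(k+k')M}$ — one must choose the $k'$ and the precise nesting so that this leftover region is itself a union of boxes with side lengths that are multiples of $M$; this can be arranged by a straightforward inductive/grid argument on $d$, or more simply by taking each enlarged box to have all side lengths multiples of $M$ and then applying Lemma~\ref{lemma:extension of rectangles} once more to the ``global'' complement.

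Having decomposed the complement into rectangular shapes each of which satisfies condition (1) or (2) of Lemma~\ref{lemma: tiling simple rectangles}, I would invoke that lemma to tile each such rectangular piece by translates of tiles from $\T_F$. Gluing these tilings together with the prescribed sub-patterns $w_\mi$ inside the boxes $\mi + B_{kM}$, and observing that each $w_\mi$ is itself a perfect $\T_F$-tiling of $B_{kM}$ (since $w_\mi \in C_k$), produces a function $a \colon B_{nM} \to$ (the alphabet of $X_{(F)}$) which is a perfect $\T_F$-tiling of all of $B_{nM}$; hence $a \in C_n$. By construction $S^{\mi}(a)\mid_{B_{kM}} = w_\mi$ for every $\mi \in K$, as required. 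The one place requiring care — the main obstacle — is the bookkeeping of the complementary region: ensuring that after removing the prescribed boxes together with their $M$-buffers, what is left genuinely decomposes into rectangles each having at least one side $\ge M$ and the remaining sides multiples of $M$, so that Lemma~\ref{lemma: tiling simple rectangles} applies to every piece. This is purely combinatorial/geometric and is handled by the face-extension argument already used for Lemma~\ref{lemma:extension of rectangles}, adapting $N$ there to $M$ and iterating over the boxes in $K$.
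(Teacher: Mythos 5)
Your proposal is correct and follows essentially the same route as the paper: place each prescribed $w_\mi$ in $\mi+B_{kM}$, use Lemma~\ref{lemma:extension of rectangles} to cut the annulus around each box into rectangles with one side $\ge M$ and the rest multiples of $M$, and tile everything left over via Lemma~\ref{lemma: tiling simple rectangles}. The one bookkeeping point you leave slightly open — making the global leftover region a union of $M$-aligned boxes — is handled in the paper exactly by the grid device you gesture at: take $\tilde B_\mi$ to be the union of the blocks of the partition $\P_1$ of $B_{nM}$ into translates of $B_M$ that meet $\mi+B_{kM}+F_M$, so the complement of $\biguplus_\mi \tilde B_\mi$ is automatically a disjoint union of translates of $B_M$.
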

\begin{proof}
    To prove the lemma it is enough to show that for any $k<n$ and any $(B_{kM}+F_M)$-separated set $K \subset B_{nM}$ so that the translate of $\mi + B_{kM}+F_M$ is contained in $B_{nM}$ for all $\mi \in K$, there is a perfect tiling of $B_{nM} \setminus (K+B_{kM})$ by the tiles of $\T_F$.
    
	The proof will involve an appropriate division of 
	$B_{nM}\setminus K+ B_{kM}$ into rectangular shapes which can be tiled by rectangles in $\T_F$ by Lemma \ref{lemma: tiling simple rectangles}.
	
	\begin{enumerate}
		\item
		Let $\P_1$ be the partition of $B_{nM}$ into translates of $B_M$. 
		\item
		For all $\mi \in K$, we let $\tilde B_\mi$ be the union of all partition elements of $\P_1$ which intersect $\mi + B_{kM}+F_M$.
		\item
		By Lemma \ref{lemma:extension of rectangles}, for every $\mi \in K$, $\t B_\mi \setminus \mi +B_{kM}$ is partitioned into rectangular shapes of which one of the sides is greater than or equal to $M$ and the rest are multiples of $M$. The partition elements of $\P_1$ give a partition of  
		$$B_{nM}\setminus \biguplus_{\mi \in K}\tilde  B_\mi$$
		into translates of $B_M$. By Lemma \ref{lemma: tiling simple rectangles}, each of these can be tiled by $\T_F$. 
	\end{enumerate}
This completes the proof.
\end{proof}
Now we can proceed towards the proof of the theorem.
\begin{proof}[Proof of Theorem \ref{theorem: flexible tiling}]

Recall that $C_n$ has been defined by \eqref{eq:C_n_tiling_def}. Let $\t C_n \subset \L(X^\T, F_n)$ consist of the patterns $a \in  \L(X^\T, F_n)$ such that 
\begin{enumerate}
	\item There exists $\t a \in C_{M \lceil\frac{2n+1}{M}\rceil}$ so that the ``centered'' $F_n$ pattern in $\t a$ is equal to $a$. In other words, $a$ can be extended to a perfect tiling of a slightly bigger box whose side-length is divisible by $M$. If  $M\lceil \frac{2n+1}{M}\rceil$ is even, we choose an ``approximate center'' for  $B_{M\lceil \frac{2n+1}{M}\rceil}$.
	\item The restriction of $a$  to the  ``centered'' $B_{M\lfloor \frac{2n+1}{M}\rfloor}$  of $F_n$ is also  a perfect $\T_F$ tiling, where the two `outermost layers'' of thickness $M$ are each tiled by a single tile $T_{\mi^{(1)}}$ and $T_{\mi^{(2)}}$ respectively, where $\mi^{(1)} \ne \mi^{(2)}$ and $\mi^{(1)},\mi^{(2)} \in F$. 
\end{enumerate}
  By Lemma \ref{lem:tiling_felxible} the first property implies that $\t C_n$ is a flexible sequence of patterns (In general, whenever we have a flexible sequence of patterns ``along a subsequence with small gaps'' we can obtain a new flexible sequence along the integer). The second property implies the marker property (two such patterns cannot overlap too much).
	\end{proof}

Now we will prove the universality for the domino tiling shifts.

\begin{proof}[Proof of Theorem \ref{thm: domino universal}]

Let $D= \{(1,2),(2,1)\} \subset \NN^2$, and let $X_{(D)}$  be the domino tiling shift.
 As in the proof of the previous theorem we let $M=4$ be the product of all the side lengths of rectangles in $\T_D$.
We already proved in the Theorem \ref{theorem: flexible tiling} that the sequence $\t \C= (\t C_n)_{n=1}^\infty$ is a flexible marker sequence of patterns. The additional claim is that $h(\t \C)$ is equal to the topological entropy of $X_{(D)}$.
This is essentially the statement that the number of perfect matchings of $B_{2n}$ is $e^{|B_{2n}|h(X_{(D)})+o(n^2)}$ as $n \to \infty$.  
This  follows almost directly from well known results about dimers in $\ZZ^2$, and in particular  from \cite[Theorems 4.1 and 10.1]{MR1815214}. This seems to be an overkill for what we require. As an alternative, 
one can use older and more basic classical results by Kasteleyn about the  the number of tilings of a $2$-dimensional discrete torus $\ZZ^2/(2n\ZZ^2)$ and of $B_{2n}$, in combination with a ``reflection positivity argument'' of the sort used in the previous section: Observe that the uniform measure on tilings by dominoes of $B_{4n}$ is ``reflection positive'' with respect to reflection along the hyperplanes parallel to the coordinate hyperplanes passing through $(2n+\frac12,2n+\frac12,\ldots, 2n+\frac12)$. Thus it follows as in the proof of Lemma \ref{lem:many_periodic_conf_hom} that 
$$\lim_{n\longrightarrow \infty}\frac{\log{(\text{number of perfect matchings of }T^2_{4n})}}{|T^2_{4n}|}=h(X_{(D)}).$$
Kasteleyn in \cite[the discussion following Equation (27)]{KASTELEYN19611209} further proved that
$$\limsup_{n\longrightarrow \infty}\frac{\log{(\text{number  of perfect matchings of }T^2_{4n})}}{|T^2_{4n}|}=\limsup_{n\longrightarrow \infty}\frac{\log{|C_n|}}{|B_{4n}|},$$
where $T^2_{4n}=\ZZ^2/4n\ZZ^2$.
By these two equations together we conclude that
$$\limsup_{n\longrightarrow \infty}\frac{\log{|C_n|}}{|B_{4n}|}=h(X_{(D)}).$$

The part about ``full universality'' follows from \cite[Theorem 2.1]{MR1836431} which shows that any finite patch of a domino tiling of $\Z^2$ can be extended to a tiling of a square.
 \end{proof}

\section{A Fully Ergodic Universal and almost Borel universal System with a Factor which is Not Ergodic Universal}\label{sec:Not_fully_universal}
Lind and Thouvenot \cite{MR0584588} asked if a factor of a fully ergodic universal dynamical system must also be ergodic universal. 
Here
we provide a negative answer by describing a fully ergodic universal $\ZZ$ subshift that admits a factor which is not ergodic universal.
The construction has some similar features to   Haydn's construction of a subshift with multiple MME's \cite{haydn1998multiple} and also 
to related methods of Quas and \c{S}ahin \cite{quas2003entropy} that can be adapted to produce a $\ZZ^2$-SFT with similar properties.

Let $\A_{+,0} \subset 2\ZZ \cap (0,\infty)$ be a finite set of positive even integers.
Denote $\A_{+,1} =\A_{+,0} +1$, $\A_{-,0} = -A_{+,0}$, $\A_{-,1}=-A_{+,1}$.
Let $M = |\A_{+,0}|=|\A_{+,1}|=|\A_{-,0}|=|\A_{-,1}|$, and suppose $M >4$.
Denote:
\begin{equation}
\A_+ = \A_{+,1} \uplus A_{+,0}, ~ \A_- = \A_{-,1} \uplus A_{-,0} 
\end{equation}
and 
\begin{equation}
\A = \A_{+} \uplus \A_{-}\uplus \{0\}.
\end{equation}
We now describe a subshift $ \hat X \subset \A^{\ZZ}$.
The rules for the subshift  $X$ are:
\begin{enumerate}
	\item A negative even integer must be directly followed by a negative odd integer.
	\item A negative odd integer must be directly  followed by a non-zero even integer.
	\item A positive even integer must be directly followed by a positive odd integer.
	\item A positive odd integer must be directly  followed by non-negative even integer. 
	\item The symbol $0$ must be directly  followed by a non-positive even integer.
	\item Whenever $0$ is directly followed by a sequence of $n_+$ positive integers and then a sequence of $n_-$ negative integers and then another $0$, then $n_+=n_-$.   

\end{enumerate}
In other words, $\hat X$ consists of concatenations of blocks of the form 
$$0^k e^{-}_1o^{-}_1\ldots e^{-}_n o^{-}_n
  e^{+}_1 o^{+}_1\ldots e^{+}_n o^{+}_n0^j,$$
with $e^{-}_1,\ldots,e^{-}_n \in A_{-,0}$, $o^{-}_1,\ldots,o^{-}_n \in A_{-,1}$, $e^{+}_1,\ldots, e^{+}_n \in A_{+,0}$, $o^{+}_1,\ldots,o^{+}_n \in A_{+,1}$, and their limits. Let $\Phi:\hat X \to \ZZ^\ZZ$ be given by 
\begin{equation}
\Phi(x)_i = \min\{x_i,0\}.
\end{equation}
Let $X'= \Phi(\hat X)$.
\begin{prop}\label{prop:fully_universal_with_non_fully_universal_factor}
	The subshift $\hat X \subset \A^\ZZ$ is fully ergodic universal but the factor $X'= \Phi(\hat X)$ is not universal.
\end{prop}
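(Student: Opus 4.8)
The plan is to establish two things: that $\hat X$ is fully ergodic universal and almost Borel universal, and that $X'$ fails to be universal. For the first part, I would exhibit a flexible marker sequence of patterns for $\hat X$ in the sense of Section~\ref{section:universal_hom}, with $h(\t\C)$ equal to $h(\hat X, S)>0$ and with $\bigcup_n \t C_n$ dense in $\hat X$, and then invoke Theorem~\ref{thm:spec_sequence_implies_univesality} together with Proposition~\ref{prop:full_universality} (and the remarks relating flexible sequences of patterns to flexible marker sequences). The natural candidate for $\t C_n$ is the set of admissible blocks of length roughly $2n+1$ of the form $0^{\,m} \,(\text{negative run})\,(\text{positive run})\, 0^{\,m'}$ with matching run-lengths, padded by a fixed number of $0$'s on each side so that a block in $\t C_n$ carries a distinctive "marker" — a run of $0$'s of a prescribed length at its centre's extremities — ensuring the marker property \eqref{eq:marker_property_subshift}: two such blocks cannot overlap nontrivially because a sufficiently long run of $0$'s in the middle of one forces alignment. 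Flexibility is the statement that given any $F_{k+g(k)}$-spaced collection of smaller such blocks sitting inside $F_{n-g(n)-k}$, one can fill in the gaps with admissible content: this is easy here because the "filler" regions can be taken to be long all-$0$ stretches (or standard balanced excursions), and the constraints (1)--(6) are purely local apart from the run-length matching, which is respected as long as each inserted block is itself balanced. The entropy computation $h(\t\C) = h(\hat X)$ should follow by a counting argument: the number of balanced blocks of a given length differs from the total number of admissible blocks of that length only by a subexponential factor, since the "entropy" of $\hat X$ is carried by the free choice among $\A_{+,0},\A_{+,1},\A_{-,0},\A_{-,1}$ along the runs, of which there are $\Theta$(block length) many, each contributing a bounded factor $M$, while the balance constraint costs only polynomially (it is a one-dimensional first-return/excursion count, so the correction is $e^{o(n)}$). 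Density of $\bigcup_n\t C_n$ in $\hat X$ is immediate since every point of $\hat X$ is a limit of concatenations of such balanced blocks.

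For the second part, I would show that the factor $X' = \Phi(\hat X) \subset \{0,-1,-2,\dots\}^{\ZZ}$ is not universal because it carries very little entropy and, more to the point, because the image of a balanced excursion under $\Phi$ remembers the run-length through the \emph{number} of negative symbols but forgets the positive run entirely; one checks that $X'$ is in fact conjugate (or finite-to-one extension/factor) to a system built from: long $0$-runs, together with blocks of the form $e^-_1 o^-_1 \cdots e^-_n o^-_n$ whose length must equal the length of an (erased) positive run — but since the positive run has been deleted, the constraint "$n_+ = n_-$" becomes vacuous on the image side. So $X'$ is essentially: arbitrary concatenations of symbols from $\A_-$ (respecting the even/odd alternation (1)--(2)) separated by arbitrary $0$-runs, with the one global constraint that a maximal negative block is immediately preceded and followed by $0$'s. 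This makes $X'$ a sofic shift, and I would compute its topological entropy — it is $\tfrac12\log M$ (each pair $e^-_i o^-_i$ contributes a free choice from a set of size $M^2$ over $2$ coordinates, with the alternation being the only local rule, and $0$-runs contributing nothing). Crucially, the \emph{measures of maximal entropy} are not the obstruction per se; rather, I would argue that $X'$ is not universal by identifying a concrete ergodic measure-preserving system of entropy strictly below $h(X')$ that cannot embed, \emph{or} — more in line with the Haydn/Quas–\c{S}ahin flavour referenced — by showing $X'$ has a nontrivial topological-dynamical obstruction: e.g. there is a finite-to-one factor map from $X'$ onto a system where the $0$-runs impose a constraint incompatible with embedding certain Bernoulli or weakly mixing systems of low entropy. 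The cleanest route is probably: $X'$ decomposes (after removing a null set) into the $0$-fixed-point plus a part that is a countable-to-one extension of a mixing SFT of entropy $\tfrac12\log M$ which \emph{is} universal, but the way these pieces are glued along the $0$-runs forces any invariant measure to "see" the $0$-symbol with positive frequency on a set of positive measure — equivalently, $X'$ admits a non-constant continuous eigenfunction or a nontrivial factor onto an odometer-like or identity-on-a-clopen-set system, which obstructs universality (a universal system cannot have a topological factor that is itself not universal would be circular, so instead one shows directly: any ergodic invariant $\mu$ on $X'$ with $h_\mu>0$ must assign positive measure to the cylinder $[0]$, and on the complement the induced system has strictly smaller entropy, ruling out embedding a Bernoulli shift of entropy just below $h(X')$). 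I would pin down the exact obstruction by analyzing the return-time structure to $[0]$.

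The key technical points, in order: (i) verify constraints (1)--(6) are satisfied by the proposed $\t C_n$ and that flexibility holds — routine, using all-$0$ fillers; (ii) verify the marker property — routine, by length of central $0$-runs; (iii) the entropy identity $h(\t\C) = h(\hat X)$ — this needs a genuine but standard excursion/generating-function count showing the balance constraint is subexponentially costly; (iv) identify $X'$ precisely as a sofic shift and compute $h(X') = \tfrac12\log M$; (v) exhibit the obstruction to universality of $X'$ — this is the \textbf{main obstacle}. The subtlety is that $X'$ still has positive entropy and even contains universal subsystems, so non-universality cannot come from "too little entropy" globally; it must come from a structural feature — the forced positive frequency of the symbol $0$ for high-entropy measures, or equivalently an entropy gap between $h(X')$ and $\sup\{h_\mu : \mu \text{ gives } [0] \text{ measure } 0\}$ — that prevents embedding a free ergodic system of entropy in $(\,h(X')-\epsilon,\, h(X')\,)$ whose return-time statistics to any clopen set are incompatible. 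Making this precise — showing that \emph{every} ergodic $\mu$ on $X'$ with entropy close to $h(X')$ must charge $[0]$, and that this constraint genuinely obstructs some explicit target system — is where the real work lies, and I expect it to reduce to a careful analysis of the one-dimensional renewal structure of maximal negative blocks in $X'$.
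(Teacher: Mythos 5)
Your argument for the non-universality of $X'$ has the structure of the obstruction backwards, and this is a genuine gap. First, $h(X')=\log M$, not $\tfrac12\log M$: the set of points of $X'$ containing no $0$ is the subshift $X''=\{x\in\A_-^{\ZZ}: x_i+x_{i+1}=1 \Mod{2}\}$, in which consecutive symbols alternate between the $M$-element sets $\A_{-,0}$ and $\A_{-,1}$ with a free choice at each step, so $h(X'')=\log M$ already. Consequently your key claim --- that every ergodic measure of entropy close to $h(X')$ must charge $[0]_0$ --- is false; it is precisely the measures of near-maximal entropy that give $[0]_0$ measure zero and live on $X''$. Moreover, even if high-entropy measures were forced to charge $[0]_0$, that by itself obstructs nothing (fully supported measures charge every nonempty cylinder). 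The obstruction you are missing is a dichotomy: (i) $X''$ is a period-$2$ SFT, so every ergodic measure with $\mu([0]_0)=0$ has a non-ergodic square; (ii) if $\mu([0]_0)>0$, the balance constraint of $\hat X$ does \emph{not} become vacuous in $X'$ --- the erased positive run of length $2n$ is mapped to $2n$ zeros, so every maximal negative block is followed by a $0$-run at least as long as itself, forcing $\mu([0]_0)\ge\tfrac12$ and hence $h_\mu\le\tfrac12\log M+\log 2$. Since $M>4$, no free system with ergodic square and entropy in $(\tfrac12\log M+\log 2,\ \log M)$ can be realized on $X'$. Your reading of $X'$ as ``alternation rules plus arbitrary $0$-runs, balance constraint vacuous'' loses item (ii), and your entropy-gap picture inverts item (i); the continuous-eigenfunction/odometer suggestions do not apply to $X'$ as a whole (it contains the fixed point $0^{\ZZ}$).

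For the universality of $\hat X$ the paper takes a much shorter route than yours: the map $\Psi(x)_i=\lfloor |x_i/2|\rfloor$ sends $\hat X$ onto a mixing sofic shift $Y$ (sequences over $\A_2$ in which consecutive occurrences of $0$ are separated by a multiple of $4$ symbols), is finite-to-one, and restricts to a Borel isomorphism between the full sets of points containing a $0$; full ergodic universality of $\hat X$ is then inherited from that of mixing sofic shifts, with no need for the main theorem of the paper. Your direct construction of a flexible marker sequence is, as described, in tension with itself: single-excursion blocks $0^m B\, 0^{m'}$ give the marker property but cannot absorb a spaced family of smaller such blocks via all-$0$ fillers (the result is multi-excursion, hence not of the prescribed form), while admitting multi-excursion blocks destroys the marker property. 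This is likely fixable, but it is extra work the paper avoids.
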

\begin{proof}
	Let
	$$\A_2 = \{ n \in \NN~: 2n \in \A_{+,0} \} \cup \{0\}.$$
	Let $Y \subset \A_2^{\Z}$ be the subshift consisting of strings where the number of symbols between two consecutive  occurrences of  $0$ is divisible by $4$. Then $Y$ is a mixing sofic shift.  
	Let $\Psi:\hat X \to \A_2^{\ZZ}$ be given by
	\begin{equation}
	\Psi(x)_i = \lfloor |x/2| \rfloor.
	\end{equation}
        One can check that $\Psi(\hat{X})=Y$.
	Furthermore, if $y \in Y$ and $y_i =0$ for some $i \in \ZZ$, then $y$ has a unique preimage in $\hat X$ under $Y$.
	Let 
	$$\hat{X}_0 = \{x \in \hat X:~ \exists i \in \ZZ~ x_i =0 \}$$
	and 
	$$Y_0 = \{y \in  Y:~ \exists i \in \ZZ~ y_i =0 \}.$$
	So $\Psi$ induces a Borel isomorphism between $\hat{X}_0$ and $Y_0$.
	$Y_0 \subset Y$  and $\hat X_0 \subset \hat X$ are both dense.  
	Also, 
	$\mu(Y_0) =1$ for every ergodic fully-supported invariant probability measure on $Y$.
	Similarly, $\mu(\hat X_0)=1$ for every ergodic fully-supported invariant probability measure on $\hat X$.
	Also, $\Psi:\hat X \to Y$ is finite-to-one (actually at most $4$ to $1$), so $h(\hat X,S)=h(Y,S)$.
	Now since  $Y$ is mixing sofic shift, it is fully ergodic universal. This shows that every free $\ZZ$-action with entropy smaller than 
	$h(\hat X,S)$ can be realized as an invariant probability measure on $\hat X_0$ with full support, so $\hat X$ is fully ergodic universal.
	
	Let us prove that  $X'$ is not ergodic universal, note that the set of points $x \in X'$ with no occurrences of $0$ is precisely the subshift
$$ X'':= \{ x \in \A_-^\ZZ :~  x_i + x_{i+1} = 1\Mod 2\ \  \forall\  i \in \ZZ\}.$$
This is a non-mixing SFT with period $2$. It follows that any $\mu \in \Prob_e(X',S)$ with $\mu([0]_0)=0$ has a non-ergodic square.
Also, 
	\begin{equation}
		h(X') \ge h(X'') = \log M.
	\end{equation}
	Now if $\mu \in \Prob_e(X',S)$ and $\mu([0]_0)>0$ then by ergodicity $\mu$-almost every $x \in X'$ is (up to translation) an infinite concatenation of blocks of the form
	$$x_1, x_2, x_3, \ldots, x_n, 0^m$$    
	with $0 < n < m$ and $x_1,\ldots,x_n \in \A_-$.
	It follows that for  $\mu$-almost every $x \in X'$ the frequency of $0$'s is at least $\frac{1}{2}$ and so by the ergodic theorem
	$$\mu([0]_0) \ge \frac{1}{2}.$$
	So
	$$h_\mu(X',S) \le \frac{1}{2}\log(M)+\log(2).$$
	We conclude that no probability preserving system with ergodic square and entropy between  $\frac{1}{2}\log(M)+\log(2)$ and $\log(M)$ can be modeled as an invariant measure on  $X'$.
	This concludes the proof.
\end{proof}

\section{Mixing properties of subshifts of the $3$-colorings}\label{section:SI_subhshifts}
Let us recall the uniform filling property for $\ZD$ subshifts \cite{MR1844076}:
\begin{defn}
	A subshift $X \subset \A^{\ZD}$ has the uniform filling property (UFP) if there exists $M \in \NN$ such that for all $x,y\in X$ and $n\in \N$ there exists $z \in X$ such that $z\mid_{F_n}= x|_{F_n}$ and $z\mid_{\ZD \setminus F_{n+M}} = y\mid_{\ZD \setminus F_{n+M}}$.
\end{defn}
As we mentioned earlier, Robinson and \c{S}ahin \cite{MR1844076} have shown that if a $\ZD$ subshift of finite type $X$  has the \emph{uniform filling property (UFP)} (plus a condition on periodic points) then it is universal. 

As an early attempt to resolve the universality of the $3$-coloring subshift $X_3$ we  checked if it could be the case that for every $\epsilon >0$ there exists a subshift $Y \subset X_3$ with the UFP such that $h(Y')>h(Y)-\epsilon$. The result of  \cite{MR1844076} could then have been applied to prove that any such $Y$ is universal; this would show that $X_3$ is universal.  Pavlov used this argument  to prove universality for subshifts of finite type with ``nearly full entropy'' \cite{MR3162822}. Note that this does not automatically imply  full ergodic universality.
If we allow ourselves to restrict to the subaction of the shift by $(2\ZZ)^d$, this argument works for the $X_3$, and in fact for any mixing hom-shift.
Eventually we realized that this  approach cannot be used to prove universality for $3$-colorings, because of the following result:

\begin{prop}\label{prop:No_SI_in_C3}
	Suppose $d \ge 2$ and that  $Y$ is a  subshift of the $3$-colorings subshift  in $\ZD$. Then $Y$ does not have the UFP.
\end{prop}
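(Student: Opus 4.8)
The idea is that proper $3$-colorings of $\ZD$ are extremely rigid: once we fix the colors on a large enough region, the ``height function'' associated with the coloring forces global structure that cannot be locally repaired. The cleanest way to exploit this for $d\ge 2$ is to use the standard correspondence between proper $3$-colorings of $\ZD$ and height functions: given $x\in X_3$, one can define $h_x:\ZD \to \ZZ$ (unique up to an additive constant, once we fix a base point) so that $h_x(\mi)-h_x(\mj) \equiv \pm 1$ for adjacent $\mi,\mj$, with the sign determined by which of the three colors appears; concretely, on each edge the increment of $h_x$ is $+1$ or $-1$ according to a fixed cyclic order on the color set $\ZZ/3\ZZ$ and the parity of the endpoint. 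The key classical fact (Lipschitz-type constraint) is: for any two sites $\mi,\mj$, $|h_x(\mi)-h_x(\mj)| \le \|\mi-\mj\|_1$, and moreover $h_x(\mi)-h_x(\mj)$ has the same parity as $\|\mi-\mj\|_1$. First I would recall/establish this height-function machinery carefully for $d=2$ (and note the $d\ge 2$ case reduces to restricting to a $2$-dimensional sublattice coordinate plane, since any UFP subshift of $X_3$ restricts to a UFP-like subshift on a coordinate plane — or simply observe that the obstruction already lives on a $2$-plane).

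\textbf{Main step: build two configurations whose heights disagree too much to be reconciled.} Suppose for contradiction that $Y\subseteq X_3$ has the UFP with constant $M$. The plan is to exhibit, for arbitrarily large $n$, two colorings $x,y\in Y$ such that $x$ restricted to $F_n$ forces the height function to be ``high'' near $F_n$ (say $h_x$ takes a large value $\asymp n$ somewhere on $\partial F_n$ after normalizing $h_x(\vec 0)=0$) while $y$ restricted to $\ZD\setminus F_{n+M}$ forces the height function there to be ``low'' (say $h_y \equiv 0$-ish, or of bounded oscillation, on $\partial F_{n+M}$). A configuration $z$ agreeing with $x$ on $F_n$ and with $y$ off $F_{n+M}$ would then have a height function $h_z$ satisfying $h_z = h_x + c$ on $F_n$ and $h_z = h_y + c'$ off $F_{n+M}$; but then the value of $h_z$ on $\partial F_n$ (which is $\asymp n$ above its value at $\vec 0$-type references inside, hence $\asymp n$ above the boundary layer values coming from $y$) must be connected to the value of $h_z$ on $\partial F_{n+M}$ through the annulus $F_{n+M}\setminus F_n$ of width only $M$; the Lipschitz bound forces the difference of heights across that annulus to be at most $M$ (along a straight path of length $\le M$ crossing it). Choosing $n > $ (say) $3M$ gives a contradiction. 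So the crux is: (i) produce $x\in Y$ with a steep height function on $F_n$, and (ii) produce $y\in Y$ with a flat height function outside $F_{n+M}$.

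\textbf{Producing the steep configuration.} For (ii), a flat $y$ is easy provided $Y$ is nonempty and has the UFP: for a nontrivial subshift the UFP already forces $Y$ to contain a periodic ``brick-wall'' or striped coloring of bounded height oscillation, or alternatively — since UFP implies $Y$ contains configurations realizing any locally admissible boundary condition around a box — one can just take $y$ to be any fixed element of $Y$ and normalize. The real work is (i). Here I would argue by entropy or by a direct combinatorial/counting argument: if $h_x$ could never exceed a constant $K$ in absolute value over all $x\in Y$ and all translates, then $Y$ would be contained in a subshift of $3$-colorings with uniformly bounded height, which is easily seen to have zero entropy (bounded height functions on $\ZD$, $d\ge 2$, with the parity constraint, are essentially rigid — their number on $F_n$ grows subexponentially in $|F_n|$). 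But UFP together with $Y$ being infinite forces $h(Y)>0$: indeed UFP lets us freely paste independent patches, so $Y$ has positive entropy unless $Y$ is finite, and a finite (hence periodic-point) $Y\subseteq X_3$ with UFP is impossible for $d\ge 2$ by a separate easy argument (a single periodic orbit does not satisfy UFP). Alternatively and more robustly: take any two distinct colorings, use UFP to splice a large square of one into a background of the other repeatedly along a line, producing a configuration whose height function grows linearly in one direction at a definite rate; iterating/rescaling gives steepness $\asymp n$ on $F_n$. I expect \textbf{this step — getting a quantitative lower bound $\asymp n$ on the height attained on $F_n$ from the UFP and positive entropy — to be the main obstacle}, and I would handle it by a clean extremal/averaging argument: among all $F_n$-patterns in $Y$, the height increments around the boundary loop sum to $0$, but if all boundary heights were within $o(n)$ of each other then a combinatorial count (à la the bounded-height rigidity above) would cap $|\mathcal L(Y,F_n)|$ by $\exp(o(|F_n|))$, contradicting $h(Y)>0$; hence some pattern has height oscillation $\ge \epsilon n$ on $\partial F_n$ for a fixed $\epsilon>0$.

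\textbf{Conclusion.} Fix $M$ (the UFP constant), choose $n$ with $\epsilon n > M$, pick $x\in Y$ whose height function oscillates by more than $M+O(1)$ across $F_n$ and $y\in Y$ with height oscillation $\le O(1)$ on $\partial F_{n+M}$; UFP produces $z\in Y$ agreeing with $x$ on $F_n$ and with $y$ off $F_{n+M}$; comparing $h_z$ on $\partial F_n$ with $h_z$ on $\partial F_{n+M}$ via straight-line paths of length $\le M$ through the annulus and using the $1$-Lipschitz property of height functions of proper $3$-colorings yields $|h_z|$-oscillation $\le M + O(1)$ across the whole of $F_{n+M}$, contradicting the oscillation $> M+O(1)$ already present on $\partial F_n$. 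Hence no such $Y$ with UFP exists. I would present the height-function Lipschitz lemma and the bounded-height rigidity/entropy lemma as two short self-contained lemmas, then assemble the contradiction in half a page.
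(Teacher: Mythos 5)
There is a genuine gap, and it sits exactly where you predicted the main obstacle would be: the step that produces a ``steep'' configuration. Your argument for that step rests on the claim that a subshift of $3$-colorings with uniformly bounded height function has zero entropy (``bounded height functions on $\ZD$, $d\ge 2$, with the parity constraint, are essentially rigid --- their number on $F_n$ grows subexponentially''). This is false. Take the height function to be $0$ on every site of even parity and $\pm 1$, chosen freely and independently, on every site of odd parity; this is a graph homomorphism to $\ZZ$ with oscillation at most $2$, and the corresponding colorings (one fixed color on the even sublattice, either of the two remaining colors on each odd site) form a family of size $3\cdot 2\cdot 2^{\#\{\text{odd sites}\}}$ on $F_n$, giving entropy at least $\tfrac12\log 2$. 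So positive entropy does not force unbounded height, and your extremal/averaging argument cannot produce the oscillation $\ge \epsilon n$ you need. Consequently the ``flat'' (in the paper's terminology, \emph{quasiflat}) case is not handled.

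For comparison: the paper splits into the same two cases. When $Y$ is not quasiflat --- i.e.\ height differences between points of $Y$ are unbounded --- its argument is essentially your gluing-across-a-width-$M$-annulus contradiction, using the $1$-Lipschitz property of the cocycle; that half of your proposal is fine. But in the quasiflat case the paper uses no entropy at all. It homogenizes the height quasimorphism to a homomorphism $\overline\phi$, picks an odd vector $\mj_0$ on which $|\overline\phi|\le 2$, and uses topological mixing (a consequence of UFP) to find $z\in Y$ with $z|_{F_n}=S^{\mj_0}(z)|_{F_n}=y|_{F_n}$ for a $y$ chosen so that $c(y,\mi_0)-\overline\phi(\mi_0)$ is within $\tfrac{1}{100}$ of the quasiflatness constant $D_Y$. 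Then $z_{\vec 0}=z_{\mj_0}$ forces $c(z,\mj_0)\equiv 0 \pmod 3$, while oddness of $\mj_0$ forces $c(z,\mj_0)$ odd, hence $|c(z,\mj_0)|\ge 3$; adding this increment to the near-extremal value at $\mi_0$ overshoots $D_Y$, a contradiction. Some parity-and-mod-$3$ input of this kind is unavoidable, precisely because quasiflat subshifts of $X_3$ can have positive entropy. To repair your proof you would need to replace the entropy lemma with an argument of this type (or otherwise show directly that a quasiflat subshift cannot have the UFP).
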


\begin{remark}
	In response to a question posed by M. Boyle, Quas and \c{S}ahin \cite{quas2003entropy} constructed a topologically mixing $\ZZ^2$ SFT $\overline{X}$ and a number $h_0 \in (0,h(\overline{X},S))$ such that if $Y \subset \overline{X}$ has UFP then $h(Y,S) < h_0$.
	 Proposition \ref{prop:No_SI_in_C3} shows that a  somewhat stronger phenomena holds for the $3$-coloring subshift.
\end{remark}

The main tool we use in this proof is the so called \emph{height cocycle} or \emph{height functions} associated to $3$-colorings. A (real-valued) cocycle for a topological $\ZD$ dynamical system $(X,S)$ is a function $c:X \times \ZD \to \RR$ satisfying the relation
	\begin{equation}
	\label{eq:cocycle}c(x, \mi+ \mj)= c(x, \mi)+ c(S^{\mi}(x),\mj) \mbox{ for every } x \in X,~ \mi,\mj \in \ZD.
	\end{equation}
	
	Note that \eqref{eq:cocycle} implies that $c(x,\vec{0})=0$. The height cocycle $c:X_3 \times \ZZ^2 \to \ZZ$ is uniquely defined by the following properties:

	$$ x_\mi - x_\mj = c(x,\mi)-c(x,\mj) \mod 3 \mbox{ whenever } \mi \mbox{ is  adjacent  to } \mj,$$
	and
	$$ |c(x,\vec{e}_j) | =1 \mbox{ for all } x \in X_3 \mbox { and } 1\le j \le d.$$
	In other words, for every $x \in X_3$,  $\tilde x= (c(x,\mi)_{\mi \in \ZD}) \in \ZZ^{\ZD}$ is the unique graph homomorphism from the standard Cayley graph of  $\ZD$ to the standard Cayley graph of $\ZZ$ that satisfies $x_\mi- x_{\m 0} =\tilde x_\mi \mod 3$ for every $\mi \in \ZD$ and $\tilde x_{\m 0}=0$. We will use the following two facts about these cocycles. If for some connected set $A\subset \Z^d$, $x,x' \in X_3$ are such that $x|_A=x'|_A$ then for all $\mi, \mj\in A$
	\begin{equation}
c(S^{\mi}(x), \mj-\mi)=c(S^{\mi}(x'), \mj-\mi).\label{equation:equality of heights}
\end{equation}
Lastly 
	\begin{equation}\label{eq:c_lip}
	|c(x,\mi)| \le \|\mi\|_1\ \forall\  x \in X_3\text{ and } \mi \in \ZD.
	\end{equation}
	See  \cite{MR3552299,galvin_homomorphism_cube_2003,schmidt_cohomology_SFT_1995} for details and further references.

\begin{proof}[Proof of Proposition \ref{prop:No_SI_in_C3}]
	
	Call a subshift $Y \subset X_3$ \emph{quasiflat} if 
	\begin{equation}\label{eq:quasiflat}
	\sup \{ c(y',\mi)-c(y,\mi):~  \mi \in \ZD,~ y,y' \in Y\} < \infty.
	\end{equation}
	
	We claim that if $Y$ is not quasiflat, then $Y$ does not have UFP.
	Indeed, suppose that $Y$ is not quasiflat. Fix $M \in \NN$. Then there exists $y^{(1)},y^{(2)} \in Y$, $n \in \NN$, $\mi,\mj \in F_n \setminus F_{n-1}$ such that
	$$c(S^{\mi}(y^{(1)}),\mj-\mi)- c(S^{\mi}(y^{(2)}),\mj-\mi) > 4M.$$
	
	Now find $\mi',\mj' \in F_{n+M}\setminus F_{n+M-1}$ such that $\|\mi -\mi'\|_1 \leq M$ and $\|\mj-\mj'\|_1 \leq M$.
	Then it follows from \eqref{eq:cocycle} and \eqref{eq:c_lip} that
	$$\left|c(S^{\mi}(y^{(k)}),\mj-\mi) - c(S^{\mi'}(y^{(k)}),\mj'-\mi')\right| \leq 2M \mbox { for } k=1,2.$$
	Thus
	$$c(S^{\mi}(y^{(1)}),\mj-\mi)- c(S^{\mi'}(y^{(2)}),\mj'-\mi') > 2M.$$
	Suppose there exists $y \in Y$ such that $y\mid_{F_n}=y^{(1)}\mid_{F_n}$ and $y\mid_{\ZD \setminus F_{n+M-1}}=y^{(2)}\mid_{\ZD \setminus F_{n+M-1}}$.
	Then by \eqref{equation:equality of heights}
	$$c(S^{\mi}(y),\mj-\mi)=c(S^{\mi}(y^{(1)}),\mj-\mi)$$
	and
	$$c(S^{\mi'}(y),\mj'-\mi')=c(S^{\mi'}(y^{(2)}),\mj'-\mi').$$
    We conclude that 
	$$c(S^{\mi}(y),\mj-\mi)- c(S^{\mi'}(y),\mj'-\mi') > 2M,$$
    contradicting   \eqref{eq:cocycle} and \eqref{eq:c_lip}.
    This shows that a subshift $Y$ that is not quasiflat does not have the UFP.

    Now suppose $Y \subset X_3$ is quasiflat. We will show that in this case that $Y$ cannot even be topologically mixing, and in particular does not have UFP.
    
Since $Y$ is a quasiflat, for every $y \in Y$ the map $\phi_y:\ZD \to \RR$ given by  
    $\phi_y(\mi)= c(y,\mi)$ is a \emph{quasimorphism} on the group $\ZD$, in the sense that
    $$\sup_{\mi,\mj \in \ZD}|\phi_y(\mi+\mj)-\phi_y(\mi)-\phi_y(\mj)| < \infty.$$
    Then by a well known simple  argument the homogenized quasimorphism \cite{MR2026941,MR2527432} $\overline{\phi}:\ZD \to \RR$ given by 
    $$ \overline{\phi}(\mi) = \lim_{n \to \infty}\frac{\phi_y(n \cdot \mi)}{n}$$
    is a group homomorphism that satisfies $|\overline{\phi}(\mi)| \leq \|\mi\|_1$.
    Since $Y$ is quasiflat it follows that $\overline{\phi}$ is independent of the choice of $y$ and 
    $$D_Y = \sup_{y \in Y, \mi \in \ZD}|c(y,\mi)- \overline\phi(\mi)| < \infty.$$
    We can find  $y \in Y$ and $\mi_0\in \ZD$ such that
    $$|c(y,\mi_0)- \overline\phi(\mi_0)| > D_Y - \frac{1}{100}.$$ 
    Let $n = \|\mi_0\|_1$ and 
    $$L_Y = \{\mj \in \ZD:~ |\overline{\phi}(\mj)| \le 2 \mbox{ and } \mj \mbox{ is odd} \}.$$
Since $\overline \phi$ is a group homomorphism there exists $\m s\in \R^d$ such that $\overline{\phi}(\mi)=\langle \mi, \m s\rangle$. By taking integer approximations of the zeros of the inner product $\langle \cdot , \m s\rangle$ we get that $L_Y$ is an infinite set.  Since $Y$ is mixing, for large enough $\m j_0\in L_Y$ there exists $z\in Y$ such that
$$z|_{F_n}=y|_{F_n}\text{ and } S^{\m j_0}(z)|_{F_n} = y|_{F_n}.$$ 

In particular $z_{\m 0}=z_{\mj_0}$ and $z_{\mi_0}=z_{\mi_0+\mj_0}$ so $c(z,\mj_0) =0  \mod 3$ and $c(S^{\mi_0}(z), \mj_0)= 0 \mod 3$.
Since $\mj_0$ is odd it follows that $c(z,\mj_0) \ne 0$ and $c(S^{\mi_0}(z), \mj_0) \ne 0$.
Thus
$|c(z, \mj_0)|, |c(S^{\mi_0}(z), \mj_0)|\geq 3$. Assume without the loss of generality that
$$c(z,\mi_0)- \overline\phi(\mi_0) > D_Y - \frac{1}{100};$$
the proof is similar in the other case. Since $F_n$ is connected we have that either
$$c(z, \mj_0)=c(S^{\mi_0}(z),\mj_0)\geq 3\text{ or }c(z, \mj_0)=c(S^{\mi_0}(z),\mj_0)\leq -3.$$

If the former is true then
$$c(z, \mi_0+\mj_0)=c(z, \m j_0)+c(S^{\mj_0}(z), \mi_0) \geq 3+\overline{\phi}(\mi_0)+D_Y-\frac{1}{100}$$
while the latter implies
$$c(S^{\mj_0}(z), \mi_0-\mj_0)=c(S^{\mj_0}(z), -\mj_0)+ c(z,  \m i_0) \geq 3+\overline{\phi}(\mi_0)+D_Y-\frac{1}{100}.$$
But  the choice of $D_Y$  and that $\mj_0\in L_Y$ shows
\begin{eqnarray*}
c(z, \mi_0+\mj_0) \leq& \overline\phi(\mi_0+\mj_0)+D_Y&\leq 2+ D_Y +\overline\phi(\mi_0)\text{ and }\\
c(S^{\mj_0}(z), \mi_0-\mj_0) \leq& \overline\phi(\mi_0-\mj_0)+D_Y&\leq 2+ D_Y +\overline\phi(\mi_0)
\end{eqnarray*}
 contradicting both cases.
\end{proof}

\section{Further questions and comments}
We conclude with some comments and further questions:
\begin{enumerate}
		\item Ergodic vs. almost Borel universality: 
		Is there a general condition under which  ergodic universality of  a compact Borel dynamical system $(X,S)$ implies almost Borel universality?
	Specifically: Suppose $(X,S)$ and $(Y,T)$ are compact dynamical systems and that for every $\mu \in \Prob(Y,T)$ the system  $(Y,\mu,T)$ can realized as an invariant measure on $(X,S)$. Is there an almost-Borel embedding of  $(Y,T)$ into $(X,S)$? It is clearly not sufficient to  require only  that any ergodic $\mu \in \Prob(Y,T)$ has an isomorphic copy in  $\Prob(X,S)$. For example, take $(X,S)$ to be uniquely ergodic and let $(Y,T)$ consist of two disjoint copies of $(X,S)$.
	\item Borel vs. ``almost Borel'' universality: Is it possible to strengthen Theorem \ref{thm:spec_sequence_implies_univesality} and prove that systems satisfying the assumptions  contain a Borel copy of any free system of sufficiently low entropy? In other words, to what extent is it necessary to disregard a null set? Mike Hochman proved that any Borel $\ZZ$ dynamical system with no invariant probability measure admits a $2$-set generator \cite{MR3880210}, and thus deduced Borel universality for mixing $\ZZ$-SFTs and more. Mike Hochman and Brandon Seward  informed us that they have managed to extend this result to Borel actions of arbitrary countable groups.
	\item
	 Dimers in higher dimensions and rectangular tiling shifts: Are $\ZD$-dimers fully ergodic universal when $d >2$? We proved that they are $t$-universal for some $t>0$. It suffices to show that the number of perfect domino tilings of an $F_n$ is $e^{h|F_n|+o(|F_n|)}$ as $n \to \infty$. For this we invoked  some classical results based on ``hard'' computations, in particular  on Kasteleyn's formula for the number of perfect matchings of a finite planar graph. Is there a ``soft method'' to deduce a similar result in greater generality?
		\item Universality for $\mathbb{R}^d$-actions: To what extend do our results and methods apply to  $\mathbb{R}^d$ actions? Does specification imply universality for these systems?
		 Quas and Soo obtained results along theses lines for certain $\mathbb{R}$-actions \cite{MR3008405}.
	See also the Kra-Quas-\c{S}ahin version of Alpern Lemma for $\mathbb{R}^d$-actions \cite{MR3324927}.
	\item Does specification imply universality for actions of more general countable groups? 
	In this paper we do not directly deal with action of groups beyond $\ZD$, but  most of the ergodic theoretic machinery used in our proof (Rokhlin towers, Shannon-McMillan theorem) is available for countable amenable groups.
	In view of Seward's version of Krieger generator theorem for arbitrary countable groups \cite{MR3904452}, it is tempting to ask the question beyond the amenable setting.
	\item Realizing measure preserving actions and Borel actions as  continuous actions  on a manifold: We now know that any free measure preserving $\ZZ$-action is isomorphic to some continuous homeomorphism of the $2$-torus (as a measure preserving dynamical system, with respect to Lebesgue measure). What about actions of more general groups? For instance, what about $\Z^2$-actions? Actions of the free group? $\RR$-flows? (Note that there are no free continuous $\mathbb{R}^{d+1}$-flows on a $d$-dimensional manifold).
 
\item Non compact models: Some natural (non-compact) Polish dynamical systems have been shown to be universal. For instance, the space of entire functions on $\mathbb{C}$ is an example of a non-compact Polish $\mathbb{R}^2$ dynamical system that is universal (in the ergodic sense) \cite{MR1422707}. Can this be interpreted in the context of our results?

	\item Universality of algebraic actions: 
	Which algebraic actions (continuous actions on a compact group that preserve the group structure) are universal? For $\ZZ$-actions we know that ergodicity suffices.
\end{enumerate}
\printindex

\bibliographystyle{abbrv}
\bibliography{universal_models_ZD}
\end{document}